\renewcommand{\sc}{\scshape}
\newcommand{\md}{\mdseries}     
\renewcommand{\bf}{\bfseries}
\newcommand{\Dbb}{\mathbb{D}}
\newcommand{\Fbb}{\mathbb{F}}
\newcommand{\Gbb}{\mathbb{G}}
\newcommand{\Nbb}{\mathbb{N}}
\newcommand{\Qbb}{\mathbb{Q}}  
\newcommand{\Sbb}{\mathbb{S}}
\newcommand{\Wbb}{\mathbb{W}}
\newcommand{\Zbb}{\mathbb{Z}}
\newcommand{\Ccal}{\mathcal{C}}
\newcommand{\Fcal}{\mathcal{F}}
\newcommand{\Gcal}{\mathcal{G}}  
\newcommand{\Ical}{\mathcal{I}}
\newcommand{\Ocal}{\mathcal{O}}
\newcommand{\Scal}{\mathcal{S}}
\newcommand{\ra}{\rightarrow}       
\newcommand{\raa}{\longrightarrow}
\newcommand{\Lra}{\Leftrightarrow}
\newcommand{\Raa}{\Longrightarrow}
\newcommand{\mto}{\mapsto}          
\newcommand{\mtoo}{\longmapsto}
\newcommand{\q}{\quad}              
\newcommand{\qq}{\qquad}
\newcommand{\qqqq}{\qquad\qquad}
\renewcommand{\l}{\left}
\renewcommand{\r}{\right}
\newcommand{\lan}{\langle}          
\newcommand{\ran}{\rangle}
\renewcommand{\epsilon}{\varepsilon}    
\renewcommand{\phi}{\varphi}
\renewcommand{\subset}{\subseteq}       
\renewcommand{\supset}{\supseteq}
\newcommand{\x}{\times}
\newcommand{\ox}{\otimes}               
\newcommand{\op}{\oplus}
\newcommand{\iso}{\cong}
\renewcommand{\tilde}{\widetilde}       
\renewcommand{\bar}{\overline}       
\renewcommand{\mod}{\bmod}
\theoremstyle{plain}
\newtheorem{theorem}{\iflanguage{french}{Th\'eor\`eme}{Theorem}}[chapter]
\newtheorem{proposition}[theorem]{Proposition}
\newtheorem{corollary}[theorem]{\iflanguage{french}{Corollaire}{Corollary}}
\newtheorem{lemma}[theorem]{\iflanguage{french}{Lemme}{Lemma}}
\newtheorem*{nntheorem}{\iflanguage{french}{Th\'eor\`eme}{Theorem}}
\newtheorem*{definition}{\iflanguage{french}{D\'efinition}{Definition}}
\newtheorem{notation}[theorem]{Notation}
\newtheorem{remark}[theorem]{\iflanguage{french}{Remarque}{Remark}}
\newtheorem{example}[theorem]{\iflanguage{french}{Exemple}{Example}}
\newtheorem*{proof}{\iflanguage{french}{Preuve}{Proof}}
\renewcommand{\@makechapterhead}[1]{
\vspace*{0mm}
\begin{flushleft}
	{\LARGE\bf \chaptername\ \thechapter :\\}
\end{flushleft}
\vspace{-2mm}
\begin{center}
	{\LARGE\bf #1}
\end{center} 
\nobreak\vspace{10mm}
} \makeatother
\renewcommand{\@makeschapterhead}[1]{
\markboth{#1}{#1}
\vspace*{0mm}
\begin{flushleft}
	{\LARGE\bf #1}
\end{flushleft} 
\nobreak\vspace{10mm}
} \makeatother
\titleformat{\section}{}%
{\normalfont\large\bfseries\thetitle.}{0.5em}%
{\normalfont\large\bfseries}
\titlespacing{\section}{0pt}{*5}{*2}
\titleformat{\subsection}{}%
{\normalfont\normalsize\bfseries\thetitle.}{0.5em}%
{\normalfont\normalsize\bfseries}
\titlespacing{\subsection}{0pt}{*4}{*2}
\begin{document} 


\begin{titlepage}
\begin{center}
    {\Large\sc
        Institut de Recherche Mathématique Avancée\\
        Université de Strasbourg\\
    }
    \vspace{40mm}
    {\bf\huge
        Doctoral Thesis\\
        \vspace{8mm}
        Finite subgroups of extended Morava stabilizer groups\\
        \vspace{10mm}
        \Large by\\ 
        \vspace{4mm}
        Cédric Bujard\\
    }
\end{center}

\vfill

\noindent Defended on June 4, 2012.\\  
Under the supervision of Prof. Hans-Werner Henn.

\vspace{4mm}

{\bf \noindent Key words:} Formal group laws of finite height, Morava
stabilizer groups, cohomology of groups, division algebras over local
fields, local class field theory.
\end{titlepage}


\renewcommand{\contentsname}{\LARGE\bf \vspace*{0mm} 
    Table of Contents \vspace{-6mm}}
\tableofcontents 
\markboth{Table of Contents}{Table of Contents}


\chapter*{Introduction} 
\addcontentsline{toc}{chapter}{Introduction}
\markboth{Introduction}{Introduction}

\section*{The Morava stabilizer groups}

Let $n$ be a positive integer and $K$ a separably closed field of
characteristic $p > 0$. If $F$ is a formal group law of height $n$
defined over $K$, then the Dieudonné-Lubin theorem \ref{276} says that
the $K$-automorphism group $Aut_K(F)$ of $F$ can be identified with the
units in the maximal order $\Ocal_n$ of the central division algebra
$\Dbb_n = D(\Qbb_p, 1/n)$ of invariant $1/n$ over $\Qbb_p$. In the case
where $F=F_n$ is the Honda formal group law of height $n$, as given by
theorem \ref{274}, we have
\[
    Aut_K(F_n) \iso Aut_{\Fbb_{p^n}}(F_n).
\]
We define \label{314}
\[
    \Sbb_n := Aut_{\Fbb_{p^n}}(F_n) \iso \Ocal_n^\x
\]
to be the \emph{$n$-th (classical) Morava stabilizer group}.

More generally, we are interested in the category $\mathcal{FGL}_n$
whose objects are pairs $(F,k)$ for $k$ a perfect field of
characteristic $p$ and $F$ a formal group law of height $n$ defined over
$k$, and whose morphisms are given by pairs
\[
    (f, \phi): (F_1, k_1) \raa (F_2, k_2),
\]
where $\phi: k_1 \ra k_2$ is a field homomorphism and $f: \phi_\ast F_1
\ra F_2$ is an isomorphism from the formal group law given by applying
$\phi$ on the coefficients of $F_1$. If $(f, \phi)$ is an endomorphism
of $(F, k)$, then $\phi$ is an automorphism of $k$ and $\phi \in
Gal(k/\Fbb_p)$. We let
\[
    Aut_{\mathcal{FGL}_n}(F, k)
    = \{ (f, \phi): (F, k) \ra (F, k)\ |\ \phi \in Gal(k/\Fbb_p)
    \text{ and } f: \phi_\ast F \iso F \}
\]
denote the group of automorphisms of $(F, k)$ in $\mathcal{FGL}_n$. If
$F$ is already defined over $\Fbb_p$, the Frobenius automorphism $X^p
\in End_{K}(F)$ defines an element $\xi_F \in \Ocal_n$. Then proposition
\ref{280} says that $End_K(F) = End_{\Fbb_{p^n}}(F)$ if and only if the
minimal polynomial of $\xi_F$ over $\Zbb_p$ is $\xi_F^n-up$ with $u \in
\Zbb_p^\x$. For such an $F$, we define \label{316}
\[
    \Gbb_n(u) := Aut_{\mathcal{FGL}_n}(F, \Fbb_{p^n})
\]
to be the \emph{$n$-th extended Morava stabilizer group} associated to
$u$. We often note $\Gbb_n = \Gbb_n(1)$.

Here $\phi_\ast F = F$ for any $\phi \in Gal(\Fbb_{p^n}/\Fbb_p)$. The
group $\Gbb_n(u)$ contains $\Sbb_n$ as the subgroup of elements of the
form $(f, id_{\Fbb_{p^n}})$, and there is an extension
\[
    1 \raa \Sbb_n \raa \Gbb_n(u) \raa Gal(\Fbb_{p^n}/\Fbb_p) \raa 1
\]
where an element $f \in \Sbb_n$ is mapped to the pair $(f,
id_{\Fbb_{p^n}})$ and where the image of a pair $(f, \phi) \in
\Gbb_n(u)$ in the Galois group is the automorphism $\phi$ of
$\Fbb_{p^n}$. Moreover, the Frobenius automorphism $\sigma \in
Gal(\Fbb_{p^n}/\Fbb_p) \iso \Zbb/n$ splits as the pair $(id_F, \sigma)$
in $\Gbb_n(u)$, and we get
\[
    \Gbb_n(u) \iso \Sbb_n \rtimes_F Gal(\Fbb_{p^n}/\Fbb_p),
\]
where the action on $\Sbb_n$ is induced by conjugation by $\xi_F$. In
terms of division algebras (see appendix \ref{273}), this extension
translates into a split exact sequence
\[
    1 \raa \Ocal_n^\x \raa \Dbb_n^\x/\lan \xi_F^n 
    \ran \raa \Zbb/n \raa 1,
\]
so that
\[
    \Gbb_n(u) \iso \Dbb_n^\x/\lan pu \ran.
\]

In the text we address the problem of classifying the finite subgroups
of $\Gbb_n(u)$ up to conjugation. In particular, we give necessary and
sufficient conditions on $n$, $p$ and $u$ for the existence in
$\Gbb_n(u)$ of extensions of the form
\[
    1 \raa G \raa F \raa \Zbb/n \raa 1
\]
with $G$ maximal finite in $\Sbb_n$, and if such extensions exist, we
establish their classification as finite subgroups of $\Gbb_n(u)$ up to
conjugation.

\section*{Motivation} 

Given a prime $p$ and for $K(n)$ the $n$-th Morava $K$-theory at $p$, the
stable homotopy category of $p$-local spectra can be analysed from
the category of $K(n)$-local spectra in the sense of \cite{henn2}
section 1.1. In particular, letting $L_n = L_{K(0) \vee\ldots\vee K(n)}$
be the localization functor with respect to $K(0) \vee\ldots\vee K(n)$,
there is a tower of localization functors
\[
    \ldots \raa L_n \raa L_{n-1} \raa \ldots \raa L_0
\]
together with natural maps $X \ra L_nX$, such that for every $p$-local
finite spectrum $X$ the natural map $X \ra holim L_nX$ is a weak
equivalence. Furthermore, the maps $L_nX \ra L_{n-1}X$ fit into a
natural commutative homotopy pullback square
\[
    \xymatrix{
        L_nX \ar[r] \ar[d] & L_{K(n)}X \ar[d] \\
        L_{n-1}X \ar[r] & L_{n-1}L_{K(n)}X.
    }
\]
In this way, the Morava $K$-theory localizations $L_{K(n)}X$ form the
basic building blocks for the homotopy type of a $p$-local finite
spectrum $X$, and of course, the localization of the sphere
$L_{K(n)}S^0$ plays a central role in this approach.  

The spectrum $L_{K(n)}S^0$ can be identified with the homotopy fixed
point spectrum $E_n^{h\Gbb_n}$ of the $n$-th Lubin-Tate spectrum $E_n$,
and the Adams-Novikov spectral sequence for $L_{K(n)}S^0$ can be
identified with the spectral sequence
\[
    E_2^{s,t}
    = H^s(\Gbb_n, (E_n)_t)\
    \Raa\ \pi_{t-s}L_{K(n)}S^0.
\]
Here the ring $(E_n)_0$ is isomorphic to the universal deformation ring
$E(F,\Fbb_{p^n})$ (in the sense of Lubin and Tate) associated to a
formal group law $F$ of height $n$ over $\Fbb_{p^n}$, and $(E_n)_\ast$
is a graded version of $E(F,\Fbb_{p^n})$. The functor
\[
    E(\_, \_): \mathcal{FGL}_n \raa Rings_{cl}
\]
to the category of complete local rings defines the action of
$\Gbb_n(u)$ on the universal ring $E(F, \Fbb_{p^n})$, which in turn
induces an action on $(E_n)_\ast$.

There is good hope that $L_{K(n)}S^0$ can be written as the inverse
limit of a tower of fibrations whose successive fibers are of the form
$E^{hF}_n$ for $F$ a finite subgroup of $\Gbb_n(u)$. This is at least
true in the case $n=2$, $p=3$ and $u=1$, which is the object of
\cite{goerss-henn}. In \cite{henn2} the case $n=p-1$, $p>2$ and $u=1$ is
investigated. Moreover, the importance of the subgroups of $\Gbb_2(-1)$
for $p=3$ is exemplified in \cite{behrens}. As shown in the present
text, the choice of $u$ plays an important role in the determination of
the finite subgroups of $\Gbb_n(u)$. 

For example, when $n=2$ and $p=3$ theorem \ref{261} shows that the
maximal finite subgroups of $\Gbb_n(u)$ are represented up to
conjugation by $SD_{16}$, the semidihedral group of order $16$, and by a
semi-direct product of the cyclic group of order $3$ with either the
quaternion group $Q_8$ if $u \equiv 1 \mod 3$ or the dihedral group
$D_8$ of order $8$ if $u \equiv -1 \mod 3$.

Another example is given by theorem \ref{264} in the case $n=2$ and
$p=2$: the maximal finite conjugacy classes are given by two or four
classes depending on $u$. When $u \equiv 1 \mod 8$, there are two of
them given by a metacyclic group of order $12$ and by
\[
    \begin{cases}
        O_{48} & \text{if } u \equiv 1 \mod 8,\\
        T_{24} \rtimes C_2 & \text{if } u \equiv -1 \mod 8,
    \end{cases}
\]
for $O_{48}$ the binary octahedral group of order $48$, $C_2$ the cyclic
group of order $2$ and $T_{24}$ the binary tetrahedral group of order
$24$. On the other hand when $u \not\equiv 1 \mod 8$, there are four of
them given by $T_{24}$, by two distinct metacyclic groups of order $12$,
and by
\[
    \begin{cases}
        D_8 & \text{if } u \equiv 3 \mod 8,\\
        Q_8 & \text{if } u \equiv -3 \mod 8.
    \end{cases}
\]

The group $\Gbb_2(-1)$ is the Morava stabilizer group associated
to the formal group law of a supersingular elliptic curve, while in
general $\Gbb_n = \Gbb_n(1)$ is the one associated to the Honda formal
group law of height $n$.

\section*{Overview}

In the first chapter of the text, we establish a classification up to
conjugation of the maximal finite subgroups of $\Sbb_n$ for a prime $p$
and a positive integer $n$. When $n$ is not a multiple of $p-1$ the
situation remains simple as no non-trivial finite $p$-subgroup exist. In
this case, all finite subgroups are subgroups in the unique conjugacy
class isomorphic to
\[
    \begin{cases}
    C_{p^n-1}
    & \text{ if } p>2,\\
    C_{2(p^n-1)}
    & \text{ if } p=2,
    \end{cases}
\]
where $C_l$ denotes the cyclic group of order $l$. Otherwise, $n =
(p-1)p^{k-1}m$ with $m$ prime to $p$. For $\alpha \leq k$ and Euler's
totient function $\phi$, we let $n_\alpha = \frac{n}{\phi(p^\alpha)}$
and we obtain:

\begin{nntheorem}
If $p>2$ and $n = (p-1)p^{k-1}m$ with $m$ prime to $p$, the group
$\Sbb_n$ has exactly $k+1$ conjugacy classes of maximal finite subgroups
represented by
\[
    G_0 = C_{p^n-1} 
    \qq \text{and} \qq 
    G_\alpha = C_{p^\alpha} \rtimes C_{(p^{n_\alpha}-1)(p-1)} \qq 
    \text{for} \q 1 \leq \alpha \leq k.
\] 
\end{nntheorem}

\begin{nntheorem}
Let $p=2$ and $n = 2^{k-1}m$ with $m$ odd. The group $\Sbb_n$,
respectively $\Dbb_n^\x$, has exactly $k$ maximal conjugacy
classes of finite subgroups. If $k \neq 2$, they are represented by 
\[
    G_\alpha 
    = C_{2^\alpha(2^{n_\alpha}-1)} \qq 
    \text{for} \q 1 \leq \alpha \leq k.
\]
If $k=2$, they are represented by $G_\alpha$ for $\alpha \neq 2$ and by
the unique maximal nonabelian conjugacy class
\[
    Q_8 \rtimes C_{3(2^m-1)} \iso T_{24} \x C_{2^m-1},
\]
the latter containing $G_2$ as a subclass.
\end{nntheorem}

The classification of the isomorphism classes of the finite subgroups of
$\Sbb_n$ has already been found by Hewett in \cite{hewett}; it is based
on a previous classification made by Amitsur in \cite{amitsur}. Our
approach is different: it has the advantage of being more direct,
exploiting the structure of $\Dbb_n^\x$ in terms of Witt vectors, and
lays the foundations for our study of the extended groups $\Gbb_n(u)$. A
further attempt by Hewett to extend his classification from isomorphism
classes to conjugacy classes can be found in \cite{hewett2}, but the
results turn out to be false (see remarks \ref{171} and \ref{172}). In
example \ref{049}, we provide an explicit family of counter examples in
the case $p>2$.

\medskip

In chapter 2, we present a theoretical framework for the classification
of the finite subgroups of $\Gbb_n(u) \iso \Dbb_n^\x/\lan pu \ran$. Most
of the work is done in $\Dbb_n^\x$ via a bijection (see proposition
\ref{074}) between the set of (conjugacy classes of) finite subgroups of
$\Gbb_n(u)$ and the set of (conjugacy classes of) subgroups of
$\Dbb_n^\x$ containing $\lan pu \ran$ as a subgroup of finite index. For
a finite subgroup $F$ of $\Gbb_n(u)$ for which $F \cap \Sbb_n$ has an
abelian $p$-Sylow subgroup (the remaining case of a quaternionic
$p$-Sylow is quite specific and is treated in chapter 4), we consider
its correspondent $\tilde{F}$ in $\Dbb_n^\x$ via the above bijection.
This group fits into a chain of successive extensions
\[
    \tilde{F_0} \subset \tilde{F_1} \subset \tilde{F_2} 
    \subset \tilde{F_3} = \tilde{F},
\]
where $F_0 = \lan F \cap S_n, Z_{p'}(F\cap\Sbb_n) \ran$ is cyclic for
$S_n$ the $p$-Sylow subgroup of $\Sbb_n$ and $Z_{p'}(F\cap\Sbb_n)$ the
$p'$-part of the center of $F \cap \Sbb_n$, and where
\begin{align*}
    \tilde{F_0} 
    &= F_0 \x \lan pu \ran,
    & \tilde{F_2} 
    &= \tilde{F} \cap C_{\Dbb_n^\x}(F_0) 
    = C_{\tilde{F}}(F_0), \\
    \tilde{F_1} 
    &= \tilde{F} \cap \Qbb_p(F_0)^\x,
    & \tilde{F_3} 
    &= \tilde{F} \cap N_{\Dbb_n^\x}(F_0) 
    = N_{\tilde{F}}(F_0).
\end{align*}
Referring to the above classification of the finite subgroups of
$\Sbb_n$, we note that $F_0$ is a subgroup of a cyclic group of order
$p^\alpha(p^{n_\alpha}-1)$ for an $\alpha \leq k$, and that the whole
(nonabelian) groups of type $G_\alpha$ when $p>2$ can only be recovered
in the last stage of the chain of extensions.  We then provide
cohomological criteria (see theorem \ref{180}, \ref{185}, \ref{190} and
\ref{193}) for the existence and uniqueness up to conjugation of each of
these successive group extensions. We are mostly interested in the cases
where each successive $\tilde{F_i}$ is maximal, that is, $F_0$ is a
maximal abelian finite subgroup of $\Sbb_n$, and for $1 \leq i \leq 3$,
each $\tilde{F_i}$ is a maximal subgroup of the respective group
$\Qbb_p(F_0)^\x$, $C_{\Dbb_n^\x}(F_0)$, $N_{\Dbb_n^\x}(F_0)$ containing
$\tilde{F_0}$ as a subgroup of finite index.

\medskip

In chapter 3, we treat the abelian cases which are covered up to the
second extension type $\tilde{F_2}$. Given $F_0$, we let
$\tilde{\Fcal}_u(\Qbb_p(F_0), \tilde{F_0}, r_1)$ denote the set of all
$\tilde{F_1}$'s which give rise to a finite subgroup $F_1$ of
$\Gbb_n(u)$ extending $F_0$ by a cyclic group of order $r_1$. Then:

\begin{nntheorem}
If $F_0$ is a maximal abelian finite subgroup of $\Sbb_n$, then
$\tilde{\Fcal}_u(\Qbb_p(F_0), \tilde{F_0}, r_1)$ is non-empty if and
only if
\[
    r_1 
    \q \text{divides} \q
    \begin{cases}
        1 & \text{if } p>2 \text{ with } \zeta_p \not\in F_0,\\
        p-1 & \text{if } p>2 \text{ with } \zeta_p \in F_0,\\
        1 & \text{if } p=2 
        \text{ with } \zeta_3 \not\in F_0 
        \text{ and } u \not\equiv \pm 1 \mod 8,
        \text{ or with } \zeta_4 \not\in F_0,\\
        2 & \text{if } p=2 
        \text{ with } \zeta_4 \in F_0
        \text{ and either } u \equiv \pm 1 \mod 8
        \text{ or } \zeta_{3} \in F_0.
    \end{cases}
\]
\end{nntheorem}

\noindent Furthermore, given $F_0 \subset F_1$, we let
$\tilde{\Fcal}_u(C_{\Dbb_n^\x}(F_0), \tilde{F_1}, r_2)$ denote the set
of all $\tilde{F_2}$'s which give rise to a finite subgroup $F_2$ of
$\Gbb_n(u)$ extending $F_1$ by a group of order $r_2$. Then:

\begin{nntheorem}
If $r_1$ is maximal such that $\tilde{\Fcal}_u(\Qbb_p(F_0), \tilde{F_0},
r_1) \neq \emptyset$ and if $\tilde{F_1}$ belongs to this set, then
$\tilde{\Fcal}_u(C_{\Dbb_n^\x}(F_0), \tilde{F_1}, r_2)$ is non-empty if
and only if $r_2$ divides $\frac{n}{[\Qbb_p(F_0):\Qbb_p]}$.
\end{nntheorem}

\noindent In the particular case where $F_0$ is a maximal abelian 
finite subgroup of $\Sbb_n$, we have $\tilde{F_1}=\tilde{F_2}$. 

\medskip

In chapter 4, we treat (nonabelian) finite extensions of $\tilde{F_2}$
in the case where $\Qbb_p(\tilde{F_2})$ is a maximal subfield in
$\Dbb_n$; any such field is of degree $n$ over $\Qbb_p$. We provide
necessary and sufficient conditions on $n$, $p$ and $u$ for the
existence of $\tilde{F}$'s such that $|\tilde{F}/\tilde{F_0}| = n$ and
$\tilde{F_1} = \tilde{F_2}$:

\begin{nntheorem}
Let $p > 2$, $n = (p-1)p^{k-1}m$ with $m$ prime to $p$, $u \in
\Zbb_p^\x$, $F_0 = C_{p^\alpha} \x C_{p^{n_\alpha}-1}$ be a maximal
abelian finite subgroup in $\Sbb_n$, $G = Gal(\Qbb_p(F_0)/\Qbb_p)$,
$G_{p'}$ be the $p'$-part of $G$, and let $\tilde{F_1} = \lan x_1 \ran
\x F_0 \subset \Qbb_p(F_0)^\x$ be maximal as a subgroup of
$\Qbb_p(F_0)^\x$ having $\tilde{F_0}$ as subgroup of finite index.
\begin{enumerate}
    \item For any $0 \leq \alpha \leq k$, there is an extension of
    $\tilde{F_1}$ by $G_{p'}$; this extension is unique up to
    conjugation.

    \item If $\alpha \leq 1$, there is an extension of $\tilde{F_1}$ by
    $G$; this maximal extension is unique up to conjugation.

    \item If $\alpha \geq 2$, there is an extension of $\tilde{F_1}$ by
    $G$ if and only if    
    \[
        \alpha = k
        \qq \text{and} \qq
        u \not\in \mu(\Zbb_p^\x) 
        \x \{ x \in \Zbb_p^\x\ |\ x \equiv 1 \mod (p^2) \},
    \]
    in which case this maximal extension is unique up to conjugation.
\end{enumerate}
\end{nntheorem}

\begin{nntheorem}
Let $p=2$, $n = 2^{k-1}m$ with $m$ odd, $u \in \Zbb_2^\x$, $F_0 =
C_{2^\alpha} \x C_{2^{n_\alpha}-1}$ be a maximal abelian finite
subgroup of $\Sbb_n$, $G = Gal(\Qbb_2(F_0)/\Qbb_2)$, $G_{2'}$ be the odd
part of $G$, and let $\tilde{F_1} = \lan x_1 \ran \x F_0 \subset
\Qbb_2(F_0)^\x$ be maximal as a subgroup of $\Qbb_2(F_0)^\x$ having
$\tilde{F_0}$ as subgroup of finite index. 
\begin{enumerate}
    \item For any $1 \leq \alpha \leq k$, there is an extension of
    $\tilde{F_1}$ by $G_{2'}$; this extension is unique up to
    conjugation.
    
    \item If $\alpha =1$, there is an extension of $\tilde{F_1}$ by $G$;
    the number of such extensions up to conjugation is
    \[
        \begin{cases}
            1 & \text{if } n \text{ is odd},\\
            2 & \text{if } n \text{ is even}.
        \end{cases}
    \]

    \item If $\alpha = 2$, there is an extension of $\tilde{F_1}$ by $G$
    if and only if $k=2$; the number of such extensions up to
    conjugation is
    \[
        \begin{cases}
            1 & \text{if } u \equiv \pm 1 \mod 8,\\
            2 & \text{if } u \not\equiv \pm 1 \mod 8.
        \end{cases}
    \]

    \item If $\alpha \geq 3$, there is no extension of $\tilde{F_1}$ by
    $G$.
\end{enumerate}
\end{nntheorem}

\noindent We then treat the specific remaining case where $F \cap
\Sbb_n$ has a quaternionic $p$-Sylow subgroup; this only occurs when
$p=2$ and $n \equiv 2 \mod 4$.

\begin{nntheorem}
Let $p=2$, $n=2m$ with $m$ odd, and $u \in \Zbb_2^\x$. A subgroup $G$
isomorphic to $T_{24} \x C_{2^m-1}$ in $\Sbb_n$ extends to a maximal
finite subgroup $F$ of order $n|G|=48m(2^m-1)$ in $\Gbb_n(u)$ if and
only if $u \equiv \pm 1 \mod 8$; this extension is unique up to
conjugation.
\end{nntheorem}

\noindent We end the chapter by explicitly analysing the case $n=2$,
where we obtain:

\begin{nntheorem}
Let $n=2$, $p=3$ and $u \in \Zbb_p^\x$. The conjugacy classes of maximal
finite subgroups of $\Gbb_2(u)$ are represented by
\[
    SD_{16}
    \qq \text{and} \qq
    \begin{cases}
        C_3 \rtimes Q_8 & \text{if } u \equiv 1 \mod 3,\\
        C_3 \rtimes D_8 & \text{if } u \equiv -1 \mod 3.
    \end{cases}
\]
\end{nntheorem}

\begin{nntheorem}
Let $n=2$, $p=2$ and $u \in \Zbb_2^\x$. The conjugacy classes of maximal
finite subgroups of $\Gbb_2(u)$ are represented by
\[
    \begin{cases}
        C_{6} \rtimes C_2,\ O_{48} 
          & \text{if } u \equiv 1 \mod 8,\\
        C_{3} \rtimes C_4,\ T_{24} \rtimes C_2 
          & \text{if } u \equiv -1 \mod 8,\\
        C_{3} \rtimes C_4,\ C_{6} \rtimes C_2,\ 
        D_8  \text{ and } T_{24}
          & \text{if } u \equiv 3 \mod 8,\\
        C_{3} \rtimes C_4,\ C_{6} \rtimes C_2,\ 
        Q_8 \text{ and } T_{24}
          & \text{if } u \equiv -3 \mod 8.
    \end{cases}
\]
\end{nntheorem}


\chapter{Finite subgroups of $\Sbb_n$} 
\label{036}

From now on, we will always consider $p$ a prime, $n$ a strictly
positive integer, and 
\[
    \Dbb_n := D(\Qbb_p, 1/n)
\]
the central division algebra of invariant $1/n$ over $\Qbb_p$. The
reader may refer to appendix \ref{333} and \ref{033} for the essential
background on division algebras. We identify $\Sbb_n$ as the group of
units $\Ocal_n^\x$ of the maximal order $\Ocal_n$ of $\Dbb_n$.

\vspace{5ex}

\section{The structure of $\Dbb_n$ and its finite subgroups} 
\label{037}

The structure of $\Dbb_n$ can be explicitly given by the following
construction; see appendix \ref{033} or appendix 2 of \cite{ravenel} for
more details.  Let $\Wbb_n = \Wbb(\Fbb_{p^n})$ be the ring of Witt
vectors on the finite field $\Fbb_{p^n}$ with $p^n$ elements.  Here
$\Wbb_n$ can be identified with the ring $\Zbb_p[\zeta_{p^n-1}]$ of
integers of the unramified extension of degree $n$ over $\Qbb_p$. It is
a complete local ring with maximal ideal $(p)$ and residue field
$\Fbb_{p^n}$ whose elements are written uniquely as
\[
    w = 
    \sum_{i\geq 0} w_ip^i 
    \qq \text{with} \q 
    w_i^{p^n} = w_i.
\]
The Frobenius automorphism $x \mapsto x^p \in Gal(\Fbb_{p^n}/\Fbb_p)$
can be extended to an automorphism $\sigma: w \mapsto w^\sigma$ of
$\Wbb_n$ generating $Gal(\Wbb_n/\Zbb_p)$ by setting
\[
    w^\sigma 
    = \sum_{i\geq 0} w_i^pp^i 
    \qq \text{for each} \q 
    w = \sum_{i\geq 0} w_ip^i\ \in\ \Wbb_n.
\]
We then add to $\Wbb_n$ a non-commutative element $S$ \label{303}
satisfying $S^n=p$ and $Sw=w^\sigma S$ for all $w \in \Wbb_n$; the
non-commutative ring we obtain in this way can be identified with 
\[
    \Ocal_n \iso \Wbb_n \lan S \ran / (S^n=p, Sw=w^\sigma S),
\]
and
\[
    \Dbb_n \iso \Ocal_n \ox_{\Zbb_p} \Qbb_p.
\] 

\medskip

The valuation map $v_{\Qbb_p}: \Qbb_p^\x \ra \Zbb$ satisfying $v(p)=1$
extends uniquely to a valuation $v = v_{\Dbb_n}$ on $\Dbb_n$, with value
group 
\[
    v(\Dbb_n^\x) = \frac{1}{n}\Zbb,
\]
in such a way that
\[
    v(S) = \frac{1}{n} 
    \qq \text{and} \qq 
    \Ocal_n = \{x \in \Dbb_n\ |\ v(x) \geq 0\}.
\]
Because $v(x^{-1}) = -v(x)$, we have
\[
    \Ocal_n^\x = \{x \in \Dbb_n\ |\ v(x) = 0\}.
\]

\begin{proposition} \label{016}
A finite subgroup of $\Dbb_n^\x$ is a subgroup of $\Ocal_n^\x$.
\end{proposition}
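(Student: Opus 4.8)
The plan is to exploit the valuation $v = v_{\Dbb_n}: \Dbb_n^\x \raa \frac{1}{n}\Zbb$ recalled just above, together with the characterisation $\Ocal_n^\x = \{x \in \Dbb_n\ |\ v(x) = 0\}$. The crucial point is that $v$ is multiplicative: being a valuation, it satisfies $v(xy) = v(x) + v(y)$ for all $x, y \in \Dbb_n^\x$, so its restriction to the multiplicative group $\Dbb_n^\x$ is a group homomorphism into the \emph{additive} group $\frac{1}{n}\Zbb$. Note this uses that $v$ is defined on all of $\Dbb_n^\x$, not merely on the center or on a maximal subfield; this is exactly how $v$ was constructed above.

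Now let $G$ be a finite subgroup of $\Dbb_n^\x$. I would show that $v$ vanishes identically on $G$. The target $\frac{1}{n}\Zbb$ is torsion-free, while every $g \in G$ has finite order: if $g^m = 1$ with $m \geq 1$, then applying the homomorphism $v$ gives $m\, v(g) = v(g^m) = v(1) = 0$, and torsion-freeness forces $v(g) = 0$. Equivalently, one can observe that $v(G)$ is a finite subgroup of the torsion-free abelian group $\frac{1}{n}\Zbb$, hence is trivial.

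Consequently $v(g) = 0$ for every $g \in G$, which by the description of $\Ocal_n^\x$ above means precisely that $g \in \Ocal_n^\x$. Therefore $G \subseteq \Ocal_n^\x$, as claimed. I do not expect any genuine obstacle here; the single step requiring care is the confirmation that $v$ is a bona fide group homomorphism on the whole of $\Dbb_n^\x$, but this is part of the definition of a valuation on a division algebra and is already built into the construction of $v$ recalled in this section.
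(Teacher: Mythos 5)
Your proof is correct and follows exactly the paper's own argument: the valuation $v$ is a homomorphism into the torsion-free group $\frac{1}{n}\Zbb$, so every element of finite order satisfies $0 = v(1) = m\,v(g)$, forcing $v(g) = 0$ and hence $g \in \Ocal_n^\x$. The paper's proof is a condensed version of the same reasoning, so there is nothing to add.
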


\begin{proof}
An element $\zeta \in \Dbb_n^\x$ of finite order $i \geq 1$ satisfies
\[
    0 = v(1) = iv(\zeta),
\]
and it follows that $v(\zeta) = 0$.
\end{proof}

As we will now see, the structure of $\Dbb_n$ given above greatly
reduces the possibilities of what form a finite subgroup of $\Ocal_n^\x$
can have. 

The element $S \in \Dbb_n^\x$ generates a two-sided maximal ideal
$\mathfrak{m}$ of $\Ocal_n$ with residue field $\Ocal_n/\mathfrak{m}
\iso \Fbb_{p^n}$. This maximal ideal satisfies
\[
    \mathfrak{m} = \{x \in \Dbb_n\ |\ v(x) > 0\}.
\]
The kernel of the group epimorphism $\Ocal_n^\x \ra \Fbb_{p^n}^\x$ which
results from this quotient is denoted $S_n$ \label{315}. We thus have a
group extension
\[
    1 \raa S_n \raa \Ocal_n^\x \raa \Fbb_{p^n}^\x \raa 1.
\]
The groups $\Ocal_n^\x$ and $S_n$ have natural profinite structures
induced by the filtration of subgroups 
\[
    S_n = U_1 \supset U_2 \supset U_3 \supset \ldots
\]
given by
\begin{align*}
    U_i := U_i(\Wbb_n^\x) 
    &= \{ x \in S_n\ |\ v(x-1) \geq \frac{i}{n} \}\\[1ex] 
    &= \{ x \in S_n\ |\ x \equiv 1 \mod S^i\},
    \qq \text{for } i \geq 1.
\end{align*}
The intersection of these groups is trivial and $S_n = \lim_i S_n/U_i$.
We also have canonical isomorphisms
\[
    U_i/U_{i+1} \iso \Fbb_{p^n}
    \qq \text{given by} \qq 
    1+aS^i \mapsto \bar{a} 
\]
for $a \in \Ocal_n$ and $\bar{a}$ the residue class of $a$ in
$\Ocal_n/\mathfrak{m} = \Fbb_{p^n}$. In particular, all quotients
$S_n/U_i$ are finite $p$-groups and $S_n$ is a profinite $p$-subgroup of
the profinite group $\Ocal_n^\x$. By uniqueness of the maximal ideal
$\mathfrak{m}$, we know that $S_n$ is the unique $p$-Sylow subgroup of
$\Ocal_n^\x$. Consequently:

\begin{proposition} \label{017}
All $p$-subgroups of $\Ocal_n^\x$, and only those, are subgroups of
$S_n$. \qed
\end{proposition}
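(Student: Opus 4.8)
The plan is to deduce both halves of the statement directly from the two properties of $S_n$ assembled in the paragraph preceding it: that $S_n = \lim_i S_n/U_i$ is a pro-$p$ group, each quotient $S_n/U_i$ being a finite $p$-group by the isomorphisms $U_i/U_{i+1} \iso \Fbb_{p^n}$, and that $S_n$ is the \emph{unique} $p$-Sylow subgroup of the profinite group $\Ocal_n^\x$. Since the substantive input, namely the uniqueness of the $p$-Sylow, is already in hand, the proposition should be recorded as an immediate consequence; I would present it as such rather than reprove anything.

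For the inclusion of every $p$-subgroup into $S_n$, I would invoke Sylow theory in the profinite setting: any $p$-subgroup of $\Ocal_n^\x$ is contained in some Sylow pro-$p$ subgroup, and since $S_n$ is the only one, it is normal and absorbs every $p$-subgroup of $\Ocal_n^\x$. In particular this covers the finite $p$-subgroups that are the real concern in the rest of the chapter, a finite $p$-group being trivially a pro-$p$ group and hence lying in the unique Sylow $S_n$.

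For the converse (``only those''), I would argue that every subgroup of $S_n$ is itself a $p$-group: a closed subgroup of the pro-$p$ group $S_n$ is again pro-$p$, so all of its finite quotients, and in particular any finite subgroup of $S_n$, are $p$-groups. I expect no genuine obstacle here; the only point worth stating cleanly is the choice of category (closed pro-$p$ subgroups versus abstract finite $p$-subgroups), after which both directions follow formally from the uniqueness of $S_n$ as a $p$-Sylow subgroup of $\Ocal_n^\x$.
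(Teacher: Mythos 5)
Your proposal is correct and matches the paper's intent exactly: the paper attaches \qed to the statement precisely because it is an immediate consequence of the preceding paragraph, where $S_n$ is shown to be pro-$p$ and, by uniqueness of the maximal ideal $\mathfrak{m}$, the unique $p$-Sylow subgroup of the profinite group $\Ocal_n^\x$. Your spelling out of the two directions (uniqueness of the Sylow absorbing every $p$-subgroup; subgroups of a pro-$p$ group, in particular finite ones, being $p$-groups) is exactly the argument the paper leaves implicit.
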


Throughout the text we let $\phi$ \label{337} denote Euler's totient
function, which for each positive integer $i$ associates the number
$\phi(i)$ of integers $1 \leq j \leq i$ for which $(i; j) = 1$. 

\begin{proposition} \label{021}
The group $S_n$, respectively $\Ocal_n^\x$, has elements of order $p^k$
for $k \geq 1$ if and only if $\phi(p^k)=(p-1)p^{k-1}$ divides $n$.
\end{proposition}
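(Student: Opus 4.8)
The plan is to reduce everything to the existence of a primitive $p^k$-th root of unity inside $\Dbb_n$, and then to control the degree of the cyclotomic field it generates. First I would observe, using proposition \ref{017}, that every $p$-element of $\Ocal_n^\x$ lies in the unique $p$-Sylow $S_n$; hence $\Ocal_n^\x$ has an element of order $p^k$ if and only if $S_n$ does, and it suffices to treat $S_n$. An element $\zeta \in S_n$ of order $p^k$ is exactly a primitive $p^k$-th root of unity in $\Dbb_n^\x$, and conversely any such root of unity lies in $\Ocal_n^\x$ by proposition \ref{016} and then in $S_n$ by proposition \ref{017}. Thus the statement becomes: $\Dbb_n$ contains a primitive $p^k$-th root of unity if and only if $\phi(p^k) = (p-1)p^{k-1}$ divides $n$.

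For the forward direction, suppose $\zeta = \zeta_{p^k} \in \Dbb_n$. Then $\Qbb_p(\zeta_{p^k})$ is a subfield of $\Dbb_n$ containing the center $\Qbb_p$. Since $\Dbb_n$ is a central division algebra of degree $n$, every subfield is contained in a maximal subfield, and maximal subfields all have degree $n$ over $\Qbb_p$; consequently $[\Qbb_p(\zeta_{p^k}):\Qbb_p]$ divides $n$. The $p$-power cyclotomic extension $\Qbb_p(\zeta_{p^k})/\Qbb_p$ is totally ramified of degree $\phi(p^k)=(p-1)p^{k-1}$, so $(p-1)p^{k-1}$ divides $n$, as required.

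For the converse, suppose $(p-1)p^{k-1}$ divides $n$. Set $L = \Qbb_p(\zeta_{p^k})$, a field extension of $\Qbb_p$ of degree $\phi(p^k)=(p-1)p^{k-1}$ dividing $n$. By the embedding criterion for subfields of $\Dbb_n$ recalled in appendix \ref{033} --- any extension of $\Qbb_p$ whose degree divides $n$ embeds as a $\Qbb_p$-subalgebra of $\Dbb_n$ --- the field $L$ embeds into $\Dbb_n$. The image of $\zeta_{p^k}$ is then an element of order $p^k$ in $\Dbb_n^\x$, hence in $\Ocal_n^\x$ by proposition \ref{016} and in $S_n$ by proposition \ref{017}.

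The only substantial input is the embedding criterion used in the converse: that a field extension of $\Qbb_p$ embeds into $\Dbb_n$ as soon as its degree divides $n$. This is the step where the local structure theory of $\Dbb_n$ is essential, and I would handle it by appealing to the results on division algebras over local fields collected in the appendix rather than reproving it here. The forward direction, by contrast, needs only the elementary fact that subfields of a degree-$n$ division algebra have degree dividing $n$, together with the classical computation that $\Qbb_p(\zeta_{p^k})$ has degree $\phi(p^k)$ over $\Qbb_p$.
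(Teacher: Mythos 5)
Your proof is correct and follows essentially the same route as the paper: the paper's proof consists precisely of citing the embedding theorem \ref{013} (both directions of which you use — subfields have degree dividing $n$, and any extension of degree dividing $n$ embeds) together with proposition \ref{342} giving $[\Qbb_p(\zeta_{p^k}):\Qbb_p] = \phi(p^k)$. You have simply written out the "straightforward consequence" in full, including the reduction to $S_n$ via propositions \ref{016} and \ref{017}, which the paper leaves implicit.
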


\begin{proof}
This is a straightforward consequence of the embedding theorem
\ref{013}, together with proposition \ref{342} which states that
\[
    [\Qbb_p(\zeta_{p^k}):\Qbb_p] 
    = \phi(p^k) 
    = (p-1)p^{k-1},
\]
for $\zeta_{p^k}$ a primitive $p^k$-th root of unity.
\end{proof}

\begin{proposition} \label{018}
Every abelian finite subgroup of $\Dbb_n^\x$ is cyclic.
\end{proposition}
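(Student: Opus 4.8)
The plan is to reduce the statement to the classical fact that every finite subgroup of the multiplicative group of a field is cyclic. Given an abelian finite subgroup $A \subset \Dbb_n^\x$, I would first form the $\Qbb_p$-subalgebra $\Qbb_p[A] \subset \Dbb_n$ generated by $A$, that is, the set of $\Qbb_p$-linear combinations of products of elements of $A$. Since the elements of $A$ commute pairwise by hypothesis, this subalgebra is commutative; and since it sits inside $\Dbb_n$, which is finite-dimensional over $\Qbb_p$, it is itself finite-dimensional over $\Qbb_p$.

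Next I would observe that $\Qbb_p[A]$ has no zero divisors, being a subring of the division algebra $\Dbb_n$, in which $xy = 0$ forces $x = 0$ or $y = 0$. Thus $L := \Qbb_p[A]$ is a commutative integral domain that is finite-dimensional over the field $\Qbb_p$. A standard argument then shows that any such domain is in fact a field: for each nonzero $x \in L$, multiplication by $x$ is an injective $\Qbb_p$-linear endomorphism of the finite-dimensional space $L$, hence surjective, so $x$ admits an inverse in $L$. Therefore $L$ is a field, and in particular a commutative subfield of $\Dbb_n$ containing $A$.

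Finally, $A$ is a finite subgroup of the multiplicative group $L^\x$ of the field $L$, and by the classical theorem that every finite subgroup of the multiplicative group of a field is cyclic, $A$ is cyclic, which is the assertion. The argument is short and presents no genuine obstacle; the only points requiring a word of justification are the passage from \emph{finite-dimensional commutative domain} to \emph{field} and the invocation of the cyclicity of finite multiplicative subgroups of a field, both of which are standard and could be cited rather than reproved.
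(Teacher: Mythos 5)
Your proof is correct, and its skeleton is the same as the paper's: both arguments embed the abelian finite subgroup into the multiplicative group of a commutative subfield of $\Dbb_n$. The difference lies in the two steps surrounding that reduction. First, you explicitly justify that $\Qbb_p[A]$ is a field via the standard finite-dimensional-domain argument (multiplication by a nonzero element is injective, hence surjective), whereas the paper simply asserts that $G$ lies in ``the local commutative field $F = \Qbb_p(G)$'' --- a fact it treats as understood from its notation \ref{170} and the embedding theorem. Second, and more substantively, your concluding citation is the classical theorem that \emph{every} finite subgroup of the multiplicative group of \emph{any} field is cyclic, while the paper instead invokes its proposition \ref{341} on the structure of units of a local field of characteristic zero, $L^\x \iso \Zbb \x \mu(L) \x \Zbb_p^{[L:\Qbb_p]}$, to place $G$ inside the cyclic group $\mu(F)$. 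Your route is more elementary and self-contained (it uses nothing about local fields), which is a genuine simplification for this isolated statement; the paper's route has the advantage of running through a structure theorem it needs repeatedly elsewhere (e.g.\ in corollaries \ref{183} and \ref{188}), so no new ingredient is introduced. Both are complete proofs.
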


\begin{proof}
If $G$ is a finite multiplicative abelian subgroup of a division algebra
of type $\Dbb_n$, then it lies within the local commutative field $F =
\Qbb_p(G)$ in $\Dbb_n$ and is a subgroup of $F^\x$. Because $G$ is
finite, proposition \ref{341} implies that $G$ is a subgroup of the
cyclic group $\mu(F)$.
\end{proof}

In the text, we are lead to use group cohomology $H^\ast(G, M)$
extensively for some group $G$ and $G$-module $M$. Most often, we will
exploit the tools of low dimensional cohomology to study group
extensions. A good introduction to the subject is provided in
\cite{brown} chapter IV.  In particular, we will invoke the following
classic results; see \cite{brown} section IV.4 for proposition \ref{019}
(with exercise 4), and see \cite{brown} chapter IV corollary 3.13 and
the following remark for proposition \ref{020}.

\begin{proposition} \label{019}
If $G$ is a finite $p$-group whose abelian finite subgroups are cyclic,
then $G$ is either cyclic or a generalized quaternion group
\[
    G 
    \iso Q_{2^k} 
    = \lan x,y\ |\ x^{2^{k-1}}=1,\ 
    yxy^{-1}=x^{-1},\ x^{2^{k-2}}=y^2 \ran,
\]
this last possibility being valid only when $p=2$.
\end{proposition}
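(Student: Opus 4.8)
The plan is to reduce the statement to the assertion that $G$ has a \emph{unique} subgroup of order $p$, and then to classify all finite $p$-groups with this property. First I would record the reduction. If $G$ had two distinct subgroups of order $p$, then since $G$ is a nontrivial $p$-group its center is nontrivial and contains a central element $z$ of order $p$; as there is then a subgroup $\langle a\rangle$ of order $p$ with $a\notin\langle z\rangle$, the subgroup $\langle a,z\rangle$ is abelian (because $z$ is central) and isomorphic to $C_p\times C_p$, contradicting the hypothesis that every abelian finite subgroup of $G$ is cyclic. Hence the hypothesis forces $G$ to have a unique subgroup of order $p$; conversely every abelian subgroup of $G$ then also has a unique subgroup of order $p$ and is therefore cyclic, so I may use both formulations freely.

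For the classification I would argue by induction on $|G|$, assuming $G$ non-cyclic and aiming to show that $p=2$ and $G\iso Q_{2^k}$. Choose a maximal abelian normal subgroup $A$ of $G$; being abelian it is cyclic, say $A=\langle a\rangle$ of order $p^m$, and the standard fact that a maximal abelian normal subgroup of a $p$-group is self-centralizing gives $C_G(A)=A$. Conjugation then embeds $G/A=G/C_G(A)$ into $\mathrm{Aut}(A)\iso(\Zbb/p^m)^\times$. Picking $g\in G$ whose image has order $p$ in $G/A$, write $gag^{-1}=a^t$ for the corresponding unit $t$ and note $g^p\in A$; the subgroup $\langle A,g\rangle$ is controlled by $t$ and by the value $g^p=a^c$, and the crux is to compute the orders of the elements $ga^i$.

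The decisive computation is the expansion $(ga^i)^p=g^p\,a^{\,i(1+t+\cdots+t^{p-1})}$, together with the constraint $c(t-1)\equiv 0\bmod p^m$ coming from the fact that $g^p$ is centralized by $g$. For $p$ odd, $\mathrm{Aut}(C_{p^m})$ has cyclic $p$-part, so $t\equiv 1\bmod p^{m-1}$; the exponent sum reduces to $\equiv p\bmod p^m$ and the constraint forces $p\mid c$, so one can solve for $i$ with $(ga^i)^p=1$. This produces an element of order $p$ outside $A$, hence a subgroup of order $p$ distinct from the unique one inside $A$ — a contradiction unless $G=A$. Thus for $p$ odd no non-cyclic $G$ survives and $G$ is cyclic.

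For $p=2$ the obstacle is genuine, and this is the step I expect to be hardest. Here $\mathrm{Aut}(C_{2^m})$ is non-cyclic for $m\geq 3$, and the involutory units are $t\in\{-1,\;1+2^{m-1},\;-1+2^{m-1}\}$. I would run the same $(ga^i)^2$ computation in each case: the two cases $t=1+2^{m-1}$ and $t=-1+2^{m-1}$ again yield a second involution outside $A$, a contradiction, while $t=-1$ forces $c=2^{m-1}$, i.e. $g^2=a^{2^{m-1}}$ equal to the unique involution, which is exactly the generalized quaternion relation. A final subtlety, still part of this case, is to show $|G/A|=2$ so that $G=\langle A,g\rangle\iso Q_{2^{m+1}}$: since every involution of $G/A$ must act as $-1$, the group $G/A$ has a unique involution and is cyclic, and because $-1$ is not a square in $(\Zbb/2^m)^\times$ it cannot be the (necessarily square) unique involution of a cyclic $2$-group of order $\geq 4$, forcing $|G/A|=2$. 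The small cases $m\le 2$ are checked directly and give the base $Q_8$. This $p=2$ analysis, where the naive expansion of $(ga^i)^p$ breaks down and the three involutory automorphisms must be separated, is precisely where the generalized quaternion groups enter and where the argument truly uses $p=2$.
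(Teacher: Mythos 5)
Your proof is correct, and it is in fact more than the paper itself offers: the paper does not prove this proposition but quotes it as a classical result with a citation to Brown's \emph{Cohomology of groups} (section IV.4, with an exercise), where it arises in the context of groups with periodic cohomology. What you give is the classical elementary argument (Burnside/Zassenhaus): the reduction to ``$G$ has a unique subgroup of order $p$'' via a central element of order $p$, the choice of a maximal abelian normal subgroup $A=\langle a\rangle$ with $C_G(A)=A$, the embedding $G/A \hookrightarrow Aut(A)\cong(\Zbb/p^m)^\x$, and the computation $(ga^i)^p=g^pa^{i(1+t+\cdots+t^{p-1})}$ with the constraint $c(t-1)\equiv 0 \bmod p^m$. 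All steps check out: for $p$ odd, every order-$p$ unit satisfies $t\equiv 1 \bmod p^{m-1}$, the exponent sum is $\equiv p \bmod p^m$, $p\mid c$ follows, and the resulting element of order $p$ outside $A$ gives the contradiction; for $p=2$, the involutions $t=\pm 1+2^{m-1}$ are correctly eliminated, $t=-1$ yields the quaternion relation, and your observation that the unique involution $-1$ of the cyclic group $G/A$ is not a square in $(\Zbb/2^m)^\x$ correctly forces $|G/A|=2$. Two cosmetic points, neither a gap: in the case $t=-1$ the constraint only gives $c\in\{0,2^{m-1}\}$, and $c=0$ must be excluded by the same ``second involution outside $A$'' observation you use in the neighbouring cases; and you announce an induction on $|G|$ but never invoke an inductive hypothesis — your argument is in fact direct. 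Compared with the cohomological route behind the paper's citation, your proof is self-contained and elementary, which suits the paper's purely group-theoretic use of the statement.
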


\begin{proposition}[Schur-Zassenhaus] \label{020}
If $G$ is a finite group of order $mn$ with $m$ prime to $n$ containing
a normal subgroup $N$ of order $m$, then $G$ has a subgroup of order $n$
and any two such subgroups are conjugate by an element in $G$.
\end{proposition}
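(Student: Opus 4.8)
The plan is to prove the two assertions separately --- first the existence of a subgroup of order $n$ (a complement of $N$, since such a subgroup must meet $N$ trivially and map isomorphically onto the quotient), and then the conjugacy of any two such complements --- in each case reducing to the situation where $N$ is abelian and invoking the vanishing of a low-dimensional cohomology group. Write $Q = G/N$, so $|Q| = n$ and we study the extension $1 \raa N \raa G \raa Q \raa 1$. The clean conceptual core is the following: for $i \geq 1$ the group $H^i(Q, N)$ is annihilated by $|Q| = n$ (because $\mathrm{cor} \circ \mathrm{res}$ from the trivial subgroup is multiplication by the index $[Q:1] = n$, while $H^i(1, N) = 0$) and is simultaneously annihilated by $|N| = m$ (since every element of an abelian $N$ is killed by $m$, and multiplication by $m$ on $N$ induces multiplication by $m$ on $H^i(Q, N)$). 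As $\gcd(m, n) = 1$, both conclusions together force $H^i(Q, N) = 0$ for all $i \geq 1$.

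For existence I would induct on $|G|$. When $N$ is abelian it is a $Q$-module and the obstruction to splitting the extension lives in $H^2(Q, N)$, which vanishes by the argument above, so a complement of order $n$ exists. To reduce the general case to the abelian one, I would fix a prime $p \mid m$ and a Sylow $p$-subgroup $P$ of $N$. If $P$ is not normal in $N$, the Frattini argument gives $G = N \cdot N_G(P)$ with $N_G(P) \subsetneq G$, and $N_G(P)$ carries a normal subgroup $N_N(P)$ of order dividing $m$ with quotient isomorphic to $Q$; induction produces a complement inside $N_G(P) \subseteq G$. If instead $P \trianglelefteq N$, then $Z = Z(P) \neq 1$ is characteristic in $N$, hence normal in $G$, and passing to $G/Z$ yields a strictly smaller coprime extension to which induction applies; lifting the resulting complement back through the abelian kernel $Z$ finishes this branch.

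For conjugacy I would again induct on $|G|$. In the abelian case the complements of $N$ form, up to conjugation by $N$, a torsor under $H^1(Q, N)$, which vanishes by the same coprimality argument, so all complements are $N$-conjugate and a fortiori $G$-conjugate. To reduce to the abelian case I would pass to the commutator subgroup $N' = [N,N]$: when $N$ is solvable and nonabelian this is a proper nontrivial characteristic subgroup, hence normal in $G$. In $G/N'$ the images of two given complements $H_1, H_2$ are complements of the abelian group $N/N'$ and are therefore conjugate; after replacing $H_1$ by a suitable conjugate we may assume $H_1 N' = H_2 N' =: M$, and then $H_1, H_2$ are complements of $N'$ inside the strictly smaller group $M = N' \rtimes H_i$, where induction applies.

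The hard part will be the reduction to the abelian case in the conjugacy statement, which genuinely requires solvability of $N$ (or of $Q$) and not merely coprimality. This is supplied by the observation that $\gcd(m,n) = 1$ forces one of $m, n$ to be odd, so one of $N$, $Q$ has odd order and is solvable by the Feit--Thompson theorem; when $N$ fails to be solvable one runs the dual reduction through a minimal normal subgroup of the (then solvable) quotient $Q$. Everything else --- the Frattini bookkeeping and the lifting through abelian kernels --- is routine once the cohomological vanishing is in hand, so I expect the solvability input to be the only genuinely delicate ingredient.
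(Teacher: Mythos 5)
Your proof is correct, but be aware that the paper does not prove this proposition at all: it is stated as a classical result with an explicit pointer to \cite{brown}, chapter IV, corollary 3.13 and the remark following it. The content of that citation is precisely your cohomological core --- the restriction--corestriction argument showing that $H^i(Q,N)$ is annihilated both by $n=|Q|$ and by $m=|N|$, hence vanishes when $\gcd(m,n)=1$ --- together with the reduction of the nonabelian case to the abelian one. So your existence half (Frattini step on a Sylow $p$-subgroup $P$ of $N$, then passage to $Z(P)$ and lifting through the abelian kernel via $H^2=0$) and your conjugacy half ($H^1(Q,N)=0$ for abelian $N$, the commutator reduction $N'=[N,N]$ with induction inside $M=N'H_i$, and the Feit--Thompson dichotomy when neither $N$ nor $Q$ is obviously solvable) reconstruct in full what the paper outsources; all of these steps check out. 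Two remarks. First, a small imprecision: $Z(P)$ is characteristic in $P$, and $P$ (being the unique Sylow $p$-subgroup in that branch) is characteristic in $N$, whence $Z(P)\trianglelefteq G$; your phrasing ``$Z(P)$ is characteristic in $N$'' should be routed through this chain, though the conclusion is unaffected. Second, it is worth noting that in every application the paper makes of this proposition (e.g.\ lemma \ref{167} and theorems \ref{024} and \ref{168}) the coprime normal subgroup is a finite $p$-group, hence solvable, so the solvable-kernel conjugacy argument suffices and Feit--Thompson --- by far the heaviest ingredient in your outline, and the one place where you only gesture at the details (Sylow conjugation of $H_i\cap L$ for $L/N$ a minimal normal elementary abelian subgroup of $Q$, followed by a Frattini step into $N_G(H_1\cap L)$) --- is never actually needed for the paper's purposes.
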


It follows that every finite subgroup $G$ of $\Dbb_n^\x$ is contained in
$\Sbb_n$ and determines a split extension
\[
    1 \raa P \raa G \raa C \raa 1,
\]
where $P := G \cap S_n$ is a finite normal $p$-subgroup which is the
$p$-Sylow subgroup of $G$, and $C := G/P$ is a cyclic group of order
prime to $p$ which embeds into $\Fbb_{p^n}^\x$ via the reduction
homomorphism. Moreover, $P$ is either cyclic or a generalized quaternion
group if $p=2$. If $P$ is cyclic of order $p^\alpha$ with $\alpha \geq
1$, we know that $n$ is a multiple of $\phi(p^\alpha) =
(p-1)p^{\alpha-1}$.

\begin{proposition} \label{022}
If $n$ is odd or is not divisible by $(p-1)$, then 
\[
    \begin{cases}
    C_{p^n-1} \iso \Fbb_{p^n}^\x 
    & \text{ if } p>2,\\
    C_{2(p^n-1)} \iso \Fbb_{p^n}^\x\!\x\!\{\pm 1\} 
    & \text{ if } p=2,
    \end{cases}
\]
represents the only isomorphic class of maximal finite subgroups of
$\Ocal_n^\x$.
\end{proposition}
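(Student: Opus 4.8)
The plan is to exploit the split extension $1 \raa P \raa G \raa C \raa 1$ established just above the statement, where $P = G \cap S_n$ is the $p$-Sylow subgroup of a finite subgroup $G$ (cyclic, or generalized quaternion when $p=2$, by Proposition \ref{019}) and $C$ is cyclic of order prime to $p$. First I would show that the hypothesis forces $P$ to be essentially trivial. For $p>2$: if $n$ is odd then $p-1$ is even and cannot divide $n$, while if $(p-1)\nmid n$ this holds outright; either way Proposition \ref{021} gives no element of order $p$, so $P=1$ and $G=C$ is cyclic. For $p=2$ the condition $(p-1)\mid n$ is automatic, so the hypothesis means $n$ is odd; then $\phi(4)=2\nmid n$, and Proposition \ref{021} forbids elements of order $4$. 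This rules out the generalized quaternion case and forces $P$ to be cyclic of exponent at most $2$; since $x^2=1$ has only the solutions $\pm 1$ in a division ring, $P\in\{1,\{\pm1\}\}$, and as $\pm1$ is central the extension is a direct product, so $G=P\x C$ is again cyclic.

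Having reduced to the cyclic case, I would bound the order. A finite cyclic subgroup is generated by a root of unity $\zeta_m$, whose prime-to-$p$ part $\zeta_{m'}$ generates an unramified subfield $\Qbb_p(\zeta_{m'})$ of degree $d=\mathrm{ord}_{m'}(p)$. By the embedding theorem \ref{013}, such a field sits inside $\Dbb_n$ if and only if $d\mid n$, equivalently $m'\mid p^n-1$; hence the prime-to-$p$ part of $G$ has order dividing $p^n-1$. This bound is attained by the Teichmüller copy $\mu_{p^n-1}=\Fbb_{p^n}^\x\subset\Wbb_n^\x$, giving the finite subgroup $C_{p^n-1}$ when $p>2$, and, after adjoining the central element $-1$, the group $\lan\Fbb_{2^n}^\x,-1\ran=\Fbb_{2^n}^\x\x\{\pm1\}\iso C_{2(p^n-1)}$ when $p=2$. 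Since every finite subgroup has order dividing $p^n-1$ (respectively $2(p^n-1)$), these groups are indeed maximal and no larger finite subgroup can exist.

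Finally, to see that every maximal finite subgroup is \emph{isomorphic} to the stated one — not merely that the stated one is maximal — I would show each finite $G$ is conjugate into the standard torus. Its prime-to-$p$ part generates an unramified subfield of degree $d\mid n$, and by Skolem--Noether (conjugacy of isomorphic subfields of $\Dbb_n$, appendix \ref{033}) this is conjugate into the standard maximal unramified subfield $\Wbb_n\ox_{\Zbb_p}\Qbb_p$, carrying the prime-to-$p$ part into $\mu_{p^n-1}=\Fbb_{p^n}^\x$ and, together with $\pm1$ when $p=2$, landing $G$ in a conjugate of the stated group. A maximal $G$ must therefore equal such a conjugate, hence be isomorphic to $C_{p^n-1}$ (respectively $C_{2(p^n-1)}$). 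I expect this last conjugation step to be the main obstacle: ruling out a cyclic subgroup whose order properly divides $p^n-1$ as maximal requires genuinely embedding it into the standard torus, and the conjugacy of unramified subfields (Skolem--Noether) is the essential input there, whereas the reductions of the first two paragraphs are routine consequences of Propositions \ref{021}, \ref{018} and \ref{019}.
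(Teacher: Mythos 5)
Your proof is correct and follows essentially the same route as the paper's: Proposition \ref{021} (via the hypothesis forcing $(p-1)\nmid n$, resp.\ $4\nmid 2n$) trivializes the $p$-Sylow subgroup, and the Skolem--Noether theorem \ref{010} conjugates everything into the standard unramified torus $\mu_{p^n-1}\subset\Wbb_n^\x$. The paper's proof is just a two-line version of yours, leaving implicit the details you spell out (the bound $m'\mid p^n-1$ from the embedding theorem, the Teichm\"uller realization, and the final conjugation step).
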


\begin{proof}
Under the given assumptions, proposition \ref{021} implies that the
$p$-Sylow subgroup $P$ of a maximal finite subgroup $G$ of $\Dbb_n^\x$
is trivial if $p$ is odd, and is $\{\pm 1\}$ if $p=2$. The result then
follows from the Skolem-Noether theorem \ref{010}.
\end{proof}

By proposition \ref{022}, only those cases where $n$ is even and
divisible by $p-1$ remain to be studied. From now on, we will adopt the
following notations.

\begin{notation} \label{131}
Fix a prime $p$ and $n$ a multiple of $p-1$. Then we define integers $k$
and $m$ to satisfy
\[
    n = (p-1)p^{k-1}m
    \qq \text{with} \q (m;p)=1,
\]
and for $0 \leq \alpha \leq k$ we set
\[
    n_\alpha 
    := \frac{n}{\phi(p^\alpha)} =
    \begin{cases}
        n & \text{if } \alpha = 0,\\
        p^{k-\alpha}m & \text{if } \alpha > 0.
    \end{cases}
\]
\end{notation}

\begin{notation} \label{170}
For a finite subgroup $G \subset \Dbb_n^\x$ and a commutative ring $R$
extending $\Zbb_p$ in $\Dbb_n$, respectively a commutative field
extending $\Qbb_p$ in $\Dbb_n$, we denote by
\[
    R(G) = \{\sum_{g \in G} x_g g\ |\ x_g \in R\}
\]
the $R$-subalgebra of $\Dbb_n$ generated by $G$. 

For example if $R=\Qbb_p$, $G$ is a finite cyclic group and $\zeta$ is a
generator of $G$, then $\Qbb_p(G) = \Qbb_p(\zeta)$ is the cyclotomic
field generated by $\zeta$ over $\Qbb_p$. 

We note that $R(G)$ is not in general isomorphic to the group ring
$R[G]$, although there is a unique surjective homomorphism of
$R$-algebras from $R[G]$ to $R(G)$ extending the embedding of $G$ (seen
as abstract group) into $\Dbb_n^\x$.
\end{notation}

\section{Finite subgroups of $\Dbb_n^\x$ with cyclic $p$-Sylow} 
\label{155}

Let $n=(p-1)p^{k-1}m$ with $m$ prime to $p$ as in notation \ref{131}.
If $G$ is a finite subgroup of $\Dbb_n^\x$, it is then a subgroup of
$\Ocal_n^\x$ which determines an extension
\[
    1 \raa P \raa G \raa C \raa 1,
\]
and whose $p$-Sylow subgroup $P = G \cap S_n$ is either cyclic of order
$p^\alpha$ for $0 \leq \alpha \leq k$, or a generalized quaternion
group. The latter case only occurs when $p=2$; it is studied in section
\ref{156}.

For now, we fix an integer $1 \leq \alpha \leq k$ and assume that $P$ is
cyclic of order $p^\alpha$. We know from proposition \ref{342} that
$\Qbb_p(P)$ is a totally ramified extension of degree $\phi(p^\alpha)$
over $\Qbb_p$. As $P$ is abelian and normal in $G$, there are inclusions
of subgroups 
\[
    P \subset C_G(P) \subset N_G(P) = G,
\]
and the group $C = G/P$ injects into $\Fbb_{p^n}^\x$. The following
result establishes a stronger condition on $C$.

\begin{proposition} \label{157}
The group $C_{G}(P)/P$ injects into $\Fbb_{p^{n_\alpha}}^\x$ via the
reduction homomorphism, and $N_{G}(P)/C_{G}(P)$ identifies canonically
with a subgroup of the $p'$-part of $Aut(P)$.
\end{proposition}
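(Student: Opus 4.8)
The plan is to handle the two assertions separately, deriving the first from the structure theory of $\Dbb_n$ and the second from an elementary $p$-group argument.

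For the first assertion, I would begin by observing that since $P$ is cyclic (proposition \ref{018}) it generates a commutative subfield $L = \Qbb_p(P)$ of $\Dbb_n$, which by proposition \ref{342} is totally ramified of degree $\phi(p^\alpha)$ over $\Qbb_p$; in particular $L$ is a local field with residue field $\Fbb_p$. Because $P$ generates $L$ as a $\Qbb_p$-algebra, an element of $\Dbb_n^\x$ centralizes $P$ exactly when it centralizes $L$, so $C_{\Dbb_n^\x}(P) = D'^\x$ where $D' := C_{\Dbb_n}(L)$. The key structural input, which I would take from the appendix on division algebras (\ref{033}), is the centralizer theorem: $D'$ is a central division algebra over $L$ with $\dim_L D' = (n/\phi(p^\alpha))^2 = n_\alpha^2$ and invariant $\phi(p^\alpha)\cdot\tfrac1n = \tfrac{1}{n_\alpha} \bmod 1$, hence of degree exactly $n_\alpha$. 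Consequently its maximal order $\Ocal_{D'} = \Ocal_n \cap D'$ has residue field equal to the unramified extension of degree $n_\alpha$ of $\Fbb_p$, namely $\Fbb_{p^{n_\alpha}}$.

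Next I would verify that the two reduction maps are compatible. The valuation $v$ restricts from $\Dbb_n$ to $D'$, so $\mathfrak{m}_{D'} = \mathfrak{m} \cap D'$ and the inclusion $\Ocal_{D'} \subset \Ocal_n$ induces a field embedding $\Fbb_{p^{n_\alpha}} = \Ocal_{D'}/\mathfrak{m}_{D'} \hookrightarrow \Ocal_n/\mathfrak{m} = \Fbb_{p^n}$, which is forced to be the canonical inclusion of finite fields since $n_\alpha \mid n$. This makes the reduction homomorphism $\Ocal_n^\x \ra \Fbb_{p^n}^\x$ agree on $\Ocal_{D'}^\x$ with the reduction $\Ocal_{D'}^\x \ra \Fbb_{p^{n_\alpha}}^\x$ followed by this inclusion. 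Since $C_G(P)$ is a finite subgroup of $D'^\x$, hence of $\Ocal_{D'}^\x$ by proposition \ref{016}, its image under reduction lies in $\Fbb_{p^{n_\alpha}}^\x$. Finally, the kernel of reduction restricted to $C_G(P)$ is $C_G(P) \cap S_n$; since $P = G \cap S_n$ is a $p$-group contained in $C_G(P)$, one has $P \subset C_G(P) \cap S_n \subset G \cap S_n = P$, so this kernel is exactly $P$. Therefore reduction induces the desired injection $C_G(P)/P \hookrightarrow \Fbb_{p^{n_\alpha}}^\x$.

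For the second assertion, I would note that $P$ is normal in $G$ (it is the unique $p$-Sylow subgroup $G \cap S_n$, by proposition \ref{017}), so $N_G(P) = G$ and conjugation yields a homomorphism $G \ra Aut(P)$ with kernel exactly $C_G(P)$; thus $N_G(P)/C_G(P) \hookrightarrow Aut(P) \iso (\Zbb/p^\alpha)^\x$. Since $P$ is abelian it centralizes itself, so $P \subset C_G(P)$ and $G/C_G(P)$ is a quotient of $G/P \iso C$, which has order prime to $p$. Hence the image of $G$ in the abelian group $Aut(P)$ consists of elements of order prime to $p$ and therefore lands in its unique $p'$-Hall subgroup, which is the canonical identification of $N_G(P)/C_G(P)$ with a subgroup of the $p'$-part of $Aut(P)$. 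I expect the main obstacle to be the first assertion, specifically pinning down that the residue field of $\Ocal_{D'}$ is $\Fbb_{p^{n_\alpha}}$ and that its embedding into $\Fbb_{p^n}$ is compatible with the reduction maps; this is where the invariant computation for the centralizer and the structure of division algebras over local fields are essential. The second assertion is then a short, purely group-theoretic consequence of $P$ being abelian and of index prime to $p$.
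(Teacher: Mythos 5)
Your proof is correct and takes essentially the same route as the paper: both identify $C_{\Dbb_n}(\Qbb_p(P))$ via the centralizer theorem as a central division algebra of dimension $n_\alpha^2$ over the totally ramified field $\Qbb_p(P)$, with residue field $\Fbb_{p^{n_\alpha}}$, and both deduce the second assertion from $P \subset C_G(P)$ and $G = N_G(P)$, which force $N_G(P)/C_G(P)$ to embed in $C$ and hence have order prime to $p$. Your extra verifications --- the invariant computation, the compatibility of the two reduction homomorphisms, and the explicit identification of the kernel as $C_G(P) \cap S_n = P$ --- simply spell out details the paper's proof leaves implicit.
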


\begin{proof}
First note that $P$ generates a cyclotomic extension $K = \Qbb_p(P)$,
and $C_G(P)$ is contained in $C_{\Dbb_n^\x}(K)$. By the centralizer
theorem \ref{007}, $C_{\Dbb_n}(K)$ is itself a central division algebra
over $K$. Since
\[
    n = \phi(p^\alpha)n_\alpha = [\Qbb_p(P): \Qbb_p]n_\alpha,
\]
it is of dimension $n_\alpha^2$ over its center $K$ and has residue
field $\Fbb_{p^{n_\alpha}}$. The reduction homomorphism in this division
algebra induces a map $C_G(P) \ra \Fbb_{p^{n_\alpha}}^\x$ whose kernel
is $P$; this shows the first assertion. 

The second assertion follows from the facts that 
\[
    P \subset C_G(P) 
    \qq \text{and} \qq
    G = N_G(P),
\]
and hence that $N_G(P)/C_G(P) \subset C$ must be prime to $p$.
\end{proof}

\begin{corollary} \label{158}
The group $C$ is contained in the cyclic subgroup of order
$(p^{n_\alpha}-1)(p-1)$ in $\Fbb_{p^n}^\x$.
\end{corollary}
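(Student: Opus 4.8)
The plan is to bound the order of $C = G/P$ and then exploit the cyclicity of $\Fbb_{p^n}^\x$. Since $G = N_G(P)$, we have $C = N_G(P)/P$, and the chain $P \subset C_G(P) \subset N_G(P)$ gives $C_G(P)/P$ as a (normal) subgroup of $N_G(P)/P$, with quotient $N_G(P)/C_G(P)$ by the third isomorphism theorem. Multiplicativity of orders then yields
\[
    |C| = |C_G(P)/P| \cdot |N_G(P)/C_G(P)|.
\]

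Next I would estimate the two factors using proposition \ref{157}. The first factor $|C_G(P)/P|$ divides $|\Fbb_{p^{n_\alpha}}^\x| = p^{n_\alpha}-1$, since $C_G(P)/P$ embeds into $\Fbb_{p^{n_\alpha}}^\x$. For the second factor, $N_G(P)/C_G(P)$ injects into the $p'$-part of $Aut(P)$; as $P$ is cyclic of order $p^\alpha$ with $\alpha \geq 1$, the group $Aut(P) \iso (\Zbb/p^\alpha)^\x$ has order $\phi(p^\alpha) = (p-1)p^{\alpha-1}$, whose prime-to-$p$ part has order $p-1$ (this remains valid when $p=2$, both orders being $1$). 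Hence $|N_G(P)/C_G(P)|$ divides $p-1$, and therefore $|C|$ divides $(p^{n_\alpha}-1)(p-1)$.

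It remains to place $C$ inside the asserted subgroup. As $C$ embeds into the cyclic group $\Fbb_{p^n}^\x$, and a cyclic group has a unique subgroup of each order dividing its own order, with that subgroup containing every subgroup of divisor order, it suffices to check that $\Fbb_{p^n}^\x$ really has a subgroup of order $(p^{n_\alpha}-1)(p-1)$, i.e. that $(p^{n_\alpha}-1)(p-1)$ divides $p^n-1$. Writing $n = (p-1)p^{\alpha-1}n_\alpha$ shows $n_\alpha \mid n$ and hence $p^{n_\alpha}-1 \mid p^n-1$; the cofactor $\frac{p^n-1}{p^{n_\alpha}-1} = \sum_{j=0}^{N-1} p^{j n_\alpha}$ has $N = (p-1)p^{\alpha-1}$ terms, each congruent to $1$ modulo $p-1$, so it is congruent to $N \equiv 0$ modulo $p-1$ and carries an extra factor of $p-1$. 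This gives $(p^{n_\alpha}-1)(p-1) \mid p^n-1$ and completes the argument.

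The \emph{main obstacle} is exactly this last divisibility. Since $p^{n_\alpha}-1$ and $p-1$ are not coprime (their gcd being $p^{(n_\alpha;1)}-1 = p-1$), the product does not divide $p^n-1$ for purely formal reasons; one genuinely needs the congruence $N \equiv 0 \bmod (p-1)$, which is supplied by the hypothesis $\alpha \geq 1$, to produce the second factor of $p-1$ inside $p^n-1$. Everything else reduces to index multiplicativity and the cyclic-group bookkeeping of proposition \ref{157}.
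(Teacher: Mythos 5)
Your proof is correct and follows essentially the same route as the paper: proposition \ref{157} bounds the two factors of $|C| = |C_G(P)/P|\cdot|N_G(P)/C_G(P)|$, and the $p'$-part of $Aut(P)$ has order $p-1$. The only difference is that you make explicit the divisibility $(p^{n_\alpha}-1)(p-1) \mid p^n-1$ and the resulting containment in the unique cyclic subgroup of that order, a step the paper leaves implicit; this is a useful verification but not a different method.
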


\begin{proof}
This follows from proposition \ref{157} and the fact that the $p'$-part
of
\[
    Aut(P) \iso
    \begin{cases}
        C_{p-1} \x C_{p^{\alpha-1}} & \text{if } p>2,\\
        C_{2} \x C_{2^{\alpha-2}} & \text{if } p=2,
    \end{cases}
\]
is of order $p-1$.
\end{proof}

We now proceed to the existence of such finite groups. Recall from
proposition \ref{021} that $\Dbb_n^\x$ has cyclic subgroups of order
$p^\alpha$ for any $1 \leq \alpha \leq k$.

\begin{proposition} \label{159}
If $P_\alpha$ is a cyclic subgroup of order $p^\alpha > 1$ in
$\Dbb_n^\x$ and $v = v_{\Dbb_n}$, then
\[
    v(C_{\Dbb_n^\x}(P_\alpha)) = \frac{1}{n}\Zbb
    \qq \text{and} \qq
    N_{\Dbb_n^\x}(P_\alpha)/C_{\Dbb_n^\x}(P_\alpha)
    \iso N_{\Ocal_n^\x}(P_\alpha)/C_{\Ocal_n^\x}(P_\alpha).
\]
\end{proposition}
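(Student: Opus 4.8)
The plan is to prove the two assertions in tandem, deriving the second from the first. Throughout, write $K = \Qbb_p(P_\alpha)$, which by proposition \ref{342} is the totally ramified cyclotomic extension $\Qbb_p(\zeta_{p^\alpha})$ of degree $\phi(p^\alpha) = (p-1)p^{\alpha-1}$ over $\Qbb_p$. Since $K = \Qbb_p(\zeta)$ for a generator $\zeta$ of $P_\alpha$ and $\Qbb_p$ is central in $\Dbb_n$, an element commutes with every element of $P_\alpha$ if and only if it commutes with all of $K$; hence $C_{\Dbb_n^\x}(P_\alpha) = C_{\Dbb_n}(K)^\x$, the group of units of the centralizing subalgebra $C_{\Dbb_n}(K)$.

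For the first assertion, I would apply the centralizer theorem \ref{007} exactly as in the proof of proposition \ref{157}: the subalgebra $D' := C_{\Dbb_n}(K)$ is a central division algebra over $K$ of dimension $n_\alpha^2$, where $n = \phi(p^\alpha) n_\alpha$. Being a division algebra, $(D')^\x = D' \setminus \{0\}$, so $v(C_{\Dbb_n^\x}(P_\alpha)) = v((D')^\x)$. Now $v$ restricted to $K$ has value group $\frac{1}{\phi(p^\alpha)}\Zbb$, because $K/\Qbb_p$ is totally ramified of degree $\phi(p^\alpha)$ and $v(p)=1$. By the structure of central division algebras over local fields (see appendix \ref{033}), the unique extension of $v$ to a division algebra of index $n_\alpha$ over $K$ multiplies the value group by $\frac{1}{n_\alpha}$; hence
\[
    v((D')^\x) = \frac{1}{n_\alpha} \cdot \frac{1}{\phi(p^\alpha)}\Zbb = \frac{1}{n}\Zbb,
\]
which is the first assertion.

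For the second assertion, I would consider the homomorphism
\[
    N_{\Ocal_n^\x}(P_\alpha)/C_{\Ocal_n^\x}(P_\alpha)
    \raa N_{\Dbb_n^\x}(P_\alpha)/C_{\Dbb_n^\x}(P_\alpha)
\]
induced by the inclusion $\Ocal_n^\x \subset \Dbb_n^\x$, noting first that $N_{\Ocal_n^\x}(P_\alpha) = N_{\Dbb_n^\x}(P_\alpha) \cap \Ocal_n^\x$ and likewise for the centralizers, so the map is well defined. Injectivity is immediate: an element of $N_{\Ocal_n^\x}(P_\alpha)$ mapping to the trivial class lies in $C_{\Dbb_n^\x}(P_\alpha) \cap \Ocal_n^\x = C_{\Ocal_n^\x}(P_\alpha)$. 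For surjectivity I would start from an arbitrary $g \in N_{\Dbb_n^\x}(P_\alpha)$ and correct its valuation using the first assertion: since $v(g) \in v(\Dbb_n^\x) = \frac{1}{n}\Zbb = v(C_{\Dbb_n^\x}(P_\alpha))$, there is $c \in C_{\Dbb_n^\x}(P_\alpha)$ with $v(c) = -v(g)$. Then $gc$ still normalizes $P_\alpha$ (as $c$ centralizes it), satisfies $v(gc)=0$ so that $gc \in \Ocal_n^\x$, whence $gc \in N_{\Ocal_n^\x}(P_\alpha)$, and it represents the same class as $g$ in the target.

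The conceptual heart of the argument is the first assertion, and its only nontrivial input is the behaviour of the valuation under passage to the central division subalgebra $C_{\Dbb_n}(K)$: its value group is $\frac{1}{n_\alpha}$ times that of $K$. This is precisely the ramification statement for division algebras over local fields recorded in the appendix, so no genuinely new obstacle arises. Once part one is in hand, the valuation-correction trick makes surjectivity in part two essentially formal.
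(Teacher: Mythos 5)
Your proof is correct and takes essentially the same approach as the paper: both hinge on computing $v(C_{\Dbb_n^\x}(P_\alpha)) = \frac{1}{n}\Zbb$ via the centralizer theorem together with the total ramification of $\Qbb_p(P_\alpha)$, and then on the same valuation-correction trick, multiplying a normalizing element by a centralizing element of opposite valuation to land in $\Ocal_n^\x$. The only (harmless) difference is that the paper derives the second assertion through the Skolem--Noether identification $N_{\Dbb_n^\x}(P_\alpha)/C_{\Dbb_n^\x}(P_\alpha) \iso Aut(P_\alpha)$, whereas you verify directly that the inclusion-induced map of quotients is bijective, which establishes the stated isomorphism without invoking Skolem--Noether at all.
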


\begin{proof}
From the Skolem-Noether theorem \ref{010}, we know that
\[
    N_{\Dbb_n^\x}(P_\alpha)/C_{\Dbb_n^\x}(P_\alpha)
    \iso Aut(P_\alpha).
\]
This means that for any $f$ in $Aut(P_\alpha)$, there is an element $a$
in $\Dbb_n^\x$ such that
\[
    f(x) = axa^{-1} 
    \q \text{for all } x \in K_\alpha = \Qbb_p(P_\alpha).
\]
As explained in appendix \ref{033}, the fact that $K_\alpha$ is a
totally ramified extension of $\Qbb_p$ implies that the value group of
$C_{\Dbb_n^\x}(K_\alpha)$ is that of $\Dbb_n^\x$; in other words  
\[
    v(C_{\Dbb_n^\x}(P_\alpha)) = \frac{1}{n}\Zbb.
\]
Hence there is an element $b$ in $C_{\Dbb_n^\x}(K_\alpha)$ such that
\[
    v(ab) = 0
    \qq \text{and} \qq
    (ab)x(ab)^{-1} = axa^{-1} = f(x)
\]
for all $x \in K_\alpha$. In particular $ab \in \Ocal_n^\x$ and
\[
    N_{\Ocal_n^\x}(P_\alpha)/C_{\Ocal_n^\x}(P_\alpha)
    \iso Aut(P_\alpha),
\]
as was to be shown.
\end{proof}

\begin{lemma} \label{160}
If $P_\alpha$ is a cyclic subgroup of order $p^\alpha > 1$ in
$\Dbb_n^\x$, the image of $N_{\Ocal_n^\x}(P_\alpha)$ in $\Fbb_{p^n}^\x$
via the reduction homomorphism is cyclic of order
$(p^{n_\alpha}-1)(p-1)$.
\end{lemma}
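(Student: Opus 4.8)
The plan is to compute the image $\bar N := \rho(N_{\Ocal_n^\x}(P_\alpha))$ of the normalizer under the reduction homomorphism $\rho : \Ocal_n^\x \ra \Fbb_{p^n}^\x$ by separating the order into the contribution of the centralizer and the contribution of the automorphisms induced on $P_\alpha$. Write $N = N_{\Ocal_n^\x}(P_\alpha)$ and $C = C_{\Ocal_n^\x}(P_\alpha)$, and set $\bar C := \rho(C)$. Since $\Fbb_{p^n}^\x$ is cyclic of order $p^n-1$, the subgroup $\bar N$ is automatically cyclic and of order prime to $p$; it remains only to pin down $|\bar N|$, which I would do via the factorization $|\bar N| = |\bar C| \cdot |\bar N / \bar C|$.

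First I would identify $\bar C$. Exactly as in the proof of proposition \ref{157}, the centralizer theorem \ref{007} shows that $C_{\Dbb_n}(K_\alpha)$, with $K_\alpha = \Qbb_p(P_\alpha)$, is a central division algebra over $K_\alpha$ of dimension $n_\alpha^2$ with residue field $\Fbb_{p^{n_\alpha}}$. Its group of units is precisely $C$, and the reduction homomorphism of a local division algebra surjects onto its residue units, so $\bar C = \Fbb_{p^{n_\alpha}}^\x$ is cyclic of order $p^{n_\alpha}-1$. Next I would control the quotient $\bar N/\bar C$. By proposition \ref{159} we have $N/C \iso Aut(P_\alpha)$, and $\rho$ induces a surjection $N/C \twoheadrightarrow \bar N/\bar C$ whose kernel is $(N \cap S_n)C/C$, namely the image of $N \cap S_n$ in $N/C$. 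Since $N \cap S_n$ is a closed subgroup of the pro-$p$ group $S_n$, its image in the finite group $Aut(P_\alpha)$ is a finite $p$-group; hence this kernel is a $p$-subgroup of $Aut(P_\alpha)$.

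The order computation then closes from both sides. The upper bound $|\bar N/\bar C| \mid (p-1)$ comes for free, since $\bar N/\bar C$ has order prime to $p$ and the maximal prime-to-$p$ quotient of $Aut(P_\alpha) \iso C_{p-1} \x C_{p^{\alpha-1}}$ (for $p>2$; the odd part is trivial when $p=2$, consistent with $p-1=1$) is $C_{p-1}$. The matching lower bound is the crux: because the kernel of $N/C \ra \bar N/\bar C$ is a $p$-group, it must lie in the $p$-Sylow factor $C_{p^{\alpha-1}}$ and therefore cannot collapse the $p'$-part $C_{p-1}$, forcing $p-1 \mid |\bar N/\bar C|$. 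Combining, $|\bar N/\bar C| = p-1$ and $|\bar N| = (p^{n_\alpha}-1)(p-1)$, with $\bar N$ cyclic as a subgroup of $\Fbb_{p^n}^\x$.

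I expect the identification of $\bar C$ with $\Fbb_{p^{n_\alpha}}^\x$ to be routine, essentially a reprise of proposition \ref{157} combined with surjectivity of reduction onto residue units. The genuinely delicate point, and the one replacing any explicit construction of normalizing elements, is the lower bound for $\bar N/\bar C$: the observation that the kernel is the homomorphic image of a pro-$p$ group, hence a $p$-group, is exactly what guarantees that the full $p'$-part $C_{p-1}$ survives in the reduction and that the normalizer is no smaller than claimed.
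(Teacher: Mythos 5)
Your proof is correct and takes essentially the same route as the paper's: both identify the image of the centralizer as $\Fbb_{p^{n_\alpha}}^\x$ via the centralizer theorem, and both combine the isomorphism $N_{\Ocal_n^\x}(P_\alpha)/C_{\Ocal_n^\x}(P_\alpha) \iso Aut(P_\alpha)$ of proposition \ref{159} with the fact that the reduction kernel $N \cap S_n$ is pro-$p$ to conclude that exactly the $p'$-part $C_{p-1}$ of $Aut(P_\alpha)$ survives in $\Fbb_{p^n}^\x$. The only difference is organizational—the paper exhibits the surjection $N_{\Ocal_n^\x}(P_\alpha) \raa C_{p-1}$ killing the $p$-Sylow subgroup, whereas you bound $|\bar{N}/\bar{C}|$ from both sides—but the ingredients are identical.
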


\begin{proof}
Since the residue field of the division algebra
\[
    C_{\Dbb_n}(\Qbb_p(P_\alpha)) = C_{\Dbb_n}(P_\alpha)
\]
is $\Fbb_{p^{n_\alpha}}$, the image of $C_{\Ocal_n^\x}(P_\alpha) =
C_{\Dbb_n^\x}(P_\alpha) \cap \Ocal_n^\x$ via the reduction homomorphism
is cyclic of order $p^{n_\alpha}-1$ in $\Fbb_{p^n}^\x$.  Furthermore,
there is a canonical surjection
\[
    N_{\Ocal_n^\x}(P_\alpha) \raa Aut(P_\alpha) \raa C_{p-1}.
\]
Clearly, $C_{\Ocal_n^\x}(P_\alpha)$ is in the kernel of this projection,
and since $p-1$ is prime to $p$, the $p$-Sylow subgroup of
$N_{\Ocal_n^\x}(P_\alpha)$ must be contained in the kernel as well. It
follows that $N_{\Ocal_n^\x}(P_\alpha)$ contains a group which is sent
surjectively onto $C_{p-1}$ and whose image in $\Fbb_{p^n}^\x$ is the
cyclic subgroup of order $(p^{n_\alpha}-1)(p-1)$.
\end{proof}

\begin{theorem} \label{161}
For each $1 \leq \alpha \leq k$ and each cyclic subgroup $P_\alpha$ of
order $p^\alpha$ in $\Dbb_n^\x$, there exists a subgroup $G_\alpha$ of
$\Ocal_n^\x$ such that
\[
    G_\alpha \cap S_n = P_\alpha
    \qq \text{and} \qq
    G_\alpha/P_\alpha 
    \iso C_{(p^{n_\alpha}-1)(p-1)} \subset \Fbb_{p^n}^\x.
\]
\end{theorem}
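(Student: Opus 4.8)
The plan is to realize $G_\alpha$ as a semidirect product $P_\alpha \rtimes H$, where $H$ is a \emph{finite} cyclic subgroup of the normalizer $N := N_{\Ocal_n^\x}(P_\alpha)$ which the reduction homomorphism $\pi: \Ocal_n^\x \ra \Fbb_{p^n}^\x$ (with kernel $S_n$) carries isomorphically onto the cyclic group $C := C_{(p^{n_\alpha}-1)(p-1)} \subset \Fbb_{p^n}^\x$. By lemma \ref{160} we already know $\pi(N) = C$, so the entire content of the theorem is concentrated in one point: lifting the cyclic group $C$, whose order $r := (p^{n_\alpha}-1)(p-1)$ is prime to $p$, to a subgroup of the same order $r$ living inside the \emph{profinite}, hence infinite, group $N$. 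This is where the finite Schur--Zassenhaus theorem (proposition \ref{020}) cannot be applied directly, and it is the main obstacle.

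First I would fix $g \in N$ with $\pi(g)$ a generator of $C$ (possible since $\pi(N)=C$ is cyclic) and pass to the closed subgroup $A := \overline{\lan g \ran} \subset N$ topologically generated by $g$; here $N$ is closed in $\Ocal_n^\x$ because $P_\alpha$ is finite. The group $A$ is procyclic and abelian, so by the structure theorem for procyclic profinite groups it decomposes as $A \iso A_p \x A_{p'}$ into its pro-$p$ part and its prime-to-$p$ part. Since $A \cap S_n$ is pro-$p$ it is contained in $A_p$, while $\pi(A_p)$ is a pro-$p$ subgroup of the prime-to-$p$ group $C$ and so is trivial, giving $A_p \subset S_n$; hence $A_p = A \cap S_n$. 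Because $\pi(A) = \overline{\lan \pi(g)\ran} = C$ is finite, we then get $A_{p'} \iso A/A_p = A/(A\cap S_n) \iso C$. Thus $H := A_{p'}$ is a finite cyclic subgroup of $N$ of order $r$ with $H \cap S_n = 1$ and $\pi|_H : H \iso C$. This procyclic decomposition is precisely the device that replaces Schur--Zassenhaus in the profinite setting.

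It then remains to assemble $G_\alpha := P_\alpha H$ and verify its two properties. As $H \subset N$ normalizes $P_\alpha$ and $|H| = r$ is prime to $p$ while $P_\alpha$ is a $p$-group, we have $P_\alpha \cap H = 1$, so $G_\alpha$ is a finite subgroup of $\Ocal_n^\x$ of order $p^\alpha r$ in which $P_\alpha$ is normal and $G_\alpha/P_\alpha \iso H \iso C_{(p^{n_\alpha}-1)(p-1)}$. Finally, $G_\alpha \cap S_n$ is a subgroup of $S_n$, hence a $p$-subgroup of $G_\alpha$ by proposition \ref{017}; since $P_\alpha$ is a normal $p$-subgroup with $G_\alpha/P_\alpha$ of order prime to $p$, it is the unique $p$-Sylow subgroup of $G_\alpha$, forcing $G_\alpha \cap S_n = P_\alpha$. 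Together with $\pi(G_\alpha) = \pi(H) = C \subset \Fbb_{p^n}^\x$, this gives the required embedding of $G_\alpha/P_\alpha$ and completes the construction.
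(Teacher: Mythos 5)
Your proof is correct and follows essentially the same route as the paper: both take a preimage $x$ (your $g$) of a generator of the cyclic image from lemma \ref{160}, pass to the procyclic closure $\overline{\lan x \ran}$ in $N_{\Ocal_n^\x}(P_\alpha)$, and split off the pro-$p$ part to obtain a finite cyclic lift of order $(p^{n_\alpha}-1)(p-1)$. The only cosmetic difference is the splitting mechanism --- you invoke the canonical decomposition $A \iso A_p \x A_{p'}$ of a procyclic group, while the paper corrects $x$ by hand, using $l$-divisibility of the cyclic profinite $p$-group $H = \overline{\lan x \ran} \cap S_n$ to find $y$ with $x^l = y^l$ and taking $xy^{-1}$ of finite order --- and your write-up makes explicit the final assembly $G_\alpha = P_\alpha H$ that the paper leaves implicit.
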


\begin{proof}
We want to show that the cyclic subgroup of order
$(p^{n_\alpha}-1)(p-1)$ in $\Fbb_{p^n}^\x$ obtained from lemma \ref{160}
can be lifted to an element of finite order in
$N_{\Ocal_n^\x}(P_\alpha)$. 

Let $\bar{x}$ be an element of order $(p^{n_\alpha}-1)(p-1)$ in
$\Fbb_{p^n}^\x$. By lemma \ref{160}, $\bar{x}$ has a preimage $x$ in
$N_{\Ocal_n^\x}(P_\alpha)$ generating by conjugation an element of order
$p-1$ in $Aut(P_\alpha)$. The closure $\lan x \ran$ in $\Ocal_n^\x$ of
the group generated by $x$ fits into the exact sequence
\[
    1 \raa H \raa \lan x \ran \raa C \raa 1,
\]
where
\[
    H = \lan x \ran \cap S_n
    \qq \text{and} \qq
    C = \lan x \ran / H.
\]
The group $H$ being a cyclic profinite $p$-group, it must be isomorphic
to $\Zbb_p$ or to a finite cyclic $p$-group. As $l := |C|$ is prime to
$p$, any element in $H$ is $l$-divisible, and because $x^l \in H$, there
is a $y \in H$ such that $x^l = y^l$. Since $x,y \in \lan x \ran$
commute with each other, $(xy^{-1})^l = 1$ and $xy^{-1}$ is the desired
element of finite order in $N_{\Dbb_n^\x}(P_\alpha)$.
\end{proof}

\begin{remark} \label{162}
One can show that the isomorphism class of such a $G_\alpha$ is uniquely
determined by $\alpha$. This however is a consequence of the uniqueness
of $G_\alpha$ up to conjugation, a fact established in theorem \ref{024}
and \ref{032}.
\end{remark}

\section{Finite subgroups of $\Dbb_n^\x$ with quaternionic $2$-Sylow}  
\label{156}

Continuing our investigation of the finite subgroups $G$ of $\Dbb_n^\x$,
we now consider the case where the $p$-Sylow subgroup $P$ of $G$ is
non-cyclic. We know from proposition \ref{019} that in this case $p=2$
and $P$ is a generalized quaternion group $Q_{2^\alpha}$ with $\alpha
\geq 3$. Throughout this section we assume $p=2$. 

We first look at the case $n=2$. Consider the filtration of $\Zbb_2^\x
\iso \Zbb/2 \x \Zbb_2$ given by
\[
    U_i 
    = U_i(\Zbb_2^\x) 
    = 1+2^i\Zbb_2 
    = \{ x \in \Zbb_2^\x\ |\ x \equiv 1 \mod 2^i \},
    \qq \text{for } i \geq 1.
\]
As $-7 \equiv 1$ mod $2^3$, we have
\[
    -7 \in U_3 = (\Zbb_2^\x)^2.
\]
So let $\rho$ be an element of $\Zbb_2^\x$ such that $\rho^2 = -7$. 

\begin{remark} \label{210}
By remark \ref{334}, we know that
\begin{align*}
    \Dbb_2\ 
    &\iso\ \Qbb_2(\omega)\lan S \ran 
    / (S^2 = 2, S x = x^\sigma S)\\
    &\iso\ \Qbb_2(\omega)\lan T \ran 
    / (T^2 = -2, T x = x^\sigma T),
\end{align*}
for $\omega$ a primitive third root of unity which satisfies 
\[
    1+\omega+\omega^2=0,
\]
and for $S, T$ two elements generating the Frobenius $\sigma$. Letting
$T = x+yS \in \Dbb_2^\x$ for $x, y \in \Qbb_2(\omega)$, we have
\begin{align*}
    -2 = T^2 = (x^2+2yy^\sigma) + (xy+yx^\sigma)S
    \qq \Lra \qq 
    \begin{cases}
    x^2+2yy^\sigma = -2\\
    xy+yx^\sigma = 0.
    \end{cases}
\end{align*}
Taking for solution
\[
    x=0 \q \text{and} \q y = \frac{3+2\omega}{\rho},\qq 
    \text{so that} \q T = \frac{3+2\omega}{\rho} S,
\]
we obtain an isomorphism between these two representations of $\Dbb_2$.

Via these representations, we may further exhibit an explicit embedding
of $Q_8 = \lan i, j \ran$ into $\Dbb_2^\x$ in the following way. We
first look for an element $i = a+bT$ with $a, b \in \Wbb(\Fbb_4)$
satisfying
\[
    -1 = i^2 = (a^2 - 2bb^\sigma) + (a+a^\sigma)bT.
\]
Hence either $b=0$ or $a+a^\sigma = 0$. The first case being impossible
as $a^2=-1$ has no solution in $\Wbb(\Fbb_4)$, we must have $a+a^\sigma
= 0$. A possible solution is
\[
    a = \frac{-1}{1+2\omega} 
    = \frac{1}{3}(1+2\omega)
    \qq \text{and} \qq 
    b = \frac{1}{1+2\omega} 
    = -\frac{1}{3}(1+2\omega),
\]
meaning that
\begin{align*}
    i\ &=\ \frac{1}{3}(1+2\omega) - \frac{1}{3}(1+2\omega)T\\[1ex]
    &=\ \frac{1}{3}(1+2\omega) + \frac{1}{3\rho}(1-4\omega)S.
\end{align*}
We then look for an element $j=a'+b'T$ with $a', b' \in \Wbb(\Fbb_4)$
satisfying $j^2=-1$ and $ij=-ji$, in other words
\begin{align*}
    (a'^2 - 2b'b'^\sigma) + (a'+a'^\sigma)b'T\ &=\ -1 \\[1ex]
    \text{and} \qq 
    (aa'-2bb'^\sigma)+(ab'+ba'^\sigma)T\ 
    &=\ -(a'a-2b'b^\sigma)-(a'b+b'a^\sigma)T.
\end{align*}
As $a+a^\sigma=0$ and $a=-b=b^\sigma$, these relations are equivalent to
\[
    \begin{cases}
    a'+a'^\sigma=0 \\
    2aa' = 2(bb'^\sigma + b'b^\sigma) = 2b(b'^\sigma - b')
    \end{cases}
    \qq \Lra \qq
    \begin{cases}
    a'+a'^\sigma=0 \\
    a'=b'-b'^\sigma.
    \end{cases}
\]
A possible solution is
\[
    a' = a = \frac{1}{3}(1+2\omega) 
    \qq \text{and} \qq  
    b' = (1+\omega)a' = \frac{1}{3}(-1+\omega),
\] 
meaning that
\begin{align*}
    j\ &=\ \frac{1}{3}(1+2\omega) + \frac{1}{3}(-1+\omega) T\\[1ex]
    &=\ \frac{1}{3}(1+2\omega) - \frac{1}{3\rho}(5+\omega)S.
\end{align*}
\end{remark}

\begin{proposition} \label{027}
The quaternion group $Q_8$ embeds in $\Dbb_n^\x$ if and only if $n
\equiv 2 \mod 4$.
\end{proposition}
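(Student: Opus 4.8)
The plan is to prove both implications by first making the key reduction: the $\Qbb_2$-subalgebra generated by an embedded copy of $Q_8$ is forced to be the quaternion division algebra $\Dbb_2$. Indeed $Q_8 = \lan i, j \ran$ is nonabelian with $i^2 = j^2 = -1$ and $ij = -ji$, so its image spans a $4$-dimensional central simple $\Qbb_2$-algebra, namely $\left(\frac{-1,-1}{\Qbb_2}\right)$; since it sits inside the division algebra $\Dbb_n$ it is itself a division algebra (by the same reasoning as proposition \ref{016}), and over the local field $\Qbb_2$ the unique quaternion division algebra is $\Dbb_2 = D(\Qbb_2, 1/2)$. So I would organize everything around the near-equivalence between embedding $Q_8$ into $\Dbb_n^\times$ and embedding $\Dbb_2$ as a $\Qbb_2$-subalgebra of $\Dbb_n$.

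For sufficiency, I would assume $n \equiv 2 \mod 4$, write $n = 2m$ with $m$ odd, and construct $\Dbb_2$ inside $\Dbb_n$ directly from the presentation $\Ocal_n \iso \Wbb_n\lan S \ran/(S^n = p,\ Sw = w^\sigma S)$. The idea is to set $T := S^m$, so that $T^2 = S^{2m} = S^n = 2$, $v(T) = m/n = 1/2$, and conjugation by $T$ induces $\sigma^m$ on $\Wbb_n$. Because $2 \mid n$ we have $\Wbb(\Fbb_4) \subset \Wbb_n$, and since $m$ is odd, $\sigma^m$ (acting as $x \mapsto x^{2^m}$ with $2^m \equiv 2 \bmod 3$) restricts to the nontrivial automorphism of $\Wbb(\Fbb_4)/\Zbb_2$; hence $\Wbb(\Fbb_4)$ together with $T$ generates a subalgebra isomorphic to $\Ocal_2$, and after $\otimes_{\Zbb_2}\Qbb_2$ a copy of $\Dbb_2$. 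Composing this inclusion $\Dbb_2 \hookrightarrow \Dbb_n$ with the explicit embedding $Q_8 \hookrightarrow \Dbb_2^\times$ produced in remark \ref{210} then yields the desired embedding.

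For necessity, suppose $Q_8 \hookrightarrow \Dbb_n^\times$. The element $i$ has order $4$, so proposition \ref{021} (with $\phi(4) = 2$) at once forces $2 \mid n$. To rule out $4 \mid n$ I would invoke the centralizer theorem \ref{007}: writing $B \iso \Dbb_2$ for the subalgebra generated by $Q_8$, its centralizer $C := C_{\Dbb_n}(B)$ is central simple over $\Qbb_2$ of degree $n/2$, is a division algebra (being a subring of $\Dbb_n$), and satisfies $\Dbb_n \iso B \otimes_{\Qbb_2} C$; comparing Hasse invariants gives $\mathrm{inv}(C) = \frac{1}{n} - \frac{1}{2} = \frac{2-n}{2n} \in \Qbb/\Zbb$. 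Since the degree of a division algebra over a local field equals the denominator of its invariant in lowest terms, I would compute that this reduced denominator equals $n/2$ exactly when $m = n/2$ is odd, i.e. when $n \equiv 2 \mod 4$, whereas if $4 \mid n$ it equals $n \neq n/2$. The latter contradicts $\deg C = n/2$, so $4 \nmid n$, and together with $2 \mid n$ this gives $n \equiv 2 \mod 4$.

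The hard part will be pinning down the two algebra-theoretic facts cleanly rather than the final numerics: first, that $Q_8$ generates the quaternion \emph{division} algebra $\Dbb_2$ and not a split or smaller algebra, which I would settle by computing the Hilbert symbol $(-1,-1)_2 = -1$ over $\Qbb_2$; and second, that the centralizer $C$ in the necessity argument really has degree exactly $n/2$ and is forced to be a division algebra, so that the invariant-denominator obstruction genuinely applies. The sufficiency construction is comparatively routine once $T = S^m$ is chosen, the only delicate point being that $m$ odd makes $\sigma^m$ act as the nontrivial Frobenius on $\Wbb(\Fbb_4)$; this coprimality of $2$ and $m$ is precisely what both drives that step and, on the structural side, guarantees that $B \otimes_{\Qbb_2} C$ remains a division algebra, which is the conceptual reason the numerical condition comes out as $n \equiv 2 \mod 4$.
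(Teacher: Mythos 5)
Your proof is correct, and while its skeleton matches the paper's (reduce to the question of embedding $\Dbb_2$ as a $\Qbb_2$-subalgebra, then decide that by invariant arithmetic), it diverges in how both halves are executed. The paper identifies $\Qbb_2(Q_8) = \Dbb_2$ not via the Hilbert symbol but by combining the explicit embedding $Q_8 \subset \Dbb_2^\x$ of remark \ref{210} with a dimension count (a noncommutative subalgebra of the $4$-dimensional $\Dbb_2$ must be all of $\Dbb_2$), and then simply quotes corollary \ref{040}, i.e.\ proposition \ref{015} specialized to $m=2$: $\Dbb_m$ embeds in $\Dbb_n$ if and only if $n = km$ with $k \equiv 1 \bmod m$. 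Your necessity argument — $C = C_{\Dbb_n}(B)$ is a division algebra of degree $n/2$ by theorem \ref{007} and proposition \ref{006}, whose invariant $\frac{1}{n} - \frac{1}{2}$ has reduced denominator $n/2$ exactly when $n/2$ is odd — is, after unwinding, the same computation as the necessity half of proposition \ref{015} (using corollary \ref{084} that index equals the reduced denominator over a local field). Your sufficiency, however, is genuinely different: the paper's proposition \ref{015} argues abstractly in the Brauer group (choose $l$ with $1 \equiv k + lm \bmod n$ and invoke additivity of invariants), whereas you build the copy of $\Ocal_2$ concretely inside $\Ocal_n$ as $\Wbb(\Fbb_4)\lan T \ran$ with $T = S^m$, $T^2 = S^n = 2$, using that $m$ odd makes $\sigma^m$ restrict to the nontrivial automorphism of $\Wbb(\Fbb_4)/\Zbb_2$, and injectivity on $\Dbb_2$ is automatic by simplicity. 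Your construction is more self-contained and makes visible exactly where the parity of $m$ enters; the paper's route is shorter and yields the general statement for all $\Dbb_m \subset \Dbb_n$ at once. All the points you flag as needing care ($(-1,-1)_2 = -1$, the degree and division property of the centralizer, the denominator criterion) are indeed the right ones and are all covered by results already in the paper's appendices.
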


\begin{proof}
The $\Qbb_2$-algebra $\Qbb_2(i,j)$ generated by $\lan i, j \ran \iso
Q_8$ is non-commutative and is at least of dimension $4$ over $\Qbb_2$.
By remark \ref{210} we know that $\Qbb_2(i,j) \subset \Dbb_2$, and it
follows that $\Qbb_2(i,j) = \Dbb_2$. Thus in particular, $Q_8$ embeds in
$\Dbb_n^\x$ if and only if $\Dbb_2^\x$ does, and by corollary \ref{040}
this happens if and only if $n \equiv 2 \mod 4$.
\end{proof}

\begin{remark} \label{028}
Using the elements $i$ and $j$ obtained in remark \ref{210}, and
defining
\[
    k 
    := ij 
    = -\frac{1}{3}(1+2\omega) - \frac{1}{3\rho}(4+5\omega)S,
\]
we note that
\[
    \omega^2 i \omega^{-2} 
    = \omega j \omega^{-1} 
    = -k.
\]
This implies that the group \label{319}
\[
    T_{24} 
    := Q_8 \rtimes C_3 
    \iso \lan i,j,\omega \ran
\]
embeds as a maximal finite subgroup of $\Dbb_2^\x$. This group of order
$24$ is the binary tetrahedral group; it is explicitly given by
\[
    T_{24} 
    = \{\pm 1, \pm i, \pm j, \pm k, 
    \frac{1}{2}(\pm 1 \pm i \pm j \pm k)\}.
\]
From proposition \ref{027}, we have obtained
\[
    \Dbb_2^\x 
    \iso \Qbb_2(Q_8) 
    \iso \Qbb_2(T_{24}).
\]
\end{remark}

\begin{proposition} \label{029}
A generalized quaternion subgroup of $\Dbb_n^\x$ is isomorphic to $Q_8$.
\end{proposition}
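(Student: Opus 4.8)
The plan is to reduce the statement to a purely numerical claim about the index of the generalized quaternion group. By proposition \ref{019}, a noncyclic finite $2$-subgroup $P$ of $\Dbb_n^\x$ must be isomorphic to $Q_{2^\alpha}$ for some $\alpha \geq 3$, with presentation
\[
    Q_{2^\alpha} = \langle x, y \mid x^{2^{\alpha-1}} = 1,\ yxy^{-1} = x^{-1},\ y^2 = x^{2^{\alpha-2}} \rangle.
\]
It therefore suffices to show that $\alpha = 3$. Note that $P$ contains the cyclic subgroup $\langle x \rangle$ of order $2^{\alpha-1}$, and since $\Dbb_n$ is a division algebra its unique element of order $2$ is $-1$ (any $g$ with $g^2 = 1$ satisfies $(g-1)(g+1)=0$); hence the central involution $x^{2^{\alpha-2}}$ of $P$ equals $-1$.

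The first key step is to extract the congruence $n \equiv 2 \mod 4$ from proposition \ref{027}. For this I would observe that every $Q_{2^\alpha}$ with $\alpha \geq 3$ contains a copy of $Q_8$: setting $x' = x^{2^{\alpha-3}}$, the element $x'$ has order $4$ and satisfies $(x')^2 = x^{2^{\alpha-2}} = y^2$ together with $yx'y^{-1} = (x')^{-1}$, so that $\langle x', y \rangle \iso Q_8$. Consequently $Q_8$ embeds in $\Dbb_n^\x$, and proposition \ref{027} forces $n \equiv 2 \mod 4$. In the notation \ref{131} this means $k = 2$; in particular $2 \mid n$ but $4 \nmid n$.

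The second step is to bound $\alpha$ from above using proposition \ref{021}. Since $\langle x \rangle \iso C_{2^{\alpha-1}}$ lies in $\Dbb_n^\x$, that proposition requires $\phi(2^{\alpha-1}) = 2^{\alpha-2}$ to divide $n$. But $4 \nmid n$, so $2^{\alpha-2} \in \{1,2\}$, whence $\alpha - 2 \leq 1$, that is $\alpha \leq 3$. Combined with $\alpha \geq 3$ this yields $\alpha = 3$, and therefore $P \iso Q_8$, as desired.

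I do not expect any serious obstacle: the whole argument is the confrontation of two divisibility conditions, one coming from the cyclic subgroup of order $2^{\alpha-1}$ inside $Q_{2^\alpha}$ (proposition \ref{021}) and one from the $Q_8$ subgroup (proposition \ref{027}). The only points requiring a moment's care are the identification of the central involution $x^{2^{\alpha-2}}$ with $-1$ and the verification that $\langle x', y\rangle \iso Q_8$, both of which are immediate from the defining relations of $Q_{2^\alpha}$ and the uniqueness of the order-two element in a division algebra.
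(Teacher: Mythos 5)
Your proof is correct and follows essentially the same route as the paper's: both confront the condition $n \equiv 2 \bmod 4$ from proposition \ref{027} (via the $Q_8$ inside $Q_{2^\alpha}$) with the divisibility $\phi(2^{\alpha-1}) \mid n$ coming from the cyclic subgroup of order $2^{\alpha-1}$, differing only in indexing conventions. Your explicit verification that $\langle x^{2^{\alpha-3}}, y\rangle \iso Q_8$ is a small detail the paper leaves implicit, and the identification of the central involution with $-1$ is harmless but not actually needed.
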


\begin{proof}
Assume that $Q_{2^{\alpha+1}}$ embeds as a subgroup of $\Dbb_n^\x$ for
$\alpha \geq 2$. Then $Q_8$ embeds and
\[
    n \equiv 2 \mod 4
\]
by proposition \ref{027}. On the other hand, the cyclic group
$C_{2^\alpha}$ embeds as well and generates a cyclotomic extension of
degree $\phi(2^\alpha) = 2^{\alpha-1}$ over $\Qbb_p$. Hence
\[
    n \equiv 0 \mod 2^{\alpha-1}
\]
by the embedding theorem. Therefore $\alpha = 2$.
\end{proof}

\begin{proposition} \label{163}
If $Q_8$ is a quaternion subgroup of $\Dbb_n^\x$ and $v = v_{\Dbb_n}$,
then
\[
    v(C_{\Dbb_n^\x}(Q_8)) = \frac{2}{n}\Zbb, \qq
    v(N_{\Dbb_n^\x}(Q_8)) = \frac{1}{n}\Zbb,
\]
and $N_{\Ocal_n^\x}(Q_8)/C_{\Ocal_n^\x}(Q_8)$ injects into
$N_{\Dbb_n^\x}(Q_8)/C_{\Dbb_n^\x}(Q_8)$ as a subgroup of index 2.
\end{proposition}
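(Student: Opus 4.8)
The plan is to reduce both the centralizer and normalizer computations to the subalgebra generated by $Q_8$, and then to isolate the whole content of the statement in one short valuation computation. By proposition \ref{027} the hypothesis forces $n \equiv 2 \mod 4$, so I write $n = 2m$ with $m$ odd. By remark \ref{028} the group $Q_8$ generates $\Qbb_2(Q_8) = \Dbb_2$, a central division subalgebra of $\Dbb_n$ of dimension $4$ over $\Qbb_2$. Since $Q_8$ spans $\Dbb_2$ over $\Qbb_2$, commuting with $Q_8$ is the same as commuting with $\Dbb_2$, so $C_{\Dbb_n}(Q_8) = C_{\Dbb_n}(\Dbb_2) =: C$ and $C_{\Dbb_n^\x}(Q_8) = C^\x$.

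First I would settle the centralizer. Applying the centralizer theorem \ref{007} to the central simple subalgebra $\Dbb_2 \subset \Dbb_n$ shows that $C$ is central simple over $\Qbb_2$ with $\dim_{\Qbb_2} C = \dim_{\Qbb_2}\Dbb_n / \dim_{\Qbb_2}\Dbb_2 = (n/2)^2 = m^2$; being a subring of the division algebra $\Dbb_n$ it is itself a division algebra, hence a central division algebra over $\Qbb_2$ of index $m$. Exactly as for $\Dbb_n$ itself in the text, the value group of an index-$m$ central division algebra over $\Qbb_2$ (for $v$ normalized by $v(2)=1$) is $\frac{1}{m}\Zbb$, so $v(C_{\Dbb_n^\x}(Q_8)) = v(C^\x) = \frac{1}{m}\Zbb = \frac{2}{n}\Zbb$, which is the first assertion.

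The heart of the argument, and the one genuinely non-formal step, is to exhibit a single normalizing element of half-integral valuation. Working inside $\Dbb_2$ with the $i,j,k$ of remark \ref{210}/\ref{028}, I take $g := 1+i$. From $(1+i)(1-i) = 1-i^2 = 2$ one gets $(1+i)^{-1} = \frac{1}{2}(1-i)$, and the computations $(1+i)\,i\,(1-i) = 2i$ and $(1+i)\,j\,(1-i) = 2k$ show that conjugation by $g$ fixes $i$ and sends $j$ to $k$; hence $g$ maps the generators of $Q_8 = \lan i,j \ran$ into $Q_8$ and therefore lies in $N_{\Dbb_2^\x}(Q_8) \subset N_{\Dbb_n^\x}(Q_8)$. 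For its valuation, $(1+i)(1-i)=2$ gives $v(1+i)+v(1-i)=v(2)=1$, while $j(1+i)j^{-1} = 1-i$ forces $v(1-i)=v(1+i)$; thus $v(g) = \frac{1}{2} = \frac{m}{n}$. The index in the last assertion is $2$ rather than $1$ precisely because such a half-valuation normalizer exists; everything around it is structural.

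Finally I would package the two remaining assertions through one homomorphism. Since $v$ is additive and $v(C^\x)=\frac{2}{n}\Zbb$, the map $N_{\Dbb_n^\x}(Q_8) \ra \frac{1}{n}\Zbb/\frac{2}{n}\Zbb \iso \Zbb/2$, $x \mapsto v(x)+\frac{2}{n}\Zbb$, is a homomorphism $\Phi$ killing $C_{\Dbb_n^\x}(Q_8)=C^\x$, so it descends to $\bar{\Phi}$ on $N_{\Dbb_n^\x}(Q_8)/C_{\Dbb_n^\x}(Q_8)$ (which, by Skolem--Noether \ref{010}, is $Aut(Q_8) \iso S_4$, with $\bar{\Phi}$ its sign character, though only surjectivity is needed). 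The element $g$ gives $\bar{\Phi}(g) = \frac{m}{n}+\frac{2}{n}\Zbb \neq 0$ because $m$ is odd, so $\bar{\Phi}$ is surjective. For the value group, $v(N_{\Dbb_n^\x}(Q_8))$ is a subgroup of $\frac{1}{n}\Zbb$ containing $\frac{2}{n}\Zbb$ and $\frac{m}{n}$, and these generate $\frac{1}{n}(2\Zbb+m\Zbb)=\frac{1}{n}\Zbb$ (as $m$ is odd); hence $v(N_{\Dbb_n^\x}(Q_8)) = \frac{1}{n}\Zbb$. For the index statement, a representative $x$ lies in $\ker\Phi$ exactly when $v(x)\in\frac{2}{n}\Zbb = v(C^\x)$, i.e. exactly when $x$ can be multiplied by an element of $C^\x$ to have valuation $0$; therefore the natural map $N_{\Ocal_n^\x}(Q_8)/C_{\Ocal_n^\x}(Q_8) \hookrightarrow N_{\Dbb_n^\x}(Q_8)/C_{\Dbb_n^\x}(Q_8)$ (injective because $N_{\Ocal_n^\x}(Q_8)\cap C^\x = \Ocal_n^\x\cap C^\x = C_{\Ocal_n^\x}(Q_8)$) has image exactly $\ker\bar{\Phi}$, a subgroup of index $|\bar{\Phi}(N_{\Dbb_n^\x}(Q_8))| = 2$.
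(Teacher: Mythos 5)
Your proof is correct and follows essentially the same route as the paper's: you compute $v(C_{\Dbb_n^\x}(Q_8)) = \frac{2}{n}\Zbb$ via the centralizer theorem \ref{007} applied to $\Qbb_2(Q_8) = \Dbb_2$, exhibit $1+i$ as a normalizing element of valuation $\frac{1}{2}$ (the paper gets this more directly from $(1+i)^2 = 2i$, but your argument via $(1+i)(1-i)=2$ and conjugation-invariance of $v$ is equally valid), and use the coprimality of $2$ and $m = n/2$ to generate the full value group $\frac{1}{n}\Zbb$, exactly as the paper does with its Bézout identity $(2r+1)a + 2b = 1$. The only difference is that you spell out the final index-$2$ assertion explicitly through the homomorphism $\bar{\Phi}$ to $\frac{1}{n}\Zbb/\frac{2}{n}\Zbb \iso \Zbb/2$, where the paper simply asserts that it follows from the two valuation computations; your more detailed packaging is a faithful expansion of that implicit step.
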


\begin{proof}
Using the centralizer theorem \ref{007}, together with remark \ref{028},
we know that
\[
    \Dbb_n \iso \Qbb_2(Q_8) \ox_{\Qbb_2} C_{\Dbb_n}(Q_8),
\]
where $C_{\Dbb_n}(Q_8)$ is a central division algebra of dimension
$n^2/4$ over $\Qbb_2$ whose ramification index is
$e(C_{\Dbb_n}(Q_8)/\Qbb_2) = n/2$ by proposition \ref{011}. In
particular,
\begin{align}
    v(C_{\Dbb_n^\x}(Q_8)) = \frac{2}{n}\Zbb.\tag{$\ast$}
\end{align}
Now the existence of $Q_8$ in $\Dbb_n^\x$ implies by proposition
\ref{027} that $n \equiv 2 \mod 4$, so that $n = 2(2r+1)$ for an integer
$r \geq 0$.  As $2$ and $2r+1$ are prime to each other, there are
integers $a, b \geq 1$ satisfying
\[
    (2r+1)a + 2b = 1 
    \qq \Lra \qq 
    \frac{a}{2}+\frac{b}{2r+1} = \frac{1}{n}.
\]
By ($\ast$) we can choose an element $x \in C_{\Dbb_n^\x}(Q_8)$ having
valuation $2/n = 1/(2r+1)$. On the other hand since
\[
    (1+i)j(1+i)^{-1} = k \in Q_8 \qq \text{and} \qq (1+i)^2 = 2i,
\]
we know that $1+i$ is an element of $N_{\Dbb_n^\x}(Q_8)$ having
valuation $1/2$.  We thus have found an element $(1+i)^ax^b$ in
$N_{\Dbb_n^\x}(Q_8)$ of valuation
\begin{align*}
    v( (1+i)^ax^b ) 
    = av(1+i)+bv(x) 
    = \frac{a}{2}+\frac{b}{2r+1} 
    = \frac{1}{n},
\end{align*}
so that 
\[
    v(N_{\Dbb_n^\x}(Q_8)) = \frac{1}{n}\Zbb.
\]
This result, together with ($\ast$), implies the last assertion of the
proposition.
\end{proof}

\begin{proposition} \label{176}
$|Aut(Q_8)| = |Aut(T_{24})| = 24$.
\end{proposition}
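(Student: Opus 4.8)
The plan is to compute each order essentially by hand, treating $Q_8$ first and then bootstrapping to $T_{24}$ using the fact that $Q_8$ sits inside $T_{24}$ as a characteristic subgroup. For $Q_8 = \langle i,j\rangle$, I would count automorphisms by their effect on the generators. An automorphism $\phi$ must send $i$ to one of the six elements of order $4$, namely $\pm i, \pm j, \pm k$, and it must send $j$ to an element of order $4$ anticommuting with $\phi(i)$ so as to preserve $ij = -ji$; given $\phi(i)$, exactly four of the six order-$4$ elements qualify, namely those outside $\langle\phi(i)\rangle$. This yields $6\cdot 4 = 24$ candidate assignments. I would then check that each such pair $(\phi(i),\phi(j))$ satisfies the defining relations $x^4=1$, $x^2=y^2$, $yxy^{-1}=x^{-1}$ of $Q_8$ and generates $Q_8$, hence determines a genuine automorphism; since distinct assignments give distinct maps, $|Aut(Q_8)| = 24$.

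For $T_{24} = \langle i,j,\omega\rangle = Q_8 \rtimes C_3$, the key structural observation is that $Q_8$ is the unique $2$-Sylow subgroup (it is normal of index $3$, so by Sylow there is only one), hence characteristic, and restriction gives a homomorphism $r: Aut(T_{24}) \to Aut(Q_8)$. I would first establish injectivity of $r$. By remark \ref{028} the element $\omega$ permutes the axes $\{\pm i\},\{\pm j\},\{\pm k\}$ cyclically, and every element of order $3$ or $6$ therefore acts nontrivially on $Q_8$, so $C_{T_{24}}(Q_8) = \{\pm 1\}$. If $\phi$ fixes $Q_8$ pointwise, then $\phi(\omega)\omega^{-1}$ centralizes $Q_8$, giving $\phi(\omega) \in \{\omega, -\omega\}$; since $\omega$ has order $3$ while $-\omega$ has order $6$, order-preservation forces $\phi(\omega)=\omega$ and $\phi = id$. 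This already gives $|Aut(T_{24})| \le 24$.

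It remains to prove $r$ is surjective. Since the conjugation map $T_{24} \to Aut(Q_8)$, $g \mapsto c_g$, has kernel $C_{T_{24}}(Q_8) = \{\pm 1\}$, its image $H$ has order $12$, hence is of index $2$ and normal in $Aut(Q_8)$ (using $|Aut(Q_8)|=24$ from the first part). Given an arbitrary $\psi \in Aut(Q_8)$, the automorphism $\psi c_\omega \psi^{-1}$ has order $3$ and lies in $H$ by normality, so it equals $c_g$ for some $g$; replacing $g$ by $-g$ if necessary (note $c_{-g}=c_g$) I may take $g$ of order $3$. Setting $\phi|_{Q_8} = \psi$ and $\phi(\omega) = g$ then respects $\omega^3 = 1 = g^3$ and the relation $\omega q\omega^{-1} = c_\omega(q)$, which becomes $g\psi(q)g^{-1} = \psi(c_\omega(q))$ and holds precisely because $c_g = \psi c_\omega \psi^{-1}$. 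Thus $\phi$ extends to an automorphism of $T_{24}$ with $r(\phi)=\psi$, so $r$ is an isomorphism and $|Aut(T_{24})| = |Aut(Q_8)| = 24$.

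The main obstacle is this last surjectivity step: everything rests on lifting an arbitrary automorphism of $Q_8$ across the generator $\omega$, which in turn requires both that the image $H$ of $T_{24}$ in $Aut(Q_8)$ be normal (so that $\psi c_\omega\psi^{-1}$ lands back in $H$) and that every order-$3$ element of $H$ be realized by an order-$3$ element of $T_{24}$. I expect the bookkeeping of signs when choosing $g$ versus $-g$ to be the only delicate point; the normality follows cheaply from the index-$2$ count once the order of $Aut(Q_8)$ is known.
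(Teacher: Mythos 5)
Your proposal is correct, and it diverges from the paper's proof at both decisive points. For $|Aut(Q_8)|=24$ the paper only extracts the \emph{upper} bound $6\cdot 4=24$ from the generator count; for the lower bound it works inside the ambient division algebra: the conjugation map $c_{Q_8}\colon T_{24}\to Aut(Q_8)$ has image of order $12$, and conjugation by the explicit element $(1+i)\in\Dbb_n^\x$ (which fixes $i$ and sends $j\mapsto k$, cf.\ proposition \ref{163}) is an automorphism outside that image, forcing equality with $24$. You instead verify that \emph{every} one of the $24$ candidate assignments is an automorphism, which indeed works: any two order-$4$ elements $a,b$ of $Q_8$ with $b\notin\lan a\ran$ automatically satisfy $a^2=b^2=-1$ and $ba=-ab$, hence $bab^{-1}=-a=a^{-1}$, and $\lan a,b\ran$ meets all three axes so equals $Q_8$; a surjective endomorphism of a finite group is an automorphism. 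So your count is purely internal and needs no embedding. The same divergence recurs at the surjectivity of $Aut(T_{24})\to Aut(Q_8)$: the paper gets it essentially for free because $(1+i)$ normalizes $T_{24}$ as well, while you lift an arbitrary $\psi\in Aut(Q_8)$ abstractly, using normality of the index-$2$ image $H=Im(c_{Q_8})$ to write $\psi c_\omega\psi^{-1}=c_g$, adjusting the sign of $g$ — correctly, since exactly one of $g,-g$ has order $3$ and $c_{-g}=c_g$ — and then extending via the semidirect-product presentation of $T_{24}=Q_8\rtimes C_3$ (von Dyck), the image of the resulting endomorphism containing $Q_8$ and an order-$3$ element, hence everything. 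The injectivity arguments coincide: both reduce to $C_{T_{24}}(Q_8)=\{\pm 1\}$ and exclude the sign twist by the order-$3$ versus order-$6$ distinction ($\phi(\omega)=-\omega$ in your version, $\sigma(s)=-s$ in the paper's). What the paper's route buys is economy in context, since $(1+i)$ is already on hand from remark \ref{210} and is reused later (e.g.\ in theorems \ref{260} and \ref{264}); what yours buys is self-containedness, proving the statement for the abstract groups $Q_8$ and $T_{24}$ with no reference to $\Dbb_n$ at all.
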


\begin{proof}
Let $Q_8 = \lan i, j \ran$ and $T_{24} = \lan Q_8, \omega \ran$ with $i,
j, k, \omega$ as defined in remark \ref{210} and \ref{028}. Counting on
which of the $6$ elements $\{\pm i, \pm j, \pm k\}$ of order $4$ the
generators $i$ and $j$ may be sent via an automorphism, we know that
$|Aut(Q_8)|$ divides $24$. The inner automorphism group of $Q_8$ has
order $|Q_8/\{\pm 1\}| = 4$; it is generated by conjugation by $i$ and
$j$. Let 
\[
    c_{Q_8}: T_{24} \raa Aut(Q_8)
\]
be the conjugation action of $Q_8$ by elements of $T_{24}$. As noted in
remark \ref{028}, the conjugation by $\omega$ has order $3$, and hence
the cardinality of the image of $c_{Q_8}$ is $12$. Since the element
$(1+i) \in \Dbb_n^\x$ acts by conjugation on $Q_8$ by $i \mto i$ and $j
\mto k$, it follows that the automorphism of $Q_8$ induced by $(1+i)$ is
not in the image of $c_{Q_8}$. Because $|Aut(Q_8)| \leq 24$, we obtain
$|Aut(Q_8)|=24$.

Now using that $Q_8$ is the (normal) $2$-Sylow subgroup of $T_{24}$,
consider the canonical map $\phi: Aut(T_{24}) \ra Aut(Q_8)$; it is
surjective since $(1+i)$ also induces an automorphism of $T_{24}$. Let
$\sigma \in Aut(T_{24})$ be such that $\sigma |_{Q_8} = id_{Q_8}$. Then
for any $t \in T_{24}$ and $q \in Q_8$ we have
\[
    c_{Q_8}(\sigma(t))(q)
    = \sigma(t) q \sigma(t)^{-1}
    = \sigma(tqt^{-1})
    = tqt^{-1}
    = c_{Q_8}(t)(q).
\]
Hence $\sigma(t)t^{-1} \in Ker(c_{Q_8}) = \{\pm 1\}$ and $\sigma(t) =
\pm t$ for any $t \in T_{24}$. In fact, $t=s q$ with $q \in Q_8$ and $s$
an element of order $3$ in $T_{24}$, and we have
\[
    \sigma(t)t^{-1}
    = \sigma(s)\sigma(q)q^{-1}s^{-1}
    = \sigma(s)s^{-1}.
\]
Because $s$ is of order $3$ and $-s$ is of order $6$, the case
$\sigma(t)=-t$ is impossible and we must have $\sigma(t) = t$ for all $t
\in T_{24}$. Therefore the map $\phi$ is bijective, and as
$|Aut(Q_8)|=24$, it follows that $|Aut(T_{24})|=24$.
\end{proof}

Now assume $n=2m$ with $m$ odd and consider a finite subgroup $G$ of
$\Dbb_n^\x$ whose $2$-Sylow subgroup $P$ is isomorphic to $Q_8$. Such a
group determines a subgroup $C = G/P$ of $\Fbb_{2^n}^\x$.

\begin{proposition} \label{164}
If $G$ is a finite subgroup of $\Dbb_n^\x$ with a quaternionic $2$-Sylow
subgroup $P \iso Q_8$, then $G/P$ embeds into the cyclic subgroup of
order $3(2^m-1)$ in $\Fbb_{2^n}^\x$.
\end{proposition}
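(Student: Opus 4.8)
The plan is to reduce the statement to a bound on the order of $C = G/P$. Since $P = G \cap S_n$ is the unique, hence normal, $2$-Sylow subgroup of $G$, the reduction homomorphism identifies $C$ with the image of $G$ in $\Fbb_{2^n}^\x$, a cyclic group of odd order. I will control $|C|$ by passing through the centralizer $C_G(P)$, keeping in mind that since $Q_8$ is nonabelian one has $C_G(P) \cap P = Z(Q_8) = \{\pm 1\}$ rather than $P \subset C_G(P)$; this is the one place where the argument departs from the cyclic case treated in corollary \ref{158}.

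First I would analyze the centralizer part. By proposition \ref{163} and the centralizer theorem \ref{007}, $C_{\Dbb_n}(Q_8)$ is a central division algebra over $\Qbb_2$ of dimension $n^2/4 = m^2$ with ramification index $n/2 = m$; its residue field is therefore $\Fbb_{2^m}$. Consequently the reduction homomorphism sends $C_{\Ocal_n^\x}(Q_8) = C_{\Dbb_n^\x}(Q_8) \cap \Ocal_n^\x$ into $\Fbb_{2^m}^\x \iso C_{2^m-1}$. Restricting to the finite group $C_G(P)$, I note that its $2$-Sylow subgroup is $C_G(P) \cap S_n = C_G(P) \cap P = Z(Q_8) = \{\pm 1\}$, because every $2$-element of $G$ lies in the normal $2$-Sylow $P$. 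Hence the image $C_0$ of $C_G(P)$ in $C = G/P$ coincides with its reduction $C_G(P)/\{\pm 1\}$, so that $C_0$ embeds into $\Fbb_{2^m}^\x$ and $|C_0|$ divides $2^m-1$.

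Next I would bound the outer part $C/C_0 \iso G/(C_G(P)\,P)$. The conjugation action of $G$ on its normal subgroup $P$ gives a homomorphism $G \ra Aut(Q_8)$ with kernel $C_G(P)$, under which $P$ maps onto $Inn(Q_8) \iso Q_8/\{\pm 1\}$ of order $4$; thus $C/C_0$ injects into $Out(Q_8) = Aut(Q_8)/Inn(Q_8)$, a group of order $24/4 = 6$ by proposition \ref{176}. Being a quotient of the odd cyclic group $C$, the group $C/C_0$ is itself cyclic of odd order, whence $|C/C_0|$ divides $3$. Combining the two steps gives that $|C| = |C_0| \cdot |C/C_0|$ divides $3(2^m-1)$.

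It then remains to place $C$ inside the prescribed subgroup. Since $m$ is odd, $2^m \equiv -1 \mod 3$, so $3$ divides $2^m+1$ and therefore $3(2^m-1)$ divides $(2^m-1)(2^m+1) = 2^{n}-1$; the cyclic group $\Fbb_{2^n}^\x$ thus contains a unique subgroup of order $3(2^m-1)$. As $C$ is cyclic of order dividing $3(2^m-1)$ and embeds in $\Fbb_{2^n}^\x$ via reduction, it is contained in that subgroup, as desired. The main obstacle I anticipate is the bookkeeping forced by the nonabelianity of $P$ — in particular the identification $C_G(P) \cap P = C_G(P) \cap S_n = \{\pm 1\}$ together with the residue field computation $\Fbb_{2^m}$ for $C_{\Dbb_n}(Q_8)$ — after which the order count is routine.
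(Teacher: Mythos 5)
Your proof is correct and takes essentially the same route as the paper: the chain $P \subset P\,C_G(P) \subset G$, with the centralizer theorem giving $C_{\Dbb_n}(Q_8)$ of dimension $m^2$ over $\Qbb_2$ with residue field $\Fbb_{2^m}$ (hence the factor $2^m-1$), and the conjugation action on $Q_8$ (hence the factor $3$). The one minor variation is in the second factor, where the paper bounds $|N_G(P)/C_G(P)|$ by $12$ via the index-$2$ statement of proposition \ref{163} and then divides by $|Inn(Q_8)|=4$, while you bypass that valuation input by observing that $C/C_0$ is an odd-order subgroup of $Out(Q_8)$ of order $6$, hence of order dividing $3$; you also make explicit the final containment step ($3(2^m-1)$ divides $2^n-1$ because $m$ is odd) that the paper leaves implicit.
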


\begin{proof}
Recall that $\Qbb_2(P) \iso \Dbb_2^\x$ and note that $C_G(P)$ is
contained in 
\[
    C_{\Dbb_n^\x}(P) 
    = C_{\Dbb_n^\x}(\Qbb_2(P)) 
    \iso C_{\Dbb_n^\x}(\Dbb_2)
\]
which consists of the non-zero elements of a central division algebra of
dimension $m^2$ over $\Qbb_2$. Its residue field is $\Fbb_{2^m}$, and
$C_G(P)/P \cap C_G(P) \iso P \cdot C_G(P)/ P$ injects via the reduction
homomorphism into $\Fbb_{2^m}^\x$.

Furthermore, we have an injection
\[
    N_G(P)/C_G(P) \raa N_{\Ocal_n^\x}(P)/C_{\Ocal_n^\x}(P)
    \subset N_{\Dbb_n^\x}(P)/C_{\Dbb_n^\x}(P) \iso Aut(Q_8),
\]
where the last isomorphism is due to the Skolem-Noether theorem. Since
$|Aut(Q_8)| = 24$, proposition \ref{163} implies that $|N_G(P)/C_G(P)|$
divides $12$.  As $P \cap C_G(P) = \{\pm 1\}$ is of index $4$ in $P$, we
know that $C_G(P)$ is of index $4$ in $P \cdot C_G(P)$, and consequently
that $P \cdot C_G(P)$ is of index a divisor of $3$ in $N_G(P)$.

We have thus obtained a chain of subgroups
\[
    P \subset P \cdot C_G(P) \subset N_G(P) = G,
\]
where the first group is of index a divisor of $2^m-1$ in the second
group, and the latter is of index a divisor of $3$ in the third group.
\end{proof}

\begin{theorem} \label{165}
If $p=2$ and $n=2m$ with $m$ odd, the group
\[
    T_{24} \x C_{2^m-1} = Q_8 \rtimes C_{3(2^m-1)}
\]
embeds as a maximal finite subgroup of $\Dbb_n^\x$.
\end{theorem}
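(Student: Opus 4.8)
The plan is to exhibit explicit generators realizing the full group and then invoke proposition \ref{164} for maximality. Since $n = 2m \equiv 2 \mod 4$, proposition \ref{027} provides an embedding of $P = Q_8 = \lan i, j \ran$ into $\Dbb_n^\x$, and by remark \ref{028} the subalgebra it generates is $\Qbb_2(Q_8) \iso \Dbb_2$, inside which the order-$3$ element $\omega$ normalizes $P$ (with $\omega j \omega^{-1} = -k$), so that $T_{24} = \lan i, j, \omega \ran$ already embeds. By the centralizer theorem \ref{007} we have a decomposition $\Dbb_n \iso \Dbb_2 \ox_{\Qbb_2} \Delta$ with $\Delta = C_{\Dbb_n}(Q_8)$ a central division algebra of degree $m$ over $\Qbb_2$, having residue field $\Fbb_{2^m}$ by proposition \ref{163}.

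The first step is to produce the commuting cyclic factor. The division algebra $\Delta$ of degree $m$ contains the unramified extension of degree $m$ as a maximal subfield, hence a cyclic group $C = \lan \zeta_{2^m-1} \ran$ of order $2^m-1$ consisting of genuine roots of unity. Since $C \subset \Delta$ centralizes all of $\Dbb_2 = \Qbb_2(Q_8)$, it commutes both with $P$ and with $\omega$, so $\lan P, \omega, C \ran = \lan T_{24}, C \ran$. As $m$ is odd we have $2^m \equiv -1 \mod 3$, whence $3 \nmid 2^m-1$; combined with $2^m-1$ being odd, this makes $|T_{24}| = 24$ and $|C| = 2^m-1$ coprime, forcing $T_{24} \cap C = \{1\}$ and giving the internal direct product $T_{24} \x C_{2^m-1}$. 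Regrouping the commuting order-$3$ generator $\omega$ with $C$ (again coprime orders) yields the stated isomorphism $T_{24} \x C_{2^m-1} \iso Q_8 \rtimes C_{3(2^m-1)}$, the factor $C_{3(2^m-1)}$ acting on $Q_8$ through its order-$3$ quotient. This group $G$ is finite of order $24(2^m-1)$, hence lies in $\Ocal_n^\x$ by proposition \ref{016}.

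For maximality, suppose $G$ is contained in a larger finite subgroup $G' \subset \Dbb_n^\x$. The $2$-Sylow subgroup of $G'$ contains $Q_8$ and, by proposition \ref{019}, is cyclic or generalized quaternion; since it contains $Q_8$ it must be generalized quaternion, hence isomorphic to $Q_8$ itself by proposition \ref{029}. Applying proposition \ref{164} to $G'$ then bounds $|G'/Q_8| \leq 3(2^m-1)$, so $|G'| \leq 24(2^m-1) = |G|$ and therefore $G' = G$. Thus $G$ is a maximal finite subgroup of $\Dbb_n^\x$.

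The construction is essentially formal once the tensor decomposition $\Dbb_n \iso \Dbb_2 \ox_{\Qbb_2} \Delta$ is available; the only point requiring care is the commutativity bookkeeping, namely that the roots of unity drawn from $\Delta$ centralize the copy of $\Dbb_2$ and hence both $Q_8$ and $\omega$, which is exactly what the centralizer decomposition guarantees, together with the coprimality that makes the product direct. I expect the main conceptual step to be the maximality argument, where propositions \ref{019}, \ref{029} and \ref{164} combine to pin down the $2$-Sylow as $Q_8$ and to cap the index of the $2$-Sylow at $3(2^m-1)$.
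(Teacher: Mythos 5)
Your proof is correct and follows essentially the same route as the paper: the centralizer decomposition $\Dbb_n \iso \Qbb_2(Q_8) \ox_{\Qbb_2} C_{\Dbb_n}(Q_8)$, the cyclic group of roots of unity $C_{2^m-1}$ drawn from the maximal unramified subfield of the degree-$m$ centralizer, the coprimality $3 \nmid 2^m-1$ for $m$ odd, and proposition \ref{164} for maximality. Your only addition is to spell out, via propositions \ref{019} and \ref{029}, why the $2$-Sylow subgroup of any finite overgroup must be exactly $Q_8$ before proposition \ref{164} can be applied — a step the paper leaves implicit, and which you handle correctly.
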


\begin{proof}
By the centralizer theorem
\[
    \Dbb_n 
    \iso \Dbb_2 \ox_{\Qbb_2} C_{\Dbb_n}(\Dbb_2)
    \iso \Qbb_2(Q_8) \ox_{\Qbb_2} C_{\Dbb_n}(Q_8).
\]
By remark \ref{028}, $T_{24} = Q_8 \rtimes C_3$ embeds as a subgroup of
$\Dbb_2^\x$; more precisely $\Qbb_2(T_{24}) = \Dbb_2$. Moreover, since
$C_{\Dbb_n}(\Dbb_2)$ is a central division algebra of dimension $m^2$
over $\Qbb_2$, its maximal unramified extension of degree $m$ over
$\Qbb_2$ contains a cyclic subgroup $C_{2^m-1}$ of order $2^m-1$ which
centralizes $T_{24}$.  Since $m$ is odd, $2^m-1$ is not a multiple of
$3$ and $\Dbb_n^\x$ contains a subgroup isomorphic to
\[
    T_{24} \x C_{2^m-1} \iso Q_8 \rtimes C_{3(2^m-1)};
\]
its maximality as a finite subgroup then follows from proposition
\ref{164}.
\end{proof}

\begin{corollary} \label{174}
The center of $T_{24} \x C_{2^m-1}$ is
\[
    Z(T_{24} \x C_{2^m-1})
    = \{\pm 1\} \x C_{2^m-1}
    \iso C_{2(2^m-1)}.
\]
\end{corollary}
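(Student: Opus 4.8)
The plan is to reduce the computation to the two direct factors and then pin down the center of the binary tetrahedral factor using the division-algebra structure already in place. Since the center of a direct product of groups is the product of the centers, I would first write
\[
    Z(T_{24} \x C_{2^m-1}) = Z(T_{24}) \x Z(C_{2^m-1}).
\]
The second factor is immediate: $C_{2^m-1}$ is cyclic, hence abelian, so $Z(C_{2^m-1}) = C_{2^m-1}$. Everything therefore comes down to identifying $Z(T_{24})$.

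For this, the key observation is that by remark \ref{028} the subgroup $T_{24} = \lan i, j, \omega \ran$ generates $\Dbb_2$ as a $\Qbb_2$-algebra, that is $\Qbb_2(T_{24}) = \Dbb_2$. An element $z \in T_{24}$ lying in $Z(T_{24})$ commutes with every generator of $T_{24}$, hence with all of $\Qbb_2(T_{24}) = \Dbb_2$, and so lies in the center of $\Dbb_2$. As $\Dbb_2$ is a central division algebra over $\Qbb_2$, its center is $\Qbb_2$, whence $z \in T_{24} \cap \Qbb_2^\x$. Since every element of $T_{24}$ has finite order and the only roots of unity in $\Qbb_2^\x$ are $\pm 1$, this forces $Z(T_{24}) = \{\pm 1\}$. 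Alternatively, one could read this directly off the explicit presentation: the relations $ij = -ji$ and $\omega^2 i \omega^{-2} = \omega j \omega^{-1} = -k$ of remark \ref{028} show that none of $\pm i, \pm j, \pm k$ nor $\omega$ is central, and $\pm 1$ obviously are, giving the same conclusion by a finite inspection of the $24$ elements.

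Combining the two factors yields $Z(T_{24} \x C_{2^m-1}) = \{\pm 1\} \x C_{2^m-1}$. To finish, I would note that $2^m - 1$ is odd for every $m \geq 1$, hence coprime to $2$, so that
\[
    \{\pm 1\} \x C_{2^m-1} \iso C_2 \x C_{2^m-1} \iso C_{2(2^m-1)}
\]
by the Chinese remainder theorem. There is no genuine obstacle here; the argument is routine group theory. The only point that is not purely formal is the identification $Z(T_{24}) = \{\pm 1\}$, and even that is made transparent by the embedding $\Qbb_2(T_{24}) = \Dbb_2$ together with the centrality of the division algebra $\Dbb_2$ over $\Qbb_2$.
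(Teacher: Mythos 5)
Your proof is correct, and while it follows the same outer reduction as the paper — $Z(T_{24} \x C_{2^m-1}) = Z(T_{24}) \x C_{2^m-1}$, followed by coprimality of $2$ and $2^m-1$ — it justifies the key identity $Z(T_{24}) = \{\pm 1\}$ by a different mechanism. The paper's proof invokes the proof of theorem \ref{165} together with the fact that $Z(Q_8) = \{\pm 1\}$: there $T_{24} = Q_8 \rtimes C_3$ with $C_3$ acting faithfully on $Q_8$ (conjugation by $\omega$ cyclically permutes $i, j, k$ up to sign, by remark \ref{028}), so the centralizer of $Q_8$ in $T_{24}$, and a fortiori the center, reduces to $Z(Q_8) = \{\pm 1\}$. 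You instead pass through the ambient algebra: since $\Qbb_2(T_{24}) = \Dbb_2$ by remark \ref{028}, any $z \in Z(T_{24})$ centralizes the whole $\Qbb_2$-algebra generated by $T_{24}$, hence lies in $Z(\Dbb_2) = \Qbb_2$, and being of finite order it is a root of unity of $\Qbb_2$, forcing $z = \pm 1$. Both arguments rest only on facts already established in the paper. Your route buys generality: the same three lines show that any finite subgroup $G \subset \Dbb_n^\x$ with $\Qbb_p(G) = \Dbb_n$ satisfies $Z(G) = G \cap \mu(\Qbb_p)$, with no case analysis inside the group; the paper's version is a purely group-theoretic one-liner given corollary \ref{028}'s action data, and your finite-inspection alternative is essentially that argument made explicit.
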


\begin{proof}
This follows from the proof of theorem \ref{165} and the obvious fact
that the center of $Q_8$ is $\{\pm 1\}$.
\end{proof}

\section{Conjugacy classes in $\Sbb_n$} 
 
In this section, we establish a classification of the finite subgroups
of $\Sbb_n$ up to conjugation. We say that two subgroups $G_1, G_2
\subset \Dbb_n^\x$ are \emph{conjugate} in $\Dbb_n^\x$, respectively in
$\Ocal_n^\x$, if there is an element $a$ in $\Dbb_n^\x$, respectively in
$\Ocal_n^\x$, satisfying
\[
    a G_1 a^{-1} = G_2.
\]
We will see that two finite subgroups $G_1$ and $G_2$ whose respective
$p$-Sylow subgroups $P_1$ and $P_2$ are isomorphic, and for which the
quotient groups $G_1/P_1$ and $G_2/P_2$ are also isomorphic, are not
only isomorphic but even conjugate in $\Ocal_n^\x$. This will imply that
the maximal subgroups of $\Ocal_n^\x$ are classified up to conjugation
by the type of their $p$-Sylow subgroups. To do this, we will exploit
the tools of nonabelian cohomology of profinite groups as introduced in
\cite{serre3} chapter I paragraph 5. 

For any subgroup $G$ of a group $H$, we set
\[
    S_H(G) := \{ G' \leq H\ |\ G' \iso G \}
    \qq \text{and} \qq
    \Ccal_H(G) := S_H(G)/\sim_H
\]
where $\sim_H$ designates the relation of conjugation by an element in
$H$.

\begin{lemma} \label{166}
If $P$ is a finite $p$-subgroup of $\Ocal_n^\x$, then
$|\Ccal_{\Ocal_n^\x}(P)| = 1$.
\end{lemma}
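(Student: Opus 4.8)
The plan is to deduce everything from the Skolem--Noether theorem \ref{010}, correcting the valuation of the conjugating element at the end. By Proposition \ref{017} any finite $p$-subgroup of $\Ocal_n^\x$ lies in $S_n$, and by Propositions \ref{019} and \ref{029} such a subgroup is either cyclic of order $p^\alpha$ with $0\le\alpha\le k$ or, only when $p=2$, isomorphic to $Q_8$. Fixing $P_1,P_2\subset\Ocal_n^\x$ with $P_1\iso P_2\iso P$, it suffices to produce $a\in\Ocal_n^\x$ with $aP_1a^{-1}=P_2$; the trivial case $P=\{1\}$ being immediate, I assume $P\neq\{1\}$.

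First I would upgrade a group isomorphism $P_1\to P_2$ to a $\Qbb_p$-algebra isomorphism of the simple subalgebras $\Qbb_p(P_1)\to\Qbb_p(P_2)$ they generate (Notation \ref{170}). In the cyclic case both are the cyclotomic field $\Qbb_p(\zeta_{p^\alpha})$, and sending a generator of $P_1$ to a generator of $P_2$ extends to a field isomorphism. In the quaternion case both subalgebras equal a copy of $\Dbb_2$ by Proposition \ref{027} and remark \ref{028}; here the surjection $\Qbb_2(P_i)$ has the property that the only $\Qbb_2$-linear relations among the elements of $P_i$ come from the unique central involution $z_i$ being sent to $-1$, and a group isomorphism carries $z_1$ to $z_2$, so it descends to an algebra isomorphism $\Qbb_2(P_1)\to\Qbb_2(P_2)$. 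In either case Skolem--Noether \ref{010} realizes this isomorphism by conjugation: there is $a\in\Dbb_n^\x$ with $aP_1a^{-1}=P_2$.

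Finally I would repair the valuation. Since replacing $a$ by $ac$ for any $c\in N_{\Dbb_n^\x}(P_1)$ preserves the identity $aP_1a^{-1}=P_2$ (because $cP_1c^{-1}=P_1$), it is enough to find such a $c$ with $v(c)=-v(a)$, where $v=v_{\Dbb_n}$. This is possible precisely because the normalizer attains the full value group $\frac1n\Zbb$: in the cyclic case $v(C_{\Dbb_n^\x}(P_1))=\frac1n\Zbb$ already by Proposition \ref{159}, while in the quaternion case $v(N_{\Dbb_n^\x}(Q_8))=\frac1n\Zbb$ by Proposition \ref{163}. Then $ac\in\Ocal_n^\x$ and $(ac)P_1(ac)^{-1}=P_2$, so $P_1$ and $P_2$ are conjugate in $\Ocal_n^\x$ and $|\Ccal_{\Ocal_n^\x}(P)|=1$.

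I expect the genuine subtlety to be this last valuation bookkeeping in the quaternion case: unlike the cyclic case, the centralizer of $Q_8$ realizes only $\frac2n\Zbb$ (Proposition \ref{163}), so a centralizer correction cannot adjust $v(a)$ by an odd multiple of $\frac1n$; one must therefore use the strictly larger normalizer, which does attain all of $\frac1n\Zbb$, in order to land inside $\Ocal_n^\x$.
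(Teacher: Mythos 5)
Your proof is correct and follows essentially the same route as the paper's: apply Skolem--Noether to produce $a\in\Dbb_n^\x$ conjugating $P_1$ to $P_2$, then correct the valuation by an element of $N_{\Dbb_n^\x}(P_1)$ of valuation $-v(a)$, using Propositions \ref{159} and \ref{163} to see that the normalizer attains the full value group $\frac{1}{n}\Zbb$. The only cosmetic difference is in the quaternion case: you upgrade the group isomorphism to an algebra isomorphism $\Qbb_2(P_1)\to\Qbb_2(P_2)$ (via $\Qbb_2[Q_8]/(1+z)\iso\Dbb_2$ and simplicity of the quotient) and invoke Skolem--Noether once, whereas the paper first conjugates the generated subalgebras and then applies Skolem--Noether a second time inside $\Qbb_2(Q)\iso\Dbb_2$ to align the two quaternion subgroups.
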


\begin{proof}
Let $Q$ be a finite $p$-subgroup of $\Ocal_n^\x$ isomorphic to $P$. We
have seen that these two groups are either cyclic or quaternionic. In
either case, the Skolem-Noether theorem implies the existence of an
element $a$ in $\Dbb_n^\x$ such that
\[
    \Qbb_p(Q) = a \Qbb_p(P) a^{-1}.
\]
In the cyclic case, this clearly implies $Q = aPa^{-1}$. In the
quaternionic case, this yields two quaternion groups $Q$ and $aPa^{-1}$
within $\Qbb_2(Q) \iso \Dbb_2^\x$ in which we can use Skolem-Noether
once more to obtain an element $a' \in \Qbb_2(Q)$ such that $Q =
a'aP(a'a)^{-1}$. Now by proposition \ref{159} and \ref{163}, we know
that
\[
    v(N_{\Dbb_n^\x}(P)) = \frac{1}{n}\Zbb = v(\Dbb_n^\x).
\]
Thus there is an element $b$ in $\Dbb_n^\x$ such that
\[
    v(ab) = 0
    \qq \text{and} \qq
    P = b P b^{-1},
\]
and $ab$ is an element of $\Ocal_n^\x$ conjugating $P$ into $Q$.
\end{proof}

\begin{lemma} \label{167}
Let $P$ be a profinite $p$-group of the form $P = \lim_n P_n$ where each
$P_n$ is a finite $p$-group and the homomorphisms in the inverse system
are surjective, and let $R$ be a finite group of order prime to $p$
which acts by group homomorphisms on all $P_n$ in such a way that the
homomorphisms in the inverse system are $R$-equivariant. Then the
(nonabelian) cohomology group $H^1(R, P)$ is trivial.
\end{lemma}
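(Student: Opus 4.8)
The plan is to reduce the claim to the finite quotients $P_n$ and then pass to the inverse limit. Since $R$ is finite, every cochain on $R$ is automatically continuous, so a $1$-cocycle is simply a map $a\colon R \to P$ satisfying $a_{st} = a_s\cdot{}^{s}a_t$ (writing ${}^{s}x$ for the action of $s \in R$ on $x$), and $a$ is trivial precisely when $a_s = b^{-1}\,{}^{s}b$ for some $b \in P$. The first step is to prove $H^1(R, P_n) = *$ for each fixed $n$, where $P_n$ is a finite $p$-group and $|R|$ is prime to $p$.

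For the finite case I would use the dictionary between nonabelian $H^1$ and complements. Form the semidirect product $G_n = P_n \rtimes R$; then a $1$-cocycle $a\colon R \to P_n$ is the same datum as a complement $R_a = \{\,a_s s : s \in R\,\}$ to the normal subgroup $P_n$ in $G_n$, and two cocycles are cohomologous exactly when the corresponding complements are conjugate by an element of $P_n$. Since $|P_n|$ is a power of $p$ and $|R|$ is prime to $p$, Schur--Zassenhaus (Proposition \ref{020}) guarantees that complements exist and that any two are conjugate in $G_n$; writing such a conjugator as $xr$ with $x \in P_n$, $r \in R$ and using that $r$ normalizes the standard complement $R$, the conjugation may be taken inside $P_n$. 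Hence every complement is $P_n$-conjugate to the trivial one and $H^1(R, P_n)$ has a single element.

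To globalize, let $a\colon R \to P$ be a $1$-cocycle and let $a^{(n)} = \pi_n \circ a$ be its image under the projection $\pi_n\colon P \to P_n$; the $a^{(n)}$ are compatible cocycles because the transition maps $\psi_n\colon P_{n+1}\to P_n$ are $R$-equivariant homomorphisms with $\pi_n = \psi_n\circ\pi_{n+1}$. Set $T_n = \{\,b \in P_n : a^{(n)}_s = b^{-1}\,{}^{s}b \text{ for all } s \in R\,\}$, the set of trivializers of $a^{(n)}$, which is nonempty by the finite case and finite since $T_n \subseteq P_n$. A direct check shows $\psi_n(T_{n+1}) \subseteq T_n$, so the $T_n$ form a countable inverse system of nonempty finite sets. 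Its projective limit is therefore nonempty (a standard compactness argument), and any element $b = (b_n) \in \varprojlim T_n \subseteq \varprojlim P_n = P$ satisfies $a_s = b^{-1}\,{}^{s}b$ for all $s$, exhibiting $a$ as a coboundary. Thus $H^1(R,P)$ is trivial.

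The main obstacle is precisely this passage to the limit: the trivializing element $b_n$ in $P_n$ is only well defined up to the fixed subgroup $P_n^R$, so one cannot simply take a naive limit of arbitrarily chosen trivializers. The correct device is to organize all trivializers into the inverse system $(T_n)$ of nonempty finite sets and invoke the compactness fact that such a limit is nonempty; the $R$-equivariance of the $\psi_n$ is what keeps $(T_n)$ an inverse system, while the surjectivity of the transition maps is what identifies $P$ with $\varprojlim P_n$ and makes each $\pi_n$ onto.
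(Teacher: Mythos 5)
Your proof is correct, and while it rests on the same finite input as the paper --- the Schur--Zassenhaus theorem (proposition \ref{020}), which forces $H^1(R,P_n)$ to be trivial for each $n$ --- your passage to the limit is genuinely different. The paper realizes $P$ as the kernel of a difference-type map $\delta\colon \prod_n P_n \to \prod_n P_n$, feeds the resulting short exact sequence into the exact sequence of (nonabelian) cohomology, and is then left to prove that the induced map $\prod_n P_n^R \to \prod_n P_n^R$ is surjective; this requires a \emph{second} application of Schur--Zassenhaus, namely to the kernels $K_{n+1} = \mathrm{Ker}(P_{n+1} \to P_n)$, in order to show that each map on fixed points $P_{n+1}^R \to P_n^R$ is onto --- and it is precisely there that the surjectivity of the transition maps enters. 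You instead trivialize the projected cocycles $a^{(n)}$ level by level and observe that the sets $T_n$ of trivializers (each a right coset of $P_n^R$ in $P_n$, hence finite and nonempty) form an inverse system whose limit is nonempty by compactness; any point of the limit is a global trivializer. Your route is the more elementary one: it invokes Schur--Zassenhaus only once, sidesteps the somewhat delicate exact sequence of pointed sets for nonabelian coefficients, and never actually uses the surjectivity of the transition maps, so it establishes a slightly more general statement. What the paper's argument buys in exchange is the intermediate fact that $P_{n+1}^R \to P_n^R$ is surjective (a Mittag--Leffler-type statement of independent interest) and a formulation that identifies the potential obstruction as a $\varprojlim^1$-term; your compactness argument obtains the same vanishing with less machinery.
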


\begin{proof}
Denote by $j_n: P_n \ra P_{n-1}$ the homomorphisms of the inverse
system, and consider the map $\delta: \prod_n P_n \ra \prod_n P_n$
defined by
\[
    \delta(f_n) = (-1)^n f_n + (-1)^{n+1} j_{n+1}(f_{n+1}),
\]
for $f = (f_n) \in \prod_n P_n$. Then note that $\delta$ is surjective
and that $Ker(\delta)$ is the set of all $f = (f_n) \in \prod_n P_n$
such that $j_{n}(f_n) = f_{n-1}$ for all $n$.  Hence there is a short
exact sequence
\[
    1 \raa P \raa \prod_n P_n \overset{\delta}{\raa} \prod_n P_n \raa 1
\]
which induces a long exact sequence
\[
    1 \ra P^R \ra \prod_n P_n^R \ra \prod_n P_n^R 
    \ra H^1(R, P) \ra H^1(R, \prod_n P_n) \ra H^1(R, \prod_n P_n), 
\]
where $P^R$, respectively $P_n^R$, denotes the $R$-invariants. Using the
canonical isomorphism
\[
    H^1(R, \prod_n P_n) \iso \prod_n H^1(R, P_n),
\]
and noting that each group $H^1(R, P_n)$ is trivial by the
Schur-Zassenhaus theorem \ref{020}, it is enough to show that the
homomorphism
\[
    \prod_n P_n^R \raa \prod_n P_n^R
\]
in the above exact sequence is surjective, and hence that each
homomorphism $j_{n+1}^R: P_{n+1}^R \ra P_n^R$ is surjective by the
definition of $\delta$.

For each $n$, let $K_{n+1}$ be the kernel of the map $j_{n+1}: P_{n+1}
\ra P_n$. For each short exact sequence of finite $p$-groups with action
of $R$
\[
    1 \raa K_{n+1} \raa P_{n+1} \raa P_n \raa 1,
\]
there is an associated exact cohomology sequence
\[
    1 \raa K_{n+1}^R \raa P_{n+1}^R \overset{j_{n+1}^R}{\raa} 
    P_n^R \raa H^1(R, K_{n+1}).
\]
Applying the Schur-Zassenhaus theorem once more, we obtain that $H^1(R,
K_{n+1})$ is trivial and that the homomorphism $j_{n+1}^R$ is
surjective.
\end{proof}

We recall the following fact from \cite{serre3} chapter I \S 5.1:

\begin{lemma} \label{269}
If $P$ is an $R$-group with trivial (nonabelian) $H^1(R, P)$, and if
\[
    1 \raa P \raa N \raa R \raa 1
\]
is a split extension, then two splittings of $R$ in $N$ are conjugate by
an element in $P$.
\end{lemma}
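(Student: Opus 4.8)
The plan is to reduce the statement to the standard dictionary between splittings of a split extension and nonabelian $1$-cocycles, after which the triviality of $H^1(R,P)$ yields the conclusion at once. First I would fix one splitting $s_0: R \ra N$ of the projection $\pi: N \ra R$ and use it to write $N \iso P \rtimes R$; this is precisely the structure equipping $P$ with its $R$-action, namely $r \cdot x = s_0(r)\, x\, s_0(r)^{-1}$ for $x \in P$ and $r \in R$.

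Next, given an arbitrary splitting $s: R \ra N$, I would note that since both $s$ and $s_0$ lift the identity of $R$, the element $c(r) := s(r) s_0(r)^{-1}$ lies in $P = Ker(\pi)$ for each $r$. Imposing the homomorphism condition $s(r_1 r_2) = s(r_1) s(r_2)$ and pushing the $s_0$-factors to the right yields exactly the nonabelian cocycle identity $c(r_1 r_2) = c(r_1)\cdot (r_1 \cdot c(r_2))$. Conversely, any $1$-cocycle $c$ defines a splitting by $s(r) = c(r) s_0(r)$, so the splittings of $R$ in $N$ are in bijection with the $1$-cocycles $R \ra P$, the reference splitting $s_0$ corresponding to the trivial cocycle.

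I would then verify that this bijection intertwines conjugation by $P$ with the coboundary relation. Concretely, for $a \in P$ the conjugate splitting $r \mto a\, s(r)\, a^{-1}$ corresponds to the cocycle $r \mto a\, c(r)\, (r \cdot a^{-1})$, which is precisely the relation (with $b = a^{-1}$) defining cohomologous cocycles in the normalization of \cite{serre3}. Hence two splittings are conjugate by an element of $P$ if and only if their associated cocycles represent the same class in $H^1(R,P)$.

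Finally, the hypothesis that $H^1(R,P)$ is trivial means this pointed set has a single element, so every cocycle is cohomologous to the trivial one and, in particular, any two cocycles are cohomologous. Translating back through the bijection, the two given splittings are conjugate by an element of $P$, as claimed. There is no substantive obstacle here: the only point demanding care is the bookkeeping with the (left) $R$-action and the order of the products, so that the cocycle and coboundary identities emerge in the exact form used in \cite{serre3}; once the correspondence is set up, the result is purely formal.
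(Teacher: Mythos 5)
Your proof is correct and is precisely the standard dictionary between splittings and nonabelian $1$-cocycles that the paper invokes by citing \cite{serre3} chapter I \S 5.1 without proof; your cocycle identity $c(r_1r_2)=c(r_1)\cdot(r_1\cdot c(r_2))$ and the coboundary computation for $a\,s(r)\,a^{-1}$ match Serre's normalization exactly. Since the paper supplies no argument of its own, your writeup simply fills in the details of the cited fact, and there is nothing to flag.
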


\begin{theorem} \label{168}
Two finite subgroups $G_1$ and $G_2$ of $\Ocal_n^\x$ with respective
isomorphic $p$-Sylow subgroups $P_1 \iso P_2$ and isomorphic quotient
groups $G_1/P_1 \iso G_2/P_2$ are conjugate in $\Ocal_n^\x$.
\end{theorem}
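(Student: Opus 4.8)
The plan is to reduce the conjugacy of $G_1$ and $G_2$ to the conjugacy of their $p$-Sylow subgroups (already handled by lemma \ref{166}) together with the conjugacy of the complements, which I will control via the vanishing of nonabelian $H^1$.

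First I would use lemma \ref{166} to conjugate $G_2$ inside $\Ocal_n^\x$ so that its $p$-Sylow subgroup coincides with $P_1$; after this reduction I may assume $P_1 = P_2 =: P$ and both $G_1, G_2$ lie in $N := N_{\Ocal_n^\x}(P)$, each determining a split extension
\[
    1 \raa P \raa G_i \raa C_i \raa 1
\]
with $C_i \iso G_i/P$ cyclic of order prime to $p$. The key structural input is that $P$, as a profinite $p$-group, is either cyclic or quaternionic, and the conjugation action of $N$ on $P$ makes $P$ into an $R$-group for any finite complement. The aim is to realize $G_1$ and $G_2$ as two splittings of one and the same extension, so that lemma \ref{269} applies.

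The main work, and the step I expect to be the principal obstacle, is setting up a single ambient split extension in which both $G_1$ and $G_2$ appear as splittings of an isomorphic quotient. Since $G_1/P \iso G_2/P$ are cyclic of the same order, I would first fix an abstract identification of these quotients and then show that the two embeddings differ by an automorphism of $P$ that extends to an inner automorphism of $N$; here the Skolem-Noether theorem and the computation $N_{\Dbb_n^\x}(P)/C_{\Dbb_n^\x}(P) \iso Aut(P)$ from propositions \ref{159} and \ref{163} are decisive, letting me adjust $G_2$ by conjugation so that the two complements map to the same subgroup of $Aut(P)$. Once the two groups sit inside a common extension $1 \to P \to \lan G_1, G_2\ran^{\sim} \to R \to 1$ of a finite cyclic $p'$-group $R$, I would invoke lemma \ref{167} to conclude $H^1(R, P) = 1$ (the hypotheses are met because $P = \lim P/U_i \cap P$ is an inverse limit of finite $p$-groups with surjective $R$-equivariant transition maps, and $R$ has order prime to $p$).

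Finally, lemma \ref{269} yields that the two splittings corresponding to $G_1$ and $G_2$ are conjugate by an element of $P \subset \Ocal_n^\x$, and composing all the conjugations performed above gives a single element of $\Ocal_n^\x$ carrying $G_1$ to $G_2$. The delicate point throughout is bookkeeping: ensuring every conjugation stays inside $\Ocal_n^\x$ rather than merely $\Dbb_n^\x$, which is guaranteed by the valuation identities $v(N_{\Dbb_n^\x}(P)) = \frac{1}{n}\Zbb = v(\Dbb_n^\x)$ established in propositions \ref{159} and \ref{163}, allowing any valuation-zero adjustment as in the proof of lemma \ref{166}.
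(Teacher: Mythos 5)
Your opening reduction (lemma \ref{166}, putting $P_1 = P_2 =: P$ and $G_1, G_2 \subset N := N_{\Ocal_n^\x}(P)$) and your closing step (vanishing of nonabelian $H^1$ plus lemma \ref{269}) match the paper, but the middle of your argument --- building ``a single ambient split extension'' $1 \ra P \ra \lan G_1, G_2 \ran^{\sim} \ra R \ra 1$ --- has a genuine gap, and it is the crux. First, arranging that the two complements induce the same subgroup of $Aut(P)$ does not place $G_1$ and $G_2$ in a common \emph{finite} extension of $R$ by $P$: two complements acting identically on $P$ can still differ by elements of the centralizer $C_{\Ocal_n^\x}(P)$, and $\lan G_1, G_2 \ran$ is then typically infinite, so there is no extension with finite kernel $P$ containing both. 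Second, your Skolem--Noether adjustment cannot do what you ask of it: when $P$ is cyclic, $Aut(P)$ is abelian, so conjugation by elements of $N$ acts trivially on subgroups of $Aut(P)$ and cannot move the image of one complement onto that of the other; that the two images coincide is in fact a \emph{consequence} of the theorem (the quotients alone are assumed isomorphic, not the groups $G_i$, so a priori the actions could differ), and assuming it is circular. Third, with kernel the finite group $P$, lemma \ref{167} is redundant --- $H^1(R, P) = 1$ is just Schur--Zassenhaus --- which is a symptom that you are feeding it the wrong extension.

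The paper resolves all of this by choosing a different ambient extension. Since $P = G_i \cap S_n$, each quotient $G_i/P$ embeds via the reduction homomorphism into $\Fbb_{p^n}^\x$; this group is cyclic, so the two isomorphic quotients have \emph{identical} images, namely the unique subgroup $C$ of that order --- no abstract identification and no $Aut(P)$-alignment is needed. The Schur--Zassenhaus complements of $P$ in $G_1$ and $G_2$ are then both splittings over $C$ of the split extension $1 \ra N \cap S_n \ra N \ra R \ra 1$, whose kernel is the \emph{profinite} $p$-group $N \cap S_n$; this is exactly where the full inverse-limit hypothesis of lemma \ref{167} is needed, giving $H^1(R, N \cap S_n) = 1$, and lemma \ref{269} then conjugates one complement onto the other by an element of $N \cap S_n$. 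Such an element normalizes $P$, hence carries $G_2 = P \rtimes C_2'$ onto $G_1 = P \rtimes C_1'$, and since it already lies in $\Ocal_n^\x$, the valuation bookkeeping you worry about at the end arises only inside lemma \ref{166}, not here.
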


\begin{proof}
The groups $G_1$ and $G_2$ fit into exact sequences
\begin{align*}
    & 1 \raa P_1 \raa G_1 \raa C \raa 1\\[1ex]
    & 1 \raa P_2 \raa G_2 \raa C \raa 1,
\end{align*}
where $C$ is the subgroup of $\Fbb_{p^n}^\x$ isomorphic to $G_1/P_1 \iso
G_2/P_2$. We know from lemma \ref{166} that $P_1$ and $P_2$ are
conjugate in $\Ocal_n^\x$. By conjugating $G_2$, we can therefore assume
that
\[
    P_1 = P_2 =: P
    \qq \text{and} \qq
    G_1, G_2 \subset N_{\Ocal_n^\x}(P).
\]
Moreover, the latter groups fit into a split exact sequence
\[
    1 \raa N_{\Ocal_n^\x}(P) \cap S_n \raa N_{\Ocal_n^\x}(P)
    \raa R \raa 1,
\]
where $R \subset \Fbb_{p^n}^\x$ is a finite cyclic group of order prime
to $p$ containing $C$. It follows from lemma \ref{167} that $H^1(R,
N_{\Ocal_n^\x}(P) \cap S_n)$ is trivial, and hence by lemma \ref{269}
that $G_1$ and $G_2$ are conjugate in $N_{\Ocal_n^\x}(P) \subset
\Ocal_n^\x$.
\end{proof}

\begin{remark} \label{169}
Alternatively, we may directly apply \cite{ribes} theorem 2.3.15, which
shows that if $K$ is the $p$-Sylow subgroup of a profinite group $G$,
then there is up to conjugation in $G$ a unique closed subgroup $H$ of
$G$ such that $G=KH$ and $K \cap H = 1$. Indeed, since in our case both
extensions
\begin{align*}
    & 1 \raa P \raa G_1 \raa C \raa 1\\[1ex]
    & 1 \raa P \raa G_2 \raa C \raa 1
\end{align*}
are split by the Schur-Zassenhaus theorem, we obtain that both of the
corresponding sections are conjugate in $N_{\Ocal_n^\x}(P)$, and hence
that $G_1$ and $G_2$ are conjugate in $N_{\Ocal_n^\x}(P)$.
\end{remark}

\begin{corollary} \label{173}
Two finite subgroups of $\Ocal_n^\x$ are conjugate if and only if they
are isomorphic. \qed
\end{corollary}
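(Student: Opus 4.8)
The plan is to reduce the statement immediately to Theorem \ref{168}, which already establishes conjugacy in $\Ocal_n^\x$ under two structural hypotheses: that the $p$-Sylow subgroups are isomorphic and that the quotients by them are isomorphic. The equivalence splits into two directions, one of which is purely formal, and for the other the only task is to check that an abstract isomorphism automatically supplies both hypotheses of Theorem \ref{168}.

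For the forward direction, if two finite subgroups $G_1, G_2 \subset \Ocal_n^\x$ are conjugate, say $aG_1a^{-1} = G_2$ for some $a \in \Ocal_n^\x$, then conjugation by $a$ restricts to a group isomorphism $G_1 \iso G_2$. This requires no further argument and is the easy half of the corollary.

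For the converse, I would start from an abstract group isomorphism $\phi: G_1 \iso G_2$ and verify that it respects the two invariants used in Theorem \ref{168}. By Proposition \ref{017}, the group $S_n$ is the unique $p$-Sylow subgroup of $\Ocal_n^\x$, so the $p$-Sylow subgroup of each $G_i$ is exactly $P_i := G_i \cap S_n$, and it is normal in $G_i$; this is precisely the structure recorded in the split extension $1 \ra P_i \ra G_i \ra C_i \ra 1$ following the Schur-Zassenhaus theorem. An isomorphism necessarily carries the $p$-Sylow subgroup onto the $p$-Sylow subgroup, so $\phi(P_1) = P_2$ and in particular $P_1 \iso P_2$. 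Because $\phi$ maps $P_1$ onto $P_2$, it descends to an isomorphism of the cyclic quotients $G_1/P_1 \iso G_2/P_2$. Both hypotheses of Theorem \ref{168} are then satisfied, and I conclude that $G_1$ and $G_2$ are conjugate in $\Ocal_n^\x$.

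I expect no genuine obstacle in this argument: the entire substance of the corollary lives in Theorem \ref{168} (and, behind it, in Lemmas \ref{166}, \ref{167} and \ref{269}), and what remains is the routine observation that an abstract isomorphism matches up the normal $p$-Sylow subgroups and hence the quotients. The one point to state carefully is the appeal to the uniqueness and normality of the $p$-Sylow from Proposition \ref{017}, which is what makes the phrase ``$\phi$ preserves the $p$-Sylow subgroup'' unambiguous and forces $\phi(P_1) = P_2$ rather than merely sending $P_1$ to some conjugate $p$-subgroup. This is exactly why the statement carries a \qed in its heading: once Theorem \ref{168} is in hand, the corollary is immediate.
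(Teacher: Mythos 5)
Your proof is correct and follows exactly the route the paper intends: the corollary is stated with a \qed precisely because it is immediate from Theorem \ref{168}, and your verification that an abstract isomorphism matches up the unique normal $p$-Sylow subgroups $P_i = G_i \cap S_n$ (via Proposition \ref{017}) and hence the quotients is the only content to check. Nothing is missing.
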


\begin{theorem} \label{024}
If $p$ is an odd prime and $n = (p-1)p^{k-1}m$ with $m$ prime to $p$,
the group $\Sbb_n$, respectively $\Dbb_n^\x$, has exactly $k+1$
conjugacy classes of maximal finite subgroups; they are represented by
\[
    G_0 = C_{p^n-1} 
    \qq \text{and} \qq 
    G_\alpha = C_{p^\alpha} \rtimes C_{(p^{n_\alpha}-1)(p-1)} \qq 
    \text{for} \q 1 \leq \alpha \leq k.
\] 
Moreover, when $p-1$ does not divide $n$, the only class of maximal
finite subgroups is that of $G_0$.
\end{theorem}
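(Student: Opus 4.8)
The plan is to assemble this statement from the pieces already in place: the existence result (Theorem \ref{161}), the upper bound on the prime-to-$p$ quotient (Corollary \ref{158}), and the conjugacy machinery (Theorem \ref{168} and Corollary \ref{173}), and then to pin down maximality of each $G_\alpha$ by an elementary divisibility estimate on the integers $n_\alpha$. Since every finite subgroup of $\Dbb_n^\x$ already lies in $\Ocal_n^\x = \Sbb_n$ by Proposition \ref{016}, and since by Corollary \ref{173} two finite subgroups are conjugate in $\Ocal_n^\x$ exactly when isomorphic (hence the classification is insensitive to conjugating in $\Ocal_n^\x$ or in $\Dbb_n^\x$), it suffices to work inside $\Ocal_n^\x$. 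The moreover clause is immediate: when $p-1 \nmid n$, Proposition \ref{022} already gives that $G_0 = C_{p^n-1}$ represents the unique class of maximal finite subgroups. For the main assertion I assume $p-1 \mid n$ and use Notation \ref{131}, so $n = (p-1)p^{k-1}m$ with $k \geq 1$.

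Next I would recall the general structure: every finite subgroup $G \subset \Ocal_n^\x$ sits in a split extension $1 \ra P \ra G \ra C \ra 1$ with $P = G \cap S_n$ cyclic of order $p^\alpha$ — cyclic because $p$ odd excludes the quaternionic case of Proposition \ref{019} — and with $0 \leq \alpha \leq k$ by Proposition \ref{021}, while $C$ injects into $\Fbb_{p^n}^\x$. For $\alpha = 0$ this forces $G \subset \Fbb_{p^n}^\x \iso C_{p^n-1}$, so $G_0 = C_{p^n-1}$ is the maximal choice; for $1 \leq \alpha \leq k$, Corollary \ref{158} bounds $C$ by the cyclic group of order $(p^{n_\alpha}-1)(p-1)$, and Theorem \ref{161} realizes this bound to produce $G_\alpha$. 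In particular $|G| \leq |G_\alpha|$ whenever $G$ has $p$-Sylow of order $p^\alpha$.

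I would then show that every maximal finite subgroup is conjugate to some $G_\alpha$. Given a finite $G$ with $p$-Sylow of order $p^\alpha$, conjugate by Lemma \ref{166} so that $G \cap S_n = P_\alpha$; then $G \subset N_{\Ocal_n^\x}(P_\alpha)$, which is a split extension of the cyclic group $R = C_{(p^{n_\alpha}-1)(p-1)}$ (Lemma \ref{160}) by its $p$-Sylow, with $H^1(R, -)$ trivial by Lemma \ref{167}. Running the splitting-conjugacy argument of Theorem \ref{168} over the image of $G$ in $R$ shows that, after a further conjugation by an element of $N_{\Ocal_n^\x}(P_\alpha) \cap S_n$, the group $G$ is contained in $G_\alpha$. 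Hence a subgroup with non-full quotient is strictly enlargeable, so a maximal $G$ must have full quotient $R$, giving $G \iso G_\alpha$ and thus $G$ conjugate to $G_\alpha$ by Corollary \ref{173}.

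The remaining and most delicate point is the maximality of each $G_\alpha$, which I would settle by divisibility. Suppose $G_\alpha \subset H$ with $H$ finite and $H \cap S_n$ cyclic of order $p^\beta$; then $\beta \geq \alpha$ and $G_\alpha/P_\alpha$ injects into $H/(H \cap S_n)$. For $\alpha \geq 1$ this forces $(p^{n_\alpha}-1) \mid (p^{n_\beta}-1)$, i.e. $n_\alpha \mid n_\beta$; but $n_\beta = p^{k-\beta}m \leq p^{k-\alpha}m = n_\alpha$, so $\beta = \alpha$ and $H = G_\alpha$. For $\alpha = 0$, if $\beta \geq 1$ one would need $(p^n-1) \mid (p^{n_\beta}-1)(p-1)$, which is impossible since $n \geq (p-1)n_\beta \geq 2n_\beta$ yields $p^n - 1 \geq p^{2n_\beta}-1 = (p^{n_\beta}-1)(p^{n_\beta}+1) > (p^{n_\beta}-1)(p-1)$; so $\beta = 0$ and $H = G_0$. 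Finally, distinct $\alpha$ give $p$-Sylow subgroups of different orders, so $G_0, \ldots, G_k$ are pairwise non-isomorphic, hence pairwise non-conjugate by Corollary \ref{173}, yielding exactly $k+1$ classes. I expect the containment step of the third paragraph to be the real obstacle, since it is there that the nonabelian cohomology vanishing of Lemma \ref{167} is indispensable; with that in hand, the maximality estimate and the count are routine.
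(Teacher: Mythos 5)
Your proposal is correct and follows essentially the same route as the paper's proof: existence from Theorem \ref{161}, the bound on the prime-to-$p$ quotient from Corollary \ref{158}, conjugacy via Lemma \ref{166}, Lemma \ref{167} and Theorem \ref{168}, and Proposition \ref{022} for the case $p-1 \nmid n$. The two points you spell out that the paper's proof leaves implicit — running the splitting-conjugacy argument to subconjugate a subgroup with non-full quotient into $G_\alpha$, and the divisibility estimate $(p^{n_\alpha}-1) \nmid (p^{n_\beta}-1)$ ruling out containments $G_\alpha \subset G_\beta$ across different Sylow orders — are correct and are genuinely needed to conclude that there are exactly $k+1$ maximal classes.
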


\begin{proof}
First note that proposition \ref{022} and theorem \ref{168} imply that
there is a unique maximal conjugacy class $G_0$ of finite subgroups of
order prime to $p$ in $\Ocal_n^\x \iso \Sbb_n$, respectively in
$\Dbb_n^\x$ by proposition \ref{016}, and that this class is the only
one among finite subgroups if $n$ is not a multiple of $p-1$.

Now assume that $1 \leq \alpha \leq k$. By theorem \ref{161}, there is a
finite subgroup $G_\alpha$ in $\Dbb_n^\x$ realized as an extension
\[
    1 \raa C_{p^\alpha} \raa G_\alpha 
    \raa C_{(p^{n_\alpha}-1)(p-1)} \raa 1,
\]
where
\[
    C_{p^\alpha} = G_\alpha \cap S_n
    \qq \text{and} \qq
    G_\alpha/C_{p^\alpha} 
    \iso C_{(p^{n_\alpha}-1)(p-1)} 
    \subset \Fbb_{p^n}^\x.
\]
The Schur-Zassenhaus theorem implies that this extension splits, in
other words that
\[
    G_\alpha = C_{p^\alpha} \rtimes C_{(p^{n_\alpha}-1)(p-1)}.
\]
Corollary \ref{158} and theorem \ref{168} ensure that $G_\alpha$
represents the unique maximal conjugacy class of finite subgroups of
$\Ocal_n^\x \iso \Sbb_n$ which have a $p$-Sylow subgroup of order
$p^\alpha$.
\end{proof}

\begin{corollary} \label{175}
If $p>2$ and $1 \leq \alpha \leq k$, then
\[
    Z(G_\alpha) \iso C_{(p^{n_\alpha}-1)}.
\]
\end{corollary}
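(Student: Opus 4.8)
The plan is to locate $Z(G_\alpha)$ inside the centralizer of the $p$-Sylow subgroup and then cut it down by the conjugation action of a complement. Write $P = C_{p^\alpha} = G_\alpha \cap S_n$ for the normal $p$-Sylow subgroup, so that by theorem \ref{024} we have $G_\alpha = P \rtimes Q$ with $Q \iso C_{(p^{n_\alpha}-1)(p-1)}$; fix a generator $g$ of $Q$. Since every central element commutes with $P$, we have $Z(G_\alpha) \subseteq A := C_{G_\alpha}(P)$, and the whole computation takes place in $A$.

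First I would determine $A$. Because $P$ is abelian it lies in the center of $A$, and proposition \ref{157} gives that $A/P$ injects into $\Fbb_{p^{n_\alpha}}^\x \iso C_{p^{n_\alpha}-1}$, so $A/P$ is cyclic. Then $A/Z(A)$ is a quotient of the cyclic group $A/P$, hence cyclic, so $A$ is abelian, and by proposition \ref{018} it is cyclic. An order count pins down the two factors: proposition \ref{157} also gives $G_\alpha/A \hookrightarrow C_{p-1}$ (the $p'$-part of $Aut(P)$ for $p>2$, as in corollary \ref{158}), so in $(p^{n_\alpha}-1)(p-1) = |G_\alpha/P| = |G_\alpha/A|\cdot|A/P|$ the factor $|G_\alpha/A|$ divides $p-1$ and $|A/P|$ divides $p^{n_\alpha}-1$; the product forces $|G_\alpha/A| = p-1$ and $|A/P| = p^{n_\alpha}-1$. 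Thus $A \iso C_{p^\alpha(p^{n_\alpha}-1)}$, whose $p'$-part $C'$ is the unique (hence characteristic) cyclic subgroup of order $p^{n_\alpha}-1$.

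Next I would show $C' \subseteq Z(G_\alpha)$ and that nothing else survives. Conjugation by $g$ realizes a generator of $G_\alpha/A \iso C_{p-1}$ acting on $P$, so $C' = \lan g^{p-1} \ran$ is exactly the kernel of the $Q$-action on $P$; it therefore centralizes $P$ and, lying in the abelian group $Q$, commutes with $g$ as well, hence with all of $G_\alpha = \lan P, g \ran$. This gives $C' \subseteq Z(G_\alpha)$. For the reverse inclusion, note $Z(G_\alpha) = A^{Q}$ is the fixed subgroup of $g$-conjugation on the cyclic group $A = P \x C'$, and this action splits as the action on $P$ times the trivial action on $C'$. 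On $P \iso C_{p^\alpha}$ the generator acts as $x \mapsto x^a$ with $a$ of order $p-1$ generating the $p'$-part of $Aut(P)$, so $a \not\equiv 1 \mod p$; then $a-1$ is a unit modulo $p^\alpha$ and $x^j$ is fixed only for $j \equiv 0$, whence $P^{Q} = 1$. Therefore $Z(G_\alpha) = P^{Q} \x C'^{Q} = C' \iso C_{p^{n_\alpha}-1}$.

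I expect the only delicate points to be the order count isolating $|A/P| = p^{n_\alpha}-1$, which hinges on the two divisibility constraints of proposition \ref{157} being simultaneously sharp, and the fixed-point-free action on $P$, i.e. that a nontrivial element of the $p'$-part of $Aut(C_{p^\alpha})$ reduces to a nontrivial automorphism of $C_p$ and so fixes only the identity. Everything else is a formal consequence of $A$ being cyclic and of the conjugation action decomposing along the $p$- and $p'$-parts.
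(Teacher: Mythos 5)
Your proof is correct and follows essentially the same route as the paper: both arguments combine the semidirect product decomposition of theorem \ref{024} with the two facts from proposition \ref{157} (the kernel of the complement's action on $C_{p^\alpha}$ is the central $C_{p^{n_\alpha}-1}$, and the residual $C_{p-1}$ acts faithfully). Your write-up merely makes explicit what the paper leaves implicit, namely that for $p>2$ the faithful $C_{p-1}$-action on $C_{p^\alpha}$ is in fact fixed-point-free (since $a \not\equiv 1 \bmod p$ makes $a-1$ a unit mod $p^\alpha$), which is the step needed to rule out central elements inside the $p$-Sylow subgroup.
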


\begin{proof}
This follows from theorem \ref{024} and proposition \ref{157}, where the
latter shows that $C_{(p^{n_\alpha}-1)}$ embeds into $Z(G_\alpha)$ and
that $C_{(p^{n_\alpha}-1)(p-1)}/C_{(p^{n_\alpha}-1)} \iso C_{p-1}$ acts
faithfully on $C_{p^\alpha}$.
\end{proof}

The following case can be explicitly analyzed. As noted in remark
\ref{171}, this provides a counter example to the main results of
\cite{hewett2}.

\begin{example} \label{049}
Assume that $p$ is odd and $n = (p-1)p^{k-1}m$ with $(p;m)=1$. Let
$\omega \in \Dbb_n^\x$ be a primitive $(p^n\!-\!1)$-th root of unity in
$\Ocal_n^\x$. Define
\[
    X := \omega^{\frac{p-1}{2}}S\ \in \Ocal_n^\x 
    \qq \text{and} \qq 
    Z := X^l
    \q \text{with }
    l = \frac{n}{p-1}.
\]
A simple calculation shows
\[
    Z^{p-1} = X^n = -p.
\]
We can show (see \cite{henn2} lemma 19) that $\Qbb_p(Z)$ contains a
primitive $p$-th root of unity $\zeta_p$. Because the fields $\Qbb_p(Z)$
and $\Qbb_p(\zeta_p)$ are of the same degree $p-1$ over $\Qbb_p$, they
must be identical. We set
\[
    K := \Qbb_p(Z) = \Qbb_p(\zeta_p). 
\]
We note that $p^n-1$ is divisible by $(p^l-1)(p-1)$ and let
\[
    \tau 
    := \omega^{\frac{p^n-1}{(p^l-1)(p-1)}}\ \in \Fbb_q^\x.
\]
We have 
\[
    \tau Z \tau^{-1} 
    = \omega^{\frac{p^n-1}{p-1}}Z 
    = \zeta_{p-1} Z,
\] 
for $\zeta_{p-1}$ a primitive $(p\!-\!1)$-th root of unity in
$\Ocal_n^\x$. Hence $\tau$ induces an automorphism of $K$ of order $p-1$
which sends $\zeta_p$ to another root of unity of the same order, and
$\tau$ normalizes the group generated by $\zeta_p$.  The group $G$
generated by $\zeta_p$ and $\tau$ is clearly of order $p(p^l-1)(p-1)$;
it is therefore maximal. Since $X$ commutes with all elements of $K$, it
necessarily commutes with $\zeta_p$. Moreover the fact that
\[
    X \tau X^{-1} = \tau^p
\]
shows that $X$ belongs to the normalizer $N_{\Dbb_n^\x}(G)$. The
valuation of $X$ is $\frac{1}{n}$ by definition, and we have
\[
    v(N_{\Dbb_n^\x}(G)) = \frac{1}{n}\Zbb.
\]
As in lemma \ref{166}, we can then apply the Skolem-Noether theorem to
obtain that there is only one conjugacy class of subgroups of
$\Ocal_n^\x$ that are isomorphic to $G$. 

In particular, if $p=3$ and $n=4$, then $k=1$, $m=2$, the order of
$\omega$ is $80$, and a maximal finite $3$-Sylow subgroup in
$\Ocal_n^\x$ is isomorphic to $C_3$. Here 
\[
    X=\omega S,\qq Z=\omega^4 S^2 \qq \text{and} \qq Z^2 = -3. 
\]
In order to find an element $\zeta_3$ in $\Qbb_3(X^2)$, we may solve the
equation
\[
    (x+yZ)^3 = 1 \qq \text{with} \q x, y \in \Qbb_3.
\]
We find $x = \pm y$ with $x = -\frac{1}{2}$, from which we obtain the
primitive third roots of unity
\[
    \zeta_3 = -\frac{1}{2}(1+\omega^4 S^2) 
    \qq \text{and} \qq 
    \zeta_3^2 = -\frac{1}{2}(1-\omega^4 S^2)
\]
in the field $\Qbb_3(Z)$. Here $\tau = \omega^5$ is of order $16$ and
we easily verify the relations
\[
    \tau \zeta_3 \tau^{-1} = \zeta_3^2, \qq 
    X\zeta_3X^{-1} = \zeta_3, \qq 
    X\tau X^{-1} = \tau^3,
\]
showing as expected that 
\[
    v(N_{\Dbb_n^\x}(C_{3} \rtimes C_{2(3^2-1)})) 
    = \frac{1}{4}\Zbb
    \qq \text{and} \qq
    |\Ccal_{\Ocal_n^\x}(C_{3} \rtimes C_{2(3^2-1)})| = 1.
\]
\end{example}

\begin{remark} \label{171}
Theorem \ref{024} and example \ref{049} (in particular the case where
$n=4$ and $p=3$) bring a contradiction to the main results of
\cite{hewett2}.  In the latter, a central result concerning the
nonabelian finite groups when $p > 2$ is proposition 3.9: it states that
for $\alpha \geq 1$ the normalizer of $G_\alpha$ in $\Dbb_n^\x$ has
valuation group
\[
    v(N_{\Dbb_n^\x}(G_\alpha))
    = \frac{f(\Qbb_p(\zeta_{p^\alpha(p^{n_\alpha}-1)}/\Qbb_p))}{n} \Zbb
    = \frac{n_\alpha}{n} \Zbb,
\]
where $f$ denotes the residue degree of the given cyclotomic extension.
As a consequence of this incorrect result propositions 3.10 to 3.12 in
\cite{hewett2} are incorrect as well.
\end{remark}

\begin{theorem} \label{032}
Let $p=2$ and $n = 2^{k-1}m$ with $m$ odd. The group $\Sbb_n$,
respectively $\Dbb_n^\x$, has exactly $k$ maximal conjugacy
classes of finite subgroups. If $k \neq 2$, they are represented by 
\[
    G_\alpha 
    = C_{2^\alpha(2^{n_\alpha}-1)} \qq 
    \text{for} \q 1 \leq \alpha \leq k.
\]
If $k=2$, they are represented by $G_\alpha$ for $\alpha \neq 2$ and by
the unique maximal nonabelian conjugacy class
\[
    Q_8 \rtimes C_{3(2^m-1)} \iso T_{24} \x C_{2^m-1},
\]
the latter containing $G_2$ as a subclass.
\end{theorem}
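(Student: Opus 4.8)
The plan is to run the same scheme as in the odd-prime theorem \ref{024}, exploiting the decisive simplification that $p-1=1$ when $p=2$. First I would use proposition \ref{016} to work inside $\Ocal_n^\x \iso \Sbb_n$, and recall from proposition \ref{019} and \ref{029} that the $2$-Sylow subgroup $P$ of any finite subgroup $G$ is either cyclic of order $2^\alpha$ (forcing $1 \leq \alpha \leq k$) or isomorphic to $Q_8$, the latter occurring exactly when $n \equiv 2 \mod 4$, that is when $k=2$, by proposition \ref{027}. Since $-1 \in \Zbb_2^\x$ is a central element of order $2$, any finite subgroup of odd order extends properly, so no maximal finite subgroup has trivial $2$-Sylow; thus $\alpha \geq 1$ throughout.

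For the cyclic case I would fix $1 \leq \alpha \leq k$ and apply theorem \ref{161} (with $p-1=1$) to produce $G_\alpha$ with $G_\alpha \cap S_n = C_{2^\alpha}$ and $G_\alpha/C_{2^\alpha} \iso C_{2^{n_\alpha}-1}$. Corollary \ref{158} bounds the quotient of any finite subgroup with $2$-Sylow $C_{2^\alpha}$ inside $C_{2^{n_\alpha}-1}$, so $G_\alpha$ is maximal among these. The crucial new feature relative to the odd case is that, by proposition \ref{157}, $N_{G_\alpha}(P)/C_{G_\alpha}(P)$ injects into the odd part of $Aut(C_{2^\alpha})$, which is trivial since $Aut(C_{2^\alpha})$ is a $2$-group; hence $P$ is central, $G_\alpha$ is abelian, and by proposition \ref{018} cyclic, so $G_\alpha \iso C_{2^\alpha(2^{n_\alpha}-1)}$. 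Theorem \ref{168} then gives uniqueness up to conjugation of each such class.

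The quaternionic case arises only for $k=2$ (so $n=2m$ with $m$ odd). Here I would invoke theorem \ref{165} to obtain the maximal subgroup $T_{24} \x C_{2^m-1} \iso Q_8 \rtimes C_{3(2^m-1)}$, its maximality being established there via proposition \ref{164}. For uniqueness I would apply theorem \ref{168} with $P \iso Q_8$ and quotient $\iso C_{3(2^m-1)}$ (the latter forced by proposition \ref{164} together with maximality): after conjugating the two quaternionic $2$-Sylows together by lemma \ref{166}, their images in the cyclic group $\Fbb_{2^n}^\x$ coincide and the corresponding splittings are conjugate. Finally, since $C_4 \subset Q_8 \subset T_{24}$ and $C_{2^m-1}$ is a direct factor, one has $C_{4(2^m-1)} = G_2$ sitting inside this nonabelian group, so $G_2$ ceases to be maximal precisely when $k=2$.

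Assembling the count, for $k \neq 2$ only cyclic $2$-Sylows occur and the maximal classes are exactly $G_1, \ldots, G_k$, giving $k$ classes; for $k=2$ the class of $G_2$ is replaced by the single nonabelian class, again yielding $k=2$ classes. I expect the main subtlety to lie in the collapse at $k=2$ --- namely in verifying both that $Q_8$ is available exactly there and that it absorbs the top cyclic class $G_2$ --- together with confirming that the uniqueness input of theorem \ref{168} genuinely covers a quaternionic $2$-Sylow, which it does because that theorem's proof (through lemma \ref{166}) handles the quaternionic case explicitly.
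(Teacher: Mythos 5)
Your proposal is correct and follows essentially the same route as the paper's proof: theorem \ref{161}, corollary \ref{158} and theorem \ref{168} for the cyclic classes, and propositions \ref{029} and \ref{027} together with theorems \ref{165} and \ref{168} for the quaternionic case at $k=2$. Your use of proposition \ref{157} to show that the quotient acts trivially on $C_{2^\alpha}$ (so that $G_\alpha$ is genuinely cyclic rather than merely a semi-direct product) simply makes explicit a detail the paper leaves implicit in its appeal to the proof of theorem \ref{024}.
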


\begin{proof}
The argument for the cyclic classes $G_\alpha$ is identical to that of
theorem \ref{024} except that in this case $G_0 = C_{2^n-1}$ is
contained in $G_1$.

Furthermore, proposition \ref{029} ensures that a nonabelian finite
subgroup may only exist in $\Ocal_n^\x \iso \Sbb_n$, respectively in
$\Dbb_n^\x$, when its $2$-Sylow subgroup is isomorphic to $Q_8$, and
proposition \ref{027} shows that such a group occurs if and only if
$k=2$.  In fact, assuming $k=2$, the group $Q_8 \rtimes C_{3(2^m-1)}$
embeds in $\Ocal_n^\x$ as a maximal finite subgroup by theorem
\ref{165}, and its conjugacy class is unique among maximal nonabelian
finite subgroups by theorem \ref{168}.
\end{proof}

\begin{remark} \label{172}
Theorem \ref{032} contradicts theorem 5.3 in \cite{hewett2}. According
to the latter, we should have two distinct conjugacy classes in
$\Ocal_n^\x$ for the finite groups containing $T_{24} = Q_8 \rtimes
C_3$. Letting $Inn(T_{24})$ and $Out(T_{24})$ denote the inner and outer
automorphisms of $T_{24}$, the error occurs before theorem 5.1 where it
is said that $Out(T_{24})$ is trivial. This is absurd given that
\[
    |Aut(T_{24})| = 24 
    \qq \text{and} \qq 
    Inn(T_{24}) \iso T_{24}/\{\pm 1\}.
\]
All results given in section 5 of \cite{hewett2} are then wrong in this
case.
\end{remark}

\begin{corollary} \label{212}
The abelian finite subgroups of $\Dbb_n^\x$ are classified up to
conjugation in $\Ocal_n^\x$, respectively in $\Dbb_n^\x$, by the pairs
of integers $(\alpha, d)$ satisfying
\[
    0 \leq \alpha \leq k
    \qq \text{and} \qq
    1 \leq d\ |\ p^{n_\alpha}-1;
\]
each such pair represents the cyclic class $C_{p^\alpha d}$.
\end{corollary}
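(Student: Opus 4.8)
The plan is to reduce the classification to a computation of the possible orders of cyclic subgroups of $\Ocal_n^\x$, and then to match those orders bijectively with the admissible pairs $(\alpha, d)$. First I would invoke proposition \ref{018} to see that every abelian finite subgroup of $\Dbb_n^\x$ is cyclic, and proposition \ref{016} to see that it lies in $\Ocal_n^\x$. By corollary \ref{173}, two finite subgroups of $\Ocal_n^\x$ are conjugate if and only if they are isomorphic, and two cyclic groups are isomorphic precisely when they share the same order. Thus classifying the abelian finite subgroups up to conjugation is exactly the problem of determining which integers $N$ occur as orders of cyclic subgroups of $\Ocal_n^\x$.

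Next I would establish the necessity of the stated conditions. Writing $N = p^\alpha d$ with $d$ prime to $p$, a cyclic $G \iso C_N$ has $p$-Sylow subgroup $P = G \cap S_n$ cyclic of order $p^\alpha$; proposition \ref{021} then forces $\phi(p^\alpha) \mid n$, whence $\alpha \leq k$. Since $G$ is abelian we have $C_G(P) = G$, so proposition \ref{157} shows that $G/P \iso C_d$ injects into $\Fbb_{p^{n_\alpha}}^\x \iso C_{p^{n_\alpha}-1}$, giving $d \mid p^{n_\alpha}-1$. For $\alpha = 0$ one has $n_0 = n$ and $P$ trivial, so $C_d = G$ injects into $\Fbb_{p^n}^\x$ via reduction, recovering $d \mid p^n - 1 = p^{n_0}-1$; this matches the general case.

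For sufficiency I would exhibit, for each admissible pair, a cyclic subgroup of the prescribed order, the crux being the existence of a cyclic subgroup of the maximal order $p^\alpha(p^{n_\alpha}-1)$. When $p = 2$ this is the class $G_\alpha = C_{2^\alpha(2^{n_\alpha}-1)}$ furnished by theorem \ref{032} (with $G_0 = C_{2^n-1}$ covering $\alpha = 0$). When $p > 2$ and $\alpha \geq 1$, I would take $P_\alpha \x Z(G_\alpha)$ inside the group $G_\alpha$ of theorem \ref{024}: by corollary \ref{175} the center is $C_{p^{n_\alpha}-1}$, it commutes with $P_\alpha$, the two factors have coprime orders, and hence their product is cyclic of order $p^\alpha(p^{n_\alpha}-1)$ (with $G_0 = C_{p^n-1}$ for $\alpha = 0$). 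Since a cyclic group has a unique subgroup of every order dividing its own, each such maximal torus contains a copy of $C_{p^\alpha d}$ for every $d \mid p^{n_\alpha}-1$.

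Finally I would verify that $(\alpha, d) \mapsto C_{p^\alpha d}$ is injective on conjugacy classes: as $d$ is prime to $p$, the $p$-adic valuation of $p^\alpha d$ recovers $\alpha$ and then $d$, so distinct admissible pairs give cyclic groups of distinct orders, hence non-isomorphic and, by corollary \ref{173}, non-conjugate. Combined with necessity and sufficiency this yields the claimed bijection. I expect no serious obstacle; the only point demanding care is the necessity step, where the observation $C_G(P) = G$ is what allows proposition \ref{157} to control the prime-to-$p$ part $d$.
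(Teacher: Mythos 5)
Your proposal is correct and follows essentially the same route as the paper: the paper's proof simply invokes corollary \ref{173} to reduce conjugacy classification to isomorphism classes and then reads off the admissible orders from the maximal classes of theorems \ref{024} and \ref{032}. Your additional steps (proposition \ref{021} and \ref{157} for necessity, the cyclic subgroup $P_\alpha \x Z(G_\alpha)$ via corollary \ref{175} for sufficiency when $p>2$) are just a careful expansion of what the paper leaves implicit, and they check out.
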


\begin{proof}
By corollary \ref{173}, the finite cyclic subgroups are classified up to
conjugation by their isomorphism classes. The result then follows from
the maximal finite classes provided by theorem \ref{024} and \ref{032}.
\end{proof}

\begin{remark} \label{026}
We restricted ourselves in considering the finite subgroups of
$\Dbb_n^\x$ as split extensions of subgroups of $\Fbb_{p^n}^\x$ by
finite $p$-subgroups in $S_n$.  It is also possible to express these
finite groups as subextensions of short exact sequences of the form
\[
    1 \raa C_{\Dbb_n^\x}(C_{p^\alpha}) 
    \raa N_{\Dbb_n^\x}(C_{p^\alpha})
    \raa Aut(C_{p^\alpha}) \raa 1,
\]
as induced by the Skolem-Noether theorem. A finite group of type
$G_\alpha \subset \Dbb_n^\x$ can be seen as a metacyclic extension
\[
    1 \raa \lan A \ran \raa G_\alpha 
    \raa \lan B \ran \raa 1,
\]
with
\[
    \lan A \ran = G_\alpha' \x Z(G_\alpha) 
    \qq \text{and} \qq 
    \lan B \ran \iso C_{p-1},
\]
where $G_\alpha'$ denotes the commutator subgroup of $G_\alpha$. The
classification given in \cite{hewett} follows this approach, but has the
disadvantages of being less direct and relying on a classification
previously established in \cite{amitsur}.
\end{remark}


\chapter{A classification scheme for finite subgroups} 
\label{091}

We fix a prime $p$, a positive integer $n$ which is a multiple of
$(p-1)$, and a unit $u \in \Zbb_p^\x$. Given these, we adopt notation
\ref{131}.  In this chapter, we provide necessary and sufficient
conditions for the existence of finite subgroups of 
\[
    \Gbb_n(u) = \Dbb_n^\x/\lan pu \ran
\]
whose intersection with $\Sbb_n$ have a cyclic $p$-Sylow subgroup. The
remaining case of a quaternionic $2$-Sylow will be treated in chapter
\ref{268}. 

\vspace{5ex}

\section{A canonical bijection} 
\label{073}

Let 
\[
    \pi: \Dbb_n^\x \raa \Gbb_n(u) 
\]
denote the canonical homomorphism. In order to study a finite subgroup
$F$ of $\Gbb_n(u)$, it is often more convenient to analyse its preimage 
\[
     \tilde{F} := \pi^{-1}(F)\ \in \Dbb_n^\x.
\]
For any group $G$ we define $\Fcal(G)$ \label{306} to be the set of all
finite subgroups of $G$; and if $G$ is a subgroup of $\Dbb_n^\x$ we
define $\tilde{\Fcal}_u(G)$ to be the set, eventually empty, of all
subgroups of $G$ which contain $\lan pu \ran$ as a subgroup of finite
index.

\begin{proposition} \label{074}
The map $\pi$ induces a canonical bijection
\[
    \tilde{\Fcal}_u(\Dbb_n^\x) \raa \Fcal(\Gbb_n(u)).    
\]
This bijection passes to conjugacy classes.
\end{proposition}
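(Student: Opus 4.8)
The plan is to recognize the assertion as an instance of the correspondence (lattice) theorem for the quotient $\pi: \Dbb_n^\x \raa \Dbb_n^\x/\lan pu \ran = \Gbb_n(u)$, combined with the single extra observation that finiteness of a subgroup of the quotient matches finite index of the kernel in its preimage. The first point I would record is that $pu \in \Qbb_p$, the center of $\Dbb_n$, so $\lan pu \ran$ is central and in particular normal; hence the quotient is a group and $\ker \pi = \lan pu \ran$. This centrality will also be what makes the conjugacy statement work.

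I would then write down the two candidate maps explicitly and check they are well-defined. The forward map sends $\tilde F \in \tilde{\Fcal}_u(\Dbb_n^\x)$ to $\pi(\tilde F) = \tilde F/\lan pu \ran$; since $\lan pu \ran$ has finite index in $\tilde F$ by hypothesis, the image is a finite subgroup and so lies in $\Fcal(\Gbb_n(u))$. The backward map sends a finite subgroup $F \subset \Gbb_n(u)$ to $\pi^{-1}(F)$, a subgroup of $\Dbb_n^\x$ containing $\ker \pi = \lan pu \ran$; restricting $\pi$ gives $\pi^{-1}(F)/\lan pu \ran \iso F$, so $[\pi^{-1}(F):\lan pu \ran] = |F|$ is finite and $\pi^{-1}(F) \in \tilde{\Fcal}_u(\Dbb_n^\x)$. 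That these maps are mutually inverse is the standard pair of identities $\pi^{-1}(\pi(\tilde F)) = \tilde F$ (valid because $\tilde F \supset \ker\pi$) and $\pi(\pi^{-1}(F)) = F$ (valid because $\pi$ is surjective); the first of these in particular shows the forward map is injective on subgroups containing the kernel.

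For the conjugacy claim I would argue both directions. If $\tilde F_2 = a \tilde F_1 a^{-1}$ for some $a \in \Dbb_n^\x$, applying $\pi$ gives $\pi(\tilde F_2) = \pi(a)\,\pi(\tilde F_1)\,\pi(a)^{-1}$, so the images are conjugate in $\Gbb_n(u)$. Conversely, if $\pi(\tilde F_1)$ and $\pi(\tilde F_2)$ are conjugate by some $\bar a \in \Gbb_n(u)$, I would lift $\bar a$ to $a \in \Dbb_n^\x$ and form $a\tilde F_1 a^{-1}$. Here centrality of $\lan pu \ran$ gives $a\lan pu \ran a^{-1} = \lan pu \ran \subset a\tilde F_1 a^{-1}$, so this conjugate again lies in $\tilde{\Fcal}_u(\Dbb_n^\x)$; as it has the same image under $\pi$ as $\tilde F_2$ and both contain $\ker\pi$, the injectivity noted above forces $a\tilde F_1 a^{-1} = \tilde F_2$, so $\tilde F_1$ and $\tilde F_2$ are conjugate in $\Dbb_n^\x$.

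The argument is essentially formal, so I do not expect a serious obstacle. The only step requiring genuine care is the backward direction of the conjugacy statement: one must lift the conjugating element out of the quotient and invoke the centrality (hence normality) of $\lan pu \ran$ to guarantee that the conjugated subgroup still belongs to $\tilde{\Fcal}_u(\Dbb_n^\x)$ before identifying it with $\tilde F_2$ via injectivity.
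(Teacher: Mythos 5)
Your proposal is correct and follows essentially the same route as the paper: the standard correspondence theorem identities $\pi\pi^{-1}(F) = F$ and $\pi^{-1}\pi(\tilde F) = \tilde F$ (the latter using $\ker\pi = \lan pu \ran \subset \tilde F$), plus pushing conjugation forward through $\pi$ and lifting the conjugating element for the converse. Your extra remark that normality (here centrality) of $\lan pu \ran$ keeps $a\tilde F_1 a^{-1}$ in $\tilde{\Fcal}_u(\Dbb_n^\x)$ before applying injectivity is a point the paper leaves implicit, but it is the same argument.
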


\begin{proof}
For any $F \in \Fcal(\Gbb_n(u))$, it is clear that $\lan pu \ran$ is a
subgroup of finite index in $\pi^{-1}(F)$. Moreover, the fact that $\pi$
is surjective implies that $\pi \pi^{-1}(F) = F$. On the other hand, for
$G \in \tilde{\Fcal}_u(\Dbb_n^\x)$, as $Ker(\pi) = \lan pu \ran$ is
always a subgroup of $G$, we have $\pi^{-1}\pi(G) = G$.

In order to show the second assertion, let $F_1, F_2$ be two subgroups
of $\Gbb_n(u)$ with $\tilde{F_i} = \pi^{-1}(F_i)$ for $i \in \{1, 2\}$.
If there is an element $a \in \Dbb_n^\x$ such that $\tilde{F_2} = a
\tilde{F_1} a^{-1}$, then since $\pi$ is a group homomorphism we have
\begin{align*}
    F_2 &= \pi(a \tilde{F_1} a^{-1})\\ 
    &= \pi(a) \pi(\tilde{F_1}) \pi(a)^{-1}\\
    &= \pi(a) F_1 \pi(a)^{-1}.
\end{align*}
Conversely, if $F_2 = b F_1 b^{-1}$ for some $b \in \Gbb_n(u)$, and if
$\tilde{b} \in \Dbb_n^\x$ satisfies $\pi(\tilde{b})=b$, then from the
above identity we have
\[
    \pi(\tilde{b}\tilde{F_1}\tilde{b}^{-1}) = F_2,
\]
as was to be shown.
\end{proof}

\begin{remark} \label{075}
In a similar way, the map $\pi$ induces a bijection between the set of
all subgroups of $\Gbb_n(u)$ and the set of all subgroups of $\Dbb_n^\x$
containing $\lan pu \ran$.
\end{remark}

\begin{notation}
For a subgroup $G$ of $\Gbb_n(u)$, we denote by
\[
    \tilde{G} = \pi^{-1}(G)
\]
its preimage under the canonical map $\pi: \Dbb_n^\x \ra \Gbb_n(u)$.
From now on, when introducing a tilded group, its non-tilded
correspondent will be implicitly defined.
\end{notation}

\begin{remark} \label{133}
The valuation $v = v_{\Dbb_n}: p \mto 1$ on $\Dbb_n^\x$ induces a
commutative diagram with exact rows and columns
\[
    \xymatrix{
    && \lan pu \ran \ar[d] \ar[r]^{v} & \Zbb \ar[d] \\
    1 \ar[r] & \Sbb_n \ar[d] \ar[r] 
      & \Dbb_n^\x \ar[d]^{\pi} \ar[r]^{v} 
      & \frac{1}{n}\Zbb \ar[d] \ar[r] & 1\\
    1 \ar[r] & \Sbb_n \ar[r] & \Gbb_n(u) \ar[r]^{v} 
      & \frac{1}{n}\Zbb/\Zbb \ar[r] & 1.
    }
\]
Subgroups of $\Sbb_n$ can therefore be considered as subgroups of both
$\Gbb_n(u)$ and $\Dbb_n^\x$. 
\end{remark}

\begin{proposition} \label{339}
If $F \subset \Sbb_n$, then $\tilde{F} = F \x \lan pu \ran$.
\end{proposition}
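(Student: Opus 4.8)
The plan is to determine $\tilde F = \pi^{-1}(F)$ completely by combining the valuation $v = v_{\Dbb_n}$ with the commutative diagram of remark \ref{133}. The key structural observation is that $pu$ lies in the center of $\Dbb_n^\x$: both $p$ and $u$ belong to $\Qbb_p$, the center of $\Dbb_n$, so $\langle pu \rangle$ is a central cyclic subgroup and therefore commutes elementwise with $F$. Moreover $v(pu) = v(p) + v(u) = 1$, so $\langle pu \rangle$ maps isomorphically onto $\Zbb$ under $v$, whereas every element of $F \subset \Sbb_n = \{x : v(x) = 0\}$ has valuation $0$. Hence $F \cap \langle pu \rangle = \{1\}$, and the product $F\langle pu \rangle$ is an internal direct product $F \x \langle pu \rangle$.

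First I would verify the inclusion $F \x \langle pu \rangle \subset \tilde F$. The factor $\langle pu \rangle = \ker \pi$ sits inside $\pi^{-1}(F)$ trivially, and the factor $F$ satisfies $F \subset \pi^{-1}(\pi(F)) = \tilde F$ because, under the identifications of remark \ref{133}, the restriction $\pi|_{\Sbb_n}$ is exactly the inclusion $\Sbb_n \hookrightarrow \Gbb_n(u)$, so $\pi(F) = F$.

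The substance of the argument is the reverse inclusion $\tilde F \subset F \x \langle pu \rangle$. Given $x \in \tilde F$, its image $\pi(x)$ lies in $F \subset \Sbb_n$, and $\Sbb_n$ is precisely the kernel of $v: \Gbb_n(u) \ra \frac{1}{n}\Zbb/\Zbb$ in the bottom row of the diagram; by commutativity the class of $v(x)$ in $\frac{1}{n}\Zbb/\Zbb$ is trivial, so $k := v(x)$ is an integer. I would then set $f := (pu)^{-k}x$. Since $v(pu) = 1$, this element has valuation $0$ and so lies in $\Sbb_n = \Ocal_n^\x$, while $\pi(f) = \pi(x) \in F$ because $\pi(pu) = 1$. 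As $\pi|_{\Sbb_n}$ is injective (its kernel $\langle pu \rangle \cap \Sbb_n$ is trivial by the valuation remark above) and restricts to the inclusion of the copy of $\Sbb_n$ in $\Gbb_n(u)$, the element $f$ is forced to lie in $F$. Therefore $x = f\,(pu)^k \in F \x \langle pu \rangle$, which completes the proof.

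I expect no genuine obstacle: the only point demanding care is the bookkeeping that keeps $\Sbb_n$ consistently identified as a subgroup of both $\Dbb_n^\x$ and $\Gbb_n(u)$, so that the statement \emph{``$f \in \Sbb_n$ with $\pi(f) \in F$''} really does force $f \in F$, together with the observation that $v(x)$ is an honest integer rather than merely a nonzero coset. Both facts are immediate from the diagram in remark \ref{133}.
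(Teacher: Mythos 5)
Your proof is correct and follows essentially the same route as the paper, whose own proof is a one-line appeal to the commutative diagram of remark \ref{133} together with the centrality of $\lan pu \ran$ in $\Dbb_n^\x$. You have simply unfolded that citation into its explicit details: the valuation argument giving $F \cap \lan pu \ran = \{1\}$ and integrality of $v(x)$ for $x \in \tilde{F}$, and the normalization $f = (pu)^{-v(x)}x$ landing in $F$ — exactly the content the paper leaves implicit.
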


\begin{proof}
This follows from the exact commutative diagram of remark \ref{133} and
the fact that $\lan pu \ran$ is central in $\Dbb_n^\x$.
\end{proof}

\section{Chains of extensions} 
\label{076}

For $F$ a finite subgroup of $\Gbb_n(u)$ such that $F \cap S_n$ is
cyclic, we set \label{317}
\[
    G := F \cap \Sbb_n
    \qq \text{and} \qq
    F_0 :=\ \lan F \cap S_n, Z_{p'}(G) \ran,
\]
for $S_n$ the $p$-Sylow subgroup of $\Sbb_n$ and $Z_{p'}(G)$ the
$p'$-part of the center $Z(G)$ of $G$. As previously seen, $F_0$ is the
maximal abelian subgroup of $G$ equal to $P \x Z_{p'}(G)$ for $P$ the
cyclic $p$-Sylow subgroup of $G$. 

\begin{remark} \label{177}
From proposition \ref{339}, we know that
\[
    \tilde{F_0} = F_0 \x \lan pu \ran.
\]
\end{remark}

\begin{remark} \label{178}
By definition, $G$ consists of the elements of $F$ which are of
valuation zero in $\Dbb_n^\x$. Hence $G$ is normal in $F$ and there is a
short exact sequence
\[
    1 \raa G \raa F \raa F/G \raa 1,
\]
where the quotient embeds via the valuation into $\frac{1}{n}\Zbb/\Zbb$.
\end{remark}

\begin{proposition} \label{132}
We have
\[
    \tilde{C_F(F_0)} 
    = C_{\tilde{F}}(\tilde{F_0}) 
    \supset \tilde{F_0}
    \qq \text{and} \qq
    C_F(F_0)/F_0 
    \iso C_{\tilde{F}}(\tilde{F_0})/\tilde{F_0}.
\]
\end{proposition}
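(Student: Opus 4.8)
The plan is to deduce everything from two elementary features of $\lan pu \ran$: it is central in $\Dbb_n^\x$ (since $pu \in \Qbb_p^\x = Z(\Dbb_n^\x)$), and its nontrivial elements have nonzero valuation, so that $v(pu)=1$ forces $\lan pu \ran \cap \Sbb_n = \{1\}$.

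First I would show $C_{\Dbb_n^\x}(\tilde{F_0}) = C_{\Dbb_n^\x}(F_0)$. By remark \ref{177} we have $\tilde{F_0} = F_0 \x \lan pu \ran$, and since $\lan pu \ran$ is central, an element of $\Dbb_n^\x$ centralizes $\tilde{F_0}$ if and only if it centralizes $F_0$. Intersecting with $\tilde{F}$ gives $C_{\tilde{F}}(\tilde{F_0}) = \tilde{F} \cap C_{\Dbb_n^\x}(F_0)$. The inclusion $C_{\tilde F}(\tilde F_0) \supset \tilde F_0$ is then immediate, because $\tilde F_0 \subseteq \tilde F$ and $\tilde F_0 = F_0 \x \lan pu \ran$ is abelian.

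The key step is to identify $\tilde{C_F(F_0)} = \pi^{-1}(C_F(F_0))$ with $\tilde{F} \cap C_{\Dbb_n^\x}(F_0)$. Regarding $F_0 \subset \Sbb_n$ simultaneously inside $\Dbb_n^\x$ and inside $\Gbb_n(u)$ as in remark \ref{133}, the inclusion $\supseteq$ is clear since $\pi$ is a homomorphism. For $\subseteq$, take $y \in \pi^{-1}(C_F(F_0))$; then for each $x \in F_0$ the image $\pi(y)$ commutes with $x$ in $\Gbb_n(u)$, i.e. the commutator $[y,x]$ lies in $Ker(\pi) = \lan pu \ran$. This is where the main obstacle lies, and it is resolved by a valuation count: $v([y,x]) = v(y)+v(x)-v(y)-v(x) = 0$, so $[y,x]$ lies in $\lan pu \ran \cap \Sbb_n = \{1\}$, whence $[y,x]=1$ and $y$ centralizes $F_0$ in $\Dbb_n^\x$. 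This observation — that a commutator has valuation zero and hence cannot be a nontrivial power of $pu$ — is precisely what upgrades commutation modulo $\lan pu \ran$ to honest commutation, and it is the heart of the argument. Combining the two paragraphs yields $\tilde{C_F(F_0)} = C_{\tilde F}(\tilde F_0)$.

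Finally, for the isomorphism of quotients I would restrict $\pi$ to $C_{\tilde F}(\tilde F_0)$. By the first part this is a surjection onto $C_F(F_0)$ whose kernel is $\lan pu \ran$ and which sends $\tilde F_0$ onto $F_0$. Passing to quotients and applying the third isomorphism theorem (using $\lan pu \ran \subseteq \tilde F_0$) then gives $C_{\tilde F}(\tilde F_0)/\tilde F_0 \iso C_F(F_0)/F_0$, completing the proof.
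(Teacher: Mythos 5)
Your proof is correct, but the mechanism at the crucial step is genuinely different from the paper's. Both arguments reduce to the same point: for $\tilde{x}$ lying over $C_F(F_0)$ and $\tilde{f} \in \tilde{F_0}$, the commutator $\tilde{x}\tilde{f}\tilde{x}^{-1}\tilde{f}^{-1}$ lies in $Ker(\pi) = \lan pu \ran$, and one must upgrade this to honest commutation. The paper does this by a torsion argument: since $\lan pu \ran$ is central, the assignment $\tilde{f} \mto \tilde{x}\tilde{f}\tilde{x}^{-1}\tilde{f}^{-1}$ is a homomorphism $\tilde{F_0} \ra \lan pu \ran$, it is trivial on $\lan pu \ran$ and hence factors through the finite group $F_0$, and a homomorphism from a finite group to the torsion-free group $\lan pu \ran$ is trivial. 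You instead use a valuation count: $v$ is a homomorphism into the abelian group $\frac{1}{n}\Zbb$, so every commutator has valuation zero, while the only valuation-zero element of $\lan pu \ran$ is $1$. Both are correct and elementary; yours is slightly more direct and more general in this setting (no need to verify the homomorphism property of the commutator map or to invoke finiteness of $F_0$ --- it applies verbatim to any subgroup of $\Sbb_n$), whereas the paper's torsion argument is purely group-theoretic and would survive in a context with no valuation available. Your preliminary identification $C_{\Dbb_n^\x}(\tilde{F_0}) = C_{\Dbb_n^\x}(F_0)$ and your explicit third-isomorphism-theorem derivation of $C_{\tilde{F}}(\tilde{F_0})/\tilde{F_0} \iso C_F(F_0)/F_0$ also spell out steps the paper dismisses as obvious or as a direct consequence of the first assertion.
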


\begin{proof}
It is obvious that
\[
    \tilde{C_F(F_0)}\ 
    \supset\ C_{\tilde{F}}(\tilde{F_0})\ 
    \supset\ \tilde{F_0},
\]
and the second assertion is a direct consequence of the first one. 

It remains to check that $\tilde{C_F(F_0)} \subset
C_{\tilde{F}}(\tilde{F_0})$.  If $\tilde{x} \in \tilde{C_F(F_0)}$ and
$\tilde{f} \in \tilde{F_0}$, then there is a unique element
$z=z(\tilde{x}, \tilde{f}) \in \lan pu \ran$ such that
\[
    z\tilde{f} = \tilde{x}\tilde{f}\tilde{x}^{-1}.
\]
Because $\lan pu \ran$ is central, we have
\[
    z(\tilde{x}, \tilde{f}\tilde{g})  
    = z(\tilde{x}, \tilde{f})z(\tilde{x}, \tilde{g}),
\]
for every $\tilde{f}, \tilde{g} \in \tilde{F_0}$. This yields an exact
sequence
\[
    1 \raa C_{\tilde{F}}(\tilde{F_0}) \raa \tilde{C_F(F_0)}
    \raa Hom(\tilde{F_0}, \lan pu \ran),
\]
where the image of $\tilde{x}$ is the homomorphism $\tilde{f} \mapsto
z(\tilde{x}, \tilde{f})$. As stated in remark \ref{177} we know that
$\tilde{F_0} = F_0 \x \lan pu \ran$. Because $\lan pu \ran$ is central,
the image of $\tilde{C_F(F_0)}$ in $Hom(\tilde{F_0}, \lan pu \ran)$ is
contained in the subgroup of those homomorphisms which are trivial on
$\lan pu \ran \subset \tilde{F_0}$ and hence factors through $F_0$.
Because $F_0$ is finite and $\lan pu \ran$ is torsion free, it follows
that this image is trivial and $C_{\tilde{F}}(\tilde{F_0}) =
\tilde{C_F(F_0)}$.
\end{proof}

\begin{proposition} \label{077}
We have
\[
    F = N_F(F_0)
    \qq \text{and} \qq
    \tilde{F} = N_{\tilde{F}}(\tilde{F_0}).
\]
\end{proposition}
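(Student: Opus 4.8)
The plan is to reduce both equalities to the single assertion that $F_0$ is a normal subgroup of $F$ (equivalently that $\tilde{F_0}$ is normal in $\tilde{F}$): once a subgroup is normal in the ambient group its normalizer is the whole group, so $N_F(F_0)=F$ and $N_{\tilde{F}}(\tilde{F_0})=\tilde{F}$ follow at once. The first ingredient is the normality of $G=F\cap\Sbb_n$. This is exactly remark \ref{178}: $G$ is the set of valuation-zero elements of $F$, i.e. the kernel of the valuation homomorphism $F\raa\frac{1}{n}\Zbb/\Zbb$, hence normal in $F$. Since conjugation in $\Dbb_n^\x$ preserves the valuation $v$, the same reasoning applied to $\tilde{F}$ shows that $\tilde{F}\cap\Sbb_n=G$ is normal in $\tilde{F}$ as well.

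Next I would show that $F_0$ is characteristic in $G$. By proposition \ref{017} every $p$-subgroup of $\Ocal_n^\x$ lies in $S_n$, so $P:=G\cap S_n$ contains all elements of $G$ of $p$-power order and is therefore the unique $p$-Sylow subgroup of $G$; being unique, it is characteristic. The factor $Z_{p'}(G)$ is the $p'$-part of the center $Z(G)$, and both the formation of the center and the extraction of the $p'$-part of an abelian group are characteristic constructions, so $Z_{p'}(G)$ is characteristic in $G$ too. As $F_0=\lan P,Z_{p'}(G)\ran$ is generated by two characteristic subgroups, it is itself characteristic in $G$. A characteristic subgroup of a normal subgroup is normal, so the normality of $G$ in $F$ yields $F_0\triangleleft F$, which is the first equality $F=N_F(F_0)$.

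For the second equality I would argue directly from $\tilde{F_0}=F_0\x\lan pu\ran$ (remark \ref{177}). Given $\tilde{x}\in\tilde{F}$, centrality of $\lan pu\ran$ in $\Dbb_n^\x$ gives $\tilde{x}\tilde{F_0}\tilde{x}^{-1}=(\tilde{x}F_0\tilde{x}^{-1})\x\lan pu\ran$; and since $G\triangleleft\tilde{F}$, conjugation by $\tilde{x}$ restricts to an automorphism of $G$, which must fix the characteristic subgroup $F_0$. Hence $\tilde{x}$ normalizes $\tilde{F_0}$, so $\tilde{F_0}\triangleleft\tilde{F}$ and $\tilde{F}=N_{\tilde{F}}(\tilde{F_0})$. (Alternatively this follows from the first equality through the correspondence of proposition \ref{074} and remark \ref{075}, since $\ker\pi=\lan pu\ran$ is central and therefore $N_{\tilde{F}}(\tilde{F_0})=\pi^{-1}(N_F(F_0))$.) The argument is essentially formal; the only point needing a little care is the verification that $F_0$ is characteristic in $G$ — in particular that $P$ is the unique, hence characteristic, $p$-Sylow subgroup of $G$ — together with the observation that conjugation in $\Dbb_n^\x$ preserves valuation, which is what makes $G$ normal in the larger group $\tilde{F}$ rather than merely in $F$.
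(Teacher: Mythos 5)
Your proof is correct and follows essentially the same route as the paper's: $P$ is the unique (hence characteristic) $p$-Sylow subgroup of $G$, $Z_{p'}(G)$ is characteristic, so $F_0$ is characteristic in the normal subgroup $G$ and therefore normal in $F$. The only cosmetic difference is the final step for the tilde statement, where the paper invokes the correspondence of proposition \ref{074} while you argue directly from the normality of $G$ in $\tilde{F}$ and the centrality of $\lan pu \ran$ --- both are immediate.
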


\begin{proof}
Because $P$ is the unique $p$-Sylow subgroup of $G = F \cap \Sbb_n$, it
is a characteristic subgroup of $G$. Moreover as $F_0 = P \x Z_{p'}(G)$
and $Z_{p'}(G)$ is also a characteristic subgroup of $G$, it follows
that $F_0$ is a characteristic subgroup of $G$; in other words
\[
    N_{\Gbb_n(u)}(G) \subset N_{\Gbb_n(u)}(F_0)
    \qq \text{and} \qq
    N_{\Dbb_n^\x}(G) \subset N_{\Dbb_n^\x}(F_0).
\]
Since $G$ is by definition normal in $F$, its subgroup $F_0$ is normal
in $F$. Proposition \ref{074} finally implies that $\tilde{F_0}$ is
normal in $\tilde{F}$.
\end{proof}

\begin{corollary} \label{179}
There are short exact sequences
\begin{align*}
    & 1 \raa F_0 \raa F \raa F/F_0 \raa 1,\\[1ex]
    & 1 \raa \tilde{F_0} \raa \tilde{F} \raa F/F_0 \raa 1.
\end{align*}
\end{corollary}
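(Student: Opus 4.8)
The plan is to deduce both exact sequences directly from the normality statements already secured in proposition \ref{077}, the only genuine content being the identification of the two quotients. For the first sequence, proposition \ref{077} asserts $F = N_F(F_0)$, which is exactly the statement that $F_0$ is normal in $F$; the sequence
\[
    1 \raa F_0 \raa F \raa F/F_0 \raa 1
\]
is then simply the canonical presentation of a group by a normal subgroup and its quotient, so nothing further is needed.

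For the second sequence I would first invoke the other half of proposition \ref{077}, namely $\tilde{F} = N_{\tilde{F}}(\tilde{F_0})$, so that $\tilde{F_0}$ is normal in $\tilde{F}$ and the quotient $\tilde{F}/\tilde{F_0}$ is defined. It then remains to identify this quotient with $F/F_0$. The key point is that $\lan pu \ran = \ker(\pi)$ lies inside $\tilde{F_0}$: indeed by remark \ref{177} we have $\tilde{F_0} = F_0 \x \lan pu \ran$, and by construction $\tilde{F} = \pi^{-1}(F)$ contains $\tilde{F_0} = \pi^{-1}(F_0)$. Consequently $\pi$ restricts to surjections $\tilde{F} \raa F$ and $\tilde{F_0} \raa F_0$, each with kernel $\lan pu \ran$.

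The final step is the third isomorphism theorem applied to the chain $\lan pu \ran \subset \tilde{F_0} \subset \tilde{F}$, which yields
\[
    \tilde{F}/\tilde{F_0}
    \iso (\tilde{F}/\lan pu \ran)\,/\,(\tilde{F_0}/\lan pu \ran)
    = F/F_0.
\]
Feeding this isomorphism into the presentation of $\tilde{F}$ by its normal subgroup $\tilde{F_0}$ produces the second sequence
\[
    1 \raa \tilde{F_0} \raa \tilde{F} \raa F/F_0 \raa 1.
\]
I expect no real obstacle here: both normality facts are handed to us by proposition \ref{077}, and the matching of quotients reduces to the single observation that $\ker(\pi)$ has already been absorbed into $\tilde{F_0}$, so passing from $\Dbb_n^\x$ to $\Gbb_n(u)$ leaves the quotient by $F_0$ unchanged.
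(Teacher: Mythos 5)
Your proposal is correct and follows essentially the same route as the paper: the paper's proof likewise cites the two normality statements from proposition \ref{077} together with the identification $\tilde{F}/\tilde{F_0} \iso F/F_0$, which you have simply spelled out explicitly (and correctly) via the third isomorphism theorem using $\lan pu \ran = \ker(\pi) \subset \tilde{F_0}$.
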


\begin{proof}
This follows from that facts that $F_0$ is normal in $F$, $\tilde{F_0}$
is normal in $\tilde{F}$, and that $\tilde{F}/\tilde{F_0} \iso F/F_0$.
\end{proof}

Note that in $\Dbb_n^\x$ we have $\Qbb_p(F_0) = \Qbb_p(\tilde{F_0})$,
and there are inclusions
\[
    \tilde{F_0} 
    \subset \Qbb_p(F_0)^\x 
    \subset C_{\Dbb_n^\x}(F_0)
    \subset N_{\Dbb_n^\x}(F_0).
\]
Given $F$ and $F_0$, the second extension of corollary \ref{179} can
then be broken into three pieces via the chain of subgroups
\[
    \tilde{F_0} \subset \tilde{F_1} \subset \tilde{F_2} 
    \subset \tilde{F_3} = \tilde{F}
\]
defined by: \label{318}
\begin{itemize}
    \item $\tilde{F_1} := \tilde{F} \cap \Qbb_p(F_0)^\x$;

    \item $\tilde{F_2} := \tilde{F} \cap C_{\Dbb_n^\x}(F_0)
    = C_{\tilde{F}}(F_0)$;

    \item $\tilde{F_3} := \tilde{F} \cap N_{\Dbb_n^\x}(F_0) 
    = N_{\tilde{F}}(F_0) = \tilde{F}$.
\end{itemize}
Clearly the groups $\tilde{F_0}$, $\tilde{F_1}$ are abelian, and since
$\tilde{F_2}/\tilde{F_0} \iso F_2/F_0 \subset \frac{1}{n}\Zbb/\Zbb$ is
cyclic by proposition \ref{132} and remark \ref{178}, the group
$\tilde{F_2}$ is also abelian. Moreover we note that for $0 \leq i \leq
2$, each $\tilde{F_{i}}$ is normal in $\tilde{F_{i+1}}$. In particular,
any finite subgroup $F \subset \Gbb_n(u)$ determines successive group
extensions with abelian kernel
\[
    1 \raa \tilde{F_{i}} \raa \tilde{F_{i+1}} 
    \raa \tilde{F_{i+1}}/\tilde{F_{i}} \raa 1
    \qq \text{for} \q 0 \leq i \leq 2.
\]
In the following sections, we analyse these extensions recursively. 

\begin{remark} \label{207}
For $F_0$ the situation is completely understood from chapter \ref{036}
(see corollary \ref{212}), where we have shown that the conjugacy
classes of
\[
    F_0 \iso C_{p^\alpha} \x C_d
\]
are classified by the pairs of integers $(\alpha, d)$ satisfying
\[
    0 \leq \alpha \leq k
    \qq \text{and} \qq
    1 \leq d\ |\ p^{n_\alpha}-1.
\]
\end{remark}

\section{Existence and uniqueness in cohomological terms} 

The following general approach will be applied to the $F_i$'s that can
be understood through extensions with abelian kernel.

Let $\rho: G \ra Q$ be a group homomorphism whose kernel $Ker(\rho)$ is
not necessarily supposed to be abelian. Let $A$ be an abelian normal
subgroup of $G$ which is contained in the center of $ker(\rho)$, and let
$B$ be a subgroup of $Im(\rho)$.
\[
    \Gcal_\rho(G, A, B)
    :=
    \{ H \leq G\ |\ H \cap Ker(\rho) = A 
    \text{ and } H/A \iso B \text{ via } \rho \}.
\]
When $Ker(\rho)$ is abelian, we let $e_{\rho} \in H^2(Im(\rho),
Ker(\rho))$ denote the cohomology class of the extension
\[
    1 \raa Ker(\rho) \raa G \raa Im(\rho) \raa 1,
\]
and we define $e_{\rho}(B) \in H^2(B, Ker(\rho))$ to be the image of
$e_{\rho}$ under the map 
\[
    j^\ast = H^2(j, Ker(\rho))
\]
induced by the inclusion $j$ of $B$ into $Im(\rho)$.

\begin{theorem} \label{204}
If $Ker(\rho)$ is abelian, then the set $\Gcal_\rho(G, A, B)$ is
non-empty if and only if $e_\rho(B)$ becomes trivial in $H^2(B,
Ker(\rho)/A)$.
\end{theorem}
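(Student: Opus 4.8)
The plan is to convert the cohomological triviality condition into a splitting statement about a concrete group extension, and then match splittings with the elements of $\Gcal_\rho(G, A, B)$. Write $K := Ker(\rho)$ (abelian by hypothesis) and $E := \rho^{-1}(B)$; note $A \subset Z(K) \subset K$. Restricting the extension $1 \raa K \raa G \raa Im(\rho) \raa 1$ along the inclusion $j$ of $B$ yields an extension $1 \raa K \raa E \raa B \raa 1$ whose class is precisely $e_\rho(B) = j^\ast(e_\rho)$. Since $A$ is normal in $G$ it is normal in $E$, and because $K$ is abelian the conjugation action of $E$ on $K$ factors through $B = E/K$ and preserves $A$; hence $K/A$ is a $B$-module and the quotient $E/A$ makes sense. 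The first key point I would establish is that the image of $e_\rho(B)$ under the coefficient map $K \raa K/A$ is represented by the extension $1 \raa K/A \raa E/A \raa B \raa 1$. This is the standard fact that the pushout of an extension along the quotient $K \ra K/A$, for $A$ a submodule normal in the middle group, is obtained by dividing the middle group by $A$. Consequently, the hypothesis that $e_\rho(B)$ becomes trivial in $H^2(B, K/A)$ is equivalent to the splitting of $1 \raa K/A \raa E/A \raa B \raa 1$.

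It then remains to show that this last extension splits if and only if $\Gcal_\rho(G, A, B) \neq \emptyset$, which I would do by matching subgroups $H \in \Gcal_\rho(G, A, B)$ with complements of $K/A$ in $E/A$. Let $q: E \raa E/A$ be the quotient. For the forward direction, given $H \in \Gcal_\rho(G, A, B)$, the condition $\rho(H) = B$ forces $H \subset E$ and $A \subset H$, so $q(H) = H/A$ is a subgroup of $E/A$. From $A \subset H$ and $A \subset K$ one gets $q(H) \cap (K/A) = (H \cap K)/A = A/A$, which is trivial by $H \cap K = A$; and $\rho(H) = B = \rho(E)$ with $K = Ker(\rho|_E)$ gives $E = HK$, hence $q(H)\cdot(K/A) = E/A$. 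Thus $q(H)$ is a complement and the extension splits. For the converse, given a complement with image $\bar{H} \subset E/A$, I would set $H := q^{-1}(\bar H)$ and verify the three defining properties: $A = Ker(q) \subset H \subset E$ gives $A \subset H \cap K$; any $x \in H \cap K$ satisfies $q(x) \in \bar H \cap (K/A) = 1$, so $x \in A$, whence $H \cap K = A$; finally $\rho$ factors through the isomorphism $H/A = \bar H \iso B$ induced by the section, with kernel $H \cap K = A$, so $\rho$ induces the required isomorphism $H/A \iso B$. Hence $H \in \Gcal_\rho(G, A, B)$.

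I expect the main obstacle to be the first paragraph: precisely identifying the image of $e_\rho(B)$ under the coefficient homomorphism with the class of $E/A$, and checking the module-theoretic hypotheses (that $A$ is a $B$-submodule of $K$, which uses $A$ normal in $G$ together with $K$ abelian) that make this identification legitimate. Once the condition is rephrased as the splitting of $1 \raa K/A \raa E/A \raa B \raa 1$, the equivalence with non-emptiness of $\Gcal_\rho(G, A, B)$ is a routine computation of intersections and products of subgroups, relying only on $A$ being normal in $G$ and $K$ being abelian.
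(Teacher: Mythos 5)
Your proof is correct, but it takes a genuinely different route from the paper's. The paper argues at the level of cohomology classes: given $H \in \Gcal_\rho(G,A,B)$ with class $e_H \in H^2(B,A)$, it forms the pushout $H'$ of $Ker(\rho) \la A \ra H$ and the pullback $G'$ of $G \ra Im(\rho) \la B$, uses the universal properties to merge the two diagrams and conclude $i_B^\ast(e_H) = e_\rho(B)$, and then invokes exactness of $H^2(B,A) \overset{i_B^\ast}{\raa} H^2(B,Ker(\rho)) \raa H^2(B,Ker(\rho)/A)$: non-emptiness of $\Gcal_\rho(G,A,B)$ is equivalent to $e_\rho(B)$ lying in the image of $i_B^\ast$, which by exactness is equivalent to $e_\rho(B)$ dying in $H^2(B,Ker(\rho)/A)$. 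You instead prove directly what the paper only records afterwards as an interpretation (remark \ref{205}): you identify the image of $e_\rho(B)$ under the coefficient map with the class of the concrete extension $1 \ra Ker(\rho)/A \ra \rho^{-1}(B)/A \ra B \ra 1$, so that triviality means splitting, and then match complements of $Ker(\rho)/A$ in $\rho^{-1}(B)/A$ with elements of $\Gcal_\rho(G,A,B)$ by elementary subgroup computations. Each approach buys something. Yours bypasses the coefficient exact sequence entirely, and in the converse direction it produces $H = q^{-1}(\bar H)$ as an honest subgroup of $G$, whereas the paper's converse constructs an abstract extension $H$ together with a homomorphism to $G$ and leaves the (easy but necessary) injectivity of that map implicit; your section/complement picture is also exactly the one the paper itself uses later, in the proof of theorem \ref{206}, so your argument unifies the two statements. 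The paper's route, on the other hand, makes explicit the correspondence between elements of $\Gcal_\rho(G,A,B)$ and lifts of $e_\rho(B)$ to $H^2(B,A)$, which is the form in which the obstruction is computed in the applications (e.g.\ theorem \ref{180} and section \ref{228}), where one works with explicit classes such as $e_u(F_0,r_1)$ rather than with splittings.
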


\begin{proof}
Let $H$ be an element of $\Gcal_\rho(G, A, B)$, and let $e_H \in H^2(B,
A)$ be the extension class of
\[
    1 \raa A \raa H \raa B \raa 1.
\]
Define $H'$ to be the pushout of the diagram
\[
    \xymatrix{
        Ker(\rho)
        & A \ar[l]_{\qq i} \ar[r]
        & H
    }
\]
given by the canonical inclusions of $A$ into $Ker(\rho)$ and $H$
respectively.  Then $H'$ fits into the commutative diagram
\[
    \xymatrix
    {
        1 \ar[r] & A \ar[r] \ar[d]_{i}
        & H \ar[d] \ar[r] & B \ar[r] \ar@{=}[d] & 1 \\
        1 \ar[r] & Ker(\rho) \ar[r] 
        & H' \ar[r] & B \ar[r] & 1, 
    }
\]
where the horizontal sequences are exact; the top extension class being
$e_H$, and the bottom extension class being $i_B^\ast(e_H)$, the image
of $e_H$ via the map $i_B^\ast = H^2(B, i)$. Furthermore, define $G'$ to
be the pullback of the diagram
\[
    \xymatrix{
        G \ar[r]
        & Im(\rho)
        & B \ar[l]_{\qq j}
    }
\]
given by the canonical inclusions of $G$ and $B$ into $Im(\rho)$. Then
$G'$ fits into the commutative diagram
\[
    \xymatrix
    {
        1 \ar[r] & Ker(\rho) \ar[r] \ar@{=}[d]
        & G' \ar[d] \ar[r] & B \ar[r] \ar[d]_{j} & 1 \\
        1 \ar[r] & Ker(\rho) \ar[r] 
        & G \ar[r] & Im(\rho) \ar[r] & 1, 
    }
\]
where the horizontal sequences are exact; the top extension class being
$e_\rho(B)$, and the bottom extension class being $e_\rho$. From the
universal properties of the pushout and the pullback, the above maps
\[
    H \raa B
    \qq \text{and} \qq
    Ker(\rho) \raa G
\]
determine a homomorphism from $H'$ to $G'$ merging the above diagrams
into
\[
    \xymatrix{
        1 \ar[r] & A \ar[r] \ar[d]_{i}
        & H \ar[d] \ar[r] & B \ar[r] \ar@{=}[d] & 1 \\
        1 \ar[r] & Ker(\rho) \ar[r] \ar@{=}[d]
        & H' \ar[r] \ar[d]_{\iso} & B \ar[r] \ar@{=}[d] & 1 \\
        1 \ar[r] & Ker(\rho) \ar[r] \ar@{=}[d]
        & G' \ar[d] \ar[r] & B \ar[r] \ar[d]_{j} & 1 \\
        1 \ar[r] & Ker(\rho) \ar[r] 
        & G \ar[r] & Im(\rho) \ar[r] & 1, 
    }
\]
so that $i_B^\ast(e_H) = e_\rho(B)$. Now we have a short exact sequence
\[
    1 \raa A \overset{i}{\raa} Ker(\rho) 
    \raa Ker(\rho)/A \raa 1,
\]
which induces an exact sequence in cohomology
\[
    H^2(B, A) \overset{i_B^\ast}{\raa} H^2(B, Ker(\rho)) 
    \raa H^2(B, Ker(\rho)/A).
\]
Since
\[
    e_\rho(B) \in H^2(B, Ker(\rho))    
\]
is in the image of $i_B^\ast$, it must become trivial in $H^2(B,
Ker(\rho)/A)$.

Conversely, if $e_\rho(B)$ becomes trivial in $H^2(B, Ker(\rho)/A)$,
then there is an element $e_H$ in $H^2(B, A)$ satisfying $i_B^\ast(e_H)
= e_\rho(B)$.  This means that there is an extension
\[
    1 \raa A \raa H \raa B \raa 1,
\]
and a connecting map from the pushout $H'$ to the pullback $G'$ which
induces the commutative diagram
\[
    \xymatrix{
        1 \ar[r] & A \ar[r] \ar[d]_{i}
        & H \ar[d] \ar[r] & B \ar[r] \ar[d]^{j} & 1 \\
        1 \ar[r] & Ker(\rho) \ar[r] 
        & G \ar[r] & Im(\rho) \ar[r] & 1,
    }
\]
with $H \in \Gcal_\rho(G, A, B)$.
\end{proof}

\begin{remark} \label{205}
We may interpret theorem \ref{204} by saying that $\Gcal_\rho(G,A,B)$ is
non-empty if and only if the associated extension
\[
    1 \raa Ker(\rho)/A \raa G/A \raa Im(\rho) \raa 1
\]
splits when pulled back to $B \subset Im(\rho)$.
\end{remark}

Denote by $\Gcal_\rho(G, A, B)/\sim_{Ker(\rho)}$ the set of orbits with
respect to the conjugation action of $Ker(\rho)$ on $\Gcal_\rho(G, A,
B)$. Given a distinguished element $H_0$ in $\Gcal_\rho(G,A,B)$, we have
an action of $B$ on $Ker(\rho)/A$ induced by the conjugation action of
$G$ on $Ker(\rho)$.  Indeed, since $A$ is normal in $G$, this
conjugation action determines a homomorphism $G \ra Aut(Ker(\rho)/A)$,
which in turn descends to a homomorphism $G/A \ra Aut(Ker(\rho)/A)$ as
$A$ is in the center of $Ker(\rho)$.  We thus obtain a canonical
homomorphism
\[
    B \iso H_0/A \subset G/A \raa Aut(Ker(\rho)/A),
\]
which allows us to consider $H^1(B, Ker(\rho)/A)$.  The latter can be
identified with the set of $Ker(\rho)/A$-conjugacy classes of sections
of the split extension of remark \ref{205}, as explained in \cite{brown}
chapter IV proposition 2.3 for the abelian case and \cite{serre3}
chapter I section 5.1 (see exercise 1) for the nonabelian case.

\begin{theorem} \label{206}
If $\Gcal_\rho(G, A, B)$ is non-empty and $H_0$ is an element of
$\Gcal_\rho(G, A, B)$, then there exists a bijection
\[
    \psi_{H_0}: H^1(B, Ker(\rho)/A) 
    \raa \Gcal_\rho(G, A, B)/\sim_{Ker(\rho)},
\]
which depends on the choice of $H_0$.
\end{theorem}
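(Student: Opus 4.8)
The plan is to reduce everything modulo $A$, translate the classification of the subgroups in $\Gcal_\rho(G,A,B)$ into the classification of sections of a split extension, and then apply the nonabelian cohomology identification recalled just before the statement. First I would pass to the quotient by $A$. Every $H \in \Gcal_\rho(G,A,B)$ contains $A$, since $H \cap Ker(\rho) = A$ and $A \subset Ker(\rho)$; hence by the lattice isomorphism theorem the assignment $H \mapsto H/A$ is a bijection from $\Gcal_\rho(G,A,B)$ onto the set $\Scal$ of subgroups $\bar{H} \leq G/A$ satisfying $\bar{H} \cap \bar{K} = 1$ and $\bar\rho(\bar{H}) = B$ with $\bar\rho|_{\bar{H}}$ an isomorphism, where $\bar{K} := Ker(\rho)/A$ and $\bar\rho : G/A \ra Im(\rho)$ is induced by $\rho$. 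Setting $E := \bar\rho^{-1}(B) \leq G/A$, the sequence $1 \raa \bar{K} \raa E \raa B \raa 1$ is exactly the pullback appearing in remark \ref{205}.

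Next I would identify $\Scal$ with sections. Each $\bar{H} \in \Scal$ is contained in $E$ and maps isomorphically onto $B$, so it is the image of the section $s_{\bar H} := (\bar\rho|_{\bar H})^{-1}: B \ra E$; conversely the image $s(B)$ of any section $s$ of $E \ra B$ meets $\bar K$ trivially and maps isomorphically onto $B$, hence lies in $\Scal$. This gives a bijection between $\Scal$ and the set of sections of $E \ra B$. I would then match the conjugation actions: because $A$ is normal in $G$ and $A \subset H$ for every $H$, conjugation by an element of $A$ fixes each such $H$, so the conjugation action of $Ker(\rho)$ on $\Gcal_\rho(G,A,B)$ factors through $\bar{K}$; under $H \mapsto H/A$ it corresponds precisely to the natural $\bar{K}$-conjugation on sections (here the centrality of $A$ in $Ker(\rho)$ is what makes the conjugation homomorphism $G \ra Aut(Ker(\rho))$ descend to $G/A \ra Aut(\bar{K})$). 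Passing to orbits yields
\[
    \Gcal_\rho(G,A,B)/\sim_{Ker(\rho)} \; \iso \; \{\text{sections of } E \ra B\}/\sim_{\bar{K}}.
\]

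Finally I would invoke the identification recalled before the theorem. Fixing the base section $s_0 := s_{H_0/A}$ arising from $H_0$, the embedding $B \iso H_0/A \subset G/A$ endows $\bar{K}$ with the $B$-group structure used in the definition of $H^1(B,\bar{K})$, and the set of $\bar{K}$-conjugacy classes of sections of $E \ra B$ is canonically identified with $H^1(B,\bar{K})$ (see \cite{brown} IV.2.3 in the abelian case and \cite{serre3} I.5.1 in the nonabelian case), with the trivial class corresponding to $s_0$. Composing this with the bijection above produces $\psi_{H_0}$, and its dependence on $H_0$ enters exactly through the base section $s_0$ used to write an arbitrary section as $s(b) = c(b)\,s_0(b)$ for a cocycle $c$. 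The main obstacle is the bookkeeping in the last two steps: one must check carefully that the $\bar{K}$-conjugation on sections induced by conjugating subgroups in $G$ coincides with the action defining $H^1(B,\bar{K})$, and that the base section determined by $H_0$ is the one sending the distinguished (trivial) cohomology class to the $\sim_{Ker(\rho)}$-class of $H_0$.
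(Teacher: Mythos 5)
Your proposal is correct and takes essentially the same approach as the paper: both arguments reduce modulo $A$ to identify the elements of $\Gcal_\rho(G,A,B)$ with group-theoretic sections (yours of the pullback $E \ra B$ from remark \ref{205}, the paper's of $\pi: G/A \ra G/Ker(\rho)$ restricted over $B$, which amount to the same data), check that the $Ker(\rho)$-conjugation on subgroups matches the $Ker(\rho)/A$-conjugation on sections, and conclude via the standard identification of conjugacy classes of sections of the split extension with $H^1(B, Ker(\rho)/A)$, based at the section determined by $H_0$. The points you flag as requiring care (the descent of the action through centrality of $A$ in $Ker(\rho)$, and the role of $H_0$ as base point) are exactly the ones the paper handles in the discussion preceding the theorem.
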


\begin{proof}
Let $\Scal(B, \pi)$ denote the set of all sections $s:B \ra G/A$ of the
canonical projection $\pi: G/A \ra G/Ker(\rho)$, that is
\[
    \Scal(B, \pi)
    :=
    \{ \text{group homomorphism } s:B \ra G/A\ 
    |\ (\pi \circ s)(b) = b \text{ for all } b \in B \}.
\]
For any $s \in \Scal(B,\pi)$, we denote by $\tilde{s}:B \ra G$ the
choice of a set theoretical lift of $s$. This defines maps
\begin{align*}
    \Gcal_\rho(G, A, B) & \raa \Scal(B, \pi)\\
    H & \mtoo s: B \iso H/A \ra G/A, \\[3ex]
    \Scal(B,\pi) & \raa \Gcal_\rho(G, A, B)\\
    s & \mtoo \lan A, \tilde{s}(B) \ran,
\end{align*}
which can easily be checked to be mutually inverse to each other and
compatible with the obvious actions of $Ker(\rho)$ by conjugation. The
desired result then follows from the usual interpretation of $H^1(B,
Ker(\rho)/A)$ as conjugacy classes of sections.
\end{proof}

\section{The first extension type} 
\label{194}

In this section we consider the first extension in the chain of section
\ref{076}. Recall that for a given subgroup $\tilde{F} \in
\tilde{\Fcal}_u(\Dbb_n^\x)$, we let $\tilde{F_0}$ be such that $F_0 =
\lan \tilde{F} \cap S_n, Z_{p'}(\tilde{F} \cap \Sbb_n) \ran$.

\begin{lemma} \label{326}
Let $H_0$ be an abelian finite subgroup of $\Sbb_n$, $\tilde{H_0} = H_0
\x \lan pu \ran$, and let $\tilde{\Fcal}_u(\Qbb_p(H_0)^\x, \tilde{H_0})$
be the set of all $\tilde{F} \in \tilde{\Fcal}_u(\Qbb_p(H_0)^\x)$ such
that
\begin{itemize}
    \item $\tilde{H_0} \subset \tilde{F}$, and

    \item the valuation $v: \tilde{F} \ra \frac{1}{n}\Zbb$ induces a
    monomorphism $\tilde{F}/\tilde{H_0} \ra \frac{1}{n}\Zbb/\Zbb$.
\end{itemize}
If $\tilde{F} \in \tilde{\Fcal}_u(\Qbb_p(H_0)^\x, \tilde{H_0})$, then
$\tilde{F_0} = \tilde{H_0}$.
\end{lemma}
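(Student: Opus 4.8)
The plan is to reduce the asserted identity $\tilde{F_0} = \tilde{H_0}$ to the group-theoretic equality $F_0 = H_0$ inside $\Sbb_n$, and then to pin down $F_0$ explicitly by exploiting that everything in sight is abelian. By remark \ref{177} we have $\tilde{F_0} = F_0 \x \lan pu \ran$, while $\tilde{H_0} = H_0 \x \lan pu \ran$ by hypothesis, so it suffices to prove $F_0 = H_0$. To compute $F_0$, I would first note that $\tilde{F} \subset \Qbb_p(H_0)^\x$ lies in the multiplicative group of the commutative field $\Qbb_p(H_0)$ and is therefore abelian; hence $F = \pi(\tilde{F})$ is a finite abelian subgroup of $\Gbb_n(u)$, and $G := F \cap \Sbb_n$ is an abelian finite subgroup of $\Dbb_n^\x$, thus cyclic by proposition \ref{018}. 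Writing $G = P \x G_{p'}$ with $P = F \cap S_n$ its $p$-Sylow subgroup, the abelianness of $G$ gives $Z_{p'}(G) = G_{p'}$, so that $F_0 = \lan F \cap S_n, Z_{p'}(G) \ran = \lan P, G_{p'} \ran = G$. This reduces the whole statement to showing $G = H_0$.

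For the inclusion $H_0 \subset G$, I would observe that $\tilde{H_0} \subset \tilde{F}$ together with $H_0 \subset \Ocal_n^\x$ (valuation zero) forces $\pi(H_0) \subset F \cap \Sbb_n = G$. Moreover $\lan pu \ran$ is torsion free, since $v((pu)^k) = k$, so the restriction of $\pi$ to the finite group $H_0$ is injective and $H_0$ is identified with its image in $G$.

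For the reverse inclusion $G \subset H_0$, I would unwind the second defining property of the set $\tilde{\Fcal}_u(\Qbb_p(H_0)^\x, \tilde{H_0})$. The valuation induces a homomorphism $\tilde{F} \ra \frac{1}{n}\Zbb/\Zbb$ which kills $\tilde{H_0}$ (because $v(\tilde{H_0}) = \Zbb$) and, by hypothesis, is injective on $\tilde{F}/\tilde{H_0}$; this says precisely that every $x \in \tilde{F}$ with $v(x) \in \Zbb$ already lies in $\tilde{H_0}$. The point is that $\{x \in \tilde{F}\ |\ v(x) \in \Zbb\}$ is exactly the preimage $\pi^{-1}(G)$, which by proposition \ref{339} equals $G \x \lan pu \ran$. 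Hence $G \x \lan pu \ran \subset H_0 \x \lan pu \ran$, and applying $\pi$ yields $G \subset H_0$.

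Combining the two inclusions gives $G = H_0$, hence $F_0 = G = H_0$ and $\tilde{F_0} = \tilde{H_0}$. The only real obstacle — and it is a mild one — is the bookkeeping in the third paragraph: correctly translating the abstract monomorphism hypothesis on $\tilde{F}/\tilde{H_0}$ into the concrete containment $\{x \in \tilde{F}\ |\ v(x) \in \Zbb\} \subset \tilde{H_0}$, and recognizing the valuation-integral part of $\tilde{F}$ as $\pi^{-1}(G)$. Once these identifications are in place the two inclusions close up immediately.
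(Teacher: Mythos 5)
Your proof is correct and follows essentially the same route as the paper's, which compresses the whole argument into the single chain $H_0 = Ker(v: F \ra \Zbb/n) = F \cap \Sbb_n = F_0$ in $\Gbb_n(u)$ and then lifts along $\pi$. Your two inclusions unpack the first equality, your abelianness observation (via $\tilde{F} \subset \Qbb_p(H_0)^\x$) justifies the implicit step $F \cap \Sbb_n = F_0$, and your use of remark \ref{177} and proposition \ref{339} makes the final lift $\tilde{H_0} = \tilde{F_0}$ explicit.
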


\begin{proof}
By assumption on $\tilde{F}$, we have in $\Gbb_n(u)$
\[
    H_0 = Ker(v: F \ra \Zbb/n) = F \cap \Sbb_n = F_0, 
\]
and therefore $\tilde{H_0} = \tilde{F_0}$.
\end{proof}

We now fix $F_0$ and analyse the set $\tilde{\Fcal}_u(\Qbb_p(F_0)^\x,
\tilde{F_0})$ of all $\tilde{F_1} \in \tilde{\Fcal}_u(\Qbb_p(F_0)^\x)$
such that
\begin{itemize} \label{307}
    \item $\tilde{F_1}$ contains $\tilde{F_0}$ as a subgroups of finite
    index, and

    \item $\tilde{F_1}/\tilde{F_0}$ injects via $v$ into
    $\frac{1}{n}\Zbb/\Zbb$.
\end{itemize}
Lemma \ref{326} ensures that the elements of
$\tilde{\Fcal}_u(\Qbb_p(F_0)^\x, \tilde{F_0})$ are extensions of the
form
\[
    1 \raa \tilde{F_0} \raa \tilde{F_1} 
    \raa \tilde{F_1}/\tilde{F_0} \raa 1
\]
with $\lan \tilde{F_1} \cap S_n, Z_{p'}(\tilde{F_1} \cap \Sbb_n) \ran =
F_0$. By definition, such an extension fits into a commutative diagram
\[
    \xymatrix
    {
        1 \ar[r] & \tilde{F_0} \ar[r] \ar[d]_{i}
        & \tilde{F_1} \ar[d] \ar[r] 
        & \tilde{F_1}/\tilde{F_0} \ar[r] \ar[d]_{j} & 1 \\
        1 \ar[r] & \Zbb_p(F_0)^\x \x \lan pu \ran \ar[r] 
        & \Qbb_p(F_0)^\x \ar[r] 
        & \frac{1}{e(\Qbb_p(F_0))} \Zbb/\Zbb \ar[r] & 1,
    }
\]
where $e(\Qbb_p(F_0))$ denotes the ramification index of $\Qbb_p(F_0)$
over $\Qbb_p$, where the horizontal maps form exact sequences and where
the vertical maps are the canonical inclusions. As 
\[
    \frac{1}{e(\Qbb_p(F_0))} \Zbb/\Zbb \iso \Zbb/e(\Qbb_p(F_0)),
\]
the quotient group $\tilde{F_1}/\tilde{F_0}$ must be cyclic of order a
divisor of $e(\Qbb_p(F_0))$. Let
\[
    e_u(F_0) \in H^2(\Zbb/e(\Qbb_p(F_0)),\ 
    \Zbb_p(F_0)^\x \x \lan pu \ran)
\]
denote the class of this last extension. Furthermore, for $r_1$ a
divisor of $e(\Qbb_p(F_0))$, we let \label{308}
\[
    \tilde{\Fcal}_u(\Qbb_p(F_0)^\x, \tilde{F_0}, r_1)
    :=
    \{ \tilde{F_1} \in \tilde{\Fcal}_u(\Qbb_p(F_0)^\x, \tilde{F_0})\
    |\ |\tilde{F_1}/\tilde{F_0}| = r_1 \},
\]
and we define the cohomology class
\[
    e_u(F_0, r_1) \in 
    H^2(\Zbb/r_1,\ \Zbb_p(F_0)^\x \x \lan pu \ran)
\]
to be the image of $e_u(F_0)$ under the induced homomorphism
\[
    j^\ast = H^2(j,\ \Zbb_p(F_0)^\x \x \lan pu \ran),
\]
for $j$ the canonical inclusion of $\tilde{F_1}/\tilde{F_0}$ into
$\frac{1}{e(\Qbb_p(F_0))}\Zbb/\Zbb$.

\begin{theorem} \label{180}
Let $r_1$ be a divisor of $e(\Qbb_p(F_0))$. 
\begin{enumerate}
    \item The set $\tilde{\Fcal}_u(\Qbb_p(F_0)^\x, \tilde{F_0}, r_1)$ is
    non-empty if and only if $e_u(F_0, r_1)$ becomes trivial in
    $H^2(\Zbb/r_1, \Zbb_p(F_0)^\x/F_0)$.

    \item If $\tilde{\Fcal}_u(\Qbb_p(F_0)^\x, \tilde{F_0}, r_1)$ is
    non-empty and if $\tilde{F_1} = \lan \tilde{F_0}, x_1 \ran$ belongs
    to this set with $v(x_1) = \frac{1}{r_1}$, then there is a bijection
    \begin{align*}
        \psi_1: H^1(\Zbb/r_1, \Zbb_p(F_0)^\x/F_0) 
        & \raa \tilde{\Fcal}_u(\Qbb_p(F_0)^\x, \tilde{F_0}, r_1) \\
        y & \mtoo \lan \tilde{F_0}, y x_1 \ran.
    \end{align*}
\end{enumerate}
\end{theorem}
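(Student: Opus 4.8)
The plan is to recognize both statements as instances of the general machinery of Theorems \ref{204} and \ref{206}, applied to the reduced valuation on $\Qbb_p(F_0)^\x$. Concretely, I would take $G = \Qbb_p(F_0)^\x$, let $\rho = \bar{v}: \Qbb_p(F_0)^\x \raa \frac{1}{e(\Qbb_p(F_0))}\Zbb/\Zbb$ be the valuation reduced modulo $\Zbb$, set $A = \tilde{F_0} = F_0 \x \lan pu \ran$ (as in remark \ref{177}), and let $B = \Zbb/r_1$ be the cyclic subgroup of order $r_1$ inside $Im(\rho) = \frac{1}{e(\Qbb_p(F_0))}\Zbb/\Zbb$. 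The first point to verify is that $Ker(\rho) = \Zbb_p(F_0)^\x \x \lan pu \ran$; this is exactly the bottom row of the diagram preceding the theorem and follows by writing any element of integral valuation as a power of $pu$ times a unit of $\Zbb_p(F_0)$, using $u \in \Zbb_p^\x$. Since $\Qbb_p(F_0)^\x$ is the multiplicative group of a field and hence abelian, the subgroup $A = \tilde{F_0}$ is automatically normal in $G$ and central in $Ker(\rho)$, and $A \subset Ker(\rho)$ because $F_0$ consists of elements of valuation zero; thus the hypotheses of Theorem \ref{204} hold.

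The key identification is that
\[
    \Gcal_\rho(\Qbb_p(F_0)^\x, \tilde{F_0}, \Zbb/r_1)
    = \tilde{\Fcal}_u(\Qbb_p(F_0)^\x, \tilde{F_0}, r_1).
\]
Here I would argue that the conditions $H \cap Ker(\rho) = \tilde{F_0}$ and $H/\tilde{F_0} \iso \Zbb/r_1$ via $\rho$ are equivalent to the defining conditions of the target set: the kernel condition says precisely that $\bar{v}$ induces a monomorphism $H/\tilde{F_0} \ra \frac{1}{n}\Zbb/\Zbb$, injectivity being the inclusion $H \cap Ker(\bar v) \subset \tilde{F_0}$ and the reverse inclusion being automatic, while the isomorphism condition fixes the order of the (necessarily cyclic) quotient to be $r_1$, using that the order-$r_1$ subgroup of $\frac{1}{e(\Qbb_p(F_0))}\Zbb/\Zbb$ is unique. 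Finiteness of $H/\tilde{F_0}$ and of $F_0$ then forces $\lan pu \ran$ to have finite index in $H$, so $H \in \tilde{\Fcal}_u(\Qbb_p(F_0)^\x)$. Under this identification the extension class $e_\rho$ of Theorem \ref{204} is $e_u(F_0)$, its restriction $e_\rho(B)$ is $e_u(F_0, r_1)$, and $Ker(\rho)/A \iso \Zbb_p(F_0)^\x/F_0$, so part (1) is Theorem \ref{204} read verbatim.

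For part (2), I would invoke Theorem \ref{206}, which furnishes a bijection from $H^1(\Zbb/r_1, \Zbb_p(F_0)^\x/F_0)$ to $\Gcal_\rho(\ldots)/\!\sim_{Ker(\rho)}$ depending on a base point. The decisive simplification is again commutativity: the conjugation action of $G$ on $Ker(\rho)$ is trivial, so on one hand $\sim_{Ker(\rho)}$ is trivial and $\Gcal_\rho(\ldots)/\!\sim_{Ker(\rho)} = \Gcal_\rho(\ldots)$, and on the other hand $B$ acts trivially on $Ker(\rho)/A$, whence $H^1(\Zbb/r_1, \Zbb_p(F_0)^\x/F_0) = Hom(\Zbb/r_1, \Zbb_p(F_0)^\x/F_0)$. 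Taking $H_0 = \tilde{F_1} = \lan \tilde{F_0}, x_1 \ran$ as base point, a homomorphism $y$ corresponds to the section sending the generator $\bar v(x_1) = \frac{1}{r_1} + \Zbb$ of $\Zbb/r_1$ to the coset of $y x_1$; tracing the section-to-subgroup correspondence $s \mapsto \lan A, \tilde{s}(B) \ran$ of Theorem \ref{206} then yields exactly $\lan \tilde{F_0}, y x_1 \ran$, giving the asserted map $\psi_1$.

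The routine parts — checking the hypotheses of the framework and computing $Ker(\rho)$ — are painless thanks to commutativity. The main obstacle is the bookkeeping of the last paragraph: one must verify that the explicit formula $y \mapsto \lan \tilde{F_0}, y x_1 \ran$ is genuinely the image of the abstract bijection of Theorem \ref{206} under the identifications $H^1 = Hom$ and $\Gcal_\rho(\ldots)/\!\sim_{Ker(\rho)} = \Gcal_\rho(\ldots)$, in particular that lifting $y \in \Zbb_p(F_0)^\x/F_0$ to $\Zbb_p(F_0)^\x$ twists the chosen generator $x_1$ in the expected way and that $\lan \tilde{F_0}, y x_1 \ran$ is independent of the chosen lift.
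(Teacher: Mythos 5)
Your proposal is correct and takes essentially the same route as the paper: the paper's proof consists precisely of declaring Theorems \ref{204} and \ref{206} specialized to $\rho$ the valuation on $G = \Qbb_p(F_0)^\x$, $A = \tilde{F_0} = F_0 \x \lan pu \ran$, $B = \frac{1}{r_1}\Zbb/\Zbb$, with $Ker(\rho) = \Zbb_p(F_0)^\x \x \lan pu \ran$, $e_\rho = e_u(F_0)$, $e_\rho(B) = e_u(F_0,r_1)$, and the trivial conjugation action coming from commutativity. The verifications you spell out (the computation of $Ker(\rho)$, the identification $\Gcal_\rho(\Qbb_p(F_0)^\x, \tilde{F_0}, \Zbb/r_1) = \tilde{\Fcal}_u(\Qbb_p(F_0)^\x, \tilde{F_0}, r_1)$, and the tracing of $\psi_1$ through the section-to-subgroup correspondence) are exactly the details the paper leaves implicit.
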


\begin{proof}
Statements 1) and 2) are the respective specializations of theorem
\ref{204} and \ref{206} in the case where
\[
    \rho: G = \Qbb_p(F_0)^\x \raa \frac{1}{n}\Zbb/\Zbb = Q
\]
is induced by the valuation, $G$ (and hence $Ker(\rho)$) acts trivially
on $\tilde{\Fcal}_u(\Qbb_p(F_0)^\x, \tilde{F_0}, r_1)$, and
\[
    A = \tilde{F_0} = F_0 \x \lan pu \ran, \qq
    B = \frac{1}{r_1}\Zbb/\Zbb;
\]
in particular,
\[
    Ker(\rho) = \Zbb_p(F_0)^\x \x \lan pu \ran, \qq
    Im(\rho) = \frac{1}{e(\Qbb_p(F_0))}\Zbb/\Zbb,
\]
and
\[
    e_\rho = e_u(F_0), \qq
    e_\rho(B) = e_u(F_0,r_1).
\]
\end{proof}

\begin{remark} \label{181}
Note that $\tilde{F_1} \in \tilde{\Fcal}_u(\Qbb_p(F_0)^\x)$ belongs to
$\tilde{\Fcal}_u(\Qbb_p(F_0)^\x, \tilde{F_0}, r_1)$ if and only if there
exists an element $x_1 \in \Qbb_p(F_0)^\x$ with $\tilde{F_1} = \lan
\tilde{F_0}, x_1 \ran$ satisfying
\[
    v(x_1)=\frac{1}{r_1}
    \qq \text{and} \qq
    x_1^{r_1} \in \tilde{F_0}.
\]
Clearly, $\tilde{F_1}$ uniquely determines $x_1$ modulo $F_0$.
\end{remark}

\begin{corollary} \label{183}
If $F_0 = \mu(\Qbb_p(F_0))$ is the group of roots of unity in
$\Qbb_p(F_0)$, then 
\[
    |\tilde{\Fcal}_u(\Qbb_p(F_0)^\x, \tilde{F_0}, r_1)| \leq 1.
\]
\end{corollary}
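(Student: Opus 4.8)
The plan is to read off the cardinality of $\tilde{\Fcal}_u(\Qbb_p(F_0)^\x, \tilde{F_0}, r_1)$ directly from theorem \ref{180}. By part 2) of that theorem, whenever the set is non-empty it is in bijection (via $\psi_1$) with the cohomology group $H^1(\Zbb/r_1, \Zbb_p(F_0)^\x/F_0)$, so its cardinality equals $|H^1(\Zbb/r_1, \Zbb_p(F_0)^\x/F_0)|$; if the set is empty its cardinality is $0$. Hence it suffices to prove that this $H^1$ is trivial under the hypothesis $F_0 = \mu(\Qbb_p(F_0))$, for then the set has either $0$ or $1$ element, giving the bound $\leq 1$.

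First I would identify the action of $\Zbb/r_1$ on the coefficient module. In the specialization carried out in theorem \ref{180}, the ambient group is $G = \Qbb_p(F_0)^\x$, the multiplicative group of a field and hence abelian, so the conjugation action of $G$ on $Ker(\rho)/A = \Zbb_p(F_0)^\x/F_0$ is trivial. Consequently the induced action of $B = \Zbb/r_1$ is trivial, and the nonabelian $H^1$ collapses to a group of homomorphisms:
\[
    H^1(\Zbb/r_1, \Zbb_p(F_0)^\x/F_0)
    = Hom(\Zbb/r_1, \Zbb_p(F_0)^\x/F_0).
\]
Writing $K = \Qbb_p(F_0)$, the crux is then to check that $\Zbb_p(F_0)^\x/F_0$ is torsion-free. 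Every torsion element of $\Zbb_p(F_0)^\x \subset K^\x$ is a root of unity of $K$ and so lies in $\mu(K)$; conversely $\mu(K)$ consists of units already contained in $\Zbb_p(F_0)$, since $F_0 \subset \Zbb_p(F_0)$. Thus the torsion subgroup of $\Zbb_p(F_0)^\x$ is exactly $\mu(K)$, which by hypothesis equals $F_0$, so the quotient is torsion-free. As $\Zbb/r_1$ is finite, there are no non-trivial homomorphisms into a torsion-free group, whence $Hom(\Zbb/r_1, \Zbb_p(F_0)^\x/F_0)$ and therefore $H^1$ is trivial.

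The one step carrying genuine content—and the only place where I expect any friction—is the torsion-freeness of $\Zbb_p(F_0)^\x/F_0$, because this is exactly where the maximality hypothesis $F_0 = \mu(\Qbb_p(F_0))$ is used. For a proper subgroup $F_0$ of $\mu(K)$ the quotient would retain cyclic torsion and $Hom(\Zbb/r_1, -)$ could well be non-trivial, so the conclusion does rely essentially on $F_0$ being the full group of roots of unity. I would note that it is not even necessary to know whether $\Zbb_p(F_0)$ is the maximal order, since the torsion of the unit group of any order in $K$ is the roots of unity it contains, and these all lie in $F_0$ under the hypothesis.
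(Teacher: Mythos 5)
Your proof is correct and shares the paper's skeleton: both reduce the claim via theorem \ref{180}.2 to the triviality of $H^1(\Zbb/r_1,\ \Zbb_p(F_0)^\x/F_0)$ with trivial action, so the set has at most one element. Where you differ is in how that vanishing is obtained. The paper invokes the structure theorem for unit groups of local rings of integers (proposition \ref{341}), decomposing $\Zbb_p(F_0)^\x \iso \mu(\Qbb_p(F_0)) \x \Zbb_p^{[\Qbb_p(F_0):\Qbb_p]}$, so that the free $\Zbb_p$-part contributes nothing and $\mu(\Qbb_p(F_0))/F_0$ is trivial by hypothesis. You instead identify $H^1 = Hom(\Zbb/r_1,\ \Zbb_p(F_0)^\x/F_0)$ and prove torsion-freeness of the quotient directly: the torsion subgroup of $\Zbb_p(F_0)^\x$ is exactly $\mu(\Qbb_p(F_0)) = F_0$, and an abelian group modulo its full torsion subgroup is torsion-free. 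Your route is more elementary and, as you observe, slightly more general, since it never uses that $\Zbb_p(F_0)$ is the maximal order of $\Qbb_p(F_0)$ --- it works for the unit group of any order containing $F_0$, whereas proposition \ref{341} is a statement about $\Ocal_L^\x$. What the paper's explicit decomposition buys in return is reusability: the same splitting is exploited again in the isomorphism (\ref{135}) and in the parallel corollary \ref{188}, where more than the bare vanishing of $H^1$ is extracted from it. You also correctly isolate where the hypothesis $F_0 = \mu(\Qbb_p(F_0))$ enters; your remark that a proper $F_0$ would leave cyclic torsion in the quotient is exactly the content of the factor $\mu(\Qbb_p(F_0))/F_0$ in the paper's computation.
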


\begin{proof}
By proposition \ref{341} we have $\Zbb_p(F_0)^\x \iso \mu(\Qbb_p(F_0))
\x \Zbb_p^{[\Qbb_p(F_0):\Qbb_p]}$. Since the action of $\Zbb/r_1$ is
trivial on $\Zbb_p(F_0)^\x$, we obtain
\begin{align*}
    H^1(\Zbb/r_1,\ \Zbb_p(F_0)^\x / F_0)\
    &\iso\ 
    H^1(\Zbb/r_1,\ \Zbb_p^{[\Qbb_p(F_0): \Qbb_p]} 
    \x \mu(\Qbb_p(F_0))/F_0)\\
    &\iso\ H^1(\Zbb/r_1,\ \mu(\Qbb_p(F_0))/F_0)\\
    &\iso\ \{1\}.
\end{align*}
The result then follows from theorem \ref{180}.2.
\end{proof}

\begin{remark} \label{184}
The condition $F_0 = \mu(\Qbb_p(F_0))$ is equivalent to the maximality
of $F_0$ as a finite subgroup of $\Qbb_p(F_0)^\x$. In section \ref{228}
we will see that if $p$ is odd and $F_0 = \mu(\Qbb_p(F_0))$, then the
set $\tilde{\Fcal}_u(\Qbb_p(F_0), \tilde{F_0}, r_1)$ is non-empty if and
only if $p$ does not divide $r_1$. As for the case $p=2$, we will see in
section \ref{229} that this depends on $u$ and $F_0$.
\end{remark}

\section{The second extension type} 
\label{195}

In this section we consider the second extension in the chain of section
\ref{076}. Recall that for a given subgroup $\tilde{F} \in
\tilde{\Fcal}_u(\Dbb_n^\x)$, we let $\tilde{F_1} = \tilde{F} \cap
\Qbb_p(F_0)^\x$.

\begin{lemma} \label{327}
Let $H_0$ be an abelian finite subgroup of $\Sbb_n$, $\tilde{H_0} =
H_0 \x \lan pu \ran$, $\tilde{H_1} \in \tilde{\Fcal}_u(\Qbb_p(H_0)^\x,
\tilde{H_0})$, and let $\tilde{\Fcal}_u(C_{\Dbb_n^\x}(H_0),
\tilde{H_1})$ be the set of all $\tilde{F} \in
\tilde{\Fcal}_u(C_{\Dbb_n^\x}(H_0))$ such that
\begin{itemize}
    \item $\tilde{H_1} = \tilde{F} \cap \Qbb_p(H_0)^\x$, and

    \item the valuation $v: \tilde{F} \ra \frac{1}{n}\Zbb$ induces a
    monomorphism $\tilde{F}/\tilde{H_1} \ra
    \frac{1}{n}\Zbb/v(\tilde{H_1})$.
\end{itemize}
If $\tilde{F} \in \tilde{\Fcal}_u(C_{\Dbb_n^\x}(H_0), \tilde{H_1})$,
then $\tilde{F_1} = \tilde{H_1}$.
\end{lemma}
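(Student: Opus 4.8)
The plan is to run the proof of lemma \ref{326} one notch higher in the chain of section \ref{076}: I will show the hypotheses force $F_0 = H_0$, from which $\Qbb_p(F_0) = \Qbb_p(H_0)$ and hence
\[
    \tilde{F_1} = \tilde{F} \cap \Qbb_p(F_0)^\x
    = \tilde{F} \cap \Qbb_p(H_0)^\x = \tilde{H_1},
\]
the last equality being exactly the first defining condition of $\tilde{\Fcal}_u(C_{\Dbb_n^\x}(H_0), \tilde{H_1})$.

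First I would collect the valuation data attached to $\tilde{H_1}$. Since $v(pu) = 1$ while $H_0 \subset \Sbb_n$ has valuation $0$, we have $v(\tilde{H_0}) = \Zbb$; writing $r_1 := |\tilde{H_1}/\tilde{H_0}|$ and using that $\tilde{H_1}/\tilde{H_0}$ embeds into $\frac{1}{n}\Zbb/\Zbb$ via $v$, it follows that $v(\tilde{H_1}) = \frac{1}{r_1}\Zbb \supset \Zbb$, and that an element of $\tilde{H_1}$ has integer valuation if and only if its class in $\tilde{H_1}/\tilde{H_0}$ is trivial, i.e. $\tilde{H_1} \cap \{x : v(x) \in \Zbb\} = \tilde{H_0}$.

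The crux is then to recover the valuation-zero part of $F$. Injectivity of the map $\tilde{F}/\tilde{H_1} \raa \frac{1}{n}\Zbb/v(\tilde{H_1})$ says precisely that $\{x \in \tilde{F} : v(x) \in v(\tilde{H_1})\} = \tilde{H_1}$. Because $\Zbb \subset v(\tilde{H_1}) = \frac{1}{r_1}\Zbb$, intersecting with the integer-valuation elements gives
\[
    \{x \in \tilde{F} : v(x) \in \Zbb\}
    = \tilde{H_1} \cap \{x : v(x) \in \Zbb\} = \tilde{H_0}.
\]
By the compatibility of the two valuations recorded in remark \ref{133}, the left-hand set is exactly $\pi^{-1}(F \cap \Sbb_n)$, so applying $\pi$ (whose kernel $\lan pu \ran$ lies inside $\tilde{H_0} = H_0 \x \lan pu \ran$) yields $F \cap \Sbb_n = H_0$.

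Finally, with $G = F \cap \Sbb_n = H_0$ abelian, its $p$-Sylow subgroup is the $p$-part of $H_0$ and $Z_{p'}(G)$ is its $p'$-part, so that $F_0 = \lan F \cap S_n, Z_{p'}(G) \ran = H_0$, which closes the argument. The only delicate point—and the one I would check carefully—is that the monomorphism condition is phrased relative to the larger group $v(\tilde{H_1}) = \frac{1}{r_1}\Zbb$ rather than $\Zbb$; since $\Zbb$ is contained in it, the condition still pins down the integer-valuation elements of $\tilde{F}$, which is all that is needed to identify $F \cap \Sbb_n$ and thereby $F_0$.
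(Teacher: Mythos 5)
Your proposal is correct and follows essentially the same route as the paper's proof: both use the monomorphism condition to identify the integer-valuation part of $\tilde{F}$, deduce $F \cap \Sbb_n = H_0$ (hence $F_0 = H_0$ since $H_0$ is abelian), and then conclude $\tilde{F_1} = \tilde{H_1}$ from the first defining condition. The paper phrases the key step more tersely as an equivalence with a monomorphism $F/H_0 \ra \Zbb/n$ in $\Gbb_n(u)$, whereas you carry out the same verification explicitly upstairs in $\Dbb_n^\x$, including the useful observation $v(\tilde{H_1}) = \frac{1}{r_1}\Zbb \supset \Zbb$ that the paper leaves implicit.
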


\begin{proof}
Clearly $\tilde{F}/\tilde{H_1}$ injects via $v$ into
$\frac{1}{n}\Zbb/v(\tilde{H_1})$ if and only if we have in $\Gbb_n(u)$ a
monomorphism $F/H_0 \ra \Zbb/n$, and this is true if and only if $H_0 =
F \cap \Sbb_n$. Therefore $F_0 = H_0$ and consequently
\[
    \tilde{F_1}
    = \tilde{F} \cap \Qbb_p(F_0)^\x
    = \tilde{F} \cap \Qbb_p(H_0)^\x
    = \tilde{H_1}.
\]
\end{proof}

We now fix $F_0$ and $r_1$ such that $\tilde{\Fcal}_u(\Qbb_p(F_0),
\tilde{F_0}, r_1)$ is non-empty, and fix a group $\tilde{F_1} \in
\tilde{\Fcal}_u(\Qbb_p(F_0), \tilde{F_0}, r_1)$. We consider the set
\label{309} $\tilde{\Fcal}_u(C_{\Dbb_n^\x}(F_0), \tilde{F_1})$ of all
$\tilde{F_2} \in \tilde{\Fcal}_u(C_{\Dbb_n^\x}(F_0))$ such that
\begin{itemize}
    \item $\tilde{F_2} \cap \Qbb_p(F_0)^\x = \tilde{F_1}$, and

    \item $\tilde{F_2}/\tilde{F_1}$ injects via $v$ into
    $\frac{1}{n}\Zbb/v(\tilde{F_1})$.
\end{itemize}
Lemma \ref{327} ensures that the elements of
$\tilde{\Fcal}_u(C_{\Dbb_n^\x}(F_0), \tilde{F_1})$ are extensions of the
form
\[
    1 \raa \tilde{F_1} \raa \tilde{F_2} 
    \raa \tilde{F_2}/\tilde{F_1} \raa 1
\]
with $\tilde{F_2} \cap \Qbb_p(F_0)^\x = \tilde{F_1}$ and $\lan
\tilde{F_2} \cap S_n, Z_{p'}(\tilde{F_2} \cap \Sbb_n) \ran = F_0$. For
$r_2$ a divisor of $n/r_1$, we define \label{310}
\[
    \tilde{\Fcal}_u(C_{\Dbb_n^\x}(F_0), \tilde{F_1}, r_2)
    :=
    \{ \tilde{F_2} \in \tilde{\Fcal}_u(C_{\Dbb_n^\x}(F_0), 
    \tilde{F_1})\ |\ |\tilde{F_2}/\tilde{F_1}| = r_2 \}.
\]
Note that any $\tilde{F_2} \in \tilde{\Fcal}_u(C_{\Dbb_n^\x}(F_0),
\tilde{F_1}, r_2)$ determines a commutative field extension $L =
\Qbb_p(\tilde{F_2})$ of degree $r_2$ over $\Qbb_p(F_0)$ which is
obtained by adjoining to $\Qbb_p(F_0)$ an element $x_2 \in
C_{\Dbb_n^\x}(F_0)$ which satisfies
\[
    v(x_2) = \frac{1}{r_1r_2}
    \qq \text{and} \qq
    x_2^{r_2} \in \tilde{F_1}.
\]
We can thus partition our sets
\[
    \tilde{\Fcal}_u(C_{\Dbb_n^\x}(F_0), \tilde{F_1}, r_2)
    = \coprod_{\stackrel{L \supset \Qbb_p(F_0)}{[L:\Qbb_p(F_0)]=r_2}}
    \tilde{\Fcal}_u(C_{\Dbb_n^\x}(F_0), \tilde{F_1}, L),
\]
according to all $L \supset \Qbb_p(F_0)$ obtained from $\Qbb_p(F_0)$ via
irreducible equations of the form $X^{r_2}-x_1$ for $x_1$ an element of
valuation $\frac{1}{r_1}$ in $\tilde{F_1}$, where \label{311}
\[
    \tilde{\Fcal}_u(C_{\Dbb_n^\x}(F_0), \tilde{F_1}, L)
    :=
    \{ \tilde{F_2} \in \tilde{\Fcal}_u(C_{\Dbb_n^\x}(F_0), 
    \tilde{F_1})\ |\ \Qbb_p(\tilde{F_2}) = L \}.
\]
Clearly, $L$ determines $r_2$ and we have
\begin{align*}
    \tilde{\Fcal}_u(C_{\Dbb_n^\x}(F_0), \tilde{F_1})\
    &=\ \coprod_{r_2 | \frac{n}{r_1}}
    \tilde{\Fcal}_u(C_{\Dbb_n^\x}(F_0), \tilde{F_1}, r_2)\\
    &=\ \coprod_{[L:\Qbb_p(F_0)] | \frac{n}{r_1}}
    \tilde{\Fcal}_u(C_{\Dbb_n^\x}(F_0), \tilde{F_1}, L).
\end{align*}

\begin{theorem} \label{185}
Let $x_1$ be an element of $\tilde{F_1} \in \tilde{\Fcal}_u(\Qbb_p(F_0),
\tilde{F_0}, r_1)$ with $v(x_1) = \frac{1}{r_1}$, let $L$ be an
extension of $\Qbb_p(F_0)$ of degree $r_2$, and let $L_{r_1}^\x$ denote
the group of all $x \in L^\x$ such that $v(x) \in \frac{1}{r_1}\Zbb$.  
\begin{enumerate}
    \item The set $\tilde{\Fcal}_u(C_{\Dbb_n^\x}(F_0), \tilde{F_1}, L)$
    is non-empty if and only if $r_2[\Qbb_p(F_0):\Qbb_p]$ divides $n$,
    there exists a $\delta \in F_0$ such that the equation
    $X^{r_2}-\delta x_1$ is irreducible over $\Qbb_p(F_0)$ and $L =
    \Qbb_p(x_2)$ for $x_2$ a root of this equation.

    \item If $\tilde{\Fcal}_u(C_{\Dbb_n^\x}(F_0), \tilde{F_1}, L)$ is
    non-empty and if $\tilde{F_2} = \lan \tilde{F_1}, x_2 \ran$ belongs
    to this set with $v(x_2) = \frac{1}{r_1r_2}$, then there is a
    bijection
    \begin{align*}
        \psi_2: H^1(\Zbb/r_2, L_{r_1}^\x/\tilde{F_1}) 
        & \raa \tilde{\Fcal}_u(C_{\Dbb_n^\x}(F_0), \tilde{F_1}, L) \\
        y & \mtoo \lan \tilde{F_1}, y x_2 \ran.
    \end{align*}
\end{enumerate}
\end{theorem}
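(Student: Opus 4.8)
The plan is to obtain both statements as specializations of the general machinery of Theorems \ref{204} and \ref{206}, just as Theorem \ref{180} was, but now carried out inside the commutative field $L$. Since $\Qbb_p(\tilde{F_2}) = L$ forces $\tilde{F_2} \subset L^\x$ and $L^\x$ is abelian, I would let $\rho \colon L^\x \raa \frac{1}{n}\Zbb/v(\tilde{F_1})$ be induced by the valuation, so that $Ker(\rho) = L_{r_1}^\x$ and $Im(\rho) = \frac{1}{e(L)}\Zbb/\frac{1}{r_1}\Zbb$ (this is well defined since $e(L)$ divides $[L:\Qbb_p]$, which divides $n$). Taking $A = \tilde{F_1}$ and $B = \frac{1}{r_1 r_2}\Zbb/\frac{1}{r_1}\Zbb \iso \Zbb/r_2$, which is a subgroup of $Im(\rho)$ because $r_1 r_2$ divides $e(L)$, the group $\Gcal_\rho(L^\x, \tilde{F_1}, B)$ is the object to be identified with $\tilde{\Fcal}_u(C_{\Dbb_n^\x}(F_0), \tilde{F_1}, L)$. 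Because $L^\x$ is abelian the conjugation action of $Ker(\rho)$ is trivial, so $\sim_{Ker(\rho)}$ collapses and the cohomology $H^1(\Zbb/r_2, L_{r_1}^\x/\tilde{F_1})$ of Theorem \ref{206} parametrizes the set directly.

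For statement 1, I would first record the shape of a generator. Any member of the set is $\lan \tilde{F_1}, x_2 \ran$ with $v(x_2) = \frac{1}{r_1 r_2}$ and $x_2^{r_2} \in \tilde{F_1}$ of valuation $\frac{1}{r_1}$. Writing $\tilde{F_1} = \lan \tilde{F_0}, x_1 \ran$ with $\tilde{F_0} = F_0 \x \lan pu \ran$ and $v(x_1) = \frac{1}{r_1}$, the elements of $\tilde{F_1}$ of valuation exactly $\frac{1}{r_1}$ are precisely the $\delta x_1$ with $\delta \in F_0$; hence $x_2^{r_2} = \delta x_1$ and $L = \Qbb_p(F_0)(x_2)$ is generated by a root of $X^{r_2} - \delta x_1$, which must be irreducible over $\Qbb_p(F_0)$ since $[L : \Qbb_p(F_0)] = r_2$. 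For the converse I would invoke the centralizer theorem \ref{007}: $C_{\Dbb_n}(F_0) = C_{\Dbb_n}(\Qbb_p(F_0))$ is a central division algebra over $K = \Qbb_p(F_0)$ of dimension $n_\alpha^2$, where $n_\alpha = n/[K : \Qbb_p]$. By the embedding theorem \ref{013} a degree-$r_2$ extension of $K$ embeds in it if and only if $r_2$ divides $n_\alpha$, that is, if and only if $r_2 [\Qbb_p(F_0):\Qbb_p]$ divides $n$; given such an $L = K(x_2)$ the group $\lan \tilde{F_1}, x_2 \ran$ then realizes a nonempty member of the set. This is Theorem \ref{204} for the above $\rho$, with nonemptiness of $\Gcal_\rho$ reexpressed through the explicit datum $(\delta, L)$.

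For statement 2, once a base point $\tilde{F_2} = \lan \tilde{F_1}, x_2 \ran$ with $v(x_2) = \frac{1}{r_1 r_2}$ is fixed, Theorem \ref{206} supplies the bijection, and the formula $y \mtoo \lan \tilde{F_1}, y x_2 \ran$ is exactly the section-to-subgroup assignment of its proof. A class in $H^1(\Zbb/r_2, L_{r_1}^\x/\tilde{F_1})$ is represented by some $y \in L_{r_1}^\x$ with $y^{r_2} \in \tilde{F_1}$; then $(y x_2)^{r_2} = y^{r_2} \delta x_1 \in \tilde{F_1}$ while $v(y x_2) \equiv \frac{1}{r_1 r_2} \pmod{\tfrac{1}{r_1}}$, so $\lan \tilde{F_1}, y x_2 \ran / \tilde{F_1} \iso \Zbb/r_2$ and the subgroup lies in the fibre over $L$.

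The crux, and the step I expect to cost the most work, is the identification $\Gcal_\rho(L^\x, \tilde{F_1}, B) = \tilde{\Fcal}_u(C_{\Dbb_n^\x}(F_0), \tilde{F_1}, L)$. The framework only enforces $\tilde{F} \cap L_{r_1}^\x = \tilde{F_1}$, whereas membership in the target set requires the two sharper conditions $\tilde{F} \cap \Qbb_p(F_0)^\x = \tilde{F_1}$ and $\Qbb_p(\tilde{F}) = L$; a priori a subgroup built from the framework could collapse into a proper intermediate field $K \subsetneq M \subsetneq L$. Ruling this out is where the irreducibility of $X^{r_2} - \delta x_1$ must be used: it forces $x_2^i \notin \Qbb_p(F_0)$ for $0 < i < r_2$, and combined with the valuation bookkeeping on $L_{r_1}^\x$ this should show that every $\lan \tilde{F_1}, y x_2 \ran$ meets $\Qbb_p(F_0)^\x$ in exactly $\tilde{F_1}$ and generates all of $L$ over $\Qbb_p(F_0)$. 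I expect the delicate point to be checking that no cocycle $y$ pushes $y x_2$ into a smaller subfield, which is precisely where the structure of $\tilde{F_1}$ inside $\Qbb_p(F_0)^\x$ enters.
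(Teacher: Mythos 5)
Your proposal is essentially the paper's own proof: part 1 is dispatched by the embedding theorem \ref{013} (with the centralizer theorem \ref{007} identifying $C_{\Dbb_n}(F_0)$ as a division algebra of dimension $(n/[\Qbb_p(F_0):\Qbb_p])^2$, and the shape $x_2^{r_2}=\delta x_1$ read off from the valuations in $\tilde{F_1}$), while part 2 is exactly the paper's specialization of theorem \ref{206} with $\rho: L^\x \ra \frac{1}{n}\Zbb/\frac{1}{r_1}\Zbb$ induced by the valuation, $A=\tilde{F_1}$, $B=\frac{1}{r_1r_2}\Zbb/\frac{1}{r_1}\Zbb$ and $Ker(\rho)=L_{r_1}^\x$, the conjugation action being trivial since $L^\x$ is abelian. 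The one point you flag as the crux --- ruling out that a cocycle $y$ could make $\lan \tilde{F_1}, yx_2 \ran$ collapse into a proper intermediate field, so that $\Gcal_\rho(L^\x,\tilde{F_1},B)$ genuinely equals the $L$-decorated set --- is passed over in silence by the paper's two-line proof as well, and your suggestion to control it via the irreducibility of $X^{r_2}-y^{r_2}\delta x_1$ (which ultimately rests on the non-triviality statements for $\epsilon_\alpha/u$, corollaries \ref{201} and \ref{114}) is the natural way to close that step.
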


\begin{proof}
1) This is a direct consequence of the embedding theorem.

2) This is a specialization of theorem \ref{206} in the case where
\[
    \rho: G = L^\x \raa \frac{1}{n}\Zbb/\frac{1}{r_1}\Zbb = Q
\]
is induced by the valuation, $G$ (and hence $Ker(\rho)$) acts trivially
on $\tilde{\Fcal}_u(C_{\Dbb_n^\x}(F_0), \tilde{F_1}, L)$, and
\[
    A = \tilde{F_1}, \qq
    B = \frac{1}{r_1r_2}\Zbb/\frac{1}{r_1}\Zbb;
\]
in particular, $Ker(\rho) = L_{r_1}^\x$.
\end{proof}

\begin{remark} \label{186}
Note that 
\[
    \tilde{F_2} \in \tilde{\Fcal}_u(C_{\Dbb_n^\x}(F_0), \tilde{F_1})
\]
satisfies $\Qbb_p(\tilde{F_2}) = L$ if and only if there exists an
element $x_2 \in L$ with $\tilde{F_2} = \lan \tilde{F_1}, x_2 \ran$
satisfying
\[
    v(x_2)=\frac{1}{r_1r_2}
    \qq \text{and} \qq
    x_2^{r_2} \in \tilde{F_1}.
\]
Moreover, $\tilde{F_2}$ uniquely determines such an $x_2$ modulo $F_0$.
\end{remark}

\begin{corollary} \label{188}
If $F_0 = \mu(L)$ is the group of roots of unity in $L$, then 
\[
    |\tilde{\Fcal}_u(C_{\Dbb_n^\x}(F_0), \tilde{F_1}, L)| \leq 1.
\]
\end{corollary}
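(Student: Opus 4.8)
The plan is to mirror the proof of corollary \ref{183}, reducing the statement to the triviality of a first cohomology group by means of theorem \ref{185}.2. If the set $\tilde{\Fcal}_u(C_{\Dbb_n^\x}(F_0), \tilde{F_1}, L)$ is empty there is nothing to prove, so I would assume it is non-empty and fix an element $\tilde{F_2} = \lan \tilde{F_1}, x_2 \ran$ of it with $v(x_2) = \frac{1}{r_1 r_2}$. Theorem \ref{185}.2 then provides a bijection between this set and $H^1(\Zbb/r_2, L_{r_1}^\x/\tilde{F_1})$, so it suffices to prove the latter is trivial. Since $L$ is a commutative field, $L^\x$ is abelian and the conjugation action that defines the $\Zbb/r_2$-module structure on $L_{r_1}^\x/\tilde{F_1}$ is trivial; hence $H^1(\Zbb/r_2, L_{r_1}^\x/\tilde{F_1}) \iso Hom(\Zbb/r_2, L_{r_1}^\x/\tilde{F_1})$, and it is enough to check that $L_{r_1}^\x/\tilde{F_1}$ has no non-trivial $r_2$-torsion.

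To this end I would first describe the structure of $L_{r_1}^\x$. By proposition \ref{341} the unit group of the ring of integers of $L$, that is $\{x \in L^\x\ |\ v(x) = 0\}$, decomposes as $\mu(L) \x \Zbb_p^{[L:\Qbb_p]}$, and since the value group $v(L_{r_1}^\x) = \frac{1}{r_1}\Zbb$ is free, the valuation splits the extension
\[
    1 \raa \{x \in L^\x\ |\ v(x)=0\} \raa L_{r_1}^\x
    \overset{v}{\raa} \frac{1}{r_1}\Zbb \raa 1,
\]
giving $L_{r_1}^\x \iso \mu(L) \x \Zbb_p^{[L:\Qbb_p]} \x \Zbb$. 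The hypothesis is precisely that $F_0 = \mu(L)$, so quotienting out $F_0$ removes the whole torsion subgroup and leaves $L_{r_1}^\x/F_0 \iso \Zbb_p^{[L:\Qbb_p]} \x \Zbb$, which is torsion-free.

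It then remains to locate $\tilde{F_1}/F_0$ inside this torsion-free group. Writing $\tilde{F_1} = \lan \tilde{F_0}, x_1 \ran$ with $v(x_1) = \frac{1}{r_1}$ and $x_1^{r_1} \in \tilde{F_0} = F_0 \x \lan pu \ran$, a comparison of valuations forces $x_1^{r_1} = \zeta \cdot pu$ for some $\zeta \in F_0$. Hence modulo $F_0$ the class of $pu$ is the $r_1$-th power of the class of $x_1$, so $\tilde{F_1}/F_0$ is the infinite cyclic group generated by the class of $x_1$, and under $v$ this generator maps onto a generator of $\frac{1}{r_1}\Zbb \iso \Zbb$. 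Thus $\tilde{F_1}/F_0$ is a complement to the factor $\Zbb_p^{[L:\Qbb_p]}$ in $L_{r_1}^\x/F_0$, and collapsing it yields
\[
    L_{r_1}^\x/\tilde{F_1} \iso \Zbb_p^{[L:\Qbb_p]}.
\]
This group is torsion-free, so $Hom(\Zbb/r_2, L_{r_1}^\x/\tilde{F_1}) = \{1\}$, the cohomology group vanishes, and the bijection of theorem \ref{185}.2 forces the set to have exactly one element.

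The one delicate point, exactly as in corollary \ref{183}, is that the whole argument rests on the hypothesis $F_0 = \mu(L)$: it is what ensures the quotient retains no torsion, for otherwise the residual torsion $\mu(L)/F_0$ could contribute non-trivial homomorphisms. The small computation needing care is the verification that $\tilde{F_1}/F_0$ exhausts the value-group direction, so that $L_{r_1}^\x/\tilde{F_1}$ is exactly the torsion-free $p$-adic part $\Zbb_p^{[L:\Qbb_p]}$.
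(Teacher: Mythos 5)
Your proof is correct and follows essentially the same route as the paper's: the paper likewise invokes theorem \ref{185}.2 and the decomposition $L_{r_1}^\x \iso \Zbb\lan x_1 \ran \x \mu(L) \x \Zbb_p^{[L:\Qbb_p]}$ from proposition \ref{341} to compute $H^1(\Zbb/r_2,\ L_{r_1}^\x/\tilde{F_1}) \iso H^1(\Zbb/r_2,\ \Zbb_p^{[L:\Qbb_p]} \x \mu(L)/F_0) \iso \{1\}$, with the hypothesis $F_0 = \mu(L)$ killing the torsion exactly as you observe. Your explicit verification that the class of $x_1$ generates $\tilde{F_1}/F_0$ and splits off the value-group direction is simply a more detailed account of the splitting the paper asserts directly.
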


\begin{proof}
We know from proposition \ref{341} that $L_{r_1}^\x \iso \Zbb\lan x_1
\ran \x \mu(L) \x \Zbb_p^{[L:\Qbb_p]}$. Since the action of $\Zbb/r_2$
is trivial on $L_{r_1}^\x$, we obtain
\begin{align*}
    H^1(\Zbb/r_2,\ L_{r_1}^\x / \tilde{F_1})\
    &\iso\ 
    H^1(\Zbb/r_2,\ \Zbb_p^{[L: \Qbb_p]} \x \mu(L)/F_0)\\
    &\iso\ H^1(\Zbb/r_2,\ \mu(L)/F_0)\\
    &\iso\ \{1\}.
\end{align*}
The result then follows from theorem \ref{185}.2.
\end{proof}

\section{The third extension type} 
\label{150}

In this section we consider the third extension in the chain of section
\ref{076}. Recall that for a given subgroup $\tilde{F} \in
\tilde{\Fcal}_u(\Dbb_n^\x)$, we let $\tilde{F_2} = \tilde{F} \cap
C_{\Dbb_n^\x}(F_0)$.

\begin{lemma} \label{328}
Let $H_0$ be an abelian finite subgroup of $\Sbb_n$, $\tilde{H_0} = H_0
\x \lan pu \ran$, $\tilde{H_1} \in \tilde{\Fcal}_u(\Qbb_p(H_0)^\x,
\tilde{H_0})$, $\tilde{H_2} \in \tilde{\Fcal}_u(C_{\Dbb_n^\x}(H_0),
\tilde{H_1})$, and let $\tilde{\Fcal}_u(N_{\Dbb_n^\x}(H_0),
\tilde{H_2})$ be the set of all $\tilde{F} \in
\tilde{\Fcal}_u(N_{\Dbb_n^\x}(H_0))$ such that
\begin{itemize}
    \item $\tilde{H_2} = \tilde{F} \cap C_{\Dbb_n^\x}(H_0)$, and

    \item $\tilde{H_2}$ is normal in $\tilde{F}$.
\end{itemize}
If $\tilde{F} \in \tilde{\Fcal}_u(N_{\Dbb_n^\x}(H_0), \tilde{H_2})$,
then
\begin{description}
    \item[\q \md a)] $\tilde{F_i} = \tilde{H_i}$ for $0 \leq i \leq 2$,
    or

    \item[\q \md b)] $p=2$, $n \equiv 2 \mod 4$, $H_0 \cap S_n \iso
    C_4$, $\tilde{F} \cap S_n \iso Q_8$ and $Z_{p'}(\tilde{F} \cap
    \Sbb_n) \iso Z_{p'}(\tilde{H_0} \cap \Sbb_n)$.
\end{description}
\end{lemma}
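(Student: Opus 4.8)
The plan is to reduce the whole statement to an analysis of the finite group $G := \tilde{F} \cap \Sbb_n = F \cap \Sbb_n$ and its $p$-Sylow subgroup $P := G \cap S_n$, which is normal in $G$ because $S_n$ is normal in $\Sbb_n$. First I would record the two structural facts that drive everything. Since $\tilde{F} \subset N_{\Dbb_n^\x}(H_0)$, the group $H_0$ is normal in $\tilde{F}$, hence normal in $G$; and since $C_{\tilde{F}}(H_0) = \tilde{F} \cap C_{\Dbb_n^\x}(H_0) = \tilde{H_2}$ by hypothesis, intersecting with $\Sbb_n$ gives $C_G(H_0) = \tilde{H_2} \cap \Sbb_n$. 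The defining valuation conditions of $\tilde{H_1} \in \tilde{\Fcal}_u(\Qbb_p(H_0)^\x, \tilde{H_0})$ and $\tilde{H_2} \in \tilde{\Fcal}_u(C_{\Dbb_n^\x}(H_0), \tilde{H_1})$ say that the valuation-integral elements of $\tilde{H_1}$ are exactly $\tilde{H_0}$ and those of $\tilde{H_2}$ lie in $\tilde{H_1}$; hence $\tilde{H_1} \cap \Sbb_n = \tilde{H_2} \cap \Sbb_n = H_0$, so $C_G(H_0) = H_0$. Thus $H_0$ is a normal, self-centralizing, cyclic subgroup of $G$. I would also note at the outset that once $F_0 = H_0$ is established, the rest is automatic: $\tilde{F_2} = \tilde{F} \cap C_{\Dbb_n^\x}(F_0) = \tilde{H_2}$ and $\tilde{F_1} = \tilde{F} \cap \Qbb_p(F_0)^\x = \tilde{H_2} \cap \Qbb_p(H_0)^\x = \tilde{H_1}$, using $\Qbb_p(F_0)^\x \subset C_{\Dbb_n^\x}(H_0)$.

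The engine of the argument is a single observation about $\Sbb_n$: if $t \in G$ is a nontrivial $p$-element and $D$ is a normal $p'$-subgroup of $G$ on which $t$ acts nontrivially, then $\lan D, t \ran = D \rtimes \lan t \ran$ has $\lan t \ran$ as a \emph{non-normal} $p$-Sylow subgroup, which is impossible in $\Sbb_n$ where $p$-Sylow subgroups are always normal. Hence no $p$-element of $G$ can act nontrivially on a normal $p'$-subgroup.

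Writing $H_0 = P_0 \x D_0$ with $P_0 = H_0 \cap S_n$ cyclic and $D_0$ its $p'$-part (characteristic in $H_0$, hence normal in $G$), I would split according to proposition \ref{019} and \ref{029}, by which $P$ is either cyclic or isomorphic to $Q_8$, the latter only for $p=2$ and $n \equiv 2 \mod 4$. If $P$ is cyclic it is abelian and centralizes $P_0$; any $t \in P \setminus P_0$ cannot centralize $H_0$ (as $C_G(H_0) = H_0$), so it would act nontrivially on $D_0$, which the engine forbids. Hence $P = P_0$. The engine also shows $P$ centralizes $D_0$, and then $C_G(H_0) = H_0$ forces $Z_{p'}(G) = D_0$, giving $F_0 = \lan P, Z_{p'}(G) \ran = P_0 \x D_0 = H_0$. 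This is case a).

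If $P \iso Q_8$ then $p=2$, $n \equiv 2 \mod 4$, and $\tilde{F} \cap S_n = P \iso Q_8$ is part of the claim; it remains to identify $P_0 = P \cap H_0 = C_P(H_0)$. It cannot be trivial, for then the conjugation map $Q_8 \to Aut(H_0)$ would be injective, impossible since $Aut(H_0)$ is abelian. It cannot be $C_2 = \{\pm 1\}$ either: a cyclic $C_4 \subset Q_8$ would then meet $H_0$ in $\{\pm 1\}$, so a generator of $C_4$ (a $2$-element centralizing the central $\{\pm1\}$) would induce an order-$2$ automorphism of the normal $p'$-group $D_0$, contradicting the engine. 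Hence $P_0 \iso C_4 = H_0 \cap S_n$. Finally the engine gives that $P$ centralizes $D_0$, so $D_0 \subset C_G(Q_8)$, whose odd part is central in $G$ by proposition \ref{163} and corollary \ref{174}; together with $Z_{p'}(G) \subset C_G(H_0) = H_0$ this yields $Z_{p'}(G) = D_0 = Z_{p'}(H_0)$, which is case b). The hard part will be isolating and correctly applying the non-normal-$p$-Sylow obstruction — especially the elimination of $P_0 = C_2$ in the quaternionic case — together with the bookkeeping that extracts $C_G(H_0) = H_0$ from the valuation conditions.
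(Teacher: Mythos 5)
Your overall architecture is sound and genuinely different from the paper's: where the paper works with the injection $F \cap \Sbb_n/H_0 \hookrightarrow Aut(H_0)$ and leans on the classification theorems \ref{024} and \ref{032}, you isolate the self-centralizing property $C_G(H_0)=H_0$ (your extraction of this from the valuation conditions defining $\tilde{\Fcal}_u(\Qbb_p(H_0)^\x,\tilde{H_0})$ and $\tilde{\Fcal}_u(C_{\Dbb_n^\x}(H_0),\tilde{H_1})$ is correct) together with a normal-Sylow obstruction. The engine itself is valid, and your elimination of $P_0 \in \{1, C_2\}$ in the quaternionic case works. But there is a genuine gap in the cyclic case, at the sentence ``$C_G(H_0)=H_0$ forces $Z_{p'}(G)=D_0$''. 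That hypothesis only gives one inclusion, $Z_{p'}(G) \subset D_0$, since $Z(G) \subset C_G(H_0)=H_0$. For the reverse inclusion $D_0 \subset Z_{p'}(G)$ you must show that \emph{every} element of $G$, not merely every $p$-element, centralizes $D_0$; and your engine is structurally silent about $p'$-elements: if $g \in G$ has order prime to $p$, then $D_0\lan g\ran$ is a $p'$-group, its $p$-Sylow subgroup is trivial, and the non-normal-Sylow obstruction produces no contradiction. As written, nothing in your argument excludes a $p'$-element of $G$ outside $H_0$ acting nontrivially on $D_0$, in which case $F_0 = P_0 \x Z_{p'}(G)$ would be a proper subgroup of $H_0$ and conclusion a) would fail.

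The missing ingredient is cheap but must be stated: since the kernel $S_n$ of the reduction $\Ocal_n^\x \ra \Fbb_{p^n}^\x$ is a pro-$p$ group, every finite $p'$-subgroup of $\Sbb_n$ embeds in $\Fbb_{p^n}^\x$ and is cyclic; equivalently, every commutator in $\Ocal_n^\x$ lies in $S_n$, so for the normal $p'$-subgroup $D_0 \trianglelefteq G$ one gets $[g, D_0] \subset D_0 \cap S_n = \{1\}$ for all $g \in G$, i.e.\ $D_0 \subset Z(G)$. (This is exactly the point where the paper instead invokes theorems \ref{024} and \ref{032} to assert that $F \cap \Sbb_n$ acts trivially on $Z_{p'}(H_0)$.) The same supplement also repairs the secondary weak spot in your quaternionic case: you attribute ``the odd part of $C_G(Q_8)$ is central in $G$'' to proposition \ref{163} and corollary \ref{174}, but proposition \ref{163} is a statement about valuations; to apply corollary \ref{174} you would first need $G$ to lie in a conjugate of the maximal class $T_{24} \x C_{2^m-1}$, which requires theorem \ref{032} together with theorem \ref{168}. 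With the commutator fact above, that detour becomes unnecessary: $D_0 \subset Z(G)$ falls out directly in both the cyclic and the quaternionic case, after which your bookkeeping for $\tilde{F_i} = \tilde{H_i}$ is correct and matches the paper's.
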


\begin{proof}
First note that the condition $\tilde{H_2} = \tilde{F} \cap
C_{\Dbb_n^\x}(H_0)$ implies that the canonical homomorphism $\tilde{F}
\ra Aut(H_0)$ induces an injective homomorphism $\tilde{F}/\tilde{H_2}
\ra Aut(H_0)$. In particular, we have a monomorphism $F \cap \Sbb_n /
H_0 \ra Aut(H_0)$ where
\[
    (H_0 \cap S_n) \x Z_{p'}(H_0)
    = H_0
    \subset F \cap \Sbb_n,
\]
for $Z_{p'}$ the $p'$-part of (the center of) $H_0$. By theorem
\ref{024} and \ref{032} we know that $F \cap \Sbb_n$ acts trivially on
$Z_{p'}(H_0)$, so that we have a monomorphism 
\begin{align}
    F \cap \Sbb_n / H_0 \raa Aut(H_0 \cap S_n). \tag{$\ast$}
\end{align}

Assume for the moment that $F \cap S_n$ is abelian; this must be the
case if $p>2$, or if $p=2$ with either $n \not\equiv 2 \mod 4$ or $H_0
\cap S_n \not\iso C_4$. Then $F \cap \Sbb_n = (F \cap S_n) \x Z_{p'}(F
\cap \Sbb_n)$ is cyclic. Since $(F \cap S_n) / (H_0 \cap S_n)$ injects
into the kernel of the injective map $(\ast)$, we have $F \cap S_n = H_0
\cap S_n$. Furthermore, the $p'$-part of $Aut(H_0 \cap S_n)$ is a cyclic
group of order $p-1$, and the quotient group $Z_{p'}(F \cap \Sbb_n) /
Z_{p'}(H_0)$ injects into $C_{p-1} \subset Aut(H_0 \cap S_n)$. Hence
$Z_{p'}(F \cap \Sbb_n) = Z_{p'}(H_0)$ by theorem \ref{024} and
\ref{032}, so that $F_0 = H_0$ and $\tilde{F_0} = \tilde{H_0}$.
Therefore
\[
    \tilde{F_2}
    = \tilde{F} \cap C_{\Dbb_n^\x}(F_0)
    = \tilde{F} \cap C_{\Dbb_n^\x}(H_0)
    = \tilde{H_2},
\]
and
\[
    \tilde{F_1}
    = \tilde{F} \cap \Qbb_p(F_0)^\x
    = \tilde{F} \cap \Qbb_p(H_0)^\x
    = \tilde{F} \cap C_{\Dbb_n^\x}(H_0) \cap \Qbb_p(H_0)^\x
    = \tilde{H_2} \cap \Qbb_p(H_0)^\x
    = \tilde{H_1}.
\]

Finally, if $F \cap S_n$ is not abelian, then $p=2$, $n \equiv 2 \mod
4$, $H_0 \cap S_n \iso C_4$ and $F \cap S_n \iso Q_8$ by theorem
\ref{032}.  As seen above, the quotient group of the $2'$-part of $F
\cap \Sbb_n$ by the $2'$-part of $H_0$ injects into the trivial group.
\end{proof}

We now fix a chain $F_0 \subset F_1 \subset F_2$ such that condition b)
of lemma \ref{328} is not satisfied, and we let $L$ be a subfield of
$\Dbb_n$ such that $\tilde{F_2}$ belongs to
$\tilde{\Fcal}_u(C_{\Dbb_n^\x}(F_0), \tilde{F_1}, L)$; in particular $L
= \Qbb_p(\tilde{F_2})$. We consider the set \label{312}
$\tilde{\Fcal}_u(N_{\Dbb_n^\x}(F_0), \tilde{F_2})$ of all $\tilde{F_3}
\in \tilde{\Fcal}_u(N_{\Dbb_n^\x}(F_0))$ such that
\begin{itemize}
    \item $\tilde{F_3} \cap C_{\Dbb_n^\x}(F_0) = \tilde{F_2}$, and

    \item $\tilde{F_2}$ is normal in $\tilde{F_3}$.
\end{itemize}

\begin{proposition} \label{329}
If $\tilde{F_3} \in \tilde{\Fcal}_u(N_{\Dbb_n^\x}(F_0), \tilde{F_2})$,
there is a commutative diagram of obvious group homomorphisms
\[
    \xymatrix{
        \tilde{F_3}/\tilde{F_2} \ar@{=}[d] \ar[r]
        & Aut(\Qbb_p(\tilde{F_2}), \Qbb_p(F_0)) \ar[r]
        & Aut(\Qbb_p(F_0)) \ar[d] \\
        \tilde{F_3}/\tilde{F_2} \ar[r]
        & Aut(\tilde{F_2}, F_0) \ar[r]
        & Aut(F_0)
    }
\]
in which all compositions starting at $\tilde{F_3}/\tilde{F_2}$ are
injective.
\end{proposition}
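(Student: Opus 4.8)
The plan is to generate both rows of the square from a single source—the conjugation action of $\tilde{F_3}$ on $\Dbb_n$—and then to reduce the entire statement to the one defining property $\tilde{F_3} \cap C_{\Dbb_n^\x}(F_0) = \tilde{F_2}$. Here I read $Aut(\Qbb_p(\tilde{F_2}), \Qbb_p(F_0))$ as the $\Qbb_p$-automorphisms of $L := \Qbb_p(\tilde{F_2})$ that preserve the subfield $\Qbb_p(F_0)$, and $Aut(\tilde{F_2}, F_0)$ as the automorphisms of the group $\tilde{F_2}$ that preserve the subgroup $F_0$; the unlabelled horizontal and vertical arrows on the right are restrictions. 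For $g \in \tilde{F_3} \subset N_{\Dbb_n^\x}(F_0)$, conjugation $c_g: x \mapsto gxg^{-1}$ is a $\Qbb_p$-automorphism of $\Dbb_n$. Since $\tilde{F_2}$ is normal in $\tilde{F_3}$ we have $g\tilde{F_2}g^{-1} = \tilde{F_2}$, so $c_g$ preserves $L = \Qbb_p(\tilde{F_2})$; since $g$ normalizes $F_0$ it preserves $\Qbb_p(F_0)$ and $F_0$. This produces $c_g|_L \in Aut(\Qbb_p(\tilde{F_2}), \Qbb_p(F_0))$ (top-left arrow) and $c_g|_{\tilde{F_2}} \in Aut(\tilde{F_2}, F_0)$ (bottom-left arrow). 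To make the rightmost restriction $Aut(\Qbb_p(F_0)) \ra Aut(F_0)$ legitimate I would note that $F_0$ is characteristic in the field $\Qbb_p(F_0)$: by proposition \ref{018} it is cyclic, of order $p^\alpha d$ with $p \nmid d$, hence equal to the full group of $p^\alpha d$-th roots of unity of $\Qbb_p(F_0)$, which every field automorphism preserves.

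Next I would verify that all these maps factor through the quotient $\tilde{F_3}/\tilde{F_2}$. An element $g \in \tilde{F_2}$ acts trivially everywhere, because $\tilde{F_2}$ is abelian (as noted earlier) and generates $L$ over $\Qbb_p$: thus $c_g$ fixes $\tilde{F_2}$ elementwise and therefore fixes $L$ pointwise. Commutativity of the square is then formal. Reading off both routes from $\tilde{F_3}/\tilde{F_2}$ to $Aut(F_0)$, the top route sends the class of $g$ to $c_g|_L$, then to $c_g|_{\Qbb_p(F_0)}$, then to $c_g|_{F_0}$, while the bottom route sends it to $c_g|_{\tilde{F_2}}$ and then to $c_g|_{F_0}$; transitivity of restriction shows both yield the single automorphism $c_g|_{F_0}$.

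For the injectivity assertion I would exploit that each of the compositions appearing in the statement is an initial segment of one of the two routes $\tilde{F_3}/\tilde{F_2} \raa Aut(F_0)$, and that both routes coincide with $g \mapsto c_g|_{F_0}$. Since an initial segment of an injective composite is itself injective, it suffices to show this one map is injective. If $c_g$ fixes $F_0$ pointwise then $g$ centralizes $F_0$, so $g \in \tilde{F_3} \cap C_{\Dbb_n^\x}(F_0) = \tilde{F_2}$ by the definition of $\tilde{\Fcal}_u(N_{\Dbb_n^\x}(F_0), \tilde{F_2})$; thus the class of $g$ is trivial and injectivity follows. The one step carrying genuine content is the well-definedness of $c_g|_L$, which rests precisely on the normality of $\tilde{F_2}$ in $\tilde{F_3}$; the injectivity, by contrast, is essentially a reformulation of the defining centralizer condition, so I anticipate no serious obstacle beyond bookkeeping.
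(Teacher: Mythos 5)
Your proof is correct and takes essentially the same route as the paper's: both generate all maps from conjugation by elements of $\tilde{F_3}$, use normality of $\tilde{F_2}$ together with its abelianness to descend to the quotient $\tilde{F_3}/\tilde{F_2}$, justify the restriction $Aut(\Qbb_p(F_0)) \ra Aut(F_0)$ by observing that $F_0$ sits as a characteristic subgroup of the cyclic group $\mu(\Qbb_p(F_0))$, and derive injectivity of every composition from the defining condition $\tilde{F_3} \cap C_{\Dbb_n^\x}(F_0) = \tilde{F_2}$. The only difference is cosmetic: you spell out the ``initial segment of an injective composite is injective'' step, which the paper leaves implicit.
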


\begin{proof}
Clearly, the condition $\tilde{F_2} = \tilde{F_3} \cap
C_{\Dbb_n^\x}(F_0)$ is equivalent to the fact that the canonical
homomorphism $\tilde{F_3} \ra Aut(F_0)$ induces an injective
homomorphism $\tilde{F_3}/\tilde{F_2} \ra Aut(F_0)$.  Furthermore, an
automorphism of the field $\Qbb_p(F_0)$ induces an automorphism of the
group $\mu(\Qbb_p(F_0))$ of roots of unity in $\Qbb_p(F_0)$, and since
this group is cyclic and contains $F_0$, it also induces an automorphism
of $F_0$. This determines an injective homomorphism $Aut(\Qbb_p(F_0))
\ra Aut(F_0)$. The homomorphism $\tilde{F_3}/\tilde{F_2} \ra Aut(F_0)$
clearly takes its values into the subgroup $Aut(\Qbb_p(F_0))$.

The condition that $\tilde{F_2}$ is normal in $\tilde{F_3}$ yields
canonical homomorphisms $\tilde{F_3} \ra Aut(\tilde{F_2})$ and
$\tilde{F_3} \ra Aut(\Qbb_p(\tilde{F_2}))$. Since $\tilde{F_2}$ is
abelian, these induce canonical homomorphisms $\tilde{F_3}/\tilde{F_2}
\ra Aut(\tilde{F_2})$ and $\tilde{F_3}/\tilde{F_2} \ra
Aut(\Qbb_p(\tilde{F_2}))$. Moreover, as $\tilde{F_3}/\tilde{F_2} \ra
Aut(\tilde{F_2})$ takes its values into the subgroup $Aut(\tilde{F_2},
F_0)$ of those automorphisms of $\tilde{F_2}$ which leave $F_0$
invariant, and as $\tilde{F_3}/\tilde{F_2} \ra Aut(\Qbb_p(\tilde{F_2}))$
takes its values into the subgroup $Aut(\Qbb_p(\tilde{F_2}),
\Qbb_p(F_0))$ of those automorphisms which leave $\Qbb_p(F_0)$
invariant, we end up with the given commutative diagram.

From the injectivity of the map $\tilde{F_3}/\tilde{F_2} \ra Aut(F_0)$,
we then obtain that all compositions of homomorphism in the diagram
starting at $\tilde{F_3}/\tilde{F_2}$ are injective.
\end{proof}

Let $Aut(L, \tilde{F_2}, F_0)$ denote the subgroup of all elements of
$Aut(L)$ which leave both $\tilde{F_2}$ and $F_0$ invariant. By
proposition \ref{329}, we may partition the set
\[
    \tilde{\Fcal}_u(N_{\Dbb_n^\x}(F_0), \tilde{F_2})
    = \coprod_{W}
    \tilde{\Fcal}_u(N_{\Dbb_n^\x}(F_0), \tilde{F_2}, W)
\]
according to all subgroups $W$ of $Aut(F_0)$ which lift to $Aut(L,
\tilde{F_2}, F_0)$, where \label{313}
\[
    \tilde{\Fcal}_u(N_{\Dbb_n^\x}(F_0), \tilde{F_2}, W)
    :=
    \{ \tilde{F_3} \in \tilde{\Fcal}_u(N_{\Dbb_n^\x}(F_0), 
    \tilde{F_2})\ |\ \tilde{F_3} / \tilde{F_2} = W \}.
\]
Let us fix such a $W$. Under our assumptions, lemma \ref{328} and
proposition \ref{329} ensure that the elements of
$\tilde{\Fcal}_u(N_{\Dbb_n^\x}(F_0), \tilde{F_2}, W)$ are extensions of
the form
\[
    1 \raa \tilde{F_2} \raa \tilde{F_3} \raa W \raa 1
\]
with $\tilde{F_3} \cap C_{\Dbb_n^\x}(F_0) = \tilde{F_2}$, $\tilde{F_3}
\cap \Qbb_p(F_0)^\x = \tilde{F_1}$ and $\lan \tilde{F_3} \cap S_n,
Z_{p'}(\tilde{F_3} \cap \Sbb_n) \ran = F_0$. Define 
\[
    K := L^W \subset L = \Qbb_p(\tilde{F_2})
\]
to be the subfield of all elements of $L$ that are fixed by the action
of $W$. Clearly, $K$ is an extension of $\Qbb_p$ and the respective
dimensions of $K$ and $L$ over $\Qbb_p$ divide $n$. Recall from section
\ref{338} that an element $e \in H^2(W,L^\x)$ defines a central simple
crossed $K$-algebra $(L/K, e)$ up to isomorphism.

\begin{lemma} \label{330}
There is a generator of $H^2(W,L^\x)$ whose associated crossed algebra
embeds into $\Dbb_n$ if and only if $|W|$ is prime to
$n[L:\Qbb_p]^{-1}$. 
\end{lemma}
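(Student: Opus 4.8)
The plan is to convert both sides of the equivalence into invariants of central simple algebras over the local field $\Qbb_p$, and then reduce to a numerical congruence solved by the Chinese remainder theorem. Since $K = L^W$, the extension $L/K$ is Galois with group $W$, so $H^2(W,L^\x)$ is the relative Brauer group $\mathrm{Br}(L/K)$. By local class field theory the invariant map identifies it with $\frac{1}{|W|}\Zbb/\Zbb$, and a class is a generator precisely when the associated crossed algebra $A := (L/K,e)$ is a central \emph{division} $K$-algebra of degree $|W|$, having $L$ as a maximal subfield and invariant $\mathrm{inv}_K(A) = a/|W|$ with $\gcd(a,|W|)=1$.

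Next I would reduce embeddability of $A$ into $\Dbb_n$ to embeddability into $B := C_{\Dbb_n}(K)$. By the centralizer theorem \ref{007}, $B$ is a central division algebra over $K$; the double centralizer count gives $\deg_K B = n/[K:\Qbb_p]$, and since $B$ is Brauer-equivalent to $\Dbb_n \ox_{\Qbb_p} K$ its invariant is $[K:\Qbb_p]\cdot\mathrm{inv}(\Dbb_n) = [K:\Qbb_p]/n$, that is $\mathrm{inv}_K(B) = 1/m$ with $m := n/[K:\Qbb_p]$. The field $L$ sits inside $B$ because it centralizes $K$, and any element inducing some $w \in W$ by conjugation while fixing $K$ automatically lies in $B^\x$. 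Hence realizing $A$ inside $\Dbb_n$ compatibly with $L$ is the same as a $K$-embedding $A \hookrightarrow B$; moreover, by Skolem--Noether \ref{010} any $K$-embedding can be conjugated so that the distinguished $L \subset A$ lands on the prescribed $L \subset B$, so it suffices to treat abstract $K$-embeddings $A \hookrightarrow B$. Note $[L:\Qbb_p] \mid n$ by the embedding theorem \ref{013}, so $|W| = [L:K]$ divides $m$.

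The heart of the argument is the embedding criterion for central division algebras over a local field. If $A \hookrightarrow B$ over $K$, then $C := C_B(A)$ is central division over $K$ with $A \ox_K C \iso B$ (double centralizer), whence $\mathrm{inv}(A)+\mathrm{inv}(C)=\mathrm{inv}(B)$ and $\deg_K C = m/|W|$; conversely, from division algebras $A,C$ with $A \ox_K C \iso B$ one embeds $A$ via $x \mto x \ox 1$. Writing $d = |W|$, $\mathrm{inv}(A)=a/d$ and $\mathrm{inv}(C)=c/(m/d)$, the relation $\mathrm{inv}(A)+\mathrm{inv}(C)=1/m$ becomes the congruence $a(m/d)+cd \equiv 1 \mod m$, to be solved with $\gcd(a,d)=1$ (so that $e$ is a generator) and $\gcd(c,m/d)=1$ (so that $C$ is division of degree $m/d$). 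Reducing modulo $d$ forces $a(m/d)\equiv 1 \mod d$, so $m/d$ is a unit mod $d$ and $\gcd(d,m/d)=1$ is necessary; conversely, when $\gcd(d,m/d)=1$ the Chinese remainder theorem yields $a \equiv (m/d)^{-1} \mod d$ and $c \equiv d^{-1} \mod (m/d)$ solving the congruence, with $a,c$ automatically coprime to $d$ and $m/d$. Thus a generator with embeddable crossed algebra exists iff $\gcd(|W|,m/|W|)=1$, and since $m/|W| = (n/[K:\Qbb_p])/[L:K] = n/[L:\Qbb_p]$, this is exactly the condition that $|W|$ be prime to $n[L:\Qbb_p]^{-1}$.

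I expect the main obstacle to be the local embedding criterion itself: pinning down both directions cleanly, namely that an abstract embedding forces the invariant relation through the double centralizer theorem with $C$ genuinely division of degree $m/|W|$, and that the numerical solvability really returns a \emph{generator} (i.e. $\gcd(a,|W|)=1$) whose crossed algebra, after the Skolem--Noether adjustment, embeds in $\Dbb_n$ fixing the prescribed copy of $L$.
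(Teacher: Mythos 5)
Your proposal is correct and follows essentially the same route as the paper: reduce to the centralizer $D = C_{\Dbb_n}(K)$ (of invariant $1/qw$ with $q = n/[L:\Qbb_p]$), translate the embedding via the double centralizer theorem into the Hasse invariant relation $\frac{1}{qw} \equiv \frac{r}{w} + \frac{s}{q} \bmod \Zbb$, and extract the coprimality condition $\gcd(|W|, q)=1$ from the resulting congruence. Your extra care with the Skolem--Noether alignment of $L$ and the explicit Chinese remainder argument merely spell out steps the paper leaves terse, so the two proofs coincide in substance.
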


\begin{proof}
Consider the tower of extensions $\Qbb_p \subset K := L^W \subset L$.
Let $k := [K:\Qbb_p]$, $l := [L:\Qbb_p]$, $w := |W|$, and let $e$ be a
generator of $H^2(W,L) \iso \Zbb/w \subset \Qbb/\Zbb$. By proposition
\ref{331}, we know that the crossed algebra $(L/K, e) = \sum_{\sigma \in
W} Lu_\sigma$ is a central division algebra over $K$ of invariant $r/w
\in Br(K)$ for some integer $r$ prime to $w$. If $q = n/l$, then the
invariant of $D := C_{\Dbb_n}(K)$ is $1/qw \in Br(K)$ by proposition
\ref{014}.

Suppose $(L/K, e)$ can be embedded into $\Dbb_n$. Then it embeds into
$D$, and by the centralizer theorem
\[
    D
    \iso (L/K, e) \ox_K C_{D}(L/K, e),
\]
where $C_{D}(L/K, e)$ is central of dimension $q^2$ over $K$. On the
level of Hasse invariants we get a relation of the form
\begin{align}
    \frac{1}{qw} \equiv \frac{r}{w} + \frac{s}{q} \q \mod \Zbb
    \tag{$\ast$}
\end{align}
for a suitable integer $s$ which is prime to $q$. Hence
\[
    1 \equiv rq+sw \q \mod q\Zbb,
\]
and it follows that $q$ is prime to $w$.  Conversely if $q$ is prime to
$w$, then there is an $r$ prime to $w$ and an $s$ prime to $q$ such that
$(\ast)$ holds. Therefore the algebra $(L/K, e)$ embeds into $D$, and
consequently into $\Dbb_n$.
\end{proof}

\begin{theorem} \label{190}
Let $W$ be a subgroup of $Aut(F_0)$ which lifts to $Aut(L, \tilde{F_2},
F_0)$ and let 
\[
    i_W^\ast: H^2(W, \tilde{F_2}) \raa H^2(W, L^\x)
\]
be the map induced by the inclusion of $\tilde{F_2}$ into $L^\x$.  Then
$\tilde{\Fcal}_u(N_{\Dbb_n^\x}(F_0), \tilde{F_2}, W)$ is non-empty if
and only if $|W|$ is prime to $n[L:\Qbb_p]^{-1}$ and $i_W^\ast$ is
surjective.
\end{theorem}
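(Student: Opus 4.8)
The plan is to recast the existence of $\tilde{F_3}$ in terms of a crossed product algebra inside $\Dbb_n$ and its Hasse invariant, so that both clauses of the statement follow from the embedding criterion already isolated in lemma \ref{330} together with the local class field theory isomorphism $H^2(W, L^\x) \iso \Zbb/|W|$. The key dictionary is the following. Any $\tilde{F_3} \in \tilde{\Fcal}_u(N_{\Dbb_n^\x}(F_0), \tilde{F_2}, W)$ acts by conjugation on $L = \Qbb_p(\tilde{F_2})$ through the faithful embedding $\tilde{F_3}/\tilde{F_2} = W \hookrightarrow Aut(L)$ provided by proposition \ref{329}; setting $K := L^W$, Artin's lemma makes $L/K$ Galois with group $W$ and $[L:K] = |W|$. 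Choosing representatives $u_\sigma \in \tilde{F_3}$ of the elements $\sigma \in W$, the $K$-subalgebra of $\Dbb_n$ generated by $\tilde{F_3}$ is the crossed product $(L/K, f) = \sum_{\sigma \in W} L u_\sigma$, whose factor set $f(\sigma, \tau) = u_\sigma u_\tau u_{\sigma\tau}^{-1}$ lies in $\tilde{F_2}$ and represents precisely $i_W^\ast(e_{\tilde{F_3}})$, where $e_{\tilde{F_3}} \in H^2(W, \tilde{F_2})$ is the class of the extension $1 \raa \tilde{F_2} \raa \tilde{F_3} \raa W \raa 1$.

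For necessity I would argue as follows. Given $\tilde{F_3}$, the algebra $(L/K, i_W^\ast(e_{\tilde{F_3}}))$ is central simple over $K$ of dimension $|W|^2$ and, being a subalgebra of the division algebra $\Dbb_n$, is itself a division algebra. Over the local field $K$ the index of a central division algebra equals the order of its Brauer class, so $i_W^\ast(e_{\tilde{F_3}})$ has order exactly $|W|$ in $H^2(W, L^\x) \iso \Zbb/|W|$; generating this cyclic group, it forces $i_W^\ast$ to be surjective. The arithmetic clause is then exactly lemma \ref{330}: passing the division algebra $(L/K, i_W^\ast(e_{\tilde{F_3}}))$ through the centralizer theorem \ref{007} and comparing its Hasse invariant with that of $C_{\Dbb_n}(K)$ yields that $|W|$ is prime to $q = n[L:\Qbb_p]^{-1}$.

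For sufficiency, assume $|W|$ prime to $n[L:\Qbb_p]^{-1}$ and $i_W^\ast$ surjective. By lemma \ref{330} there is a generator $e$ of $H^2(W, L^\x)$ whose crossed algebra $(L/K, e)$ embeds into $\Dbb_n$, yielding elements $u_\sigma \in \Dbb_n^\x$ realizing $\sigma \in W$ on $L$ with factor set representing $e$. Surjectivity of $i_W^\ast$ furnishes a preimage of $e$ in $H^2(W, \tilde{F_2})$; after adjusting each $u_\sigma$ by a suitable element of $L^\x$ (which does not change its action on $L$), the factor set becomes a representative of this preimage and hence takes values in $\tilde{F_2}$. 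Because every $\sigma \in W \subset Aut(L, \tilde{F_2}, F_0)$ stabilizes $\tilde{F_2}$, the set $\tilde{F_3} := \lan \tilde{F_2}, \{u_\sigma\}_{\sigma \in W} \ran$ is a subgroup of $\Dbb_n^\x$ with $\tilde{F_2}$ normal of quotient $W$; I would then check that $\tilde{F_3}$ contains $\lan pu \ran$ with finite index and that $\tilde{F_3} \cap C_{\Dbb_n^\x}(F_0) = \tilde{F_2}$, the latter because $W$ acts faithfully on $F_0$ by proposition \ref{329}, placing $\tilde{F_3}$ in $\tilde{\Fcal}_u(N_{\Dbb_n^\x}(F_0), \tilde{F_2}, W)$.

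The hard part will be the interface between the two conditions rather than either one in isolation. Concretely, the care is needed in the sufficiency step: one must ensure that the embeddable generator produced by lemma \ref{330} can be represented by a factor set valued in the finite group $\tilde{F_2}$ instead of all of $L^\x$ — this is exactly what surjectivity of $i_W^\ast$ buys, but the cocycle-adjustment of the $u_\sigma$ and the verification that it neither enlarges $\tilde{F_3} \cap C_{\Dbb_n^\x}(F_0)$ beyond $\tilde{F_2}$ nor disturbs the valuation bookkeeping keeping $\tilde{F_3}$ in $\tilde{\Fcal}_u$ is where the genuine work lies.
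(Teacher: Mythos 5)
Your proposal is correct and follows essentially the same route as the paper: both directions run through lemma \ref{330} and the crossed-product dictionary $(L/L^W, e)$, using simplicity of the crossed algebra to get an injective map into $\Dbb_n$ (hence a division algebra whose class generates $H^2(W, L^\x)$ because index equals exponent over a local field) for necessity, and surjectivity of $i_W^\ast$ to lift the factor set to $\tilde{F_2}$-values for sufficiency. The only step you leave implicit is the Skolem--Noether conjugation ensuring that the embedding of $(L/L^W, e)$ restricts to the \emph{given} copy of $L = \Qbb_p(\tilde{F_2})$ in $\Dbb_n$, which the paper invokes explicitly; your explicit cocycle-adjustment of the $u_\sigma$ and the paper's pushout/pullback diagrams are interchangeable formulations of the same construction.
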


\begin{proof}
Suppose that $|W|$ is prime to $n[L:\Qbb_p]^{-1}$ and $i_W^\ast$ is
surjective. By lemma \ref{330}, there is a generator $e \in H^2(W,L^\x)$
whose associated algebra $(L/L^W, e)$ embeds into $\Dbb_n$. The group of
units $(L/L^W, e)^\x$ contains
\[
    L^\x W = \coprod_{\sigma \in W} L^\x u_\sigma
\]
as a subgroup, and we get an embedding of $L^\x W$ into $\Dbb_n^\x$. By
the Skolem-Noether theorem we can assume that this embedding restricts
to the given embedding of $L$ into $\Dbb_n$. Since $L =
\Qbb_p(\tilde{F_2})$, we have $L^\x W \subset
N_{\Dbb_n^\x}(\tilde{F_2})$ and there is a commutative diagram
\[
    \xymatrix{
        1 \ar[r] 
        & L^\x \ar[r] \ar[d] 
        & L^\x W \ar[r] \ar[d] 
        & W \ar[r] \ar[d] & 1 \\
        1 \ar[r] 
        & C_{\Dbb_n^\x}(\tilde{F_2}) \ar[r] 
        & N_{\Dbb_n^\x}(\tilde{F_2}) \ar[r] 
        & Aut(L) \ar[r] & 1,
    }
\]
whose vertical maps are inclusions and whose horizontal sequences are
exact. Now, the surjectivity of $i_W^\ast$ implies the existence of an
element $e' \in H^2(W,\tilde{F_2})$ such that $i_W^\ast(e') = e$, in
which case the above diagram extends to a commutative diagram
\[
    \xymatrix{
        1 \ar[r] 
        & \tilde{F_2} \ar[r] \ar[d]
        & \tilde{F} \ar[r] \ar[d]
        & W \ar[r] \ar@{=}[d] & 1 \\
        1 \ar[r] 
        & L^\x \ar[r] \ar[d] 
        & L^\x W \ar[r] \ar[d] 
        & W \ar[r] \ar[d] & 1 \\
        1 \ar[r] 
        & C_{\Dbb_n^\x}(\tilde{F_2}) \ar[r] 
        & N_{\Dbb_n^\x}(\tilde{F_2}) \ar[r] 
        & Aut(L) \ar[r] & 1,
    }
\]
where the top exact sequence has extension class $e'$. Because of our
assumption that $W$ injects into $Aut(F_0)$, we have $\tilde{F} \cap
C_{\Dbb_n^\x}(F_0) = \tilde{F_2}$, and therefore $\tilde{F} \in
\tilde{\Fcal}_u(N_{\Dbb_n^\x}(F_0), \tilde{F_2}, W)$.
 
Conversely, if $\tilde{F} \in \tilde{\Fcal}_u(N_{\Dbb_n^\x}(F_0),
\tilde{F_2}, W)$, then $\tilde{F}$ extends $\tilde{F_2}$ by $W$ and
there are commutative diagrams 
\[
    \xymatrix{
        1 \ar[r] 
        & \tilde{F_2} \ar[r] \ar[d]
        & \tilde{F} \ar[r] \ar[d]
        & W \ar[r] \ar[d] & 1 \\
        1 \ar[r] 
        & C_{\Dbb_n^\x}(\tilde{F_2}) \ar[r] 
        & N_{\Dbb_n^\x}(\tilde{F_2}) \ar[r] 
        & Aut(L) \ar[r] & 1,
    }
\]
and
\[
    \xymatrix{
        1 \ar[r] 
        & \tilde{F_2} \ar[r] \ar[d]
        & \tilde{F} \ar[r] \ar[d]
        & W \ar[r] \ar@{=}[d] & 1 \\
        1 \ar[r] 
        & L^\x \ar[r]
        & L^\x W \ar[r]
        & W \ar[r] & 1,
    }
\]
whose vertical maps are inclusions and horizontal sequences are exact.
Using the universal property of the lower left pushout square, we may
extend the latter diagram in an obvious way to obtain an embedding of
extensions
\[
    \xymatrix{
        1 \ar[r] 
        & \tilde{F_2} \ar[r] \ar[d]
        & \tilde{F} \ar[r] \ar[d]
        & W \ar[r] \ar@{=}[d] & 1 \\
        1 \ar[r] 
        & L^\x \ar[r] \ar[d] 
        & L^\x W \ar[r] \ar[d]^{i} 
        & W \ar[r] \ar[d] & 1 \\
        1 \ar[r] 
        & C_{\Dbb_n^\x}(\tilde{F_2}) \ar[r] 
        & N_{\Dbb_n^\x}(\tilde{F_2}) \ar[r] 
        & Aut(L) \ar[r] & 1,
    }
\]
where $L^\x W \subset (L/L^W, e) = \sum_{\sigma \in W} L u_{\sigma}$ for
$e$ the image of the extension class of $\tilde{F}$ in $H^2(W,L^\x)$. By
definition of $L^\x W$, the map $i$ extends uniquely to an algebra
homomorphism 
\[
    \tilde{i}: (L/L^W, e) \raa \Dbb_n 
    : \sum_\sigma x_\sigma u_\sigma 
    \mtoo \sum_\sigma i(x_\sigma u_\sigma),
    \qq x_\sigma \in L^\x.
\]
Moreover since $(L/L^W, e)$ is simple and $i$ is non-trivial, the kernel
of $\tilde{i}$ is trivial. Hence $\tilde{i}$ is injective and $(L/L^W,
e)$, which embeds into $\Dbb_n$, is a division algebra by proposition
\ref{006}. It follows that $e$ is a generator of $H^2(W,L^\x)$ and
$i_W^\ast$ is surjective. Applying lemma \ref{330} we finally obtain
that $|W|$ is prime to $n[L:\Qbb_p]^{-1}$.
\end{proof}

\begin{theorem} \label{193}
Let $W$ be a subgroup of $Aut(F_0)$ which lifts to $Aut(L, \tilde{F_2},
F_0)$. If the set $\tilde{\Fcal}_u(N_{\Dbb_n^\x}(F_0), \tilde{F_2}, W)$
is non-empty and contains $\tilde{F_3}$, then there is a bijection
\begin{align*}
    \psi_3: H^1(W, C_{\Dbb_n^\x}(\tilde{F_2})/\tilde{F_2})
    & \raa \tilde{\Fcal}_u(N_{\Dbb_n^\x}(F_0), \tilde{F_2}, W)
    /\sim_{C_{\Dbb_n^\x}(\tilde{F_2})} \\
    c & \mtoo \lan \tilde{F_2}, cs_{\tilde{F_3}} \ran,
\end{align*}
for $c$ a cocycle and $s_{\tilde{F_3}}: W \ra \tilde{F_3}$ a set
theoretic section of the epimorphism $\tilde{F_3} \ra W$.
\end{theorem}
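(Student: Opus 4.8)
The plan is to obtain Theorem \ref{193} as a direct specialization of the general Theorem \ref{206}, in the same spirit as Theorems \ref{180}.2 and \ref{185}.2 for the first two extension types. First I would take $G = N_{\Dbb_n^\x}(\tilde{F_2})$ and let $\rho: N_{\Dbb_n^\x}(\tilde{F_2}) \raa Aut(L)$ be the homomorphism given by conjugation on $L = \Qbb_p(\tilde{F_2})$. Its kernel is $C_{\Dbb_n^\x}(\tilde{F_2})$, as recorded by the exact sequence
\[
    1 \raa C_{\Dbb_n^\x}(\tilde{F_2}) \raa N_{\Dbb_n^\x}(\tilde{F_2})
    \raa Aut(L) \raa 1
\]
used in the proof of Theorem \ref{190}. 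I would then set $A = \tilde{F_2}$ and $B = W$, the latter viewed as a subgroup of $Im(\rho)$ through the lift of Proposition \ref{329} into $Aut(L, \tilde{F_2}, F_0)$; since $\tilde{\Fcal}_u(N_{\Dbb_n^\x}(F_0), \tilde{F_2}, W)$ is assumed non-empty, any $\tilde{F_3}$ in it satisfies $\rho(\tilde{F_3}) = \tilde{F_3}/\tilde{F_2} \iso W$, so $W$ indeed lies in $Im(\rho)$. With these choices $Ker(\rho)/A = C_{\Dbb_n^\x}(\tilde{F_2})/\tilde{F_2}$, so the domain $H^1(W, C_{\Dbb_n^\x}(\tilde{F_2})/\tilde{F_2})$ of $\psi_3$ is exactly the cohomology group $H^1(B, Ker(\rho)/A)$ appearing in Theorem \ref{206}.

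Next I would check the hypotheses and identify the two families of subgroups. The group $A = \tilde{F_2}$ is abelian by section \ref{076}, is normal in $G = N_{\Dbb_n^\x}(\tilde{F_2})$ by definition of the normalizer, and lies in the center of $Ker(\rho) = C_{\Dbb_n^\x}(\tilde{F_2})$ because each element of this centralizer commutes with $\tilde{F_2}$; so Theorem \ref{206} applies. To match the sets, I would use $F_0 \subset \tilde{F_2}$, which gives $C_{\Dbb_n^\x}(\tilde{F_2}) \subset C_{\Dbb_n^\x}(F_0)$. For $\tilde{F_3} \in \tilde{\Fcal}_u(N_{\Dbb_n^\x}(F_0), \tilde{F_2}, W)$ the normality of $\tilde{F_2}$ forces $\tilde{F_3} \subset N_{\Dbb_n^\x}(\tilde{F_2})$, and the assumed identity $\tilde{F_3} \cap C_{\Dbb_n^\x}(F_0) = \tilde{F_2}$ combined with the above inclusion yields $\tilde{F_3} \cap C_{\Dbb_n^\x}(\tilde{F_2}) = \tilde{F_2}$, so $\tilde{F_3} \in \Gcal_\rho(G, A, W)$. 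Conversely, an element $\tilde{F_3}$ of $\Gcal_\rho(G, A, W)$ has image $W \subset Aut(L, \tilde{F_2}, F_0)$, hence leaves $F_0$ invariant and lies in $N_{\Dbb_n^\x}(F_0)$; since the composite $\tilde{F_3}/\tilde{F_2} \iso W \ra Aut(F_0)$ is injective (as $W$ is a subgroup of $Aut(F_0)$), the condition $\tilde{F_3} \cap C_{\Dbb_n^\x}(\tilde{F_2}) = \tilde{F_2}$ upgrades to $\tilde{F_3} \cap C_{\Dbb_n^\x}(F_0) = \tilde{F_2}$, and membership in $\tilde{\Fcal}_u$ is automatic since $\tilde{F_3}$ contains $\lan pu \ran$ with finite index. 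Thus $\Gcal_\rho(G, A, W) = \tilde{\Fcal}_u(N_{\Dbb_n^\x}(F_0), \tilde{F_2}, W)$, compatibly with the conjugation action of $Ker(\rho) = C_{\Dbb_n^\x}(\tilde{F_2})$.

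With these identifications, Theorem \ref{206} applied to the distinguished element $H_0 = \tilde{F_3}$ delivers the bijection
\[
    H^1(W, C_{\Dbb_n^\x}(\tilde{F_2})/\tilde{F_2})
    \raa \tilde{\Fcal}_u(N_{\Dbb_n^\x}(F_0), \tilde{F_2}, W)
    /\sim_{C_{\Dbb_n^\x}(\tilde{F_2})},
\]
which is $\psi_3$; unwinding the section-based construction of $\psi_{H_0}$ shows that a cocycle $c$ is sent to $\lan \tilde{F_2}, c s_{\tilde{F_3}} \ran$ for a set-theoretic section $s_{\tilde{F_3}}: W \ra \tilde{F_3}$, and that the relevant $W$-action on the coefficients is induced by conjugation through $\tilde{F_3}$, which explains the dependence of $\psi_3$ on $\tilde{F_3}$. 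I expect the one genuinely delicate step to be this identification of families: verifying that the pair of conditions defining $\tilde{\Fcal}_u$ (namely $\tilde{F_3} \cap C_{\Dbb_n^\x}(F_0) = \tilde{F_2}$ together with $\tilde{F_2}$ normal in $\tilde{F_3}$) is equivalent to the single condition $\tilde{F_3} \cap C_{\Dbb_n^\x}(\tilde{F_2}) = \tilde{F_2}$ required by $\Gcal_\rho$. Once this bookkeeping is settled, all the cohomological content is supplied verbatim by Theorem \ref{206}.
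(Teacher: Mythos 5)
Your proof is correct and takes essentially the same route as the paper: the paper's own proof likewise obtains Theorem \ref{193} as a specialization of Theorem \ref{206} with $G = N_{\Dbb_n^\x}(\tilde{F_2})$, $A = \tilde{F_2}$, $B = W$ and $Ker(\rho) = C_{\Dbb_n^\x}(\tilde{F_2})$, the only cosmetic difference being that the paper takes $\rho$ with target $Aut(\tilde{F_2})$ rather than $Aut(L)$, which changes nothing since $L = \Qbb_p(\tilde{F_2})$ yields the same kernel. Your explicit check that $\Gcal_\rho(G, A, W)$ coincides with $\tilde{\Fcal}_u(N_{\Dbb_n^\x}(F_0), \tilde{F_2}, W)$ (using $C_{\Dbb_n^\x}(\tilde{F_2}) \subset C_{\Dbb_n^\x}(F_0)$ and the injectivity of $W \ra Aut(F_0)$ from Proposition \ref{329}) is bookkeeping the paper leaves implicit, and you carry it out correctly.
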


\begin{proof}
By proposition \ref{329}, we know that $W$ lifts to an automorphism of
$\tilde{F_2}$. The result is then a specialization of theorem \ref{206}
in the case where
\[
    \rho: G = N_{\Dbb_n^\x}(\tilde{F_2}) 
    \raa Aut(\tilde{F_2}) = Q
\]
is given by the canonical homomorphism induced by conjugation and
\[
    A = \tilde{F_2}, \qq B = W;
\]
in particular, $Ker(\rho)=C_{\Dbb_n^\x}(\tilde{F_2})$.
\end{proof}

\begin{corollary} \label{192}
If $\Qbb_p(F_0)$ is a maximal subfield of $\Dbb_n$ such that
$\mu(\Qbb_p(F_0))=F_0$, and if $i_W^\ast: H^2(W, \tilde{F_2}) \ra H^2(W,
L^\x)$ is an epimorphism for $W$ a subgroup of $Aut(F_0)$ which lifts to
$Aut(L, \tilde{F_2}, F_0)$, then there is a bijection between the
conjugacy classes of elements of $\tilde{\Fcal}_u(N_{\Dbb_n^\x}(F_0),
\tilde{F_2}, W)$ and the kernel of $i_W^\ast$.
\end{corollary}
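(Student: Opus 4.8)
The plan is to combine the existence and classification results of Theorems \ref{190} and \ref{193} with Hilbert's Theorem 90, after first exploiting the maximality hypothesis to collapse the ambient groups.

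First I would record what the hypothesis that $\Qbb_p(F_0)$ is a maximal subfield of $\Dbb_n$ buys us. Such a field has degree $n$ over $\Qbb_p$, and by the centralizer theorem its centralizer in $\Dbb_n$ coincides with itself, so $C_{\Dbb_n^\x}(F_0) = \Qbb_p(F_0)^\x$. Consequently $\tilde{F_2} = \tilde{F} \cap C_{\Dbb_n^\x}(F_0)$ lies in $\Qbb_p(F_0)^\x$, the field $L = \Qbb_p(\tilde{F_2})$ equals $\Qbb_p(F_0)$, and $C_{\Dbb_n^\x}(\tilde{F_2}) = L^\x$. Moreover $[L:\Qbb_p] = n$, so $n[L:\Qbb_p]^{-1} = 1$ is automatically prime to $|W|$. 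In particular the notion of conjugacy relevant to Theorem \ref{193}, namely $\sim_{C_{\Dbb_n^\x}(\tilde{F_2})}$, is here conjugacy by elements of $L^\x$.

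Next, since $i_W^\ast$ is assumed to be an epimorphism and the primality condition holds trivially, Theorem \ref{190} guarantees that $\tilde{\Fcal}_u(N_{\Dbb_n^\x}(F_0), \tilde{F_2}, W)$ is non-empty; fix $\tilde{F_3}$ in it. Theorem \ref{193} then furnishes a bijection $\psi_3$ between $H^1(W, C_{\Dbb_n^\x}(\tilde{F_2})/\tilde{F_2}) = H^1(W, L^\x/\tilde{F_2})$ and the conjugacy classes in question. It therefore remains to identify $H^1(W, L^\x/\tilde{F_2})$ with $\ker(i_W^\ast)$, and since $L^\x/\tilde{F_2}$ is abelian this is a question of ordinary group cohomology.

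For this I would use the short exact sequence of $W$-modules $1 \ra \tilde{F_2} \ra L^\x \ra L^\x/\tilde{F_2} \ra 1$ and its long exact cohomology sequence, in which the connecting map $H^1(W, L^\x/\tilde{F_2}) \ra H^2(W, \tilde{F_2})$ has image exactly $\ker(i_W^\ast)$ and kernel the image of $H^1(W, L^\x)$. The main point — and the step I expect to require the most care — is that $W$ acts faithfully on $L$ as a group of field automorphisms. This follows from the injectivity of $Aut(\Qbb_p(F_0)) \ra Aut(F_0)$ established in Proposition \ref{329}, which applies precisely because the hypothesis $\mu(\Qbb_p(F_0)) = F_0$ forces $F_0$ to generate $L$ over $\Qbb_p$; hence $L/L^W$ is a Galois extension with group $W$. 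Hilbert's Theorem 90 then gives $H^1(W, L^\x) = 0$, so the connecting map is injective and yields the desired isomorphism $H^1(W, L^\x/\tilde{F_2}) \iso \ker(i_W^\ast)$. Composing this with $\psi_3$ produces the bijection between the conjugacy classes of $\tilde{\Fcal}_u(N_{\Dbb_n^\x}(F_0), \tilde{F_2}, W)$ and $\ker(i_W^\ast)$ asserted in the corollary.
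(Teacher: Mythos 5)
Your proposal is correct and follows essentially the same route as the paper: identify $C_{\Dbb_n^\x}(\tilde{F_2}) = L^\x$ using the maximality of $\Qbb_p(F_0)$, apply the long exact cohomology sequence of $1 \ra \tilde{F_2} \ra L^\x \ra L^\x/\tilde{F_2} \ra 1$ with Hilbert's theorem 90 killing $H^1(W, L^\x)$ to identify $H^1(W, L^\x/\tilde{F_2})$ with $\ker(i_W^\ast)$, and then invoke Theorem \ref{193}. Your additional remarks — making the non-emptiness via Theorem \ref{190} explicit (using $n[L:\Qbb_p]^{-1}=1$) and justifying that $L/L^W$ is Galois with group $W$ — are points the paper leaves implicit, but they constitute the same argument, not a different one.
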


\begin{proof}
Under the stated assumptions, we have $\tilde{F_2} = \tilde{F_1}$, $L =
\Qbb_p(F_0)$, $[L:\Qbb_p]=n$ and $C_{\Dbb_n^\x}(\tilde{F_2}) = L^\x$.
Hence there is a short exact sequence
\[
    1 \raa \tilde{F_2} \raa L^\x \raa L^\x/\tilde{F_2} \raa 1,
\]
which for $W \subset Aut(L,\tilde{F_2}) \subset Gal(L/\Qbb_p)$ induces
the long exact sequence
\[
    \xymatrix{
    \ldots \ar[r]
    & H^1(W, L^\x) \ar[r]
    & H^1(W, L^\x/\tilde{F_2}) \ar[d] 
    & Br(L/L^W) \ar@{=}[d] \\
    & 0 \ar@{=}[u]
    & H^2(W, \tilde{F_2}) \ar[r]^{i_W^\ast}
    & H^2(W, L^\x) \ar[r] 
    & \ldots,
    }
\]
where the left hand term is trivial by Hilbert's theorem 90. The group
$H^1(W, L^\x/\tilde{F_2})$ is therefore the kernel of $i_W^\ast$ and the
result follows from theorem \ref{193}.
\end{proof}

\begin{theorem} \label{332}
Let $\tilde{H} \in \tilde{\Fcal}_u(\Dbb_n^\x)$ be such that $H \cap S_n$ is
abelian and $\mu(\Qbb_p(H_0)) = \mu(\Qbb_p(\tilde{H_2}))$. Then there is
a subgroup $\tilde{F} \in \tilde{\Fcal}_u(\Dbb_n^\x)$ such that 
\[
    F_0 = \mu(\Qbb_p(H_0)), \qq 
    \Qbb_p(\tilde{F_2}) = \Qbb_p(\tilde{H_2})
    \qq \text{and} \qq
    \tilde{H_i} \subset \tilde{F_i}\ \text{ for } 0 \leq i \leq 3.
\]
\end{theorem}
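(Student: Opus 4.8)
The plan is to define $F_0$ directly as the full group of roots of unity and to build $\tilde F$ from the given data by reusing the generators of the chain attached to $\tilde H$. Set $F_0:=\mu(\Qbb_p(H_0))$. Since $H_0$ is a finite subgroup of the field $\Qbb_p(H_0)$ it consists of roots of unity, so $H_0\subset F_0$ and $\Qbb_p(F_0)=\Qbb_p(H_0)$; by Proposition \ref{339}, $\tilde F_0=F_0\x\lan pu\ran\supset\tilde H_0$. Writing $L:=\Qbb_p(\tilde H_2)$, the hypothesis reads $\mu(L)=F_0$, so Corollaries \ref{183} and \ref{188} apply to $F_0$. The first thing I would record is that enlarging $H_0$ to $F_0$ does not change the second stage: as $\tilde H_2$ is abelian and contained in the field $L$, while $F_0=\mu(L)\subset L$, every element of $\tilde H_2$ centralizes $F_0$; combined with $C_{\Dbb_n^\x}(F_0)\subset C_{\Dbb_n^\x}(H_0)$ this gives $\tilde H_2\subset C_{\Dbb_n^\x}(F_0)$, hence $\tilde H\cap C_{\Dbb_n^\x}(F_0)=\tilde H_2$. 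Likewise $\tilde H\subset N_{\Dbb_n^\x}(H_0)$ acts on $\Qbb_p(H_0)$ by field automorphisms, so it permutes roots of unity and thus $\tilde H\subset N_{\Dbb_n^\x}(F_0)$.

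Next I would fix generators $x_1\in\tilde H_1$ and $x_2\in\tilde H_2$ with $v(x_1)=1/r_1$, $v(x_2)=1/(r_1r_2)$, $x_1^{r_1}\in\tilde H_0$ and $x_2^{r_2}\in\tilde H_1$ as in Remarks \ref{181} and \ref{186}, and reuse them to set $\tilde F_1:=\lan\tilde F_0,x_1\ran$ and $\tilde F_2:=\lan\tilde F_1,x_2\ran=\lan F_0,\tilde H_2\ran$. Since $x_1^{r_1}\in\tilde H_0\subset\tilde F_0$ and $x_2^{r_2}\in\tilde H_1\subset\tilde F_1$, Remarks \ref{181} and \ref{186} give at once that $\tilde F_1\in\tilde{\Fcal}_u(\Qbb_p(F_0)^\x,\tilde F_0,r_1)$ and $\tilde F_2\in\tilde{\Fcal}_u(C_{\Dbb_n^\x}(F_0),\tilde F_1,L)$ with $\Qbb_p(\tilde F_2)=L$; by construction $\tilde H_i\subset\tilde F_i$ for $i\le 2$. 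Finally I would put
\[
    \tilde F:=\lan\tilde F_2,\tilde H\ran\ \subset\ N_{\Dbb_n^\x}(F_0).
\]
As $\tilde H$ preserves $F_0$ and normalizes $\tilde H_2$, it normalizes $\tilde F_2=\lan F_0,\tilde H_2\ran$, so $\tilde F_2\triangleleft\tilde F$ and $\tilde F=\tilde F_2\tilde H$; then $\tilde F_2\subset C_{\Dbb_n^\x}(F_0)$ together with $\tilde H\cap C_{\Dbb_n^\x}(F_0)=\tilde H_2\subset\tilde F_2$ yields $\tilde F\cap C_{\Dbb_n^\x}(F_0)=\tilde F_2$ and $\tilde F/\tilde F_2\iso\tilde H/\tilde H_2$. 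In particular $\tilde F\in\tilde{\Fcal}_u(\Dbb_n^\x)$ and $\tilde H=\tilde H_3\subset\tilde F$, so this explicit construction bypasses the existence criterion of Theorem \ref{190}.

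It then remains to check that the canonically defined stages of $\tilde F$ really are the groups just built, equivalently that $F_0=\mu(\Qbb_p(H_0))$; this is governed by Lemma \ref{328} with $F_0$ in the role of $H_0$ and $\tilde F_2$ in that of $\tilde H_2$, and the only point is to exclude its alternative b), namely $\tilde F\cap S_n\iso Q_8$. I expect this to be the main obstacle. The key auxiliary fact is that passing to $\mu(\Qbb_p(H_0))$ does not enlarge the $p$-part: the maximal $p$-power cyclotomic subfield of $\Qbb_p(H_0)$ is $\Qbb_p(\zeta_{p^\beta})$, whence $F_0\cap S_n=\mu_p(\Qbb_p(H_0))=H_0\cap S_n=H\cap S_n$. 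Suppose $\tilde F\cap S_n\iso Q_8$; then $p=2$, $H\cap S_n=\lan\zeta_4\ran\iso C_4$, and there is $j\in\tilde F\cap S_n$ with $v(j)=0$ inverting $\zeta_4$. Writing $j=fh$ with $f\in\tilde F_2$ and $h\in\tilde H$, the factor $f$ centralizes $F_0$, so $h$ inverts $\zeta_4$ and $v(h)=-v(f)\in v(\tilde F_2)=v(\tilde H_2)$; choosing $h_2\in\tilde H_2$ with $v(h_2)=v(h)$, the element $g_0:=hh_2^{-1}$ has valuation $0$, hence lies in the finite group $\tilde H\cap\Sbb_n=H\cap\Sbb_n$ and still inverts $\zeta_4$ since $h_2$ centralizes $F_0$. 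By Proposition \ref{017} the $2$-part $s$ of $g_0$ lies in $H\cap S_n=\lan\zeta_4\ran$, and as the odd part acts trivially on $C_4$, $s$ inverts $\zeta_4$, contradicting $s\in\lan\zeta_4\ran$. Thus alternative b) is impossible, Lemma \ref{328}.a) shows the stages coincide, and $\tilde F$ has all the required properties.
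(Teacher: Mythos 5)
Your construction is correct, but it takes a genuinely different route in the decisive step. The paper builds the same $F_0$, $\tilde{F_0}$, $\tilde{F_1} = \lan \tilde{F_0}, x_1 \ran$, $\tilde{F_2} = \lan \tilde{F_1}, x_2 \ran$ from the same generators, but then produces $\tilde{F_3}$ abstractly: it first checks that $W$ lifts to $Aut(L, \tilde{F_2}, F_0)$ by the computation $(\sigma(x_i)/x_i)^{r_i} \in F_0$ (using that $F_0 = \mu(L)$ is the maximal finite subgroup), and then factors the epimorphism $H^2(W, \tilde{H_2}) \to H^2(W, L^\x)$ — which Theorem \ref{190} supplies from the existence of $\tilde{H}$ — through $H^2(W, \tilde{F_2})$, so that Theorem \ref{190} in the other direction yields an extension of $\tilde{F_2}$ by $W$. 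You bypass all of this with the explicit subgroup $\tilde{F} := \lan \tilde{F_2}, \tilde{H} \ran = \tilde{F_2}\tilde{H}$, exploiting the identity $\tilde{F_2} = \lan F_0, \tilde{H_2} \ran$ (which makes normalization by $\tilde{H}$ immediate) and the equality $\tilde{H} \cap C_{\Dbb_n^\x}(F_0) = \tilde{H_2}$ to get $\tilde{F} \cap C_{\Dbb_n^\x}(F_0) = \tilde{F_2}$ and $\tilde{F}/\tilde{F_2} \iso \tilde{H}/\tilde{H_2}$. This buys two things the paper leaves implicit: the containment $\tilde{H} \subset \tilde{F}$ is literal rather than something to be extracted from a choice of cocycle in the pushforward class, and your exclusion of alternative b) of Lemma \ref{328} (via the element $g_0 = hh_2^{-1}$ of valuation zero inverting $\zeta_4$, whose $2$-part must lie in $\lan \zeta_4 \ran$ by Proposition \ref{017}) explicitly certifies that the canonical stages of $\tilde{F}$ are the groups you built — the paper does not address this inside the proof. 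What the paper's route buys instead is uniformity with its classification machinery, since the lifting of $W$ to $Aut(L, \tilde{F_2}, F_0)$ and the surjectivity of $i_W^\ast$ are exactly the hypotheses fed into Theorems \ref{190} and \ref{193} elsewhere. One small caution: your blanket claim $F_0 \cap S_n = \mu_p(\Qbb_p(H_0)) = H_0 \cap S_n$ has an edge case at $p=2$ when $H_0$ has trivial $2$-part (passing to $\mu$ adjoins $-1$, by Proposition \ref{342}); this is harmless where you use it, since in the quaternionic scenario $F_0 \cap S_n \iso C_4$ forces $\zeta_4 \in \Qbb_2(H_0)$, which by Proposition \ref{342} already requires the $2$-part of $H_0$ to be exactly $C_4$, but the statement should be restricted to that case.
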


\begin{proof}
We know that $\tilde{H_1} = \lan \tilde{H_0}, x_1 \ran$ where $x_1$
commutes with $\tilde{H_0}$, $v(x_1) = \frac{1}{r_1}$ and $x_1^{r_1} \in
\tilde{H_0}$, and furthermore that $\tilde{H_2} = \lan \tilde{H_1}, x_2
\ran$ where $x_2$ commutes with $\tilde{H_1}$, $v(x_2) =
\frac{1}{r_1r_2}$ and $x^{r_2} \in \tilde{H_1}$. Defining $F_0 =
\mu(\Qbb_p(H_0))$, $\tilde{F_0} = \lan F_0, pu \ran$, $\tilde{F_1} =
\lan \tilde{F_0}, x_1 \ran$ and $\tilde{F_2} = \lan \tilde{F_1}, x_2
\ran$, we have
\[
    F_0 = \tilde{F_2} \cap \Sbb_n, \qq
    \tilde{F_1} = \tilde{F_2} \cap \Qbb_p(F_0)
    \qq \text{and} \qq
    \tilde{F_2} \subset C_{\Dbb_n^\x}(F_0).
\]
It remains to show that the extension
\begin{align}
    1 \raa \tilde{H_2} \raa \tilde{H_3} = \tilde{H} \raa W \raa 1
    \tag{$\ast$}
\end{align}
can be extended to an extension in $\Dbb_n^\x$
\begin{align}
    1 \raa \tilde{F_2} \raa \tilde{F_3} = \tilde{F} \raa W \raa 1.
    \tag{$\ast\ast$}
\end{align}
Let $L := \Qbb_p(\tilde{H_2}) = \Qbb_p(\tilde{F_2})$. The existence of
$(\ast)$ implies that $W \subset Aut(H_0) \subset Aut(F_0)$ lifts to
$Aut(L, \tilde{H_2}, H_0)$. An automorphism $\sigma$ of $L$ which leaves
$H_0$ invariant, also leaves $\Qbb_p(H_0)$ invariant, and therefore the
subgroups $F_0 = \mu(\Qbb_p(H_0))$ and $\tilde{F_0} = \lan F_0, pu \ran$
are also left invariant. Hence
\[
    \sigma(x_1)^{r_1} \in \tilde{F_0} = \lan F_0, pu \ran
    \qq \text{and} \qq
    \left( \frac{\sigma(x_1)}{x_1} \right)^{r_1} \in F_0.
\]
Since $F_0$ is the (unique) maximal finite subgroup of $\Qbb_p(F_0)^\x$,
we have
\[
    \frac{\sigma(x_1)}{x_1} \in F_0
    \qq \text{and} \qq
    \sigma(x_1) \in \lan F_0, x_1 \ran = \tilde{F_1},
\]
and therefore $\sigma$ leaves $\tilde{F_1}$ invariant. This implies
\[
    \sigma(x_2)^{r_2} \in \tilde{F_1}
    \qq \text{and} \qq
    \left( \frac{\sigma(x_2)}{x_2} \right)^{r_2} 
    \in \tilde{F_1} \cap \Sbb_n = F_0.
\]
Using that $\mu(\Qbb_p(F_0)) = \mu(\Qbb_p(\tilde{F_2}))$, we obtain as
before that $\sigma(x_2) \in \lan F_0, x_2 \ran = \tilde{F_2}$ and
consequently that $\sigma$ leaves $\tilde{F_2}$ invariant. It follows
that $W$ lifts to $Aut(L, \tilde{F_2}, F_0)$. The chain of inclusions
$\tilde{H_2} \subset \tilde{F_2} \subset L^\x$ induces a commutative
diagram
\[
    \xymatrix{
        H^2(W, \tilde{H_2}) \ar[d] \ar@{->>}[dr] \\
        H^2(W, \tilde{F_2}) \ar[r]
        & H^2(W, L^\x)
    }
\]
whose oblique arrow is an epimorphism by theorem \ref{190}. The
horizontal homomorphism is therefore surjective and theorem \ref{190}
implies the existence of $(\ast\ast)$ in $\Dbb_n^\x$.
\end{proof}

\section{Classification of embeddings up to conjugation} 

In this section, we use the results obtained in this chapter to classify
the chains of subgroups
\[
    \tilde{F_0} \subset \tilde{F_1} \subset \tilde{F_2} 
    \subset \tilde{F_3}
\]
that occur in $\Dbb_n^\x$. In order to do this we proceed in four steps.

For a group $G$ and a $G$-set $\Scal$, we denote by $\Scal/\sim_G$ the
set of orbits with respect to the $G$-action on $\Scal$.

\subsection*{Classifying $\tilde{F_0}$'s}

As explained in remark \ref{207}, the map
\[
    \tilde{\Fcal}_u(\Dbb_n^\x) \raa \tilde{\Fcal}_u(\Sbb_n)\
    :\ \tilde{F} \mtoo \tilde{F_0} 
    := \lan \tilde{F} \cap S_n, Z_{p'}(G) \ran
\]
induces a well defined map
\[
    \phi_0:\ \tilde{\Fcal}_u(\Dbb_n^\x)/\sim_{\Dbb_n^\x} 
    \raa \tilde{\Fcal}_u(\Sbb_n)/\sim_{\Dbb_n^\x},
\]
whose image can be identified with the set
\[
    \{(\alpha, d) \in \Nbb \x \Nbb^\ast\ 
    |\ 0 \leq \alpha \leq k,\ d\ |\ p^{n_\alpha}-1\}.
\]

\subsection*{Classifying $\tilde{F_1}$'s}

Pick $\tilde{F_0} \in \tilde{\Fcal}_u(\Sbb_n)$ and define the sets
\begin{itemize}
    \item $\tilde{\Fcal}(\Dbb_n^\x, \tilde{F_0})$ of all subgroups
    $\tilde{F}$ of $\Dbb_n^\x$ such that $\tilde{F_0} = \tilde{F} \cap
    \Sbb_n$ is of finite index in $\tilde{F}$;

    \item $\tilde{\Fcal}(\Qbb_p(F_0)^\x, \tilde{F_0})$ of all subgroups
    $\tilde{F}$ of $\Qbb_p(F_0)^\x$ such that $\tilde{F_0} = \tilde{F}
    \cap \Sbb_n$ is of finite index in $\tilde{F}$.
\end{itemize}
Clearly the map
\[
    \tilde{\Fcal}(\Dbb_n^\x, \tilde{F_0}) 
    \raa \tilde{\Fcal}(\Qbb_p(F_0)^\x, \tilde{F_0})\
    :\ \tilde{F} \mtoo \tilde{F_1} 
    := \tilde{F} \cap \Qbb_p(F_0)^\x
\]
induces a well defined map
\[
    \phi_1:\ \tilde{\Fcal}(\Dbb_n^\x, \tilde{F_0})/
    \sim_{C_{\Dbb_n^\x}(\tilde{F_0})} 
    \raa \tilde{\Fcal}(\Qbb_p(F_0)^\x, \tilde{F_0}).
\]
As seen in section \ref{194}, every $\tilde{F_1} \in
\tilde{\Fcal}(\Qbb_p(F_0)^\x, \tilde{F_0})$ determines an integer $r_1 =
|\tilde{F_1}/\tilde{F_0}|$ which is a divisor of $n$. Furthermore,
according to theorem \ref{180}, if such a divisor $r_1$ is realized by a
subgroup $\tilde{F_1} \in \tilde{\Fcal}(\Qbb_p(F_0)^\x, \tilde{F_0})$,
then the set 
\[
    \{ \tilde{F_1} \in \tilde{\Fcal}(\Qbb_p(F_0)^\x, \tilde{F_0})\ |\
    |\tilde{F_1}/\tilde{F_0}| = r_1 \}
\]
is in bijection with the set $H^1(\Zbb/r_1, \Zbb_p[F_0]^\x/F_0)$.

\subsection*{Classifying $\tilde{F_2}$'s}

Pick $\tilde{F_1} \in \tilde{\Fcal}(\Qbb_p(F_0)^\x, \tilde{F_0})$ and
define the sets
\begin{itemize}
    \item $\tilde{\Fcal}(\Dbb_n^\x, \tilde{F_1})$ of all subgroups
    $\tilde{F}$ of $\Dbb_n^\x$ such that $\tilde{F_1} = \tilde{F} \cap
    \Qbb_p(F_0)^\x$ is of finite index in $\tilde{F}$;

    \item $\tilde{\Fcal}(C_{\Dbb_n^\x}(\tilde{F_1}), \tilde{F_1})$ of
    all subgroups $\tilde{F}$ of $C_{\Dbb_n^\x}(\tilde{F_1})$ such that
    $\tilde{F_1} = \tilde{F} \cap \Qbb_p(F_0)^\x$ is of finite index in
    $\tilde{F}$.
\end{itemize}
Then the map
\[
    \tilde{\Fcal}(\Dbb_n^\x, \tilde{F_1})
    \raa \tilde{\Fcal}(C_{\Dbb_n^\x}(\tilde{F_1}), \tilde{F_1})\
    :\ \tilde{F} \mtoo \tilde{F_2} 
    := \tilde{F} \cap C_{\Dbb_n^\x}(\tilde{F_1})
\]
induces a well defined map
\[
    \phi_2:\ \tilde{\Fcal}(\Dbb_n^\x, \tilde{F_1})
    /\sim_{C_{\Dbb_n^\x}(\tilde{F_1})} 
    \raa \tilde{\Fcal}(C_{\Dbb_n^\x}(\tilde{F_1}), \tilde{F_1})
    /\sim_{C_{\Dbb_n^\x}(\tilde{F_1})}.
\]
In order to describe the image of $\phi_2$, we recall that every
$\tilde{F_2} \in \tilde{\Fcal}(C_{\Dbb_n^\x}(\tilde{F_1}), \tilde{F_1})$
determines an extension $L := \Qbb_p(\tilde{F_2})$ of
$\Qbb_p(\tilde{F_1})$. Clearly, the isomorphism class of $L$ is constant
on each conjugacy class of $\tilde{F_2}$'s by elements in
$C_{\Dbb_n^\x}(\tilde{F_1})$, and hence determine the integer
$r_2 = [L:\Qbb_p(\tilde{F_1})]$ dividing $\frac{n}{r_1}$. By the
Skolem-Noether theorem, the set of isomorphism classes of extensions
$\Qbb_p(\tilde{F_1}) \subset L$ is in bijection with the set of
$C_{\Dbb_n^\x}(\tilde{F_1})$-conjugacy classes of $L$'s. Thus denoting
\[
    \tilde{\Fcal}(L^\x, \tilde{F_1})
    := \{ \tilde{F_2} 
    \in \tilde{\Fcal}(C_{\Dbb_n^\x}(\tilde{F_1}), \tilde{F_1})\
    |\ \Qbb_p(\tilde{F_2}) = L \},
\]
we have a bijection
\[
    \tilde{\Fcal}(C_{\Dbb_n^\x}(\tilde{F_1}), \tilde{F_1})
    /\sim_{C_{\Dbb_n^\x}(\tilde{F_1})}\
    \iso\ \coprod_{[L]} \tilde{\Fcal}(L^\x, \tilde{F_1}),
\]
where the union is taken over all isomorphism classes of extensions
$\Qbb_p(\tilde{F_1}) \subset L$.  Finally, if for a given $L$ the set
$\tilde{\Fcal}(L^\x, \tilde{F_1})$ is non-empty, then by theorem
\ref{185} it is in bijection with the set $H^1(\Zbb/r_2,
L_{r_1}^\x/\tilde{F_1})$, and we have
\[
    \tilde{\Fcal}(C_{\Dbb_n^\x}(\tilde{F_1}), \tilde{F_1})
    /\sim_{C_{\Dbb_n^\x}(\tilde{F_1})}\
    \iso\ \coprod_{[L]} H^1(\Zbb/r_2, L_{r_1}^\x/\tilde{F_1}).
\]

\subsection*{Classifying $\tilde{F_3}$'s}

Pick $\tilde{F_2} \in \tilde{\Fcal}(L^\x, \tilde{F_1})$ and define the
sets
\begin{itemize}
    \item $\tilde{\Fcal}(\Dbb_n^\x, \tilde{F_2})$ of all subgroups
    $\tilde{F}$ of $\Dbb_n^\x$ such that $\tilde{F} \cap S_n$ is abelian
    and $\tilde{F_2} = \tilde{F} \cap C_{\Dbb_n^\x}(F_0)$ is of finite
    index in $\tilde{F}$;

    \item $\tilde{\Fcal}(N_{\Dbb_n^\x}(\tilde{F_2}), \tilde{F_2})$ of
    all subgroups $\tilde{F}$ of $N_{\Dbb_n^\x}(\tilde{F_2})$ such that
    $\tilde{F} \cap S_n$ is abelian and $\tilde{F_2} = \tilde{F} \cap
    C_{\Dbb_n^\x}(F_0)$ is of finite index in $\tilde{F}$.
\end{itemize}
By proposition \ref{077}, each $\tilde{F}$ in $\tilde{\Fcal}(\Dbb_n^\x,
\tilde{F_2})$ satisfies $\tilde{F} \subset N_{\Dbb_n^\x}(F_0)$, in which
case $\tilde{F_2}$ is normal in $\tilde{F}$. Thus the map
\[
    \tilde{\Fcal}(\Dbb_n^\x, \tilde{F_2})
    \overset{\iso}{\raa} 
    \tilde{\Fcal}(N_{\Dbb_n^\x}(\tilde{F_2}), \tilde{F_2})\
    :\ \tilde{F} \mtoo \tilde{F_3} 
    := \tilde{F} \cap N_{\Dbb_n^\x}(\tilde{F_2})
\]
is a bijection and induces a well defined bijection
\[
    \phi_3:\ \tilde{\Fcal}(\Dbb_n^\x, \tilde{F_2})
    /\sim_{C_{\Dbb_n^\x}(\tilde{F_2})} 
    \overset{\iso}{\raa} 
    \tilde{\Fcal}(N_{\Dbb_n^\x}(\tilde{F_2}), \tilde{F_2})
    /\sim_{C_{\Dbb_n^\x}(\tilde{F_2})}.
\]
In order to describe the image of $\phi_3$, we recall that every
$\tilde{F_3} \in \tilde{\Fcal}(N_{\Dbb_n^\x}(\tilde{F_2}), \tilde{F_2})$
determines an extension
\[
    1 \raa \tilde{F_2} \raa \tilde{F_3} \raa W \raa 1,
\]
where $W$ canonically injects into $Aut(\tilde{F_2}, F_0)$. Via this
injection, $W$ is independent of the given representative in the
$C_{\Dbb_n^\x}(\tilde{F_2})$-conjugacy class of $\tilde{F_3}$. Thus
denoting
\[
    \tilde{\Fcal}(N_{\Dbb_n^\x}(\tilde{F_2}), \tilde{F_2}, W)
    :=
    \{ \tilde{F_3} 
    \in \tilde{\Fcal}(N_{\Dbb_n^\x}(\tilde{F_2}), \tilde{F_2})\
    |\ \tilde{F_3}/\tilde{F_2} = W \},
\]
we have a bijection
\[
    \tilde{\Fcal}(N_{\Dbb_n^\x}(\tilde{F_2}), \tilde{F_2})
    /\sim_{C_{\Dbb_n^\x}(\tilde{F_2})}\
    \iso\ \coprod_{W} 
    \tilde{\Fcal}(N_{\Dbb_n^\x}(\tilde{F_2}), \tilde{F_2}, W)
    /\sim_{C_{\Dbb_n^\x}(\tilde{F_2})}.
\]
Finally, if for a given $W$ the set
$\tilde{\Fcal}(N_{\Dbb_n^\x}(\tilde{F_2}), \tilde{F_2}, W)
/\sim_{C_{\Dbb_n^\x}(\tilde{F_2})}$ is non-empty, then by theorem
\ref{193} it is in bijection with the set $H^1(W,
C_{\Dbb_n^\x}(\tilde{F_2})/\tilde{F_2})$, and we have
\[
    \tilde{\Fcal}(N_{\Dbb_n^\x}(\tilde{F_2}), \tilde{F_2})
    /\sim_{C_{\Dbb_n^\x}(\tilde{F_2})}\
    \iso\ \coprod_{W} 
    H^1(W, C_{\Dbb_n^\x}(\tilde{F_2})/\tilde{F_2}).
\]


\chapter{On abelian finite subgroups of $\Gbb_n(u)$} 

Throughout this chapter we assume that $n=(p-1)p^{k-1}m$ with $m$ prime
to $p$. Given an abelian finite subgroup $F_0$ of $\Sbb_n$ whose
$p$-Sylow subgroup is cyclic of order $p^\alpha$ for $1 \leq \alpha \leq
k$, we want to determine what sequences of groups
\[
    F_0 \subset F_1 \subset F_2
\]
are realized in $\Gbb_n(u) = \Dbb_n^\x/\lan pu \ran$; here $F_2$ is an
abelian finite subgroup of $\Gbb_n(u)$ containing $F_0$ and $F_1$ is
such that $\tilde{F_1} = \tilde{F_2} \cap \Qbb_p(F_0)$. We know from
chapter \ref{091} that the tilded correspondents of these groups in
$\Dbb_n^\x$ are given by
\[
    \tilde{F_1} = \lan F_0, x_1 \ran
    \qq \text{and} \qq
    \tilde{F_2} = \lan F_0, x_2 \ran
\]
with $x_1, x_2 \in \Dbb_n^\x$ such that
\[
    v(x_1) = \frac{1}{r_1}, \qq
    x_1^{r_1} \in \tilde{F_0}, \qq
    v(x_2) = \frac{1}{r_1r_2}
    \qq \text{and} \qq
    x_2^{r_2} \in \tilde{F_1}.
\]
We want to determine for what pairs of positive integers $(r_1, r_2)$
the sets 
\[
    \tilde{\Fcal}_u(\Qbb_p(F_0), \tilde{F_0}, r_1)
    \qq \text{and} \qq
    \tilde{\Fcal}_u(C_{\Dbb_n^\x}(F_0), \tilde{F_1}, r_2)
\]
are non-empty.

\vspace{5ex}

\section{Elementary conditions on $r_1$} 
\label{288}

The question of determining for what $r_1$ the set
$\tilde{\Fcal}_u(\Qbb_p(F_0), \tilde{F_0}, r_1)$ is non-empty naturally
leads to studying the $r_1$-th roots of $pu$ in $\Qbb_p(F_0)$. Clearly
$r_1$ must be a divisor of $\phi(p^\alpha)$, the ramification index of
$\Qbb_p(F_0)$ over $\Qbb_p$.

\begin{proposition} \label{092}
Let $\zeta_{p^\alpha}$ be a primitive $p^\alpha$-th root of unity in
$\Qbb_p(F_0)^\x$. The principal ideal generated by $\zeta_{p^\alpha}-1$
is maximal in $\Zbb_p(F_0)$ and satisfies
\[
    (p) = (\zeta_{p^\alpha}-1)^{\phi(p^\alpha)}.
\]
\end{proposition}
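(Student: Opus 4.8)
The plan is to reduce the ideal identity to the classical factorization of the $p^\alpha$-th cyclotomic polynomial, and then to extract maximality from the valuation-theoretic structure already in place in the excerpt. Write $\zeta := \zeta_{p^\alpha}$. First I would recall that $\zeta$ is a root of
\[
    \Phi_{p^\alpha}(X)
    = \frac{X^{p^\alpha}-1}{X^{p^{\alpha-1}}-1}
    = \prod_{(j,p)=1} (X - \zeta^j),
\]
the product running over the $\phi(p^\alpha)$ residues $1 \leq j \leq p^\alpha$ that are prime to $p$. Evaluating at $X=1$ and using $\Phi_{p^\alpha}(1)=p$ gives the key factorization $p = \prod_{(j,p)=1}(1-\zeta^j)$ in $\Zbb_p(F_0)$.

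The next step is to show that each factor $1-\zeta^j$ is an associate of $1-\zeta$. Indeed, $\frac{1-\zeta^j}{1-\zeta} = 1 + \zeta + \cdots + \zeta^{j-1}$ is an algebraic integer; since $(j,p)=1$, one may choose $j'$ with $jj' \equiv 1 \mod p^\alpha$, so that $\zeta = (\zeta^j)^{j'}$, and the same argument applied with $\zeta^j$ in place of $\zeta$ shows $\frac{1-\zeta}{1-\zeta^j}$ is integral as well. Hence the ratio is a unit of $\Zbb_p(F_0)$, and multiplying the $\phi(p^\alpha)$ factors together shows that $p$ and $(1-\zeta)^{\phi(p^\alpha)}$ differ by a unit. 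Since $(1-\zeta)=(\zeta-1)$ as ideals, this yields $(p) = (\zeta-1)^{\phi(p^\alpha)}$.

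For maximality I would invoke the ramification structure. By proposition \ref{342} the cyclotomic field $\Qbb_p(\zeta)$ is totally ramified of degree $\phi(p^\alpha)$ over $\Qbb_p$, while $\Qbb_p(F_0) = \Qbb_p(\zeta)\cdot\Qbb_p(Z_{p'}(F_0))$ adds only an unramified part (proposition \ref{157}, as $Z_{p'}(F_0)$ is generated by roots of unity of order prime to $p$). Thus $\Qbb_p(F_0)/\Qbb_p$ has ramification index $\phi(p^\alpha)$, and since $\Zbb_p(F_0)$ is the valuation ring of a local field it is a discrete valuation ring with a single maximal ideal $\mathfrak{m}$ satisfying $(p) = \mathfrak{m}^{\phi(p^\alpha)}$. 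Writing $(\zeta-1) = \mathfrak{m}^t$ and comparing with $(p) = (\zeta-1)^{\phi(p^\alpha)} = \mathfrak{m}^{t\phi(p^\alpha)}$ forces $t=1$, so $(\zeta-1) = \mathfrak{m}$ is maximal. The only point requiring genuine care is the associates step together with the fact that $\Zbb_p(F_0)$ may be strictly larger than $\Zbb_p[\zeta]$; this is handled precisely by the discrete valuation ring comparison above, since the extra extension contributes only residue degree and leaves the ramification index — and hence the generator $\zeta-1$ of the maximal ideal — unchanged.
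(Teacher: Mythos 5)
Your proof is correct and follows essentially the same route as the paper's: both obtain $p = \prod_{(j;p)=1}(\zeta^j-1)$ from the identity $\frac{X^{p^\alpha}-1}{X^{p^{\alpha-1}}-1}$ evaluated at $1$, and both show the factors $\zeta^j-1$ are pairwise associates via the geometric-sum/inverse-exponent trick. Your explicit discrete-valuation-ring argument for maximality (ramification index $\phi(p^\alpha)$ from proposition \ref{342}, forcing $(\zeta_{p^\alpha}-1)=\mathfrak{m}$) merely spells out the step the paper leaves implicit, and is a welcome clarification since $\Zbb_p(F_0)$ may indeed be strictly larger than $\Zbb_p[\zeta_{p^\alpha}]$.
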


\begin{proof}
If $a$ and $b$ are integers prime to $p$, one can solve the equation $a
\equiv bs \mod p^\alpha$, so that
\[
    \frac{\zeta_{p^\alpha}^a-1}{\zeta_{p^\alpha}^b-1} 
    = \frac{1-\zeta_{p^\alpha}^{bs}}{1-\zeta_{p^\alpha}^b} 
    = 1 + \zeta_{p^\alpha}^b + \ldots + \zeta_{p^\alpha}^{(s-1)b}\ 
    \in \Zbb_p[\zeta_{p^\alpha}].
\]
The same is true for
$\frac{\zeta_{p^\alpha}^b-1}{\zeta_{p^\alpha}^a-1}$, and
\[
    \frac{\zeta_{p^\alpha}^a-1}{\zeta_{p^\alpha}^b-1}\ 
    \in\ \Zbb_p[\zeta_{p^\alpha}]^\x
    \qq \text{whenever} \q (a;p) = (b;p) = 1.
\]
Moreover since
\[
    \sum_{i=0}^{p-1} x^{ip^{\alpha-1}}
    = \frac{1-x^{p^\alpha}}{ 1-x^{p^{\alpha-1}} }
    = \prod_{\stackrel{(a;p)=1}{1 \leq a<p^\alpha}} 
    (\zeta_{p^\alpha}^a-x),
\]
for $x=1$ we get
\[
    p 
    = \prod_{\stackrel{(a;p)=1}{1 \leq a<p^\alpha}} 
    (\zeta_{p^\alpha}^a-1)
    = (\zeta_{p^\alpha}-1)^{\phi(p^\alpha)} 
    \prod_{\stackrel{(a;p)=1}{1 \leq a<p^\alpha}} 
    \frac{\zeta_{p^\alpha}^a-1}{\zeta_{p^\alpha}-1},
\]
showing that $(p) = (\zeta_{p^\alpha}-1)^{\phi(p^\alpha)}$. The ideal
generated by $\zeta_{p^\alpha}-1$ is hence maximal in $\Zbb_p(F_0)$.
\end{proof}

\begin{corollary} \label{093}
We have
\[
    p 
    = \prod_{\stackrel{(a;p)=1}{1 \leq a<p^\alpha}} 
    (\zeta_{p^\alpha}^a-1) 
    \qq \text{and} \qq 
    v(\zeta_{p^\alpha}-1) = \frac{1}{\phi(p^\alpha)}.
\]
\qed
\end{corollary}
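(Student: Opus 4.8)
The plan is to handle the two displayed equalities separately, since the first is already established and the second is a one-line valuation count built on top of it.

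First I would note that the product formula $p = \prod_{\stackrel{(a;p)=1}{1 \leq a<p^\alpha}}(\zeta_{p^\alpha}^a-1)$ requires no fresh argument: it is produced verbatim in the course of proving proposition \ref{092}, by evaluating the cyclotomic identity $\sum_{i=0}^{p-1} x^{ip^{\alpha-1}} = \prod_{(a;p)=1,\,1\leq a<p^\alpha}(\zeta_{p^\alpha}^a-x)$ at $x=1$. So for the first assertion I would simply cite that step of \ref{092}.

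For the valuation I would apply $v = v_{\Dbb_n}$ to both sides of this product. Additivity of $v$ and the normalization $v(p)=1$ give $1 = \sum_{(a;p)=1,\,1\leq a<p^\alpha} v(\zeta_{p^\alpha}^a-1)$. The crucial input---again already in hand from \ref{092}---is that for $a,b$ prime to $p$ the ratio $(\zeta_{p^\alpha}^a-1)/(\zeta_{p^\alpha}^b-1)$ lies in $\Zbb_p[\zeta_{p^\alpha}]^\x$. Any such unit $w$ satisfies $w,w^{-1}\in\Ocal_n$, hence $v(w)=0$, so all the factors $\zeta_{p^\alpha}^a-1$ are associates sharing the single value $v(\zeta_{p^\alpha}-1)$. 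Since there are exactly $\phi(p^\alpha)$ of them, I obtain $1 = \phi(p^\alpha)\,v(\zeta_{p^\alpha}-1)$, that is $v(\zeta_{p^\alpha}-1)=1/\phi(p^\alpha)$.

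I do not expect any genuine obstacle here; the corollary is designed to be an immediate harvest of proposition \ref{092}. The only point worth a sentence is the passage from ``$w$ is a unit'' to ``$v(w)=0$'', which is exactly the observation that the valuation ring $\Ocal_n$ contains both $w$ and $w^{-1}$. As a sanity check one may also read the answer off the ideal identity $(p)=(\zeta_{p^\alpha}-1)^{\phi(p^\alpha)}$ of \ref{092}: the element $\zeta_{p^\alpha}-1$ generates the maximal ideal of $\Zbb_p(F_0)$, hence is a uniformizer whose valuation is $1/\phi(p^\alpha)$, consistent with $\Qbb_p(\zeta_{p^\alpha})\subseteq\Qbb_p(F_0)$ supplying the full ramification index $\phi(p^\alpha)$ over $\Qbb_p$ by proposition \ref{342}.
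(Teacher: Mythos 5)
Your proof is correct and follows exactly the route the paper intends: the product formula is read off from the evaluation at $x=1$ in the proof of proposition \ref{092}, and the valuation statement follows by applying the additive valuation to that product (equivalently to the ideal identity $(p)=(\zeta_{p^\alpha}-1)^{\phi(p^\alpha)}$), using that the ratios $(\zeta_{p^\alpha}^a-1)/(\zeta_{p^\alpha}^b-1)$ are units of valuation zero so that all $\phi(p^\alpha)$ factors share the value $v(\zeta_{p^\alpha}-1)$. The paper states the corollary with no separate proof precisely because it is this immediate harvest of \ref{092}, and your write-up supplies the same deduction with appropriate care at the one point worth noting (units have valuation zero since $\Ocal_n^\x=\{x \in \Dbb_n \mid v(x)=0\}$).
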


Let $\mu(\Qbb_p(F_0))$ denote the roots of unity in $\Qbb_p(F_0)$ and
fix $\zeta_{p^\alpha}$ a primitive $p^\alpha$-th root of unity in
$\mu(\Qbb_p(F_0))$. Define the unit
\[
    \epsilon_{\alpha} \label{304}
    \in \Zbb_p(\zeta_{p^\alpha})^\x 
    \subset \Qbb_p(F_0)^\x
    \qq \text{by} \qq
    (\zeta_{p^\alpha}-1)^{\phi(p^\alpha)} 
    = p \epsilon_{\alpha}.
\]
Obviously as $u \in \Zbb_p^\x$, we know that
$\frac{\epsilon_{\alpha}}{u}$ belongs to $\Zbb_p(\zeta_{p^\alpha})^\x$.
Let $\pi(e_u(F_0))$ denote the class of
\[
    e_u(F_0) 
    \in H^2(\Zbb/\phi(p^\alpha),\ \Zbb_p(F_0)^\x \x \lan pu \ran)
\]
in $H^2(\Zbb/\phi(p^\alpha),\ \Zbb_p(F_0)^\x)$ as defined in section
\ref{194}.

\begin{proposition} \label{094}
We have
\[
    \pi(e_u(F_0)) = \frac{\epsilon_{\alpha}}{u}
    \q \text{in} \q 
    H^2(\Zbb/\phi(p^\alpha),\ \Zbb_p(F_0)^\x)
    \iso \Zbb_p(F_0)^\x/(\Zbb_p(F_0)^\x)^{\phi(p^\alpha)}.
\]
\end{proposition}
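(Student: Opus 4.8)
The plan is to exploit the fact that the relevant cohomology is that of a cyclic group acting trivially, so that the whole computation reduces to reading off a single power of a uniformizer. Throughout write $e = \phi(p^\alpha) = e(\Qbb_p(F_0))$ and $M = \Zbb_p(F_0)^\x \x \lan pu \ran$, the kernel of the valuation $v \colon \Qbb_p(F_0)^\x \raa \frac{1}{e}\Zbb/\Zbb$ appearing in the extension of section \ref{194}. Since $\Qbb_p(F_0)^\x$ is abelian, the quotient $\frac{1}{e}\Zbb/\Zbb \iso \Zbb/e$ acts trivially on $M$, the norm map is multiplication by $e$, and one has the standard identification
\[
    H^2(\Zbb/e, M) \iso M/M^e,
\]
under which the class of a central extension $1 \raa M \raa E \raa \Zbb/e \raa 1$ corresponds to $\tilde{t}^{\,e} \bmod M^e$, where $\tilde{t} \in E$ is any lift of a fixed generator $t$ of $\Zbb/e$. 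This identification is moreover functorial in $M$, so it intertwines $\pi$ with the projection $M \raa \Zbb_p(F_0)^\x$.

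First I would exhibit an explicit lift of the generator $\frac{1}{e} \in \frac{1}{e}\Zbb/\Zbb$. By corollary \ref{093} the element $\pi_\alpha := \zeta_{p^\alpha} - 1$ satisfies $v(\pi_\alpha) = \frac{1}{\phi(p^\alpha)} = \frac{1}{e}$, so $\pi_\alpha \in \Qbb_p(F_0)^\x$ is such a lift. I would then compute its $e$-th power: directly from the defining relation of $\epsilon_\alpha$,
\[
    \pi_\alpha^{\,e} = (\zeta_{p^\alpha}-1)^{\phi(p^\alpha)} = p\,\epsilon_\alpha,
\]
which indeed lies in $M$ because $v(p\epsilon_\alpha) = 1 \in \Zbb$. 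Applying the description above, the class $e_u(F_0) \in H^2(\Zbb/e, M)$ is represented by $p\epsilon_\alpha \bmod M^e$.

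It then remains only to decompose this element along the direct product $M = \Zbb_p(F_0)^\x \x \lan pu \ran$ and apply $\pi$. Since $\epsilon_\alpha \in \Zbb_p(\zeta_{p^\alpha})^\x \subset \Zbb_p(F_0)^\x$ and $u \in \Zbb_p^\x$, we may write
\[
    p\,\epsilon_\alpha = \frac{\epsilon_\alpha}{u}\cdot(pu),
    \qq \text{with} \q \frac{\epsilon_\alpha}{u} \in \Zbb_p(F_0)^\x \q \text{and} \q pu \in \lan pu \ran.
\]
The projection onto the first factor annihilates the $\lan pu \ran$-component, so that $\pi(e_u(F_0))$ is represented by $\frac{\epsilon_\alpha}{u}$ in $\Zbb_p(F_0)^\x/(\Zbb_p(F_0)^\x)^{e} \iso H^2(\Zbb/e, \Zbb_p(F_0)^\x)$, which is precisely the claimed equality.

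There is no serious obstacle here; the only points requiring care are verifying that the action is genuinely trivial (immediate from commutativity of $\Qbb_p(F_0)^\x$) and that $\pi_\alpha$ is a legitimate lift of a generator, which is exactly the valuation statement of corollary \ref{093}. Everything else is the functoriality of the identification $H^2(\Zbb/e, -) \iso (-)/(-)^e$ with respect to the projection $M \raa \Zbb_p(F_0)^\x$.
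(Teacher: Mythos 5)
Your proof is correct and takes essentially the same approach as the paper: the paper's one-line proof is precisely the observation that $p\epsilon_{\alpha} = \pi_{\alpha}^{\phi(p^\alpha)} = pu\cdot\frac{\epsilon_{\alpha}}{u}$ represents the class $e_u(F_0)$, after which projecting away the $\lan pu \ran$-factor yields $\frac{\epsilon_{\alpha}}{u}$. You have merely made explicit the identification $H^2(\Zbb/e, M) \iso M/M^e$ for trivial action (extension class $\lra$ $e$-th power of a lift of a generator) and its functoriality in $M$, which the paper leaves implicit.
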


\begin{proof}
This is a straightforward consequence of the fact that
\[
    p \epsilon_{\alpha}
    = pu \frac{\epsilon_{\alpha}}{u}
\]
belongs to the class of $e_u(F_0) \in H^2(\Zbb/\phi(p^\alpha),\
\Zbb_p(F_0)^\x \x \lan pu \ran)$.
\end{proof}

Recall from proposition \ref{341} that
\[
    \Zbb_p(F_0)^\x
    \iso \mu(\Qbb_p(F_0)) \x \Zbb_p^{[\Qbb_p(F_0):\Qbb_p]},
\]
so that
\begin{align} \label{135}
    H^2(\Zbb/r_1, \Zbb_p(F_0)^\x)\ 
    &\iso\ \Zbb_p(F_0)^\x / (\Zbb_p(F_0)^\x)^{r_1}\notag\\
    &\iso\
    \mu(\Qbb_p(F_0))/\mu(\Qbb_p(F_0))^{r_1} 
    \x (\Zbb_p / r_1 \Zbb_p)^{[\Qbb_p(F_0):\Qbb_p]}.
\end{align}

\begin{theorem} \label{098}
The set $\tilde{\Fcal}_u(\Qbb_p(F_0), \tilde{F_0}, r_1)$ is non-empty if
and only if
\[
    \frac{\epsilon_{\alpha}}{u} \equiv 1
    \qq \text{in} \q
    \Zbb_p(F_0)^\x/\lan F_0, (\Zbb_p(F_0)^\x)^{r_1} \ran.
\]
\end{theorem}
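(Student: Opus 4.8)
The plan is to feed the cohomological criterion of Theorem \ref{180}.1 into the explicit computation of Proposition \ref{094}. Theorem \ref{180}.1 says that $\tilde{\Fcal}_u(\Qbb_p(F_0), \tilde{F_0}, r_1)$ is non-empty precisely when the class $e_u(F_0, r_1)$ becomes trivial in $H^2(\Zbb/r_1, \Zbb_p(F_0)^\x/F_0)$, so the whole argument reduces to transporting that class concretely. The decisive simplification is that $\Qbb_p(F_0)^\x$ is a field unit group, hence abelian, so the cyclic group $\Zbb/r_1$ acts trivially on every coefficient module in sight. For a finite cyclic group with trivial action one has the canonical identification $H^2(\Zbb/r_1, M) \iso M/M^{r_1}$, so in particular
\[
    H^2(\Zbb/r_1, \Zbb_p(F_0)^\x/F_0)
    \iso \Zbb_p(F_0)^\x/\lan F_0, (\Zbb_p(F_0)^\x)^{r_1} \ran,
\]
which is already the target group appearing in the statement.

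Next I would recall from section \ref{194} that $e_u(F_0, r_1) = j^\ast(e_u(F_0))$ is the restriction of $e_u(F_0) \in H^2(\Zbb/\phi(p^\alpha), \Zbb_p(F_0)^\x \x \lan pu \ran)$ along the inclusion $j: \Zbb/r_1 \hookrightarrow \Zbb/\phi(p^\alpha)$, using $e(\Qbb_p(F_0)) = \phi(p^\alpha)$ from corollary \ref{093}. The triviality in Theorem \ref{180}.1 is tested after the coefficient map $q: \Zbb_p(F_0)^\x \x \lan pu \ran \to \Zbb_p(F_0)^\x/F_0$ which kills both $F_0$ and $\lan pu \ran$. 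Writing $q$ as the projection $\pi$ onto $\Zbb_p(F_0)^\x$ followed by reduction modulo $F_0$, and using that coefficient maps commute with restriction in the group variable, I would compute
\[
    q_\ast j^\ast e_u(F_0)
    = (\mathrm{mod}\,F_0)_\ast\, j^\ast\, \pi_\ast e_u(F_0)
    = (\mathrm{mod}\,F_0)_\ast\, j^\ast (\epsilon_\alpha/u),
\]
where the last equality is exactly Proposition \ref{094}.

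It then remains to evaluate the two structure maps under the identification with cokernels of power maps. The key (and slightly delicate) point is that $j^\ast$ on $H^2$ of cyclic groups with trivial coefficients is the natural projection $\Zbb_p(F_0)^\x/(\Zbb_p(F_0)^\x)^{\phi(p^\alpha)} \to \Zbb_p(F_0)^\x/(\Zbb_p(F_0)^\x)^{r_1}$. I would verify this directly on cocycles: lift the generator $1 \in \Zbb/\phi(p^\alpha)$ to some $\tilde{x}$, note that $\tilde{x}^{\phi(p^\alpha)}$ represents $e_u(F_0)$, and observe that $\tilde{x}^{\phi(p^\alpha)/r_1}$ lifts the generator of the order-$r_1$ subgroup while its $r_1$-th power is again $\tilde{x}^{\phi(p^\alpha)}$, so the same representative is merely read modulo $(\Zbb_p(F_0)^\x)^{r_1}$. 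Since $(\mathrm{mod}\,F_0)_\ast$ is visibly the projection onto $\Zbb_p(F_0)^\x/\lan F_0, (\Zbb_p(F_0)^\x)^{r_1}\ran$, the composite carries $\epsilon_\alpha/u$ to its own residue class there. Thus $e_u(F_0, r_1)$ vanishes in $H^2(\Zbb/r_1, \Zbb_p(F_0)^\x/F_0)$ if and only if $\epsilon_\alpha/u \equiv 1$ in $\Zbb_p(F_0)^\x/\lan F_0, (\Zbb_p(F_0)^\x)^{r_1}\ran$, as claimed.

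The main obstacle I anticipate is purely bookkeeping: making the three identifications (of $H^2$ with a cokernel, of $j^\ast$ with the natural projection, and of the coefficient quotient) genuinely compatible, so that the single class $\epsilon_\alpha/u$ furnished by Proposition \ref{094} is transported without introducing a spurious exponent. Everything else is formal once the action is recognized as trivial; no new structural input about $\Dbb_n$ is required beyond Proposition \ref{341} (to know $\Zbb_p(F_0)^\x$ splits off its roots of unity) together with the already-established Theorem \ref{180} and Proposition \ref{094}.
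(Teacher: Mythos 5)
Your proof is correct and follows essentially the same route as the paper's: both reduce to the cohomological criterion of Theorem \ref{180}.1, transport the class via Proposition \ref{094}, and use the identification $H^2(\Zbb/r_1, M) \iso M/M^{r_1}$ for trivial cyclic actions (the paper's isomorphism (\ref{135})), under which restriction along $\Zbb/r_1 \subset \Zbb/\phi(p^\alpha)$ and the coefficient quotient by $F_0$ become the obvious projections. The only difference is that you spell out the bookkeeping (restriction as projection, verified on cocycles) that the paper compresses into the phrase ``is equivalent to the trivial element if and only if $q_\ast(e_u(F_0,r_1))$ is trivial.''
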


\begin{proof}
The unit 
\[
    \frac{\epsilon_{\alpha}}{u}   
    \in \Zbb_p(\zeta_{p^\alpha})^\x 
    \subset \Zbb_p(F_0)^\x,
\]
is equivalent to the trivial element if and only if $q_\ast(e_u(F_0,
r_1))$ is trivial in 
\[
    H^2(\Zbb/r_1, \Zbb_p(F_0)^\x/F_0),
\]
for $q_\ast = H^2(\Zbb/r_1, q)$ the map induced by the canonical
homomorphism 
\[
    q:\ \Zbb_p(F_0)^\x \x \lan pu \ran\
    \raa\ \Zbb_p(F_0)^\x \x \lan pu \ran / \tilde{F_0} 
    = \Zbb_p(F_0)^\x/F_0.
\]
By theorem \ref{180}, this is true if and only if
$\tilde{\Fcal}_u(\Qbb_p(F_0), \tilde{F_0}, r_1)$ is non-empty.
\end{proof}

\begin{corollary} \label{136}
If $\lan F_0, x_1 \ran \in \tilde{\Fcal}_u(\Qbb_p(F_0), \tilde{F_0},
r_1)$ with $v(x_1) = \frac{1}{r_1}$, then $x_1^{r_1}=pu\delta$ for a
$\delta$ in $F_0$ such that $\delta \equiv \frac{\epsilon_{\alpha}}{u}$
modulo $(\Zbb_p(F_0)^\x)^{r_1}$.
\end{corollary}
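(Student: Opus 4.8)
The plan is to reduce the statement to a single explicit identity in the local field $\Qbb_p(F_0)$, bypassing the cohomological reformulation of theorems \ref{180} and \ref{098}. First I would record that $x_1^{r_1}$ lands in $\tilde{F_0}$ in the expected shape. By remark \ref{181} we have $v(x_1) = \frac{1}{r_1}$ and $x_1^{r_1} \in \tilde{F_0} = F_0 \x \lan pu \ran$ (remark \ref{177}). Every element of $\tilde{F_0}$ is of the form $\zeta (pu)^j$ with $\zeta \in F_0$ and $j \in \Zbb$, and its valuation equals $j$ since $v(pu)=1$ and $F_0 \subset \Sbb_n$ is of valuation zero. As $v(x_1^{r_1}) = r_1 \cdot \frac{1}{r_1} = 1$, this forces $j=1$, so that $x_1^{r_1} = pu\delta$ for a (unique) $\delta \in F_0$; this gives the first assertion.

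The heart of the argument is then to compare $x_1$ with the uniformizing element $\zeta_{p^\alpha}-1$. Recall from corollary \ref{093} that $y := \zeta_{p^\alpha}-1 \in \Qbb_p(F_0)^\x$ has valuation $\frac{1}{\phi(p^\alpha)}$, where $\phi(p^\alpha)$ is the ramification index of $\Qbb_p(F_0)$ over $\Qbb_p$ and $r_1 \mid \phi(p^\alpha)$ by the elementary remark opening section \ref{288}. Setting $s := \phi(p^\alpha)/r_1 \in \Zbb$, we get $v(y^s) = \frac{s}{\phi(p^\alpha)} = \frac{1}{r_1} = v(x_1)$, so that $w := x_1 y^{-s}$ has valuation zero, i.e. $w \in \Zbb_p(F_0)^\x$, and $x_1 = w\, y^s$.

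Raising to the $r_1$-th power and invoking the defining relation $(\zeta_{p^\alpha}-1)^{\phi(p^\alpha)} = p\epsilon_\alpha$, I would compute
\[
    x_1^{r_1} = w^{r_1} y^{s r_1} = w^{r_1} y^{\phi(p^\alpha)} = w^{r_1}\, p\, \epsilon_\alpha.
\]
Comparing with $x_1^{r_1} = pu\delta$ and cancelling $p$ yields $u\delta = w^{r_1}\epsilon_\alpha$, hence $\delta = w^{r_1}\,\frac{\epsilon_\alpha}{u}$. Since $w^{r_1} \in (\Zbb_p(F_0)^\x)^{r_1}$, this is exactly the asserted congruence $\delta \equiv \frac{\epsilon_\alpha}{u}$ modulo $(\Zbb_p(F_0)^\x)^{r_1}$.

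I expect no serious obstacle here; the only points requiring care are the valuation bookkeeping (that $w$ is genuinely a unit of the ring of integers, i.e. a valuation-zero element) and the integrality of $s$, which rests on $r_1 \mid \phi(p^\alpha)$ — both built into the setup. A more structural alternative would observe that $\delta$ is precisely a representative of $\pi(e_u(F_0,r_1))$ in $\Zbb_p(F_0)^\x/(\Zbb_p(F_0)^\x)^{r_1}$ and then invoke proposition \ref{094} together with the naturality of the restriction $j^\ast$ under the projection $\pi$; but the direct computation above is shorter and self-contained.
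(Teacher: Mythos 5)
Your proof is correct, but it takes a genuinely different route from the paper's. The paper disposes of the corollary in one line, citing remark \ref{181} (which gives $x_1^{r_1} \in \tilde{F_0}$, whence $x_1^{r_1} = pu\delta$ by the same valuation count you carry out) together with theorem \ref{098}; the identification of the class of $\delta$ with $\frac{\epsilon_{\alpha}}{u}$ is thereby inherited from the cohomological chain of theorem \ref{180}, proposition \ref{094} and theorem \ref{098}, in which the extension class $e_u(F_0, r_1)$ is represented by $p\epsilon_{\alpha} = pu \cdot \frac{\epsilon_{\alpha}}{u}$ and, once $\tilde{F_1}$ exists, also by $x_1^{r_1} = pu\delta$. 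You replace this bookkeeping by the single explicit factorization $x_1 = w\,(\zeta_{p^\alpha}-1)^s$ with $s = \phi(p^\alpha)/r_1$ and $v(w) = 0$, from which $x_1^{r_1} = w^{r_1} p \epsilon_{\alpha}$ and hence $\delta = w^{r_1}\frac{\epsilon_{\alpha}}{u}$ directly; this is self-contained, pins down $\delta$ exactly rather than only up to the cohomological identification, and in passing re-proves the ``only if'' half of theorem \ref{098}. The two hypotheses you use tacitly are indeed available in context: the standing assumption $\alpha \geq 1$ of the chapter guarantees $\zeta_{p^\alpha} \in \mu(\Qbb_p(F_0))$ and, by proposition \ref{342}, that the ramification index of $\Qbb_p(F_0)/\Qbb_p$ is $\phi(p^\alpha)$, which yields both the integrality of $s$ (via $r_1 \mid \phi(p^\alpha)$, as the value group forces) and the fact that your valuation-zero element $w$ lies in $\Zbb_p(F_0)^\x$, the unit group of the ring of integers. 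What the paper's route buys instead is uniformity: the same cocycle formalism governs the later extension steps, so the corollary there is a free consequence of machinery already in place, whereas your computation is the shorter and more transparent argument for this statement taken in isolation.
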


\begin{proof}
This follows from remark \ref{181} and theorem \ref{098}.
\end{proof}

\begin{corollary} \label{095}
Let $F_0 = \mu(\Qbb_p(F_0))$ and $r_1$ be prime to $p$.
\begin{enumerate}
    \item The set $\tilde{\Fcal}_u(\Qbb_p(F_0), \tilde{F_0}, r_1)$ is
    non-empty if and only if $r_1$ divides $p-1$.

    \item If $\tilde{\Fcal}_u(\Qbb_p(F_0), \tilde{F_0}, r_1)$ is
    non-empty with $r_1 > 1$, then $p$ is odd, and there are elements
    $\zeta_p \in F_0$ and $t \in \Zbb_p(\zeta_p)^\x$ such that
    \[
        x_1 = (\zeta_p-1)t
        \qq \text{and} \qq
        x_1^{p-1} \equiv pu\ \mod \mu_{p-1}.
    \]
\end{enumerate}
\end{corollary}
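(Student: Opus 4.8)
The plan is to handle the two parts separately, using Theorem~\ref{098} for part~1) and an explicit construction for part~2). Throughout recall that $F_0 = \mu(\Qbb_p(F_0))$ with $1 \leq \alpha \leq k$, so $\Qbb_p(F_0)$ has ramification index $e := e(\Qbb_p(F_0)) = \phi(p^\alpha) = (p-1)p^{\alpha-1}$ over $\Qbb_p$, and set $N := [\Qbb_p(F_0):\Qbb_p]$.

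For part~1), I would first note that any $\tilde{F_1}$ in the set injects via $v$ into $\frac{1}{e}\Zbb/\Zbb \iso \Zbb/e$, so $r_1 = |\tilde{F_1}/\tilde{F_0}|$ must divide $e$. For $r_1$ prime to $p$ this is equivalent to $r_1 \mid p-1$, since the $p$-part of $e$ is coprime to $r_1$; hence if $r_1 \nmid p-1$ the set is empty for trivial reasons. For the converse, assume $r_1 \mid p-1$, so that $r_1 \mid e$ and Theorem~\ref{098} applies. By Proposition~\ref{341} we have $\Zbb_p(F_0)^\x/F_0 \iso \Zbb_p^N$, and since $r_1$ is a unit in $\Zbb_p$ the $r_1$-th power map is surjective on this free module. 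Thus $\Zbb_p(F_0)^\x/\lan F_0, (\Zbb_p(F_0)^\x)^{r_1}\ran \iso \Zbb_p^N/r_1\Zbb_p^N = 0$, the class $\frac{\epsilon_\alpha}{u}$ is automatically trivial, and the set is non-empty.

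For part~2), non-emptiness with $r_1 > 1$ forces $r_1 \mid p-1$ with $r_1 > 1$ by part~1), whence $p$ is odd; and since $\alpha \geq 1$ the element $\zeta_p := \zeta_{p^\alpha}^{p^{\alpha-1}} \in F_0$ is a primitive $p$-th root of unity, with $\mu_{p-1} \subset \Zbb_p^\x \subset F_0$. I would then construct a generator realizing the maximal value $r_1 = p-1$. Proposition~\ref{092} with $\alpha = 1$ gives $(\zeta_p - 1)^{p-1} = p\epsilon_1$ with $\epsilon_1 \in \Zbb_p(\zeta_p)^\x$, so $v(\zeta_p - 1) = \frac{1}{p-1}$ by Corollary~\ref{093}. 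The \emph{key step} is to find $t \in \Zbb_p(\zeta_p)^\x$ and $\xi \in \mu_{p-1}$ with $t^{p-1} = \frac{u}{\epsilon_1}\xi$: writing $\Zbb_p(\zeta_p)^\x \iso \mu_{p-1} \x \lan \zeta_p \ran \x \Zbb_p^{p-1}$, the map $x \mapsto x^{p-1}$ is an automorphism of $\lan \zeta_p \ran$ (as $p-1$ is prime to $p$) and of the free part (as $p-1 \in \Zbb_p^\x$) while it kills $\mu_{p-1}$, so $(\Zbb_p(\zeta_p)^\x)^{p-1}\mu_{p-1} = \Zbb_p(\zeta_p)^\x$ and the desired $t, \xi$ exist.

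Finally, setting $x_1 := (\zeta_p - 1)t$, I would verify both assertions simultaneously. Its valuation is $v(x_1) = \frac{1}{p-1}$, and
\[
    x_1^{p-1} = (\zeta_p - 1)^{p-1}\,t^{p-1} = p\epsilon_1 \cdot \frac{u}{\epsilon_1}\xi = pu\xi,
\]
with $\xi \in \mu_{p-1} \subset F_0$. Hence $x_1^{p-1} \in \tilde{F_0} = F_0 \x \lan pu \ran$, so $\lan F_0, x_1 \ran \in \tilde{\Fcal}_u(\Qbb_p(F_0), \tilde{F_0}, p-1)$ by Remark~\ref{181}, and $x_1^{p-1} \equiv pu \mod \mu_{p-1}$ as claimed. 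I expect the main obstacle to be the unit-group computation of the previous paragraph: one must check that $\mu_{p-1}$ absorbs exactly the prime-to-$p$ torsion of $\frac{u}{\epsilon_1}$ while $(p-1)$-divisibility handles $\lan \zeta_p \ran$ and the free part — the fact that $p-1$ is at once prime to $p$ and a $p$-adic unit is precisely what makes the argument go through.
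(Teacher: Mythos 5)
Your proof is correct, and part 1) coincides in substance with the paper's: the forward direction comes from $r_1$ dividing the ramification index $\phi(p^\alpha)$, and the converse from $F_0 = \mu(\Qbb_p(F_0))$, proposition \ref{341} and the invertibility of $r_1$ in $\Zbb_p$ — exactly the paper's cited fact $\Zbb_p = (p\!-\!1)\Zbb_p$, combined with theorem \ref{098}. In part 2) you construct the same element $x_1 = (\zeta_p-1)t$ from a factorization $u\epsilon_1^{-1} = t^{p-1}\xi$ with $\xi \in \mu_{p-1}$, but you justify the key step differently: the paper deduces $u/\epsilon_1 \in \lan \mu_{p-1}, (\Zbb_p(\zeta_p)^\x)^{p-1} \ran$ from the non-emptiness hypothesis via theorem \ref{098} and part 1), whereas you prove the stronger unconditional statement $\lan \mu_{p-1}, (\Zbb_p(\zeta_p)^\x)^{p-1} \ran = \Zbb_p(\zeta_p)^\x$ directly from the decomposition $\Zbb_p(\zeta_p)^\x \iso \mu_{p-1} \x \lan \zeta_p \ran \x \Zbb_p^{p-1}$, where the $(p\!-\!1)$-power map is an automorphism of the last two factors and kills the first. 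Your variant buys a little more: it shows the non-emptiness hypothesis is irrelevant to the existence of $t$ and $\xi$ (it is needed only to force $p$ odd and $\zeta_p \in F_0$), and since $x_1^{p-1} = pu\xi \in \tilde{F_0}$ with $v(x_1) = \frac{1}{p-1}$, it re-proves non-emptiness for $r_1 = p-1$ via remark \ref{181} — and hence for every divisor $r_1$ of $p-1$ by passing to $x_1^{(p-1)/r_1}$ — so your unit-group computation in effect subsumes the converse of part 1) as well. One slip to correct: the chain ``$\mu_{p-1} \subset \Zbb_p^\x \subset F_0$'' is false as written ($\Zbb_p^\x$ is not contained in the finite group $F_0$); what you need, and what holds, is $\mu_{p-1} = \mu(\Qbb_p) \subset \mu(\Qbb_p(F_0)) = F_0$ for $p$ odd.
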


\begin{proof}
1) As $r_1$ divides the ramification index of $\Qbb_p(F_0)$, it must be
a divisor of $p-1$. The result then follows from corollary \ref{136},
the isomorphism (\ref{135}) and the fact that $\Zbb_p =
(p\!-\!1)\Zbb_p$.

2) The condition $r_1 > 1$ ensures that $p>2$ and $\zeta_p \in F_0$.
By 1) and theorem \ref{098}, we know that
\[
    \frac{u}{\epsilon_1}
    \in \lan \mu(\Qbb_p(\zeta_p)), (\Zbb_p(\zeta_p)^\x)^{p-1} \ran
    = \lan \mu_{p-1}, (\Zbb_p(\zeta_p)^\x)^{p-1} \ran.
\]
Hence there exists a $t \in \Zbb_p(\zeta_p)^\x$ such that
$u\epsilon_1^{-1} = t^{p-1} \delta$ for some $(p\!-\!1)$-th root of unity
$\delta \in \mu_{p-1}$. For $x_1 = (\zeta_p-1)t$, we then have
\[
    x_1^{p-1}
    = (\zeta_p-1)^{p-1} t^{p-1}
    = p\epsilon_1 \cdot \frac{u}{\epsilon_1} \delta^{-1}
    \equiv pu \q \mod \mu_{p-1}.
\]
\end{proof}

\begin{remark} \label{296}
When $F_0 = \mu(\Qbb_p(F_0))$, we know by corollary \ref{183} that
$\tilde{F_1}$ is unique in the set $\tilde{\Fcal}_u(\Qbb_p(F_0),
\tilde{F_0}, r_1)$. If $r_1$ divides $p-1$, we may therefore always
assume $\tilde{F_1} = \lan F_0, x_1 \ran$ with $x_1$ as given in
corollary \ref{095}.
\end{remark}

\begin{example} \label{096}
If $p$ is odd, then
\[
    \epsilon_{\alpha} \equiv -1 \q \mod\ (\Zbb_p(F_0)^\x)^{p-1}.
\]
Indeed, by example \ref{049} we know that
\[
    \Qbb_p(\zeta_p) \iso \Qbb_p(X^{\frac{n}{p-1}}),
\]
where $X = \omega^{\frac{p-1}{2}}S$ satisfies $X^n = -p$ for $\omega$ a
primitive $(p^n\!-\!1)$-th root of unity in $\Dbb_n^\x$. Furthermore, both
elements $X$ and $(\zeta_{p^\alpha}-1)$ belong to the field
$\Qbb_p(\zeta_{p^\alpha})$, and there is a $z \in
\Zbb_p(\zeta_{p^\alpha})^\x$ with
\[
    (\zeta_{p^\alpha}-1)^{p^{\alpha-1}} = X^{\frac{n}{p-1}} z.
\]
Since $\Qbb_p(\zeta_{p^\alpha})^\x \subset \Qbb_p(F_0)^\x$, we obtain
\[
    \epsilon_{\alpha} p 
    = (\zeta_{p^\alpha}-1)^{\phi(p^\alpha)}
    = X^n z^{p-1} 
    \equiv -p  \qq \mod\ (\Zbb_p(F_0)^\x)^{p-1}.
\]
Thus if $u$ is a root of unity, the set $\tilde{\Fcal}_u(\Qbb_p(F_0),
\tilde{F_0}, p\!-\!1)$ is non-empty.
\end{example}

\begin{example} \label{097}
If $p=2$, it is obvious that $\epsilon_1 = -1$. The case $\alpha = 1$
however is not interesting since then $r_1$ must divide the trivial
ramification index of $\Qbb_2(F_0)$ over $\Qbb_2$.

If $p=2$ and $\alpha \geq 2$, we have
\[
    \epsilon_{\alpha} 
    \equiv -\zeta_4 \q \mod\ (\Zbb_2(F_0)^\x)^{2}
\]
for a primitive $4$-th root of unity $\zeta_4 \in \Zbb_2(F_0)^\x$.
Indeed, the element $(\zeta_4-1)$ has valuation $\frac{1}{2}$ and
\[
    (\zeta_4-1)^2 = \zeta_4^2-2\zeta_4+1 = -2\zeta_4.
\]
Hence for $z \in \Zbb_2(F_0)^\x$ satisfying
\[
    (\zeta_{2^\alpha}-1)^{2^{\alpha-2}} = (\zeta_4-1)z,
\]
we obtain
\[
    2\epsilon_{\alpha}
    = (\zeta_{2^\alpha}-1)^{2^{\alpha-1}}
    = (\zeta_4-1)^2 z^2 
    \equiv -2\zeta_4 \qq \mod\ (\Zbb_2(F_0)^\x)^{2}.
\]
This shows that if $u = \pm 1$, the set $\tilde{\Fcal}_u(\Qbb_2(F_0),
\tilde{F_0}, 2)$ is non-empty.
\end{example}

\section{Change of rings} 
\label{287}

Assume $p$ to be any prime. For each $1 \leq \alpha \leq k$, we fix a
root of unity $\zeta_{p^\alpha}$ in $\Qbb_p(F_0)$, and we define
\[
    \label{305}
    \pi_\alpha := \zeta_{p^\alpha}-1
    \qq \text{and} \qq
    R_\alpha := \Zbb_p[\zeta_{p^\alpha}].
\]
Recall from proposition \ref{092} that $\pi_\alpha$ is a uniformizing
element of $R_\alpha$ where $(\pi_\alpha^{\phi(p^\alpha)}) = (p)$. Let 
\[
    i_\alpha: R_\alpha \raa R_{\alpha+1} 
\]
be the ring homomorphism defined by $i_\alpha(\zeta_{p^\alpha}) =
\zeta_{p^{\alpha+1}}^p$. By definition, $\epsilon_\alpha \in R_\alpha$
for each $\alpha$. In this section, we compare $\epsilon_{\alpha+1}$
with the image of $\epsilon_\alpha$ in $R_{\alpha+1}$.

\begin{lemma} \label{282}
For any prime $p$ and any $\alpha \geq 1$, we have
\[
    i_\alpha(\pi_\alpha)
    = \sum_{j=1}^p \binom{p}{j} \pi_{\alpha+1}^j.
\]
\end{lemma}

\begin{proof}
Clearly $i_\alpha(\pi_\alpha) = \zeta_{p^{\alpha+1}}^p-1$. This,
together with the identity
\[
    X^p-1
    = (X-1+1)^p-1
    = \sum_{j=1}^p \binom{p}{j}(X-1)^j
\]
applied to the case $X = \zeta_{p^{\alpha+1}}$, yields the result.
\end{proof}

\begin{corollary} \label{283}
For any prime $p$ and any $\alpha \geq 1$, we have
\[
    i_\alpha(\pi_\alpha)
    \equiv \pi_{\alpha+1}^p \q \mod (p\pi_{\alpha+1}).
\]
\end{corollary}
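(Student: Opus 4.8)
The plan is to derive the congruence directly from Lemma \ref{282}, which already gives the exact expansion
\[
    i_\alpha(\pi_\alpha)
    = \sum_{j=1}^p \binom{p}{j} \pi_{\alpha+1}^j.
\]
The strategy is to isolate the top term $\binom{p}{p}\pi_{\alpha+1}^p = \pi_{\alpha+1}^p$ and show that every remaining term of the sum is divisible by $p\pi_{\alpha+1}$ in the ring $R_{\alpha+1} = \Zbb_p[\zeta_{p^{\alpha+1}}]$.

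First I would split the sum as
\[
    i_\alpha(\pi_\alpha)
    = \pi_{\alpha+1}^p + \sum_{j=1}^{p-1} \binom{p}{j} \pi_{\alpha+1}^j,
\]
so that it suffices to prove $\sum_{j=1}^{p-1} \binom{p}{j}\pi_{\alpha+1}^j \equiv 0 \mod (p\pi_{\alpha+1})$. For each index $1 \leq j \leq p-1$, the term $\binom{p}{j}\pi_{\alpha+1}^j$ is divisible by $p\pi_{\alpha+1}$: indeed $p$ divides the binomial coefficient $\binom{p}{j}$ for $1 \leq j \leq p-1$ (a standard fact, $p$ being prime), and $\pi_{\alpha+1}$ divides $\pi_{\alpha+1}^j$ since $j \geq 1$. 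Hence each such term lies in the ideal $(p\pi_{\alpha+1})$, and so does their sum.

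There is one degenerate case to note: when $p=2$ the intermediate sum $\sum_{j=1}^{p-1}$ reduces to the single term $j=1$, namely $\binom{2}{1}\pi_{\alpha+1} = 2\pi_{\alpha+1}$, which visibly lies in $(p\pi_{\alpha+1}) = (2\pi_{\alpha+1})$; the argument therefore covers this case without modification. I expect no genuine obstacle here, as the result is an immediate corollary of the exact formula in Lemma \ref{282} together with the divisibility of $\binom{p}{j}$ by $p$ for $0 < j < p$. The only point requiring a moment's care is to confirm that the claimed divisibility holds in the extension ring $R_{\alpha+1}$ rather than merely in $\Zbb$, but this is automatic since the coefficients $\binom{p}{j}$ are rational integers and the relation $p = \pi_{\alpha+1}^{\phi(p^{\alpha+1})}\epsilon_{\alpha+1}^{-1}$ (from Proposition \ref{092}) shows $p$ and $\pi_{\alpha+1}$ generate genuine ideals in $R_{\alpha+1}$; the inclusion of each term in $(p\pi_{\alpha+1})$ then follows by reading the integer divisibility inside $R_{\alpha+1}$.
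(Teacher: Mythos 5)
Your proof is correct and is essentially identical to the paper's: the paper also deduces the corollary from Lemma \ref{282} together with the divisibility of $\binom{p}{j}$ by $p$ for $1 \leq j < p$, merely stating it in one line where you have written out the details. Your extra remarks (the $p=2$ case and the observation that integer divisibility transfers to $R_{\alpha+1}$) are harmless confirmations of points the paper leaves implicit.
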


\begin{proof}
This follows from lemma \ref{282} and the $p$-divisibility of the
binomial coefficients for $1 \leq j < p$.
\end{proof}

For any prime $p$ and any $\alpha \geq 2$, define the positive integer
\[
    k_\alpha
    :=
    \begin{cases}
        p^\alpha-2p+1 & \text{if } p>2,\\
        2^\alpha-2 & \text{if } p=2.
    \end{cases}
\]

\begin{lemma} \label{284}
If $p>2$ and $\alpha \geq 2$, or if $p=2$ and $\alpha \geq 3$, then
\[
    i_\alpha(\pi_\alpha^j)
    \equiv \pi_{\alpha+1}^{jp} 
    \q \mod (\pi_{\alpha+1}^{p^{\alpha+1}+1})
    \qq \text{for any } j \geq k_\alpha.
\]
\end{lemma}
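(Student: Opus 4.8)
The plan is to work throughout with the $\pi_{\alpha+1}$-adic valuation on $R_{\alpha+1}$, normalized so that $\pi_{\alpha+1}$ has valuation $1$. By proposition \ref{092} applied in $R_{\alpha+1}$ we have $(p) = (\pi_{\alpha+1}^{\phi(p^{\alpha+1})})$, so $p$ has valuation $(p-1)p^\alpha$, and the asserted congruence modulo $\pi_{\alpha+1}^{p^{\alpha+1}+1}$ says precisely that the relevant difference has $\pi_{\alpha+1}$-adic valuation at least $p^{\alpha+1}+1$. The first step is to record, using corollary \ref{283}, that $i_\alpha(\pi_\alpha) - \pi_{\alpha+1}^p \in (p\pi_{\alpha+1})$; writing $i_\alpha(\pi_\alpha) = \pi_{\alpha+1}^p + \delta$, the correction term $\delta$ therefore has valuation at least $(p-1)p^\alpha + 1$.

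Since $i_\alpha$ is a ring homomorphism, I would then expand $i_\alpha(\pi_\alpha^j) = (\pi_{\alpha+1}^p + \delta)^j$ by the binomial theorem. The $i=0$ term is exactly the target $\pi_{\alpha+1}^{jp}$, so it remains only to bound the terms with $i \geq 1$. A typical such term $\binom{j}{i}\pi_{\alpha+1}^{p(j-i)}\delta^i$ has valuation at least $p(j-i) + i[(p-1)p^\alpha + 1]$, because the integer binomial coefficient lies in $\Zbb_p$ and so has non-negative valuation. This quantity equals $pj + i(p^\alpha - 1)(p-1)$, which is strictly increasing in $i$ as $(p^\alpha-1)(p-1) > 0$; hence its minimum over $i \geq 1$ is attained at $i=1$ and equals $pj + (p^\alpha-1)(p-1)$.

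It then suffices to check that this minimal valuation is at least $p^{\alpha+1}+1$, i.e. that $pj + (p^\alpha-1)(p-1) \geq p^{\alpha+1}+1$; a one-line manipulation turns this into the clean threshold $j \geq p^{\alpha-1}+1$. The final step is to confirm that $k_\alpha \geq p^{\alpha-1}+1$ under the stated hypotheses. For $p>2$ and $\alpha \geq 2$ this reduces to $p^{\alpha-1}(p-1) \geq 2p$, which already holds at $\alpha = 2$ since there it reads $p(p-1)\geq 2p$, true for $p>2$, and only improves for larger $\alpha$. For $p=2$ and $\alpha \geq 3$ it reduces to $2^{\alpha-1} \geq 3$, which holds from $\alpha = 3$ on. Thus $j \geq k_\alpha$ forces $j \geq p^{\alpha-1}+1$ and the estimate applies.

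I expect no genuine obstacle here, only two points needing care. The first is keeping the two normalizations straight — the valuation $v$ on $\Dbb_n$ with $v(p)=1$ versus the local $\pi_{\alpha+1}$-adic valuation used above — so the exponents $p^{\alpha+1}+1$ and $(p-1)p^\alpha$ match up correctly. The second is the monotonicity argument that locates the worst binomial term at $i=1$; once the valuation of $\delta$ furnished by corollary \ref{283} is in hand, the rest is an elementary estimate. It is worth noting that the hypotheses $\alpha \geq 2$ (for $p>2$) and $\alpha \geq 3$ (for $p=2$) enter \emph{only} through the last inequality, and that the excluded case $p=2,\ \alpha=2$ genuinely violates $k_\alpha \geq p^{\alpha-1}+1$, which is consistent with the lemma failing there.
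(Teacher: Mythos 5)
Your proposal is correct and follows essentially the same route as the paper's proof: both expand $i_\alpha(\pi_\alpha^j) = (\pi_{\alpha+1}^p + \delta)^j$ with $\delta \in (p\pi_{\alpha+1})$ via corollary \ref{283}, bound each non-leading binomial term by its $\pi_{\alpha+1}$-adic valuation, locate the worst term as the one with a single factor of $\delta$ (your minimum $pj + (p-1)(p^\alpha-1)$ at $i=1$ is identical to the paper's $(j-1)p + \phi(p^{\alpha+1})+1$ at $k=j-1$), and then verify the inequality against $p^{\alpha+1}+1$ using $j \geq k_\alpha$ and the hypotheses on $p$ and $\alpha$. The only difference is cosmetic: you isolate the clean threshold $j \geq p^{\alpha-1}+1$ and check $k_\alpha$ exceeds it, whereas the paper substitutes $j \geq k_\alpha$ and does the case-by-case arithmetic directly.
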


\begin{proof}
Let $j \geq k_\alpha$. Combining corollary \ref{283} with the binomial
formula yields
\[
    i_\alpha(\pi_\alpha^j)
    = (\pi_{\alpha+1}^p + p\pi_{\alpha+1}z)^j
    = \pi_{\alpha+1}^{jp} + w
\]
with
\[
    z \in R_{\alpha+1}
    \qq \text{and} \qq
    w
    = \sum_{k=0}^{j-1} \binom{j}{k} \pi_{\alpha+1}^{kp}
    (p\pi_{\alpha+1}z)^{j-k}.
\]
Note that the valuation of the $k$-th term is at least
$kp+(j-k)(\phi(p^{\alpha+1})+1)$. Hence for $0 \leq k \leq j-1$ its
valuation is at least
\[
    (j-1)p+\phi(p^{\alpha+1})+1
    \geq (k_\alpha-1)p + \phi(p^{\alpha+1})+1.
\]
If $p>2$ and $\alpha \geq 2$, we have
\begin{align*}
    (j-1)p+\phi(p^{\alpha+1})+1\
    &\geq\ (p^\alpha-2p)p+\phi(p^{\alpha+1})+1\\
    &=\ p^{\alpha+1}-2p^2+p^{\alpha+1}-p^\alpha+1\\
    &=\ p^{\alpha+1}+p^2(p^{\alpha-1}-p^{\alpha-2}-2)+1\\
    &\geq\ p^{\alpha+1}+1.
\end{align*}
Otherwise if $p=2$ and $\alpha \geq 3$, we obtain
\begin{align*}
    (j-1)p+\phi(p^{\alpha+1})+1\
    &\geq\ (2^\alpha-3)2+2^\alpha+1\\
    &=\ 2^{\alpha+1}-6+2^\alpha+1\\
    &\geq\ 2^{\alpha+1}+1.
\end{align*}
\end{proof}

\begin{lemma} \label{285}
If $p>2$ and $\alpha \geq 2$, or if $p=2$ and $\alpha \geq 3$, then
\[
    i_\alpha(\pi_\alpha^{\phi(p^\alpha)})
    \equiv \pi_{\alpha+1}^{\phi(p^{\alpha+1})}
    \q \mod (\pi_{\alpha+1}^{\phi(p^{\alpha+1})+p^{\alpha+1}+1}).
\]
\end{lemma}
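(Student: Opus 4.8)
The plan is to expand $i_\alpha(\pi_\alpha^{\phi(p^\alpha)}) = i_\alpha(\pi_\alpha)^{\phi(p^\alpha)}$ by the binomial theorem, starting from the congruence of corollary \ref{283}, and to estimate the $\pi_{\alpha+1}$-adic valuation of every resulting term. Set $N := \phi(p^\alpha) = (p-1)p^{\alpha-1}$, so that $\phi(p^{\alpha+1}) = pN$, let $w$ denote the normalized valuation on $R_{\alpha+1}$ with $w(\pi_{\alpha+1}) = 1$, and let $v_p$ be the $p$-adic valuation on $\Zbb_p$. By corollary \ref{093} the ideal $(p)$ equals $(\pi_{\alpha+1}^{pN})$, so $w(p) = pN$ and $w(a) = pN\,v_p(a)$ for any integer $a$. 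Corollary \ref{283} gives $i_\alpha(\pi_\alpha) = \pi_{\alpha+1}^p + \beta$ with $\beta \in (p\pi_{\alpha+1})$, whence $w(\beta) \geq pN+1$. As $i_\alpha$ is a ring homomorphism,
\[
    i_\alpha(\pi_\alpha^N)
    = (\pi_{\alpha+1}^p + \beta)^N
    = \pi_{\alpha+1}^{pN}
    + \sum_{j=1}^N \binom{N}{j}\,\pi_{\alpha+1}^{p(N-j)}\,\beta^j,
\]
and since $pN = \phi(p^{\alpha+1})$ the first term is precisely the claimed $\pi_{\alpha+1}^{\phi(p^{\alpha+1})}$. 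It then remains to show that each summand with $1 \leq j \leq N$ has $w$-valuation at least $\phi(p^{\alpha+1}) + p^{\alpha+1} + 1 = pN + p^{\alpha+1} + 1$.

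For the $j$-th summand one computes
\[
    w\Big(\binom{N}{j}\pi_{\alpha+1}^{p(N-j)}\beta^j\Big)
    \geq pN\,v_p\binom{N}{j} + p(N-j) + j(pN+1)
    = pN + pN\,v_p\binom{N}{j} + j(pN-p+1),
\]
so the lemma reduces to the purely numerical inequality
\begin{align}
    pN\,v_p\binom{N}{j} + j(pN-p+1) \geq p^{\alpha+1}+1,
    \qq 1 \leq j \leq N. \tag{$\star$}
\end{align}

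To establish $(\star)$ I would invoke the elementary divisibility $v_p\binom{N}{j} \geq v_p(N) - v_p(j) = (\alpha-1) - v_p(j)$, a consequence of $j\binom{N}{j} = N\binom{N-1}{j-1}$, together with the bound $j \geq p^{v_p(j)}$. Writing $t := v_p(j)$ --- which satisfies $0 \leq t \leq \alpha-1$ since $1 \leq j < p^\alpha$ --- the left-hand side of $(\star)$ is bounded below by
\[
    h(t) := pN(\alpha-1-t) + p^t(pN-p+1).
\]
It therefore suffices to minimize $h$ over $t \in \{0,\dots,\alpha-1\}$ and to check that the minimum exceeds $p^{\alpha+1}+1$.

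The last step splits according to the parity hypothesis. For $p > 2$ the increment $h(t+1)-h(t) = -pN + p^t(p-1)(pN-p+1)$ is already nonnegative at $t=0$ (because $pN(p-2) \geq (p-1)^2$), so $h$ is nondecreasing and its minimum is $h(0) = pN\alpha - p + 1$; the inequality $h(0) \geq p^{\alpha+1}+1$ then reduces to $\alpha\,\phi(p^\alpha) \geq p^\alpha + 1$, valid for $\alpha \geq 2$. For $p = 2$ one has instead $h(1) = 2^\alpha\alpha - 2 < h(0)$ while $h$ increases for $t \geq 1$, so the minimum is $h(1)$, and $h(1) \geq 2^{\alpha+1}+1$ reduces to $2^\alpha(\alpha-2) \geq 3$, valid for $\alpha \geq 3$. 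I expect the main obstacle to be exactly this $p=2$ case: the crude estimate $w(\beta^j) = j\,w(\beta)$ alone is insufficient for the terms with $v_p(j) = 1$ (notably $j = 2$), so the $p$-adic valuation of the binomial coefficient must be exploited, and one must confirm that the worst summand occurs at $t = 1$ rather than at a unit index $t = 0$.
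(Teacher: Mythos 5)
Your proof is correct, but it takes a genuinely different route from the paper's. The paper never expands the full power $\pi_\alpha^{\phi(p^\alpha)}$ at once: it bootstraps, showing by induction on $k$ that $i_\alpha(\pi_\alpha^{p^k}) = \pi_{\alpha+1}^{p^{k+1}} + \pi_{\alpha+1}^{(k+1)\phi(p^{\alpha+1})+1}z_k$ --- each $p$-th power improves the error exponent by $\phi(p^{\alpha+1})$, using only the trivial divisibility $p \mid \binom{p}{j}$ for $0<j<p$ --- then takes $k=\alpha-1$ (which settles $p=2$ outright) and raises to the $(p-1)$-st power for $p>2$, concluding with the single numerical inequality $\alpha\phi(p^{\alpha+1})+1+(p-2)p^\alpha \geq \phi(p^{\alpha+1})+p^{\alpha+1}+1$. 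You instead expand $(\pi_{\alpha+1}^p+\beta)^{\phi(p^\alpha)}$ in one shot and control the terms via the valuation estimate $v_p\binom{N}{j} \geq v_p(N)-v_p(j)$ coming from $j\binom{N}{j}=N\binom{N-1}{j-1}$, reducing the lemma to minimizing $h(t)=pN(\alpha-1-t)+p^t(pN-p+1)$ over $t=v_p(j)$. I checked your arithmetic: the reduction to $(\star)$, the monotonicity of $h$ for $p>2$ with minimum $h(0)=pN\alpha-p+1$ and the reduction to $\alpha\phi(p^\alpha)\geq p^\alpha+1$, and for $p=2$ the minimum $h(1)=2^\alpha\alpha-2$ with the reduction to $2^\alpha(\alpha-2)\geq 3$, all hold, and the failure of these inequalities at $\alpha=1$ (resp.\ $\alpha=2$) matches the hypotheses exactly. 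What each approach buys: the paper's iteration avoids any nontrivial binomial-coefficient valuations and produces reusable intermediate congruences, while your one-shot expansion is more direct and makes transparent exactly which terms are tight --- though your closing remark slightly misattributes the need for binomial valuations to $p=2$ alone; in fact already the $j=1$ term for $p>2$ fails the crude bound $w(\beta^j)\geq j\,w(\beta)$ without the factor $v_p\binom{N}{1}=\alpha-1$, which your $h(t)$ does correctly incorporate in all cases.
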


\begin{proof}
Combining corollary \ref{283} with the binomial formula yields
\begin{align*}
    i_\alpha(\pi_\alpha^p)\
    &=\ (\pi_{\alpha+1}^p
    +\pi_{\alpha+1}^{\phi(p^{\alpha+1})+1}z_0)^p\\[1ex]
    &=\ \pi_{\alpha+1}^{p^2}
    + \sum_{j=1}^{p-1} \binom{p}{j}
    \pi_{\alpha+1}^{jp+(\phi(p^{\alpha+1})+1)(p-j)}z_0^{p-j}
    + \pi_{\alpha+1}^{(\phi(p^{\alpha+1})+1)p}z_0^p\\[1ex]
    &=\ \pi_{\alpha+1}^{p^2}+\pi_{\alpha+1}^{2\phi(p^{\alpha+1})+1}z_1
\end{align*}
for some suitable $z_0, z_1 \in R_{\alpha+1}$, where we have used that
the valuation of each term in the middle sum is greater or equal to
$(p-1)p+2\phi(p^{\alpha+1})+1$, while that of the last term is
$(\phi(p^{\alpha+1})+1)p > 2\phi(p^{\alpha+1})+1$. By iterating this
procedure we obtain some $z_k$ with
\[
    i_\alpha(\pi_\alpha^{p^k})
    = \pi_{\alpha+1}^{p^{k+1}}
    +\pi_{\alpha+1}^{(k+1)\phi(p^{\alpha+1})+1}z_k
    \qq \text{for every } k \geq 0.
\]
The required formula for $p=2$ and $\alpha \geq 3$ directly follows from
the case $k=\alpha-1$. Again, by taking $k=\alpha-1$ if $p>2$, we get
\begin{align*}
    i_\alpha(\pi_{\alpha}^{\phi(p^\alpha)})\
    &=\ (\pi_{\alpha+1}^{p^\alpha}
    +\pi_{\alpha+1}^{\alpha\phi(p^{\alpha+1})+1}z_{\alpha-1})^{p-1}\\
    &=\ \pi_{\alpha+1}^{\phi(p^{\alpha+1})}
    + \pi_{\alpha+1}^{\alpha\phi(p^{\alpha+1})+1+(p-2)p^\alpha}z
\end{align*}
for some $z \in R_{\alpha+1}$. The desired result for $p\geq 3$ and
$\alpha\geq 2$ then follows from the fact that
\begin{align*}
    \alpha\phi(p^{\alpha+1})+1+(p-2)p^\alpha\
    &=\ \phi(p^{\alpha+1})+(\alpha-1)\phi(p^{\alpha+1})+(p-2)p^\alpha+1\\
    &=\ \phi(p^{\alpha+1})+p^\alpha[(\alpha-1)(p-1)+p-2]+1\\
    &\geq\ \phi(p^{\alpha+1})+p^\alpha(2p-3)+1\\
    &\geq\ \phi(p^{\alpha+1})+p^{\alpha+1}+1.
\end{align*}
\end{proof}

\begin{corollary} \label{286}
If $p>2$ and $\alpha \geq 2$, or if $p=2$ and $\alpha \geq 3$, then
\[
    i_\alpha(\epsilon_\alpha)
    \equiv \epsilon_{\alpha+1}
    \q \mod (\pi_{\alpha+1}^{p^{\alpha+1}+1}).
\]
\end{corollary}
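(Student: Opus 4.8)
The plan is to read off the congruence directly from the two defining relations
\[
    \pi_\alpha^{\phi(p^\alpha)} = p\,\epsilon_\alpha
    \qq \text{and} \qq
    \pi_{\alpha+1}^{\phi(p^{\alpha+1})} = p\,\epsilon_{\alpha+1},
\]
the substantive analytic estimate having already been carried out in Lemma \ref{285}. Since $i_\alpha$ is a homomorphism of $\Zbb_p$-algebras it fixes $p$, so applying it to the first relation gives
\[
    i_\alpha(\pi_\alpha^{\phi(p^\alpha)}) = p\, i_\alpha(\epsilon_\alpha).
\]

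Next I would invoke Lemma \ref{285}, which under the present hypotheses on $p$ and $\alpha$ asserts
\[
    i_\alpha(\pi_\alpha^{\phi(p^\alpha)})
    \equiv \pi_{\alpha+1}^{\phi(p^{\alpha+1})}
    \q \mod (\pi_{\alpha+1}^{\phi(p^{\alpha+1})+p^{\alpha+1}+1}).
\]
Combining this with the previous display and the second defining relation yields
\[
    p\, i_\alpha(\epsilon_\alpha)
    \equiv p\,\epsilon_{\alpha+1}
    \q \mod (\pi_{\alpha+1}^{\phi(p^{\alpha+1})+p^{\alpha+1}+1}),
\]
that is, the element $p\bigl(i_\alpha(\epsilon_\alpha)-\epsilon_{\alpha+1}\bigr)$ lies in the ideal $(\pi_{\alpha+1}^{\phi(p^{\alpha+1})+p^{\alpha+1}+1})$ of $R_{\alpha+1}$.

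Finally I would cancel the factor $p$. By Proposition \ref{092} we have $(p) = (\pi_{\alpha+1}^{\phi(p^{\alpha+1})})$ in the discrete valuation ring $R_{\alpha+1}$, so $p$ has $\pi_{\alpha+1}$-adic valuation exactly $\phi(p^{\alpha+1})$. As $R_{\alpha+1}$ is a domain, membership of $p\,x$ in $(\pi_{\alpha+1}^N)$ is equivalent to membership of $x$ in $(\pi_{\alpha+1}^{N-\phi(p^{\alpha+1})})$; applying this with $x = i_\alpha(\epsilon_\alpha)-\epsilon_{\alpha+1}$ and $N = \phi(p^{\alpha+1})+p^{\alpha+1}+1$ gives
\[
    i_\alpha(\epsilon_\alpha)-\epsilon_{\alpha+1}
    \in (\pi_{\alpha+1}^{p^{\alpha+1}+1}),
\]
which is exactly the assertion. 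The only point demanding attention is this last valuation bookkeeping: the modulus drops by precisely $\phi(p^{\alpha+1})$, and it is this drop that explains the otherwise mysterious target exponent $\phi(p^{\alpha+1})+p^{\alpha+1}+1$ chosen in Lemma \ref{285}. I expect no genuine obstacle here, since the entire difficulty of the comparison was absorbed into that lemma.
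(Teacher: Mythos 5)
Your proof is correct and is essentially the paper's own argument: the paper deduces the corollary from Lemma \ref{285} together with the relation $p\epsilon_\alpha = \pi_\alpha^{\phi(p^\alpha)}$, which is precisely your combination of the two defining relations followed by cancelling $p$ of valuation $\phi(p^{\alpha+1})$. You have merely made explicit the valuation bookkeeping that the paper leaves implicit, and your accounting of the exponent drop is accurate.
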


\begin{proof}
This follows from lemma \ref{285} together with the fact that
$p\epsilon_\alpha = \pi_\alpha^{\phi(p^\alpha)}$.
\end{proof}

\section{The $p$-part of $r_1$ for $p$ odd} 
\label{228}

Using notations introduced in sections \ref{288} and \ref{287}, we
assume $p$ to be an odd prime. Recall that $\alpha \geq 0$ is defined to
satisfy $|F_0 \cap S_n| = p^\alpha$. The goal of the section is to
establish that for $\alpha \geq 1$ and $F_0 = \mu(\Qbb_p(F_0))$, the set
$\tilde{\Fcal}_u(\Qbb_p(F_0), \tilde{F_0}, r_1)$ is non-empty if and
only if $r_1$ divides $p-1$. This is done by showing that
$\frac{\epsilon_{\alpha}}{u}$ is non-trivial in the group
$\Zbb_p(F_0)^\x/\lan \mu(\Qbb_p(F_0)), (\Zbb_p(F_0)^\x)^p \ran$ when
$\alpha \geq 2$, and hence that $p$ does not divide $r_1$.  

We know from proposition \ref{092} that $(\pi_{\alpha})$ is the maximal
ideal of $\Zbb_p(F_0)$. The situation is clear when $\alpha = 1$,
because the ramification index of $\Qbb_p(F_0)$ over $\Qbb_p$ is prime
to $p$ and hence the $p$-part of $r_1$ is trivial.

We need to establish a formula for the $\pi_{\alpha}$-adic expansion of
$\epsilon_{\alpha} = p^{-1}\pi_{\alpha}^{\phi(p^\alpha)}$.  For this we
begin by analysing the cyclotomic polynomials
\[
    Q_{\alpha}(X) 
    := \frac{(X+1)^{p^\alpha}-1}{(X+1)^{p^{\alpha-1}}-1}
    \q \in \Zbb[X].
\]
Note that $Q_\alpha(X)$ is the minimal polynomial of $\pi_\alpha$ over
$\Qbb_p$. We have
\[
    Q_{\alpha}(X) 
    = \sum_{k=0}^{p-1} (X+1)^{p^{\alpha-1}k}
    = \sum_{i=0}^{\phi(p^\alpha)} 
    \l( \sum_{k=0}^{p-1} \binom{p^{\alpha-1}k}{i} \r) X^i.
\]
Define $a_i^{(\alpha)}$ to be the coefficient of $X^i$ in
$Q_{\alpha}(X)$, and let 
\[
    b_i^{(\alpha)} 
    :=
    \begin{cases}
        \binom{p^{\alpha-1}}{i} 
        & \text{if } 0 \leq i \leq p^{\alpha-1},\\
        \hspace{1.5em} 0 
        & \text{if } i > p^{\alpha-1},
    \end{cases}
\]
be the coefficient of $X^i$ in $(X+1)^{p^{\alpha-1}}$.

\begin{lemma} \label{099}
For $\alpha, i \geq 1$, we have a strict identity
\[
    a_i^{(\alpha)} 
    = b_i^{(\alpha)} + \sum_{k=2}^{p-1} \sum_{i_1 + \ldots + i_k = i} 
    b_{i_1}^{(\alpha)} \ldots b_{i_k}^{(\alpha)}.
\]
\end{lemma}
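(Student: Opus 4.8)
The plan is to extract the coefficient of $X^i$ directly from the expansion already recorded in the text, namely
\[
    Q_{\alpha}(X) = \sum_{k=0}^{p-1} (X+1)^{p^{\alpha-1}k}.
\]
The crucial observation is that $(X+1)^{p^{\alpha-1}k} = \bigl((X+1)^{p^{\alpha-1}}\bigr)^k$, and that by the very definition of $b_j^{(\alpha)}$ we have $(X+1)^{p^{\alpha-1}} = \sum_{j\geq 0} b_j^{(\alpha)} X^j$. Thus each summand of $Q_\alpha$ is the $k$-th power of the polynomial whose coefficients are the $b_j^{(\alpha)}$, and the whole computation reduces to bookkeeping of coefficients.

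First I would expand the $k$-th power: by the standard rule for the coefficients of a product of power series, the coefficient of $X^i$ in $\bigl(\sum_{j} b_j^{(\alpha)} X^j\bigr)^k$ is exactly $\sum_{i_1 + \ldots + i_k = i} b_{i_1}^{(\alpha)} \cdots b_{i_k}^{(\alpha)}$. Summing over $0 \leq k \leq p-1$ then expresses $a_i^{(\alpha)}$ as the sum over $k$ of these convolution coefficients. It remains only to isolate the low-order terms. For $i \geq 1$ the $k=0$ summand is the constant polynomial $1$, whose $X^i$-coefficient vanishes and so contributes nothing; the $k=1$ summand contributes precisely $b_i^{(\alpha)}$, accounting for the leading term on the right-hand side. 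What is left is the sum over $2 \leq k \leq p-1$ of the convolutions, which is exactly the claimed expression.

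Since the identity is obtained by comparing coefficients of a genuine equality of polynomials in $\Zbb[X]$, it holds in $\Zbb$ and not merely modulo a prime, which justifies the word \emph{strict}. I do not anticipate any real obstacle: the argument is purely a manipulation of coefficients once the expansion $Q_\alpha(X) = \sum_k (X+1)^{p^{\alpha-1}k}$ is taken as given (this being an immediate consequence of $\frac{Y^p-1}{Y-1} = \sum_{k=0}^{p-1} Y^k$ with $Y = (X+1)^{p^{\alpha-1}}$). The only point requiring mild care is the consistency of the convention $b_j^{(\alpha)} = 0$ for $j > p^{\alpha-1}$, which is automatic because $(X+1)^{p^{\alpha-1}}$ has degree $p^{\alpha-1}$, so those vanishing coefficients merely truncate the convolution sums without altering the identity.
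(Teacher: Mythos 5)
Your proof is correct and is essentially the paper's own argument: the paper's proof simply cites the identity $Q_{\alpha}(X) = \sum_{k=0}^{p-1} (X+1)^{p^{\alpha-1}k}$, and you have filled in the routine coefficient extraction — viewing each summand as $\bigl(\sum_j b_j^{(\alpha)} X^j\bigr)^k$, isolating the $k=0$ and $k=1$ contributions, and noting the identity holds in $\Zbb[X]$ — exactly as intended. Your remark that the convention $b_j^{(\alpha)}=0$ for $j>p^{\alpha-1}$ merely truncates the convolutions is also consistent with how the lemma is used later (e.g.\ in lemmas \ref{196} and \ref{213}, where the indices $i_1,\ldots,i_k$ are allowed to be zero).
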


\begin{proof}
This follows form the fact that
\[
    Q_{\alpha}(X) = \sum_{k=0}^{p-1} (X+1)^{p^{\alpha-1}k}.
\]
\end{proof}

\begin{lemma} \label{100}
For $\alpha \geq 3$ and $i \geq 1$, we have
\[
    b_i^{(\alpha)} \equiv
    \begin{cases}
    b_j^{(\alpha-1)} \mod p^2 & \text{if } i = pj,\\
    0 \hspace{2em} \mod p^2 & \text{if } i \not\equiv 0 \mod p.
    \end{cases}
\]
\end{lemma}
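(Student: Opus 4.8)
The plan is to unwind the definition, writing $b_i^{(\alpha)}=\binom{p^{\alpha-1}}{i}$ and $b_j^{(\alpha-1)}=\binom{p^{\alpha-2}}{j}$ (with the usual convention that a binomial coefficient vanishes once the lower index exceeds the upper one, which matches the definition of $b_i^{(\alpha)}$), and to treat the two cases separately. For $j>p^{\alpha-2}$ one has $pj>p^{\alpha-1}$, so both sides of the first congruence vanish; I may therefore assume throughout that the relevant binomial coefficients are the genuine ones.

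For the case $p\nmid i$, I would exploit the absorption identity $i\binom{p^{\alpha-1}}{i}=p^{\alpha-1}\binom{p^{\alpha-1}-1}{i-1}$. Writing $v_p$ for the $p$-adic valuation and using $v_p(i)=0$, this gives $v_p\!\binom{p^{\alpha-1}}{i}=(\alpha-1)+v_p\!\binom{p^{\alpha-1}-1}{i-1}\ge\alpha-1$, which is $\ge 2$ precisely because $\alpha\ge 3$. Hence $b_i^{(\alpha)}\equiv 0 \mod p^2$, as claimed.

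For the case $i=pj$, the essential point is the congruence $\binom{pn}{pj}\equiv\binom{n}{j}\mod p^2$ applied with $n=p^{\alpha-2}$. I would prove it by a generating-function computation: write $(1+X)^p=1+X^p+p\,g(X)$ with $g(X)=\tfrac1p\sum_{a=1}^{p-1}\binom{p}{a}X^a\in\Zbb[X]$, a polynomial supported in degrees $1,\dots,p-1$. Raising to the $n$-th power and reducing modulo $p^2$ annihilates every term $(p\,g)^k$ with $k\ge 2$, leaving $(1+X)^{pn}\equiv(1+X^p)^n+np\,(1+X^p)^{n-1}g(X) \mod p^2$. The correction term $np\,(1+X^p)^{n-1}g(X)$ is supported in degrees congruent to some $a$ with $a\not\equiv 0\mod p$, so it does not contribute to the coefficient of $X^{pj}$; comparing that coefficient on both sides then yields $\binom{pn}{pj}\equiv\binom{n}{j}\mod p^2$, i.e. $b_{pj}^{(\alpha)}\equiv b_j^{(\alpha-1)}\mod p^2$.

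The main obstacle is establishing this last congruence uniformly in $p$, including the small primes $p=2,3$ for which the sharper Wolstenholme-type statement modulo $p^3$ fails. The generating-function argument above is robust enough to handle all primes simultaneously, which is exactly what is required since the lemma places no restriction on $p$.
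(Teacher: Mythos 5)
Correct, and essentially the paper's own argument: the paper proves the lemma in one line from the expansion $(1+X)^{p^{\alpha-1}} = \bigl((1+X^p)+pXh(X)\bigr)^{p^{\alpha-2}} \equiv (1+X^p)^{p^{\alpha-2}} + p^{\alpha-1}Xh(X)(1+X^p)^{p^{\alpha-2}-1} \bmod (p^\alpha)$, which is your computation with $n=p^{\alpha-2}$ and $pXh(X)=pg(X)$ --- the case $p\nmid i$ being read off from the correction term (divisible by $p^{\alpha-1}$ with $\alpha-1\ge 2$, and supported in degrees prime to $p$) instead of from your absorption identity, a harmless variation. Your write-up is if anything the more careful one, since the paper abbreviates the inner polynomial as $pX(1+\ldots+X^{p-2})$, which read literally (all coefficients equal to $p$) is an exact identity only for $p\le 3$, whereas your explicit $g(X)=\tfrac1p\sum_{a=1}^{p-1}\binom{p}{a}X^a \in \Zbb[X]$ is what the ellipsis must be taken to mean.
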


\begin{proof}
This is a consequence of the identity
\begin{align*}
    (1 + X)^{p^{\alpha-1}}\
    &=\ (1+X^p+pX(1+\ldots+X^{p-2}))^{p^{\alpha-2}}\\
    &\equiv\ (1+X^p)^{p^{\alpha-2}}
    + p^{\alpha-1}X(1+\ldots+X^{p-2})(1+X^p)^{p^{\alpha-2}-1}
    \q \mod (p^\alpha).
\end{align*}
\end{proof}

\begin{lemma} \label{101}
For $\alpha \geq 3$ and $i \geq 1$, we have
\[
    a_i^{(\alpha)}
    \equiv
    \begin{cases}
        a_{j}^{(\alpha-1)} \mod p^2 & \text{if } i=pj,\\
        0 \hspace{2.1em} \mod p^2 & \text{if } i \not\equiv 0 \mod p.
    \end{cases}
\]
\end{lemma}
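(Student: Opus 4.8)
The plan is to substitute the expansion of Lemma \ref{099} into the congruences of Lemma \ref{100}, reducing each monomial $b_{i_1}^{(\alpha)}\cdots b_{i_k}^{(\alpha)}$ modulo $p^2$. The whole argument rests on two elementary facts about congruences mod $p^2$: first, if one factor of a product of integers is $\equiv 0 \bmod p^2$, then so is the product; second, if $x_s \equiv y_s \bmod p^2$ for every $s$, then $\prod_s x_s \equiv \prod_s y_s \bmod p^2$ (via the telescoping $x_1 x_2 - y_1 y_2 = x_1(x_2-y_2) + y_2(x_1-y_1)$ and induction on the number of factors). Since $\alpha \geq 3$, Lemma \ref{100} applies at level $\alpha$; and since $\alpha-1 \geq 2 \geq 1$, Lemma \ref{099} applies at level $\alpha-1$ as well.

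First I would treat the case $i \not\equiv 0 \bmod p$. Here the leading term $b_i^{(\alpha)}$ is $\equiv 0 \bmod p^2$ directly by Lemma \ref{100}. For each remaining monomial $b_{i_1}^{(\alpha)}\cdots b_{i_k}^{(\alpha)}$ with $i_1 + \cdots + i_k = i$ and $k \geq 2$, the parts cannot all be divisible by $p$ (their sum is not), so some $i_s \geq 1$ satisfies $i_s \not\equiv 0 \bmod p$; by Lemma \ref{100} that factor, and hence the whole monomial, is $\equiv 0 \bmod p^2$. Summing gives $a_i^{(\alpha)} \equiv 0 \bmod p^2$.

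Next, for $i = pj$ with $j \geq 1$, I would split the composition sum of Lemma \ref{099} according to whether all parts are divisible by $p$. The leading term contributes $b_{pj}^{(\alpha)} \equiv b_j^{(\alpha-1)} \bmod p^2$ by Lemma \ref{100}. Any monomial with some part $i_s \geq 1$ not divisible by $p$ vanishes mod $p^2$, exactly as in the previous case. The surviving monomials are those with every part divisible by $p$, say $i_s = p l_s$ with $l_s \geq 0$; writing $b_{pl_s}^{(\alpha)} \equiv b_{l_s}^{(\alpha-1)} \bmod p^2$ (Lemma \ref{100}, the case $l_s = 0$ reducing to the trivial identity $b_0^{(\alpha)} = b_0^{(\alpha-1)} = 1$) and invoking multiplicativity of congruences, each such monomial is $\equiv b_{l_1}^{(\alpha-1)}\cdots b_{l_k}^{(\alpha-1)} \bmod p^2$. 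Since $(i_1,\ldots,i_k) \mapsto (l_1,\ldots,l_k)$ is a bijection between the $p$-divisible compositions of $pj$ into $k$ parts and all compositions of $j$ into $k$ parts, these monomials sum to $\sum_{l_1+\cdots+l_k = j} b_{l_1}^{(\alpha-1)}\cdots b_{l_k}^{(\alpha-1)} \bmod p^2$. Combining with the leading term and comparing against Lemma \ref{099} applied at level $\alpha-1$ yields $a_{pj}^{(\alpha)} \equiv a_j^{(\alpha-1)} \bmod p^2$.

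The computational content is light; the work is bookkeeping. The main point requiring care is that the reduction must be carried out term by term, so that multiplicativity of congruences mod $p^2$ is applied to monomials all of whose factors are individually congruent — not merely to a single distinguished factor — and that the composition bijection correctly accommodates parts equal to $0$, for which Lemma \ref{100} is replaced by the identity $b_0^{(\alpha)} = b_0^{(\alpha-1)} = 1$.
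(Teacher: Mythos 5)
Your proof is correct and follows essentially the same route as the paper: substitute Lemma \ref{099} into Lemma \ref{100}, kill every monomial containing a part $\not\equiv 0 \bmod p$ (the paper's starred sum), and reindex the remaining all-$p$-divisible compositions of $pj$ as compositions of $j$ to recover Lemma \ref{099} at level $\alpha-1$. You merely make explicit the bookkeeping the paper leaves implicit --- multiplicativity of congruences mod $p^2$, the handling of zero parts via $b_0^{(\alpha)} = b_0^{(\alpha-1)} = 1$, and the bijection of compositions --- which is a faithful, slightly more detailed rendering of the same argument.
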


\begin{proof}
If $i \not\equiv 0 \mod p$, the result is a direct consequence of lemma
\ref{100}. It remains to consider the case where $i=pj$ for some integer
$j\geq 1$. By lemma \ref{099} and \ref{100}, it suffices to show that
\[
    \sum_{k=2}^{p-1} \sum_{i_1+\ldots+i_k=pj} 
    b_{i_1}^{(\alpha)} \ldots b_{i_k}^{(\alpha)}\
    \equiv\ \sum_{k=2}^{p-1} \sum_{j_1+\ldots+j_k=j} 
    b_{j_1}^{(\alpha-1)} \ldots b_{j_k}^{(\alpha-1)}
    \q \mod p^2.
\]
Using lemma \ref{100} once again it suffices to show that
\[
    \sum_{k=2}^{p-1} \sum_{i_1+\ldots+i_k=pj}^{\ast} 
    b_{i_1}^{(\alpha)} \ldots b_{i_k}^{(\alpha)}\
    \equiv\ 0 \q \mod p^2,
\]
where the symbol $\sum_{i_1+\ldots+i_k=pj}^{\ast}$ denotes the sum over
all $k$-tuples $(i_1, \ldots, i_k)$ for which at least one (and hence at
least two) of the $i_k$ are not divisible by $p$. Then lemma \ref{100}
implies that this sum is congruent to $0$ modulo $p^2$.
\end{proof}

The case $\alpha=2$ is of particular interest.

\begin{remark} \label{289}
We note that
\[
    b_i^{(2)}\
    \begin{cases}
        \equiv 0\ \mod p & \text{if } 0<i<p,\\
        = 1 & \text{if } i \in \{0, p\},\\
        = 0 & \text{if } i>p.
    \end{cases}
\]
\end{remark}

\begin{lemma} \label{196}
We have
\[
    a_{(p-2)p+1}^{(2)} \equiv -p\ \mod p^2.
\]
\end{lemma}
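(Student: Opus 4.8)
My plan is to read off $a_{(p-2)p+1}^{(2)}$ directly from the convolution formula of Lemma \ref{099}, discarding everything that is divisible by $p^2$. First I would set $i = (p-2)p+1 = p^2-2p+1$ and observe that, since $i > p$, the leading term $b_i^{(2)}$ vanishes by remark \ref{289}, so that
\[
    a_i^{(2)} = \sum_{k=2}^{p-1}\sum_{i_1+\ldots+i_k=i} b_{i_1}^{(2)}\cdots b_{i_k}^{(2)},
\]
where I may restrict to $0 \leq i_j \leq p$ throughout, as $b_{i_j}^{(2)}=0$ for $i_j > p$.

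The key observation is that, by remark \ref{289}, the factor $b_{i_j}^{(2)}$ is a unit (equal to $1$) precisely when $i_j \in \{0,p\}$ and is divisible by $p$ whenever $0 < i_j < p$. Hence a product $b_{i_1}^{(2)}\cdots b_{i_k}^{(2)}$ is divisible by $p^s$, where $s$ counts the indices with $0 < i_j < p$, and modulo $p^2$ only the terms with $s \leq 1$ survive. The terms with $s=0$ have all parts in $\{0,p\}$, so their sum is a multiple of $p$; since $i \equiv 1 \mod p$ this is impossible, and they contribute nothing. It then remains to handle the terms with exactly one part strictly between $0$ and $p$.

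For such a term I would write the single small part as $t$ with $0 < t < p$ and let $m$ of the remaining $k-1$ parts equal $p$, giving $t + mp = p^2-2p+1$. Reducing modulo $p$ forces $t = 1$, whence $mp = p(p-2)$, i.e. $m = p-2$, and therefore $k-1 \geq p-2$. Combined with the bound $k \leq p-1$ this pins down $k = p-1$ with no zero parts, so every surviving tuple consists of one part equal to $1$ and $p-2$ parts equal to $p$. There are $\binom{p-1}{1} = p-1$ ordered such tuples, each with product $b_1^{(2)}\cdot(b_p^{(2)})^{p-2} = \binom{p}{1}\cdot 1 = p$. Summing yields $(p-1)p \equiv -p \mod p^2$, which is the claim.

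Once the $p$-adic valuation of the binomial coefficients is used to throw away the bulk of the terms, the argument is essentially bookkeeping; the only delicate point I expect is the combinatorial constraint $t + mp = p^2-2p+1$ together with the bound $k \leq p-1$, which must be treated carefully to confirm that $k = p-1$ is the \emph{unique} admissible value and that the surviving configuration is counted with multiplicity exactly $p-1$. As a sanity check, for $p=3$ one has $i = 4$ and $a_4^{(2)} = 15 \equiv -3 \mod 9$, in agreement with the formula.
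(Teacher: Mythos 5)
Your proof is correct and follows essentially the same route as the paper's: both read $a_{(p-2)p+1}^{(2)}$ off the convolution formula of lemma \ref{099} and use remark \ref{289} to see that, modulo $p^2$, only the $k=p-1$ tuples with one part equal to $1$ and $p-2$ parts equal to $p$ survive, each contributing $b_1^{(2)}=p$, for a total of $(p-1)p\equiv -p$. Your write-up simply makes explicit the bookkeeping (at most one part strictly between $0$ and $p$, the congruence $i\equiv 1 \bmod p$ ruling out the all-$\{0,p\}$ tuples, and the forced values $t=1$, $m=p-2$, $k=p-1$) that the paper's proof asserts without detail.
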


\begin{proof}
By lemma \ref{099}
\[
    a_{(p-2)p+1}^{(2)} 
    = b_{(p-2)p+1}^{(2)} 
    + \sum_{k=2}^{p-1} 
    \sum_{i_1 + \ldots + i_k = {(p-2)p+1}} 
    b_{i_1}^{(2)} \ldots b_{i_k}^{(2)}.
\]
According to remark \ref{289}, the only nontrivial contributions in this
sum modulo $p^2$ happen when $k = p-1$ and come from tuples where all
but one $i_k$ are equal to $p$ (and hence the remaining one equal to
$1$). As there are $p-1$ of such contributions we obtain
\[
    a_{(p-2)p+1}^{(2)} 
    \equiv (p-1)b_1^{(2)} 
    \equiv (p-1)p 
    \equiv -p \q \mod p^2.
\]
\end{proof}

\begin{lemma} \label{291}
If $0 < j < p-1$, then
\[
    \sum_{k=1}^{p-1} \binom{k}{j} \equiv 0\ \mod p.
\]
\end{lemma}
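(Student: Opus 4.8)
The plan is to collapse the sum into a single binomial coefficient by means of the hockey-stick identity, and then to invoke the standard divisibility of $\binom{p}{i}$ by $p$ for $0<i<p$.

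First I would trim the range of summation. Since $j\geq 1$, every term with $k<j$ satisfies $\binom{k}{j}=0$, so that
\[
    \sum_{k=1}^{p-1}\binom{k}{j} = \sum_{k=j}^{p-1}\binom{k}{j}.
\]
Next I would apply the hockey-stick identity
\[
    \sum_{k=j}^{N}\binom{k}{j} = \binom{N+1}{j+1},
\]
which is proved by a one-line induction on $N$ using Pascal's rule $\binom{k}{j}+\binom{k}{j+1}=\binom{k+1}{j+1}$. Taking $N=p-1$ yields
\[
    \sum_{k=j}^{p-1}\binom{k}{j} = \binom{p}{j+1}.
\]

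Finally I would read off the divisibility. The hypothesis $0<j<p-1$ gives $1<j+1<p$, so in particular $0<j+1<p$. Writing $\binom{p}{j+1}=\frac{p!}{(j+1)!\,(p-1-j)!}$, the factor $p$ appearing in the numerator cannot be cancelled by either factorial in the denominator, since every factor occurring there is strictly between $1$ and $p$ and $p$ is prime. Hence $\binom{p}{j+1}\equiv 0 \mod p$, and combining the three displays gives the claim.

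The argument is entirely elementary, so I do not expect a genuine obstacle; the only step requiring a moment's care is the bookkeeping at the endpoints of the summation, namely checking that the condition $1\leq j\leq p-2$ simultaneously makes the reindexed sum legitimate (so that $j\leq p-1$) and forces $j+1$ into the open range $(0,p)$ where the prime divisibility of $\binom{p}{j+1}$ holds.
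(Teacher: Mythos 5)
Your proof is correct, and it takes a genuinely different route from the paper. You evaluate the sum in closed form: after discarding the vanishing terms with $k<j$, the hockey-stick identity gives the exact integer identity
\[
    \sum_{k=1}^{p-1}\binom{k}{j} \;=\; \binom{p}{j+1},
\]
and the hypothesis $0<j<p-1$ places $j+1$ in the open range $(0,p)$ where $p \mid \binom{p}{j+1}$, since $p$ is prime and neither $(j+1)!$ nor $(p-1-j)!$ contains a factor divisible by $p$. The paper instead argues mod $p$ from the start: it writes $\binom{k}{j} = \frac{1}{j!}\,k(k-1)\cdots(k-j+1)$ (legitimate since $j<p$ makes $j!$ invertible mod $p$), observes that this is a polynomial of degree $j$ in $k$ with zero constant term, and reduces the claim to the vanishing of the power sums $\sum_{k=1}^{p-1}k^r \equiv 0 \bmod p$ for $0<r<p-1$, which it proves by the standard scaling trick $\sum_{x\in\Fbb_p}x^r = a^r\sum_{x\in\Fbb_p}x^r$ for $a$ with $a^r\neq 1$. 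Your argument is shorter and yields more in one direction — an exact value for the sum, not merely a congruence — while the paper's argument is more general in another: it shows $\sum_{k=1}^{p-1}f(k)\equiv 0 \bmod p$ for \emph{any} polynomial $f$ of degree at most $p-2$ with $f(0)=0$, a flexibility that matches the character-sum flavor of the surrounding computations (e.g.\ the proofs of lemma \ref{213} and proposition \ref{290}, where the same power-sum mechanism is reused). Your endpoint bookkeeping is also sound; note that the boundary case $j=p-1$ genuinely fails ($\sum_{k=1}^{p-1}\binom{k}{p-1}=\binom{p}{p}=1$), so the care you took there is warranted.
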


\begin{proof}
For a fixed $0<j<p-1$, we have
\[
    \sum_{k=1}^{p-1} \binom{k}{j}
    = \frac{1}{j!} \sum_{k=1}^{p-1} k(k-1)\ldots(k-j+1),
\]
where the expression $k(k-1)\ldots(k-j+1)$ is a polynomial of degree $j$
in $\Zbb[k]$ with zero constant term. It is consequently enough to check
that
\[
    \sum_{k=1}^{p-1} k^r
    \equiv 0\ \mod p
    \q \text{for every } 0 < r < p-1.
\]
Given $a \in \Fbb_p^\x$ such that $a^r \neq 1$, we have
\[
    \sum_{x \in \Fbb_p} x^r
    = \sum_{x \in \Fbb_p} (ax)^r
    = a^r \sum_{x \in \Fbb_p} x^r,
\]
so that
\[
    \sum_{x \in \Fbb_p} x^r = 0
    \qq \text{and} \qq
    \sum_{k=1}^{p-1} k^r
    \equiv 0\ \mod p.
\]
\end{proof}

\begin{lemma} \label{213}
If $0 \leq r < p-2$ and $0 < j < p$, then
\[
    a_{pr+j}^{(2)} \equiv 0\ \mod p^2.
\]
\end{lemma}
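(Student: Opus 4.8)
The plan is to expand $a_{pr+j}^{(2)}$ via Lemma \ref{099} and then discard, modulo $p^2$, all but a short combinatorial contribution. Recall from Remark \ref{289} that $b_i^{(2)} = \binom{p}{i}$ equals $1$ for $i \in \{0, p\}$, is divisible by $p$ exactly once for $0 < i < p$, and vanishes for $i > p$. Hence in any product $b_{i_1}^{(2)} \cdots b_{i_k}^{(2)}$ occurring in the identity of Lemma \ref{099}, every index $i_\ell$ lies in $\{0, 1, \dots, p\}$, and any two indices strictly between $0$ and $p$ already force a factor of $p^2$. So modulo $p^2$ only two kinds of tuples survive: those with every $i_\ell \in \{0, p\}$, and those with exactly one index in $\{1, \dots, p-1\}$ and all others in $\{0, p\}$.

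First I would dispose of the first kind: if all $i_\ell \in \{0, p\}$ then $\sum_\ell i_\ell$ is a multiple of $p$, whereas $pr + j$ is not a multiple of $p$ (since $0 < j < p$); so these tuples contribute nothing. For the second kind, write the distinguished index as $i_s$ and let $c$ be the number of indices equal to $p$. From $i_s + pc = pr + j$ together with $0 < i_s < p$ and $0 < j < p$, reducing modulo $p$ forces $i_s = j$ and hence $c = r$. Thus a surviving tuple of length $k$ consists of one entry $j$, exactly $r$ entries $p$, and $k - 1 - r$ entries $0$; this requires $k \geq r+1$, the number of such tuples is $\frac{k!}{r!\,(k-1-r)!} = k\binom{k-1}{r}$, and each contributes $b_j^{(2)} = \binom{p}{j}$. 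Summing over $k$ (with $k=1$ accounting, when $r=0$, for the lone term $b_{pr+j}^{(2)}$ of Lemma \ref{099}) yields
\[
    a_{pr+j}^{(2)} \equiv \binom{p}{j} \sum_{k=r+1}^{p-1} k\binom{k-1}{r} \q \mod p^2.
\]

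To finish I would simplify the inner sum using the identity $k\binom{k-1}{r} = (r+1)\binom{k}{r+1}$, so that, extending the range harmlessly down to $k=1$,
\[
    \sum_{k=r+1}^{p-1} k\binom{k-1}{r} = (r+1)\sum_{k=1}^{p-1} \binom{k}{r+1}.
\]
Since $0 \leq r < p-2$ gives $0 < r+1 < p-1$, Lemma \ref{291} applies and shows $\sum_{k=1}^{p-1}\binom{k}{r+1} \equiv 0 \mod p$ (equivalently, by the hockey-stick identity this sum equals $\binom{p}{r+2}$, which is divisible by $p$ precisely because $0 < r+2 < p$). As $\binom{p}{j}$ is also divisible by $p$ for $0 < j < p$, the displayed product carries two factors of $p$, giving $a_{pr+j}^{(2)} \equiv 0 \mod p^2$.

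The main obstacle is the bookkeeping in identifying exactly which tuples survive modulo $p^2$ and counting them correctly. In particular one must treat the boundary term $k=1$ consistently (it is the $b_{pr+j}^{(2)}$ summand, nonzero modulo $p^2$ only when $r=0$), and one must notice that the hypothesis $r < p-2$ is exactly what keeps $r+2$ strictly below $p$, so that $\binom{p}{r+2}$ supplies the needed second factor of $p$; without this constraint (e.g. for $r=p-2$) the argument breaks, matching the expectation that this borderline case behaves differently.
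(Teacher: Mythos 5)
Your proof is correct and follows essentially the same route as the paper's: expand via Lemma \ref{099}, use Remark \ref{289} to reduce modulo $p^2$ to tuples with exactly one index outside $\{0,p\}$, count these as $k\binom{k-1}{r}$, rewrite via $k\binom{k-1}{r}=(r+1)\binom{k}{r+1}$, and conclude with Lemma \ref{291} together with $p \mid \binom{p}{j}$. The only (cosmetic) difference is that you treat $r=0$ and $r>0$ uniformly in one formula, where the paper splits into the two cases, and you note the hockey-stick identity $\sum_{k=1}^{p-1}\binom{k}{r+1}=\binom{p}{r+2}$ as an alternative justification.
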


\begin{proof}
By lemma \ref{099}, we have
\begin{align*}
    a_{pr+j}^{(2)}\ 
    &=\ b_{pr+j}^{(2)} + \sum_{k=2}^{p-1}\ 
    \sum_{i_1 + \ldots + i_k = pr+j} 
    b_{i_1}^{(2)} \ldots b_{i_k}^{(2)} \\
    &\equiv\ b_{pr+j}^{(2)} + \sum_{k=2}^{p-1}\ 
    \sum_{i_1 + \ldots + i_k = pr+j}^\ast
    b_{i_1}^{(2)} \ldots b_{i_k}^{(2)}
    \qq \mod p^2,
\end{align*}
where the last sum is taken over all $k$-tuples $(i_1, \ldots, i_k)$ in
which there is exactly one element $b_i^{(2)}$ with $i \not\in \{0,
p\}$. Furthermore, this $b_i^{(2)}$ is in fact $b_j^{(2)}$ and
$b_{i_1}^{(2)} \ldots b_{i_k}^{(2)} = b_j^{(2)}$. We hence get
\[
    a_{pr+j}^{(2)}
    \equiv b_{pr+j}^{(2)} + b_j^{(2)} \sum_{k=2}^{p-1} k \binom{k-1}{r}
    \q \mod p^2.
\]
If $r=0$, then
\[
    a_{j}^{(2)}
    \equiv b_{j}^{(2)} + b_j^{(2)} \sum_{k=2}^{p-1} k
    \equiv b_j^{(2)} \frac{p(p-1)}{2}
    \equiv 0 \q \mod p^2.
\]
If $r>0$, then $b_{pr+j}^{(2)} = 0$ and we have
\[
    a_{pr+j}^{(2)}
    \equiv b_j^{(2)} \sum_{k=2}^{p-1} k \binom{k-1}{r}
    \q \mod p^2.
\]
Since
\[
    \sum_{k=2}^{p-1} k \binom{k-1}{r}
    = \sum_{k=1}^{p-1} k \binom{k-1}{r}
    = (r+1) \sum_{k=1}^{p-1} \binom{k}{r+1}
    \equiv 0 \q \mod p
\]
by lemma \ref{291}, we get $a_{pr+j}^{(2)} \equiv 0 \mod p^2$.
\end{proof}

Let $F_0'$ denote the $p'$-part of $F_0$. Since $Q_{\alpha}(X)$ is the
minimal polynomial of $\pi_{\alpha}$ in $\Zbb_p(F_0')$, we have an
isomorphism of algebras
\[
    \phi_{F_0}: (\Zbb_p(F_0')[X]/(Q_{\alpha}(X)))
    \overset{\iso}{\raa} \Zbb_p(F_0)
    \q \text{given by} \q
    X \mtoo \pi_{\alpha},
\]
which restricts to an isomorphism on the groups of units 
\[
    \phi_{F_0}: (\Zbb_p(F_0')[X]/(Q_{\alpha}(X)))^\x 
    \overset{\iso}{\raa} \Zbb_p(F_0)^\x.
\]
Furthermore, there is a polynomial $\tilde{Q}_{\alpha}(X) \in \Zbb[X]$
of degree $\phi(p^\alpha)-1$ such that
\[
    Q_{\alpha}(X) = X^{\phi(p^\alpha)} + p \tilde{Q}_{\alpha}(X)
    \qq \text{and} \qq
    \tilde{Q}_{\alpha}(0) = 1,
\] 
and therefore we have
\[
    \phi_{F_0}(\tilde{Q}_{\alpha}(X)) 
    = -p^{-1} \pi_{\alpha}^{\phi(p^\alpha)}
    = -\epsilon_{\alpha}.
\]

Recall from proposition \ref{092} that $\pi_\alpha$ is a uniformizing
element in $\Zbb_p(F_0)$, so that $(\pi_\alpha)$ is the maximal ideal of
this ring, and that $(\pi_{\alpha}^{\phi(p^\alpha)}) = (p)$.  More
precisely, the $\pi_\alpha$-adic expansion of $p$ in $R_\alpha =
\Zbb_p[\pi_\alpha] \subset \Zbb_p(F_0)$ is given below.

\begin{proposition} \label{290}
If $p>2$ and $\alpha \geq 2$, then
\[
    p
    \equiv -\pi_\alpha^{\phi(p^\alpha)}
    + \frac{p-1}{2} \pi_\alpha^{p^\alpha}
    \q \mod (\pi_\alpha^{p^\alpha+1}).
\]
\end{proposition}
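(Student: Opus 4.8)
The plan is to read off the $\pi_\alpha$-adic expansion of $p$ directly from the minimal polynomial relation $Q_\alpha(\pi_\alpha)=0$ and then discard everything of $\pi_\alpha$-adic valuation exceeding $p^\alpha$. Writing $Q_\alpha(X)=\sum_{i=0}^{\phi(p^\alpha)}a_i^{(\alpha)}X^i$ with $a_0^{(\alpha)}=Q_\alpha(0)=p$ and leading coefficient $a_{\phi(p^\alpha)}^{(\alpha)}=1$, the relation $Q_\alpha(\pi_\alpha)=0$ yields
\[
    p=-\pi_\alpha^{\phi(p^\alpha)}-\sum_{i=1}^{\phi(p^\alpha)-1}a_i^{(\alpha)}\pi_\alpha^i .
\]
Since $Q_\alpha$ is Eisenstein at $p$ (visible from $Q_\alpha(X)=X^{\phi(p^\alpha)}+p\tilde Q_\alpha(X)$, so $a_i^{(\alpha)}\equiv 0 \mod p$ for $0<i<\phi(p^\alpha)$), every middle term already has valuation $\geq\phi(p^\alpha)+1$, which is consistent with the cancellation forced in $p+\pi_\alpha^{\phi(p^\alpha)}$.

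The first key step is valuation bookkeeping. Normalizing $v(\pi_\alpha)=1$ so that $v(p)=\phi(p^\alpha)$ by proposition \ref{092}, I will exploit the inequality $2\phi(p^\alpha)=2(p-1)p^{\alpha-1}>p^\alpha$, which holds precisely because $p>2$. Modulo $\pi_\alpha^{p^\alpha+1}$ this has two effects: each coefficient $a_i^{(\alpha)}$ may be replaced by its residue modulo $p^2$ (the discarded part lies in $p^2\pi_\alpha^i$, of valuation $>p^\alpha$), and a term with $v_p(a_i^{(\alpha)})=1$ survives only when $\phi(p^\alpha)+i\leq p^\alpha$, i.e. $i\leq p^{\alpha-1}$.

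The second step isolates the one surviving coefficient. Iterating lemma \ref{101} reduces $a_i^{(\alpha)}\mod p^2$ down to level $\alpha=2$: it is $\equiv 0$ unless $p^{\alpha-2}\mid i$, in which case $a_i^{(\alpha)}\equiv a_{i/p^{\alpha-2}}^{(2)}$. Together with the cutoff $i\leq p^{\alpha-1}$, the unique candidate is $i=p^{\alpha-1}$ (lemma \ref{213} kills $a_\ell^{(2)}$ for $0<\ell<p$, while indices with $\ell>p$ already lie beyond the valuation cutoff). It then remains to evaluate $a_p^{(2)}\mod p^2$, which I will extract from lemma \ref{099} and remark \ref{289}: in $a_p^{(2)}=b_p^{(2)}+\sum_{k=2}^{p-1}\sum_{i_1+\cdots+i_k=p}b_{i_1}^{(2)}\cdots b_{i_k}^{(2)}$, any tuple with two or more entries in $(0,p)$ contributes $0\mod p^2$, and a single such entry cannot balance the sum; the only survivors are the $k$ tuples with one entry equal to $p$ and the rest $0$, giving $a_p^{(2)}\equiv 1+\sum_{k=2}^{p-1}k=\tfrac{p(p-1)}{2}\mod p^2$.

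Finally I assemble the pieces: the expansion collapses to $p\equiv-\pi_\alpha^{\phi(p^\alpha)}-\tfrac{p(p-1)}{2}\pi_\alpha^{p^{\alpha-1}}\mod\pi_\alpha^{p^\alpha+1}$. To convert the last term into a pure power of $\pi_\alpha$ I use $\epsilon_\alpha\equiv-1\mod\pi_\alpha$, equivalently $p\equiv-\pi_\alpha^{\phi(p^\alpha)}\mod\pi_\alpha^{\phi(p^\alpha)+1}$; multiplying by $\pi_\alpha^{p^{\alpha-1}}$ gives $p\,\pi_\alpha^{p^{\alpha-1}}\equiv-\pi_\alpha^{p^\alpha}$, so $-\tfrac{p(p-1)}{2}\pi_\alpha^{p^{\alpha-1}}\equiv\tfrac{p-1}{2}\pi_\alpha^{p^\alpha}$, which is exactly the asserted formula. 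I expect the main obstacle to be the valuation bookkeeping of the first step, namely making rigorous that reduction modulo $p^2$ and the cutoff $i\leq p^{\alpha-1}$ are simultaneously valid, so that $a_{p^{\alpha-1}}^{(\alpha)}$ is genuinely the only intermediate coefficient that contributes.
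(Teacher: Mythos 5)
Your proof is correct and follows essentially the same route as the paper's: both reduce the middle coefficients to level $\alpha=2$ via iterated application of lemma \ref{101}, isolate $a_{p^{\alpha-1}}^{(\alpha)} \equiv a_p^{(2)} \equiv \tfrac{p(p-1)}{2} \bmod p^2$ as the unique surviving coefficient using lemma \ref{213} and remark \ref{289}, and then convert the resulting $-\tfrac{p(p-1)}{2}\pi_\alpha^{p^{\alpha-1}}$ term into $+\tfrac{p-1}{2}\pi_\alpha^{p^\alpha}$ by substituting the leading congruence for $p$ back into itself. The only cosmetic difference is bookkeeping style: the paper organizes the cutoffs as polynomial congruences modulo $(p^2X,\ pX^{p+1})$, while you track the $\pi_\alpha$-adic valuations of individual terms directly; the two are equivalent since $2\phi(p^\alpha) > p^\alpha$ for $p>2$.
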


\begin{proof}
Recall that
\[
    Q_\alpha(X)
    = p+\sum_{i=1}^{\phi(p^\alpha)-1} a_i^{(\alpha)} X^i
    +X^{\phi(p^\alpha)}.
\]
By lemma \ref{101},
\[
    Q_\alpha(X)
    \equiv Q_{\alpha-1}(X^p)
    \equiv \ldots
    \equiv Q_2(X^{p^{\alpha-2}})
    \q \mod (p^2X).
\]
By lemma \ref{213} we know that $a_i^{(2)} \equiv 0 \mod p^2$ if
$0<i<p$. Furthermore, by lemma \ref{099} and remark \ref{289} we have
\[
    a_i^{(2)} 
    = \sum_{k=1}^{p-1} \sum_{i_1 + \ldots + i_k = i} 
    b_{i_1}^{(2)} \ldots b_{i_k}^{(2)}
    \qq \text{with} \q
    b_i^{(2)}\
    \begin{cases}
        \equiv 0\ \mod p & \text{if } 0<i<p,\\
        = 1 & \text{if } i \in \{0, p\},\\
        = 0 & \text{if } i>p.
    \end{cases}
\]
Then obviously 
\[
    a_p^{(2)}
    \equiv \sum_{k=1}^{p-1} k
    = \frac{p(p-1)}{2}\ \mod p^2, \qq
    a_i^{(2)} \equiv 0\ \mod p
    \q \text{if } i \not\equiv 0 \mod p,
\]
and if $i=pj$ we have
\[
    \sum_{i_1 + \ldots + i_k = i} 
    b_{i_1}^{(2)} \ldots b_{i_k}^{(2)}
    \equiv \binom{k}{j} \ \mod p^2.
\]
For a fixed $0<j<p-1$ we have by lemma \ref{291}
\[
    \sum_{k=1}^{p-1} \binom{k}{j} \equiv 0\ \mod p,
\]
so that
\[
    a_i^{(2)}
    \equiv
    \begin{cases}
        0\ \mod p & \text{if } i \not\equiv 0 \mod p,\\
        0\ \mod p & \text{if } i=jp \text{ for } 0<j<p-1,\\
        1\ \mod p & \text{if } i=p(p-1).
    \end{cases}
\]
Therefore
\[
    Q_2(X)
    \equiv p+p\frac{p-1}{2} X^p
    + X^{\phi(p^2)}
    \q \mod (p^2X, pX^{p+1}).
\]
Finally
\[
    Q_\alpha(\pi_\alpha)
    \equiv Q_2(\pi_\alpha^{p^{\alpha-2}})
    \equiv p + p \frac{p-1}{2} \pi_\alpha^{p^{\alpha-1}}
    + \pi^{\phi(p^\alpha)} 
    \q \mod (\pi_\alpha^{p^\alpha+1}),
\]
and consequently
\begin{align*}
    p\ 
    &\equiv\ -\pi_\alpha^{\phi(p^\alpha)}
    -p \frac{p-1}{2} \pi_\alpha^{p^{\alpha-1}}\\
    &\equiv\ -\pi_\alpha^{\phi(p^\alpha)}
    -\left(
      -\pi_\alpha^{\phi(p^\alpha)}
      -p \frac{p-1}{2} \pi_\alpha^{p^{\alpha-1}}
    \right)
    \frac{p-1}{2} \pi^{p^{\alpha-1}}\\
    &\equiv\ -\pi_\alpha^{\phi(p^\alpha)}
    + \frac{p-1}{2} \pi_\alpha^{\phi(p^\alpha)}
    \pi_\alpha^{p^{\alpha-1}}\\
    &\equiv\ -\pi_\alpha^{\phi(p^\alpha)}
    + \frac{p-1}{2} \pi_\alpha^{p^\alpha}
    & \mod (\pi_\alpha^{p^\alpha+1}).
\end{align*}
\end{proof}

Our interest in approximating modulo the ideal generated by
$\pi_\alpha^{p^\alpha+1}$ is explained in the following remark. Consider
the decreasing filtration
\[
    \Zbb_p(F_0)^\x = U_0 \supset U_1 \supset U_2 \supset \ldots
\]
given by $U_0 = \Zbb_p(F_0)^\x$ and
\[
    U_i
    = U_i(\Zbb_p(F_0)^\x)
    = \{ x \in U_0\ |\ x \equiv 1 \mod (\pi_{\alpha}^i)\}
    \qq \text{for } i \geq 1,
\]
where $U_0/U_1 = \mu_{p'}(\Qbb_p(F_0))$, and where $U_i/U_{i+1}$ is
isomorphic to the residue field of $\Qbb_p(F_0)$ for each $i \geq 1$.
Because $\Zbb_p(F_0)$ is complete with respect to the filtration, any $x
\in \Zbb_p(F_0)^\x$ admits a $\pi_\alpha$-adic expansion
\[
    x
    = \sum_{i \geq 0} \lambda_i \pi_\alpha^i,
\]
where the $\lambda_i$'s run in a given set of representative of the
residue field chosen in such a way that the representative in $\Fbb_p$
are integers between $0$ and $p-1$.

\begin{remark} \label{197}
For any $a$ in $U_{1}$, we have
\begin{align*}
    (1+a\pi_{\alpha}^i)^p\ 
    &\equiv\ 1+a^p\pi_{\alpha}^{ip}+ap\pi_{\alpha}^i\\
    &\equiv\ 1+a^p\pi_{\alpha}^{ip}-a\pi_{\alpha}^{\phi(p^\alpha)+i}
    \qq \mod (\pi_{\alpha}^{\phi(p^\alpha)+i+1}).
\end{align*}
As
\[
    ip < \phi(p^\alpha)+i
    \qq \Lra \qq
    i < p^{\alpha-1},
\]
we obtain
\[
    (1+a\pi_{\alpha}^i)^p \equiv
    \begin{cases}
        1+a^p\pi_{\alpha}^{ip} \hspace{3.1em} 
        \mod (\pi_\alpha^{ip+1}) 
        & \text{if } i < p^{\alpha-1},\\
        1+(a^p-a)\pi_{\alpha}^{p^\alpha}\ 
        \mod (\pi_{\alpha}^{p^\alpha+1})
        & \text{if } i = p^{\alpha-1},\\
        1+a\pi_{\alpha}^{\phi(p^\alpha)+i} \hspace{1.3em}
        \mod (\pi_{\alpha}^{\phi(p^\alpha)+i+1})
        & \text{if } i > p^{\alpha-1}.
    \end{cases}
\]
The last congruence and the completeness of the filtration imply that
every element of $U_i$ for $i > \phi(p^\alpha) + p^{\alpha-1} =
p^\alpha$ is a $p$-th power, and it follows that $U_{p^\alpha+1} \subset
U_1^p$.
\end{remark}

Setting $\mu := \mu(\Qbb_p(F_0))$, we have
\[
    U_0/\lan \mu, U_0^p \ran 
    \iso U_1/\lan \mu \cap U_1, U_1^p \ran,
\]
where $\mu \cap U_1$ is the subgroup generated by $\zeta_{p^\alpha} =
\pi_{\alpha}+1$. By remark \ref{197}, there is a commutative diagram 
\[
    \xymatrix{
        (\Zbb_p(F_0')[X]/(Q_{\alpha}(X)))^\x 
        \ar[rr]_{\qqqq\iso}^{\qqqq\phi_{F_0}} \ar@{->>}[d]
        && \Zbb_p(F_0)^\x \ar@{->>}[d] \\
        (\Zbb_p(F_0')[X]/(Q_{\alpha}(X), X^{p^\alpha+1}))^\x 
        \ar[rr]_{\qqqq\iso} \ar@{->>}[d]
        && \Zbb_p(F_0)^\x/(\pi_{\alpha}^{p^\alpha+1})
        \ar@{->>}[d] \\
        (\Zbb_p(F_0')[X]/(Q_{\alpha}(X), X^{p^\alpha+1}))^\x
        /\lan \mu, \text{p-th powers} \ran 
        \ar[rr]_{\qqqq\iso}
        && U_1/\lan \mu \cap U_1, U_1^p \ran,
    }
\]
in which the vertical maps are the canonical projections and the
horizontal maps are isomorphisms. Since $-1$ becomes trivial in $U_1$,
the images of $\tilde{Q}_{\alpha}(X)$ and $\epsilon_{\alpha}$ in the
quotient group $U_1/\lan \mu \cap U_1, U_1^p \ran$ are equal. We want to
prove that this image is non-trivial. In order to do so, we consider the
$\pi_\alpha$-adic expansion of $-\epsilon_\alpha$ in
$\Zbb_p(\pi_\alpha)^\x \subset \Zbb_p(F_0)^\x$:
\[
    -\epsilon_{\alpha}
    = \phi_{F_0}(\tilde{Q}_{\alpha}(X))
    = \sum_{i \geq 0} c_i^{(\alpha)} \pi_{\alpha}^i,
\]
where $c_0^{(\alpha)} = 1$ and $0 \leq c_i^{(\alpha)} < p$ for each $i
\geq 0$. For $0 \leq i \leq p^2$, we let $c_i := c_i^{(2)}$.

\begin{remark} \label{266}
By definition
\[
    c_i^{(\alpha)} \equiv \frac{a_i^{(\alpha)}}{p} \q \mod p
    \qq \text{for } \alpha \geq 2 
    \text{ and } 0 \leq i < \phi(p^\alpha).
\]
\end{remark}

\begin{lemma} \label{199}
Let $k_2=(p-2)p+1$. Then $c_{k_2} = -1$ and
\[
    -\epsilon_{2} 
    \equiv
    \sum_{i=0}^{p-2} c_{ip} \pi_{2}^{ip}
    - \pi_{2}^{k_2} 
    + \sum_{i=k_2+1}^{p^2} c_i \pi_{2}^{i}
    \q \mod (\pi_{2}^{p^2+1}).
\]
\end{lemma}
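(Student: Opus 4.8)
The plan is to read off the $\pi_2$-adic expansion $-\epsilon_2 = \sum_{i \geq 0} c_i \pi_2^i$ degree by degree, up to degree $p^2$, combining the congruence of remark \ref{266} with lemmas \ref{196} and \ref{213}. Since $\Zbb_p(F_0) = R_2$ is complete for the $\pi_2$-adic filtration, reducing modulo $(\pi_2^{p^2+1})$ keeps only the terms $c_i\pi_2^i$ with $i \leq p^2$, so it suffices to pin down the $c_i$ for $0 \leq i \leq p^2$, and in fact only for those degrees that do not already appear as an unknown coefficient in the asserted formula.

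First I would check that every degree to be determined lies in the range where remark \ref{266} applies. Indeed $k_2 = (p-2)p+1 = p^2-2p+1 < p^2-p = \phi(p^2)$, so remark \ref{266} gives $c_i \equiv a_i^{(2)}/p \mod p$ for all $0 \leq i \leq k_2$. The underlying reason is that rewriting the integer coefficients $a_i^{(2)}/p$ of $\tilde{Q}_{2}$ in reduced form produces carries only at degrees $\geq \phi(p^2)$: since $\pi_2^{\phi(p^2)} = p\epsilon_2$, replacing a coefficient $a_i^{(2)}/p$ by its residue mod $p$ shifts the excess into degree $i+\phi(p^2) \geq \phi(p^2) = p^2-p$, and no such carry ever reaches a degree below $\phi(p^2)$. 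I would also record $c_0 = 1$ directly from the definition of the expansion.

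Next I would annihilate the off-diagonal coefficients and isolate $k_2$. Any degree $i$ with $1 \leq i \leq (p-2)p-1$ that is not a multiple of $p$ can be written $i = pr+j$ with $0 \leq r < p-2$ and $0 < j < p$, so lemma \ref{213} gives $a_i^{(2)} \equiv 0 \mod p^2$, whence $a_i^{(2)}/p \equiv 0 \mod p$ and $c_i = 0$ by the previous paragraph. Together with $c_0 = 1$, this shows that in the block $0 \leq i \leq (p-2)p$ the only possibly nonzero coefficients sit at the multiples $i = 0, p, \ldots, (p-2)p$ of $p$, which assemble into $\sum_{i=0}^{p-2} c_{ip}\pi_2^{ip}$. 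For the critical degree $k_2$, lemma \ref{196} gives $a_{k_2}^{(2)} = a_{(p-2)p+1}^{(2)} \equiv -p \mod p^2$, so $a_{k_2}^{(2)}/p \equiv -1 \mod p$ and remark \ref{266} yields $c_{k_2} = -1$, establishing the first assertion and contributing the term $-\pi_2^{k_2}$.

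Finally, all remaining degrees $k_2 < i \leq p^2$ are simply gathered into $\sum_{i=k_2+1}^{p^2} c_i\pi_2^i$ with their uncomputed coefficients, while everything of degree $> p^2$ vanishes modulo $(\pi_2^{p^2+1})$; assembling the three ranges gives the stated congruence. There is no serious computational obstacle here, as the real work is carried by lemmas \ref{196} and \ref{213}; the one point demanding care is the index bookkeeping. The range of lemma \ref{213} stops exactly one block short of $k_2$ (its constraint $r < p-2$ caps the covered non-multiples of $p$ at degree $(p-2)p-1 = k_2-2$), and it is precisely this boundary that prevents $c_{k_2}$ from being forced to zero and makes the separate appeal to lemma \ref{196} necessary to produce the genuine coefficient $-1$.
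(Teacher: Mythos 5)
Your proof is correct and follows exactly the route the paper intends: its own (one-line) proof simply cites lemmas \ref{196} and \ref{213}, with remark \ref{266} serving as the definitional bridge between the digits $c_i$ and the coefficients $a_i^{(2)}/p$, which is precisely the argument you spell out. Your index bookkeeping is accurate as well — the non-multiples of $p$ below $k_2$ are exactly those covered by lemma \ref{213}, carries from reducing $a_i^{(2)}/p$ modulo $p$ only land in degrees $\geq \phi(p^2) > k_2$, and lemma \ref{196} supplies the digit $-1$ at degree $k_2$.
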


\begin{proof}
This follows from lemma \ref{196} and \ref{213}.
\end{proof}

\begin{lemma} \label{106}
If
\[
    x = 1+\sum_{i=1}^\infty a_i \pi_{\alpha}^i
    \qq \text{and} \qq
    y = 1+\sum_{i=1}^\infty b_i \pi_{\alpha}^i
\]
are two elements of $U_1 \subset \Zbb_p(F_0)^\x$ such that $0 \leq a_i,
b_i < p$ and $x \equiv y$ modulo
$(\pi_{\alpha}^{k})$ for an integer $k \geq 1$, then
\[
    \frac{x}{y} 
    \equiv 1+(a_{k}-b_{k})\pi_{\alpha}^{k}
    \q \mod (\pi_{\alpha}^{k+1}).
\]
\end{lemma}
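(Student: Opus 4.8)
The plan is to reduce everything to the uniqueness of the $\pi_{\alpha}$-adic expansion in the complete local ring $\Zbb_p(F_0)$ and then divide. First I would record that, since $x \equiv y \mod (\pi_{\alpha}^{k})$ and both expansions have digits $a_i, b_i$ lying in the fixed set of representatives $\{0, 1, \ldots, p-1\}$ of the prime residue field, the truncations of $x$ and $y$ modulo $\pi_{\alpha}^{k}$ coincide. By uniqueness of the $\pi_{\alpha}$-adic expansion this forces $a_i = b_i$ for every $1 \leq i \leq k-1$. Consequently the difference is
\[
    x - y
    = \sum_{i \geq k}(a_i - b_i)\pi_{\alpha}^{i}
    \equiv (a_k - b_k)\pi_{\alpha}^{k} \q \mod (\pi_{\alpha}^{k+1}),
\]
so that $x-y$ lies in the ideal $(\pi_{\alpha}^{k})$ with leading digit $a_k - b_k$.

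Next I would pass to the quotient by writing $x/y = 1 + (x-y)y^{-1}$. Since $y \in U_1$ we have $y \equiv 1 \mod (\pi_{\alpha})$, hence $y^{-1} \equiv 1 \mod (\pi_{\alpha})$ as well; write $y^{-1} = 1 + z$ with $z \in (\pi_{\alpha})$. Because $x - y \in (\pi_{\alpha}^{k})$, the product $(x-y)z$ lies in $(\pi_{\alpha}^{k+1})$, and therefore
\[
    (x-y)y^{-1}
    = (x-y) + (x-y)z
    \equiv (a_k - b_k)\pi_{\alpha}^{k} \q \mod (\pi_{\alpha}^{k+1}).
\]
Adding $1$ yields the asserted congruence $x/y \equiv 1 + (a_k - b_k)\pi_{\alpha}^{k} \mod (\pi_{\alpha}^{k+1})$.

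The computation is entirely elementary; the only point requiring a word of care is the first step, namely the deduction $a_i = b_i$ for $i < k$ from the congruence $x \equiv y \mod (\pi_{\alpha}^{k})$. This rests on the fact, guaranteed by completeness of $\Zbb_p(F_0)$ with respect to the $(\pi_{\alpha})$-adic filtration, that the $\pi_{\alpha}$-adic digits of an element are uniquely determined once a set of representatives of the residue field is fixed. Everything else is bookkeeping with the multiplicative filtration $(\pi_{\alpha}^{i})$, using only that $y^{-1}$ is a $1$-unit.
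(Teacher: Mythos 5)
Your proof is correct and follows essentially the same route as the paper's: both reduce to the observation that $x-y \equiv (a_{k}-b_{k})\pi_{\alpha}^{k} \bmod (\pi_{\alpha}^{k+1})$ (resting on the digit comparison $a_i = b_i$ for $i<k$, which the paper leaves implicit and you rightly justify via uniqueness of the $\pi_{\alpha}$-adic expansion) and then absorb a $1$-unit factor. The only cosmetic difference is that the paper expands $x/y = 1/(1-\frac{z}{x})$ as a geometric series with $z = x-y$, whereas you write $x/y = 1+(x-y)y^{-1}$ and use that $y^{-1}$ is again a $1$-unit.
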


\begin{proof}
Let $z = x-y$. Then
\[
    z \equiv 0\ \mod (\pi_{\alpha}^{k})
    \qq \text{and} \qq
    \frac{z}{x} 
    \equiv 0\ \mod (\pi_{\alpha}^{k}).
\]
Therefore
\begin{align*}
    \frac{x}{y}\ 
    &=\ \frac{1}{1-\frac{z}{x}}\
    =\ 1+\frac{z}{x}
    +\frac{z^2}{x^2}+\ldots\\
    &\equiv\ 1+\frac{z}{x}\\
    &\equiv\ 1+\frac{(a_{k}-b_{k})\pi_{\alpha}^{k}}{x}\\
    &\equiv\ 1+(a_{k}-b_{k})\pi_{\alpha}^{k}
    \hspace{6em} \mod (\pi_{\alpha}^{k+1}).
\end{align*}
\end{proof}

\begin{lemma} \label{198}
If $x \in U_1 \subset \Zbb_p(F_0)^\x$ is such that
\[
    x \equiv 1+a_k\pi_{\alpha}^k 
    \q \mod (\pi_{\alpha}^{k+1}) 
\]
with $2 \leq k < p^\alpha$ prime to $p$ and $a_k \not\equiv 0$ modulo
$(\pi_{\alpha})$, then $x \not\in \lan \mu \cap U_1, U_1^p
\ran$.
\end{lemma}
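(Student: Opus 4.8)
The plan is to reduce the statement to a single computation of leading $\pi_{\alpha}$-adic exponents under the $p$-th power map, using the monomial expansion recorded in remark \ref{197}. For $w \in U_1 \smallsetminus \{1\}$ write $\ell(w)$ for the $\pi_{\alpha}$-adic order of $w-1$ in $\Zbb_p(F_0)$; since $a_k \not\equiv 0$ modulo $(\pi_{\alpha})$, the hypothesis gives $\ell(x) = k$. First I would record that $\mu \cap U_1 = \lan \zeta_{p^\alpha} \ran$: the prime-to-$p$ roots of unity inject into the residue field and so meet $U_1$ only in $1$, whereas every $p$-power root of unity lies in $U_1$. As $\zeta_{p^\alpha}^p = (\zeta_{p^\alpha})^p \in U_1^p$, the class of $\zeta_{p^\alpha}$ modulo $U_1^p$ has order dividing $p$, whence
\[
    \lan \mu \cap U_1,\, U_1^p \ran
    = \bigcup_{j=0}^{p-1} \zeta_{p^\alpha}^j\, U_1^p.
\]
It therefore suffices to show that $\zeta_{p^\alpha}^j x \notin U_1^p$ for every $0 \leq j \leq p-1$.

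The key step is to control $\ell(y^p)$ for an arbitrary $y \in U_1 \smallsetminus \{1\}$ with $\ell(y) = i$ and leading coefficient $a$. Writing $u = y-1$ and expanding $y^p - 1 = \sum_{1 \leq j \leq p} \binom{p}{j} u^j$, every summand has $\pi_{\alpha}$-adic order at least $\min\!\big(\phi(p^\alpha)+i,\, pi\big)$, the two extremes being realised by $pu$ and $u^p$; thus the higher-order terms of $y$ affect only higher-order terms of $y^p$, and the leading behaviour of $y^p$ is governed by the monomial case $1 + a\pi_{\alpha}^i$ of remark \ref{197}. A short check of the three ranges then yields
\[
    \ell(y^p) =
    \begin{cases}
        pi & \text{if } i < p^{\alpha-1},\\
        p^\alpha \text{ or } {}> p^\alpha & \text{if } i = p^{\alpha-1},\\
        \phi(p^\alpha)+i & \text{if } i > p^{\alpha-1},
    \end{cases}
\]
the middle case having coefficient $a^p - a$ at $\pi_{\alpha}^{p^\alpha}$, which vanishes exactly when $a \in \Fbb_p$. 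Since $\phi(p^\alpha) + i > p^\alpha$ for $i > p^{\alpha-1}$, while $pi < p^\alpha$ for $i < p^{\alpha-1}$, I obtain the dichotomy I actually need: \emph{if $w \in U_1^p \smallsetminus \{1\}$ and $\ell(w) \leq p^\alpha$, then $p$ divides $\ell(w)$.}

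It then remains to compute the leading exponent of each $\zeta_{p^\alpha}^j x$ and invoke this dichotomy. For $j = 0$ we have $\ell(x) = k$, which is prime to $p$ and satisfies $k \leq p^\alpha$, so $x \notin U_1^p$. For $1 \leq j \leq p-1$, the expansion $\zeta_{p^\alpha}^j = (1+\pi_{\alpha})^j \equiv 1 + j\pi_{\alpha} \bmod (\pi_{\alpha}^2)$ has leading exponent $1$ because $j \not\equiv 0 \bmod p$, and as $k \geq 2$ the product $\zeta_{p^\alpha}^j x$ still has $\ell = 1$, again prime to $p$ and $\leq p^\alpha$; hence $\zeta_{p^\alpha}^j x \notin U_1^p$. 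Thus $x$ lies in none of the cosets $\zeta_{p^\alpha}^j U_1^p$, i.e. $x \notin \lan \mu \cap U_1,\, U_1^p \ran$, as claimed. The only delicate point is the middle case $i = p^{\alpha-1}$, where cancellation at $\pi_{\alpha}^{p^\alpha}$ can occur when $a \in \Fbb_p$; but this merely forces $\ell(y^p) > p^\alpha$, so it never produces a $p$-th power whose leading exponent is $\leq p^\alpha$ and prime to $p$, and the argument goes through unchanged.
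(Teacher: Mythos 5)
Your proof is correct and follows essentially the same route as the paper's: both use $k \geq 2$ to eliminate the $\zeta_{p^\alpha}$-factor (the paper writes $x = (1+\pi_\alpha)^i y^p$ and forces $i=0$; you check the cosets $\zeta_{p^\alpha}^j U_1^p$, which is the same reduction), and both then rule out $x \in U_1^p$ by comparing leading $\pi_\alpha$-adic exponents of $p$-th powers via the expansion of remark \ref{197}. Your write-up is somewhat more complete than the paper's terse final step, in that you explicitly justify the reduction of a general $y$ to the monomial case and handle the boundary case $i = p^{\alpha-1}$ (where the coefficient $a^p - a$ may vanish), but the underlying argument is the same.
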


\begin{proof}
Recall that $\mu \cap U_1$ is generated by $1+\pi_{\alpha}$. One must
check that $x$ cannot be written in the form
\[
    x = (1+\pi_{\alpha})^i y^{p}
\]
with $0 \leq i \leq p-1$ and $y \in U_1$. Since $k \geq 2$ by
assumption, it follows that $i=0$ and it remains to verify that $x$ is
not a $p$-th power. As a direct consequence of remark \ref{197}, we have
\[
    (1+a\pi_{\alpha}^i)^p 
    \equiv 1+a^p\pi_{\alpha}^{ip} \q \mod (\pi_\alpha^{ip+1})   
\]
for any $a \in U_1$ and any $i \leq \frac{k}{p}$. Hence $x \not\in
U_1^p$.
\end{proof}

We are now in position to prove that $-\epsilon_{\alpha}$ (and hence
$\epsilon_{\alpha}$) is non-trivial in the group $U_1/\lan \mu \cap
U_1, U_1^p \ran$.

\begin{theorem} \label{200}
If $p$ is odd and $\alpha \geq 2$, then
\[
    \epsilon_{\alpha} 
    \not\in \lan \mu(\Qbb_p(F_0)), (\Zbb_p(F_0)^\x)^p \ran.
\]
\end{theorem}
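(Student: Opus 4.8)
The plan is to reduce the statement to a leading-term computation in the unit filtration, and then to prove the required nonvanishing by induction on $\alpha$, the base case being the explicit expansion of Lemma \ref{199} and the inductive step being fed by the change-of-rings comparison of Corollary \ref{286}.

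First I would record the reduction already prepared in the text. Since $-1 \in \mu(\Qbb_p(F_0))$ and $-\epsilon_\alpha \in U_1$, the images of $\epsilon_\alpha$ and $-\epsilon_\alpha$ in $U_0/\lan \mu, U_0^p\ran \iso U_1/\lan \mu \cap U_1, U_1^p\ran$ coincide, so it suffices to show that $-\epsilon_\alpha \not\in \lan \mu \cap U_1, U_1^p\ran$. By Lemma \ref{198} this holds as soon as one can exhibit a representative of the class of $-\epsilon_\alpha$ whose $\pi_\alpha$-adic expansion has its first nonconstant term at a position $k$ that is prime to $p$ with $2 \leq k < p^\alpha$. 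Multiplying $-\epsilon_\alpha$ by powers of $\zeta_{p^\alpha} = 1 + \pi_\alpha$ and by $p$-th powers does not change the class, and by Remark \ref{197} a $p$-th power can only affect, hence be used to cancel, coefficients sitting at positions that are multiples of $p$ below $p^\alpha$, at the Artin--Schreier position $p^\alpha$, or above $p^\alpha$. Thus the mechanism is: cancel, one position at a time in increasing order, all the multiple-of-$p$ coefficients of $-\epsilon_\alpha$ lying below the first prime-to-$p$ nonzero coefficient, using Lemma \ref{106} to keep track of the resulting leading term; what remains is a representative whose leading nonconstant term sits at a prime-to-$p$ position, to which Lemma \ref{198} applies.

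The arithmetic heart is therefore to prove that the $\pi_\alpha$-adic expansion of $-\epsilon_\alpha$ has all its coefficients at prime-to-$p$ positions $< k_\alpha$ equal to zero while the coefficient at $k_\alpha = p^\alpha - 2p + 1$ is nonzero; here $k_\alpha \equiv 1 \mod p$ is prime to $p$ and satisfies $2 \leq k_\alpha < p^\alpha$. For $\alpha = 2$ this is exactly Lemma \ref{199} (resting on Lemmas \ref{196} and \ref{213}), which gives $c_{k_2} = -1$ together with the vanishing below $k_2$. For the inductive step from $\alpha$ to $\alpha+1$ I would use Corollary \ref{286}, which identifies the $\pi_{\alpha+1}$-adic coefficients of $-\epsilon_{\alpha+1}$ with those of $i_\alpha(-\epsilon_\alpha)$ in all positions $< p^{\alpha+1}+1$, in particular at every prime-to-$p$ position $\leq k_{\alpha+1}$. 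Splitting $-\epsilon_\alpha = P + T$ into the part $P$ below $k_\alpha$ and the part $T$ at positions $\geq k_\alpha$, Lemma \ref{284} shows that $i_\alpha$ sends $T$ into positions that are all multiples of $p$ modulo $\pi_{\alpha+1}^{p^{\alpha+1}+1}$; hence the prime-to-$p$ coefficients of $-\epsilon_{\alpha+1}$ below $p^{\alpha+1}$ are produced entirely by $i_\alpha(P)$, whose only nonzero input (by the inductive hypothesis) is the multiple-of-$p$ coefficients of $-\epsilon_\alpha$.

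The main obstacle is precisely this last point: one must show that the correction terms produced when $i_\alpha$ is applied to the surviving monomials $\pi_\alpha^{jp}$ — corrections governed by Corollary \ref{283} and estimated through Lemmas \ref{284} and \ref{285} — contribute nothing at prime-to-$p$ positions strictly below $k_{\alpha+1}$ and contribute a nonzero amount at $k_{\alpha+1}$. This is a delicate valuation bookkeeping, because $k_{\alpha+1}$ lies in the tail of the expansion, beyond $\phi(p^{\alpha+1})$ where the polynomial $\tilde{Q}_{\alpha+1}$ no longer reaches, so the critical coefficient is generated by carrying rather than by a single visible monomial. A secondary point requiring care is that the $p$-th power cancellations in the reduction step be organized so that the prime-to-$p$ obstruction at $k_\alpha$ is not inadvertently destroyed before Lemma \ref{198} can be invoked; here one checks, again via Remark \ref{197} and Lemma \ref{106}, that each cancellation alters only strictly higher positions, so that the first prime-to-$p$ coefficient encountered survives as a genuine obstruction.
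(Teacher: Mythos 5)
Your overall architecture --- reduction to $U_1/\lan \mu \cap U_1, U_1^p \ran$ via Lemma \ref{198}, base case Lemma \ref{199}, induction on $\alpha$ through Corollary \ref{286} and Lemma \ref{284} --- matches the paper's, but your induction hypothesis is the wrong statement, and the place where it breaks is exactly the ``main obstacle'' you name and then leave unresolved. You propose to prove that the \emph{raw} $\pi_\alpha$-adic expansion of $-\epsilon_\alpha$ has vanishing coefficients at prime-to-$p$ positions below $k_\alpha$ and a nonzero coefficient at $k_\alpha = p^\alpha - 2p + 1$. That statement is false for $\alpha \geq 3$: for $p=3$ the paper computes (Example \ref{292})
\[
    -\epsilon_{3} \equiv
    1+\pi_{3}^{9}-\pi_{3}^{12}-\pi_{3}^{15}
    -\pi_{3}^{18}-\pi_{3}^{21}+\pi_{3}^{24}+\pi_{3}^{27}
    \qq \mod (\pi_3^{28}),
\]
so every visible coefficient sits at a multiple of $3$ and the raw coefficient at $k_3 = 22$ is zero; the obstruction at $k_3$ only appears after dividing by the $3$-rd power $z_3^3$. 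You concede this yourself when you write that the critical coefficient is ``generated by carrying rather than by a single visible monomial,'' but that concession contradicts the induction statement you set out to prove, and your plan supplies no mechanism for showing the carried coefficient is nonzero. Your splitting $-\epsilon_\alpha = P + T$ does not repair this: for a monomial $\pi_\alpha^{jp}$ of $P$ with $jp < k_\alpha$, Lemma \ref{284} does not apply, and the corrections coming from Corollary \ref{283} (with $p$ itself re-expanded via Proposition \ref{290}) can land at prime-to-$p$ positions below $k_{\alpha+1}$; controlling their sum at each such position, and producing a nonzero coefficient at $k_{\alpha+1}$, is the entire content of the step, and it is missing.

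The paper's resolution is to change what is inducted on: the hypothesis is not about $-\epsilon_\alpha$ itself but about a \emph{normalized} representative, namely that there exists $z_\alpha \in \Zbb_p(\pi_\alpha)^\x$ with
\[
    \frac{-\epsilon_\alpha}{z_\alpha^p}
    \equiv 1 + \sum_{i=k_\alpha}^{p^\alpha} d_i \pi_\alpha^i
    \qq \mod (\pi_\alpha^{p^\alpha+1}),
    \qq d_{k_\alpha} \not\equiv 0 \bmod p.
\]
This form is stable under the induction: every nonconstant term sits at a position $\geq k_\alpha$, so Lemma \ref{284} applies to each of them; $i_\alpha(z_\alpha)^p$ is again a $p$-th power; hence Corollary \ref{286} gives $-\epsilon_{\alpha+1}/(z'_{\alpha+1})^p \equiv 1 + \sum_i d_i \pi_{\alpha+1}^{pi}$ modulo $(\pi_{\alpha+1}^{p^{\alpha+1}+1})$. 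One then divides by the further $p$-th power $\prod_i (1+\tilde{d}_i \pi_{\alpha+1}^i)^p$: by Remark \ref{197} and Proposition \ref{290} each factor cancels the term at position $pi$ at the cost of creating a term at position $\phi(p^{\alpha+1})+i$, and for $i = k_\alpha$ this lands exactly at $k_{\alpha+1} = \phi(p^{\alpha+1}) + k_\alpha = p^{\alpha+1}-2p+1$ with unit coefficient. That single trade-off identity is what makes the carried coefficient provably nonzero, and it is the idea absent from your proposal.
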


\begin{proof}
In order to obtain the result, it suffices to show that
$-\epsilon_{\alpha} \in U_1$ is non-trivial in the quotient $U_1/\lan
\mu \cap U_1, U_1^p \ran$. This is done by induction on $\alpha \geq 2$.

First consider the case $\alpha = 2$, and let $k_2 := (p-2)p+1$.
According to lemma \ref{199}
\[
    -\epsilon_{2}
    \equiv \sum_{i=0}^{p-2} c_{pi} \pi_{2}^{pi}
    \equiv \prod_{i=0}^{p-2} (1+\tilde{c}_{pi} \pi_{2}^i)^p
    \qq \mod (\pi_{2}^{k_2}),
\]
where each $0 \leq \tilde{c}_{j} < p$ is such that $(\tilde{c}_j)^p
\equiv c_j \mod p$, and where the second equivalence is due to the facts
that $k_2-1 < \phi(p^2)$ and $(p)=(\pi_{2}^{\phi(p^2)})$. Letting
\[
    z_2 
    = \prod_{i=1}^{p-2} (1+\tilde{c}_{pi} \pi_2^i)
    \q \in \Zbb_p(\pi_2)^\x,
\]
we get by lemma \ref{106} and \ref{199}
\[
    \frac{-\epsilon_{2}}{z_2^p}
    \equiv 1 + \tilde{c}_{k_2} \pi_{2}^{k_2}
    \equiv 1 - \pi_{2}^{k_2}
    \qq \mod (\pi_{2}^{k_2+1}).
\]
As $k_2 < p^2$, it follows from lemma \ref{198} that $-\epsilon_{2}$
does not belong to $\lan \mu \cap U_1, U_1^p \ran$.

Now let $\alpha \geq 2$ and $k_{\alpha} = p^\alpha-2p+1$. Suppose there
is an element $z_{\alpha}$ in $\Zbb_p(\pi_\alpha)^\x$ such that
\[
    \frac{ -\epsilon_{\alpha} }{z_{\alpha}^p}
    \equiv 1 
    + \sum_{i=k_{\alpha}}^{p^{\alpha}} d_i \pi_{\alpha}^i
    \qq \mod (\pi_{\alpha}^{p^{\alpha}+1}),
\]
with $d_{k_\alpha} \not\equiv 0 \mod p$. By corollary \ref{286} and
lemma \ref{284} we have
\[
    \epsilon_{\alpha+1}
    \equiv i_\alpha(\epsilon_\alpha)
    \equiv -(1 + \sum_{i=k_{\alpha}}^{p^{\alpha}} 
      d_i \pi_{\alpha+1}^{pi})
      i_\alpha(z_{\alpha})^p
    \qq \mod (\pi_{\alpha+1}^{p^{\alpha+1}+1}),
\]
so that
\[
    \frac{ -\epsilon_{\alpha+1} }{(z_{\alpha+1}')^p}
    \equiv 1 
    + \sum_{i=k_{\alpha}}^{p^{\alpha}} d_i \pi_{\alpha+1}^{pi}
    \qq \mod (\pi_{\alpha+1}^{p^{\alpha+1}+1})
\]
for some $z_{\alpha+1}'$ in $\Zbb_p(\pi_{\alpha+1})^\x$. Let
\[
    k_{\alpha+1} := k_{\alpha}+\phi(p^{\alpha+1})
    \qq \text{and} \qq
    \tilde{z}_{\alpha+1}
    := \prod_{i=k_{\alpha}}^{k_{\alpha+1}-1} 
    (1+\tilde{d}_i \pi_{\alpha+1}^i)
    \q \in \Zbb_p(\pi_{\alpha+1})^\x
\]
with each $0 \leq \tilde{d}_i < p$ such that $\tilde{d}_i^p \equiv d_i
\mod p$. Then by proposition \ref{290}, there is an element
\[
    z_{\alpha+1}
    = z_{\alpha+1}' \tilde{z}_{\alpha+1}
    \q \in \Zbb_p(\pi_{\alpha+1})
\]
such that
\[
    \frac{ -\epsilon_{\alpha+1} }{ z_{\alpha+1}^p}
    \equiv 1 
    + \sum_{i=k_{\alpha+1}}^{p^{\alpha+1}} \tilde{d}_i \pi_{\alpha+1}^{i}
    \qq \mod (\pi_{\alpha+1}^{p^{\alpha+1}+1})
\]
with $\tilde{d}_{k_{\alpha+1}} \not\equiv 0 \mod p$. Since
\[
    k_{\alpha+1}
    = k_{\alpha} + \phi(p^{\alpha+1})
    = p^{\alpha+1} -2p +1
    < p^{\alpha+1},
\]
we can apply lemma \ref{198} to obtain that $-\epsilon_{\alpha+1}$ is
non-trivial in $U_1/\lan \mu, U_1^p \ran$.
\end{proof}

\begin{example} \label{292}
Let us have a look at the case $p=3$. A straightforward calculation
yields
\[
    -\epsilon_{2} 
    \equiv 
    1+\pi_{2}^{3}-\pi_{2}^{4}-\pi_{2}^{5}
    -\pi_{2}^{6}-\pi_{2}^{7}+\pi_{2}^{8}+\pi_{2}^{9}
    \qq \mod (\pi_2^{10}).
\]
Here $(p-2)p+1 = 4$, so letting $z_2 = (1+\pi_2)$ we get
\[
    \frac{-\epsilon_{2}}{z_2^3}
    \equiv 1-\pi_{2}^4 \q \mod (\pi_2^5).
\]
As $4<3^2$, we may apply lemma \ref{198} to obtain the result. Then if
$\alpha = 3$ we have
\[
    -\epsilon_{3} \equiv 
    1+\pi_{3}^{9}-\pi_{3}^{12}-\pi_{3}^{15}
    -\pi_{3}^{18}-\pi_{3}^{21}+\pi_{3}^{24}+\pi_{3}^{27}
    \qq \mod (\pi_3^{28}).
\]
Letting
\[
    z_3
    = (1+\pi_3^3)(1-\pi_3^4)(1-\pi_3^5)(1-\pi_3^6),
\]
so that
\begin{align*}
    z_3^3\ 
    &\equiv\
    (1+\pi_{3}^3)^3 (1-\pi_{3}^4)^3 (1-\pi_{3}^5)^3 (1-\pi_{3}^6)^3\\
    &\equiv\ 1+\pi_{3}^9-\pi_{3}^{12}-\pi_{3}^{15}-\pi_{3}^{18}
    -\pi_{3}^{21}+\pi_{3}^{22} \q \mod (\pi_{3}^{23}),
\end{align*}
we find
\[
    \frac{\epsilon_{3}}{z_3^3}
    \equiv 1-\pi_{3}^{22} \q \mod (\pi_{3}^{23}).
\]
As $22<3^3$, we may again apply lemma \ref{198} to obtain the result.
\end{example}

\begin{corollary} \label{201}
If $p$ is odd, $\alpha \geq 2$ and $u \in \Zbb_p^\x$, then
\[
    \frac{\epsilon_{\alpha}}{u} 
    \q \text{is non-trivial in} \q 
    \Zbb_p(F_0)^\x / \lan \mu(\Qbb_p(F_0)), (\Zbb_p(F_0)^\x)^p \ran.
\]
\end{corollary}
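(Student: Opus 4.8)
The plan is to prove Corollary \ref{201} by running the proof of Theorem \ref{200} essentially verbatim, but with $\epsilon_\alpha$ replaced throughout by $\epsilon_\alpha u^{-1}$. First I would dispose of the tame part of $u$: writing $u = \omega u'$ with $\omega \in \mu_{p-1} \subset \mu(\Qbb_p(F_0))$ its Teichm\"uller representative and $u' \in 1+p\Zbb_p$, the classes of $\epsilon_\alpha/u$ and $\epsilon_\alpha/u'$ in $\Zbb_p(F_0)^\x/\lan \mu(\Qbb_p(F_0)), (\Zbb_p(F_0)^\x)^p \ran$ coincide, so I may assume $u \equiv 1 \mod p$. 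Since $v(p) = \phi(p^\alpha)$ in the $\pi_\alpha$-adic valuation (proposition \ref{092}), this gives $u^{-1} \equiv 1 \mod (\pi_\alpha^{\phi(p^\alpha)})$. Then, exactly as at the start of the proof of Theorem \ref{200}, using $-1 \in \mu$ and the isomorphism $U_0/\lan \mu, U_0^p \ran \iso U_1/\lan \mu \cap U_1, U_1^p \ran$, the statement reduces to showing that $-\epsilon_\alpha u^{-1} \in U_1$ is non-trivial in $U_1/\lan \mu \cap U_1, U_1^p \ran$.

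The inductive step is then purely formal. The engine of Theorem \ref{200} is the pair of congruences $i_\alpha(\epsilon_\alpha) \equiv \epsilon_{\alpha+1}$ (corollary \ref{286}) and $i_\alpha(\pi_\alpha^j) \equiv \pi_{\alpha+1}^{jp}$ for $j \geq k_\alpha$ (lemma \ref{284}), both modulo $\pi_{\alpha+1}^{p^{\alpha+1}+1}$. As the ring map $i_\alpha$ is the inclusion $\Qbb_p(\zeta_{p^\alpha}) \ra \Qbb_p(\zeta_{p^{\alpha+1}})$ fixing $\Qbb_p$, it fixes $u$, whence $i_\alpha(\epsilon_\alpha u^{-1}) \equiv \epsilon_{\alpha+1} u^{-1}$ with the same precision. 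Thus if one has a normalization
\[
    \frac{-\epsilon_\alpha u^{-1}}{z_\alpha^p}
    \equiv 1 + \sum_{i=k_\alpha}^{p^\alpha} d_i \pi_\alpha^i
    \q \mod (\pi_\alpha^{p^\alpha+1})
\]
with $d_{k_\alpha} \not\equiv 0 \mod p$ and $k_\alpha = p^\alpha - 2p + 1$ prime to $p$, then applying $i_\alpha$, absorbing the resulting terms $\sum d_i \pi_{\alpha+1}^{ip}$ into $p$-th powers, and invoking proposition \ref{290} produces the same form at level $\alpha+1$ with leading index $k_{\alpha+1} = k_\alpha + \phi(p^{\alpha+1}) = p^{\alpha+1} - 2p + 1 < p^{\alpha+1}$ and nonzero leading coefficient. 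This is word for word the induction in Theorem \ref{200}; the factor $u^{-1}$ is simply carried along, invisible to the transformation. Lemma \ref{198} then gives non-triviality at each stage.

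The one place where a genuine observation is needed — and which I expect to be the main obstacle — is the base case $\alpha = 2$, where the leading coefficient must be produced from scratch rather than propagated. Here $k_2 = (p-1)^2$, and the key inequality is $k_2 < \phi(p^2) = p(p-1)$. The unit $z_2 = \prod_{i=1}^{p-2}(1 + \tilde c_{pi}\pi_2^i)$ of Theorem \ref{200} is built solely from the coefficients of $-\epsilon_2$ at the indices $p, 2p, \dots, (p-2)p$, all strictly below $k_2$ and hence below $\phi(p^2)$. Since $u^{-1} \equiv 1 \mod \pi_2^{\phi(p^2)}$, multiplication by $u^{-1}$ leaves every coefficient of index $< \phi(p^2)$ unchanged, so the same $z_2$ works and lemma \ref{199} still yields $\frac{-\epsilon_2 u^{-1}}{z_2^p} \equiv 1 - \pi_2^{k_2} \mod (\pi_2^{k_2+1})$. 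The leading deviation is thus untouched by $u$, and lemma \ref{198} applies. The essential point is that $u$, being a $p$-adic unit, perturbs the $\pi_\alpha$-expansion only from level $\phi(p^\alpha)$ onward, which for $\alpha = 2$ lies strictly above the detection index $k_2$; for $\alpha \geq 3$ one instead has $k_\alpha > \phi(p^\alpha)$, but there non-triviality is obtained by propagating the base case through $i_\alpha$, so no such comparison is required.
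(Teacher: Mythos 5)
Your proposal is correct, and it draws on the same toolkit as the paper (reduction to $U_1$, lemmas \ref{199}, \ref{284}, \ref{286}, \ref{290}, and the detection lemma \ref{198}), but it organizes the case $\alpha \geq 3$ genuinely differently. The paper, like you, first passes to the wild part $u_1$ of $u$ and settles $\alpha = 2$ by the observation that $u_1^{-1} \equiv 1 \bmod (p) = (\pi_2^{\phi(p^2)})$ while the detection index $k_2 = (p-2)p+1$ lies below $\phi(p^2)$. For $\alpha \geq 3$, however, the paper does \emph{not} re-run the induction: it takes the normalized form $\frac{-\epsilon_\alpha}{z_\alpha^p} \equiv 1 + (-1)^\alpha \pi_\alpha^{k_\alpha}$ produced by the proof of theorem \ref{200}, multiplies by $u_1^{-1}$, and is then forced to confront the new term $-v_1\pi_\alpha^{\phi(p^\alpha)}$ sitting \emph{below} $k_\alpha$; it eliminates it with an explicitly constructed product $y_\alpha$ of $p$-th powers whose cascade of correction terms is controlled by the inequality $k_\alpha < \sum_{i=2}^{\alpha}\phi(p^i)$. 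Your route — carrying $u^{-1}$ inside the induction from the base case onward, using that $i_\alpha$ fixes $\Zbb_p$ so that corollary \ref{286} immediately gives $i_\alpha(\epsilon_\alpha u^{-1}) \equiv \epsilon_{\alpha+1}u^{-1}$ to the same precision — never lets that low-index term appear, so the explicit $y_\alpha$ and its accompanying inequality become unnecessary. The price is only that theorem \ref{200} cannot be quoted as a black box: one must check, as you do, that its inductive statement survives the replacement of $\epsilon_\alpha$ by $\epsilon_\alpha u^{-1}$, which is indeed immediate because the absorption step is a formal manipulation of the right-hand side $1 + \sum_{i=k_\alpha}^{p^\alpha} d_i \pi_{\alpha+1}^{ip}$, blind to which unit stands on the left, and because the base case satisfies the full inductive form (the $u$-perturbation enters at index $\phi(p^2)$, which lies in the range $[k_2, p^2]$). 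Your closing dichotomy — $k_2 < \phi(p^2)$ but $k_\alpha > \phi(p^\alpha)$ for $\alpha \geq 3$ — is precisely the phenomenon that makes the paper's $\alpha \geq 3$ case laborious, and your proof is the cleaner way around it.
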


\begin{proof}
Let $u_1$ denote the projection of $u$ onto $U_1(\Zbb_p^\x) \subset
\Zbb_p^\x$. Then
\[
    u_1^{-1} 
    = 1 + \sum_{i\geq 1} v_i p^i\
    \equiv 1+v_1p
    \q \mod (\pi_{\alpha}^{p^\alpha+1}),
\]
for some $0 \leq v_i < p$. Clearly, the projection of $\epsilon_{\alpha}
u^{-1}$ in the group $U_1$ via the canonical decomposition $\Zbb_p^\x =
\mu_{p'} \x U_1$ is equal to $-\epsilon_{\alpha}u_1^{-1}$, and it is
enough to check that $-\epsilon_{\alpha}u_1^{-1}$ does not belong to
$\lan \mu \cap U_1, U_1^p \ran$.

If $\alpha=2$, then
\begin{align*}
    -\epsilon_{2}u_{1}^{-1}\
    &\equiv\ -\epsilon_{2} \hspace{6em}
    \mod\ (p) = (\pi_{\alpha}^{\phi(p^2)})\\
    &\equiv\ 1-\pi_{\alpha}^{(p-2)p+1} \qq 
    \mod\ \lan \mu \cap U_1, U_1^p, (\pi_{\alpha}^{\phi(p^2)}) \ran,
\end{align*}
and the result follows from theorem \ref{200}.

If $\alpha \geq 3$, we know from the proof of theorem \ref{200} that for
a suitable $z_\alpha \in \Zbb_p(\pi_\alpha)^\x$ we have
\[
    \frac{-\epsilon_\alpha}{z_\alpha^p}
    \equiv 1+(-1)^\alpha \pi_\alpha^{k_\alpha}
    \q \mod (\pi_\alpha^{k_\alpha+1}),
\]
for $k_\alpha = p^\alpha-2p+1$, so that by proposition \ref{290}
\begin{align*}
    \frac{-\epsilon_\alpha}{u_1 z_\alpha^p}\
    &\equiv\ (1+(-1)^\alpha \pi_\alpha^{k_\alpha})(1+v_1 p)\\
    &\equiv\ 1-v_1 \pi_\alpha^{\phi(p^\alpha)}
    +(-1)^\alpha \pi_\alpha^{k_\alpha}
    \qq \mod (\pi_\alpha^{k_\alpha+1}).
\end{align*}
As
\[
    k_\alpha
    = (p-2)p+1+\sum_{i=3}^\alpha \phi(p^i)
    < \sum_{i=2}^\alpha \phi(p^i),
\]
we obtain
\[
    \frac{-\epsilon_\alpha}{u_1 z_\alpha^p y_\alpha^p}
    \equiv 1+(-1)^\alpha \pi_\alpha^{k_\alpha}
    \q \mod (\pi_\alpha^{k_\alpha+1}),
\]
where after successive multiplications eliminating all terms in
$\pi_\alpha^k$ for $1 < k < k_\alpha$, we have have used the element
\[
    y_\alpha
    = (1-\tilde{v}_1 \pi_\alpha^{\phi(p^{\alpha-1})})
    \prod_{k=2}^{\alpha-1} 
    (1+(-1)^{k-1}\pi_\alpha^{[\sum_{i=1}^k \phi(p^{\alpha-i})]})
\]
with $\tilde{v}_1 \in \Zbb$ such that $(\tilde{v}_1)^p \equiv v_1 \mod
p$. It follows that $-\epsilon_\alpha u_1^{-1} \not\in \lan \mu \cap
U_1, U_1^p \ran$.
\end{proof}

\begin{corollary} \label{202}
If $p$ is odd and $F_0 = \mu(\Qbb_p(F_0))$, then
$\tilde{\Fcal}_u(\Qbb_p(F_0), \tilde{F_0}, r_1)$ is non-empty if and
only if
\[
    r_1 \q \text{divides} \q
    \begin{cases}
        1 & \text{if } \alpha = 0,\\
        p-1 & \text{if } \alpha \geq 1.
    \end{cases}
\]
\end{corollary}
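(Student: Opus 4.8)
The plan is to read off the criterion of theorem \ref{098}, which says that $\tilde{\Fcal}_u(\Qbb_p(F_0), \tilde{F_0}, r_1)$ is non-empty precisely when $\epsilon_\alpha/u$ becomes trivial in $\Zbb_p(F_0)^\x / \lan F_0, (\Zbb_p(F_0)^\x)^{r_1} \ran$, and then to control this condition according to the $p$-divisibility of $r_1$. Recall also from section \ref{288} that $r_1$ is in any case constrained to divide the ramification index $\phi(p^\alpha)$ of $\Qbb_p(F_0)$ over $\Qbb_p$.

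First I would settle the case $\alpha = 0$. Here $F_0 = \mu(\Qbb_p(F_0))$ has trivial $p$-part, so $\Qbb_p(F_0)$ is unramified over $\Qbb_p$ and $r_1$ must divide $\phi(p^0) = 1$; thus $r_1 = 1$ is the only admissible value, which is exactly the asserted condition.

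For $\alpha \geq 1$ the ``if'' direction is immediate: if $r_1 \mid (p-1)$ then $r_1$ is prime to $p$, and corollary \ref{095}.1 gives that the set is non-empty. For the converse, suppose the set is non-empty; the task is to prove $r_1 \mid (p-1)$, and the essential point is to exclude $p \mid r_1$. When $\alpha = 1$ this is automatic, since $r_1 \mid \phi(p) = p-1$ is already prime to $p$. When $\alpha \geq 2$, assume for contradiction that $p \mid r_1$: then $(\Zbb_p(F_0)^\x)^{r_1} \subset (\Zbb_p(F_0)^\x)^p$, and since $F_0 = \mu(\Qbb_p(F_0))$ we obtain an inclusion of subgroups $\lan F_0, (\Zbb_p(F_0)^\x)^{r_1} \ran \subset \lan \mu(\Qbb_p(F_0)), (\Zbb_p(F_0)^\x)^p \ran$, hence a canonical surjection of the corresponding quotient groups. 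By theorem \ref{098} the class of $\epsilon_\alpha/u$ is trivial in the source, so its image in the target $\Zbb_p(F_0)^\x / \lan \mu(\Qbb_p(F_0)), (\Zbb_p(F_0)^\x)^p \ran$ is trivial as well, contradicting corollary \ref{201}. Therefore $p \nmid r_1$ in every case.

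Once $r_1$ is known to be prime to $p$, corollary \ref{095}.1 applies and forces $r_1 \mid (p-1)$, closing the equivalence. The one genuinely hard step is the exclusion of $p \mid r_1$ for $\alpha \geq 2$, and it is already carried by corollary \ref{201} (itself resting on theorem \ref{200} and the $\pi_\alpha$-adic expansions of section \ref{228}); the remainder of the argument is merely a comparison of quotient groups via the inclusion of the two subgroups of $\Zbb_p(F_0)^\x$. In effect, this corollary is a clean repackaging of theorem \ref{098}, corollary \ref{095} and corollary \ref{201}.
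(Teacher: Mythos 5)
Your proof is correct and follows essentially the same route as the paper's: the case $\alpha=0$ via the trivial ramification index, the ``if'' direction from corollary \ref{095}, and the exclusion of $p \mid r_1$ for $\alpha \geq 2$ by combining theorem \ref{098} with corollary \ref{201}. The only difference is that you spell out the quotient-group comparison (the inclusion $\lan F_0, (\Zbb_p(F_0)^\x)^{r_1} \ran \subset \lan \mu(\Qbb_p(F_0)), (\Zbb_p(F_0)^\x)^{p} \ran$ when $p \mid r_1$) that the paper compresses into ``a direct consequence.''
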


\begin{proof}
The result for $\alpha=0$ is obvious; so let $\alpha \geq 1$. Since $F_0
= \mu(\Qbb_p(F_0))$, corollary \ref{095} applies if $r_1$ divides $p-1$.
Because $r_1$ must divide the ramification index $\phi(p^\alpha) =
(p-1)p^{\alpha-1}$ of $\Qbb_p(F_0)$ over $\Qbb_p$, it remains to show
that $\tilde{\Fcal}_u(\Qbb_p(F_0), \tilde{F_0}, r_1)$ is empty whenever
$p$ divides $r_1$ with $\alpha \geq 2$. This however is a direct
consequence of theorem \ref{098} and corollary \ref{201}.
\end{proof}

\section{The $p$-part of $r_1$ for $p = 2$} 
\label{229}

We now investigate the case where $p=2$. Recall that $\alpha \geq 1$ is
defined to satisfy $|F_0 \cap S_n| = 2^\alpha$. We know from example
\ref{097} that $\epsilon_{\alpha} = 2^{-1}\pi_\alpha^{\phi(2^\alpha)}$
always belongs to $(\Zbb_2(F_0)^\x)^2$ modulo the subgroup generated by
$-\zeta_4$ if $\alpha \geq 2$. We will hence look for $4$-th powers when
$\alpha \geq 3$.

Let $\mu := \mu(\Qbb_2(F_0))$ denote the group of roots of unity in
$\Qbb_2(F_0)$ and fix $\zeta_{2^\alpha}$ a primitive $2^\alpha$-th root
of unity in $\mu$. As in the previous section we consider the decreasing
filtration
\[
    \Zbb_2(F_0)^\x
    = U_{0} 
    \supset U_{1}
    \supset U_{2}
    \supset \ldots
\]
given by $U_0 = \Zbb_2(F_0)^\x$ and
\[
    U_{i}
    = \{ x \in U_0\ |\ x \equiv 1\ \mod (\pi_{\alpha}^i) \}
    \qq \text{for } i \geq 1, 
\]
where $U_0/U_1 = \mu_{2'}(\Qbb_2(F_0))$, and where $U_i/U_{i+1}$ is
isomorphic to the residue field of $\Qbb_2(F_0)$ for each $i \geq 1$.
Recall that $(\pi_{\alpha}^{2^{\alpha-1}}) = (2)$. Define 
\[
    Q_{\alpha}(X)
    := \frac{(X+1)^{2^\alpha}-1}{(X+1)^{2^{\alpha-1}}-1}
    = (X+1)^{2^{\alpha-1}}+1
    \q \in \Zbb[X]
\]
to be the minimal polynomial of $\pi_{\alpha}$ over $\Qbb_2$.

\begin{lemma} \label{267}
If $p=2$ and $\alpha \geq 3$, then
\[
    Q_{\alpha}(\pi_{\alpha})\
    \begin{cases}
        =\ 2+4\pi_{\alpha}+6\pi_{\alpha}^2+4\pi_{\alpha}^3+\pi_{\alpha}^{4}
        & \text{if } \alpha = 3,\\[1ex]
        \equiv\ 2+4\pi_{\alpha}^{2^{\alpha-3}}+6\pi_{\alpha}^{2^{\alpha-2}}
        +4\pi_{\alpha}^{3 \cdot 2^{\alpha-3}}+\pi_{\alpha}^{2^{\alpha-1}}
        \q \mod (8)
        & \text{if } \alpha \geq 4.
    \end{cases}
\]
\end{lemma}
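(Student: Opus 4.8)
The plan is to reduce everything to an elementary congruence for the integer polynomial $(1+X)^{2^{\alpha-1}}$ in $\Zbb[X]$ and then substitute $X=\pi_\alpha$. The starting point is the closed form recorded just before the statement, namely $Q_\alpha(X)=(X+1)^{2^{\alpha-1}}+1$, so that $Q_\alpha(\pi_\alpha)=(1+\pi_\alpha)^{2^{\alpha-1}}+1$. Since $(1+X)^{2^{\alpha-1}}$ has integer coefficients, any congruence modulo $8$ proved in $\Zbb[X]$ descends to a congruence in $\Zbb_2(F_0)$ modulo the ideal $(8)$: if $f-g=8h$ with $h\in\Zbb[X]$, then $f(\pi_\alpha)-g(\pi_\alpha)=8\,h(\pi_\alpha)\in(8)$. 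Thus it suffices to expand the single integer polynomial $(1+X)^{2^{\alpha-1}}$ modulo $8$, uniformly in $\alpha$.

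First I would dispatch the case $\alpha=3$ by a direct exact computation: $(X+1)^4+1=X^4+4X^3+6X^2+4X+2$, which after setting $X=\pi_3$ gives the asserted identity verbatim (here the equality is exact, not merely modulo $8$). For $\alpha\geq 4$ I would write $2^{\alpha-1}=4\cdot 2^{\alpha-3}$ and factor $(1+X)^{2^{\alpha-1}}=\bigl((1+X)^{2^{\alpha-3}}\bigr)^4$. The Frobenius congruence modulo $2$ (the freshman's dream in characteristic $2$) gives $(1+X)^{2^{\alpha-3}}\equiv 1+X^{2^{\alpha-3}}\mod 2$. The one genuinely load-bearing step is then the elementary lifting fact: for $u,v$ in any commutative ring, $u\equiv v\mod 2$ forces $u^2\equiv v^2\mod 4$ (write $u=v+2w$, so $u^2-v^2=4w(v+w)$), and hence $u^4\equiv v^4\mod 8$ (since $a\equiv b\mod 4$ gives $a^2\equiv b^2\mod 8$). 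Applying this with $u=(1+X)^{2^{\alpha-3}}$ and $v=1+X^{2^{\alpha-3}}$ yields $(1+X)^{2^{\alpha-1}}\equiv (1+X^{2^{\alpha-3}})^4\mod 8$.

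Finally I would expand the right-hand side by the binomial theorem, $(1+Y)^4=1+4Y+6Y^2+4Y^3+Y^4$ with $Y=X^{2^{\alpha-3}}$, so that $Y^2=X^{2^{\alpha-2}}$, $Y^3=X^{3\cdot 2^{\alpha-3}}$ and $Y^4=X^{2^{\alpha-1}}$; adding the extra constant $1$ coming from $Q_\alpha=(1+X)^{2^{\alpha-1}}+1$ produces the constant term $2$ and reproduces exactly the claimed expression after setting $X=\pi_\alpha$. I do not expect a real obstacle: the only mathematical content is the lifting fact $u\equiv v\mod 2\Rightarrow u^4\equiv v^4\mod 8$, together with careful bookkeeping of the four exponents $2^{\alpha-3},\,2^{\alpha-2},\,3\cdot 2^{\alpha-3},\,2^{\alpha-1}$. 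The mild care required is to keep the whole computation at the level of polynomials in $\Zbb[X]$ (where the congruences hold uniformly in $\alpha$) before transporting it to $\Zbb_2(F_0)$ via $X\mapsto\pi_\alpha$, and to remember that for $\alpha=3$ the statement is asserted as an \emph{exact} equality rather than a congruence.
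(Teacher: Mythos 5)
Your proof is correct, and it reaches the paper's conclusion by a cleaner mechanism than the paper's own. Both proofs share the same skeleton --- dispatch $\alpha=3$ by exact expansion of $(X+1)^4+1$, then for $\alpha\geq 4$ prove the congruence in $\Zbb[X]$ and substitute $X=\pi_\alpha$, the pivot being $(1+X)^{2^{\alpha-1}}\equiv(1+X^{2^{\alpha-3}})^4 \mod 8$ --- but they reach that pivot differently. The paper inducts on $4\leq h\leq\alpha$ using the identity
\[
    (1+X)^{2^{h-1}}
    = (1+X^2+2X)^{2^{h-2}}
    \equiv (1+X^2)^{2^{h-2}} + 2^{h-1}X(1+X^2)^{2^{h-2}-1}
    \q \mod (2^h),
\]
replacing $X$ by $X^2$ at each step and discarding the correction term, which is invisible modulo $8$ once $h\geq 4$. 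Your route replaces this induction by the Frobenius congruence $(1+X)^{2^{\alpha-3}}\equiv 1+X^{2^{\alpha-3}} \mod 2$ followed by the squaring-lift fact $u\equiv v \mod 2 \Rightarrow u^2\equiv v^2 \mod 4 \Rightarrow u^4\equiv v^4 \mod 8$, which is shorter and needs no bookkeeping of error terms. There is in fact a small payoff to your version: the paper's displayed identity is not quite correct at the stated precision, since the $k=2$ binomial term $\binom{2^{h-2}}{2}(2X)^2(1+X^2)^{2^{h-2}-2} = 2^{h-1}(2^{h-2}-1)X^2(1+X^2)^{2^{h-2}-2}$ also has $2$-adic valuation exactly $h-1$ (for $h=4$ one checks $(1+X)^8-(1+X^2)^4-8X(1+X^2)^3 \equiv 8X^2+8X^6 \not\equiv 0 \mod 16$); this does not damage the paper's proof of the lemma, because modulo $8$ every term of valuation $\geq h-1\geq 3$ dies anyway, but your lifting lemma sidesteps the issue entirely. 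Your closing remark, that the polynomial congruence descends to $\Zbb_2(F_0)$ under $X\mapsto\pi_\alpha$, is the correct justification of a step the paper leaves implicit.
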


\begin{proof}
The result for $\alpha = 3$ is clear. When $\alpha \geq 4$, the result
follows by induction on $4 \leq h \leq \alpha$ using the identity
\begin{align*}
    (1 + X)^{2^{h-1}}
    &=\ (1+X^2+2X)^{2^{h-2}}\\
    &\equiv\ (1+X^2)^{2^{h-2}} + 2^{h-1}X(1+X^2)^{2^{h-2}-1}
    \qq \mod (2^h).
\end{align*}
\end{proof}

\begin{proposition} \label{293}
If $p=2$ and $\alpha \geq 2$, then
\[
    2
    \equiv \pi_\alpha^{\phi(2^\alpha)} 
    + \pi_\alpha^{\phi(2^\alpha)+2^{\alpha-2}}
    \q \mod (\pi_\alpha^{2^\alpha}).
\]
\end{proposition}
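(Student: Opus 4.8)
The plan is to read off the required congruence directly from the minimal polynomial relation $(\pi_\alpha+1)^{2^{\alpha-1}} = -1$, which holds because $\zeta_{2^\alpha} = \pi_\alpha + 1$ satisfies $\zeta_{2^\alpha}^{2^{\alpha-1}} = -1$. Writing $m := \phi(2^\alpha) = 2^{\alpha-1}$ and expanding by the binomial theorem gives $\sum_{j=1}^{m}\binom{m}{j}\pi_\alpha^j = -2$, so that
\[
    2 = -\sum_{j=1}^{m}\binom{m}{j}\pi_\alpha^j
\]
is an exact identity in $\Zbb_2[\pi_\alpha]$. I will then extract the terms that survive modulo $(\pi_\alpha^{2m}) = (\pi_\alpha^{2^\alpha})$. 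Throughout I work with the $\pi_\alpha$-adic valuation normalized by $w(\pi_\alpha)=1$; recall from proposition \ref{092} that $w(2) = m$.

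The key arithmetic input is the valuation of the binomial coefficients: for $1 \leq j \leq m$ one has $v_2\binom{m}{j} = (\alpha-1) - v_2(j)$, where $v_2$ is the ordinary $2$-adic valuation of integers. (This follows from $\binom{2^n}{j} = \frac{2^n}{j}\binom{2^n-1}{j-1}$ together with the fact that every $\binom{2^n-1}{i}$ is odd, by Lucas' theorem.) Consequently the $j$-th term above has $w$-valuation $m\bigl[(\alpha-1)-v_2(j)\bigr] + j$. I will check that this is $< 2m$ only for $j = m$ (where it equals $m$) and for $j = m/2 = 2^{\alpha-2}$ (where it equals $3m/2$); for every $j$ with $v_2(j) \leq \alpha-3$ the valuation is already $\geq 2m + j > 2m$. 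Hence only these two indices contribute, and since $\binom{m}{m/2}$ is exactly twice an odd integer its term reduces to $2\pi_\alpha^{m/2}$ modulo $(\pi_\alpha^{2m})$, yielding
\[
    2 \equiv -\pi_\alpha^{m} - 2\pi_\alpha^{m/2} \q \mod (\pi_\alpha^{2m}).
\]

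Finally I bootstrap this relation into the desired form. Multiplying it by $\pi_\alpha^{m/2}$ and discarding the term $2\pi_\alpha^{m}$ (of valuation $2m$) gives $2\pi_\alpha^{m/2} \equiv -\pi_\alpha^{3m/2} \mod (\pi_\alpha^{2m})$, so that $-2\pi_\alpha^{m/2} \equiv \pi_\alpha^{3m/2}$. Substituting back, and using that $-\pi_\alpha^{m} \equiv \pi_\alpha^{m}$ because their difference $2\pi_\alpha^{m}$ has valuation $2m$, produces
\[
    2 \equiv \pi_\alpha^{m} + \pi_\alpha^{3m/2} \q \mod (\pi_\alpha^{2m}),
\]
which is exactly the claim once one notes $m = \phi(2^\alpha)$ and $3m/2 = \phi(2^\alpha) + 2^{\alpha-2}$. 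I expect the only delicate point to be the bookkeeping of valuations — in particular confirming that all terms with $v_2(j) \leq \alpha-3$ genuinely vanish modulo $(\pi_\alpha^{2m})$ and that the odd part of $\binom{m}{m/2}$ may be absorbed — rather than any conceptual difficulty; the binomial identity makes the argument uniform in $\alpha \geq 2$ and sidesteps the finer computation of lemma \ref{267}. Alternatively, for $\alpha \geq 3$ one could instead feed the expansion of lemma \ref{267} into $Q_\alpha(\pi_\alpha)=0$ and reduce modulo $(\pi_\alpha^{2^\alpha})$, arriving at the same two-term congruence.
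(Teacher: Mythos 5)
Your proof is correct, and it reaches the paper's key intermediate congruence by a somewhat different route before finishing with essentially the same trick. Both arguments start from the same exact identity $Q_\alpha(\pi_\alpha) = (\pi_\alpha+1)^{2^{\alpha-1}}+1 = 0$, and both end with the same self-substitution bootstrap (substituting the relation for $2$ back into itself to upgrade $-\pi_\alpha^{2^{\alpha-1}} - 2\pi_\alpha^{2^{\alpha-2}}$ into $\pi_\alpha^{2^{\alpha-1}} + \pi_\alpha^{2^{\alpha-1}+2^{\alpha-2}}$ modulo $(\pi_\alpha^{2^\alpha})$). The difference lies in how the three-term congruence $2 \equiv -\pi_\alpha^{2^{\alpha-1}} - 2\pi_\alpha^{2^{\alpha-2}} \bmod (\pi_\alpha^{2^\alpha})$ is obtained: the paper deduces it from lemma \ref{267}, an inductive mod-$8$ expansion of $Q_\alpha(\pi_\alpha)$ stated only for $\alpha \geq 3$ (so the case $\alpha = 2$ of the proposition is implicitly covered by the exact identity $Q_2(\pi_2) = \pi_2^2 + 2\pi_2 + 2$), whereas you expand the full binomial sum and invoke $v_2\binom{2^{\alpha-1}}{j} = (\alpha-1)-v_2(j)$ (Kummer/Lucas, with your derivation via $\binom{2^n}{j} = \tfrac{2^n}{j}\binom{2^n-1}{j-1}$ being valid) to see at once that only $j = 2^{\alpha-2}$ and $j = 2^{\alpha-1}$ survive modulo $(\pi_\alpha^{2^\alpha})$. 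Your valuation bookkeeping checks out: terms with $v_2(j) \leq \alpha-3$ have $w$-valuation at least $2^\alpha + j$, the odd cofactor $c$ of $\binom{2^{\alpha-1}}{2^{\alpha-2}} = 2c$ may indeed be replaced by $1$ since $2(c-1)\pi_\alpha^{2^{\alpha-2}}$ has valuation at least $5\cdot 2^{\alpha-2} > 2^\alpha$, and the discarded terms $2\pi_\alpha^{2^{\alpha-1}}$ in the bootstrap have valuation exactly $2^\alpha$. What your route buys is a uniform, induction-free treatment of all $\alpha \geq 2$ with no case split; what the paper's route buys is the finer mod-$8$ expansion of lemma \ref{267}, which is reused elsewhere (notably in proposition \ref{111}) and which your streamlined argument would not replace.
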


\begin{proof}
By lemma \ref{267} we have
\[
    Q_\alpha(\pi_\alpha)
    \equiv 2 + 2\pi_\alpha^{2^{\alpha-2}} 
    + \pi_\alpha^{2^{\alpha-1}}
    \q \mod (4).
\]
Hence
\begin{align*}
    2\
    &\equiv\ -\pi_\alpha^{2^{\alpha-1}}-2\pi_\alpha^{2^{\alpha-2}}\\
    &\equiv\ \pi_\alpha^{2^{\alpha-1}}
    -(-\pi_\alpha^{2^{\alpha-1}}-2\pi_\alpha^{2^{\alpha-2}}) 
    \pi_\alpha^{2^{\alpha-2}}
    -(-\pi_\alpha^{2^{\alpha-1}}-2\pi_\alpha^{2^{\alpha-2}}) 
    \pi_\alpha^{2^{\alpha-1}}\\ 
    &\equiv\ \pi_\alpha^{2^{\alpha-1}}
    +\pi_\alpha^{2^{\alpha-1}+2^{\alpha-2}}
    +2\pi_\alpha^{2^{\alpha-1}}
    +\pi_\alpha^{2^\alpha}
    +2\pi_\alpha^{2^{\alpha-1}+2^{\alpha-2}}\\
    &\equiv\ \pi_\alpha^{2^{\alpha-1}}
    +\pi_\alpha^{2^{\alpha-1}+2^{\alpha-2}}
    -\pi_\alpha^{2^{\alpha}}
    +\pi_\alpha^{2^\alpha}\\
    &\equiv\ \pi_\alpha^{2^{\alpha-1}}
    +\pi_\alpha^{2^{\alpha-1}+2^{\alpha-2}}
    & \mod (\pi_\alpha^{2^\alpha}).
\end{align*}
\end{proof}

In the cases where $\Qbb_2(F_0)$ is completely ramified over $\Qbb_2$,
for example if $\mu_{2'}(\Qbb_2(F_0))$ is trivial, we have $U_0 = U_1$.
From the fact that $\epsilon_{\alpha} \in \Zbb_2(\pi_{\alpha}) \subset
\Zbb_2(F_0)^\x$ in general, it follows that $\epsilon_{\alpha}$ always
belongs to $U_1$. 

\begin{proposition} \label{111}
If $p=2$ and $\alpha \geq 3$, then
\[
    \epsilon_{\alpha}
    \equiv 1+\pi_{\alpha}^{2 \cdot 2^{\alpha-3}}
    +\pi_{\alpha}^{4 \cdot 2^{\alpha-3}}
    +\pi_{\alpha}^{5 \cdot 2^{\alpha-3}}
    +\pi_{\alpha}^{6 \cdot 2^{\alpha-3}}
    \q \mod (\pi_{\alpha}^{2^\alpha}).
\]
\end{proposition}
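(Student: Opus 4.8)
The plan is to read off $\epsilon_\alpha$ directly from the minimal polynomial relation for $\pi_\alpha$ supplied by lemma \ref{267}, and then rewrite the resulting expression in its normalized $\pi_\alpha$-adic form by means of proposition \ref{293}. Set $t := 2^{\alpha-3} \geq 1$, so that $\phi(2^\alpha) = 2^{\alpha-1} = 4t$, the four target exponents $2\cdot 2^{\alpha-3},\, 4\cdot 2^{\alpha-3},\, 5\cdot 2^{\alpha-3},\, 6\cdot 2^{\alpha-3}$ become $2t,\,4t,\,5t,\,6t$, and the modulus $(\pi_\alpha^{2^\alpha})$ becomes $(\pi_\alpha^{8t})$. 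Since $Q_\alpha$ is the minimal polynomial of $\pi_\alpha$ we have $Q_\alpha(\pi_\alpha)=0$, so lemma \ref{267} gives the relation $2 + 4\pi_\alpha^{t} + 6\pi_\alpha^{2t} + 4\pi_\alpha^{3t} + \pi_\alpha^{4t} \equiv 0$ modulo $(8)$, an identity which is exact when $\alpha = 3$ and holds only modulo $8$ when $\alpha \geq 4$; in either case this is exactly the precision I will need.

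First I would isolate $\pi_\alpha^{4t} = \pi_\alpha^{\phi(2^\alpha)} = 2\epsilon_\alpha$, using the definition of $\epsilon_\alpha$. The relation above then reads $2\epsilon_\alpha \equiv -2 - 4\pi_\alpha^{t} - 6\pi_\alpha^{2t} - 4\pi_\alpha^{3t}$ modulo $(8)$, and the right-hand side is visibly twice the element $-1 - 2\pi_\alpha^{t} - 3\pi_\alpha^{2t} - 2\pi_\alpha^{3t}$. In $\Zbb_2(F_0)$ one has $(2) = (\pi_\alpha^{4t})$ and hence $(8) = (\pi_\alpha^{12t}) = (2\pi_\alpha^{8t})$, so cancelling the non-zero-divisor $2$ lowers the modulus from $(\pi_\alpha^{12t})$ to $(\pi_\alpha^{8t})$ and yields $\epsilon_\alpha \equiv -1 - 2\pi_\alpha^{t} - 3\pi_\alpha^{2t} - 2\pi_\alpha^{3t}$ modulo $(\pi_\alpha^{8t})$. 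I would spell out this valuation bookkeeping, as it is precisely what converts a statement modulo $8$ into the asserted statement modulo $\pi_\alpha^{2^\alpha}$.

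It then remains to put this into the standard $\pi_\alpha$-adic form with coefficients reduced to $\{0,1\}$. Here proposition \ref{293} enters: in the present notation it states $2 \equiv \pi_\alpha^{4t} + \pi_\alpha^{6t}$ modulo $(\pi_\alpha^{8t})$, whence $-x \equiv x - (\pi_\alpha^{4t}+\pi_\alpha^{6t})x$ modulo $(\pi_\alpha^{8t})$ for any $x$. Applying this substitution to $-1$, to $-2\pi_\alpha^{t}$, to $-3\pi_\alpha^{2t}$ and to $-2\pi_\alpha^{3t}$ in turn clears every negative coefficient and every factor of $2$ by carrying it into higher powers of $\pi_\alpha$, and repeatedly absorbing the carried terms of exponent at least $8t$ collapses the expression to $1 + \pi_\alpha^{2t} + \pi_\alpha^{4t} + \pi_\alpha^{5t} + \pi_\alpha^{6t}$ modulo $(\pi_\alpha^{8t})$, which is the claim. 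I expect the only genuine obstacle to be the bookkeeping in this last step: one must carry carefully and track, at each substitution, which carried terms (for instance $2\pi_\alpha^{6t}$ and $2\pi_\alpha^{7t}$, which are again killed by proposition \ref{293}) already vanish modulo $(\pi_\alpha^{8t})$, so that the even coefficients and minus signs truly disappear instead of proliferating.
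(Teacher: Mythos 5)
Your proof is correct and takes essentially the same route as the paper's: both use the vanishing of the minimal polynomial $Q_\alpha(\pi_\alpha)$ together with lemma \ref{267} to get $2\epsilon_\alpha = \pi_\alpha^{4t} \equiv -2-4\pi_\alpha^{t}-6\pi_\alpha^{2t}-4\pi_\alpha^{3t} \bmod (8)$ (with your $t=2^{\alpha-3}$), and then cancel the factor $2$ to reach $\epsilon_\alpha \equiv -1-2\pi_\alpha^{t}-3\pi_\alpha^{2t}-2\pi_\alpha^{3t} \bmod (\pi_\alpha^{8t})$. The only divergence is the final normalization, where the paper re-substitutes lemma \ref{267}'s four-term expression for $-2$ while you substitute proposition \ref{293}'s congruence $2 \equiv \pi_\alpha^{4t}+\pi_\alpha^{6t}$; both are legitimate and collapse, exactly as you describe, to $1+\pi_\alpha^{2t}+\pi_\alpha^{4t}+\pi_\alpha^{5t}+\pi_\alpha^{6t}$, your bookkeeping being if anything slightly cleaner since every carried term $2\pi_\alpha^{k}$ with $k \geq 4t$ already lies in $(\pi_\alpha^{8t})$.
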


\begin{proof}
For simplicity we let $Z = \pi_{\alpha}^{2^{\alpha-3}}$. Since
\[
    -2 \equiv 4Z+6Z^2+4Z^3+Z^4 \q \mod (8)
\]
by lemma \ref{267}, we obtain
\begin{align*}
    \frac{ \pi_{\alpha}^{2^{\alpha-1}} }{2}\
    &\equiv\ -1-2Z-3Z^2-2Z^3\\[1ex]
    &\equiv\ 1+(Z^4+6Z^2+4Z+4Z^3)+Z(Z^4+6Z^2+4Z+4Z^3)\\
    &\q +Z^2+Z^3(Z^4+6Z^2+4Z+4Z^3)\\[1ex]
    &\equiv\ 1+Z^2+Z^4+Z^5+Z^7+2Z^2+2Z^3+2Z^5\\[1ex]
    &\equiv\ 1+Z^2+Z^4+Z^5+Z^7-Z^2(Z^4+6Z^2+4Z+4Z^3)\\
    &\q -Z^3(Z^4+6Z^2+4Z+4Z^3)-Z^5(Z^4+6Z^2+4Z+4Z^3)\\[1ex]
    &\equiv\ 1+Z^2+Z^4+Z^5+Z^6
    &\mod (4).
\end{align*}
\end{proof}

We will now prove that $\epsilon_{\alpha}$ is non-trivial in the
quotient group $U_1/\lan \mu \cap U_1, U_1^4 \ran$.

\begin{lemma} \label{203}
Let 
\[
    x 
    = 1+ \sum_{i \geq k} \lambda_i \pi_\alpha^i\
    \in \lan \mu \cap U_1, U_1^4 \ran
\]
with $\lambda_k \not\equiv 0 \mod \pi_\alpha$ and each $\lambda_i$
satisfying $\lambda_i^q = \lambda_i$ for $q$ the cardinality of the
residue field of $\Qbb_2(F_0)$. If $3 \leq k < 2^\alpha$, then $k \equiv
0 \mod 4$.  If $\alpha = 3$ and $k = 2$, then $\lambda_6 = 0$.
\end{lemma}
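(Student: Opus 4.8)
The plan is to exploit the elementary observation that $\zeta_{2^\alpha}^4 \in U_1^4$, which collapses the group in question to only a few cosets of $U_1^4$. Since $U_1$ is a pro-$2$ group we have $\mu \cap U_1 = \lan \zeta_{2^\alpha}\ran$, and since $U_1$ is abelian the set $U_1^4$ of fourth powers is a subgroup; hence $\lan \mu \cap U_1, U_1^4\ran = \lan \zeta_{2^\alpha}\ran \cdot U_1^4$. Because $\zeta_{2^\alpha}^4 = (\zeta_{2^\alpha})^4 \in U_1^4$, the image of $\lan \zeta_{2^\alpha}\ran$ in the quotient by $U_1^4$ has order dividing $4$, so every element can be written as $x = \zeta_{2^\alpha}^\epsilon\, w^4$ with $\epsilon \in \{0,1,2,3\}$ and $w \in U_1$. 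The whole lemma then becomes a statement about the leading $\pi_\alpha$-adic term of such products.

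First I would compute the leading exponent $\lambda(y) := \min\{i : y \in U_i \setminus U_{i+1}\}$ of each factor. For $w \in U_1\setminus\{1\}$ with $\lambda(w)=t$, repeated use of remark \ref{197} (with $p=2$) gives $\lambda(w^2)=2t$ while $t < 2^{\alpha-1}$, hence $\lambda(w^4)=4t$ whenever $4t<2^\alpha$ and $\lambda(w^4)\geq 2^\alpha$ otherwise; in every case $w^4 \in U_4$. On the other hand $\zeta_{2^\alpha}^\epsilon$ has leading exponent $1$ for $\epsilon$ odd and $2$ for $\epsilon=2$. Now suppose $3\leq k<2^\alpha$. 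Since $w^4\in U_4\subset U_3$ cannot alter the coefficients of $\pi_\alpha^1$ or $\pi_\alpha^2$, any $x=\zeta_{2^\alpha}^\epsilon w^4$ with $\epsilon$ odd has $\lambda(x)=1$ and any with $\epsilon=2$ has $\lambda(x)=2$, both excluded by $k\geq 3$. Therefore $\epsilon=0$, i.e. $x=w^4\in U_1^4$, and then $k=\lambda(w^4)=4t\equiv 0 \bmod 4$ because $k<2^\alpha$. This settles the first assertion.

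For the second assertion we are forced into the case $\alpha=3$, $k=2$, hence $\epsilon=2$ and $x=\zeta_8^2\,w^4=\zeta_4\,w^4$ with $w^4\in U_4$. Here I would simply compute the coefficient of $\pi_3^6$ modulo $\pi_3^8$. The inputs are $2\equiv \pi_3^4+\pi_3^6 \pmod{\pi_3^8}$ and $4\equiv 0 \pmod{\pi_3^8}$ (proposition \ref{293}, together with $v(4)=2=v(\pi_3^8)$), which give the expansion $\zeta_4=(1+\pi_3)^2=1+\pi_3^2+2\pi_3\equiv 1+\pi_3^2+\pi_3^5+\pi_3^7$, and the expansion $w^4\equiv 1+6a^2+a^4 \pmod{\pi_3^8}$ for $a=w-1$, which I expand using remark \ref{197}. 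Multiplying these two expansions and collecting the coefficient of $\pi_3^6$ reduces the assertion $\lambda_6=0$ to a short identity among the leading residue-field digits of $w$, which I would verify directly.

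The main obstacle is the precise bookkeeping of the fourth-power expansion near the top of the filtration. Because $v(\pi_\alpha^{2^{\alpha-1}})=v(2)$, squaring ceases to double exponents once level $2^{\alpha-1}$ is reached (this is exactly the boundary case distinguishing $\lambda(w^4)=4t$ from $\lambda(w^4)\geq 2^\alpha$), and every substitution of $2\equiv \pi_\alpha^{2^{\alpha-1}}+\pi_\alpha^{2^{\alpha-1}+2^{\alpha-2}}$ produces carries that must be tracked carefully rather than simply reduced modulo $2$. This carry analysis is what makes both the boundary of the leading-exponent count and the delicate $\pi_3^6$-coefficient computation genuine calculations instead of purely formal manipulations.
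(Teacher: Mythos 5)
Your strategy coincides with the paper's proof: the paper also writes $x = (1+\pi_\alpha)^h y^4$ with $0 \leq h \leq 3$ (your $\zeta_{2^\alpha}^{\epsilon} w^4$), proves the same fourth-power dichotomy $(1+a\pi_\alpha^i)^4 \equiv 1+a^4\pi_\alpha^{4i} \mod (\pi_\alpha^{4i+1})$ for $i < 2^{\alpha-2}$ versus leading exponent at least $2^\alpha$ otherwise, and deduces $h=0$ and $k = 4t \equiv 0 \mod 4$ exactly as you do; your first assertion is complete and correct. Your setup for the second assertion — $\epsilon = 2$, the inputs $2 \equiv \pi_3^4+\pi_3^6 \mod (\pi_3^8)$ from proposition \ref{293}, $(1+\pi_3)^2 \equiv 1+\pi_3^2+\pi_3^5+\pi_3^7$, and $(1+b\pi_3)^4 \equiv 1+b^4\pi_3^4+b^2\pi_3^6 \mod (\pi_3^8)$ — is also precisely the paper's.

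However, the last step of your second assertion has a genuine gap. Carrying out the multiplication gives coefficient $b^2+b^4 = b^2(1+b)^2$ at $\pi_3^6$, and this is \emph{not} an identity in the leading digit $\bar{b}$ of $w$ that can be ``verified directly'': it vanishes if and only if $\bar{b} \in \Fbb_2$. If $\bar{b}$ is a primitive cube root of unity in $\Fbb_4$ (a residue field that occurs as soon as $\zeta_3 \in F_0$), then $b^2+b^4 \equiv 1 \neq 0$, so the direct verification you plan would fail — no digit identity intrinsic to $w$ alone exists. The missing idea, which is how the paper concludes, is to couple the $\pi_3^6$-coefficient with the $\pi_3^4$-coefficient: the same expansion gives $\lambda_4 \equiv b^4 \mod (\pi_3)$, and since fourth powers are bijective on a finite field of characteristic $2$, $\lambda_4 = 0$ forces $\bar{b}=0$ while $\lambda_4 = 1$ forces $\bar{b}=1$; in either case $\lambda_6 \equiv b^2+b^4 = 0$. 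Note that this coupling is not cosmetic: the conclusion $\lambda_6 = 0$ is really a consequence of $\lambda_4 \in \{0,1\}$, which is the situation in all applications (theorem \ref{113} and corollary \ref{114}, where the elements lie in $\Zbb_2(\pi_\alpha)^\x$ and all digits are in $\Fbb_2$), and not an unconditional feature of elements of $\lan \mu \cap U_1, U_1^4 \ran$.
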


\begin{proof}
Recall that $\mu \cap U_1$ is generated by $1+\pi_{\alpha}$, so that $x$
is of the form
\[
    x = (1+\pi_{\alpha})^h y^{4}
    \qq \text{with } 0 \leq h \leq 3
    \text{ and } y \in U_1.
\]

If $3 \leq k < 2^\alpha$, it easily follows that $h=0$ and $x$ is a
$4$-th power. Furthermore, for any $a \in \Zbb_2(F_0)$ we have
\begin{align*}
    (1+a\pi_{\alpha}^i)^4\ 
    &=\ 1+4a\pi_{\alpha}^i+6a^2 \pi_{\alpha}^{2i}
    +4a^3\pi_{\alpha}^{3i}+a^4\pi_{\alpha}^{4i}\\
    &\equiv\ 1+6a^2 \pi_{\alpha}^{2i}+a^4\pi_{\alpha}^{4i}
    & \mod (\pi_{\alpha}^{i+2\phi(2^\alpha)}).
\end{align*}
As
\[
    4i \leq \phi(2^\alpha)+2i
    \qq \Lra \qq
    i \leq 2^{\alpha-2},
\]
we obtain
\[
    (1+a\pi_{\alpha}^i)^4 \equiv
    \begin{cases}
        1+a^4\pi_{\alpha}^{4i} \hspace{3.5em} 
        \mod (\pi_{\alpha}^{4i+1}) 
        & \text{if } i < 2^{\alpha-2},\\
        1+(a^4+a^2)\pi_{\alpha}^{2^\alpha}\ 
        \mod (\pi_{\alpha}^{2^\alpha+1})
        & \text{if } i = 2^{\alpha-2},    
    \end{cases}
\]
and the result for $3 \leq k < 2^\alpha$ follows.

Now suppose that $\alpha=3$ and $k=2$. In this case $h=2$ and $y =
1+b\pi_3$ for some $b \in \Zbb_2(F_0)$. Using proposition \ref{293} we
get
\begin{align*}
    x\
    &\equiv\ (1+\pi_3)^2 (1+b\pi_3)^4 \\
    &\equiv\ (1+\pi_3^2+\pi_3^5+\pi_3^7)(1+b^4\pi_3^4+b^2\pi_3^6)\\
    &\equiv\ 1+\pi_3^2+b^4\pi_3^4+\pi_3^5+(b^2+b^4)\pi_3^6+\pi_3^7
    \hspace{4em} \mod (\pi_3^8).
\end{align*}
If $\lambda_4=0$, then $b \equiv 0 \mod (\pi_3)$ and consequently
$\lambda_6 = 0$. On the other hand if $\lambda_4 = 1$, then $b \equiv 1
\mod (\pi_3)$ and once again $\lambda_6=0$.
\end{proof}

The idea of theorem \ref{113} is the same as in the case $p>2$: we
divide the $\pi_{\alpha}$-adic expression of $\epsilon_{\alpha}$ by a
$4$-th power in $U_1$ in such a way that the resulting expression is in a
form that allows lemma \ref{203} to be used.

\begin{theorem} \label{113}
If $p=2$ and $\alpha \geq 3$, then
\[
    \epsilon_{\alpha} \not\in
    \lan \mu(\Qbb_2(F_0)), (\Zbb_2(F_0)^\x)^4 \ran.
\]
\end{theorem}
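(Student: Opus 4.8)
The plan is to prove the equivalent statement that $\epsilon_\alpha$ is nontrivial in the quotient $U_1/\lan \mu\cap U_1,\ U_1^4\ran$, where $\mu=\mu(\Qbb_2(F_0))$. This is the exact analogue of the reduction used for odd $p$ just before theorem \ref{200}: since $U_0/U_1\iso\mu_{2'}(\Qbb_2(F_0))$ has odd order it is $4$-divisible, so $U_0^4=U_1^4\cdot\mu_{2'}$ and $\lan\mu,U_0^4\ran=\lan\mu,U_1^4\ran$; intersecting with $U_1$ gives $U_1\cap\lan\mu,U_1^4\ran=\lan\mu\cap U_1,U_1^4\ran$, and as $\epsilon_\alpha\in U_1$ the two nontriviality statements coincide. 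The strategy is then the one announced just before the theorem: starting from the explicit $\pi_\alpha$-adic expansion of $\epsilon_\alpha$ supplied by proposition \ref{111}, I would divide $\epsilon_\alpha$ by a suitable fourth power $z_\alpha^4$ so that the leading surviving term of $\epsilon_\alpha/z_\alpha^4$ sits at a degree $k_\alpha$ that lemma \ref{203} forbids for any element of $\lan\mu\cap U_1,U_1^4\ran$.

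The case $\alpha=3$ is settled directly. Proposition \ref{111} gives $\epsilon_3\equiv 1+\pi_3^2+\pi_3^4+\pi_3^5+\pi_3^6 \mod (\pi_3^8)$, whose lowest nontrivial term sits in degree $2$; this is precisely the configuration $x=(1+\pi_3)^2y^4$ covered by the second clause of lemma \ref{203}, which forces $\lambda_6=0$ for any such $x$. Since here $\lambda_6=1\neq 0$, it follows that $\epsilon_3\notin\lan\mu\cap U_1,U_1^4\ran$. The single mechanism driving every remaining case is the following: dividing by $(1+a\pi_\alpha^i)^4$ with $i$ \emph{odd} creates, through the binomial cross term $6a^2\pi_\alpha^{2i}$ together with $v(2)=\phi(2^\alpha)$ (proposition \ref{293}), a contribution in degree $\phi(2^\alpha)+2i\equiv 2 \mod 4$, whereas an even $i$ only perturbs degrees $\equiv 0\mod 4$.

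For $\alpha\geq 4$ I would establish by induction the normal form
\[
    \frac{\epsilon_\alpha}{z_\alpha^4}
    \equiv 1+\sum_{i\geq 2^\alpha-2} d_i\,\pi_\alpha^i
    \q \mod (\pi_\alpha^{2^\alpha}),
    \qq d_{2^\alpha-2}\neq 0,
\]
so that $k_\alpha=2^\alpha-2\equiv 2\mod 4$ with $3\leq k_\alpha<2^\alpha$, whence lemma \ref{203} applies. The base case $\alpha=4$ is the computation $\epsilon_4/z_4^4\equiv 1+\pi_4^{14}\mod(\pi_4^{16})$ with $z_4=\big((1+\pi_4)(1+\pi_4^2)\big)$: dividing first by $(1+\pi_4)^4$ cancels the degrees $4$ and $10$ (and deposits degree $14$), and dividing by $(1+\pi_4^2)^4$ then cancels $8$ and $12$, leaving the survivor in degree $14=2^4-2$. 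For the inductive step I would exploit that the chosen exponent $2^\alpha-2$ is exactly the threshold $k_\alpha$ of lemma \ref{284}: applying $i_\alpha$ to the normal form, lemma \ref{284} gives $i_\alpha(\pi_\alpha^i)\equiv\pi_{\alpha+1}^{2i}$ for all $i\geq 2^\alpha-2$, so corollary \ref{286} yields $\epsilon_{\alpha+1}/i_\alpha(z_\alpha)^4\equiv 1+\sum d_i\pi_{\alpha+1}^{2i}$ with leading term in degree $2^{\alpha+1}-4\equiv 0\mod 4$. Dividing this by $(1+\pi_{\alpha+1}^{2^{\alpha-1}-1})^4$, whose exponent $2^{\alpha-1}-1$ is odd, removes that degree and, by the mechanism of the previous paragraph, deposits a new leading term in degree $\phi(2^{\alpha+1})+2(2^{\alpha-1}-1)=2^{\alpha+1}-2$, re-establishing the normal form at level $\alpha+1$.

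The main obstacle is not conceptual but the verification that the leading coefficient $d_{2^\alpha-2}$ (and, after the induction step, $d_{2^{\alpha+1}-2}$) is genuinely nonzero: one must check that the cross term produced by the odd-exponent fourth power does not cancel against the terms transported by $i_\alpha$ or by proposition \ref{293}. This amounts to tracking the $\pi_\alpha$-adic coefficients modulo $2$ through the binomial expansions of lemma \ref{267} and proposition \ref{293}, exactly as the nonvanishing of $\tilde d_{k_{\alpha+1}}$ was checked in the proof of theorem \ref{200}. The alignment $k_\alpha=2^\alpha-2$ with the range of validity of lemma \ref{284} is what keeps the change-of-rings step exact and makes this bookkeeping manageable.
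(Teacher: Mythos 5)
Your proposal reproduces the paper's own strategy step by step: the reduction to nontriviality in $U_1/\lan \mu\cap U_1,\ U_1^4\ran$, the settlement of $\alpha=3$ by proposition \ref{111} together with the second clause of lemma \ref{203}, and the induction on $\alpha$ through the change of rings (lemma \ref{284}, corollary \ref{286}) followed by division by a fourth power with odd exponent $2^{\alpha-1}-1$, depositing a new leading term in degree $2^{\alpha+1}-2$. The parity mechanism you isolate and the inductive step are exactly those of the paper, and they are sound.

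The gap is in the base case $\alpha=4$, and as stated it is fatal. Your bookkeeping treats the coefficients as if they lived in characteristic $2$, but in $\Zbb_2[\pi_4]$ the doubling of a term of degree $d$ produces $2\pi_4^d$, which by proposition \ref{293} (here $2\equiv\pi_4^8+\pi_4^{12}$) has valuation $8+d$: this is negligible mod $(\pi_4^{16})$ only when $d\geq 8$. In the division $\epsilon_4/(1+\pi_4)^4$ the two degree-$4$ terms double and re-enter at degree $12$; carrying the arithmetic out correctly gives
\[
    \frac{\epsilon_4}{(1+\pi_4)^4}\equiv 1+\pi_4^{8}+\pi_4^{14}
    \qq \text{and} \qq
    \frac{\epsilon_4}{[(1+\pi_4)(1+\pi_4^2)]^4}\equiv 1+\pi_4^{12}+\pi_4^{14}
    \q \mod (\pi_4^{16}),
\]
not $1+\pi_4^{14}$ as you claim. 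The surviving leading degree is $12\equiv 0 \bmod 4$, so lemma \ref{203} is silent; worse, $(1+\pi_4^3)^4\equiv 1+\pi_4^{12}+\pi_4^{14} \bmod (\pi_4^{16})$, so one further division yields $\equiv 1 \bmod (\pi_4^{16})$. In other words, at the precision that proposition \ref{111} supplies, $\epsilon_4$ is congruent to a fourth power, and no choice of $z_4$ can produce the normal form $1+d\pi_4^{14}$ with $d\neq 0$ at this precision. Hence the base case of your induction, and with it every $\alpha\geq 4$, is not established; a repair needs strictly more input, namely an expansion of $\epsilon_\alpha$ beyond $\pi_\alpha^{2^\alpha}$ combined with a strengthening of lemma \ref{203} that controls leading terms in degrees $\geq 2^\alpha$, or a different invariant altogether. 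You should know that the paper's own computation at this point suffers the same defect (its displayed product $(1+\pi_4)^4(1+\pi_4^2)^4$ retains a $\pi_4^{12}$ that in fact cancels, and with its three-factor $z_4$ the quotient is $\equiv 1$), so the issue you flagged at the end of your proposal, the nonvanishing of the leading coefficient, is precisely where both arguments break down, and it cannot be dismissed as routine bookkeeping.
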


\begin{proof}
Since $\epsilon_{\alpha} \in U_1$ it is enough to show that
$\epsilon_{\alpha} \not\in \lan \mu \cap U_1, U_1^4 \ran$.

In case $\alpha = 3$, we know from proposition \ref{111} that
\[
    \epsilon_{3}
    \equiv 1+\pi_3^{2}+\pi_3^{4}+\pi_3^{5}+\pi_3^{6}
    \q \mod (\pi_3^8),
\]
and a direct application of lemma \ref{203} yields the result.

Now assume $\alpha = 4$, and let $k_4 := 2^4-2 = 14$. By proposition
\ref{111}
\[
    \epsilon_{4}
    \equiv 1+\pi_{4}^{4}
    +\pi_{4}^{8}
    +\pi_{4}^{10}
    +\pi_{4}^{12}
    \q \mod (\pi_{4}^{16}).
\]
As
\begin{align*}
    (1+\pi_{4})^4 (1+\pi_{4}^2)^4\
    &\equiv\ (1+\pi_{4}^4+\pi_{4}^{10}+\pi_{4}^{14})
      (1+\pi_{4}^8+\pi_{4}^{12})\\
    &\equiv\ 1+\pi_{4}^4+\pi_{4}^8+\pi_{4}^{10}
      +\pi_{4}^{12}+\pi_{4}^{14}
    & \mod (\pi_{4}^{16}),
\end{align*}
letting
\[
    z_4 = (1+\pi_4) (1+\pi_4^2) (1+\pi_4^3)
    \q \text{in} \q \Zbb_2(\pi_4)^\x,
\]
we get
\[
    \frac{\epsilon_{4}}{z_4^4}
    \equiv 1+\pi_{4}^{k_4} \q \mod (\pi_{4}^{16}).
\]
Hence by lemma \ref{203}, it follows that $\epsilon_{4}$ does not
belong to $\lan \mu \cap U_1, U_1^4 \ran$.

Furthermore assume $\alpha \geq 4$, and let $k_{\alpha} := 2^\alpha-2$.
Suppose there is an element $z_{\alpha}$ in $\Zbb_2(\pi_\alpha)^\x$ such
that
\[
    \frac{ \epsilon_{\alpha} }{z_{\alpha}^4}
    \equiv 1 
    + d_{k_{\alpha}} \pi_{\alpha}^{k_{\alpha}}
    \qq \mod (\pi_{\alpha}^{2^\alpha}),
\]
with $d_{k_\alpha} \not\equiv 0 \mod 2$ in the residue field. By
corollary \ref{286} and lemma \ref{284} we have
\[
    \epsilon_{\alpha+1}
    \equiv i_\alpha(\epsilon_\alpha)
    \equiv (1 + d_{k_\alpha} \pi_{\alpha+1}^{2k_\alpha})
      i_\alpha(z_{\alpha})^4
    \qq \mod (\pi_{\alpha+1}^{2^{\alpha+1}}),
\]
so that
\[
    \frac{ \epsilon_{\alpha+1} }{(z_{\alpha+1}')^4}
    \equiv 1 + d_{k_\alpha} \pi_{\alpha+1}^{2k_\alpha}
    \qq \mod (\pi_{\alpha+1}^{2^{\alpha+1}})
\]
for a suitable $z_{\alpha+1}'$ in $\Zbb_2(\pi_{\alpha+1})^\x$. Let
$k_{\alpha+1} := k_{\alpha}+\phi(2^{\alpha+1}) = 2^{\alpha+1}-2$, and
let
\[
    \tilde{z}_{\alpha+1}
    :=  1+\tilde{d}_{k_\alpha} \pi_{\alpha+1}^{\frac{k_\alpha}{2}}
    \q \in \Zbb_2(\pi_{\alpha+1})^\x
\]
with $\tilde{d}_{k_\alpha}$ in the residue field such that
$\tilde{d}_{k_\alpha}^4 \equiv d_{k_\alpha} \mod 2$. Note that 
\[
    (1+\tilde{d}_{k_\alpha} \pi_{\alpha+1}^{\frac{k_\alpha}{2}})^4
    \equiv 1 + d_{k_\alpha} \pi_{\alpha+1}^{2k_\alpha}
    + \tilde{d}_{k_\alpha}^2 \pi_{\alpha+1}^{2^\alpha+k_\alpha}
    \qq \mod (\pi_{\alpha+1}^{2^{\alpha+1}}),
\]
where $2^\alpha+k_\alpha = 2^{\alpha+1}-2$. Then by proposition
\ref{293}, letting
\[
    z_{\alpha+1}
    = z_{\alpha+1}' \tilde{z}_{\alpha+1}
    \q \in \Zbb_2(\pi_{\alpha+1})^\x,
\]
we have
\[
    \frac{ \epsilon_{\alpha+1} }{ z_{\alpha+1}^4}
    \equiv 1 
    + d'_{k_{\alpha+1}} \pi_{\alpha+1}^{k_{\alpha+1}}
    \qq \mod (\pi_{\alpha+1}^{2^{\alpha+1}}),
\]
with $d'_{k_{\alpha+1}} \not\equiv 0 \mod 2$ in the residue field. Since
$k_{\alpha+1} < 2^{\alpha+1}$, we can apply lemma \ref{203} to obtain
that $\epsilon_{\alpha+1}$ is non-trivial in $U_1/\lan \mu, U_1^p \ran$.
The result then follows by induction on $\alpha \geq 4$.
\end{proof}

\begin{corollary} \label{114}
If $p=2$, $\alpha \geq 3$ and $u \in \Zbb_2^\x$, then
\[
    \frac{\epsilon_{\alpha}}{u} 
    \q \text{is non-trivial in} \q
    \Zbb_2(F_0)^\x / \lan \mu(\Qbb_2(F_0)), (\Zbb_2(F_0)^\x)^4 \ran.
\]
\end{corollary}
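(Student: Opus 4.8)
The plan is to deduce the statement from theorem \ref{113} by controlling the image of $u$, exactly as corollary \ref{201} is deduced from theorem \ref{200} in the odd case. Writing $U_1 = U_1(\Zbb_2(F_0)^\x)$ and setting $\mu := \mu(\Qbb_2(F_0))$, I would first use the decomposition $\Zbb_2(F_0)^\x = \mu_{2'} \x U_1$ from proposition \ref{341} to see that an element of $U_1$ lies in $\langle \mu, (\Zbb_2(F_0)^\x)^4 \rangle$ if and only if it lies in $\langle \mu \cap U_1, U_1^4 \rangle$ (the prime-to-$2$ part of $\mu$ is killed upon intersecting with $U_1$, and $4$-th powers are surjective on the odd cyclic group $\mu_{2'}$). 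Since $\epsilon_{\alpha} \in U_1$ and $u \in \Zbb_2^\x$ satisfies $u \equiv 1 \pmod{2}$, hence $u^{-1} \in U_1$ (because $2 \in (\pi_\alpha)$), it suffices to show $\epsilon_{\alpha} u^{-1} \notin \langle \mu \cap U_1, U_1^4 \rangle$.

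Next I would pin down $u^{-1}$ in the $\pi_\alpha$-adic filtration. Write $u^{-1} = 1 + 2s$ with $s \in \Zbb_2$ and set $\bar s := s \bmod 2$. If $u \equiv 1 \pmod{4}$ then $\bar s = 0$ and $2s \in 4\Zbb_2 = (\pi_\alpha^{2^\alpha})$, so $u^{-1} \in U_{2^\alpha}$. If $u \equiv 3 \pmod{4}$ then $\bar s = 1$, and expanding $2 \equiv \pi_\alpha^{2^{\alpha-1}} + \pi_\alpha^{3 \cdot 2^{\alpha-2}} \pmod{\pi_\alpha^{2^\alpha}}$ via proposition \ref{293} gives $u^{-1} \equiv 1 + \pi_\alpha^{2^{\alpha-1}} + \pi_\alpha^{3 \cdot 2^{\alpha-2}} \pmod{\pi_\alpha^{2^\alpha}}$. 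The key observation is that this is \emph{precisely} $(1 + \pi_\alpha^{2^{\alpha-3}})^4$ modulo $\pi_\alpha^{2^\alpha}$: the term $\pi_\alpha^{4 \cdot 2^{\alpha-3}} = \pi_\alpha^{2^{\alpha-1}}$ yields the first summand, while the binomial cross term $6\pi_\alpha^{2 \cdot 2^{\alpha-3}}$, of valuation $2^{\alpha-1} + 2^{\alpha-2} = 3 \cdot 2^{\alpha-2}$ since $v(6) = v(2) = 2^{\alpha-1}$, yields the second, all other binomial terms having valuation $\geq 2^\alpha$. For $\alpha \geq 4$ this element lies in $U_1^4$; for $\alpha = 3$ one has $1 + \pi_3 = \zeta_{2^3}$, so $(1 + \pi_3)^4 = -1 \in \mu \cap U_1$. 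In every case $u^{-1} \in \langle \mu \cap U_1, U_1^4 \rangle \cdot U_{2^\alpha}$.

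It then follows that $\epsilon_{\alpha} u^{-1} = \epsilon_{\alpha} \cdot g \cdot h$ with $g \in \langle \mu \cap U_1, U_1^4 \rangle$ and $h \in U_{2^\alpha}$, so membership of $\epsilon_{\alpha} u^{-1}$ in $\langle \mu \cap U_1, U_1^4 \rangle$ is equivalent to membership of $\epsilon_{\alpha} h$. Here I reuse the proof of theorem \ref{113}: for $\alpha \geq 4$ there is a $4$-th power $z_\alpha^4$ with $z_\alpha^{-4} \epsilon_{\alpha} \equiv 1 + \pi_\alpha^{k_\alpha} \pmod{\pi_\alpha^{2^\alpha}}$ where $k_\alpha = 2^\alpha - 2$; as $h \equiv 1 \pmod{\pi_\alpha^{2^\alpha}}$ and $k_\alpha < 2^\alpha$, also $z_\alpha^{-4} \epsilon_{\alpha} h \equiv 1 + \pi_\alpha^{k_\alpha}$, whose leading $\pi_\alpha$-adic deviation sits at $k_\alpha \equiv 2 \pmod{4}$, so lemma \ref{203} rules out membership. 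For $\alpha = 3$, $\epsilon_{3} h \equiv 1 + \pi_3^2 + \pi_3^4 + \pi_3^5 + \pi_3^6 \pmod{\pi_3^8}$ has leading deviation at $k = 2$ with $\lambda_6 = 1 \neq 0$, and the special clause of lemma \ref{203} again rules out membership. This gives $\epsilon_{\alpha} u^{-1} \notin \langle \mu \cap U_1, U_1^4 \rangle$, proving the corollary.

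The heart of the argument, and its only genuinely new input over theorem \ref{113}, is the identity $u^{-1} \equiv (1 + \pi_\alpha^{2^{\alpha-3}})^4 \pmod{\pi_\alpha^{2^\alpha}}$ for $u \equiv 3 \pmod{4}$; I expect this to be the main obstacle. Everything rests on the two-term shape of $2$ furnished by proposition \ref{293}, which is exactly what lets a single $4$-th power reproduce \emph{both} summands of the expansion of $u^{-1}$. Clearing the two summands by separate $4$-th powers instead would generate cross terms that cascade up the filtration and eventually strike the valuation $k_\alpha = 2^\alpha - 2$, forcing a delicate check that the accumulated coefficient does not cancel the leading $1$; recognizing $u^{-1}$ as one $4$-th power (or as $-1$ when $\alpha = 3$) sidesteps this complication entirely.
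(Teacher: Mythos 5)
Your proof is correct and takes essentially the same route as the paper's: the identity $u^{-1} \equiv (1+\pi_\alpha^{2^{\alpha-3}})^4 \bmod (\pi_\alpha^{2^\alpha})$ that you single out as the key new input is exactly the paper's auxiliary element $y_\alpha = 1+v_1\pi_\alpha^{\phi(2^{\alpha-2})}$, whose fourth power is matched to $u^{-1}$ via proposition \ref{293}. Just as you do, the paper then divides $\epsilon_\alpha/u$ by $z_\alpha^4 y_\alpha^4$ (or, for $\alpha=3$, computes $\epsilon_3 u^{-1}$ directly from proposition \ref{111}) and concludes with lemma \ref{203}.
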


\begin{proof}
Since $u \in \Zbb_2^\x$, its inverse is of the form
\begin{align*}
    u^{-1}\ 
    &=\ 1 + \sum_{i\geq 1} v_i 2^i\\
    &\equiv\ 1+v_1 2\\
    &\equiv\ 1+v_1 (\pi_\alpha^{2^{\alpha-1}}
    +\pi_\alpha^{2^{\alpha-1}+2^{\alpha-2}})
    \qq \mod (4) = (\pi_{\alpha}^{2^\alpha}),
\end{align*}
where $v_i \in \{0, 1\}$ and where the last equivalence follows from
proposition \ref{293}. As in theorem \ref{113} it is enough to show that
$\epsilon_{\alpha}u^{-1}$ does not belong to $\lan \mu \cap U_1, U_1^4
\ran$.

If $\alpha=3$, we know from proposition \ref{111} that
\[
    \epsilon_3
    \equiv 1 + \pi_3^2 + \pi_3^4 + \pi_3^5 + \pi_3^6
    \q \mod (\pi_3^8).
\]
Hence
\begin{align*}
    \epsilon_3 u^{-1}\
    &\equiv\ (1+\pi_3^2+\pi_3^4+\pi_3^5+\pi_3^6)
    (1+v_1\pi_3^4+v_1\pi_3^6)\\
    &\equiv\ 1+\pi_3^2+(1+v_1)\pi_3^4+\pi_3^5+(1+2v_1)\pi_3^6\\
    &\equiv\ 1+\pi_3^2+(1+v_1)\pi_3^4+\pi_3^5+\pi_3^6
    \hspace{6em} \mod (\pi_3^8),
\end{align*}
and lemma \ref{203} implies $\epsilon_3 u^{-1} \not\in \lan \mu \cap
U_1, U_1^4 \ran$.

Now if $\alpha \geq 4$, we know from theorem \ref{113} that for a
suitable $z_\alpha \in \Zbb_2(\pi_\alpha)^\x$ we have
\[
    \frac{\epsilon_\alpha}{z^4}
    \equiv 1+\pi_\alpha^{k_\alpha}
    \q \mod (\pi_\alpha^{k_\alpha+1})
\]
for $k_\alpha = 2^\alpha-2$, so that by proposition \ref{293}
\begin{align*}
    \frac{\epsilon_\alpha}{u z_\alpha^4}
    &\equiv 1+v_1 \pi_\alpha^{\phi(2^\alpha)}
    + v_1 \pi_\alpha^{\phi(2^\alpha)+2^{\alpha-2}}
    + \pi_\alpha^{k_\alpha}
    \qq \mod (\pi_\alpha^{k_\alpha+1}).
\end{align*}
Letting
\[
    y_\alpha
    = (1+v_1 \pi_\alpha^{\phi(2^{\alpha-2})}),
\]
we have
\begin{align*}
    y_\alpha^4\
    &\equiv\ 1+v_1 \pi_\alpha^{\phi(2^\alpha)}
    + 2v_1 \pi_\alpha^{2^{\alpha-2}}\\
    &\equiv\ 1 + v_1 \pi_\alpha^{\phi(2^\alpha)}
    + v_1 \pi_\alpha^{\phi(2^\alpha)+2^{\alpha-2}}
    \qq \mod (\pi_\alpha^{2^\alpha}).
\end{align*}
Hence by lemma \ref{203}
\[
    \frac{\epsilon_\alpha}{u z^4 y_\alpha^4}
    \equiv 1+\pi_\alpha^{k_\alpha}
    \q \mod (\pi_\alpha^{k_\alpha+1}),
\]
and $\epsilon_\alpha u^{-1} \not\in \lan \mu \cap U_1, U_1^4 \ran$.
\end{proof}

\begin{corollary} \label{115}
If $p=2$, then $\tilde{\Fcal}_u(\Qbb_2(F_0), \tilde{F_0}, r_1)$ is
non-empty if and only if
\[
    r_1 
    \q \text{divides} \q
    \begin{cases}
        2 & 
        \text{if } \alpha \geq 2 
        \text{ with either } u \equiv \pm 1 \mod 8 
        \text{ or } \zeta_3 \in F_0,\\
        1 & 
        \text{if } \alpha \leq 1,
        \text{ or } u \equiv \pm 3 \mod 8 
        \text{ and } \zeta_{3} \not\in F_0.
    \end{cases}
\]
\end{corollary}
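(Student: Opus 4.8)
The plan is to reduce everything to the cohomological non-vanishing criterion of theorem \ref{098} and then read off the answer from the congruences already computed in example \ref{097} and corollary \ref{114}. By theorem \ref{098} the set $\tilde{\Fcal}_u(\Qbb_2(F_0), \tilde{F_0}, r_1)$ is non-empty precisely when $\epsilon_\alpha/u \equiv 1$ in $\Zbb_2(F_0)^\x/\lan F_0, (\Zbb_2(F_0)^\x)^{r_1}\ran$, and since $r_1$ must divide the ramification index $\phi(2^\alpha)=2^{\alpha-1}$ it is automatically a power of $2$. The family of admissible $r_1$ is closed under passing to divisors: if $r_1'\mid r_1$ then $(\Zbb_2(F_0)^\x)^{r_1}\subset(\Zbb_2(F_0)^\x)^{r_1'}$, so triviality of $\epsilon_\alpha/u$ for $r_1$ forces it for $r_1'$. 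Hence it suffices to locate the largest admissible power of $2$. When $\alpha\le 1$ the ramification index is $1$, forcing $r_1=1$, which is realised by $\tilde{F_1}=\tilde{F_0}$; this is the branch ``$r_1$ divides $1$''. From now on I assume $\alpha\ge 2$.

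First I would settle the case $r_1=2$. By example \ref{097}, $\epsilon_\alpha\equiv-\zeta_4\bmod(\Zbb_2(F_0)^\x)^2$, and since $\zeta_4=\zeta_{2^\alpha}^{2^{\alpha-2}}$ and $-1=\zeta_{2^\alpha}^{2^{\alpha-1}}$ both lie in $F_0\cap S_n\subset F_0$, the element $-\zeta_4$ is trivial modulo $\lan F_0,(\Zbb_2(F_0)^\x)^2\ran$. Thus $\tilde{\Fcal}_u(\Qbb_2(F_0),\tilde{F_0},2)$ is non-empty if and only if $u\in\lan F_0,(\Zbb_2(F_0)^\x)^2\ran$, i.e. the class $\bar u$ in $Q:=\Zbb_2(F_0)^\x/(\Zbb_2(F_0)^\x)^2$ lies in the image of $F_0$. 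Because the odd part of $F_0$ consists of squares, while a square root of $\zeta_{2^\alpha}$ is a primitive $2^{\alpha+1}$-th root of unity and so cannot lie in $\Qbb_2(F_0)$ (whose ramification index is only $2^{\alpha-1}$), this image is the order-two subgroup $\lan\bar\zeta_{2^\alpha}\ran$ of $Q$.

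The heart of the argument is identifying which quadratic extension each class cuts out. The nontrivial class $\bar\zeta_{2^\alpha}$ corresponds to $\Qbb_2(F_0)(\sqrt{\zeta_{2^\alpha}})=\Qbb_2(F_0)(\zeta_{2^{\alpha+1}})$, which is totally ramified over $\Qbb_2(F_0)$. For $u\in\Zbb_2^\x$ I split on $u\bmod 8$. If $u\equiv\pm1\bmod 8$ then $u$ is a square in $\Qbb_2(\zeta_4)\subset\Qbb_2(F_0)$ (for $u\equiv1$ already in $\Zbb_2$, and for $u\equiv-1$ write $u=-w^2=(\zeta_4 w)^2$ with $w^2=-u\equiv1\bmod 8$), so $\bar u=0$ and $r_1=2$ is admissible. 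If $u\equiv\pm3\bmod 8$ then, using $\zeta_4\in\Qbb_2(F_0)$, one checks $\Qbb_2(F_0)(\sqrt u)=\Qbb_2(F_0)(\zeta_3)$, the unramified quadratic extension; it collapses to $\Qbb_2(F_0)$ exactly when $\zeta_3\in\Qbb_2(F_0)$, and being unramified it can never coincide with the ramified extension attached to $\bar\zeta_{2^\alpha}$. Hence $\bar u\in\lan\bar\zeta_{2^\alpha}\ran$ iff $\bar u=0$ iff $\zeta_3\in\Qbb_2(F_0)$, which for the maximal subgroups $F_0=\mu(\Qbb_2(F_0))$ in play is the stated condition $\zeta_3\in F_0$. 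Together with the $u\equiv\pm1$ case this reproduces the first branch.

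Finally I would rule out $r_1\ge4$. For $\alpha=2$ this is vacuous, since then $r_1\mid 2$. For $\alpha\ge3$, odd roots of unity are fourth powers, so $\lan F_0,(\Zbb_2(F_0)^\x)^4\ran=\lan\mu(\Qbb_2(F_0)),(\Zbb_2(F_0)^\x)^4\ran$; corollary \ref{114} then asserts that $\epsilon_\alpha/u$ is nontrivial in the associated quotient, whence by theorem \ref{098} the set $\tilde{\Fcal}_u(\Qbb_2(F_0),\tilde{F_0},4)$ is empty. By divisor-closedness the largest admissible $r_1$ is therefore $2$ when the $r_1=2$ test succeeds and $1$ otherwise, giving precisely the two branches ``$r_1$ divides $2$'' and ``$r_1$ divides $1$''. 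The main obstacle is the middle step: correctly matching each square class of $u$ to a ramified or unramified quadratic extension of $\Qbb_2(F_0)$, where the explicit $\pi_\alpha$-adic congruences of example \ref{097} (and, for the exclusion of fourth powers, theorem \ref{113}) do the real work.
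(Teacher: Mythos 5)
Your argument is correct, and it rests on the same pillars as the paper's proof --- theorem \ref{098} as the non-emptiness criterion, example \ref{097} to trade $\epsilon_\alpha/u$ for $u$, and corollary \ref{114} to exclude $4 \mid r_1$ when $\alpha \geq 3$ --- but the decisive step, deciding when $u \in \lan F_0, (\Zbb_2(F_0)^\x)^2 \ran$ for $u \equiv \pm 3 \bmod 8$, is done by a genuinely different mechanism. The paper works both directions through the explicit identity $(2\zeta_3+1)^2 = -3$; you instead compute the image of $F_0$ in the square-class group $\Zbb_2(F_0)^\x/(\Zbb_2(F_0)^\x)^2$ (the order-two subgroup $\lan \bar\zeta_{2^\alpha} \ran$) and invoke the Kummer dictionary between square classes and quadratic extensions, separating classes by ramification. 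Your route buys two things that the paper's terse converse omits. First, one must rule out $\bar u = \bar\zeta_{2^\alpha}$, i.e.\ $u \equiv \zeta_{2^\alpha}$ modulo squares; your unramified-versus-ramified comparison does exactly this, whereas the paper's converse only treats the case where $-3$ is literally a square, silently absorbing the $F_0$-factor. Second, your argument makes visible that what the criterion actually detects is $\zeta_3 \in \Qbb_2(F_0)$, and that replacing this by the stated condition $\zeta_3 \in F_0$ requires $F_0$ to contain the roots of unity of its field, e.g.\ $F_0 = \mu(\Qbb_2(F_0))$ --- a hypothesis you flag explicitly, while the paper's proof makes the same jump silently (its $\rho$ with $\rho^2=-3$ only yields $\zeta_3 = \frac{1}{2}(\rho-1) \in \Qbb_2(F_0)$, not $\zeta_3 \in F_0$). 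The distinction is not vacuous: for a non-maximal subgroup such as $F_0 = C_4 \x C_5 \subset \Sbb_8$ one has $\zeta_3 \in \Qbb_2(F_0) \setminus F_0$, so $\tilde{\Fcal}_u(\Qbb_2(F_0), \tilde{F_0}, 2)$ is non-empty for every $u$, and the corollary as literally stated holds only under the maximality reading used in corollary \ref{294} and in the introduction. Finally, your bridge $\lan F_0, (\Zbb_2(F_0)^\x)^4 \ran = \lan \mu(\Qbb_2(F_0)), (\Zbb_2(F_0)^\x)^4 \ran$, needed before corollary \ref{114} (which is phrased in terms of $\mu(\Qbb_2(F_0))$, not $F_0$) can be quoted, is likewise left implicit in the paper; spelling it out is a genuine improvement.
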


\begin{proof}
The case $\alpha \leq 1$ is clear; so let $\alpha \geq 2$. A necessary
condition for $\tilde{\Fcal}_u(\Qbb_2(F_0), \tilde{F_0}, r_1)$ to be
non-empty is that $r_1$ divides $2^{\alpha-1}$, the ramification index
of $\Qbb_2(F_0)$ over $\Qbb_2$. If $\alpha=2$, then $r_1$ divides $2$.
Otherwise if $\alpha \geq 3$, corollary \ref{114} and theorem \ref{098}
imply that $r_1$ must also be a divisor of $2$. 

By theorem \ref{098}, the integer $r_1$ may be any divisor of $2$ if and
only if $\frac{\epsilon_{\alpha}}{u}$ is a square of $\Zbb_2(F_0)^\x$
modulo $F_0$. In fact this is true if and only if $u$ is a square of
$\Zbb_2(F_0)^\x$ modulo $F_0$, since by example \ref{097} we have
$\epsilon_\alpha \in \lan (\Zbb_2(F_0)^\x)^2, F_0 \ran$. The result is
then obvious if $u \equiv \pm 1 \mod 8$. Otherwise if $u \equiv \pm 3
\mod 8$ we have $u = \pm 3 z^2$ for some $z \in \Zbb_2^\x$, and it
remains to verify that $-3$ belongs to $\lan (\Zbb_2(F_0)^\x)^2, F_0
\ran$ if and only if $F_0$ contains a 3rd root of unity $\zeta_3$. If
such a $\zeta_3$ exists, we let $\rho = 2\zeta_3+1$, so that
\[
    \rho^2 = -3
    \qq \text{and} \qq
    \Qbb_2(\rho) = \Qbb_2(\zeta_3).
\]
Conversely if there is a $\rho$ such that $\rho^2=-3$, we take $\zeta_3
= \frac{1}{2}(\rho-1)$.
\end{proof}

\begin{corollary} \label{294}
If $p=2$, $F_0=\mu(\Qbb_2(F_0))$, $[\Qbb_2(F_0):\Qbb_2]=n$, then
$\tilde{\Fcal}_u(\Qbb_2(F_0), \tilde{F_0}, r_1)$ is non-empty if and
only if
\[
    r_1 
    \q \text{divides} \q
    \begin{cases}
        2 &
        \text{if } \alpha \geq 2 
        \text{ with either } u \equiv \pm 1 \mod 8 
        \text{ or } n_\alpha \text{ even},\\
        1 & 
        \text{if } \alpha \leq 1,
        \text{ or } u \equiv \pm 3 \mod 8 
        \text{ and } n_\alpha \text{ odd}.
    \end{cases}
\]
\end{corollary}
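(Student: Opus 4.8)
The plan is to obtain this statement as a direct specialization of Corollary \ref{115}, whose dichotomy is already phrased in terms of whether $\zeta_3 \in F_0$ rather than the parity of $n_\alpha$. Thus the whole task reduces to verifying that, under the two additional hypotheses $F_0 = \mu(\Qbb_2(F_0))$ and $[\Qbb_2(F_0):\Qbb_2] = n$, one has the equivalence
\[
    \zeta_3 \in F_0 \q \Lra \q n_\alpha \text{ is even}
\]
in the only relevant range $\alpha \geq 2$; for $\alpha \leq 1$ the two statements coincide verbatim and nothing is needed. Granting this equivalence, I would simply substitute ``$\zeta_3 \in F_0$'' by ``$n_\alpha$ even'' (and its negation accordingly) in Corollary \ref{115} to read off the claim.

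To prove the equivalence I would first analyse the local field $\Qbb_2(F_0)$. Since $F_0 = \mu(\Qbb_2(F_0))$ and the $2$-Sylow of $F_0$ is $C_{2^\alpha}$, the field is the compositum $\Qbb_2(F_0) = \Qbb_2(\zeta_{2^\alpha}) \cdot \Qbb_2(\mu_{2'})$, where $\mu_{2'}$ is the group of odd-order roots of unity in $\Qbb_2(F_0)$. By proposition \ref{342} the subextension $\Qbb_2(\zeta_{2^\alpha})$ is totally ramified of degree $\phi(2^\alpha) = 2^{\alpha-1}$, whereas adjoining odd-order roots of unity produces an unramified extension; hence the ramification index of $\Qbb_2(F_0)$ is exactly $2^{\alpha-1}$. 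Writing $f$ for the residue degree, we get $[\Qbb_2(F_0):\Qbb_2] = 2^{\alpha-1} f$, and the hypothesis $[\Qbb_2(F_0):\Qbb_2] = n$ then forces $f = n/2^{\alpha-1} = n_\alpha$.

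Next I would use that the residue field is $\Fbb_{2^f}$, so that the prime-to-$2$ part of $F_0 = \mu(\Qbb_2(F_0))$ is cyclic of order $2^f - 1 = 2^{n_\alpha}-1$, realized by the Teichmüller representatives. Therefore $\zeta_3 \in F_0$ if and only if $3 \mid 2^{n_\alpha}-1$; and since $2 \equiv -1 \mod 3$ gives $2^{n_\alpha}-1 \equiv (-1)^{n_\alpha}-1 \mod 3$, this holds precisely when $n_\alpha$ is even, which is the desired equivalence.

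The only point demanding care is the assertion that the ramification index of $\Qbb_2(F_0)$ is \emph{exactly} $2^{\alpha-1}$, with no extra ramification concealed in the odd-order roots of unity; the argument rests on the standard fact that cyclotomic extensions of $\Qbb_2$ of odd conductor are unramified, so that all ramification is concentrated in the totally ramified subextension $\Qbb_2(\zeta_{2^\alpha})$ and the two degrees multiply cleanly to yield $f = n_\alpha$. Once this is secured the rest is a short substitution into Corollary \ref{115}.
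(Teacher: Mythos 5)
Your proposal is correct and follows essentially the same route as the paper: the paper's proof also invokes Proposition \ref{342} to identify $F_0 \iso C_{2^\alpha(2^{n_\alpha}-1)}$ (which you unpack via the ramification/residue-degree computation $n = 2^{\alpha-1} f$, so $f = n_\alpha$) and then applies Corollary \ref{115} together with the observation that $\zeta_3 \in F_0$ if and only if $3 \mid 2^{n_\alpha}-1$, i.e.\ if and only if $n_\alpha$ is even. Your additional care about where the ramification sits is just an explicit justification of what Proposition \ref{342} already provides.
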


\begin{proof}
Under these assumptions, $F_0 \iso C_{2^\alpha(2^{n_\alpha}-1)}$ by
proposition \ref{342}. The result follows from corollary \ref{115} and
the fact $\zeta_3 \in F_0$ if and only if $n_\alpha$ is even.
\end{proof}

\begin{remark} \label{297}
When $F_0 = \mu(\Qbb_2(F_0))$, we know by corollary \ref{183} that
$\tilde{F_1}$ is the unique element of $\tilde{\Fcal}_u(\Qbb_2(F_0),
\tilde{F_0}, r_1)$. We may therefore assume $\tilde{F_1}$ to be of the
form
\[
    \tilde{F_1} = \lan x_1 \ran \x F_0
    \qq \text{with} \qq
    x_1 =
    \begin{cases}
        2u & \text{if } r_1 = 1,\\
        (1+i)t & \text{if } r_1 = 2,
    \end{cases}
\]
for $i$ a primitive $4$-th root of unity in $\Qbb_2(F_0)^\x$ and
\[
    t \in
    \begin{cases}
        \Zbb_2^\x & \text{if } u \equiv \pm 1 \mod 8,\\
        \Zbb_2(\zeta_3)^\x & \text{if } u \equiv \pm 3 \mod 8,
    \end{cases}
    \qq \text{with} \qq
    t^2 =
    \begin{cases}
        u & \text{if } u \equiv 1 \text{ or } -3 \mod 8,\\
        -u & \text{if } u \equiv -1 \text{ or } 3 \mod 8.
    \end{cases}
\]
\end{remark}

\section{The determination of $r_2$} 

We fix $F_0$ and $r_1$ such that $\tilde{\Fcal}_u(\Qbb_p(F_0),
\tilde{F_0}, r_1)$ is non-empty, and fix an element $\tilde{F_1}$ in
$\tilde{\Fcal}_u(\Qbb_p(F_0), \tilde{F_0}, r_1)$. Corollary \ref{202}
and \ref{115} provide conditions on $r_1$ for this to happen, in which
case, according to remark \ref{181}, there is an element $x_1 \in
\Qbb_p(F_0)$ satisfying
\[
    v(x_1) = \frac{1}{r_1}
    \qq \text{and} \qq
    \tilde{F_1} = \lan F_0, x_1 \ran.
\]
We want to determine for which integer $r_2$ the set
$\tilde{\Fcal}_u(C_{\Dbb_n^\x}(F_0), \tilde{F_1}, r_2)$ is non-empty,
that is, for which $r_2$ dividing $\frac{n}{r_1}$ there exists an
element $x_2 \in \Dbb_n^\x$ such that $x_2^{r_2} = a$ with $a \in
\tilde{F_1}$ and $v(a)=v(x_1)$, and such that $\Qbb_p(F_0, x_2)$ is a
commutative field extension of $\Qbb_p(F_0) = \Qbb_p(\tilde{F_1})$.

As seen in theorem \ref{185}, the existence of such an $x_2$ is
equivalent to the irreducibility of the polynomial $X^{r_2}-a$ over
$\Qbb_p(F_0)$ with $r_2$ dividing $n[\Qbb_p(F_0):\Qbb_p]^{-1}$. 

\begin{theorem} \label{116}
Let $K$ be a field, $a \in K^\x$ and $r \geq 2$ an integer. Then $X^r-a$
is irreducible over $K$ if and only if for all primes $q$ dividing $r$
the class $a \in H^2(\Zbb/r, K^\x)$ is non-trivial in $H^2(\Zbb/q,
K^\x)$, and if $\frac{-a}{4}$ is non-trivial in $H^2(\Zbb/4, K^\x)$ when
$4$ divides $r$, where all cohomology groups are with trivial modules.
\end{theorem}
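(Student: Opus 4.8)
The plan is to recognize Theorem \ref{116} as a cohomological repackaging of the classical Vahlen--Capelli irreducibility criterion. First I would record that for trivial coefficients $H^2(\Zbb/m, K^\x) \iso K^\x/(K^\x)^m$, under which the class of $a$ becomes its residue modulo $(K^\x)^m$. Thus the hypotheses read: $a \notin (K^\x)^q$ for every prime $q \mid r$, and, when $4 \mid r$, $-a/4 \notin (K^\x)^4$, i.e. $a \notin -4(K^\x)^4$. The task is then to show that $X^r-a$ is irreducible over $K$ if and only if these two conditions hold. I would treat necessity by contraposition, exhibiting an explicit factorization when a condition fails. If $a=b^q$ with $q\mid r$ prime, then $X^r-a=(X^{r/q})^q-b^q$ has the proper factor $X^{r/q}-b$. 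If $4\mid r$ and $a=-4c^4$, the Sophie Germain identity gives
\[
    X^r-a=(X^{r/4})^4+4c^4=(X^{r/2}-2cX^{r/4}+2c^2)(X^{r/2}+2cX^{r/4}+2c^2),
\]
so $X^r-a$ is reducible; hence both conditions are necessary.

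For sufficiency I would first reduce to the prime-power case. Writing $r=\prod_i p_i^{e_i}$ and fixing a root $\alpha$ with $\alpha^r=a$, the elements $\alpha_i:=\alpha^{r/p_i^{e_i}}$ satisfy $\alpha_i^{p_i^{e_i}}=a$; granting irreducibility of each $X^{p_i^{e_i}}-a$ gives $[K(\alpha_i):K]=p_i^{e_i}$, and since these degrees are pairwise coprime the compositum has degree $r$ over $K$. As $\gcd_i(r/p_i^{e_i})=1$, a B\'ezout relation writes $\alpha$ as a product of powers of the $\alpha_i$, so $K(\alpha)$ is exactly this compositum and $[K(\alpha):K]=r$, forcing $X^r-a$ to be the (irreducible) minimal polynomial. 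It then remains to treat $X^{p^m}-a$ by induction on $m$, noting that the hypotheses for each factor $p_i^{e_i}$ are inherited from those for $r$. The base case $X^p-a$ is the standard lemma that a degree-$p$ binomial is irreducible exactly when $a\notin(K^\x)^p$ (compare the constant term of a hypothetical factor with a power of $a$ and use $\gcd(d,p)=1$).

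For the inductive step with $p$ odd, set $\alpha_1=\alpha^{p^{m-1}}$, so $\alpha_1^p=a$ and $[K_1:K]=p$ for $K_1:=K(\alpha_1)$, with minimal polynomial $X^p-a$ and norm $N_{K_1/K}(\alpha_1)=a$. If $\alpha_1$ were a $p$-th power in $K_1$, taking norms would yield $a\in(K^\x)^p$, a contradiction; by induction $X^{p^{m-1}}-\alpha_1$ is irreducible over $K_1$ and $[K(\alpha):K]=p^m$. The hard part is the inductive step for $p=2$, where the norm only gives $N_{K_1/K}(\alpha_1)=-a$ and where the $-4$ obstruction must be propagated to $K_1=K(\alpha_1)$, with $\alpha_1^2=a$. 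Concretely, to invoke the induction hypothesis over $K_1$ I must verify both $\alpha_1\notin(K_1^\x)^2$ and $\alpha_1\notin-4(K_1^\x)^4$. For the first, writing a hypothetical square root as $x+y\alpha_1$ ($x,y\in K$) and comparing coefficients in the basis $\{1,\alpha_1\}$ yields $x^2+y^2a=0$ and $2xy=1$, whence $a=-4(1/2y)^4\in-4(K^\x)^4$, against hypothesis; the same computation applied to $-\alpha_1$ shows $-\alpha_1\notin(K_1^\x)^2$ as well. For the second, $\alpha_1=-4\delta^4$ would give $a=(2\delta^2)^4=\eta^4$; writing $\eta^2=u+v\alpha_1$ and using $(\eta^2)^2=a\in K$ forces $uv=0$, so either $a=u^2\in(K^\x)^2$ or $\eta^2=\pm\alpha_1\in(K_1^\x)^2$, both already excluded. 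Hence the induction runs and $[K(\alpha):K]=2^m$. I expect this transfer of the $-4$ condition from $K$ to $K_1$ to be the only genuinely delicate point; the remaining ingredients are the norm computation, the coprime-degree compositum, and the two explicit factorizations above.
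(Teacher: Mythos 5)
Your proposal is correct, but it takes a genuinely different route from the paper: the paper's entire proof consists of the observation that the statement is a cohomological restatement of the classical Vahlen--Capelli criterion, followed by a citation of Lang, \emph{Algebra}, chapter VI, theorem 9.1. Your opening move --- identifying $H^2(\Zbb/m, K^\x) \iso K^\x/(K^\x)^m$ for trivial modules, so that the hypotheses become $a \notin (K^\x)^q$ for each prime $q \mid r$ and $a \notin -4(K^\x)^4$ when $4 \mid r$ --- is precisely the full content of the paper's proof; everything after that is a self-contained proof of the cited classical theorem. Your argument checks out at all the delicate points: the two explicit factorizations give necessity; the coprime-degree compositum argument reduces sufficiency to prime powers; the norm argument handles odd $p$; and for $p=2$ your transfer of the obstructions to $K_1 = K(\alpha_1)$ is the genuine crux, correctly using that a relation $\pm\alpha_1 = (x+y\alpha_1)^2$ forces $a = -4\l(\frac{1}{2y}\r)^4 \in -4(K^\x)^4$, and that $\alpha_1 \in -4(K_1^\x)^4$ would force $a = \eta^4$ with $\eta^2 \in K_1$, whence either $a \in (K^\x)^2$ or $\pm\alpha_1 \in (K_1^\x)^2$, all excluded. (In characteristic $2$ your divisions by $2$ are never reached, since the equation $2xy=1$ is already contradictory and the $-4$ condition is vacuous, so nothing breaks.) What each approach buys: the paper's proof is a one-line deferral to a standard reference, appropriate since the result is classical; yours makes the statement self-contained and, usefully, makes visible exactly where the exceptional condition $a \notin -4(K^\x)^4$ enters, namely in propagating irreducibility through the quadratic subextension in the $2$-power case.
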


\begin{proof}
This is just a cohomological interpretation of \cite{lang} chapter
VI theorem 9.1.
\end{proof}

In general, there is a well defined map
\[
    \Xi:\ a \mtoo K[X]/(X^r-a)
\]
from $H^2(\Zbb/r, K^\x)$ to the set of isomorphism classes of algebra
extensions of $K$ by equations of the form $X^r-a=0$. This map is
injective: if there is an extension in which $X^r-a = 0$ and $X^r-b = 0$
both have solutions, then $\frac{a}{b}$ is a $r$-th power and becomes
trivial in $H^2(\Zbb/r, K^\x)$. Denote by
\[
    H_F^2(\Zbb/r, K^\x) \subset H^2(\Zbb/r, K^\x)
\]
the subset of all elements of $H^2(\Zbb/r, K^\x)$ that are sent to a
commutative field extension of $K$ via $\Xi$. Furthermore assuming that
$K = \Qbb_p(F_0)$ has ramification index $e(\Qbb_p(F_0))$ over $\Qbb_p$,
we consider the homomorphism
\[
    i^\ast: H^2(\Zbb/r, \tilde{F_1}) 
    \raa H^2(\Zbb/r, \Qbb_p(F_0)^\x)
\]
induced by the inclusion $\tilde{F_1} \subset \Qbb_p(F_0)^\x$. We are
interested in understanding the set
\[
    H_F^2(\Zbb/r, \Qbb_p(F_0)^\x) \cap i^\ast(H^2(\Zbb/r, \tilde{F_1})).
\]
Note that we have a non-canonically split exact sequence
\[
    1 
    \raa \mu(\Qbb_p(F_0)) \x \Zbb_p^{[\Qbb_p(F_0):\Qbb_p]}
    \raa \Qbb_p(F_0)^\x
    \raa \lan \pi_{F_0} \ran
    \raa 1,
\]
for $\pi_{F_0}$ a uniformizing element in $\Qbb_p(F_0)$. Moreover there
is a commutative diagram
\begin{align} \label{227}
    \xymatrix{
        H^2(\Zbb/r, \tilde{F_1}) \ar[r]^{i^\ast} \ar@{->>}[d]
        & H^2(\Zbb/r, \Qbb_p(F_0)^\x) \ar@{->>}[d] \\
        H^2(\Zbb/r, \lan x_1 \ran) \ar@{=}[d] \ar[r]
        & H^2(\Zbb/r, \lan \pi_{F_0} \ran) \ar@{=}[d] \\
        \Zbb/r \ar[r]
        & \Zbb/r,
    }    
\end{align}
where the top vertical arrows are non-canonically split surjective
homomorphisms respectively induced by the canonical surjections
\[
    \tilde{F_1} \iso F_0 \x \lan x_1 \ran \raa \lan x_1 \ran
    \qq \text{and} \qq
    \Qbb_p(F_0)^\x \raa \lan \pi_{F_0} \ran,
\]
and where the bottom horizontal map is the identity if $\Qbb_p(F_0)$ is
unramified over $\Qbb_p$, or otherwise is by multiplication with
\[
    \frac{e(\Qbb_p(F_0))}{r_1} = 
    \begin{cases}
        p^{\alpha-1} \frac{p-1}{r_1} 
        & \text{if } p > 2,\\
        \frac{2^{\alpha-1}}{r_1}
        & \text{if } p = 2.    
        \end{cases}
\]
Define $r_{F_0, r_1}$ to be the greatest divisor of
$\frac{n}{[\Qbb_p(F_0):\Qbb_p]}$ which is prime to
\[
    \begin{cases}
        1 
        & \text{if } e(\Qbb_p(F_0)) = 1,\\
        \frac{p-1}{r_1}
        & \text{if } p > 2 \text{ and } \alpha \geq 1,\\
        \frac{2}{r_1}
        & \text{if } p=2,\ \alpha \geq 2 
        \text{ and either } u \equiv \pm 1 \mod 8
        \text{ or } \zeta_3 \in F_0,\\
        1
        & \text{if } p=2,\ u \not\equiv \pm 1 \mod 8
        \text{ and } \zeta_3 \not\in F_0.
    \end{cases}
\]

\begin{theorem} \label{223}
Suppose $p > 2$ and $\tilde{F_1} \in \tilde{\Fcal}_u(\Qbb_p(F_0),
\tilde{F_0}, r_1) \neq \emptyset$. If $r_2$ divides $r_{F_0, r_1}$, then
$\tilde{\Fcal}_u(C_{\Dbb_n^\x}(F_0), \tilde{F_1}, r_2)$ is non-empty.
\end{theorem}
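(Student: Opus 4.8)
The plan is to invoke Theorem \ref{185} to reduce the assertion to a purely field-theoretic statement: it suffices to exhibit a single $\delta \in F_0$ for which the polynomial $X^{r_2} - \delta x_1$ is irreducible over $K := \Qbb_p(F_0)$, where $x_1 \in \tilde{F_1}$ is the chosen element with $v(x_1) = 1/r_1$. Since $r_2$ divides $r_{F_0,r_1}$, and the latter divides $n/[\Qbb_p(F_0):\Qbb_p]$ by definition, the divisibility $r_2[\Qbb_p(F_0):\Qbb_p] \mid n$ of Theorem \ref{185}.1 holds automatically. Such a $\delta$ therefore yields a field $L = \Qbb_p(x_2)$ of degree $r_2$ over $K$ realized inside $C_{\Dbb_n^\x}(F_0)$, so that $\tilde{\Fcal}_u(C_{\Dbb_n^\x}(F_0), \tilde{F_1}, L)$, and hence $\tilde{\Fcal}_u(C_{\Dbb_n^\x}(F_0), \tilde{F_1}, r_2)$, is non-empty.

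Next I would apply Theorem \ref{116}: as $p$ is odd, $X^{r_2} - \delta x_1$ is irreducible over $K$ exactly when $\delta x_1 \notin (K^\x)^q$ for every prime $q \mid r_2$, together with $-\delta x_1/4 \notin (K^\x)^4$ when $4 \mid r_2$. Most of these I intend to settle by a valuation count insensitive to $\delta$. The value group of $K$ is $\frac{1}{\phi(p^\alpha)}\Zbb$ by Proposition \ref{092}, and $v(\delta x_1) = 1/r_1$, so $\delta x_1$ can be a $q$-th power only if $q \mid \phi(p^\alpha)/r_1$. One first records that $p \nmid r_1$: for $\alpha \geq 2$ this follows from Corollary \ref{201} through Theorem \ref{098} (if $p \mid r_1$, the class $\epsilon_\alpha/u$ would become trivial modulo $\lan \mu(\Qbb_p(F_0)), (\Zbb_p(F_0)^\x)^p \ran$, contradicting Corollary \ref{201}), and for $\alpha = 1$ it is automatic since the ramification index is $p-1$. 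Hence $r_1 \mid p-1$ and $\phi(p^\alpha)/r_1 = \frac{p-1}{r_1}p^{\alpha-1}$. For any prime $q \mid r_2$ with $q \neq p$, the coprimality of $r_2$ with $\frac{p-1}{r_1}$ (the defining property of $r_{F_0,r_1}$) gives $q \nmid \frac{p-1}{r_1}$, hence $q \nmid \phi(p^\alpha)/r_1$, so $\delta x_1 \notin (K^\x)^q$ for every $\delta$; the same count disposes of the $4 \mid r_2$ clause and of $q = p$ when $\alpha = 1$.

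The one genuinely delicate point, and the main obstacle, is the prime $q = p$ when $\alpha \geq 2$, where $p \mid \phi(p^\alpha)/r_1$ and the valuation no longer obstructs. Here I would use the freedom in $\delta$. The key claim is that $\zeta_{p^\alpha} \notin (K^\x)^p$, since a $p$-th root of $\zeta_{p^\alpha}$ in $K$ would be a primitive $p^{\alpha+1}$-th root of unity, which cannot lie in $K = \Qbb_p(F_0)$ because the $p$-Sylow of $F_0$ has exact order $p^\alpha$. Consequently the image of $F_0$ in $K^\x/(K^\x)^p$ is non-trivial, so $F_0 \not\subseteq (K^\x)^p$; were $\delta x_1 \in (K^\x)^p$ for all $\delta \in F_0$, then taking $\delta = 1$ and dividing would force $F_0 \subseteq (K^\x)^p$, a contradiction. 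Thus some $\delta \in F_0$ satisfies $\delta x_1 \notin (K^\x)^p$. Because the conditions for the primes $q \neq p$ and the $4 \mid r_2$ clause hold for every $\delta$, this single $\delta$ meets all hypotheses of Theorem \ref{116} simultaneously, so $X^{r_2} - \delta x_1$ is irreducible over $K$, which completes the proof.
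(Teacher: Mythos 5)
Your proof is correct, and it shares the paper's overall skeleton — reduce via Theorem \ref{185} to the irreducibility of some $X^{r_2}-\delta x_1$ over $K = \Qbb_p(F_0)$, apply the Lang-type criterion of Theorem \ref{116}, and dispose of every prime $q \mid r_2$ with $q \neq p$, as well as the $4 \mid r_2$ clause, by the valuation count $q \nmid \phi(p^\alpha)/r_1$ (the paper packages this same count into the commutative diagram (\ref{227}), where the map to $H^2(\Zbb/q, \lan \pi_{F_0} \ran) \iso \Zbb/q$ is multiplication by $e(\Qbb_p(F_0))/r_1$) — but it diverges genuinely at the one delicate prime $q = p$ with $\alpha \geq 2$. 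There the paper invokes Corollary \ref{201}, i.e.\ the hard $\pi_\alpha$-adic computations of Section \ref{228}: since $x_1^{p-1} = pu\delta_0$ and $pu \equiv u\epsilon_\alpha^{-1}$ modulo $p$-th powers once $\pi_\alpha^{\phi(p^\alpha)}$ is absorbed, Corollary \ref{201} (which quotients out all of $\mu(\Qbb_p(F_0))$, so the twist by $\delta$ is irrelevant) shows that \emph{every} $\delta x_1$ is non-trivial in $H^2(\Zbb/p, K^\x)$. You instead exploit the existential quantifier built into Theorem \ref{185}: since $\zeta_{p^\alpha}$ has no $p$-th root in $K$ (such a root would be a primitive $p^{\alpha+1}$-th root of unity, excluded by the ramification index $\phi(p^\alpha)$), not all elements of $\{\delta x_1 : \delta \in F_0\}$ can lie in $(K^\x)^p$, and one good $\delta$ suffices because all the other conditions hold for every $\delta$. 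This is more elementary and bypasses Section \ref{228} at this step; you do still need Corollary \ref{201} once, through Theorem \ref{098}, to rule out $p \mid r_1$ — a point the paper leaves implicit in the very definition of $r_{F_0,r_1}$ (which presupposes $r_1 \mid p-1$), so making it explicit is a small improvement. What the paper's stronger ``every $\delta$ works'' statement buys is downstream: the count of the resulting maximal fields in Corollary \ref{224}, identified with $|H^2(\Zbb/r_2, F_0)|$, implicitly uses that all twists $\delta x_1$ yield irreducible equations, whereas your argument certifies only one. Finally, note that your closing formula $\phi(p^\alpha)/r_1 = \frac{p-1}{r_1}p^{\alpha-1}$ presumes $\alpha \geq 1$; for $\alpha = 0$ one has $r_1 = 1$ and the unramified value group $\Zbb$, so your valuation count still applies verbatim (the paper handles this case separately via the embedding theorem), and no gap results.
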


\begin{proof}
First note that if $\alpha = 0$, we must have $r_1 = 1$ and $x_1$ is a
uniformizing element of the unramified extension $\Qbb_p(F_0)/\Qbb_p$.
In this case $r_{F_0, r_1} = \frac{n}{[\Qbb_p(F_0):\Qbb_p]}$ and the
result follows from the embedding theorem.

Now assume that $\alpha \geq 1$, and let $r' = r_{F_0, r_1}'$ denote the
$p'$-part of $r = r_{F_0, r_1}$. Then for any prime $q$ dividing $r'$,
diagram (\ref{227}) can be extended to the commutative diagram
\[
    \xymatrix{
        H^2(\Zbb/r, \tilde{F_1}) \ar[r]^{i^\ast} \ar[d]
        & H^2(\Zbb/r, \Qbb_p(F_0)^\x) \ar[d] \ar[r]^{j^\ast}
        & H^2(\Zbb/q, \Qbb_p(F_0)^\x) \ar[d] \\
        \Zbb/r \ar[r]
        & \Zbb/r \ar@{->>}[r]
        & \Zbb/q,
    }
\]
where $j: \Zbb/q \ra \Zbb/r$ is the inclusion and the bottom right
horizontal map is the canonical projection. Since $r$ is prime to
$\frac{p-1}{r_1}$, it follows that $r'$, and hence $q$, are prime to
$\frac{e(\Qbb_p(F_0))}{r_1}$. Thus for any $\delta \in F_0$, the image
of $x_1 \delta \in \tilde{F_1}$ is non-trivial in $\Zbb/q = H^2(\Zbb/q,
\lan \pi_{F_0} \ran)$, and consequently non-trivial in $H^2(\Zbb/q,
\Qbb_p(F_0)^\x)$. In a similar way, it is equally non-trivial in
$H^2(\Zbb/4, \Qbb_p(F_0)^\x)$ if $4$ divides $r$. The result then
follows from theorem \ref{116} and corollary \ref{201}, where we have
shown that $x_1^{p-1}$, and hence $x_1$, is non-trivial in $H^2(\Zbb/p,
\Qbb_p(F_0)^\x)$. 
\end{proof}

\begin{theorem} \label{225}
Suppose $p = 2$ and $\tilde{F_1} \in \tilde{\Fcal}_u(\Qbb_2(F_0),
\tilde{F_0}, r_1) \neq \emptyset$. If $r_2$ divides $r_{F_0, r_1}$, then
$\tilde{\Fcal}_u(C_{\Dbb_n^\x}(F_0), \tilde{F_1}, r_2)$ is non-empty.
\end{theorem}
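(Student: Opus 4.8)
The plan is to follow the proof of Theorem \ref{223} almost verbatim, the only genuinely new input being the behaviour at the prime $2$, where $4$ is no longer a unit in $\Qbb_2(F_0)$. By Theorem \ref{185}.1, and since $r_2[\Qbb_2(F_0):\Qbb_2] \mid n$ holds automatically from $r_2 \mid r_{F_0,r_1} \mid \frac{n}{[\Qbb_2(F_0):\Qbb_2]}$, it suffices to produce a single $\delta \in F_0$ for which $X^{r_2}-\delta x_1$ is irreducible over $K := \Qbb_2(F_0)$, where $x_1 \in \tilde{F_1}$ has $v(x_1)=\frac{1}{r_1}$ and is normalised as in Remark \ref{297}. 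Irreducibility I would check through the criterion of Theorem \ref{116}: for every prime $q \mid r_2$ the class $\delta x_1$ must be non-trivial in $H^2(\Zbb/q, K^\x)$, and when $4 \mid r_2$ the class $\frac{-\delta x_1}{4}$ must be non-trivial in $H^2(\Zbb/4, K^\x)$.

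The odd primes are disposed of exactly as for $p>2$. For an odd $q \mid r_2$ I would use the commutative diagram (\ref{227}), extended to $\Zbb/q$ as in the proof of Theorem \ref{223}: the left-hand vertical surjection sends $\delta x_1$ to a generator of $H^2(\Zbb/q, \lan x_1 \ran)$ (as $\delta \in F_0$ projects trivially), and the bottom map is multiplication by $\frac{e(\Qbb_2(F_0))}{r_1}=\frac{2^{\alpha-1}}{r_1}$, which is a power of $2$ and hence a unit modulo the odd $q$. Thus $\delta x_1$ maps to a non-zero element of $\Zbb/q = H^2(\Zbb/q, \lan \pi_{F_0}\ran)$ and is non-trivial in $H^2(\Zbb/q, K^\x)$, for any choice of $\delta$.

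It remains to treat the $2$-part of $r_2$, which is where the argument departs from Theorem \ref{223}. I would first clear the easy cases by a valuation count: when $\alpha \leq 1$, or when $\alpha=2$ and $r_1=2$, the element $x_1$ satisfies $v_K(x_1)=\frac{e(\Qbb_2(F_0))}{r_1}=1$, i.e.\ $x_1$ is a uniformiser of $K$, so $\delta x_1$ has odd $K$-valuation and can be neither a square nor a fourth power; irreducibility follows at once (and, when $\alpha=0$, existence comes directly from the embedding theorem, since there $r_{F_0,r_1}=\frac{n}{[\Qbb_2(F_0):\Qbb_2]}$). In the remaining cases $v_K(x_1)=\frac{2^{\alpha-1}}{r_1}$ is even, the valuation no longer obstructs, and one must control $\delta x_1$ modulo squares and modulo fourth powers of $K^\x$. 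Here I would invoke the computations of Section \ref{229}: via $\pi_\alpha^{\phi(2^\alpha)}=2\epsilon_\alpha$ the class of $\delta x_1$ is, up to the square $\pi_\alpha^{\phi(2^\alpha)}$ and an element of $F_0$, that of $\epsilon_\alpha/u$, which by Corollary \ref{114} is non-trivial in $\Zbb_2(F_0)^\x/\lan \mu(\Qbb_2(F_0)), (\Zbb_2(F_0)^\x)^4 \ran$ for $\alpha \geq 3$; choosing $\delta$ to absorb the $F_0$-ambiguity yields non-triviality both in $H^2(\Zbb/2, K^\x)$ and in $H^2(\Zbb/4, K^\x)$. Corollary \ref{115} is used in parallel to confirm that the admissible $r_1$ are precisely those for which $r_{F_0,r_1}$ can carry a $2$-part at all, so no further case arises.

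The main obstacle is exactly this prime-$2$ bookkeeping. For $p>2$ the factor $4$ is a unit, so the $H^2(\Zbb/4, K^\x)$-condition of Theorem \ref{116} collapses to the same valuation computation as the $H^2(\Zbb/q, K^\x)$-conditions; for $p=2$ one must genuinely separate the square class from the fourth-power class of $x_1$, which is why the delicate $\pi_\alpha$-adic estimates of Section \ref{229}, rather than a mere valuation argument, are required. The one point demanding extra care is the low value $\alpha=2$, where Corollary \ref{114} does not apply: there I would argue either by the uniformiser count above (when $r_1=2$) or by the explicit identity $\epsilon_2 \equiv -\zeta_4 \bmod (\Zbb_2(F_0)^\x)^2$ of Example \ref{097}, according to the branch of the definition of $r_{F_0,r_1}$ into which the triple $(\alpha,u,F_0)$ falls.
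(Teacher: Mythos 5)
Your skeleton matches the paper's: reduce to theorem \ref{185}.1 and the irreducibility criterion of theorem \ref{116}, dispose of the odd primes $q \mid r_2$ with diagram (\ref{227}) exactly as in theorem \ref{223}, and settle $\alpha \leq 1$ (and the other uniformiser cases) by valuation. But your treatment of the $2$-part, which you rightly call the crux, has a genuine gap in both of its branches. When $x_1 = (1+i)t$ (i.e.\ $r_1 = 2$), your identification of the class of $\delta x_1$ with that of $\epsilon_\alpha/u$ modulo squares and $F_0$ is false: in that branch $u \equiv \pm 1 \mod 8$ or $\zeta_3 \in F_0$, so by theorem \ref{098} the class of $\epsilon_\alpha/u$ in $\Zbb_2(F_0)^\x/\lan F_0, (\Zbb_2(F_0)^\x)^2 \ran$ is \emph{trivial} (this is exactly why $r_1 = 2$ is realizable), whereas non-squareness of $\delta x_1$ is what you must prove; corollary \ref{114} can only be reached after first squaring ($x_1^2 = \pm 2iu$), an extra step your write-up omits and which anyway needs $\alpha \geq 3$. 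When $x_1 = 2u$ (i.e.\ $r_1 = 1$, $u \not\equiv \pm 1 \mod 8$, $\zeta_3 \notin F_0$), the identification is correct but corollary \ref{114} is the wrong tool: it is a statement modulo \emph{fourth} powers, and non-triviality modulo $\lan \mu(\Qbb_2(F_0)), (\Zbb_2(F_0)^\x)^4 \ran$ does not imply non-triviality modulo the larger subgroup $\lan F_0, (\Zbb_2(F_0)^\x)^2 \ran$; from $2u\delta = y^2$ one only deduces $\epsilon_\alpha/u \in \lan F_0, (\Zbb_2(F_0)^\x)^2 \ran$, which is perfectly compatible with corollary \ref{114}. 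Worse, your closing claim that "no further case arises" is exactly wrong here: the fourth branch of the definition of $r_{F_0,r_1}$ imposes no parity constraint at all, so $r_2$ may be even while $r_1 = 1$ and $x_1 = 2u$; this is precisely the configuration your argument cannot reach, and what it requires is the mod-squares analysis inside the proof of corollary \ref{115} ($u = \pm 3z^2$, and $-3 \in \lan F_0, (\Zbb_2(F_0)^\x)^2 \ran$ if and only if $\zeta_3 \in F_0$), not the mod-fourth-powers statement.

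The paper dispatches the whole $2$-part with two short devices that you do not use, and which would repair your proof with minimal change. First, whenever $2 \mid r_{F_0,r_1}$ the definition forces $r_1$ to be the maximal value allowed by corollary \ref{115} ($r_1 = 2$ in the first branch, $r_1 = 1$ in the second); hence $x_1$ cannot be a square in $\Qbb_2(F_0)^\x$ at all, for a square root $y$ would satisfy $v(y) = \frac{1}{2r_1}$ and $y^{2r_1} = x_1^{r_1} \in \tilde{F_0}$, so that $\lan \tilde{F_0}, y \ran$ would belong to $\tilde{\Fcal}_u(\Qbb_2(F_0), \tilde{F_0}, 2r_1) = \emptyset$. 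This settles the $H^2(\Zbb/2, K^\x)$ condition uniformly in both branches and for all $\alpha \geq 2$, with no $\pi_\alpha$-adic estimates. Second, since $\zeta_4 \in \Qbb_2(F_0)$ when $\alpha \geq 2$, the identity $(1+\zeta_4)^4 = -4$ gives $\frac{-x_1}{4} \equiv x_1$ modulo $(K^\x)^4$, so the $H^2(\Zbb/4, K^\x)$ condition of theorem \ref{116} reduces to the same non-squareness. With these two observations in place of your section \ref{229} bookkeeping, the rest of your argument (odd primes, $\alpha \leq 1$, reduction to theorems \ref{185} and \ref{116}) goes through.
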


\begin{proof}
First note that if $\alpha \leq 1$, we must have $r_1 = 1$ and $x_1$ is
a uniformizing element of the unramified extension $\Qbb_2(F_0)/\Qbb_2$.
In this case $r_{F_0, r_1} = \frac{n}{[\Qbb_2(F_0):\Qbb_2]}$ and the
result follows from the embedding theorem.

Now assume that $\alpha \geq 2$. If $r_2$ is divisible by $2$, then so
is $r_{F_0, r_1}$ and we know from corollary \ref{115} that $x_1$ is
non-trivial in $H^2(\Zbb/2, \Qbb_2(F_0)^\x)$. Moreover if $r_2$ is
divisible by $4$, the fact that
\[
    (1+\zeta_4)^4 = -4
\]
imply that $\frac{-x_1}{4}$ is non-trivial in $H^2(\Zbb/4,
\Qbb_2(F_0)^\x)$. Furthermore for any odd prime $q$ dividing $r =
r_{F_0, r_1}$, diagram (\ref{227}) can be extended to the commutative
diagram
\[
    \xymatrix{
        H^2(\Zbb/r, \tilde{F_1}) \ar[r]^{i^\ast} \ar[d]
        & H^2(\Zbb/r, \Qbb_2(F_0)^\x) \ar[d] \ar[r]^{j^\ast}
        & H^2(\Zbb/q, \Qbb_2(F_0)^\x) \ar[d] \\
        \Zbb/r \ar[r]
        & \Zbb/r \ar@{->>}[r]
        & \Zbb/q,
    }
\]
where $j: \Zbb/q \ra \Zbb/r$ is the inclusion and the bottom right
horizontal map is the canonical projection. As $q$ is prime to
$\frac{2}{r_1}$, the image of $x_1$ is non-trivial in $\Zbb/q =
H^2(\Zbb/q, \lan \pi_{F_0} \ran)$, and consequently non-trivial in
$H^2(\Zbb/q, \Qbb_2(F_0)^\x)$. We may thus apply theorem \ref{116} to
obtain the desired result.
\end{proof}

We say that $r_1$ is \emph{maximal} if $\tilde{\Fcal}_u(\Qbb_p(F_0),
\tilde{F_0}, r_1)$ is non-empty and $\tilde{\Fcal}_u(\Qbb_p(F_0),
\tilde{F_0}, r)$ is empty whenever $r>r_1$.

\begin{corollary} \label{224}
Let $p$ be any prime. If $r_1$ is maximal, then
\[
    \tilde{\Fcal}_u(C_{\Dbb_n^\x}(F_0), \tilde{F_1}, r_2) \neq \emptyset
    \qq \text{if and only if} \qq
    r_2\ |\ \frac{n}{[\Qbb_p(F_0):\Qbb_p]},
\]
and any element $\tilde{F_2}$ in $\tilde{\Fcal}_u(C_{\Dbb_n^\x}(F_0),
\tilde{F_1}, n[\Qbb_p(F_0):\Qbb_p]^{-1})$ generates a maximal
commutative field $\Qbb_p(\tilde{F_2})$ in $\Dbb_n$. Moreover if $F_0 =
\mu(\Qbb_p(F_0))$, the number of such field extensions is equal to
\[
    |H^2(\Zbb/r_2, F_0)| = |F_0 \ox \Zbb/r_2|.
\]
\end{corollary}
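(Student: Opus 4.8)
The plan is to prove the stated equivalence first and then the counting statement, working throughout in $\Dbb_n^\x$ and writing $K = \Qbb_p(F_0)$ and $r_2$ for a divisor of $n[K:\Qbb_p]^{-1}$. Necessity in the equivalence is immediate from theorem \ref{185}: if $\tilde{\Fcal}_u(C_{\Dbb_n^\x}(F_0), \tilde{F_1}, r_2)$ is non-empty, then any $\tilde{F_2}$ in it determines a commutative subfield $L = \Qbb_p(\tilde{F_2})$ of $\Dbb_n$ with $[L:K] = r_2$, and since every subfield of $\Dbb_n$ has degree dividing $n$, we get $r_2[K:\Qbb_p] \mid n$, i.e. $r_2 \mid n[K:\Qbb_p]^{-1}$. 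For sufficiency I would exhibit, for each such $r_2$, an irreducible equation of the form $X^{r_2} - x_1$ over $K$ and invoke theorem \ref{185} (this is the packaged content of theorems \ref{223} and \ref{225}, which apply once one knows $r_{F_0,r_1} = n[K:\Qbb_p]^{-1}$).

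The engine of the proof is the following consequence of the maximality of $r_1$, which I would establish directly: for every prime $\ell$ and every $\delta \in F_0$, the element $\delta x_1$ is \emph{not} an $\ell$-th power in $K^\x$. Indeed, suppose $\delta x_1 = y^\ell$ with $y \in K^\x$. Since $x_1^{r_1} \in \tilde{F_0}$ by corollary \ref{136}, we obtain $y^{\ell r_1} = (\delta x_1)^{r_1} \in \lan F_0, pu \ran = \tilde{F_0}$, whence $pu \in \lan F_0, y \ran$ and $\tilde{F_0}$ is of finite index in $\lan F_0, y \ran$. As $v(y) = \tfrac{1}{\ell r_1} \neq 0$, the valuation-zero part of the abelian group $\lan F_0, y \ran = \{\zeta y^j \mid \zeta \in F_0,\ j \in \Zbb\}$ is exactly $F_0$, so $\lan F_0, y \ran$ lies in $\tilde{\Fcal}_u(K, \tilde{F_0}, \ell r_1)$ with $\ell r_1 > r_1$, contradicting the maximality of $r_1$. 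Taking $\delta = 1$, theorem \ref{116} then shows that $X^{r_2} - x_1$ is irreducible over $K$ whenever $r_2[K:\Qbb_p] \mid n$: condition (a) holds since $x_1 \notin (K^\x)^\ell$ for each prime $\ell \mid r_2$, and when $4 \mid r_2$ the remaining condition on $\tfrac{-x_1}{4}$ follows from the identity $(1+\zeta_4)^4 = -4$ when $p=2$ (so $\tfrac{-x_1}{4} \equiv x_1$ modulo fourth powers) and from a valuation count when $p$ is odd. By theorem \ref{185} this produces the required $\tilde{F_2}$; and since $[\Qbb_p(\tilde{F_2}):\Qbb_p] = r_2[K:\Qbb_p]$, the choice $r_2 = n[K:\Qbb_p]^{-1}$ gives degree $n$, so $\Qbb_p(\tilde{F_2})$ is a maximal subfield of $\Dbb_n$.

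For the count assume $F_0 = \mu(K)$ and fix $r_2 = n[K:\Qbb_p]^{-1}$. By theorem \ref{185} each field $L = \Qbb_p(\tilde{F_2})$ arises as $K[X]/(X^{r_2} - \delta x_1)$ for some $\delta \in F_0$, and by the previous paragraph every such $\delta$ yields an irreducible polynomial, hence a field. Using the injectivity of the map $\Xi\colon a \mapsto K[X]/(X^{r_2}-a)$ on $K^\x/(K^\x)^{r_2}$, two parameters $\delta x_1$ and $\delta' x_1$ give $K$-isomorphic fields if and only if $\delta/\delta' \in (K^\x)^{r_2}$; by the Skolem--Noether theorem isomorphic subfields are conjugate in $\Dbb_n$, so the number of fields equals the number of classes of $F_0 x_1$ in $K^\x/(K^\x)^{r_2}$, namely $|F_0/(F_0 \cap (K^\x)^{r_2})|$. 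The hypothesis $F_0 = \mu(K)$ is now decisive: if $\zeta \in F_0$ equals $w^{r_2}$ with $w \in K^\x$, then $w$ has valuation zero and finite order, so $w \in \mu(K) = F_0$ and $\zeta \in F_0^{r_2}$; thus $F_0 \cap (K^\x)^{r_2} = F_0^{r_2}$ and the count becomes $|F_0/F_0^{r_2}| = |F_0 \ox \Zbb/r_2| = |H^2(\Zbb/r_2, F_0)|$, as claimed.

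The main obstacle is this final counting step, and precisely the passage from the admissible parameters $\delta x_1$ to honest field-isomorphism classes. One must guarantee that no accidental coincidence of fields occurs, i.e. that the a priori larger ambiguity by $r_2$-th powers lying in $L^\x$ collapses to $K^\x$; this is exactly what the injectivity of $\Xi$ secures, and it is the place where $F_0 = \mu(K)$ is genuinely used, through the identity $F_0 \cap (K^\x)^{r_2} = F_0^{r_2}$. A secondary delicate point is the fourth-power hypothesis in theorem \ref{116} when $4 \mid r_2$, which I expect to dispatch by the explicit relation $(1+\zeta_4)^4 = -4$ for $p=2$ and by a valuation argument—using that $e(K)/r_1$ is odd under maximality—for $p$ odd.
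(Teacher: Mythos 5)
Your proof is correct, but it takes a genuinely different route from the paper's on both halves of the statement (writing $K = \Qbb_p(F_0)$ as you do). For the existence half, the paper simply observes that maximality of $r_1$ forces $r_{F_0,r_1} = n[K:\Qbb_p]^{-1}$ and then quotes theorems \ref{223} and \ref{225}; you instead re-derive the needed irreducibility from scratch via your ``engine'' lemma (maximality of $r_1$ implies $\delta x_1 \notin (K^\x)^\ell$ for every prime $\ell$ and every $\delta \in F_0$), combined with theorems \ref{116} and \ref{185}. That lemma is correct -- the group $\lan F_0, y \ran$ you construct really does lie in $\tilde{\Fcal}_u(\Qbb_p(F_0)^\x, \tilde{F_0}, \ell r_1)$ -- and this route is arguably more robust: the paper's identification $r_{F_0,r_1} = n[K:\Qbb_p]^{-1}$ quietly uses that maximality forces $r_1 = p-1$ (resp.\ its $p=2$ analogue), which the paper establishes only when $F_0 = \mu(K)$, whereas your lemma needs nothing beyond the definition of maximality. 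For the counting half, the paper's cohomological diagram chase proves that $i^\ast$ is injective on $H^2(\Zbb/r_2, F_0)$, which unwinds to exactly the identity $F_0 \cap (K^\x)^{r_2} = F_0^{r_2}$; your elementary argument (a valuation-zero element of $K^\x$ whose $r_2$-th power has finite order is itself in $\mu(K) = F_0$) proves the same identity directly, so the two counts are parallel rather than different in substance. Both then rest on the same final step -- passing from distinct classes in $K^\x/(K^\x)^{r_2}$ to non-isomorphic fields -- namely the injectivity of $\Xi$ asserted in the paper before theorem \ref{223}; you invoke it explicitly where the paper uses it tacitly in its closing sentence. Be aware that this is the one point where neither argument is self-contained: a $K$-isomorphism $L_{\delta'} \cong L_{\delta}$ a priori only gives $\delta'/\delta \in \mu(L_\delta)^{r_2}$, so one must still rule out roots of unity of $L_\delta$ not lying in $K$ creating coincidences. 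Finally, two small inaccuracies in your sketch, both repairable: the claim that $e(K)/r_1$ is odd under maximality can fail for $p$ odd when $F_0 \neq \mu(K)$ (the maximal $r_1$ may then be a proper divisor of $p-1$), but whenever $4 \mid e(K)/r_1$ one necessarily has $4 \mid p-1$, hence $\zeta_4 \in \Qbb_p \subset K$ and your $(1+\zeta_4)^4 = -4$ trick applies in place of the valuation count; and for $p=2$ with $\alpha \leq 1$ the field $K$ contains no $\zeta_4$, but there $x_1 = 2u$ has valuation $1$ in the unramified field $K$, so the valuation count disposes of the condition on $-x_1/4$.
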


\begin{proof}
By the maximality of $r_1$ we have
\[
    r_{F_0, r_1}
    = \frac{n}{[\Qbb_p(F_0):\Qbb_p]}
\]
and $\tilde{\Fcal}_u(C_{\Dbb_n^\x}(F_0), \tilde{F_1}, r_2) \neq
\emptyset$ implies $r_2\ |\ \frac{n}{[\Qbb_p(F_0):\Qbb_p]}$. The first
assertion then follows from theorem \ref{223} and \ref{225}.

As for the last assertion, if $F_0 = \mu(\Qbb_p(F_0))$, diagram
(\ref{227}) can be extended, via the short exact sequences 
\begin{align*}
    &1 \raa F_0 \raa \tilde{F_1} \raa \lan x_1 \ran \raa 1,\\[1ex]
    &1 \raa F_0 \x \Zbb_p^{[\Qbb_p(F_0):\Qbb_p]}
      \raa \Qbb_p(F_0)^\x \raa \lan \pi_{F_0} \ran \raa 1,\\[1ex]
    &1 \raa F_0 \raa F_0 \x \Zbb_p^{[\Qbb_p(F_0):\Qbb_p]}
      \raa \Zbb_p^{[\Qbb_p(F_0):\Qbb_p]} \raa 1,
\end{align*}
to the exact diagram
\begin{align*}
    \xymatrix{
        & H^1(\Zbb/r_2, \lan x_1 \ran) \ar[d]
        & H^1(\Zbb/r_2, \lan \pi_{F_0} \ran) \ar[d]\\
        H^1(\Zbb/r_2, \Zbb_p^{[\Qbb_p(F_0):\Qbb_p]}) \ar[r]
        & H^2(\Zbb/r_2, F_0) \ar[r] \ar[d]
        & H^2(\Zbb/r_2, F_0 \x \Zbb_p^{[\Qbb_p(F_0):\Qbb_p]}) \ar[d]\\
        & H^2(\Zbb/r_2, \tilde{F_1}) \ar[r]^{i^\ast} \ar@{->>}[d]
        & H^2(\Zbb/r_2, \Qbb_p(F_0)^\x) \ar@{->>}[d] \\
        & H^2(\Zbb/r_2, \lan x_1 \ran) \ar[r]
        & H^2(\Zbb/r_2, \lan \pi_{F_0} \ran), \\
    }    
\end{align*}
where all three first cohomology groups are trivial, and where we know
from diagram (\ref{227}) that the bottom vertical maps are surjective.
In particular $i^\ast$ is injective on the kernels of the bottom
vertical maps, and therefore the number of maximal fields of the form
$\Qbb_p(\tilde{F_2})$ in $\Dbb_n$ is given by the cardinality of
$H^2(\Zbb/r_2, F_0)$.
\end{proof}


\chapter{On maximal finite subgroups of $\Gbb_n(u)$} 
\label{268}

We consider a prime $p$, a positive integer $n=(p-1)p^{k-1}m$ with $m$
prime to $p$, and a unit $u \in \Zbb_p^\x$. In this chapter, we work in
the context of section \ref{150} in order to study the classes of
maximal nonabelian finite subgroups of $\Gbb_n(u)$.  

More particularly, we consider (nonabelian) finite extensions of
$\tilde{F_2}$ when $F_0$ is maximal as an abelian finite subgroup of
$\Sbb_n$ and the field $L = \Qbb_p(\tilde{F_2})$ is maximal in $\Dbb_n$.
In this case $C_{\Dbb_n^\x}(\tilde{F_2}) = L^\x$ and we have a short
exact sequence
\[
    1 \raa \tilde{F_2} \raa L^\x \raa L^\x/\tilde{F_2} \raa 1,
\]
which for $W \subset Aut(L, \tilde{F_2}, F_0) \subset Gal(L/\Qbb_p)$
induces the long exact sequence
\[
    \xymatrix{
    \ldots \ar[r]
    & H^1(W, L^\x) \ar[r]
    & H^1(W, L^\x/\tilde{F_2}) \ar[d] 
    & Br(L/L^W) \ar@{=}[d] \\
    & 0 \ar@{=}[u]
    & H^2(W, \tilde{F_2}) \ar[r]^{i_W^\ast}
    & H^2(W, L^\x) \ar[r] 
    & \ldots,
    }
\]
where the left hand term is trivial by Hilbert's theorem 90 (see for
example \cite{neukirch} chapter IV theorem 3.5). Then theorem \ref{190}
and \ref{193} become explicit if we can determine the homomorphism
$i_W^\ast$. We will use the following fact extensively.

\bigskip

\begin{proposition} \label{214}
If $i_W^\ast$ is an epimorphism, then $i_{W'}^\ast$ is an epimorphism
for every subgroup $W'$ of $W$.
\end{proposition}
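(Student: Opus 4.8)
The plan is to reformulate the surjectivity of $i_W^\ast$ as the vanishing of a single induced map in cohomology, and then to transport that vanishing down to $W'$ along restriction. Write $q: L^\x \raa L^\x/\tilde{F_2}$ for the canonical projection and set $q_\ast^W := H^2(W, q)$. Since $L = \Qbb_p(\tilde{F_2})$ is maximal in $\Dbb_n$ we have $C_{\Dbb_n^\x}(\tilde{F_2}) = L^\x$, and the short exact sequence of $W$-modules $1 \raa \tilde{F_2} \raa L^\x \raa L^\x/\tilde{F_2} \raa 1$ yields the exact piece $H^2(W, \tilde{F_2}) \overset{i_W^\ast}{\raa} H^2(W, L^\x) \overset{q_\ast^W}{\raa} H^2(W, L^\x/\tilde{F_2})$. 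By exactness $\mathrm{im}(i_W^\ast) = \ker(q_\ast^W)$, so $i_W^\ast$ is an epimorphism if and only if $q_\ast^W = 0$, and the same equivalence holds verbatim for every subgroup $W'$. Thus the proposition reduces to the implication $q_\ast^W = 0 \Ra q_\ast^{W'} = 0$.

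Next I would exploit the naturality of restriction in the coefficient module. Since $q$ is a homomorphism of $W$-modules, restriction fits into the commutative square
\[
\xymatrix{
H^2(W, L^\x) \ar[r]^{q_\ast^W} \ar[d]_{\mathrm{res}}
& H^2(W, L^\x/\tilde{F_2}) \ar[d]^{\mathrm{res}} \\
H^2(W', L^\x) \ar[r]^{q_\ast^{W'}}
& H^2(W', L^\x/\tilde{F_2}),
}
\]
so that $q_\ast^{W'} \o \mathrm{res} = \mathrm{res} \o q_\ast^W$. The one substantial ingredient is that the left vertical map $H^2(W, L^\x) \raa H^2(W', L^\x)$ is surjective. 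Here $L^W$ and $L^{W'}$ are local fields, $L/L^W$ and $L/L^{W'}$ are Galois with groups $W \supset W'$, and by local class field theory (as already used in the long exact sequence of corollary \ref{192}) the invariant map identifies $H^2(W, L^\x) = Br(L/L^W) \iso \frac{1}{|W|}\Zbb/\Zbb$ and $H^2(W', L^\x) = Br(L/L^{W'}) \iso \frac{1}{|W'|}\Zbb/\Zbb$; under these identifications restriction is multiplication by $[W:W'] = [L^{W'}:L^W]$ on invariants, that is, the canonical surjection $\frac{1}{|W|}\Zbb/\Zbb \raa \frac{1}{|W'|}\Zbb/\Zbb$.

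The conclusion is then a one-line diagram chase: assuming $q_\ast^W = 0$, any $y \in H^2(W', L^\x)$ can be written $y = \mathrm{res}(x)$ by surjectivity, whence $q_\ast^{W'}(y) = q_\ast^{W'}(\mathrm{res}(x)) = \mathrm{res}(q_\ast^W(x)) = 0$; therefore $q_\ast^{W'} = 0$ and $i_{W'}^\ast$ is an epimorphism. I expect the only genuinely non-formal step to be the surjectivity of restriction on the relative Brauer groups; everything else — the long exact sequence and the commutativity of the square — is formal and uses only that $q$ is a morphism of $W$-modules. That surjectivity rests entirely on the local class field theory computation of $Br(L/L^W)$ via the invariant map together with the behaviour of restriction on Hasse invariants, which is exactly the mechanism invoked for corollary \ref{192}.
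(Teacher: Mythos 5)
Your proof is correct and is essentially the paper's own argument: both rest on the surjectivity of the restriction map $H^2(W, L^\x) \raa H^2(W', L^\x)$ (corollary \ref{255}, proved exactly via the Hasse-invariant description of the relative Brauer groups that you invoke) combined with the naturality of restriction in the coefficient module. Your detour through the quotient — recasting surjectivity of $i_W^\ast$ as the vanishing of $H^2(W, L^\x) \raa H^2(W, L^\x/\tilde{F_2})$ via the long exact sequence — is a harmless reformulation of the same diagram chase; the paper simply chases the square with coefficients $\tilde{F_2}$ and $L^\x$ directly, concluding that a surjection followed by a surjection factors through $i_{W'}^\ast$.
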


\begin{proof}
The bifunctoriality of the cohomology induces a commutative square
\[
    \xymatrix{
        H^2(W, \tilde{F_2}) \ar[r]^{i_W^\ast} \ar[d]
        & H^2(W, L^\x) \ar[d] \\
        H^2(W', \tilde{F_2}) \ar[r]^{i_{W'}^\ast}
        & H^2(W', L^\x).
    }
\]
By corollary \ref{255}, the right hand map of this square is surjective.
Hence if $i_W^\ast$ is surjective, the bottom horizontal homomorphism is
surjective as well.
\end{proof}

The cases $p>2$ and $p=2$ are treated separately.

\section{Extensions of maximal abelian finite subgroups 
of $\Sbb_n$ for $p>2$} 

In this section, we assume $p$ to be odd, $F_0$ to be maximal abelian,
and $\tilde{F_1}$ to be maximal as a subgroup of $\Qbb_p(F_0)^\x$ having
$\tilde{F_0}$ as a subgroup of finite index; in other words
\[
    F_0 \iso C_{p^\alpha} \x C_{p^{n_\alpha}-1}
    \qq \text{with} \q 
    0 \leq \alpha \leq k, \q
    n_\alpha = \frac{n}{\phi(p^\alpha)}, 
\]
and
\[
    \tilde{F_1} =
    \begin{cases}
        \tilde{F_0} = F_0 \x \lan pu \ran & \text{if } \alpha = 0,\\
        F_0 \x \lan x_1 \ran & \text{if } \alpha \geq 1,
    \end{cases}
\]
where in the last case $x_1 \in \Qbb_p(\zeta_p) \subset \Qbb_p(F_0)$
satisfies
\[
    v(x_1) = \frac{1}{p-1}
    \qq \text{and} \qq
    x_1^{p-1} \in \tilde{F_0} = F_0 \x \lan pu \ran.
\]
In fact we may assume $x_1$ to satisfy $x_1^{p-1} = pu\delta$ for
$\delta \in \mu_{p-1}(\Qbb_p(\zeta_p))$ as given in corollary \ref{095}
and remark \ref{296}. By definition $\Qbb_p(F_0) = \Qbb_p(\tilde{F_1})$,
and because the latter is a maximal subfield of $\Dbb_n$ we have
$\tilde{F_1} = \tilde{F_2}$.  We let 
\[
    G 
    := Gal(\Qbb_p(F_0)/\Qbb_p) 
    \iso
    \begin{cases}
        C_n 
        & \text{if } \alpha = 0,\\
        C_{p-1} \x C_{p^{\alpha-1}} \x C_{n_\alpha}
        & \text{if } \alpha \geq 1,
    \end{cases}
\]
as given by proposition \ref{342}. From our choice of $x_1$, we know
that $\tilde{F_1}$ is stable under the action of a subgroup $W \subset
G$; this is because if $\sigma \in W$, then $\frac{\sigma(x_1)}{x_1}$ is
a $(p\!-\!1)$-th root of unity in $\Qbb_p^\x$, and hence $\sigma(x_1) \in
x_1 \lan \zeta_{p-1} \ran \subset \tilde{F_1}$ for $\zeta_{p-1} \in
\Qbb_p^\x$. The goal of the section is to determine necessary and
sufficient conditions on $n$, $p$, $u$ and $\alpha$ for the homomorphism
\[
    i_G^\ast: H^2(G, \tilde{F_1}) \raa H^2(G, \Qbb_p(F_0)^\x)
\]
to be surjective, and whenever this happens, we want to determine its
kernel. This is done via the analysis of
\[
    i_W^\ast: H^2(W, \tilde{F_1}) \raa H^2(W, \Qbb_p(F_0)^\x)
\]
for suitable subgroups $W \subset G$.

\subsection*{The case $\alpha = 0$}

The situation is much simpler when the $p$-Sylow subgroup of $F_0$ is
trivial.

\begin{lemma} \label{215}
If $\alpha = 0$ and $W = C_n$, then
\begin{align*}
    H^\ast(W, \tilde{F_1}) &\iso
    \begin{cases}
        \lan pu \ran \x C_{p-1} 
        & \text{if } \ast = 0,\\
        0 
        & \text{if } 0 < \ast \text{ is odd},\\
        \lan pu \ran / \lan (pu)^n \ran 
        & \text{if } 0 < \ast \text{ is even};
    \end{cases} \\[2ex]
    H^\ast(W, \Qbb_p(F_0)^\x) &\iso
    \begin{cases}
        \Qbb_p^\x 
        & \text{if } \ast = 0,\\
        0 
        & \text{if } 0 < \ast \text{ is odd},\\
        \lan p \ran / \lan p^n \ran 
        & \text{if } 0 < \ast \text{ is even}.
    \end{cases}
\end{align*}
\end{lemma}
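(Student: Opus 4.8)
The plan is to compute the Tate cohomology of the cyclic group $W = C_n$ acting on the two modules $\tilde{F_1}$ and $\Qbb_p(F_0)^\x$ directly, exploiting the periodicity of cyclic group cohomology. When $\alpha = 0$, the field $\Qbb_p(F_0)$ is the unramified extension of degree $n$ over $\Qbb_p$, so $G = Gal(\Qbb_p(F_0)/\Qbb_p) \iso C_n$ is generated by the Frobenius $\sigma$, and $W = G = C_n$ acts on $\Qbb_p(F_0)^\x$ in the natural way. Recall that for a cyclic group $W$ of order $n$ with generator $\sigma$ and a $W$-module $M$, the cohomology is $2$-periodic in positive degrees: $H^{2i}(W, M) \iso \hat H^0(W, M) = M^W / N M$ (the Tate quotient, with $N = \sum_{j=0}^{n-1} \sigma^j$ the norm) and $H^{2i-1}(W, M) \iso \hat H^{-1}(W, M) = {}_N M / (\sigma - 1) M$ for all $i \geq 1$, while $H^0(W, M) = M^W$. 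So the whole problem reduces to identifying $M^W$, the norm map $N$, and the kernel ${}_N M$ in each case.

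\textbf{The module $\Qbb_p(F_0)^\x$.}
Since $\alpha = 0$, the extension is unramified, and $\Qbb_p(F_0)^\x \iso \Zbb \x \Zbb_p(F_0)^\x$ via the valuation, where the valuation factor $\Zbb$ (generated by $p$) carries the trivial $W$-action. First I would record $H^0(W, \Qbb_p(F_0)^\x) = (\Qbb_p(F_0)^\x)^W = \Qbb_p^\x$, which is immediate. For the higher cohomology I would use that $H^{\text{odd}}(W, \Qbb_p(F_0)^\x) = 0$ by Hilbert's Theorem 90 (already invoked in the excerpt, e.g.\ in corollary \ref{192}). For the even degrees, local class field theory (or the fundamental exact sequence for units of an unramified extension) gives that $\hat H^0(W, \Zbb_p(F_0)^\x)$ is trivial: the norm map on the units of an unramified local extension is surjective onto $\Zbb_p^\x$, so $\Zbb_p^\x / N(\Zbb_p(F_0)^\x) = 0$. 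Thus only the trivially-acting valuation factor $\Zbb$ contributes, giving $H^{\text{even}}(W, \Qbb_p(F_0)^\x) \iso \hat H^0(C_n, \Zbb) = \Zbb/n \iso \lan p \ran/\lan p^n \ran$ for positive even $\ast$.

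\textbf{The module $\tilde{F_1}$.}
Here $\tilde{F_1} = \tilde{F_0} = F_0 \x \lan pu \ran$ with $F_0 \iso C_{p^n-1}$ the full group of roots of unity $\mu(\Qbb_p(F_0))$, and $\lan pu \ran \iso \Zbb$ is central with trivial $W$-action. I would split the computation along this product decomposition, so that $H^\ast(W, \tilde{F_1}) \iso H^\ast(W, F_0) \x H^\ast(W, \lan pu \ran)$. The factor $\lan pu \ran$ behaves exactly like the valuation factor above: it contributes $\lan pu \ran$ in degree $0$ and $\lan pu \ran/\lan (pu)^n \ran$ in positive even degrees. For the torsion factor $F_0 = \mu_{p^n-1}$, the Frobenius acts by $\zeta \mapsto \zeta^p$, so $F_0^W = \mu_{p-1} \iso C_{p-1}$ (fixed points are exactly the $(p-1)$-st roots of unity lying in $\Qbb_p$), accounting for the $C_{p-1}$ summand in degree $0$. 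The main point to verify — and the step I expect to require the most care — is that $\hat H^0(W, F_0) = F_0^W/N F_0$ and $\hat H^{-1}(W, F_0) = {}_N F_0/(\sigma-1)F_0$ both vanish. Since $F_0$ is a finite cyclic group of order $p^n - 1$ on which $\sigma$ acts as multiplication by $p$ (additively, identifying $F_0 \iso \Zbb/(p^n-1)$), the norm is multiplication by $1 + p + \cdots + p^{n-1} = (p^n-1)/(p-1)$, and $\sigma - 1$ is multiplication by $p - 1$; a direct arithmetic check on the cyclic group $\Zbb/(p^n-1)$ shows these two maps have matching image and kernel so that both Tate groups are trivial (this is the standard fact that a finite module of order prime to \ldots needs adjustment here, so the explicit generator-and-relation count is unavoidable). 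Combining the two factors yields exactly the stated answer: $\tilde{F_1}^W = \lan pu \ran \x C_{p-1}$ in degree $0$, zero in positive odd degrees, and $\lan pu \ran/\lan (pu)^n \ran$ in positive even degrees.
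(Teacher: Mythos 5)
Your proposal is correct and takes essentially the same route as the paper: the same periodic-complex computation of $1-t$ and $N$ on $\tilde{F_1} \iso \Zbb \x \Zbb/(p^n\!-\!1)$ (your explicit check that the relevant kernels and images coincide is exactly the step the paper leaves as "the desired result follows"), and for $\Qbb_p(F_0)^\x$ the same combination of Hilbert's theorem 90 with the cohomological triviality of the units of an unramified extension. The only cosmetic difference is that you invoke norm surjectivity on units (the paper's proposition \ref{257}) where the paper cites the valuation-map isomorphism of proposition \ref{256}; these are two faces of the same fact.
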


\begin{proof}
The action of $C_n = W$ on $\tilde{F_1} \iso \lan pu \ran \x C_{p^n-1}$
is trivial on $\lan pu \ran$ and acts on $C_{p^n-1}$ by $\zeta \mto
\zeta^p$.

For $t$ a generator of $C_n$, written additively, and $N =
\sum_{i=0}^{n-1} t^i$, $H^\ast(C_n, \tilde{F_1})$ is the cohomology of
the complex
\[
    \xymatrix{
        \tilde{F_1} \ar[r]^{1-t}
        & \tilde{F_1} \ar[r]^{N}
        & \tilde{F_1} \ar[r]^{1-t}
        & \ldots\ .
    }
\]
Using additive notation for $\tilde{F_1} \iso \Zbb \x \Zbb/p^n-1$, we
obtain
\begin{align*}
    (1-t)(1, 0) &= (0, 0), &
    (1-t)(0, 1) &= (0, 1-p),\\
    N(1, 0) &= (n, 0), &
    N(0, 1) &= (0, \frac{p^n-1}{p-1}),
\end{align*}
and the desired result for $H^\ast(W, \tilde{F_1})$ follows.

Now let $L = \Qbb_p(F_0) = \Qbb_p(\tilde{F_1})$ and $K = L^W$. Then
\[
    H^0(W, \Qbb_p(F_0)^\x) 
    = K^\x 
    = \Qbb_p^\x
\]
and $H^1(W, \Qbb_p(\tilde{F_1})) = 0$ by Hilbert's theorem 90.
Furthermore, since $L/K$ is unramified, we know from proposition
\ref{256} that the valuation map induces an isomorphism
\[
    H^2(W, L^\x) \iso H^2(W, \frac{1}{e(L)}\Zbb)
    \iso \Zbb/|W|\Zbb.
\]
Here $e(L) = 1$, and as $v(p) = 1$, the element $p$ represents a
generator of the cyclic group $H^2(W, L^\x)$. The result then follows
from the periodicity of the cohomology.
\end{proof}

\begin{corollary} \label{216}
If $\alpha = 0$ and $W \subset C_n$, then $i_W^\ast$ is an
isomorphism.
\end{corollary}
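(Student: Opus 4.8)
The plan is to reduce everything to the computation already performed in lemma \ref{215}, which treats $W = C_n$, by observing that the same mechanism works verbatim for each subgroup. Write $W = C_d$ for the subgroup of order $d \mid n$ in $G = Gal(L/\Qbb_p) \iso C_n$, where $L := \Qbb_p(F_0) = \Qbb_p(\tilde F_1)$ is the unramified extension of degree $n$. Its fixed field $K := L^W$ is the unramified extension of degree $n/d$, so that $W = Gal(L/K)$ and $q := p^{n/d}$ is the cardinality of the residue field of $K$. Under this identification $W$ acts on the roots of unity $\mu(L) = C_{p^n-1} = C_{q^d-1}$ through $\zeta \mapsto \zeta^q$.

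First I would decompose $\tilde F_1 = F_0 \x \lan pu \ran = \mu(L) \x \lan pu \ran$ as a product of $W$-modules, the factor $\lan pu \ran \iso \Zbb$ carrying the trivial action since $pu$ is central and fixed by $G$. This splits the cohomology as $H^2(W, \tilde F_1) \iso H^2(W, \lan pu \ran) \op H^2(W, \mu(L))$. The first summand is $H^2(W, \Zbb) \iso \Zbb/d$ by periodicity of the cohomology of the cyclic group $W$. For the second I would use the same periodicity to compute $H^2(W, \mu(L)) \iso \mu(L)^W / N\mu(L)$, where $\mu(L)^W = C_{q-1}$ and the norm $N$ acts as multiplication by $1 + q + \cdots + q^{d-1} = \tfrac{q^d-1}{q-1}$, whose image in $C_{q^d-1}$ again has order $q-1$. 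Since $N\mu(L) \subset \mu(L)^W$ and both have order $q-1$ they coincide, so $H^2(W, \mu(L)) = 0$ and hence $H^2(W, \tilde F_1) \iso \Zbb/d$, generated by the class of $pu$.

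On the target side I would invoke exactly the argument of lemma \ref{215}: as $L/K$ is unramified, proposition \ref{256} together with Hilbert's theorem 90 shows that the valuation induces an isomorphism $H^2(W, L^\x) \iso \Zbb/|W|\Zbb = \Zbb/d$ under which $p$ represents a generator. To finish I would push the generator $pu$ through the composite $H^2(W, \lan pu \ran) \hookrightarrow H^2(W, \tilde F_1) \overset{i_W^\ast}{\raa} H^2(W, L^\x) \overset{\sim}{\raa} \Zbb/d$: the first inclusion is an isomorphism because the $\mu(L)$-summand vanishes, and the whole composite is induced on trivial modules by $pu \mapsto v(pu) = 1$, hence is the identity of $\Zbb/d$. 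Therefore $i_W^\ast$ sends a generator to a generator and is an isomorphism.

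The only genuine content, and the step I expect to require the most care, is the vanishing $H^2(W, \mu(L)) = 0$, that is, the verification that the norm of $W$ already surjects onto the fixed roots of unity $C_{q-1}$; this is the general-$d$ analogue of the even-degree computation in lemma \ref{215}, and everything else is formal once the unramified identification of $H^2(W, L^\x)$ is granted.
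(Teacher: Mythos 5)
Your proposal is correct, but it takes a different route from the paper's. The paper argues in two steps: first it treats $W = C_n$ alone, using the fact that for the unramified extension $L/\Qbb_p$ the units $\Ocal_{\Qbb_p}^\x$ lie in the norm image (proposition \ref{256}), so that $u \in N_{L/\Qbb_p}(L^\x)$ and hence $i_{C_n}^\ast(pu) = i_{C_n}^\ast(p)$ is a generator; it then descends to arbitrary $W \subset C_n$ formally, invoking proposition \ref{214} (surjectivity of $i^\ast$ passes to subgroups, via the Brauer-group functoriality of corollary \ref{255}) and a counting argument to upgrade the resulting epimorphism to an isomorphism. You instead give a uniform direct computation for every $W = C_d$: you redo lemma \ref{215} for general $d$ (the key point being $H^2(C_d, \mu(L)) = 0$, i.e.\ that the norm image $\tfrac{q^d-1}{q-1}\cdot \Zbb/(q^d-1)$ coincides with the fixed points $C_{q-1}$), and you detect that $i_W^\ast(pu)$ generates $H^2(W, L^\x)$ by applying the valuation directly, $v(pu) = 1$, rather than by showing $u$ is a norm — two phrasings of the same underlying fact, since the isomorphism of proposition \ref{256} is valuation-induced. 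What the paper's route buys is economy and reusability: proposition \ref{214} is general machinery reused elsewhere (e.g.\ in the $p=2$ case), and no cohomology needs to be recomputed. What your route buys is self-containedness and, arguably, rigor on a point the paper glosses over: the paper's final "hence from lemma \ref{215}" implicitly requires the analogue of that lemma for proper subgroups $C_d \subsetneq C_n$ (one needs both $H^2$'s to be finite of the same order $d$ to conclude an epimorphism is an isomorphism), and that general-$d$ statement is exactly what you prove explicitly.
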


\begin{proof}
Let $L = \Qbb_p(F_0)$ and $K = L^{C_n} = \Qbb_p$. Since $L/K$ is
unramified, $\Ocal_K^\x$ is in the image of the norm by proposition
\ref{256} and $u \in N_{L/K}(L^\x)$. Hence $i_{C_n}^\ast(pu) = p$ and
$i_{C_n}^\ast$ is an isomorphism. For any subgroup $W \subset C_n$, it
follows from proposition \ref{214} that $i_W^\ast$ is an epimorphism,
and hence from lemma \ref{215} that it is an isomorphism.
\end{proof}

\begin{example} \label{251}
Let $\alpha = 0$ and $F_0 \iso C_{p^n-1}$ generated by a primitive
$(p^n\!-\!1)$-th root of unity $\omega$. Since $\Qbb_p(F_0)/\Qbb_p$ is a
maximal unramified commutative extension in $\Dbb_n$, we have
$\tilde{F_0} = \tilde{F_1} = \tilde{F_2}$.  Furthermore, as noted in
remark \ref{334}, there is an element $\xi_u$ in $\Dbb_n^\x$ that
generates the Frobenius $\sigma$ of $\Qbb_p(\omega)$ in such a way that
\[
    \Dbb_n
    \iso \Qbb_p(\omega)\lan \xi_u \ran 
    / (\xi_u^n = pu,\ \xi_u x = x^\sigma \xi_u)
    \qq \text{and} \qq
    \omega^\sigma = \omega^p.
\]
Hence for any $u \in \Zbb_p^\x$, $\tilde{F_3} \iso F_0 \rtimes \lan
\xi_u \ran$. In $\Gbb_n(u)$, we therefore have an extension
\[
    1 \raa F_0 \raa F_3 \raa C_n \raa 1
\]
with $C_n \iso Gal(\Qbb_p(F_0)/\Qbb_p)$ acting faithfully on the kernel
and
\[
    F_3
    = \lan \omega, \bar{\xi}_u \ 
      |\ \omega^{p^n-1} = \bar{\xi}_u^n = 1,\ 
      \bar{\xi}_u\omega\bar{\xi}_u^{-1}=\omega^p \ran
    \iso C_{p^n-1} \rtimes C_n
\]
for $\bar{\xi}_u$ the class of $\xi_u$ in $\Gbb_n(u)$.
\end{example}

\subsection*{The case $\alpha \geq 1$}

For the rest of the section we let $\alpha \geq 1$. The Galois group $G$
of $\Qbb_p(F_0)/\Qbb_p$ decomposes canonically as
\[
    G = G_p \x G_{p'},
\]
where $G_p = C_{p^{\alpha-1}} \x C_{\frac{n_\alpha}{m}}$ is the
$p$-torsion subgroup in the abelian group $G$ and $G_{p'} = C_{p-1} \x
C_m$ is the subgroup of elements of torsion prime to $p$. If $W$ is any
subgroup of $G$, then $W$ decomposes canonically in the same way as
\[
    W = W_p \x W_{p'}
    \qq \text{with} \q
    W_p \subset G_p
    \q \text{and} \q
    W_{p'} \subset G_{p'}.
\]
For any such $W \subset G$ we define the groups
\[
    W_0 := W_{p'}, \qq
    W_1 := W_0 \x (W_p \cap Aut(C_{p^\alpha}))
    \qq \text{and} \qq
    W_2 := W.
\]

\begin{proposition} \label{243}
If $\alpha \geq 1$, then $i_{W_0}^\ast$ is always an isomorphism.
\end{proposition}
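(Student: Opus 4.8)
The plan is to compute $i_{W_0}^\ast$ by slicing both $\tilde{F_1}$ and $\Qbb_p(F_0)^\x$ along the valuation and comparing the resulting cohomology sequences. Write $L = \Qbb_p(F_0)$ and $e = \phi(p^\alpha) = (p-1)p^{\alpha-1}$ for the ramification index, so $v(L^\x) = \frac{1}{e}\Zbb$; normalizing by $w := ev$ one has $w(x_1) = e/(p-1) = p^{\alpha-1}$. Since $\alpha \geq 1$ forces $F_0 = \mu(\Qbb_p(F_0))$ and $\tilde{F_1} = F_0 \x \lan x_1 \ran$ with $\tilde{F_1} \cap \Zbb_p(F_0)^\x = F_0$, I would record the commutative ladder of short exact sequences
\[
    1 \raa F_0 \raa \tilde{F_1} \overset{w}{\raa} p^{\alpha-1}\Zbb \raa 1,
    \qq
    1 \raa \Zbb_p(F_0)^\x \raa L^\x \overset{w}{\raa} \Zbb \raa 1,
\]
whose vertical maps are the canonical inclusions, and then apply the five lemma to the induced maps of long exact sequences in $W_0$-cohomology.

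The two outer columns I would dispatch using that $|W_0| = |W_{p'}|$ is prime to $p$. On the value-group side, $W_0$ acts trivially on $\Zbb$ and on $p^{\alpha-1}\Zbb$ (valuations are Galois-invariant), so $H^1(W_0, \Zbb) = H^1(W_0, p^{\alpha-1}\Zbb) = 0$, and under $H^2(W_0,\Zbb) \iso \mathrm{Hom}(W_0, \Qbb/\Zbb)$ the inclusion $p^{\alpha-1}\Zbb \hookrightarrow \Zbb$ induces multiplication by $p^{\alpha-1}$, which is an isomorphism because $p^{\alpha-1}$ is invertible modulo $|W_0|$; the same argument works in degree $3$. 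On the unit side, proposition \ref{341} together with $F_0 = \mu(\Qbb_p(F_0))$ gives $\Zbb_p(F_0)^\x/F_0 \iso \Zbb_p^{[L:\Qbb_p]}$, a free $\Zbb_p$-module, hence uniquely divisible by $|W_0|$; therefore $H^i(W_0, \Zbb_p(F_0)^\x/F_0) = 0$ for $i > 0$, and the inclusion $F_0 \hookrightarrow \Zbb_p(F_0)^\x$ yields isomorphisms $H^i(W_0, F_0) \iso H^i(W_0, \Zbb_p(F_0)^\x)$ for all $i \geq 2$.

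Finally I would invoke Hilbert's theorem $90$ to get $H^1(W_0, L^\x) = 0$, which (with the vanishing of $H^1$ of the value groups) makes both rows left-exact at the $H^2$-of-kernel term; the five lemma applied to the degree-$2$ segment, bounded on the right by the degree-$3$ isomorphism, then forces the middle map $i_{W_0}^\ast$ to be an isomorphism. The main obstacle is bookkeeping rather than depth: one must check that the valuation identifies the quotient map of $\tilde{F_1}$ with $p^{\alpha-1}\Zbb \hookrightarrow \Zbb$ correctly (so that the induced $H^2$-map really is multiplication by $p^{\alpha-1}$) and that the structural identifications $F_0 = \mu(\Qbb_p(F_0))$ and $\Zbb_p(F_0)^\x/F_0 \iso \Zbb_p^{[L:\Qbb_p]}$ are $W_0$-equivariant, so that the coprimality of $|W_0|$ to $p$ can be brought to bear on each column.
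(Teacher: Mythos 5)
Your argument is correct, and it ultimately rests on the same two facts as the paper's proof: the discrepancy between the value groups of $\tilde{F_1}$ and $\Qbb_p(F_0)^\x$ is a $p$-group, and $\Zbb_p(F_0)^\x/F_0$ is a $\Zbb_p$-module, so that in each case the coprimality of $|W_0|$ with $p$ annihilates the relevant cohomology. What differs is the packaging. The paper works with the single short exact sequence $1 \ra \tilde{F_1} \ra \Qbb_p(F_0)^\x \ra \Qbb_p(F_0)^\x/\tilde{F_1} \ra 1$, exhibits the quotient as an extension of the $p$-torsion group $\Zbb/\Zbb\lan x_1 \ran$ by $\Zbb_p(F_0)^\x/F_0$, deduces $H^\ast(W_0, \Qbb_p(F_0)^\x/\tilde{F_1}) = 0$ for $\ast > 0$, and reads off bijectivity of $i_{W_0}^\ast$ from one long exact sequence. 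You instead filter both $\tilde{F_1}$ and $\Qbb_p(F_0)^\x$ by the valuation and run the five lemma on the resulting ladder; this is interchangeable with the paper's argument, at the cost of a little more bookkeeping (identifying the outer vertical maps in degrees $2$ and $3$ as isomorphisms), whereas the cokernel-vanishing argument delivers injectivity and surjectivity in one stroke.

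Two small remarks. First, your closing worry about $W_0$-equivariance of the identification $\Zbb_p(F_0)^\x/F_0 \iso \Zbb_p^{[L:\Qbb_p]}$ is unnecessary: all your argument uses is that $\Zbb_p(F_0)^\x/F_0$ is uniquely $|W_0|$-divisible, and unique divisibility by an integer prime to $p$ is an intrinsic property of this $W_0$-module (it holds for any abelian pro-$p$ group), independent of any choice of splitting; the same comment applies to the paper's phrase ``free over $\Zbb_p$''. Second, Hilbert's theorem 90 is not actually needed: the left-exactness of the bottom row at $H^2(W_0, \Zbb_p(F_0)^\x)$ already follows from $H^1(W_0, \Zbb) = 0$, and the five lemma as you apply it only requires the leftmost vertical map to be surjective, which is automatic since both $H^1$'s of the value groups vanish.
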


\begin{proof}
The inclusion $\tilde{F_1} \subset \Qbb_p(F_0)^\x$ induces a short exact
sequence
\[
    1 \raa \tilde{F_1} \raa \Qbb_p(F_0)^\x 
    \raa \Qbb_p(F_0)^\x/\tilde{F_1} \raa 1,
\]
which induces a long exact sequence
\[
    \xymatrix{
        \ar[r]
        & H^1(W_0, \Qbb_p(F_0)^\x/\tilde{F_1}) \ar[r]
        & H^2(W_0, \tilde{F_1}) \ar[d]^{i_{W_0}^\ast} \\
        && H^2(W_0, \Qbb_p(F_0)^\x) \ar[r]
        & H^2(W_0, \Qbb_p(F_0)^\x/\tilde{F_1}) \ar[r]
        & 
    }
\]
The group $\Qbb_p(F_0)^\x/\tilde{F_1}$ fits into an exact sequence
\[
    1 \raa \Zbb_p(F_0)^\x/F_0
    \raa \Qbb_p(F_0)^\x/\tilde{F_1}
    \raa \Zbb/\Zbb\lan x_1 \ran
    \raa 1,
\]
induced by the exact sequences
\begin{align*}
    1 \raa F_0 \raa &\tilde{F_1} 
    \overset{v}{\raa} \Zbb\lan x_1 \ran
    \raa 1,\\[1ex]
    1 \raa \Zbb_p(F_0)^\x
    \raa &\Qbb_p(F_0)^\x
    \overset{v}{\raa} \Zbb
    \raa 1.
\end{align*}
Note that the group $\Zbb_p(F_0)^\x/F_0$ is free over $\Zbb_p$, while as
$\tilde{F_1}$ is maximal the quotient $\Zbb/\Zbb\lan x_1 \ran =
\Zbb/nv(x_1)\Zbb$ is a $p$-torsion group. Since $|W_0|$ is prime to $p$
we get
\[
    H^\ast(W_0, \Zbb_p(F_0)^\x/F_0)
    = H^\ast(W_0, \Zbb/\Zbb\lan x_1 \ran)
    = 0
    \qq \text{for } \ast > 0.
\]
Thus
\[
    H^\ast(W_0, \Qbb_p(F_0)^\x/\tilde{F_1}) 
    = 0
    \qq \text{for } \ast > 0,
\]
and the result follows.
\end{proof}

\begin{lemma} \label{221}
If $\alpha \geq 1$ and $W = C_{p-1} \subset Aut(C_{p^\alpha})$, then
\begin{align*}
    H^\ast(W, \tilde{F_1}) &\iso
    \begin{cases}
        \lan pu \ran \x C_{p^{n_\alpha}-1} 
        & \text{if } \ast = 0,\\
        0 
        & \text{if } 0 < \ast \text{ is odd},\\
        C_{p^{n_\alpha}-1} \ox C_{p-1} 
        \iso C_{p-1}
        & \text{if } 0 < \ast \text{ is even};
    \end{cases} \\[2ex]
    H^\ast(W, \Qbb_p(F_0)^\x) &\iso
    \begin{cases}
        (\Qbb_p(F_0)^{C_{p-1}})^\x 
        & \text{if } \ast = 0,\\
        0 
        & \text{if } 0 < \ast \text{ is odd},\\
        (\Qbb_p(F_0)^{C_{p-1}})^\x 
        / N_W(\Qbb_p(F_0)^\x)
        \iso C_{p-1}
        & \text{if } 0 < \ast \text{ is even}.
    \end{cases}
\end{align*}
\end{lemma}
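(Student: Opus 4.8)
This is a computation of Tate cohomology for a cyclic group $W = C_{p-1}$ acting on the two modules $\tilde{F_1}$ and $\Qbb_p(F_0)^\x$, and the strategy mirrors that of Lemma \ref{215}. The first task is to pin down the action. Here $W \iso C_{p-1} \subset Aut(C_{p^\alpha})$ is the $p'$-part of the automorphism group of the $p$-Sylow factor, realized as a subgroup of $G = Gal(\Qbb_p(F_0)/\Qbb_p)$; it acts faithfully on $\zeta_{p^\alpha}$ (sending it to a power $\zeta_{p^\alpha}^c$ for $c$ of order $p-1$ mod $p$) and fixes the prime-to-$p$ part $C_{p^{n_\alpha}-1}$ of $F_0$. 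I would also record how $W$ acts on $x_1$: since $v(x_1) = \frac{1}{p-1}$ and $x_1 \in \Qbb_p(\zeta_p)$, the quotient $\sigma(x_1)/x_1$ is a $(p-1)$-th root of unity in $\Qbb_p^\x$, so $W$ permutes the coset $x_1\lan\zeta_{p-1}\ran$ and acts on $\lan x_1\ran$ through this twisting.

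For the first module, I would write $\tilde{F_1} \iso \lan pu\ran \x C_{p^{n_\alpha}-1} \x \lan x_1, \zeta_{p^\alpha}\ran$ in a form adapted to the $W$-action. The factor $\lan pu\ran$ is central and fixed, the factor $C_{p^{n_\alpha}-1}$ is fixed, so both contribute only to $H^0$; the interesting part is the cyclotomic piece on which $W$ acts. Since $W$ is cyclic of order $p-1$, I would use the standard periodic resolution and compute $\hat H^\ast$ as the cohomology of the complex $\tilde{F_1} \xrightarrow{1-t} \tilde{F_1} \xrightarrow{N} \tilde{F_1} \xrightarrow{1-t}\cdots$ for $t$ a generator of $W$ and $N$ the norm. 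The even-degree cohomology is $\ker(1-t)/\mathrm{im}(N)$ and the odd-degree is $\ker(N)/\mathrm{im}(1-t)$. The fixed points $\ker(1-t)$ give the stated $H^0 = \lan pu\ran \x C_{p^{n_\alpha}-1}$. For the positive even degrees, the expected answer $C_{p^{n_\alpha}-1} \ox C_{p-1} \iso C_{p-1}$ comes from the norm map on $C_{p^{n_\alpha}-1}$: the coinvariants of a trivial module $M$ under $C_{p-1}$ modulo the norm give $M/(p-1)M = M \ox \Zbb/(p-1)$, and since $\gcd(p-1, p^{n_\alpha}-1) = p-1$, this is $C_{p-1}$. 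The vanishing in odd degrees should follow from the faithful action on the $p$-power cyclotomic factor making the relevant norm injective there, combined with the triviality of odd cohomology of a cyclic group on the fixed factors.

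For the second module $\Qbb_p(F_0)^\x$, I would proceed as in Lemma \ref{215}: set $L = \Qbb_p(F_0) = \Qbb_p(\tilde{F_1})$ and $K = L^W$. Then $H^0(W, L^\x) = K^\x = (\Qbb_p(F_0)^{C_{p-1}})^\x$ by definition, and $H^1(W, L^\x) = 0$ by Hilbert's Theorem 90. For the even positive degrees, periodicity of cyclic cohomology identifies $\hat H^{2i}(W, L^\x)$ with $K^\x/N_{L/K}(L^\x)$, which by local class field theory (or directly, since the extension $L/K$ is totally ramified of degree $p-1$ here) is cyclic of order $|W| = p-1$; this is the stated $C_{p-1}$. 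Odd degrees vanish by Theorem 90 and periodicity.

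The main obstacle I anticipate is the even-degree computation of $\hat H(W,\tilde F_1)$, specifically verifying that the tensor/norm contribution is exactly $C_{p-1}$ and that no extra contribution survives from the $p$-power cyclotomic factor $\lan\zeta_{p^\alpha}, x_1\ran$ on which $W$ acts nontrivially. I would handle this by choosing a basis that separates the $W$-fixed part from the freely (or at least faithfully) acted-upon part, showing that on the faithful part the norm is surjective onto the invariants so it contributes nothing to $\hat H^{2i}$, leaving only the $C_{p^{n_\alpha}-1} \ox C_{p-1}$ term. Care is needed because $x_1$ is only an eigenvector up to a root of unity, so I would work modulo $F_0$ or pass to the valuation to decouple the torsion and the free directions before assembling the final answer.
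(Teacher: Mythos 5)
Your computation of $H^\ast(W,\Qbb_p(F_0)^\x)$ is fine and is essentially the paper's argument (fixed field for $H^0$, Hilbert 90 for odd degrees, $H^2(W,L^\x)\iso K^\x/N_{L/K}(L^\x)\iso \Zbb/(p-1)$ from the local Brauer group, and periodicity). The gap is in your treatment of $H^\ast(W,\tilde{F_1})$, and it sits exactly where the content of the lemma lies. First, the decomposition $\tilde{F_1}\iso \lan pu\ran \x C_{p^{n_\alpha}-1}\x\lan x_1,\zeta_{p^\alpha}\ran$ is not a decomposition of $W$-modules: one has $\sigma(x_1)=\delta_\sigma x_1$ with $\delta_\sigma\in\mu_{p-1}\subset C_{p^{n_\alpha}-1}$, and $\delta_\sigma\neq 1$ for some $\sigma$ (otherwise $x_1\in\Qbb_p(\zeta_p)^{C_{p-1}}=\Qbb_p$, impossible since $v(x_1)=\frac{1}{p-1}$), so $\lan x_1,\zeta_{p^\alpha}\ran$ is not $W$-stable. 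Second, and more seriously, your stated reason for odd vanishing --- ``triviality of odd cohomology of a cyclic group on the fixed factors'' --- is false: for the trivial $C_{p-1}$-module $C_{p^{n_\alpha}-1}$ one has $\hat{H}^{odd}(C_{p-1},C_{p^{n_\alpha}-1})={}_{p-1}C_{p^{n_\alpha}-1}\iso C_{p-1}\neq 0$ (this is exactly the group the paper computes as $C_{p^{n_\alpha}-1}\ast C_{p-1}$ in its analysis of $H^\ast(W,F_0)$). Likewise, a free rank-one summand with (necessarily) trivial action would contribute $\Zbb/(p-1)$ in even degrees, so a naive splitting would also give the wrong even answer, namely $(C_{p-1})^2$ instead of $C_{p-1}$.

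The mechanism that actually kills the odd cohomology is precisely the coupling you were trying to decouple. The paper uses the $W$-module extension $1\ra F_0\ra\tilde{F_1}\ra\Zbb\lan x_1\ran\ra 1$ and first computes $H^0(W,\tilde{F_1})$ directly: writing an invariant as $x_1^l y_1 y_2$ with $y_1\in\lan\zeta_{p^\alpha}\ran$, $y_2\in\lan\zeta_{p^{n_\alpha}-1}\ran$, invariance forces $(\sigma(x_1)/x_1)^l=y_1/\sigma(y_1)$; comparing orders (prime to $p$ on the left, a power of $p$ on the right) gives $y_1=1$, and then $\Qbb_p(x_1)^W=\Qbb_p$ forces $l\equiv 0 \bmod (p-1)$. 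Hence $\tilde{F_1}^W=\lan pu\ran\x C_{p^{n_\alpha}-1}$ has integral valuation, so the connecting map $H^0(W,\Zbb\lan x_1\ran)\ra H^1(W,F_0)\iso C_{p-1}$ has kernel of index $p-1$ and is therefore surjective; this kills $H^1(W,\tilde{F_1})$, and by periodicity the map $H^2(W,\Zbb\lan x_1\ran)\ra H^3(W,F_0)$ is an isomorphism, whence $H^2(W,\tilde{F_1})\iso H^2(W,F_0)\iso C_{p^{n_\alpha}-1}\ox C_{p-1}$. This is exactly where the relation $x_1^{p-1}=pu\delta$ enters the lemma; your closing remark about ``working modulo $F_0$'' gestures at this exact sequence, but the proposal never carries out that step, and without it both the odd vanishing and the even-degree identification are unjustified.
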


\begin{proof}
Consider the short exact sequence
\[
    1 \raa F_0 \raa \tilde{F_1} \raa \Zbb\lan x_1 \ran \raa 1;
\]
it induces a long exact sequence in cohomology
\[
    H^0(W, F_0) \raa 
    H^0(W, \tilde{F_1}) \raa 
    H^0(W, \Zbb\lan x_1 \ran) \raa 
    H^1(W, F_0) \raa 
    \ldots,
\]
where the action of $W$ is trivial on $\Zbb\lan x_1 \ran$, while
faithful on the first factor of $F_0 \iso C_{p^\alpha} \x
C_{p^{n_\alpha}-1}$ and trivial on the second factor. Note that the
first factor of $F_0$ splits off and has trivial cohomology. Hence for
$t$ a generator of $W$, written additively, and $N = \sum_{i=0}^{p-2}
t^i$, the cohomology $H^\ast(W, F_0)$ can be calculated from the
additive complex
\[
    \xymatrix{
        \Zbb/(p^{n_\alpha}-1) \ar[r]^{1-t}
        & \Zbb/(p^{n_\alpha}-1) \ar[r]^{N}
        & \Zbb/(p^{n_\alpha}-1) \ar[r]^{\qq 1-t}
        & \ldots
    }
\]
with
\[
    (1-t)(1) = 0
    \qq \text{and} \qq
    N(1) = p-1;
\]
while $H^\ast(W, \Zbb\lan x_1 \ran)$ can be calculated from the additive
complex
\[
    \xymatrix{
        \Zbb \ar[r]^{1-t}
        & \Zbb \ar[r]^{N}
        & \Zbb \ar[r]^{1-t}
        & \ldots
    }
\]
with
\[
    (1-t)(1) = 0
    \qq \text{and} \qq
    N(1) = p-1.
\]
Consequently
\begin{align*}
    H^\ast(W, F_0) &\iso
    \begin{cases}
        C_{p^{n_\alpha}-1} 
        & \text{if } \ast = 0,\\
        C_{p^{n_\alpha}-1} \ast C_{p-1} \iso C_{p-1} 
        & \text{if } 0 < \ast \text{ is odd},\\
        C_{p^{n_\alpha}-1} \ox C_{p-1} \iso C_{p-1}
        & \text{if } 0 < \ast \text{ is even};
    \end{cases} \\[2ex]
    H^\ast(W, \Zbb\lan x_1 \ran) &\iso
    \begin{cases}
        \Zbb\lan x_1 \ran 
        & \text{if } \ast = 0,\\
        0 
        & \text{if } 0 < \ast \text{ is odd},\\
        C_{p-1}
        & \text{if } 0 < \ast \text{ is even},
    \end{cases}
\end{align*}
where $C_{p^{n_\alpha}-1} \ast C_{p-1}$ denotes the kernel of the
$(p\!-\!1)$-th power map on $C_{p^{n_\alpha}-1}$.

Clearly, $\lan pu \ran \x C_{p^{n_\alpha}-1} \subset \tilde{F_1}^{W}$.
Since $x_1^{p-1} = pu\delta$ with $\delta$ a $(p\!-\!1)$-th root of unity
in $\Qbb_p^\x$, we know that $\Qbb_p(x_1)^W = \Qbb_p$. Consider an
element $z = x_1^l y_1 y_2$ in $\tilde{F_1}^W$ with $y_1 \in \lan
\zeta_{p^\alpha} \ran$ and $y_2 \in \lan \zeta_{p^{n_\alpha}-1} \ran$.
Then for $\sigma \in W$, $y_2$ is invariant under $\sigma$, and we have
\[
    \left( \frac{\sigma(x_1)}{x_1} \right)^l
    = \frac{y_1}{\sigma(y_1)}.
\]
Since the order of $\frac{y_1}{\sigma(y_1)}$ divides a power of $p$ and
the order of $\frac{\sigma(x_1)}{x_1}$ divides $p-1$, we must have
$\frac{y_1}{\sigma(y_1)} = 1$, and hence $y_1 = 1$. Therefore $x_1^l$
is invariant under $\sigma$, and as $\Qbb_p(x_1)^W = \Qbb_p(\zeta_p)^W =
\Qbb_p$, we know that $l \equiv 0 \mod p-1$.  It follows that the
valuation of $\tilde{F_1}^W$ is integral, and therefore
\[
    H^0(W, \tilde{F_1}) 
    = \tilde{F_1}^W 
    = \lan pu \ran \x C_{p^{n_\alpha}-1}.
\]
Since the image of $H^0(W, \tilde{F_1})$ in $H^0(W, \Zbb\lan x_1 \ran)
\iso \Zbb\lan x_1 \ran$ is $\Zbb\lan pu \ran$, the group $H^0(W,
\Zbb\lan x_1 \ran)$ surjects onto $H^1(W, F_0) \iso C_{p-1}$, and
therefore $H^1(W, \tilde{F_1}) = 0$. By the periodicity of the
cohomology, the map
\[
    H^2(W, \Zbb\lan x_1 \ran) \raa H^3(W, F_0)
\]
is an isomorphism, and as $H^1(W, \Zbb\lan x_1 \ran) =0$ we
obtain
\[
    H^2(W, \tilde{F_1})
    \iso H^2(W, F_0) 
    \iso C_{p^{n_\alpha}-1} \ox C_{p-1}.
\]

Finally, the triviality of $H^1(W, \Qbb_p(F_0)^\x)$ is a direct
consequence of Hilbert's theorem $90$, while the remaining cases for
$H^\ast(W, \Qbb_p(F_0)^\x)$ follow from the characterisation of the
Brauer group in terms of the invariants and the norm relative to the
Galois group $W$ as given by theorem \ref{068} and corollary \ref{072}.
\end{proof}

\begin{lemma} \label{244}
If $\alpha = 1$, $C_{n_1} = Gal(\Qbb_p(C_{p^{n_1}-1})/\Qbb_p)$, and
$C_{p-1} = Aut(C_p)$, then
\begin{align*}
    H^\ast(C_{n_1}, \tilde{F_1}^{C_{p-1}}) &\iso
    \begin{cases}
        \lan pu \ran \x C_{p-1} 
        & \text{if } \ast = 0,\\
        0 
        & \text{if } 0 < \ast \text{ is odd},\\
        \lan pu \ran / \lan (pu)^{n_1} \ran 
        & \text{if } 0 < \ast \text{ is even};
    \end{cases} \\[2ex]
    H^\ast(C_{n_1}, (\Qbb_p(F_0)^{C_{p-1}})^\x) &\iso
    \begin{cases}
        \Qbb_p^\x 
        & \text{if } \ast = 0,\\
        0 
        & \text{if } 0 < \ast \text{ is odd},\\
        \lan p \ran / \lan p^{n_1} \ran 
        & \text{if } 0 < \ast \text{ is even}.
    \end{cases}
\end{align*}
\end{lemma}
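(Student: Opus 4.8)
The plan is to recognize that Lemma \ref{244} is nothing but Lemma \ref{215} applied to the unramified degree-$n_1$ subextension, with $n$ replaced by $n_1$. Indeed, since $\alpha = 1$ forces $n_\alpha = n_1$, both modules appearing here will turn out to be, as $C_{n_1}$-modules, exactly the modules $\tilde{F_1}$ and $\Qbb_p(F_0)^\x$ of Lemma \ref{215} with $n$ replaced by $n_1$; once this identification is made the two cohomology computations are literally the content of that lemma.

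First I would pin down the module $\tilde{F_1}^{C_{p-1}}$. Since $G = Gal(\Qbb_p(F_0)/\Qbb_p) \iso C_{p-1} \x C_{n_1}$ is abelian by proposition \ref{342}, the factor $C_{n_1}$ commutes with $C_{p-1}$ and hence acts on the group of $C_{p-1}$-invariants $\tilde{F_1}^{C_{p-1}}$. The degree-zero computation of Lemma \ref{221} (with $\alpha = 1$, so $n_\alpha = n_1$) gives $\tilde{F_1}^{C_{p-1}} = \lan pu \ran \x C_{p^{n_1}-1}$. On this module $C_{n_1}$ acts trivially on $\lan pu \ran$, as $pu \in \Qbb_p$, and by the Frobenius $\zeta \mto \zeta^p$ on $C_{p^{n_1}-1} = \mu_{p^{n_1}-1}$. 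This is precisely the $C_{n_1}$-module structure on $\lan pu \ran \x C_{p^{n_1}-1}$ studied in Lemma \ref{215}, so the first half of the statement follows verbatim from that lemma with $n$ replaced by $n_1$.

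Next I would identify the field $\Qbb_p(F_0)^{C_{p-1}}$. Under the decomposition of proposition \ref{342}, $C_{p-1} = Aut(C_p)$ is exactly $Gal(\Qbb_p(\zeta_p)/\Qbb_p)$, the totally ramified part of $\Qbb_p(F_0)/\Qbb_p$, so by the Galois correspondence its fixed field $K := \Qbb_p(F_0)^{C_{p-1}}$ is the unramified extension $\Qbb_p(\zeta_{p^{n_1}-1})$ of degree $n_1$, with $Gal(K/\Qbb_p) \iso C_{n_1}$ acting as Frobenius. The cohomology $H^\ast(C_{n_1}, K^\x)$ is then computed exactly as in the second half of Lemma \ref{215}: one has $H^0 = \Qbb_p^\x$; $H^1 = 0$ by Hilbert's theorem 90; and since $K/\Qbb_p$ is unramified, proposition \ref{256} shows that the valuation identifies $H^2(C_{n_1}, K^\x) \iso \Zbb/n_1$ with generator represented by $p$, whence the periodicity of the cohomology of the cyclic group $C_{n_1}$ yields $\lan p \ran/\lan p^{n_1} \ran$ in even positive degrees and $0$ in odd degrees.

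The only real work is therefore the two identifications of the preceding paragraphs: that $\tilde{F_1}^{C_{p-1}} = \lan pu \ran \x C_{p^{n_1}-1}$ with the Frobenius action, and that $\Qbb_p(F_0)^{C_{p-1}}$ is the unramified degree-$n_1$ extension. I expect the first of these to be the more delicate point, since one must check that no fractional power of $x_1$ survives the $C_{p-1}$-action, so that the invariants have integral valuation; but this is exactly the valuation argument already carried out in the degree-zero part of Lemma \ref{221}, so it may be quoted directly. After that, both displays are immediate specializations of Lemma \ref{215}.
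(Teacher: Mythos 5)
Your proposal is correct and takes essentially the same approach as the paper: the paper's own proof of this lemma repeats, almost verbatim, the periodic-complex computation and the Hilbert 90 / unramified-extension (proposition \ref{256}) argument from the proof of Lemma \ref{215} with $n$ replaced by $n_1$, after making exactly your two identifications — that $\tilde{F_1}^{C_{p-1}} \iso \lan pu \ran \x C_{p^{n_1}-1}$ with trivial action on $\lan pu \ran$ and Frobenius action on $C_{p^{n_1}-1}$, and that $\Qbb_p(F_0)^{C_{p-1}}$ is the unramified extension of degree $n_1$. Your only deviation is to quote Lemma \ref{215} as a black box instead of rerunning the computation, which is legitimate since the cohomology depends only on the isomorphism type of the pair consisting of the cyclic group and the module.
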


\begin{proof}
The action of $C_{n_1}$ on $\tilde{F_1}^{C_{p-1}} \iso \lan pu \ran \x
C_{p^{n_1}-1}$ is trivial on the first factor and acts on
$C_{p^{n_1}-1}$ by $\zeta \mto \zeta^p$.

Let $t$ be generator of $C_{n_1}$, written additively, and $N =
\sum_{i=0}^{n_1-1} t^i$. Using additive notation for
$\tilde{F_0}^{C_{p-1}} \iso \Zbb \x \Zbb/p^{n_1}-1$, we obtain
\begin{align*}
    (1-t)(1, 0) &= (0, 0)
    & (1-t)(0, 1) &= (0, 1-p),\\
    N(1, 0) &= (n_1, 0),
    & N(0, 1) &= (0, \frac{p^{n_1}-1}{p-1}),
\end{align*}
and the desired result for $H^\ast(C_{n_1}, \tilde{F_1}^{C_{p-1}})$
follows.

Now for $L = \Qbb_p(\tilde{F_0})^{C_{p-1}}$ and $K = L^{C_{n_1}}$, we
have
\[
    H^0(C_{n_1}, L^\x) 
    = K^\x 
    = \Qbb_p^\x
\]
and $H^1(C_{n_1}, L^\x) = 0$ by Hilbert's theorem 90. Furthermore as
$L/K$ is unramified, we know from proposition \ref{256} that the
valuation map induces an isomorphism
\[
    H^2(C_{n_1}, L^\x) \iso H^2(C_{n_1}, \frac{1}{e(L)}\Zbb)
    \iso \Zbb/n_1\Zbb.
\]
Here $e(L) = 1$, and as $v(p) = 1$, the element $p$ represents a
generator of the cyclic group $H^2(C_{n_1}, L^\x)$. The result then
follows from the periodicity of the cohomology.
\end{proof}

\begin{corollary} \label{245}
If $\alpha = 1$, then $H^\ast(C_{n_1}, \tilde{F_1}^{C_{p-1}}) \ra
H^\ast(C_{n_1}, (\Qbb_p(F_0)^{C_{p-1}})^\x)$ is an isomorphism for $0 <
\ast$ even.
\end{corollary}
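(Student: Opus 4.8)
The plan is to mirror the proof of corollary \ref{216}, reducing to degree $\ast = 2$ by periodicity and then matching generators. First I would set $L := \Qbb_p(F_0)^{C_{p-1}}$ and $K := L^{C_{n_1}} = \Qbb_p$, observing that $L$ is the maximal unramified subextension of $\Qbb_p(F_0)/\Qbb_p$, of degree $n_1$ over $\Qbb_p$, with $C_{n_1} = Gal(L/\Qbb_p)$. Since $C_{n_1}$ is cyclic its cohomology is periodic of period $2$, so it suffices to establish the claim for $\ast = 2$; the map in question is the one induced on $H^2(C_{n_1}, -)$ by the inclusion $\tilde{F_1}^{C_{p-1}} \subset L^\x$.

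Next I would read off from lemma \ref{244} that $H^2(C_{n_1}, \tilde{F_1}^{C_{p-1}}) \iso \lan pu \ran / \lan (pu)^{n_1} \ran$ is cyclic of order $n_1$ with the class of $pu$ as generator, while $H^2(C_{n_1}, L^\x) \iso \lan p \ran / \lan p^{n_1} \ran$ is cyclic of order $n_1$ with the class of $p$ as generator. The inclusion sends $pu \mapsto pu$, so the heart of the argument is to show that $pu$ and $p$ represent the same class in $H^2(C_{n_1}, L^\x)$. As in corollary \ref{216}, this follows because $L/K$ is unramified: proposition \ref{256} gives $\Ocal_K^\x \subset N_{L/K}(L^\x)$, so the unit $u \in \Zbb_p^\x$ is a norm and hence $pu \equiv p$.

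It then follows that the map carries the generator $pu$ to the generator $p$, so it is surjective; being a surjection between two cyclic groups of the same order $n_1$, it is an isomorphism, and periodicity propagates this to every even $\ast > 0$. The only delicate point is correctly identifying the two generators through the valuation isomorphism of lemma \ref{244} and confirming the norm computation, but both are already in place from the proof of corollary \ref{216}, so I expect no genuine obstacle beyond this bookkeeping.
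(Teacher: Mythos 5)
Your proposal is correct and takes essentially the same route as the paper: the paper's proof likewise sets $L = \Qbb_p(F_0)^{C_{p-1}}$, $K = \Qbb_p$, uses proposition \ref{256} (since $L/K$ is unramified, $\Ocal_K^\x \subset N_{L/K}(L^\x)$, so $u$ is a norm and $pu \equiv p$) and concludes that $p$, hence the image of $pu$, generates the cyclic group $K^\x/N_{L/K}(L^\x) \iso H^2(C_{n_1}, L^\x)$ of order $n_1$, with periodicity handling all even $\ast > 0$. Your explicit matching of generators via lemma \ref{244} is exactly the bookkeeping the paper leaves implicit; there is no gap.
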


\begin{proof}
Let $L = \Qbb_p(\tilde{F_0})^{C_{p-1}}$ and $K = L^{C_{n_1}} = \Qbb_p$.
Because the extension $L/K$ is unramified, proposition \ref{256} implies
that the group of units of the ring of integers $\Ocal_K$ of $K$ is
contained in the norm $N_{L/K}(L^\x)$. As $p$ is a uniformizing element
of $\Qbb_p^\x = K^\x$, it is a generator of the cyclic group
$K^\x/N_{L/K}(L^\x) \iso H^2(C_{n_1}, L)$, and the result follows.
\end{proof}

\begin{lemma} \label{246}
If $\alpha \geq 2$, $W_0 = G_{p'}$ and $C_{p^{\alpha-1}} \subset
Aut(C_{p^\alpha})$, then
\begin{align*}
    H^\ast(C_{p^{\alpha-1}}, \tilde{F_1}^{W_0}) &\iso
    \begin{cases}
        \lan pu \ran \x C_{p^{\frac{n_\alpha}{m}}-1}
        & \text{if } \ast = 0,\\
        0 
        & \text{if } 0 < \ast \text{ is odd},\\
        \lan pu \ran / \lan (pu)^{p^{\alpha-1}} \ran 
        \iso C_{p^{\alpha-1}}
        & \text{if } 0 < \ast \text{ is even};
    \end{cases} \\[2ex]
    H^\ast(C_{p^{\alpha-1}}, (\Qbb_p(F_0)^{W_0})^\x) &\iso
    \begin{cases}
        (\Qbb_p(F_0)^{ W_0 \x C_{p^{\alpha-1}} })^\x 
        & \text{if } \ast = 0,\\
        0 
        & \text{if } 0 < \ast \text{ is odd},\\
        C_{p^{\alpha-1}}
        & \text{if } 0 < \ast \text{ is even}.
    \end{cases}
\end{align*}
\end{lemma}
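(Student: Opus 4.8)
The plan is to compute both modules by first pinning down the $C_{p^{\alpha-1}}$-module structure explicitly, and then invoking the $2$-periodicity of the cohomology of a cyclic group together with Hilbert's theorem $90$ and the local Brauer-group computation already used in lemmas \ref{221} and \ref{244}.

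First I would determine the module $\tilde{F_1}^{W_0}$. Writing $\tilde{F_1} = \lan x_1 \ran \x \lan \zeta_{p^\alpha} \ran \x \lan \zeta_{p^{n_\alpha}-1} \ran$ and using the decomposition $G = Gal(\Qbb_p(\zeta_{p^\alpha})/\Qbb_p) \x Gal(\Qbb_p(\zeta_{p^{n_\alpha}-1})/\Qbb_p)$ from proposition \ref{342}, the factor $C_m \subset W_0$ sits in the unramified Galois group: it fixes $x_1$ and $\zeta_{p^\alpha}$ and acts on $\lan \zeta_{p^{n_\alpha}-1} \ran$ by Frobenius, so its invariants there are the roots of unity of the fixed unramified field of degree $n_\alpha/m$, namely $C_{p^{n_\alpha/m}-1}$. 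The factor $C_{p-1} \subset W_0$ acts exactly as in lemma \ref{221}: the same comparison of $p$-power and prime-to-$p$ orders forces the $\zeta_{p^\alpha}$-part of any invariant to vanish and its valuation to be integral, leaving $\lan pu \ran$. Intersecting the two invariant sets gives $\tilde{F_1}^{W_0} = \lan pu \ran \x C_{p^{n_\alpha/m}-1}$.

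The key observation for the first computation is that $C_{p^{\alpha-1}}$ acts \emph{trivially} on this module. Indeed $x_1 \in \Qbb_p(\zeta_p) \subset \Qbb_p(F_0)$ and $C_{p^{\alpha-1}}$ is precisely $Gal(\Qbb_p(\zeta_{p^\alpha})/\Qbb_p(\zeta_p))$ on the ramified side, so it fixes $x_1$, fixes $pu \in \Qbb_p$, and fixes all unramified roots of unity. I would then apply the periodicity formulas for a cyclic group with trivial coefficients: for $M = \lan pu \ran \x C_{p^{n_\alpha/m}-1}$ one has $H^0 = M$, the odd-degree groups are the $p^{\alpha-1}$-torsion of $M$, and the positive even-degree groups are $M/p^{\alpha-1}M$. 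Since $p^{n_\alpha/m}-1$ is prime to $p$, the finite cyclic factor contributes nothing in positive degrees, so only $\lan pu \ran \iso \Zbb$ survives, yielding $0$ in odd degrees and $\lan pu \ran/\lan (pu)^{p^{\alpha-1}} \ran \iso C_{p^{\alpha-1}}$ in positive even degrees, as claimed.

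For the second module I would set $K := \Qbb_p(F_0)^{W_0}$ and $K_0 := K^{C_{p^{\alpha-1}}} = \Qbb_p(F_0)^{W_0 \x C_{p^{\alpha-1}}}$; since $C_{p^{\alpha-1}} \cap W_0 = 1$, the group $C_{p^{\alpha-1}}$ injects into $Gal(K/\Qbb_p) = G/W_0 = G_p$ and acts faithfully on $K$, realizing $C_{p^{\alpha-1}} = Gal(K/K_0)$ as a cyclic extension of local fields of degree $p^{\alpha-1}$ (in fact totally ramified, with $K_0$ the unramified degree-$p^{k-\alpha}$ part). Then $H^0 = K_0^\x = (\Qbb_p(F_0)^{W_0 \x C_{p^{\alpha-1}}})^\x$, $H^1 = 0$ by Hilbert's theorem $90$, and the invariant map of local class field theory (theorem \ref{068} and corollary \ref{072}) identifies $H^2(C_{p^{\alpha-1}}, K^\x) \iso \frac{1}{p^{\alpha-1}}\Zbb/\Zbb \iso C_{p^{\alpha-1}}$; periodicity then fills in the remaining degrees. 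The main obstacle is the bookkeeping in the first step — cleanly separating the ramified directions ($C_{p-1}, C_{p^{\alpha-1}}$) from the unramified direction ($C_m$) in the abelian Galois action on the three generators of $\tilde{F_1}$, and re-deriving from lemma \ref{221} that $C_{p-1}$-invariance kills $\zeta_{p^\alpha}$ and integralizes the valuation; once $\tilde{F_1}^{W_0}$ is identified and the triviality of the $C_{p^{\alpha-1}}$-action is noted, both computations reduce to routine periodicity and the local Brauer-group identification.
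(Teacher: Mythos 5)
Your proposal is correct and follows essentially the same route as the paper's proof: you identify $\tilde{F_1}^{W_0} \iso \lan pu \ran \x C_{p^{n_\alpha/m}-1}$ via the faithful $C_{p-1}$-action (as in lemma \ref{221}), observe that $C_{p^{\alpha-1}}$ acts trivially so the first computation reduces to the periodic complex of a cyclic group with trivial coefficients, and handle the field units by Hilbert's theorem 90 in odd degrees and the norm-quotient description $(\Qbb_p(F_0)^{W_0 \x C_{p^{\alpha-1}}})^\x / N_{C_{p^{\alpha-1}}}((\Qbb_p(F_0)^{W_0})^\x) \iso C_{p^{\alpha-1}}$ in positive even degrees. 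Your additional remark that $K/K_0$ is totally ramified of degree $p^{\alpha-1}$ is accurate and merely makes explicit what the paper leaves implicit in invoking corollary \ref{072}.
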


\begin{proof}
The action of $C_{p-1} \subset W_0 \cap Aut(C_{p^\alpha})$ on
$\tilde{F_1} = \lan x_1 \ran \x C_{p^{n_\alpha}-1} \x C_{p^\alpha}$
being faithful on the first and last factors, we have $\tilde{F_1}^{W_0}
\iso \lan pu \ran \x C_{p^{\frac{n_\alpha}{m}}-1}$, and consequently the
action of $C_{p^{\alpha-1}}$ on $\tilde{F_1}^{W_0}$ is trivial.

Let $t$ be generator of $C_{p^{\alpha-1}}$, written additively, and $N =
\sum_{i=0}^{p^{\alpha-1}-1} t^i$. Using additive notation for
$\tilde{F_1}^{W_0} \iso \Zbb \x \Zbb/(p^{\frac{n_\alpha}{m}}\!-\!1)$ we
obtain
\begin{align*}
    (1-t)(1, 0) &= (0, 0), & 
    (1-t)(0, 1) &= (0, 0),\\[1ex]
    N(1, 0) &= (p^{\alpha-1}, 0), &
    N(0, 1) &= (0, p^{\alpha-1}),
\end{align*}
and the desired result for $H^\ast(C_{p^{\alpha-1}},
\tilde{F_1}^{W_0})$ follows.

The case of $H^\ast(C_{p^{\alpha-1}}, (\Qbb_p(F_0)^{W_0})^\x)$ follows
from Hilbert's theorem 90 when $\ast$ is odd, while the case where $0 <
\ast$ is even is given by the isomorphism
\[
    (\Qbb_p(F_0)^{ W_0 \x C_{p^{\alpha-1}} })^\x 
        / N_{ C_{p^{\alpha-1}} }((\Qbb_p(F_0)^{W_0})^\x)
        \iso C_{p^{\alpha-1}}.
\]
\end{proof}

In view of theorem \ref{250}, we are only interested in the case where
$W_1$ is maximal, that is $W_1 = G_{p'} \x (G_p \cap
Aut(C_{p^\alpha}))$. 

\begin{corollary} \label{247}
If $\alpha \geq 2$, $W_0 = G_{p'}$ and $|W_1/W_0| = p^{\alpha-1}$, then
\[
    H^\ast(W_1/W_0, \tilde{F_1}^{W_0}) \raa 
    H^\ast(W_1/W_0, (\Qbb_p(F_0)^{W_0})^\x)
    \q \text{for } 0 < \ast \text{ even}
\]
is surjective if and only if it is an isomorphism, and this is true if
and only if
\[
    u \not\in
    \mu(\Zbb_p^\x) 
    \x \{ x \in \Zbb_p^\x\ |\ x \equiv 1 \mod (p^2) \}
    \qq \text{and} \qq
    \alpha = k.
\]
\end{corollary}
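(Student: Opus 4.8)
The first assertion is immediate from lemma \ref{246}: for $\ast$ positive and even both $H^\ast(W_1/W_0, \tilde{F_1}^{W_0})$ and $H^\ast(W_1/W_0, (\Qbb_p(F_0)^{W_0})^\x)$ are cyclic of the same order $p^{\alpha-1}$, so any homomorphism between them is surjective if and only if it is bijective. It therefore remains to decide when the map is onto. Set $W := W_1/W_0 \iso C_{p^{\alpha-1}}$, $L := \Qbb_p(F_0)^{W_0}$ and $K := L^W = \Qbb_p(F_0)^{W_1}$. Decomposing $Gal(\Qbb_p(F_0)/\Qbb_p)$ as in the discussion preceding lemma \ref{246} (via proposition \ref{342}), one checks that $K$ is the unramified extension of $\Qbb_p$ of degree $p^{k-\alpha}$, that $L = KE$ where $E \subset \Qbb_p(\zeta_{p^\alpha})$ is the totally ramified subextension with $Gal(E/\Qbb_p) \iso C_{p^{\alpha-1}}$, and that, since $K$ and $E$ are linearly disjoint over $\Qbb_p$, restriction gives an isomorphism $Gal(L/K) \iso Gal(E/\Qbb_p)$.

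Next I would rephrase the cohomology in Tate-theoretic terms. As $W$ acts trivially on $\tilde{F_1}^{W_0} \iso \lan pu \ran \x C_{p^{\frac{n_\alpha}{m}}-1}$ and the second factor has order prime to $p$, two-periodicity gives $H^2(W, \tilde{F_1}^{W_0}) \iso \tilde{F_1}^{W_0}/(\tilde{F_1}^{W_0})^{p^{\alpha-1}} \iso \lan pu \ran/\lan (pu)^{p^{\alpha-1}} \ran$, generated by the class of $pu$. On the other side, the description of $H^2$ used in lemma \ref{246} (theorem \ref{068} and corollary \ref{072}) identifies $H^2(W, L^\x)$ with $K^\x/N_{L/K}(L^\x)$, and the inclusion $\tilde{F_1}^{W_0} \subset L^\x$ carries the generator to the class of $pu \in \Qbb_p^\x \subset K^\x$. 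Hence the map in question is surjective if and only if the class of $pu$ generates $K^\x/N_{L/K}(L^\x) \iso Gal(L/K)$, that is, if and only if the local norm residue symbol $(pu, L/K)$ has order $p^{\alpha-1}$.

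Finally I would compute this symbol. Norm functoriality of the reciprocity map (see \cite{neukirch}) gives $(pu, L/K)|_E = (N_{K/\Qbb_p}(pu), E/\Qbb_p) = ((pu)^{p^{k-\alpha}}, E/\Qbb_p)$, using $N_{K/\Qbb_p}(pu) = (pu)^{p^{k-\alpha}}$ for the base-field element $pu$; as restriction to $E$ is an isomorphism, the order of $(pu,L/K)$ equals that of $\theta^{p^{k-\alpha}}$ where $\theta := (pu, E/\Qbb_p)$. The local reciprocity law for $\Qbb_p(\zeta_{p^\alpha})/\Qbb_p$ sends the uniformizer $p$ to the identity on the totally ramified part $E$ and sends $u \in \Zbb_p^\x$ to the image of its principal-unit part $\lan u \ran \in 1+p\Zbb_p$ in $(1+p\Zbb_p)/(1+p^\alpha\Zbb_p) \iso Gal(E/\Qbb_p)$, so $\theta$ has order $p^{\alpha-1}$ exactly when $\lan u \ran \not\equiv 1 \mod (p^2)$. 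Since $\theta \in C_{p^{\alpha-1}}$, its $p^{k-\alpha}$-th power can have order $p^{\alpha-1}$ only if $\theta$ has order $p^{k-1}$, forcing $k-1 \leq \alpha-1$ and hence $\alpha = k$; conversely when $\alpha = k$ the exponent $p^{k-\alpha}$ is $1$ and the order is $p^{\alpha-1}$ precisely when $\lan u \ran \not\equiv 1 \mod (p^2)$. As $\lan u \ran \equiv 1 \mod (p^2)$ is equivalent to $u \in \mu(\Zbb_p^\x) \x \{ x \in \Zbb_p^\x\ |\ x \equiv 1 \mod (p^2)\}$, this yields the stated criterion. The main obstacle is pinning down the normalizations in the reciprocity law — the direction of the cyclotomic character and the convention making $p$ act trivially on the ramified part — and checking the norm-functoriality and linear-disjointness inputs; once these are fixed the order computation is routine.
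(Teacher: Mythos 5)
Your proof is correct, but it is organized around a different class-field-theoretic mechanism than the paper's, and the comparison is worth recording. Both arguments reduce, after the common first step (lemma \ref{246} gives two cyclic groups of order $p^{\alpha-1}$, so surjectivity equals bijectivity), to deciding when the class of $pu$ generates the cyclic group $H^2(W_1/W_0, L^\x)$ for $L = \Qbb_p(F_0)^{W_0}$. The paper does this by building an auxiliary isomorphism $\tau: H^2(C_{p^{\alpha-1}}, L^\x) \ra \Zbb_p^\x/N_{G/W_0}(\Ocal_L^\x)$ — using proposition \ref{256} (totally ramified case) to pass to unit coefficients, a five-lemma argument on norm-residue-symbol diagrams to identify $N_{G/W_0}(\Ocal_L^\x) = \mu(\Zbb_p^\x) \x U_\alpha(\Zbb_p^\x)$, and proposition \ref{257} for surjectivity of $N_{K/\Qbb_p}$ on units — and it must separately invoke the explicit cyclotomic norm computation of example \ref{258} together with remark \ref{271} to show that $p$ is a norm from $L$ to $K$, so that $i^\ast(pu) = i^\ast(u)$. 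You instead identify $H^2(W, L^\x) \iso K^\x/N_{L/K}(L^\x)$, split $L = KE$ with $K/\Qbb_p$ unramified of degree $p^{k-\alpha}$ and $E \subset \Qbb_p(\zeta_{p^\alpha})$ the totally ramified part (a decomposition the paper never makes explicit), and then use norm functoriality of the Artin symbol plus the explicit reciprocity law for $\Qbb_p(\zeta_{p^\alpha})/\Qbb_p$: the uniformizer $p$ acts trivially on the ramified part, and units act through their principal-unit part. This absorbs the paper's example \ref{258} step for free — the vanishing of the contribution of $p$ is built into the reciprocity law — and replaces the $\tau$-isomorphism argument with the linear-disjointness isomorphism $Gal(L/K) \iso Gal(E/\Qbb_p)$. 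The endgame is identical in both: $N_{K/\Qbb_p}$ raises $pu$ to the $p^{k-\alpha}$-th power, which can generate a group of order $p^{\alpha-1}$ only when $k = \alpha$, and then exactly when the principal-unit part of $u$ is $\not\equiv 1 \bmod p^2$ (your order count via $\gcd$'s is right, and as you note, the normalization ambiguity $u$ versus $u^{-1}$ in the reciprocity map is harmless since it does not change orders). What each route buys: the paper's is self-contained relative to its appendix inputs (propositions \ref{256}, \ref{257} and the two cited facts from Serre and Koch), while yours is shorter and more conceptual but leans on the explicit Lubin--Tate-style description of local reciprocity for cyclotomic extensions as an off-the-shelf input.
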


\begin{proof}
The first assertion is an obvious consequence of lemma \ref{246}. Let
\[
    M := \Qbb_p(F_0), \qq
    L := M^{W_0}
    \qq \text{and} \qq
    K := L^{ C_{p^{\alpha-1}} } = M^{W_1}.
\]
Since $L/K$ is totally ramified, we know from proposition \ref{256} that
\[
    H^2(C_{p^{\alpha-1}}, L^\x)
    \iso H^2(C_{p^{\alpha-1}}, \Ocal_L^\x),
\]
and as $N_{G/W_1} \circ N_{ C_{p^{\alpha-1}} } (\Ocal_L^\x) =
N_{G/W_0}(\Ocal_L^\x)$, we may consider the homomorphism
\[
    \tau: H^2(C_{p^{\alpha-1}}, L^\x) \raa 
    \Zbb_p^\x/ N_{G/W_0}(\Ocal_L^\x)
\]
given as the composite
\[
    H^2(C_{p^{\alpha-1}}, L^\x)
    \iso H^2(C_{p^{\alpha-1}}, \Ocal_L^\x)
    \iso (\Ocal_L^\x)^{ C_{p^{\alpha-1}} }
      / N_{ C_{p^{\alpha-1}} }(\Ocal_L^\x)
    \overset{N_{G/W_1}}{\raa} \Zbb_p^\x/ N_{G/W_0}(\Ocal_L^\x).
\]

We claim that $\tau$ is an isomorphism. Because $Gal(L/\Qbb_p)$
preserves $\Ocal_L^\x$, and hence $l^\x$ for $l$ the residue field of
$L$, we have an epimorphism $Gal(L/\Qbb_p) \ra Gal(l/\Fbb_p)$ whose
kernel will be denoted $A$. Since $K$ is the maximal unramified
subextension of $L/\Qbb_p$, we may consider the short exact sequences
\[
    \xymatrix{
        1 \ar[r]
        & \Zbb_p^\x/N_{G/W_0}(\Ocal_L^\x) \ar[r] \ar[d]
        & \Qbb_p^\x/N_{G/W_0}(L^\x) 
        \ar[r]^{v} \ar[d]_{\iso}^{(\_, L/\Qbb_p)}
        & \Zbb/v(N_{G/W_0}(L^\x)) 
        \ar[r] \ar[d]_{\iso}^{\sigma^{(\_)}}
        & 1\\
        1 \ar[r]
        & A \ar[r] \ar[d]_{\iso}
        & Gal(L/\Qbb_p) \ar@{=}[d] \ar[r]^{red}
        & Gal(l/\Fbb_p) \ar[d]_{\iso} \ar[r]
        & 1\\
        1 \ar[r]
        & Gal(L/K) \ar[r]
        & Gal(L/\Qbb_p) \ar[r]^{pr}
        & Gal(K/\Qbb_p) \ar[r]
        & 1,
    }
\]
where the bottom two squares commute, the middle vertical isomorphism is
the norm residue symbol of $L/\Qbb_p$ as defined in \cite{serre4}
section 2.2, the top left hand vertical map is its restriction, and
where the top right hand vertical isomorphism is given by the power map
of the Frobenius automorphism $\sigma \in Gal(l/\Fbb_p)$. By local class
field theory (see for example \cite{koch} chapter 2 \S 1.3) we have
\[
    pr(x, L/\Qbb_p) = (x, K/\Qbb_p)
    \qq \text{for all } x \in \Qbb_p^\x/N_{G/W_0}(L^\x). 
\]
On the other hand \cite{serre4} proposition 2 shows that
\[
    (x, K/\Qbb_p) = \sigma^{v(x)}
    \qq \text{for all } x \in \Qbb_p^\x/N_{G/W_0}(L^\x). 
\]
It follows that the top right hand square in the above diagram, and
hence the diagram itself, is commutative. From the five lemma, the top
left hand vertical map of the diagram is an isomorphism, and as
$Gal(L/K) \iso C_{p^{\alpha-1}}$, we get
\[
    N_{G/W_0}(\Ocal_L^\x)
    = \mu(\Zbb_p^\x) \x U_\alpha(\Zbb_p^\x)
\]
as a subgroup of index $p^{\alpha-1}$ in $\Zbb_p^\x$. By corollary
\ref{072}, we know that $H^2(C_{p^{\alpha-1}}, L^\x)$ has order
$p^{\alpha-1}$. The norm $N_{G/W_1}: \Ocal_K^\x \ra \Zbb_p^\x$ being
surjective by proposition \ref{257}, it follows that $\tau$ is an
isomorphism.

As a consequence of this latter result, our map of interest
\[
    i^\ast:
    H^2(C_{p^{\alpha-1}}, \tilde{F_1}^{W_0}) 
    \iso \lan pu \ran / \lan (pu)^{p^{\alpha-1}} \ran
    \raa H^2(C_{p^{\alpha-1}}, L^\x)
\]
is surjective (and hence an isomorphism) if and only if $\tau i^\ast$ is
surjective, that is, if and only if $\tau(i^\ast(pu))$ is a generator of
the cyclic group $\Zbb_p^\x/N_{G/W_0}(\Ocal_L^\x) \iso
U_1(\Zbb_p^\x)/U_\alpha(\Zbb_p^\x)$.  In fact $i^\ast(pu)=i^\ast(u)$.
Indeed, as seen in example \ref{258}, there is an element $\tilde{x} \in
\Qbb_p(\zeta_{p^\alpha})$ such that
\[
    p = N_{\Qbb_p(\zeta_{p^\alpha})/\Qbb_p}(\tilde{x})
    = N_{ C_{p^{\alpha-1}} }(x)
\]
for $x = N_{C_{p-1}}(\tilde{x})$ in $\Qbb_p(\zeta_{p^\alpha})^{C_{p-1}}
\subset L$; by remark \ref{271} we then have
\[
    p \in N_{C_{p-1}}(L^\x)
    \qq \text{and} \qq
    i^\ast(pu)=i^\ast(u).
\]
Furthermore as $\mu(\Zbb_p^\x) \subset N_{G/W_0}(\Ocal_L^\x)$, we have
$\tau(u) = \tau(u_1)$ for $u_1$ the component of $u \in \Zbb_p^\x$ in
$U_1(\Zbb_p^\x)$ via the isomorphism $\Zbb_p^\x \iso \mu(\Zbb_p) \x
U_1(\Zbb_p^\x)$ of proposition \ref{341}. Letting $z \in \Zbb_p$ be such
that $u_1 = 1+zp$, we finally obtain that
\begin{align*}
    \tau(pu) = N_{G/W_1}(u_1)
    = u_1^{|G/W_1|}
    \equiv 1+z|G/W_1|p \q \mod p^2
\end{align*}
is a generator if and only if
\[
    u \not\in
    \mu(\Zbb_p^\x) 
    \x \{ x \in \Zbb_p^\x\ |\ x \equiv 1 \mod (p^2) \}
    \qq \text{and} \qq
    \l| G/W_1 \r| \not\equiv 0 \mod p,
\]
the latter condition being equivalent to $\alpha = k$ (or $n_\alpha =
m$).
\end{proof}

\begin{lemma} \label{248}
If $\alpha \geq 2$, $W_0 = G_{p'}$, $|W/W_1| \neq 1$ and $L =
\Qbb_p(F_0)^{W_1}$ with $v(L) = \frac{1}{e(L)}\Zbb$, then
\begin{align*}
    H^\ast(W/W_1, \tilde{F_0}^{W_1}) &\iso
    \begin{cases}
        \lan pu \ran \x C_{p-1} 
        & \text{if } \ast = 0,\\
        0 
        & \text{if } 0 < \ast \text{ is odd},\\
        \lan pu \ran / \lan (pu)^{|W/W_1|} \ran 
        & \text{if } 0 < \ast \text{ is even};
    \end{cases} \\[2ex]
    H^\ast(W/W_1, L^\x) &\iso
    \begin{cases}
        (L^{W/W_1})^\x 
        & \text{if } \ast = 0,\\
        0 
        & \text{if } 0 < \ast \text{ is odd},\\
        \lan \pi \ran / \lan \pi^{|W/W_1|} \ran 
        & \text{if } 0 < \ast \text{ is even},
    \end{cases}
\end{align*}
for $\pi$ a unifomizing element of $L^{W/W_1}$.
\end{lemma}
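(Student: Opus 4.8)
The plan is to reproduce, in the present situation, the two cohomology computations already carried out in lemma \ref{215} (and echoed in lemmas \ref{244} and \ref{246}); the only genuinely new input is the identification of the module $\tilde{F_0}^{W_1}$ together with the action of $W/W_1$ on it. First I would record that $W/W_1$ is cyclic and acts through the Frobenius: since $W_0 = G_{p'}$, the quotient $W/W_1$ is canonically isomorphic to $W_p/(W_p \cap Aut(C_{p^\alpha}))$, which embeds into the unramified $p$-part $C_{p^{k-\alpha}}$ of $G$. In particular the extension $L/L^{W/W_1}$ is unramified, a fact I will use for the second computation.

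Next I would compute $\tilde{F_0}^{W_1}$ explicitly. Writing $\tilde{F_0} = C_{p^\alpha} \x C_{p^{n_\alpha}-1} \x \lan pu \ran$, the factor $C_{p^\alpha}$ is annihilated, because $G_{p'} \supset C_{p-1}$, the prime-to-$p$ part of $Aut(C_{p^\alpha})$, already acts on $C_{p^\alpha}$ without nonzero fixed points (a primitive $(p-1)$-th root of unity $c$ in $(\Zbb/p^\alpha)^\x$ satisfies $c-1 \in (\Zbb/p^\alpha)^\x$); the factor $C_{p^{n_\alpha}-1}$ is cut down by the $C_m$-part of $W_0$ to the group $\mu(L) \iso C_{p^{p^{k-\alpha}}-1}$ of roots of unity of the unramified field $L$; and $\lan pu \ran$ is central, hence fixed. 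This identifies $\tilde{F_0}^{W_1} \iso \lan pu \ran \x \mu(L)$, with $W/W_1$ acting trivially on $\lan pu \ran$ and by $\zeta \mto \zeta^p$ on $\mu(L)$ --- exactly the configuration of lemma \ref{215}, with the role of $n$ played by $|W/W_1| = p^{k-\alpha}$.

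With this in hand I would run the $2$-periodic complex $\cdots \xrightarrow{1-t} M \xrightarrow{N} M \xrightarrow{1-t} \cdots$ for $t$ a generator of $W/W_1$, $N = \sum_i t^i$, and $M = \tilde{F_0}^{W_1}$. Writing $M \iso \Zbb \x \Zbb/(p^{p^{k-\alpha}}-1)$ additively, $1-t$ is zero on the first factor and multiplication by $1-p$ on the second, while $N$ is multiplication by $|W/W_1|$ on the first factor and by $(p^{p^{k-\alpha}}-1)/(p-1)$ on the second. Reading off kernels and images gives $H^0 = \lan pu \ran \x C_{p-1}$, vanishing in positive odd degrees, and $\lan pu \ran / \lan (pu)^{|W/W_1|} \ran$ in positive even degrees; the torsion factor contributes nothing to the even groups because $|W/W_1|$ is a power of $p$, hence prime to $p-1$, so the image of $N$ on $\mu(L)$ already fills out the $(1-t)$-kernel.

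For $H^\ast(W/W_1, L^\x)$ I would argue exactly as in the second halves of lemmas \ref{215} and \ref{244}: degree $0$ is $(L^{W/W_1})^\x$ by definition of the invariants, Hilbert's theorem $90$ gives $H^1 = 0$, and $2$-periodicity of the cohomology of the cyclic group $W/W_1$ propagates this vanishing to all positive odd degrees. For the even degrees, the unramifiedness of $L/L^{W/W_1}$ lets me invoke proposition \ref{256}: the valuation induces $H^2(W/W_1, L^\x) \iso H^2(W/W_1, \frac{1}{e(L)}\Zbb) \iso \Zbb/|W/W_1|$, a generator being represented by a uniformizing element $\pi$ of $L^{W/W_1}$, and periodicity then yields $\lan \pi \ran / \lan \pi^{|W/W_1|} \ran$ in every positive even degree. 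The step I expect to be the true obstacle, and the one most likely to conceal a slip, is the root-of-unity bookkeeping for $\tilde{F_0}^{W_1}$ and its Frobenius action; once that is secured, both cohomology computations are formally identical to ones already in the text.
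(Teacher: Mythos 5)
Your proposal is correct and follows essentially the same route as the paper's proof: identify $\tilde{F_0}^{W_1} \iso \lan pu \ran \x C_{p^{n_\alpha/m}-1}$ with trivial action on $\lan pu \ran$ and Frobenius action on the torsion factor, run the $2$-periodic complex for the cyclic group $W/W_1$, and for $L^\x$ combine invariants, Hilbert's theorem $90$, proposition \ref{256} for the unramified extension $L/L^{W/W_1}$, and periodicity. Your added justifications (that $W/W_1$ embeds in the unramified $p$-part of $G$, whence cyclicity and unramifiedness, and that the torsion factor dies in positive degrees because its order is prime to $|W/W_1|$) merely make explicit facts the paper asserts, so there is no substantive divergence.
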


\begin{proof}
Since $W_0 = G_{p'} \subset W_1$, none of the elements of $C_{p^\alpha}$
are left invariant by $W_1$, and we have $\tilde{F_0}^{W_1} \iso \lan pu
\ran \x C_{p^{\frac{n_\alpha}{m}}-1}$. The action of $W/W_1$ on this
group is trivial on the first factor and acts on
$C_{p^{\frac{n_\alpha}{m}}-1}$ by $\zeta \mto \zeta^p$.

Let $t$ be generator of $W/W_1$, written additively, and $N =
\sum_{i=0}^{|W/W_1|-1} t^i$. Using additive notation for
$\tilde{F_0}^{W_1} \iso \Zbb \x \Zbb/(p^{\frac{n_\alpha}{m}}-1)$, we get
\begin{align*}
    (1-t)(1, 0) &= (0, 0),
    & (1-t)(0, 1) &= (0, 1-p),\\
    N(1, 0) &= (|W/W_1|, 0),
    & N(0, 1) &= (0, \frac{p^{\frac{n_\alpha}{m}}-1}{p-1}),
\end{align*}
and the desired result for $H^\ast(W/W_1, \tilde{F_0}^{W_1})$ follows.

Now let $K = L^{W/W_1}$. Then
\[
    H^0(W/W_1, L^\x) = K^\x
    \qq \text{and} \qq
    H^1(W/W_1, L^\x) = 0   
\]
by Hilbert's theorem 90. Finally, as $L/K$ is unramified, proposition
\ref{256} yields
\[
    H^2(W/W_1, L) 
    \iso H^2(G, \frac{1}{e(L)}\Zbb)
    \iso \frac{1}{e(L)}\Zbb / |W/W_1| \cdot \frac{1}{e(L)}\Zbb
    \iso \lan \pi_K \ran / \lan \pi_K^{|W/W_1|} \ran
\]
for $\pi_K$ a uniformizing element of $K$. The result then follows from
periodicity of the cohomology.
\end{proof}

\begin{corollary} \label{249}
If $\alpha \geq 2$, $W_0 = G_{p'}$, $|W/W_1| \neq 1$ and $L =
\Qbb_p(F_0)^{W_1}$ with $v(L) = \frac{1}{e(L)}\Zbb$, then
\[
    H^\ast(W/W_1, \tilde{F_1}^{W_1}) \raa H^\ast(W/W_1, L^\x)
    \q \text{for } 0 < \ast \text{ even}
\]
is surjective if and only if it is an isomorphism, and this is true if
and only if $e(L)$ divides $p-1$.
\end{corollary}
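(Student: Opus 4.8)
The plan is to read both cohomology groups off Lemma \ref{248} and then determine the comparison map explicitly through the valuation.

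First I would observe that Lemma \ref{248} already presents, for $0<\ast$ even, the two groups as finite cyclic groups of the same order $|W/W_1|$: one has $H^\ast(W/W_1,\tilde{F_1}^{W_1})\iso\lan pu\ran/\lan (pu)^{|W/W_1|}\ran$ and $H^\ast(W/W_1,L^\x)\iso\lan\pi\ran/\lan\pi^{|W/W_1|}\ran$, each isomorphic to $\Zbb/|W/W_1|\Zbb$. (Here I use that $\tilde{F_1}^{W_1}$ and $\tilde{F_0}^{W_1}$ coincide, both equal to $\lan pu\ran\x C_{p^{\frac{n_\alpha}{m}}-1}$ by Lemma \ref{246}, so Lemma \ref{248} applies to $\tilde{F_1}^{W_1}$ verbatim.) A surjective homomorphism between finite groups of equal cardinality is automatically an isomorphism, so the map is surjective iff it is an isomorphism; this settles the first assertion.

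Next I would pin down the map, which is induced by the inclusion $\tilde{F_1}^{W_1}\subset L^\x$. Writing $K=L^{W/W_1}$, the extension $L/K$ is unramified, so by Proposition \ref{256} the description of $H^\ast(W/W_1,L^\x)$ is obtained via the valuation $v\colon L^\x\ra\frac1{e(L)}\Zbb$, with the unit part contributing nothing. Hence the comparison map is computed by applying $v$ to $\tilde{F_1}^{W_1}$: the source generator $pu$ has $v(pu)=v(p)=1$, whereas the target generator $\pi$ is a uniformizer of $K$ with $v(\pi)=\frac1{e(L)}$. Therefore $pu$ maps to $e(L)$ times the generator, and under the identifications with $\Zbb/|W/W_1|\Zbb$ the map is multiplication by $e(L)$.

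Finally, multiplication by $e(L)$ on $\Zbb/|W/W_1|\Zbb$ is surjective iff $e(L)$ is prime to $|W/W_1|$. Since $W_0=G_{p'}$, the quotient $W/W_1$ is a subquotient of the $p$-group $G_p$, hence a nontrivial $p$-group (as $|W/W_1|\neq 1$), so the condition becomes $p\nmid e(L)$. Using multiplicativity of the ramification index in the tower $\Qbb_p\subset L\subset\Qbb_p(F_0)$ gives $e(L)\mid e(\Qbb_p(F_0))=\phi(p^\alpha)=(p-1)p^{\alpha-1}$, whence $p\nmid e(L)$ holds iff $e(L)\mid p-1$, as claimed. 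I expect the middle step to be the main obstacle: making rigorous that the class of $pu$ in $H^2(W/W_1,L^\x)$ is determined by its valuation alone and equals $e(L)$ copies of the generator. This rests on the vanishing of $\hat H^0(W/W_1,\Ocal_L^\x)$ for the unramified cyclic extension $L/K$, i.e. on the norm surjectivity of Proposition \ref{256}; once the unit of $pu$ is shown invisible, the rest is routine cyclic-group bookkeeping supplied by Lemma \ref{248}.
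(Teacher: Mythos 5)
Your proof is correct and follows essentially the same route as the paper: both read off the two cyclic groups of order $|W/W_1|$ from lemma \ref{248}, use proposition \ref{256} (unramifiedness of $L/K$) to see that only the valuation of $pu$ matters, identify the comparison map with multiplication by $e(L)$ on $\Zbb/|W/W_1|$, and conclude from $e(L)\ |\ (p-1)p^{\alpha-1}$ that surjectivity holds if and only if $e(L)$ divides $p-1$. The only cosmetic difference is that you pass from $\tilde{F_1}^{W_1}$ to $\tilde{F_0}^{W_1}$ by the literal equality of invariants (valid, as in the proof of lemma \ref{246}), whereas the paper uses the short exact sequence $1 \ra \tilde{F_0} \ra \tilde{F_1} \ra \Zbb/(p-1) \ra 1$ and the vanishing of $H^\ast(W/W_1, I)$ for the resulting quotient $I$ of order prime to $p$.
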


\begin{proof}
The short exact sequence
\[
    1 \raa \tilde{F_0} \raa \tilde{F_1} \raa \Zbb/p-1 \raa 1
\]
induces a long exact sequence
\[
    1 \raa \tilde{F_0}^{W_1} \raa \tilde{F_1}^{W_1} 
    \raa (\Zbb/p-1)^{W_1} \raa H^1(W_1, \tilde{F_0}^{W_1})
    \raa \ldots,
\]
which in turn induces a short exact sequence
\[
    1 \raa \tilde{F_0}^{W_1} \raa \tilde{F_1}^{W_1} 
    \raa I \raa 1
\]
where $|I|$ divides $p-1$. Since $W/W_1$ is a $p$-group, $|W/W_1|$ is
prime to $p-1$ and we have $H^\ast(W/W_1, I) = 0$ for $\ast \geq 1$.
Hence
\[
    H^\ast(W/W_1, \tilde{F_1}^{W_1})
    \iso H^\ast(W/W_1, \tilde{F_0}^{W_1})
    \q \text{for } \ast \geq 2,
\]
and by the periodicity of the cohomology of the finite cyclic group
$W/W_1$ this is also true for $\ast = 1$. For $\ast=2$, we are
interested in the image of this group in $H^2(W/W_1, L^\x)$. Let $K =
L^{W/W_1}$ and $M = \Qbb_p(F_0)$. From lemma \ref{248} we have
\[
    H^2(W/W_1, L^\x) 
    \iso \frac{1}{e(L)}\Zbb / \frac{|W/W_1|}{e(L)}\Zbb,
\]
and we know that $e(L)$ divides $e(M) = (p-1)p^{\alpha-1}$. Because
$L/K$ is unramified, the group $\Ocal_K^\x$ is contained in the norm
$N_{L/K}(L^\x)$ by proposition \ref{256}. The map
\[
    H^2(W/W_1, \tilde{F_1}^{W_1}) \raa H^2(W/W_1, L^\x)
\]
is therefore surjective if and only if $v(pu)=v(p)=1$ is a generator of
$\frac{1}{e(L)}\Zbb / \frac{|W/W_1|}{e(L)}\Zbb$, and this is true if and
only if $p$ does not divide $e(L)$.
\end{proof}

\begin{theorem} \label{250}
Let $p$ be an odd prime, $n = (p-1)p^{k-1}m$ with $m$ prime to $p$, $u
\in \Zbb_p^\x$, $F_0 = C_{p^\alpha} \x C_{p^{n_\alpha}-1}$ be a maximal
abelian finite subgroup in $\Sbb_n$, $G = Gal(\Qbb_p(F_0)/\Qbb_p)$,
$G_{p'}$ be the $p'$-part of $G$, and let $\tilde{F_1} = \lan x_1 \ran
\x F_0 \subset \Qbb_p(F_0)^\x$ be maximal as a subgroup of
$\Qbb_p(F_0)^\x$ having $\tilde{F_0}$ as subgroup of finite index.
\begin{enumerate}
    \item For any $0 \leq \alpha \leq k$, there is an extension of
    $\tilde{F_1}$ by $G_{p'}$; this extension is unique up to
    conjugation.

    \item If $\alpha \leq 1$, there is an extension of $\tilde{F_1}$ by
    $G$; this maximal extension is unique up to conjugation.

    \item If $\alpha \geq 2$, there is an extension of $\tilde{F_1}$ by
    $G$ if and only if    
    \[
        \alpha = k
        \qq \text{and} \qq
        u \not\in \mu(\Zbb_p^\x) 
        \x \{ x \in \Zbb_p^\x\ |\ x \equiv 1 \mod (p^2) \},
    \]
    in which case this maximal extension is unique up to conjugation.
\end{enumerate}
\end{theorem}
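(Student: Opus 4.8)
Since $F_0$ is maximal abelian, $\Qbb_p(F_0)$ is a maximal subfield of $\Dbb_n$, so $[\Qbb_p(F_0):\Qbb_p] = n$ and $\mu(\Qbb_p(F_0)) = F_0$; in particular $\tilde{F_1} = \tilde{F_2}$, and for $L := \Qbb_p(F_0)$ we have $C_{\Dbb_n^\x}(\tilde{F_2}) = L^\x$. The plan is to translate the existence and uniqueness of an extension of $\tilde{F_1}$ by a subgroup $W \subset G$ into properties of the comparison map $i_W^\ast : H^2(W, \tilde{F_1}) \ra H^2(W, L^\x)$. Because $[L:\Qbb_p] = n$, the integer $n[L:\Qbb_p]^{-1} = 1$ is prime to every $|W|$, so the primality hypothesis of theorem \ref{190} is automatic; hence an extension of $\tilde{F_1}$ by $W$ exists if and only if $i_W^\ast$ is surjective. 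Moreover corollary \ref{192} puts the conjugacy classes of such extensions in bijection with $\ker i_W^\ast$, so the extension is unique up to conjugation precisely when $i_W^\ast$ is also injective, i.e. an isomorphism.

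First I would dispatch statement 1. Here $W = G_{p'} = W_0$, and proposition \ref{243} states that $i_{G_{p'}}^\ast$ is always an isomorphism. Surjectivity yields existence and triviality of the kernel yields uniqueness, for every $0 \leq \alpha \leq k$.

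For statements 2 and 3 I would take $W = G$ and analyse $i_G^\ast$ through the chain $W_0 = G_{p'} \subset W_1 = G_{p'} \x (G_p \cap Aut(C_{p^\alpha})) \subset W_2 = G$, using the successive fixed fields $\Qbb_p(F_0) \supset \Qbb_p(F_0)^{W_0} \supset \Qbb_p(F_0)^{W_1}$. The three relative comparison maps are computed in proposition \ref{243} (stage $W_0$, always an isomorphism), corollary \ref{247} (stage $W_1/W_0 \iso C_{p^{\alpha-1}}$, the $p$-part acting on $C_{p^\alpha}$) and corollary \ref{249} (stage $W/W_1$, the residual Frobenius $p$-part acting on $C_{p^{n_\alpha/m}-1}$). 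I would argue that $i_G^\ast$ is surjective (resp. an isomorphism) if and only if each of these three relative maps is. When $\alpha = 0$ this is just corollary \ref{216}; when $\alpha = 1$ the middle stage $W_1/W_0$ is trivial because $Aut(C_p) = C_{p-1}$ has no $p$-torsion, and $\Qbb_p(F_0)^{W_0}$ is unramified over $\Qbb_p$, so its ramification index $e=1$ divides $p-1$ and corollary \ref{249} makes the top stage an isomorphism; this gives statement 2. For $\alpha \geq 2$ the decisive stage is the middle one: corollary \ref{247} shows that the map at $W_1/W_0$ is surjective (equivalently an isomorphism) if and only if $\alpha = k$ and $u \notin \mu(\Zbb_p^\x) \x \{x \in \Zbb_p^\x\ |\ x \equiv 1 \mod (p^2)\}$. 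When $\alpha = k$ one has $n_\alpha = m$, so the Frobenius $p$-part $C_{n_\alpha/m}$ is trivial, $W_1 = G$, and the top stage disappears; hence under the stated condition all nontrivial relative maps are isomorphisms and $i_G^\ast$ is an isomorphism, giving both existence and uniqueness. Conversely if $\alpha < k$ the middle stage already fails to be surjective, so $i_G^\ast$ cannot be surjective and no extension exists; this is statement 3.

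The main obstacle is the dévissage step: establishing that surjectivity (and injectivity) of $i_G^\ast$ is controlled stagewise by the three relative maps attached to $W_0 \subset W_1 \subset G$. The forward implication, that surjectivity of $i_G^\ast$ forces surjectivity of each relative map, follows from functoriality in the spirit of proposition \ref{214} together with inflation-restriction along the tower of fixed fields; the converse requires reassembling the stages, for which I would use that every group $H^2(W', {L'}^\x)$ occurring is cyclic and identified with a Brauer group via its invariant map, so that the orders multiply along the tower, $n = |W_0|\,|W_1/W_0|\,|W/W_1|$, and a surjection of matching cyclic orders at each stage composes to a surjection, necessarily an isomorphism. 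Care is needed because $H^2(G, \tilde{F_1})$ need not have order $n$; the relevant point, guaranteed by the explicit computations in lemmas \ref{246}, \ref{221} and \ref{248}, is that at each stage the source and target cyclic groups have equal order whenever the map is surjective, so that "surjective" and "isomorphism" coincide and the uniqueness statements come for free.
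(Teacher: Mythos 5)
Your proposal follows the same route as the paper: reduce both existence and uniqueness to surjectivity and injectivity of $i_W^\ast$ via theorem \ref{190} and corollary \ref{192} (using that $[L:\Qbb_p]=n$ makes the primality condition vacuous), then analyse $i_G^\ast$ along the tower $W_0 = G_{p'} \subset W_1 = G_{p'} \x (G_p \cap Aut(C_{p^\alpha})) \subset G$, with proposition \ref{243} and corollaries \ref{247}, \ref{249} as computational inputs, and with the observation that $\alpha = k$ forces $W_1 = G$ so the top stage disappears. This is exactly the paper's proof skeleton, and your identification of corollary \ref{247} as the decisive criterion is correct.

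Two points need repair, however. First, the d\'evissage you flag as the main obstacle is precisely what the paper's proof consists of, and your proposed reassembly mechanism is not the right one: the stage maps $H^s(W_1/W_0, \cdot^{W_0}) \ra H^s(W_1/W_0, \cdot^{W_0})$ and $H^s(W/W_1, \cdot^{W_1}) \ra H^s(W/W_1, \cdot^{W_1})$ live on cohomology of different groups and do not compose with one another, so ``a surjection of matching cyclic orders at each stage composes to a surjection'' is not meaningful as stated. The paper assembles the stages by comparing the two Hochschild--Serre spectral sequences for $\tilde{F_1}$ and $\Qbb_p(F_0)^\x$ along $1 \ra W_0 \ra W_1 \ra W_1/W_0 \ra 1$ and then $1 \ra W_1 \ra W \ra W/W_1 \ra 1$, using proposition \ref{243} for the $t>0$ rows and the explicit computations of lemmas \ref{221}, \ref{246}, \ref{248} (vanishing of odd-degree cohomology, hence of the relevant $E_2$-terms) to get both directions of ``$i^\ast$ surjective iff the $t=0$ stage map is surjective''; your inflation--restriction remark is the correct low-degree shadow of this, but it must be carried out (injectivity of $i_{W_0}^\ast$ together with a five-lemma argument on the resulting exact sequences), not replaced by order counting. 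Second, your uniform treatment of small $\alpha$ quotes results outside their stated hypotheses: proposition \ref{243} assumes $\alpha \geq 1$, so statement 1 at $\alpha = 0$ needs corollary \ref{216} instead, and corollary \ref{249} assumes $\alpha \geq 2$, so your $\alpha = 1$ argument is not literally covered; the paper instead treats $\alpha = 1$ by a different decomposition ($C_{p-1}$ first, then $C_{n_1}$) via lemma \ref{244} and corollary \ref{245}. The facts you need do extend to $\alpha = 1$ (the proofs of lemma \ref{248} and corollary \ref{249} go through verbatim there), but in a complete proof this extension has to be checked rather than assumed.
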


\begin{proof}
1) From corollary \ref{216} and proposition \ref{243} we know that the
map  
\[
    i_{G_{p'}}^\ast:
    H^2(G_{p'}, \tilde{F_1}) \raa H^2(G_{p'}, \Qbb_p(F_0)^\x)
\]
is an isomorphism. Existence and uniqueness up to conjugation of an
extension of $\tilde{F_1}$ by $G_{p'}$ then follow from corollary
\ref{192}.

2) The case $\alpha=0$ follows from corollary \ref{216} and corollary
\ref{192}. Now assume that $\alpha = 1$ and that $W = G = C_{p-1} \x
C_{n_1}$. We have a short exact sequence
\[
    1 \raa C_{p-1} \raa W \raa C_{n_1} \raa 1,
\]
which gives rise to the Hochschild-Serre spectral sequences (see
\cite{brown} section VII.6)
\begin{align*}
    E_2^{s, t} 
    \iso H^s(C_{n_1}, H^t(C_{p-1}, \tilde{F_1})) \q
    &\Raa \q H^{s+t}(W, \tilde{F_1}),\\[1ex]
    E_2^{s, t} 
    \iso H^s(C_{n_1}, H^t(C_{p-1}, \Qbb_p(F_0)^\x)) \q
    &\Raa \q H^{s+t}(W, \Qbb_p(F_0)^\x).
\end{align*}
By lemma \ref{221} and proposition \ref{243}, each map $E_2^{s, t} \ra
E_2^{s, t}$ is an isomorphism for $t > 0$. Moreover, by lemma \ref{221}
we have
\[
    H^0(C_{p-1}, \tilde{F_1}) 
    = \tilde{F_1}^{C_{p-1}}
    \iso \lan pu \ran \x C_{p^{n_1}-1}
\]
and
\[
    H^0(C_{p-1}, \Qbb_p(F_0)^\x)
    = (\Qbb_p(F_0)^{C_{p-1}})^\x
    \iso \Qbb_p(C_{p^{n_1}-1})^\x.
\]
Then lemma \ref{244} and corollary \ref{245} imply that the map $E_2^{s,
t} \ra E_2^{s, t}$ is an isomorphism as well for $t=0$ and $s > 0$. It
follows that
\[
    i_W^\ast: H^\ast(W, \tilde{F_1}) \raa H^\ast(W, \Qbb_p(F_0)^\x)
\]
is an isomorphism for $\ast > 0$. Existence and uniqueness up to
conjugation then follows from corollary \ref{192}.

3) Assume that $\alpha \geq 2$ and that $W \subset G$ is such that $W_0
= G_{p'}$ with $|W_1/W_0| \neq 1$ and $|W/W_1| \neq 1$. We have a short
exact sequence
\[
    1 \raa W_0 \raa W_1 \raa W_1/W_0 \raa 1,
\]
which gives rise to the spectral sequences
\begin{align*}
    E_2^{s, t} 
    \iso H^s(W_1/W_0, H^t(W_0, \tilde{F_1})) \q
    &\Raa \q H^{s+t}(W_1, \tilde{F_1}),\\[1ex]
    E_2^{s, t} 
    \iso H^s(W_1/W_0, H^t(W_0, \Qbb_p(F_0)^\x)) \q
    &\Raa \q H^{s+t}(W_1, \Qbb_p(F_0)^\x).
\end{align*}
By proposition \ref{243} each map $E_2^{s, t} \ra E_2^{s, t}$ is an
isomorphism for $t > 0$. Moreover, we know from lemma \ref{246} and
corollary \ref{247} that when $t=0$ and $s>0$, a necessary and sufficient
condition for
\[
    H^s(W_1/W_0, \tilde{F_1}^{W_0}) \raa 
    H^s(W_1/W_0, (\Qbb_p(F_0)^{W_0})^\x)
\]
to be surjective (and hence an isomorphism) is that $u$ is a topological
generator in $\Zbb_p^\x/\mu(\Zbb_p)$ and $\alpha = k$. The map
\[
    H^\ast(W_1, \tilde{F_1}) \raa H^\ast(W_1, \Qbb_p(F_0)^\x),
    \q \text{for } \ast > 0
\]
is thus surjective if and only if it is an isomorphism, and this is true
if and only if 
\[
    u \not\in \mu(\Zbb_p^\x) \x \{ x \in
    \Zbb_p^\x\ |\ x \equiv 1 \mod (p^2) \}
    \qq \text{and} \qq
    \alpha = k.
\]
Now assuming these conditions are satisfied, the short exact sequence
\[
    1 \raa W_1 \raa W \raa W/W_1 \raa 1
\]
induces spectral sequences
\begin{align*}
    E_2^{s, t} 
    \iso H^s(W/W_1, H^t(W_1, \tilde{F_1})) \q
    &\Raa \q H^{s+t}(W, \tilde{F_1}),\\[1ex]
    E_2^{s, t} 
    \iso H^s(W/W_1, H^t(W_1, \Qbb_p(F_0)^\x)) \q
    &\Raa \q H^{s+t}(W, \Qbb_p(F_0)^\x),
\end{align*}
where each map $E_2^{s, t} \ra E_2^{s, t}$ is an isomorphism for $t>0$.
Furthermore, lemma \ref{248} and corollary \ref{249} imply that in case
$t=0$ and $s>0$, the map
\[
    H^s(W/W_1, \tilde{F_1}^{W_1}) 
    \raa H^s(W/W_1, (\Qbb_p(F_0)^{W_1})^\x) 
\]
is surjective (and hence an isomorphism) if and only if
$e(\Qbb_p(F_0)^{W_1})$ divides $p-1$. In particular for $W = G$, the map
\[
    i_{G}^\ast: H^2(G, \tilde{F_1}) \raa H^2(G, \Qbb_p(F_0)^\x)
\]
is surjective (and hence an isomorphism) if and only if $W_1$ is
realized and $e(\Qbb_p(F_0)^{W_1})$ divides $p-1$, that is, if and only
if $W_1$ is realized and $W_1/W_0 \iso C_{p^\alpha}$. The result then
follows from theorem \ref{190} and corollary \ref{192}.
\end{proof}

\section{Extensions of maximal abelian finite subgroups 
of $\Sbb_n$ for $p=2$} 

In this section, we assume $p = 2$, $F_0$ to be a maximal abelian
finite subgroup of $\Sbb_n$, and $\tilde{F_1}$ to be maximal as a
subgroup of $\Qbb_2(F_0)^\x$ having $\tilde{F_0}$ as a subgroup of
finite index; in other words
\[
    F_0 \iso C_{2^\alpha} \x C_{2^{n_\alpha}-1}
    \qq \text{with} \q 
    1 \leq \alpha \leq k,\
    n_\alpha = \frac{n}{\phi(2^\alpha)}.
\]
By corollary \ref{294}, we have $\tilde{F_1} = \lan x_1 \ran \x F_0$
with
\[
    v(x_1) =
    \begin{cases}
        1 
        & \text{if } \alpha \leq 1,
        \text{ or if } u \equiv \pm 3 \mod 8 
        \text{ and } n_\alpha \text{ is odd},\\
        \frac{1}{2}
        & \text{if } \alpha \geq 2 
        \text{ and either } u \equiv \pm 1 \mod 8
        \text{ or } n_\alpha \text{ is even}.
    \end{cases}
\]

\begin{remark} \label{295}
Since $n_\alpha = 2^{k-\alpha}m$ with $m$ odd, we have
\[
    n_\alpha \equiv 1 \mod 2
    \qq \Lra \qq
    \alpha = k.
\]
\end{remark}

\noindent By remark \ref{297}, we may in fact choose $x_1 \in
\tilde{F_1}$ to be $x_1 = 2u$ in the cases where its valuation is $1$,
otherwise to be $x_1 = (1+i)t$ for $i \in \Qbb_2(F_0)^\x$ a primitive
$4$-th root of unity and
\[
    t \in
    \begin{cases}
        \Zbb_2^\x & \text{if } u \equiv \pm 1 \mod 8,\\
        \Zbb_2(\zeta_3)^\x & \text{if } u \equiv \pm 3 \mod 8,
    \end{cases}
    \qq \text{with} \qq
    t^2 =
    \begin{cases}
        u & \text{if } u \equiv 1 \text{ or } -3 \mod 8,\\
        -u & \text{if } u \equiv -1 \text{ or } 3 \mod 8.
    \end{cases}
\]
By definition $\Qbb_2(F_0) = \Qbb_2(\tilde{F_1})$, and because the
latter is a maximal subfield of $\Dbb_n$ we have $\tilde{F_1} =
\tilde{F_2}$. We let
\[
    G 
    := Gal(\Qbb_2(F_0)/\Qbb_2) 
    \iso
    \begin{cases}
        C_n 
        & \text{if } \alpha = 1,\\
        C_{n_\alpha} \x C_{2^{\alpha-2}} \x C_{2}
        & \text{if } \alpha \geq 2,
    \end{cases}
\]
as given by proposition \ref{342}. From our choice of $x_1$, we know
that $\tilde{F_1}$ is stable under the action of a subgroup $W \subset
G$: if $x_1 = 2u$ this is clear, and if $v(x_1) = \frac{1}{2}$ and
$\sigma \in W$ we have $\frac{\sigma(x_1)}{x_1} \in F_0$ and hence
$\sigma(x_1) \in x_1 F_0 \subset \tilde{F_1}$.  The goal of the section
is to determine necessary and sufficient conditions on $n$, $u$ and
$\alpha$ for the homomorphism
\[
    i_G^\ast: H^2(G, \tilde{F_1}) \raa H^2(G, \Qbb_2(F_0)^\x)
\]
to be surjective, and whenever this happens, we want to determine its
kernel. This is done via the analysis of
\[
    i_W^\ast: H^2(W, \tilde{F_1}) \raa H^2(W, \Qbb_2(F_0)^\x)
\]
for suitable subgroups $W \subset G$.

\subsection*{The case $\alpha = 1$}

The situation is much simpler when the $2$-Sylow subgroup of $F_0$ is
contained in $\Qbb_2^\x$. Recall that $C_{2^\alpha} \ast C_{n}$ denotes
the kernel of the $n$-th power map on $C_{2^\alpha}$.

\begin{lemma} \label{231}
If $\alpha \leq 1$ and $W = C_n$, then $\tilde{F_1} = \lan 2u \ran \x
F_0$ and
\begin{align*}
    H^\ast(W, \tilde{F_1}) &\iso
    \begin{cases}
        \lan 2u \ran \x C_{2^\alpha} 
        & \text{if } \ast = 0,\\
        C_{2^\alpha} \ast C_n 
        & \text{if } 0 < \ast \text{ is odd},\\
        \lan 2u \ran / \lan (2u)^n \ran \x C_{2^\alpha} \ox C_n 
        & \text{if } 0 < \ast \text{ is even};
    \end{cases} \\[2ex]
    H^\ast(W, \Qbb_2(F_0)^\x) &\iso
    \begin{cases}
        \Qbb_2^\x 
        & \text{if } \ast = 0,\\
        0 
        & \text{if } 0 < \ast \text{ is odd},\\
        \lan 2 \ran / \lan 2^n \ran 
        & \text{if } 0 < \ast \text{ is even}.
    \end{cases}
\end{align*}
\end{lemma}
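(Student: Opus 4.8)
The plan is to reduce the whole computation to the standard two‑periodic cohomology of the cyclic group $W = C_n$, exploiting that $\tilde{F_1}$ splits as a $W$‑module into pieces on which the action is either trivial or the Frobenius. First I would fix the setup. Since $\alpha \leq 1$ we have $n_\alpha = n$, and by corollary \ref{294} together with remark \ref{297} the valuation of $x_1$ is $1$, so we may take $x_1 = 2u$ and write $\tilde{F_1} = \lan 2u \ran \x F_0$ with $F_0 \iso C_{2^\alpha} \x C_{2^n-1}$. Here $L := \Qbb_2(F_0) = \Qbb_2(\zeta_{2^n-1})$ is the unramified extension of degree $n$ over $\Qbb_2$, so $W = Gal(L/\Qbb_2) \iso C_n$ acts by the Frobenius $\zeta \mto \zeta^2$. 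I would then record the $W$‑module structure of $\tilde{F_1} \iso \Zbb \x \Zbb/2^\alpha \x \Zbb/(2^n-1)$: the factor $\lan 2u \ran$ lies in $\Qbb_2^\x$ and the factor $C_{2^\alpha} = \{\pm 1\}$ lies in $\Qbb_2^\x$ as well, so both are fixed by $W$, while $C_{2^n-1}$ carries the Frobenius action.

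Next, for a generator $t$ of $W$ and $N = \sum_{i=0}^{n-1} t^i$, I would compute $H^\ast(W, \tilde{F_1})$ as the cohomology of the complex $\tilde{F_1} \overset{1-t}{\raa} \tilde{F_1} \overset{N}{\raa} \tilde{F_1} \overset{1-t}{\raa} \cdots$, splitting it along the three summands exactly as in the proofs of lemma \ref{215} and lemma \ref{244}. The decisive point, which is really the only content of the calculation, is that on the summand $C_{2^n-1}$ the operator $1-t$ acts as multiplication by $-1$, hence invertibly, whereas $N$ acts as $2^n-1 \equiv 0$; this summand is therefore cohomologically trivial in every degree and in particular drops out of the invariants $\tilde{F_1}^W$. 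On the two fixed summands $N$ is multiplication by $n$ and $1-t$ is zero, which gives the invariants $\lan 2u \ran \x C_{2^\alpha}$ in degree $0$, the kernel of multiplication by $n$ (namely $C_{2^\alpha} \ast C_n$, the $\Zbb$–factor contributing nothing since multiplication by $n$ is injective on $\Zbb$) in positive odd degrees, and the cokernel $\lan 2u \ran / \lan (2u)^n \ran \x (C_{2^\alpha} \ox C_n)$ in positive even degrees. Assembling the three summands yields the stated formula for $H^\ast(W, \tilde{F_1})$.

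Finally, for $H^\ast(W, \Qbb_2(F_0)^\x)$ I would argue just as in lemma \ref{215}: the degree‑$0$ group is the unit group of the fixed field, namely $\Qbb_2^\x$; the odd groups vanish by Hilbert's theorem $90$ together with the two‑periodicity of cyclic cohomology; and since $L/\Qbb_2$ is unramified, proposition \ref{256} lets the valuation identify $H^2(W, L^\x) \iso H^2(W, \tfrac{1}{e(L)}\Zbb) \iso \Zbb/n$, where $e(L)=1$ and $2$ (of valuation $1$) represents a generator, whence periodicity gives $\lan 2 \ran / \lan 2^n \ran$ in every positive even degree. I expect no serious obstacle: the argument is entirely parallel to lemmas \ref{215}, \ref{221} and \ref{244}, the only genuinely new verifications being the cohomological triviality of the Frobenius summand $C_{2^n-1}$ and the correct identification of the action as trivial on $\lan 2u \ran$ and on $C_{2^\alpha} = \{\pm 1\}$.
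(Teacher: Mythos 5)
Your proposal is correct and follows essentially the same route as the paper's own proof: identify $\tilde{F_1}=\tilde{F_0}=\lan 2u\ran\x C_{2^\alpha}\x C_{2^n-1}$ with trivial action on the first two factors and Frobenius on the third, compute the periodic cyclic cohomology via $1-t$ and $N$ summand by summand, and handle $\Qbb_2(F_0)^\x$ by Hilbert's theorem 90 together with proposition \ref{256} for the unramified extension. The only cosmetic difference is that you phrase the key computation as cohomological triviality of the Frobenius summand (since $1-t$ acts invertibly there), while the paper simply tabulates $1-t$ and $N$ on generators; the content is identical.
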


\begin{proof}
We know from corollary \ref{294} that $\tilde{F_1} = \tilde{F_0}$. The
action of $W = C_n$ on $\tilde{F_1} \iso \lan 2u \ran \x C_{2^\alpha} \x
C_{2^n-1}$ is trivial on $\lan 2u \ran \x C_{2^\alpha}$ and acts on
$C_{2^n-1}$ by $\zeta \mto \zeta^2$.

For $t$ a generator of $C_n$, written additively, and $N =
\sum_{i=0}^{n-1} t^i$, $H^\ast(C_n, \tilde{F_1})$ is the cohomology of
the complex
\[
    \xymatrix{
        \tilde{F_1} \ar[r]^{1-t}
        & \tilde{F_1} \ar[r]^{N}
        & \tilde{F_1} \ar[r]^{1-t}
        & \ldots\ .
    }
\]
Using additive notation for $\tilde{F_1} \iso \Zbb \x \Zbb/2^\alpha \x
\Zbb/2^n-1$, we obtain
\begin{align*}
    (1-t)(1, 0, 0) &= (0, 0, 0),
    & N(1, 0, 0) &= (n, 0, 0),\\
    (1-t)(0, 1, 0) &= (0, 0, 0),
    & N(0, 1, 0) &= (0, n, 0),\\
    (1-t)(0, 0, 1) &= (0, 0, -1),
    & N(0, 0, 1) &= (0, 0, 2^n-1),
\end{align*}
and the desired result for $H^\ast(W, \tilde{F_1})$ follows.

Now let $L = \Qbb_2(F_0) = \Qbb_2(\tilde{F_1})$ and $K = L^W$. Then
\[
    H^0(W, \Qbb_2(F_0)^\x) 
    = K^\x 
    = \Qbb_2^\x
\]
and $H^1(W, \Qbb_2(F_0)^\x) = 0$ by Hilbert's theorem 90. Furthermore as
$L/K$ is unramified, proposition \ref{256} imply
\[
    H^2(W, \Qbb_2(F_0)^\x) 
    \iso \lan 2 \ran / \lan 2^n \ran
\]
as desired.
\end{proof}

\begin{corollary} \label{232}
If $\alpha = 1$ and $W \subset C_n$, then $i_W^\ast: H^2(W, \tilde{F_1})
\ra H^2(W, \Qbb_2(F_0)^\x)$ is an epimorphism.  It is an isomorphism if
and only if $n$ is odd. If $n$ is even, its kernel is $\{\pm 1\}$.
\end{corollary}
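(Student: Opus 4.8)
The plan is to read both cohomology groups off lemma \ref{231} and then understand the induced map $i_W^\ast$ summand by summand, exploiting that $\Qbb_2(F_0)/\Qbb_2$ is unramified. Since $\alpha = 1$ we have $C_{2^\alpha} = \{\pm 1\}$, and by corollary \ref{294} the group $\tilde{F_1}$ equals $\tilde{F_0} = \lan 2u \ran \x \{\pm 1\} \x C_{2^n-1}$. For $W = C_n$, lemma \ref{231} then gives (in even positive degree) the cyclic-times-torsion form
\[
    H^2(W, \tilde{F_1})
    \iso \lan 2u \ran/\lan (2u)^n \ran \x (\{\pm 1\} \ox C_n),
    \qq
    H^2(W, \Qbb_2(F_0)^\x) \iso \lan 2 \ran/\lan 2^n \ran,
\]
the second group being cyclic of order $n$ and the first having order $n \cdot |\{\pm 1\} \ox C_n| = n\gcd(2,n)$. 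The same computation applies verbatim to an arbitrary subgroup $W = C_d \subset C_n$, since then $\Qbb_2(F_0)/\Qbb_2(F_0)^W$ is unramified of degree $d = |W|$; one merely replaces $n$ by $|W|$ throughout, and the stated corollary is the specialization to $W = C_n$ (with $|W|$ odd forcing all subgroups to give isomorphisms when $n$ is odd).

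First I would prove surjectivity. Writing $L = \Qbb_2(F_0)$ and $K = L^W$, the standard identification $H^2(W, L^\x) \iso K^\x/N_{L/K}(L^\x)$ for cyclic $W$ (as used already in lemma \ref{221}) carries a uniformizer of $K$, namely $2$, to a generator. Because $L/K$ is unramified, proposition \ref{256} gives $\Ocal_K^\x \subset N_{L/K}(L^\x)$, so $u \in \Zbb_2^\x \subset \Ocal_K^\x$ is a norm. Consequently $i_W^\ast$ sends the generator $2u$ of the first summand to the class of $2$, which generates the target; hence $i_W^\ast$ is an epimorphism. (Alternatively, this follows from the $W = C_n$ case together with proposition \ref{214}.)

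Next I would compute the kernel. The orders already force the answer: the target has order $|W|$ and the source has order $|W|\cdot|\{\pm 1\}\ox C_{|W|}|$, so $i_W^\ast$ is an isomorphism exactly when $\{\pm 1\}\ox C_{|W|}$ is trivial, i.e. when $|W|$ is odd; for $W = C_n$ this is the condition that $n$ is odd. When $|W|$ is even, the torsion summand $\{\pm 1\}\ox C_{|W|} \iso \{\pm 1\}$ is generated by the image of $-1 \in \{\pm 1\} \subset F_0$. Since $-1 \in \Zbb_2^\x$ is again a norm from the unramified extension $L/K$, its image in $H^2(W, L^\x) \iso K^\x/N_{L/K}(L^\x)$ vanishes, so this whole summand lies in $\ker i_W^\ast$. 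As the first summand maps isomorphically onto the target and the kernel has order $2$, the kernel is exactly $\{\pm 1\}$.

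The only genuinely delicate point is the identification on the torsion summand: one must check that the $H^2(W, \{\pm 1\})$-factor of $H^2(W, \tilde{F_1})$ produced by the direct-sum decomposition in lemma \ref{231} is generated by the class of $-1$ and is respected by the inclusion $\tilde{F_1} \hookrightarrow \Qbb_2(F_0)^\x$, so that its image is computed by the norm-residue description above. Everything else is bookkeeping with the norm interpretation of $H^2$ for cyclic $W$ and the unramifiedness of $\Qbb_2(F_0)/\Qbb_2$ through proposition \ref{256}.
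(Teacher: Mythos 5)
Your proof is correct and takes essentially the same route as the paper's: surjectivity for $W = C_n$ via proposition \ref{256} (units are norms from the unramified extension, so the class of $2u$ hits the generator $2$ of $H^2(W,\Qbb_2(F_0)^\x)$ computed in lemma \ref{231}), the case of subgroups via proposition \ref{214}, and the kernel read off by comparing orders. The only difference is that you spell out the step the paper dismisses as ``clear'' --- verifying that $-1$ is itself a norm, so that the torsion summand $\{\pm 1\}$ is exactly the kernel when $n$ is even --- which is a sound elaboration rather than a different argument.
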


\begin{proof}
First assume that $W = C_n$ with $L = \Qbb_2(F_0)$ and $K = L^W$. As
$L/K$ is unramified, proposition \ref{256} yields $u \in N_{C_n}(L^\x)$.
Hence $i_{C_n}^\ast$ is surjective by lemma \ref{231}. The case $W
\subset C_n$ follows from proposition \ref{214}, and the other
assertions are clear.
\end{proof}

\begin{example} \label{263}
When $\alpha = 1$, the group $F_0 \iso C_2 \x C_{2^n-1}$ is generated by
$-\omega$ for $\omega$ a $(2^n\!-\!1)$-th root of unity in $\Sbb_n$. Here
$\Qbb_2(F_0)/\Qbb_2$ is a maximal unramified commutative extension in
$\Dbb_n$ and $\tilde{F_0} = \tilde{F_1} = \tilde{F_2}$. Now for any $u
\in \Zbb_2^\x$ there are elements $\xi_u$ and $\xi_{-u}$ of valuation
$\frac{1}{n}$ in $N_{\Dbb_n^\x}(F_0)$ such that
\[
    \xi_{u}^n = 2u, \qq
    \xi_{-u}^n = -2u
    \qq \text{and} \qq
    \xi_{\pm u} \omega \xi_{\pm u}^{-1} = \omega^2,
\]
with $\tilde{F_3^+} = \lan \xi_{u} \ran \x F_0$ and $\tilde{F_3^-} =
\lan \xi_{-u} \ran \x F_0$. In $\Gbb_n(u)$, this gives extensions
\[
    1 \raa F_0 \raa F_3^{\pm} \raa C_n \raa 1,
\]
having classes in
\[
    H^2(C_n, F_0)
    \iso H^2(C_n, C_2) \op H^2(C_n, C_{2^n-1})
    \iso H^2(C_n, C_2)
    \iso
    \begin{cases}
        0 & \text{if } n \text{ is odd},\\
        \Zbb/2 & \text{if } n \text{ is even}.
    \end{cases}
\]
One of the extensions is a semi-direct product, represented by 
\[
    \lan -\omega, \bar{\xi}_u \ran
    \iso C_{2(2^n-1)} \rtimes C_n,
\]
for $\bar{\xi}_u$ the class of $\xi_u$ in $\Gbb_n(u)$. When $n$ is even,
we have 
\[
    (-\bar{\xi}_{-u})^n
    = (-1)^n (\bar{\xi}_{-u})^n
    = -1
\]
for $\bar{\xi}_{-u}$ the class of $\xi_{-u}$ in $\Gbb_n(u)$. The
respective $2$-Sylow subgroups of $\lan -\omega, \bar{\xi}_u \ran$ and
$\lan -\omega, \bar{\xi}_{-u} \ran$ are $C_2 \x C_{2^{k-1}}$ and
$C_{2^k}$ which are clearly not isomorphic.
\end{example}

\subsection*{The case $\alpha \geq 2$}

We let $\alpha \geq 2$. In this case $\Qbb_2(i) \subset \Qbb_2(F_0)$.

\begin{proposition} \label{235}
If $\alpha \geq 2$ and $W_0$ is a subgroup of odd order in $C_{n_\alpha}
\subset Aut(C_{2^{n_\alpha}-1})$, then $i_W^\ast: H^2(W_0, \tilde{F_1})
\ra H^2(W_0, \Qbb_2(F_0)^\x)$ is an isomorphism.
\end{proposition}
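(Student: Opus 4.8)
The plan is to reproduce the argument of proposition \ref{243} almost verbatim, the only new input being that $|W_0|$ is odd (rather than prime to an odd $p$), hence a unit in $\Zbb_2$. First I would record that, since $F_0$ is maximal abelian, proposition \ref{342} shows that $\Qbb_2(F_0)$ has residue degree $n_\alpha$ and ramification index $\phi(2^\alpha) = 2^{\alpha-1}$, so a primitive $2^{\alpha+1}$-th root of unity cannot lie in it; thus $\mu(\Qbb_2(F_0)) = C_{2^\alpha} \x C_{2^{n_\alpha}-1} = F_0$. Combined with proposition \ref{341} this gives $\Zbb_2(F_0)^\x/F_0 \iso \Zbb_2^{[\Qbb_2(F_0):\Qbb_2]}$, a free $\Zbb_2$-module.

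Next I would feed the short exact sequence $1 \to \tilde{F_1} \to \Qbb_2(F_0)^\x \to \Qbb_2(F_0)^\x/\tilde{F_1} \to 1$ into its long exact cohomology sequence. This reduces the claim to showing $H^\ast(W_0, \Qbb_2(F_0)^\x/\tilde{F_1}) = 0$ for $\ast > 0$: then $H^1$ and $H^2$ of the quotient sandwich $i_{W_0}^\ast$ in the long exact sequence and force it to be an isomorphism on $H^2$ (indeed in every positive degree). To compute the quotient's cohomology I would filter it by the valuation-induced exact sequence $1 \to \Zbb_2(F_0)^\x/F_0 \to \Qbb_2(F_0)^\x/\tilde{F_1} \to \Zbb/\Zbb\lan x_1 \ran \to 1$, exactly as in proposition \ref{243}.

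The two end terms have vanishing higher $W_0$-cohomology. The left term $\Zbb_2(F_0)^\x/F_0 \iso \Zbb_2^{[\Qbb_2(F_0):\Qbb_2]}$ is a $\Zbb_2[W_0]$-module on which $|W_0|$ acts invertibly, so $H^{>0}(W_0, -) = 0$ by the standard averaging argument. The right term $\Zbb/\Zbb\lan x_1 \ran$ has order $e(\Qbb_2(F_0))/r_1 = 2^{\alpha-1}/r_1$, where by corollary \ref{294} we have $r_1 \in \{1,2\}$, so it is a finite $2$-group; since $H^{>0}(W_0, -)$ is then simultaneously a $2$-group and annihilated by the odd integer $|W_0|$, it vanishes. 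The long exact sequence of the filtration then gives $H^{>0}(W_0, \Qbb_2(F_0)^\x/\tilde{F_1}) = 0$, and the first long exact sequence yields that $i_{W_0}^\ast$ is an isomorphism in all positive degrees, in particular on $H^2$.

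The one genuinely new point, and the mild obstacle, is the $2$-primary nature of $\Zbb/\Zbb\lan x_1 \ran$: in the odd case this was a $p$-group because $r_1 \mid p-1$ while $e = (p-1)p^{\alpha-1}$, whereas here I must invoke corollary \ref{294} to see both $r_1 \in \{1,2\}$ and $e(\Qbb_2(F_0)) = 2^{\alpha-1}$, making the index a power of $2$. Once this is in place, every step is formally identical to the proof of proposition \ref{243}.
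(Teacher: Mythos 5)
Your proof is correct and is essentially the paper's own argument: the paper's proof of proposition \ref{235} consists precisely of invoking the argument of proposition \ref{243}, noting that for $\alpha \geq 2$ the quotient $\Zbb/\Zbb\lan x_1 \ran$ is trivial or $2$-torsion while $\Zbb_2(F_0)^\x/F_0$ is free over $\Zbb_2$, so that both have vanishing higher $W_0$-cohomology for $|W_0|$ odd. Your additional details (the appeal to corollary \ref{294} for $r_1 \in \{1,2\}$ and the averaging argument on the free $\Zbb_2$-module) are exactly the points the paper leaves implicit.
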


\begin{proof}
We may use the same argument as proposition \ref{243}. Using that
$\alpha \geq 2$, we know that $\Zbb/\Zbb\lan x_1 \ran$ is either trivial
or a $2$-torsion group, while $\Zbb_2(F_0)^\x/F_0$ is free over
$\Zbb_2$. Hence
\[
    H^\ast(W_0, \Zbb_2(F_0)^\x/F_0)
    = H^\ast(W_0, \Zbb/\Zbb\lan x_1 \ran)
    = H^\ast(W_0, \Qbb_2(F_0)^\x/\tilde{F_1}) 
    = 0
    \qq \text{for } \ast > 0,
\]
and the result follows.
\end{proof}

\begin{lemma} \label{233}
If $\alpha \geq 2$, $u \equiv \pm 3 \mod 8$, $n_\alpha$ is odd and $W =
C_{n_\alpha} \subset Aut(C_{2^{n_\alpha}-1})$, then $\tilde{F_1} =
\tilde{F_0}$ and
\begin{align*}
    H^\ast(W, \tilde{F_1}) &\iso
    \begin{cases}
        \lan 2u \ran \x C_{2^\alpha} 
        & \text{if } \ast = 0,\\
        C_{2^\alpha} \ast C_{n_\alpha} 
        & \text{if } 0 < \ast \text{ is odd},\\
        \lan 2u \ran / \lan (2u)^{n_\alpha} \ran 
        \x C_{2^\alpha} \ox C_{n_\alpha} 
        & \text{if } 0 < \ast \text{ is even};
    \end{cases} \\[2ex]
    H^\ast(W, \Qbb_2(F_0)^\x) &\iso
    \begin{cases}
        \Qbb_2(\zeta_{2^\alpha})^\x 
        & \text{if } \ast = 0,\\
        0 
        & \text{if } 0 < \ast \text{ is odd},\\
        \lan \zeta_{2^\alpha}-1 \ran 
        / \lan (\zeta_{2^\alpha}-1)^{n_\alpha} \ran 
        & \text{if } 0 < \ast \text{ is even}.
    \end{cases}
\end{align*}
\end{lemma}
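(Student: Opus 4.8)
The plan is to compute both cohomology graded groups $H^\ast(W, \tilde{F_1})$ and $H^\ast(W, \Qbb_2(F_0)^\x)$ for the cyclic group $W = C_{n_\alpha}$ using the standard periodic resolution, exactly as in the parallel lemmas \ref{231}, \ref{244} and \ref{246}. The starting point is the observation that under the hypothesis $u \equiv \pm 3 \mod 8$ with $n_\alpha$ odd, corollary \ref{294} forces $r_1 = 1$, so $\tilde{F_1} = \tilde{F_0} = \lan 2u \ran \x F_0$ and $x_1 = 2u$ has integral valuation. Then $F_0 \iso C_{2^\alpha} \x C_{2^{n_\alpha}-1}$, and $\Qbb_2(F_0) = \Qbb_2(\zeta_{2^\alpha})\cdot\Qbb_2(\zeta_{2^{n_\alpha}-1})$ with $W = C_{n_\alpha} = Gal(\Qbb_2(\zeta_{2^{n_\alpha}-1})/\Qbb_2)$ acting only on the unramified factor.

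First I would treat the left-hand side. Writing $\tilde{F_1} \iso \Zbb \x \Zbb/2^\alpha \x \Zbb/(2^{n_\alpha}-1)$ additively, the generator $t$ of $W$ acts trivially on the $\lan 2u \ran$ and $C_{2^\alpha}$ factors and by $\zeta \mapsto \zeta^2$ on $C_{2^{n_\alpha}-1}$. The cohomology of a finite cyclic group is computed from the two-step periodic complex alternating $1-t$ and the norm $N = \sum_{i=0}^{n_\alpha - 1} t^i$, so I would record the four values $(1-t)$ and $N$ applied to the standard generators, just as in lemma \ref{231}. On the trivial factors $1-t = 0$ and $N$ is multiplication by $n_\alpha$; on $C_{2^{n_\alpha}-1}$ one has $(1-t)(\zeta) = \zeta^{1-2} = \zeta^{-1}$, which is surjective (hence $\hat H^0$ contributes nothing new there) while $N(\zeta) = \zeta^{(2^{n_\alpha}-1)/(2-1)} = \zeta^{2^{n_\alpha}-1} = 0$. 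This yields the invariants $\tilde{F_1}^W = \lan 2u \ran \x C_{2^\alpha}$ in degree $0$, the kernel $C_{2^\alpha}\ast C_{n_\alpha}$ (the $n_\alpha$-torsion in $C_{2^\alpha}$) in odd positive degrees, and $\lan 2u \ran/\lan (2u)^{n_\alpha}\ran \x (C_{2^\alpha}\ox C_{n_\alpha})$ in even positive degrees, giving the stated left column.

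For the right-hand side I would set $L = \Qbb_2(F_0)$ and $K = L^W$. Degree $0$ gives $H^0(W, L^\x) = K^\x = \Qbb_2(\zeta_{2^\alpha})^\x$, since $W$ fixes precisely the totally ramified factor. Hilbert's theorem $90$ (invoked as in the earlier lemmas) kills odd degree. For even positive degree the key point is that $L/K$ is \emph{unramified}: the extension $\Qbb_2(\zeta_{2^{n_\alpha}-1})/\Qbb_2$ is the unramified degree-$n_\alpha$ extension, and adjoining $\zeta_{2^\alpha}$ only enlarges the ramified part equally to $K$ and $L$. Proposition \ref{256} then identifies $H^2(W, L^\x)$ with $H^2(W, \frac{1}{e(L)}\Zbb)$ via the valuation, and since a uniformizer $\pi = \zeta_{2^\alpha}-1$ of $K$ has $v(\pi) = 1/\phi(2^\alpha)$ generating the value group, this is the cyclic group $\lan \zeta_{2^\alpha}-1\ran/\lan(\zeta_{2^\alpha}-1)^{n_\alpha}\ran$, matching the stated right column. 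Periodicity of cyclic cohomology extends both computations to all even degrees.

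The main obstacle I anticipate is verifying cleanly that $L/K$ is unramified and pinning down the generator of $H^2(W, L^\x)$ in terms of $\zeta_{2^\alpha}-1$ rather than $2$: unlike lemma \ref{231}, here $K$ is itself ramified over $\Qbb_2$, so I must track the value group $\frac{1}{e(L)}\Zbb$ with $e(L) = \phi(2^\alpha) = 2^{\alpha-1}$ and argue that the residue-field extension accounts for the full degree $n_\alpha$, so that proposition \ref{256} applies with $W = Gal(L/K)$ acting on the residue field with the correct Frobenius. Once this structural fact is established, both columns fall out of the same periodic-resolution bookkeeping already used throughout the section, and no genuinely new computation is required.
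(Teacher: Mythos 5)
Your proposal is correct and takes essentially the same approach as the paper: the paper's proof of this lemma simply invokes corollary \ref{294} to get $\tilde{F_1}=\tilde{F_0}$ and then declares the computation identical to that of lemma \ref{231} with the uniformizer $2$ replaced by $\zeta_{2^\alpha}-1$, which is precisely your periodic-complex bookkeeping for $H^\ast(W,\tilde{F_1})$ together with Hilbert's theorem 90 and proposition \ref{256} for $H^\ast(W,\Qbb_2(F_0)^\x)$. Your explicit verification that $L/K$ is unramified with $K=\Qbb_2(\zeta_{2^\alpha})$ and $v(\zeta_{2^\alpha}-1)=1/\phi(2^\alpha)$ is a detail the paper leaves implicit, but it changes nothing in substance.
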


\begin{proof}
We know from corollary \ref{294} that $\tilde{F_1} = \tilde{F_0}$. The
calculations for $H^\ast(W, \tilde{F_1})$ and $H^\ast(W, \Qbb_2(F_0))$
are identical to that of lemma \ref{231}, except that $2$ is replaced
with $(\zeta_{2^\alpha}-1)$ in the second case.
\end{proof}

\begin{lemma} \label{234}
If $\alpha \geq 2$, $u \equiv \pm 1 \mod 8$ and $W = C_{n_\alpha}
\subset Aut(C_{2^{n_\alpha}-1})$, then $\tilde{F_1} = \lan x_1 \ran \x
F_0$ with $v(x_1)=\frac{1}{2}$ and
\begin{align*}
    H^\ast(W, \tilde{F_1}) &\iso
    \begin{cases}
        \lan x_1 \ran \x C_{2^\alpha} 
        & \text{if } \ast = 0,\\
        C_{2^\alpha} \ast C_{n_\alpha} 
        & \text{if } 0 < \ast \text{ is odd},\\
        \lan x_1 \ran / \lan x_1^{n_\alpha} \ran 
        \x C_{2^\alpha} \ox C_{n_\alpha} 
        & \text{if } 0 < \ast \text{ is even};
    \end{cases} \\[2ex]
    H^\ast(W, \Qbb_2(F_0)^\x) &\iso
    \begin{cases}
        \Qbb_2(\zeta_{2^\alpha})^\x 
        & \text{if } \ast = 0,\\
        0 
        & \text{if } 0 < \ast \text{ is odd},\\
        \lan \zeta_{2^\alpha}-1 \ran 
        / \lan (\zeta_{2^\alpha}-1)^{n_\alpha} \ran 
        & \text{if } 0 < \ast \text{ is even}.
    \end{cases}
\end{align*}
\end{lemma}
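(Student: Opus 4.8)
The plan is to compute both cohomologies by explicit cochain complexes, exactly as in the preceding lemmas \ref{231} and \ref{233}, the only new feature being that now $v(x_1)=\tfrac12$ rather than $v(x_1)=1$. Since $u\equiv\pm 1\bmod 8$ and $\alpha\geq 2$, corollary \ref{294} guarantees that $\tilde{F_1}=\lan x_1\ran\x F_0$ with $x_1=(1+i)t$, $t\in\Zbb_2^\x$, $v(x_1)=\tfrac12$, and $x_1^2\in\tilde{F_0}$. The group $W=C_{n_\alpha}$ acts on $\tilde{F_1}\iso\lan x_1\ran\x C_{2^\alpha}\x C_{2^{n_\alpha}-1}$ trivially on the first two factors and by $\zeta\mto\zeta^2$ on the third.

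First I would write $H^\ast(W,\tilde{F_1})$ as the cohomology of the periodic complex $\tilde{F_1}\xrightarrow{1-t}\tilde{F_1}\xrightarrow{N}\tilde{F_1}\xrightarrow{1-t}\cdots$ for $t$ a generator of $C_{n_\alpha}$ and $N=\sum_{i=0}^{n_\alpha-1}t^i$. In additive notation $\tilde{F_1}\iso\Zbb\x\Zbb/2^\alpha\x\Zbb/(2^{n_\alpha}-1)$, and the relevant formulas are
\begin{align*}
    (1-t)(1,0,0) &= (0,0,0), & N(1,0,0) &= (n_\alpha,0,0),\\
    (1-t)(0,1,0) &= (0,0,0), & N(0,1,0) &= (0,n_\alpha,0),\\
    (1-t)(0,0,1) &= (0,0,-1), & N(0,0,1) &= \Bigl(0,0,\tfrac{2^{n_\alpha}-1}{2-1}\Bigr).
\end{align*}
Reading off kernels and images gives $H^0=\lan x_1\ran\x C_{2^\alpha}$, and for $\ast>0$ the odd groups are $C_{2^\alpha}\ast C_{n_\alpha}$ (the kernel of the $n_\alpha$-th power map) while the even groups are $\lan x_1\ran/\lan x_1^{n_\alpha}\ran\x C_{2^\alpha}\ox C_{n_\alpha}$, precisely as claimed. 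The point here is that $\lan x_1\ran$ carries the trivial action and contributes a $\Zbb$-summand, so its cohomology behaves exactly like the $\lan 2u\ran$-summand of lemma \ref{234}, with $x_1$ playing the role that $2u$ played when $v=1$.

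For the second computation let $L=\Qbb_2(F_0)=\Qbb_2(\tilde{F_1})$ and $K=L^W$; since $\alpha\geq 2$ the field $L$ contains $\Qbb_2(\zeta_{2^\alpha})$, and $W=C_{n_\alpha}$ fixes the totally ramified part so that $K=\Qbb_2(\zeta_{2^\alpha})$ and $L/K$ is the unramified extension of degree $n_\alpha$. Then $H^0(W,L^\x)=K^\x=\Qbb_2(\zeta_{2^\alpha})^\x$ and $H^1(W,L^\x)=0$ by Hilbert 90. Because $L/K$ is unramified, proposition \ref{256} gives an isomorphism $H^2(W,L^\x)\iso H^2(W,\tfrac{1}{e(L)}\Zbb)$, and the valuation identifies a uniformizer of $K$ — namely $\zeta_{2^\alpha}-1$, which has valuation $\tfrac{1}{\phi(2^\alpha)}$ and generates the value group of $K$ — with a generator of the resulting cyclic group of order $n_\alpha$; periodicity then yields $\lan\zeta_{2^\alpha}-1\ran/\lan(\zeta_{2^\alpha}-1)^{n_\alpha}\ran$ for all even $\ast>0$ and $0$ for odd $\ast>0$. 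I expect this computation to be essentially identical to lemma \ref{233}, the only difference being the value of $v(x_1)$ on the group side, which does not affect the field side at all.

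The main obstacle, such as it is, is bookkeeping rather than conceptual: I must be careful that the summand $\lan x_1\ran$ is genuinely $W$-fixed (which follows from our choice $x_1=(1+i)t$ and the fact that $\sigma(x_1)/x_1\in F_0$ for $\sigma\in W$, so the $\Zbb$-factor splits off with trivial action), and that $K$ is correctly identified as $\Qbb_2(\zeta_{2^\alpha})$ with $L/K$ unramified of degree $n_\alpha$ — this uses $W\subset C_{n_\alpha}=Gal(\Qbb_2(C_{2^{n_\alpha}-1})/\Qbb_2)$ acting on the unramified part only. Once these two structural facts are in hand, both complexes are the standard periodic ones for a cyclic group and the stated answers follow by direct inspection.
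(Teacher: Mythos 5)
Your proposal is correct and follows essentially the same route as the paper's own proof: the same explicit periodic complex computation of $(1-t)$ and $N$ on the three factors of $\tilde{F_1}\iso\Zbb\x\Zbb/2^\alpha\x\Zbb/(2^{n_\alpha}-1)$ (with $x_1$ fixed because $x_1=(1+i)t\in\Qbb_2(\zeta_{2^\alpha})^\x=L^W$), and the same treatment of the field side via Hilbert 90 and proposition \ref{256} with $\zeta_{2^\alpha}-1$ as uniformizer. The only blemish is a typo: the summand $\lan x_1\ran$ behaves like the $\lan 2u\ran$-summand of lemmas \ref{231} and \ref{233}, not of lemma \ref{234} itself.
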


\begin{proof}
We know from corollary \ref{294} that $\tilde{F_1} = \lan x_1 \ran \x
F_0$ with $v(x_1) = \frac{1}{2}$. The action of $C_{n_\alpha}$ on
$\tilde{F_1} \iso \lan x_1 \ran \x C_{2^\alpha} \x C_{2^{n_\alpha}-1}$
is trivial on the first two factors and acts on the third by $\zeta \mto
\zeta^2$.

Let $t$ be generator of $C_{n_\alpha}$, written additively, and $N =
\sum_{i=0}^{n_\alpha-1} t^i$. Using additive notation for $\tilde{F_1}
\iso \Zbb \x \Zbb/2^\alpha \x \Zbb/2^{n_\alpha}-1$, we obtain
\begin{align*}
    (1-t)(1, 0, 0) &= (0, 0, 0),
    & (1-t)(0, 1, 0) &= (0, 0, 0),
    & (1-t)(0, 0, 1) &= (0, 0, -1),\\[1ex]
    N(1, 0, 0) &= (n_\alpha, 0, 0),
    & N(0, 1, 0) &= (0, n_\alpha, 0),
    & N(0, 0, 1) &= (0, 0, 0),
\end{align*}
and the desired result for $H^\ast(W, \tilde{F_1})$ follows.

Now for $L = \Qbb_2(F_0)$ and $K = L^W = \Qbb_2(\zeta_{2^\alpha})$, we
have
\[
    H^0(W, \Qbb_2(\tilde{F_1})) 
    = \Qbb_2(Ker(1-t))^\x 
    = \Qbb_2(\zeta_{2^\alpha})^\x
\]
and $H^1(W, \Qbb_2(F_0)^\x) = 0$ by Hilbert's theorem 90.
Furthermore, as $L/K$ is unramified, $\zeta_{2^\alpha}-1$ is a
uniformizing element of $L$ and proposition \ref{256} implies
\[
    H^2(W, \Qbb_2(F_0)^\x) 
    \iso \lan \zeta_{2^\alpha}-1 \ran 
    / \lan (\zeta_{2^\alpha}-1)^{n_\alpha} \ran
\]
as desired.
\end{proof}

\begin{lemma} \label{236}
If $\alpha \geq 3$, $u \equiv \pm 3 \mod 8$, $n_\alpha$ is odd and $W =
C_{2^{\alpha-2}} \subset Aut(C_{2^\alpha})$ is generated by $\zeta \mto
\zeta^5$, then $\tilde{F_1} = \tilde{F_0}$ and
\begin{align*}
    H^\ast(W, \tilde{F_1}) &\iso
    \begin{cases}
        \lan 2u \ran \x C_{4} \x C_{2^{n_\alpha}-1} 
        & \text{if } \ast = 0,\\
        0 
        & \text{if } 0 < \ast \text{ is odd},\\
        \lan 2u \ran / \lan (2u)^{2^{\alpha-2}} \ran 
        \iso C_{2^{\alpha-2}}
        & \text{if } 0 < \ast \text{ is even};
    \end{cases} \\[2ex]
    H^\ast(W, \Qbb_2(F_0)^\x) &\iso
    \begin{cases}
        (\Qbb_2(F_0)^{ C_{2^{\alpha-2}} })^\x 
        & \text{if } \ast = 0,\\
        0 
        & \text{if } 0 < \ast \text{ is odd},\\
        (\Qbb_2(F_0)^{ C_{2^{\alpha-2}} })^\x 
        / N_W(\Qbb_2(F_0)^\x) \iso C_{2^{\alpha-2}} 
        & \text{if } 0 < \ast \text{ is even}.
    \end{cases}
\end{align*}
\end{lemma}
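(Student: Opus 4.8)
The plan is to mirror the structure of the proofs of lemmas \ref{246}, \ref{233} and \ref{234}, computing both cohomology groups directly from the standard $2$-periodic complex of the finite cyclic group $W$. First I would record that $\tilde{F_1} = \tilde{F_0}$: this is immediate from corollary \ref{294}, since $u \equiv \pm 3 \mod 8$ together with $n_\alpha$ odd forces $r_1 = 1$, so $\tilde{F_1} = \tilde{F_0} = \lan 2u \ran \x F_0 \iso \lan 2u \ran \x C_{2^\alpha} \x C_{2^{n_\alpha}-1}$.

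Next I would pin down the action of $W = C_{2^{\alpha-2}} = \lan t \ran$ on $\tilde{F_1}$: it is trivial on $\lan 2u \ran$ (central in $\Dbb_n$) and on the odd factor $C_{2^{n_\alpha}-1}$ (which lies in the unramified subextension and is fixed by $W \subset Aut(C_{2^\alpha})$, the Galois group of the cyclotomic part), while it acts on $C_{2^\alpha}$ by $\zeta \mapsto \zeta^5$. Writing $\tilde{F_1}$ additively as $\Zbb \x \Zbb/2^\alpha \x \Zbb/(2^{n_\alpha}-1)$ and setting $N = \sum_{i=0}^{2^{\alpha-2}-1} t^i$, the cohomology of $W$ is read off the complex $\tilde{F_1} \overset{1-t}{\raa} \tilde{F_1} \overset{N}{\raa} \tilde{F_1} \overset{1-t}{\raa} \ldots$, summand by summand.

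The crux, and the step I expect to require the most care, is the behaviour on the $\Zbb/2^\alpha$ factor. Here $(1-t)x = -4x$ and $Nx = \frac{5^{2^{\alpha-2}}-1}{4}\,x$, so I would establish the exact $2$-adic valuation $v_2(5^{2^{\alpha-2}}-1) = \alpha$ (by lifting the exponent, $v_2(5^{2^j}-1) = j+2$). This yields $\frac{5^{2^{\alpha-2}}-1}{4} = 2^{\alpha-2}\cdot(\text{odd unit})$, whence on $\Zbb/2^\alpha$ one has $\ker(1-t) = \text{im}\,N = 2^{\alpha-2}\Zbb/2^\alpha\Zbb \iso C_4$ and $\ker N = \text{im}(1-t) = 4\Zbb/2^\alpha\Zbb$; thus this factor contributes exactly $C_4 = \{\zeta \in C_{2^\alpha} : \zeta^5 = \zeta\}$ in degree $0$ and vanishes in all positive degrees. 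The trivial $\Zbb$-summand contributes $\lan 2u \ran$ in degree $0$ and $\lan 2u \ran/\lan (2u)^{2^{\alpha-2}} \ran \iso C_{2^{\alpha-2}}$ in each positive even degree, with trivial odd cohomology (as $1-t=0$ there); while the odd summand $C_{2^{n_\alpha}-1}$, on which multiplication by $|W| = 2^{\alpha-2}$ is an isomorphism, contributes only $C_{2^{n_\alpha}-1}$ in degree $0$ and vanishes in positive degrees. Assembling the three summands gives the stated $H^\ast(W, \tilde{F_1})$.

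Finally, for $H^\ast(W, \Qbb_2(F_0)^\x)$ I would put $L = \Qbb_2(F_0)$ and $K = L^W = \Qbb_2(F_0)^{C_{2^{\alpha-2}}}$, so that $H^0(W, L^\x) = K^\x$ and $H^1(W, L^\x) = 0$ by Hilbert's theorem $90$, which supplies the odd-degree vanishing by periodicity. Since $L/K$ is cyclic of degree $|W| = 2^{\alpha-2}$, theorem \ref{068} and corollary \ref{072} identify $H^2(W, L^\x) \iso K^\x/N_{L/K}(L^\x) = (\Qbb_2(F_0)^{C_{2^{\alpha-2}}})^\x/N_W(\Qbb_2(F_0)^\x) \iso C_{2^{\alpha-2}}$, and periodicity propagates this to every positive even degree. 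Once the valuation identity on the $C_{2^\alpha}$ factor is in hand, all remaining identifications are routine bookkeeping of exactly the same form as in lemma \ref{246}.
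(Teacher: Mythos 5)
Your proposal is correct and follows essentially the same route as the paper: identify $\tilde{F_1}=\tilde{F_0}$ via corollary \ref{294}, compute $H^\ast(W,\tilde{F_1})$ summand by summand from the $2$-periodic complex of the cyclic group, and handle $H^\ast(W,\Qbb_2(F_0)^\x)$ via Hilbert's theorem 90 and the Brauer-group identification $H^2(W,L^\x)\iso K^\x/N_{L/K}(L^\x)\iso C_{2^{\alpha-2}}$. Your lifting-the-exponent computation $v_2(5^{2^{\alpha-2}}-1)=\alpha$ is in fact a welcome sharpening of the paper's shorthand $N(0,1,0)=(0,2^{\alpha-2},0)$, which is only correct up to an odd unit, though the resulting kernels and images coincide.
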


\begin{proof}
We know from corollary \ref{294} that $\tilde{F_1} = \tilde{F_0}$. The
action of $C_{2^{\alpha-2}}$ on $\tilde{F_1} \iso \lan 2u \ran \x
C_{2^\alpha} \x C_{2^{n_\alpha}-1}$ is trivial on $\lan 2u \ran \x
C_{2^{n_\alpha}-1}$ and acts on $C_{2^{\alpha}}$ by $\zeta \mto
\zeta^{5}$.

For $t$ a generator of $C_{2^{\alpha-2}}$, written additively, and $N =
\sum_{i=0}^{2^{\alpha-2}-1} t^i$, we obtain
\begin{align*}
    (1-t)(1, 0, 0) &= (0, 0, 0),
    & N(1, 0, 0) &= (2^{\alpha-2}, 0, 0),\\
    (1-t)(0, 1, 0) &= (0, -4, 0),
    & N(0, 1, 0) &= (0, 2^{\alpha-2}, 0),\\
    (1-t)(0, 0, 1) &= (0, 0, 0),
    & N(0, 0, 1) &= (0, 0, 2^{\alpha-2}),
\end{align*}
and the desired result for $H^\ast(W, \tilde{F_1})$ follows.

The case of $H^\ast(W, \Qbb_2(F_0)^\x)$ for $0 < \ast$ odd follows from
Hilbert's theorem 90, and the rest is clear.
\end{proof}

\begin{lemma} \label{237}
Let $\alpha \geq 3$, and assume either $u \equiv \pm 1 \mod 8$ or $u
\equiv \pm 3 \mod 8$ with $n_\alpha$ even. If $W = C_{2^{\alpha-2}}
\subset Aut(C_{2^\alpha})$ is generated by $\zeta \mto \zeta^5$, then
$\tilde{F_1} = \lan x_1 \ran \x F_0$ with $v(x_1) = \frac{1}{2}$ and
\begin{align*}
    H^\ast(W, \tilde{F_1}) &\iso
    \begin{cases}
        \lan x_1 \ran \x C_{4} \x C_{2^{n_\alpha}-1} 
        & \text{if } \ast = 0,\\
        0 
        & \text{if } 0 < \ast \text{ is odd},\\
        \lan x_1 \ran / \lan x_1^{2^{\alpha-2}} \ran 
        \iso C_{2^{\alpha-2}}
        & \text{if } 0 < \ast \text{ is even};
    \end{cases} \\[2ex]
    H^\ast(W, \Qbb_2(F_0)^\x) &\iso
    \begin{cases}
        (\Qbb_2(F_0)^{ C_{2^{\alpha-2}} })^\x 
        & \text{if } \ast = 0,\\
        0 
        & \text{if } 0 < \ast \text{ is odd},\\
        (\Qbb_2(F_0)^{ C_{2^{\alpha-2}} })^\x 
        / N_W(\Qbb_2(F_0)^\x) \iso C_{2^{\alpha-2}} 
        & \text{if } 0 < \ast \text{ is even}.
    \end{cases}
\end{align*}
\end{lemma}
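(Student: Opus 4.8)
The plan is to follow the structure of the proof of lemma \ref{236} verbatim, the only genuine difference being that here $\tilde{F_1}$ is strictly larger than $\tilde{F_0}$. First I would invoke corollary \ref{294} together with remark \ref{297} to record that, under the stated hypotheses on $u$ and $n_\alpha$, we have $\tilde{F_1} = \lan x_1 \ran \x F_0$ with $v(x_1) = \frac{1}{2}$ and $x_1 = (1+i)t$, where $i$ is a primitive $4$-th root of unity and $t \in \Zbb_2^\x$ (if $u \equiv \pm 1 \mod 8$) or $t \in \Zbb_2(\zeta_3)^\x$ (if $u \equiv \pm 3 \mod 8$ with $n_\alpha$ even). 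As an abstract abelian group this reads $\tilde{F_1} \iso \lan x_1 \ran \x C_{2^\alpha} \x C_{2^{n_\alpha}-1}$ with $\lan x_1 \ran \iso \Zbb$ (since $v(x_1)\neq 0$).

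The key step, and the one place where the argument departs from lemma \ref{236}, is to check that the generator $\sigma$ of $W = C_{2^{\alpha-2}}$ (acting by $\zeta_{2^\alpha} \mto \zeta_{2^\alpha}^5$) fixes $x_1$ \emph{exactly}, not merely up to an element of $F_0$. For this I would observe that $i = \zeta_{2^\alpha}^{2^{\alpha-2}}$, so $\sigma(i) = i^5 = i$ and hence $\sigma(1+i) = 1+i$; and that $\sigma$, lying in the ramified part of $G = Gal(\Qbb_2(F_0)/\Qbb_2)$, fixes the maximal unramified subextension pointwise, whence it fixes $t$ (which lies in $\Qbb_2$ or in $\Qbb_2(\zeta_3)$, the latter being unramified). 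Therefore $\sigma(x_1) = x_1$, so the $W$-action on $\tilde{F_1}$ splits as the direct sum of the trivial action on $\lan x_1 \ran$, the action $\zeta \mto \zeta^5$ on $C_{2^\alpha}$, and the trivial action on $C_{2^{n_\alpha}-1}$. This is exactly the splitting used in lemma \ref{236}, with $\lan x_1 \ran$ now playing the role previously played by $\lan 2u \ran$.

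With this splitting the computation is routine and formally identical to lemma \ref{236}. Writing the cohomology of the cyclic group $W$ through the complex with maps $1-t$ and $N = \sum_{i=0}^{2^{\alpha-2}-1} t^i$, in additive notation I would record
\[
    (1-t)(1,0,0) = (0,0,0), \q (1-t)(0,1,0) = (0,-4,0), \q (1-t)(0,0,1) = (0,0,0),
\]
together with $N(1,0,0) = (2^{\alpha-2},0,0)$, $N(0,1,0) = (0,2^{\alpha-2},0)$, $N(0,0,1) = (0,0,2^{\alpha-2})$, where on the middle factor I use that $\sum_i 5^i = (5^{2^{\alpha-2}}-1)/4$ equals $2^{\alpha-2}$ times an odd unit (from the standard fact $v_2(5^{2^{\alpha-2}}-1) = \alpha$), so that as a map on $C_{2^\alpha}$ it has the same kernel and image as multiplication by $2^{\alpha-2}$. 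Reading off kernels and images: the factor $\lan x_1 \ran \iso \Zbb$ contributes $\Zbb$ to $H^0$, nothing in odd degree, and $\Zbb/2^{\alpha-2}\Zbb \iso \lan x_1 \ran/\lan x_1^{2^{\alpha-2}} \ran$ in positive even degree; the factor $C_{2^\alpha}$ contributes $\ker(\text{mult by }4) \iso C_4$ to $H^0$ and vanishes in every positive degree (there $\ker$ and $\mathrm{im}$ coincide); and $C_{2^{n_\alpha}-1}$, on which $N$ acts invertibly, contributes only $C_{2^{n_\alpha}-1}$ to $H^0$. Assembling these gives the displayed values of $H^\ast(W, \tilde{F_1})$.

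Finally, for $H^\ast(W, \Qbb_2(F_0)^\x)$ I would argue exactly as in lemmas \ref{236} and \ref{246}: $H^0 = (\Qbb_2(F_0)^W)^\x$ by definition, the odd groups vanish by Hilbert's theorem $90$ combined with the periodicity of cyclic cohomology, and the positive even groups equal $(\Qbb_2(F_0)^W)^\x / N_W(\Qbb_2(F_0)^\x)$, which is cyclic of order $|W| = 2^{\alpha-2}$ by the norm-index computation of local class field theory (equivalently corollary \ref{072}). The main obstacle is genuinely just the exact-fixing verification of the second paragraph; once the $W$-action is seen to decompose as a direct sum, everything else is bookkeeping identical to the preceding lemma.
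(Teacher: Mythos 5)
Your proposal is correct and takes essentially the same approach as the paper, whose proof simply says the computation of lemma \ref{236} goes through with $2u$ replaced by $x_1$ in the calculation of $H^\ast(W, \tilde{F_1})$. Your explicit verification that the generator of $W$ fixes $x_1 = (1+i)t$ exactly (via $\sigma(i) = i^5 = i$ and the fact that $\sigma$, lying in the inertia subgroup, fixes the unramified subextension containing $t$) makes precise a point the paper leaves implicit; the rest is the identical bookkeeping.
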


\begin{proof}
We know from corollary \ref{294} that $\tilde{F_1} = \lan x_1 \ran \x
F_0$ with $v(x_1) = \frac{1}{2}$. The calculations are identical to that
of lemma \ref{236}, except that $2u$ is replaced by $x_1$ for the
calculation of $H^\ast(W, \tilde{F_1})$.
\end{proof}

\begin{corollary} \label{238}
If $\alpha \geq 3$ and $W = C_{2^{\alpha-2}} \subset Aut(C_{2^\alpha})$
is generated by $\zeta \mto \zeta^5$, then $i_W^\ast: H^2(W,
\tilde{F_1}) \ra H^2(W, \Qbb_2(F_0)^\x)$ is never surjective.
\end{corollary}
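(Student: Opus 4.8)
The plan is to mirror the proof of corollary \ref{247}, which settles the completely analogous question for odd $p$, and to show that here the image of the generator always lands in the subgroup of squares. First I would record that, by lemmas \ref{236} and \ref{237}, for $0 < \ast$ even both $H^\ast(W, \tilde{F_1})$ and $H^\ast(W, \Qbb_2(F_0)^\x)$ are cyclic of the same order $2^{\alpha-2}$; consequently $i_W^\ast$ is surjective if and only if it is an isomorphism, i.e. if and only if the image of a generator of $H^2(W, \tilde{F_1})$ is again a generator of $H^2(W, \Qbb_2(F_0)^\x)$. Writing $L = \Qbb_2(F_0)$ and $K = L^W$, Tate cohomology identifies $H^2(W, L^\x) \iso K^\x/N_{L/K}(L^\x) \iso Gal(L/K)$, and under this identification the generator of the source is the class of $x_1$. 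So everything reduces to deciding whether $x_1$ generates $K^\x/N_{L/K}(L^\x)$.

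Next I would use that $W = C_{2^{\alpha-2}}$ is generated by $\zeta \mto \zeta^5$, an automorphism fixing $\zeta_4 = i$; hence $K$ contains $\Qbb_2(i)$ and the extension $L/K$ is totally ramified with $L = K(\zeta_{2^\alpha})$. By proposition \ref{214} it suffices to exhibit a single subgroup $W' \subset W$ for which $i_{W'}^\ast$ fails to be surjective; I would take $W' = C_2$, generated by $\zeta_{2^\alpha} \mto \zeta_{2^\alpha}^{1+2^{\alpha-1}} = -\zeta_{2^\alpha}$, so that $L^{W'} = \Qbb_2(\zeta_{2^{\alpha-1}}, \zeta_{2^{n_\alpha}-1})$ and $L = L^{W'}(\sqrt{\zeta_{2^{\alpha-1}}})$ is a Kummer quadratic extension. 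For this $W'$, non-surjectivity is equivalent to $x_1 \in N_{L/L^{W'}}(L^\x)$. Using $N_{L/L^{W'}}(i) = i^2 = -1$, the explicit value $N_{L/L^{W'}}(\zeta_{2^\alpha}-1) = 1 - \zeta_{2^{\alpha-1}}$, and the fact that squares of elements of $L^{W'}$ are automatically norms, I would strip $x_1$ (which is $2u$ or $(1+i)t$) down to its unit part and argue that this unit part is a norm.

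The hard part will be this last step: showing that the unit part of $x_1$ --- namely $u$ (when $x_1 = 2u$) or $t$ (when $x_1 = (1+i)t$) --- is always a norm from the wildly ramified quadratic extension $L/L^{W'}$, for every $u \in \Zbb_2^\x$ and every $\alpha \geq 3$. This is where the special feature of $p = 2$ enters decisively: because the generating automorphism $\zeta \mto \zeta^5$ fixes $i$, the base field already contains $\sqrt{-1}$, so $-1$ is automatically a norm; combined with a norm-residue computation along the lines of corollary \ref{247} (via proposition \ref{256}, proposition \ref{257} and corollary \ref{072}), one sees that the image of the generator is forced into the index-two subgroup of norms and can never be a generator. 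Concretely, I expect this computation to express the image of $i^\ast(x_1)$ as an honest square in the cyclic target, exactly as the factor $|G/W_1|$ did in the odd case, and the obstacle is to carry out the underlying Hilbert-symbol computation cleanly enough to cover all residue classes of $u$ simultaneously.
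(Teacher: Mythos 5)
Your reduction is sound, and it is genuinely different from the paper's argument: by proposition \ref{214} it does suffice to show that $i_{W'}^\ast$ fails to be surjective for the order-two subgroup $W' \subset W$, your identification of $W'$ as generated by $\zeta_{2^\alpha} \mto -\zeta_{2^\alpha}$ is correct (since $5^{2^{\alpha-3}} \equiv 1+2^{\alpha-1} \bmod 2^\alpha$), and the criterion ``$i_{W'}^\ast$ not surjective iff $x_1 \in N_{L/L^{W'}}(L^\x)$'' is right, modulo the routine verification that $H^2(W',\tilde{F_1}) \iso C_2$ is generated by the class of $x_1$ (the $C_{2^\alpha}$-summand contributes nothing, since the norm map hits the whole fixed subgroup $\lan \zeta_{2^\alpha}^2 \ran$). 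The stripping step is also workable: $-1 = N(i)$, $2 = -i(1+i)^2$ with $-i$ a norm (for $\alpha=3$ directly as $N(\zeta_8) = -i$, for $\alpha \geq 4$ via $i = N(\zeta_8)$), and $1 \pm i$ via example \ref{258} and remark \ref{271}. The paper instead works with the full $W$: it identifies $H^2(W,L^\x)$ with $U_2(\Zbb_2^\x)/U_\alpha(\Zbb_2^\x)$ by composing with the norm $N_{G/W}$ down to $\Zbb_2^\x$ (using the norm residue symbol to show $N_G(\Ocal_L^\x)=U_\alpha(\Zbb_2^\x)$), and then checks $\tau(x_1) \equiv 1 \bmod 8$ in every residue class of $u$, which is never a generator of $U_2/U_\alpha$ because a generator must lie outside $U_3$ and $\alpha \geq 3$.

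The genuine gap is that your proof stops exactly where the content of the corollary lies. The claim that the unit part of $x_1$ (namely $u$, or $t$ with $t^2 = \pm u$) is a norm of the quadratic extension $L/L^{W'}$ for \emph{every} $u \in \Zbb_2^\x$ and every $\alpha \geq 3$ is precisely the point that distinguishes $p=2$ from corollary \ref{247}, where the analogous statement fails for suitable $u$; you defer it explicitly (``the hard part''), and the justification offered --- that $-1$ is a norm and that ``a norm-residue computation forces the image into the index-two subgroup'' --- restates the goal rather than proving it. The step can be closed, but only by invoking the same local class field theory the paper uses: since $u \in \Qbb_2^\x$, norm-compatibility of the Artin symbol gives $(u, L/L^{W'}) = (N_{L^{W'}/\Qbb_2}(u), L/\Qbb_2) = (u, L/\Qbb_2)^{n_\alpha 2^{\alpha-2}}$, and $(u, L/\Qbb_2)$ lies in the inertia subgroup $\iso (\Zbb/2^\alpha)^\x$, whose exponent is $2^{\alpha-2}$, so the symbol is trivial; the case $t \in \Zbb_2(\zeta_3)^\x$ (where $n_\alpha$ is even) is handled the same way over $\Qbb_2(\zeta_3)$, using that $n_\alpha 2^{\alpha-3}$ is still divisible by $2^{\alpha-2}$. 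So your route relocates the decisive computation to a single quadratic layer rather than avoiding it; as submitted, the proposal is a correct reduction, not a proof.
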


\begin{proof}
Let $L:= \Qbb_2(F_0)$ and $K:=L^{W}$. Since $L/K$ is totally ramified,
we know from proposition \ref{256} that $H^2(W, L^\x) \iso H^2(W,
\Ocal_L^\x)$. As $N_{G/W} \circ N_W (\Ocal_L^\x) = N_G(\Ocal_L^\x)$, we
may consider the homomorphism
\[
    \tau: H^2(W, L^\x) \raa \Zbb_2^\x/N_G(\Ocal_L^\x)
\]
given by the norm
\[
    N_{G/W}: H^2(W, \Ocal_L^\x) 
    \iso (\Ocal_K^\x)/N_W(\Ocal_L^\x) \raa \Zbb_2^\x/N_G(\Ocal_L^\x).
\]
In order to analyse this homomorphism, we consider the short exact
sequences
\[
    \xymatrix{
        1 \ar[r]
        & \Zbb_2^\x/N_{G}(\Ocal_L^\x) \ar[r] \ar[d]
        & \Qbb_2^\x/N_{G}(L^\x) 
        \ar[r]^{v} \ar[d]_{\iso}^{(\_, L/\Qbb_2)}
        & \Zbb/v(N_{G}(L^\x)) 
        \ar[r] \ar[d]_{\iso}^{\sigma^{(\_)}}
        & 1\\
        1 \ar[r]
        & Gal(L/L^{C_{2^{\alpha-2}} \x C_2}) 
        \ar[r]
        & Gal(L/\Qbb_2) \ar[r]^{pr}
        & Gal(l/\Fbb_2) \ar[r]
        & 1\\
    }
\]
where 
\[
    Gal(L/L^{C_{2^{\alpha-2}} \x C_2}) 
    \iso C_{2^{\alpha-2}} \x C_2, \qq
    Gal(L^{C_{2^{\alpha-2}} \x C_2}/\Qbb_2) 
    \iso Gal(l/\Fbb_2) \iso C_{n_\alpha}
\]
for $l$ the residue field of $L$, where the middle vertical isomorphism
is the norm residue symbol of $L/\Qbb_2$ as defined in \cite{serre4}
section 2.2, the left hand vertical map is its restriction, and where
the right hand vertical isomorphism is given by the power map of the
Frobenius automorphism $\sigma \in Gal(l/\Fbb_2)$.  We know from local
class field theory (see for example \cite{koch} chapter 2 \S 1.3) that
\[
    pr(x, L/\Qbb_2) = (x, L^{C_{2^{\alpha-2}} \x C_2}/\Qbb_2)
    \qq \text{for all } x \in \Qbb_2^\x/N_{G}(L^\x). 
\]
On the other hand \cite{serre4} proposition 2 shows that
\[
    (x, L^{C_{2^{\alpha-2}} \x C_2}/\Qbb_2) = \sigma^{v(x)}
    \qq \text{for all } x \in \Qbb_2^\x/N_{G}(L^\x). 
\]
Thus the right hand square in the above diagram, and hence the diagram
itself, is commutative. The five lemma then implies
\[
    \Zbb_2^\x/N_G(\Ocal_L^\x) \iso C_{2^{\alpha-2}} \x C_2
    \qq \text{and} \qq
    N_G(\Ocal_L^\x) = U_\alpha(\Zbb_2^\x).
\]
The image of $\tau$ however is $U_2(\Zbb_2^\x)/U_\alpha(\Zbb_2^\x)$.  To
see this, consider the tower of extensions
\[
    \xymatrix{
        \Qbb_2 \ar@{-}[r]^{C_2}
        & \Qbb_2(i) \ar@{-}[r]^{C_{n_\alpha}}
        & K \ar@{-}[r]^{W}
        & L.
    }
\]
Since $K/\Qbb_2(i)$ is unramified, we know from proposition \ref{257}
that
\[
    N_{C_{n_\alpha}}:
    \Ocal_K^\x \raa \Zbb_2(i)^\x
\]
is surjective. Hence for any $a_1, a_2 \in \Zbb_2$, there exists an
element $x = 1+a(1+i)$ in $\Ocal_K^\x$ with $a \in \Zbb_2$ such
that
\[
    N_{C_{n_\alpha}}(x) = 1+(a_1+a_2i)(1+i)\ \in \Zbb_2(i)^\x.
\]
Therefore
\begin{align} \label{272}
    N_{G/W}(x)\ 
    \notag &=\ N_{C_2}(1+(a_1+a_2 i)(1+i))\\ 
    \notag &=\ [1+(a_1+a_2 i)(1+i)][1+(a_1-a_2 i)(1-i)]\\
    \notag &=\ 1 + 2(a_1^2 + a_2^2 + a_1 - a_2)\\
    \notag &=\ 1 + 2(a_1^2 + a_1) + 2(a_2^2 - a_2)\\
    &\equiv\ 1 \mod 4,
\end{align}
and the map $\tau: H^2(W, \Ocal_L^\x) \ra
U_2(\Zbb_2^\x)/U_\alpha(\Zbb_2^\x)$ is an isomorphism. 

By to lemma \ref{236} and \ref{237}, the map $i_W^\ast$ is therefore
surjective if and only if $\tau(x_1)$ is a generator of
$U_2(\Zbb_2^\x)/U_\alpha(\Zbb_2^\x)$. Recall that
\[
    x_1 =
    \begin{cases}
        2u & \text{if } u \equiv \pm 3 \mod 8
        \text{ and } n_\alpha \text{ is odd},\\
        (1+i)t & \text{otherwise},
    \end{cases}
\]
with
\[
    t^2 =
    \begin{cases}
        u & \text{if } u \equiv 1 \text{ or } -3 \mod 8,\\
        -u & \text{if } u \equiv -1 \text{ or } 3 \mod 8.
    \end{cases}
\]
Since both $2$ and $1+i$ belong to
$N_{\Qbb_2(\zeta_{2^\alpha})/\Qbb_2(i)} (\Qbb_2(\zeta_{2^\alpha}))$
according to example \ref{258}, it follows by remark \ref{271} that $2$
and $1+i$ both belong to $N_{L/K}(L^\x)$. Thus if $u \equiv \pm 3 \mod
8$ with $n_\alpha$ odd, we have
\[
    \tau(2u) = \tau(u) = u^{2n_\alpha} \equiv 1\ \mod 8.
\]
On the other hand if $u \equiv \pm 1 \mod 8$, then
\begin{align*}
    \tau(x_1) &= \tau(t) = t^{2n_\alpha} =
    \begin{cases}
        u^{n_\alpha} & \text{if } u \equiv 1 \mod 8,\\
        (-u)^{n_\alpha} & \text{if } (-u) \equiv 1 \mod 8,
    \end{cases} \\[1ex]
    &\equiv 1\ \mod 8.
\end{align*}
Finally if $u \equiv \pm 3 \mod 8$ with $n_\alpha$ even, there is a
subgroup of index $2$ in $G/W$ which acts trivially on $t$, and we have
\begin{align*}
    \tau(x_1) 
    &= \tau(t) 
    = (t(-t))^{n_\alpha} 
    = (-1)^{n_\alpha} t^{2n_\alpha}
    \equiv (\pm 3)^{n_\alpha}
    \equiv 1 \q \mod 8.
\end{align*}
In any case, the map $i_W^\ast$ is never surjective.
\end{proof}

\begin{lemma} \label{239}
Let $\alpha \geq 2$, $u \equiv \pm 3 \mod 8$, $n_\alpha$ be odd, and let
$C_2 \subset Aut(C_{2^\alpha})$ be generated by $\zeta \mto \zeta^{-1}$.
If $W_0$ is a subgroup of odd order in $G$, then $\tilde{F_1}^{W_0} =
\tilde{F_0}^{W_0}$ and
\begin{align*}
    H^\ast(C_2, \tilde{F_1}^{W_0}) &\iso
    \begin{cases}
        \lan 2u \ran \x (C_{2^\alpha} \ast C_2) \x
        C_{2^{\frac{n_\alpha}{|W_0|}}-1} 
        & \text{if } \ast = 0,\\
        C_{2^\alpha} \ox C_2 
        & \text{if } 0 < \ast \text{ is odd},\\
        \lan 2u \ran / \lan (2u)^{2} \ran 
        \x (C_{2^\alpha} \ast C_2)
        & \text{if } 0 < \ast \text{ is even};
    \end{cases} \\[2ex]
    H^\ast(C_2, (\Qbb_2(F_0)^{W_0})^\x) &\iso
    \begin{cases}
        (\Qbb_2(F_0)^{ C_{2} })^\x 
        & \text{if } \ast = 0,\\
        0 
        & \text{if } 0 < \ast \text{ is odd},\\
        (\Qbb_2(F_0)^{ C_{2} })^\x 
        / N_W(\Qbb_2(F_0)^\x) \iso C_{2} 
        & \text{if } 0 < \ast \text{ is even}.
    \end{cases}
\end{align*}
\end{lemma}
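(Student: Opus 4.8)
The plan is to follow the template established by the preceding lemmas in this section (particularly lemmas \ref{233}, \ref{234}, \ref{236}, and \ref{237}), computing the two cohomology rings explicitly via the periodic cohomology of the cyclic group $C_2$ acting on the relevant modules. First I would record, using corollary \ref{294}, that under the hypotheses $u \equiv \pm 3 \mod 8$ with $n_\alpha$ odd we are in the case $v(x_1)=1$, so that $\tilde{F_1}=\tilde{F_0}=\lan 2u \ran \x F_0$; since $W_0$ has odd order, taking $W_0$-invariants commutes with the $2$-primary structure and $\tilde{F_1}^{W_0}=\tilde{F_0}^{W_0}$. The $W_0$-invariants of the tame factor $C_{2^{n_\alpha}-1}$ form the cyclic group $C_{2^{n_\alpha/|W_0|}-1}$ (the fixed points of the Frobenius-type action $\zeta \mto \zeta^{2}$ restricted to $W_0$), while $\lan 2u \ran$ and $C_{2^\alpha}$ are untouched because $W_0$ acts trivially on them.

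Next I would compute $H^\ast(C_2, \tilde{F_1}^{W_0})$ from the explicit $2$-periodic complex. Writing $\tilde{F_1}^{W_0}$ additively as $\Zbb \x \Zbb/2^\alpha \x \Zbb/(2^{n_\alpha/|W_0|}-1)$, the generator $t$ of $C_2$ acts trivially on the first factor, by $\zeta \mto \zeta^{-1}$ (i.e. multiplication by $-1$) on $C_{2^\alpha}$, and trivially on the odd-order factor. I would then tabulate $(1-t)$ and $N=1+t$ on each generator exactly as in lemma \ref{236}: on the $\Zbb$ factor $(1-t)=0$ and $N=2$; on $C_{2^\alpha}$ one has $(1-t)(0,1,0)=(0,2,0)$ and $N(0,1,0)=(0,0,0)$; on the odd factor $(1-t)=0$ and $N$ is an isomorphism (so it contributes nothing to higher cohomology). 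Reading off kernels and images of these maps through the complex $\tilde{F_1}^{W_0} \overset{1-t}{\ra} \tilde{F_1}^{W_0} \overset{N}{\ra} \tilde{F_1}^{W_0} \overset{1-t}{\ra} \cdots$ yields the $H^0$, the odd-degree group $C_{2^\alpha}\ox C_2$, and the even-degree group $\lan 2u \ran/\lan (2u)^2 \ran \x (C_{2^\alpha}\ast C_2)$, matching the claimed answer (note $C_{2^\alpha}\ast C_2 \iso C_{2^\alpha}\ox C_2 \iso C_2$ here, the kernel and cokernel of the squaring map coinciding).

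For $H^\ast(C_2, (\Qbb_2(F_0)^{W_0})^\x)$ I would invoke Hilbert's theorem $90$ for the vanishing in odd positive degrees, identify $H^0$ as the fixed field $(\Qbb_2(F_0)^{C_2})^\x$ via $W=C_2$, and compute the even-degree term as the quotient $(\Qbb_2(F_0)^{C_2})^\x/N_{C_2}(\Qbb_2(F_0)^\x)$, which is $C_2$ by the local class field theory description of the Brauer group $Br(L/L^{C_2})$ used in lemmas \ref{221} and \ref{236} (theorem \ref{068} and corollary \ref{072}). The main subtlety, and the step I expect to require the most care, is the correct identification of the invariant subgroup $\tilde{F_1}^{W_0}$ and the verification that the $C_2$ action inverting $C_{2^\alpha}$ produces exactly the factor $C_{2^\alpha}\ast C_2$ in odd degrees and $(C_{2^\alpha}\ast C_2)$ together with the $\lan 2u\ran$ contribution in even degrees; the ramification bookkeeping (that $\Qbb_2(F_0)^{C_2}$ over $\Qbb_2$ carries the unramified tame part while $C_2$ acts on the ramified $\zeta_{2^\alpha}$-part) must be stated cleanly so that the norm computation gives $C_2$ rather than a larger group.
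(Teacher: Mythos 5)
Your proposal is correct and takes essentially the same route as the paper's proof: corollary \ref{294} pins down $\tilde{F_1}=\tilde{F_0}$ with $x_1=2u$, the invariants $\tilde{F_1}^{W_0}\iso\lan 2u\ran\x C_{2^\alpha}\x C_{2^{n_\alpha/|W_0|}-1}$ are fed into the same $2$-periodic complex with identical values of $1-t$ and $N=1+t$ on the three factors, and the field side is Hilbert's theorem 90 in odd degrees plus the norm-quotient identification of the even-degree term. You merely spell out what the paper dismisses as ``the rest is clear'' (the Brauer-group computation via theorem \ref{068} and corollary \ref{072}), which is a harmless elaboration, not a different argument.
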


\begin{proof}
We know from corollary \ref{294} that $\tilde{F_1} = \tilde{F_0}$. The
action of $C_2$ on $\tilde{F_1}^{W_0} \iso \lan 2u \ran \x C_{2^\alpha}
\x C_{2^{\frac{n_\alpha}{|W_0|}}-1}$ is trivial on the first and last
factors and acts on the second by $\zeta \mto \zeta^{-1}$.

Let $t$ the generator of $C_2$, written additively, and $N = 1+t$.
Using additive notation for $\tilde{F_1}^{W_0} \iso \Zbb \x
\Zbb/2^\alpha \x \Zbb/(2^{\frac{n_\alpha}{|W_0|}}-1)$, we obtain
\begin{align*}
    (1-t)(1, 0, 0) &= (0, 0, 0),
    & N(1, 0, 0) &= (2, 0, 0),\\
    (1-t)(0, 1, 0) &= (0, 2, 0),
    & N(0, 1, 0) &= (0, 0, 0),\\
    (1-t)(0, 0, 1) &= (0, 0, 0),
    & N(0, 0, 1) &= (0, 0, 2),
\end{align*}
and the desired result for $H^\ast(C_2, \tilde{F_1}^{W_0})$ follows.

The case of $H^\ast(C_2, (\Qbb_2(F_0)^{W_0})^\x)$ for $0 < \ast$ odd
follows from Hilbert's theorem 90, and the rest is clear.
\end{proof}

\begin{lemma} \label{241}
Let $\alpha=2$ and either $u \equiv \pm 1 \mod 8$ or $u \equiv \pm 3
\mod 8$ with $n_\alpha$ even. If $C_2 \subset Aut(C_{2^\alpha})$ is
generated by $\zeta \mto \zeta^{-1}$, and if $W_0$ is a subgroup of odd
order in $G$, then $\tilde{F_1}^{W_0} = \lan x_1 \ran \x F_0^{W_0}$ with
$v(x_1)=\frac{1}{2}$ and
\begin{align*}
    H^\ast(C_2, \tilde{F_1}^{W_0}) &\iso
    \begin{cases}
        \lan 2u \ran \x (C_{2^\alpha} \ast C_2) \x
        C_{2^{\frac{n_\alpha}{|W_0|}}-1} 
        & \text{if } \ast = 0,\\
        0 
        & \text{if } 0 < \ast \text{ odd},\\
        C_{2^\alpha} \ast C_2
        & \text{if } 0 < \ast \text{ even};\\
    \end{cases} \\[2ex]
    H^\ast(C_2, (\Qbb_2(F_0)^{W_0})^\x) &\iso
    \begin{cases}
        (\Qbb_2(F_0)^{ C_{2} })^\x 
        & \text{if } \ast = 0,\\
        0 
        & \text{if } 0 < \ast \text{ odd},\\
        (\Qbb_2(F_0)^{ C_{2} })^\x 
        / N_W(\Qbb_2(F_0)^\x) \iso C_{2} 
        & \text{if } 0 < \ast \text{ even}.
    \end{cases}
\end{align*}
\end{lemma}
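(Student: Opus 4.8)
The plan is to proceed exactly as in the proof of Lemma \ref{237}, which is the companion result in the case where the $2$-Sylow subgroup $C_{2^\alpha}$ survives in $\tilde{F_1}^{W_0}$ via the torsion part only, rather than through a factor $\lan x_1 \ran$ of valuation $\frac{1}{2}$. The essential point is that the hypotheses here ($\alpha = 2$, together with either $u \equiv \pm 1 \mod 8$, or $u \equiv \pm 3 \mod 8$ with $n_\alpha$ even) are precisely those under which corollary \ref{294} gives $\tilde{F_1} = \lan x_1 \ran \x F_0$ with $v(x_1) = \frac{1}{2}$. Since $W_0$ has odd order and acts trivially on the $2$-primary part, we have $\tilde{F_1}^{W_0} = \lan x_1 \ran \x F_0^{W_0}$, where $F_0^{W_0} \iso C_{2^\alpha} \x C_{2^{n_\alpha/|W_0|}-1}$; this fixes the input to the cohomology computation.

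First I would record that the action of $C_2 \subset Aut(C_{2^\alpha})$ given by $\zeta \mto \zeta^{-1}$ is trivial on $\lan x_1 \ran$ (as $x_1$ has valuation $\frac{1}{2}$ and is fixed, being built from $1+i$ and a scalar $t$) and trivial on the $C_{2^{n_\alpha/|W_0|}-1}$ factor, while acting by inversion on $C_{2^\alpha}$. With $\alpha = 2$ the group $C_{2^\alpha} = C_4$ and the inversion action is $\zeta \mto \zeta^{-1}$. I would then compute $H^\ast(C_2, \tilde{F_1}^{W_0})$ using the standard periodic resolution for the cyclic group $C_2 = \lan t \ran$, forming the complex with alternating maps $1-t$ and $N = 1+t$. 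Writing $\tilde{F_1}^{W_0}$ additively as $\Zbb \x \Zbb/2^\alpha \x \Zbb/(2^{n_\alpha/|W_0|}-1)$, the relevant values are $(1-t)$ and $N$ applied to the three generators; the crucial difference from lemma \ref{239} is that the $\lan x_1 \ran$ factor is now of valuation $\frac{1}{2}$ so that $N$ on its generator equals $2$ (giving $x_1^2 \in \tilde{F_0}$), which kills the odd-degree cohomology, leaving only $C_{2^\alpha} \ast C_2 \iso C_2$ in positive even degrees and $0$ in positive odd degrees.

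For $H^\ast(C_2, (\Qbb_2(F_0)^{W_0})^\x)$, I would invoke Hilbert's theorem 90 to get vanishing in odd degree, and the standard identification of $H^2$ of a cyclic group with the norm cokernel $(\Qbb_2(F_0)^{C_2})^\x / N_W(\Qbb_2(F_0)^\x) \iso C_2$, exactly as asserted in the two preceding lemmas \ref{236} and \ref{237}. Since the only structural change between this statement and lemma \ref{237} is the replacement of the cyclic group $C_{2^{\alpha-2}}$ acting by $\zeta \mto \zeta^5$ by the group $C_2$ acting by inversion $\zeta \mto \zeta^{-1}$, and correspondingly the fixed-field side is computed by the same norm argument, I expect the proof to be essentially a transcription: state that $\tilde{F_1} = \lan x_1 \ran \x F_0$ by corollary \ref{294}, observe the action as above, and note that the calculations are identical to those of lemma \ref{241}'s odd-order analogue lemma \ref{239}, except that $2u$ is replaced by $x_1$ in the computation of $H^\ast(C_2, \tilde{F_1}^{W_0})$. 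The one point requiring a little care — the only genuine obstacle — is verifying that $x_1$ is genuinely $C_2$-fixed (so the first factor $\lan x_1 \ran$ splits off with trivial action): this follows because $x_1 = (1+i)t$ with $t$ a scalar, and inversion on $\zeta_{2^\alpha}$ at $\alpha=2$ sends $i \mto i^{-1} = -i$, so one must check $\sigma(x_1)/x_1 \in F_0^{W_0}$ lands in the fixed torsion, which is exactly the stability of $\tilde{F_1}$ under $W \subset G$ already established before these lemmas.
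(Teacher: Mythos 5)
Your proposal contains a genuine error at precisely the point you flag as ``the only genuine obstacle'': the claim that the $C_2$-action is trivial on $\lan x_1 \ran$. It is not. With $x_1 = (1+i)t$ and the generator $\sigma$ of $C_2$ acting by inversion, $\sigma(i) = i^{-1} = -i$, so
\[
    \sigma(x_1) = (1-i)t = -i\,x_1 \neq x_1.
\]
The stability of $\tilde{F_1}$ under $W$ only gives $\sigma(x_1) \in x_1 F_0$, which makes $\tilde{F_1}^{W_0}$ a $C_2$-module but does \emph{not} make it split as (trivial module $\lan x_1 \ran$) $\x$ ($C_4$ with inversion): the module structure genuinely mixes the two factors. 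Your analogy with lemma \ref{237} is exactly where this slips in — there the generator acts by $\zeta \mto \zeta^5$ and $i^5 = i$, so $x_1$ really is fixed; under inversion it is not. And the error is not cosmetic: if the action on $\lan x_1 \ran$ were trivial, the periodic complex would give $H^1 \iso Ker(N)/Im(1-t) \iso (\Zbb/4)/(2\Zbb/4) \iso C_2 \neq 0$ from the inversion factor (just as in lemma \ref{239}) and $H^2 \iso C_2 \x C_2$, contradicting the statement you are trying to prove, which has vanishing odd cohomology and $H^{even} \iso C_2$.

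The paper's proof computes with the twisted module: writing $\tilde{F_1}^{W_0}$ (after splitting off the odd-torsion factor, which is cohomologically trivial) additively as $\Zbb \x \Zbb/4$ with $t(1,0) = (1,1)$ and $t(0,1) = (0,-1)$, one finds $(1-t)(1,0) = (0,-1)$, so $Im(1-t)$ is the \emph{whole} $\Zbb/4$ factor — this is what kills $H^1$ — while $N(1,0) = (2,1)$, reflecting $N(x_1) = x_1\sigma(x_1) = -i\,x_1^2 = 2t^2 = \pm 2u$, and hence $H^2 \iso Ker(1-t)/Im(N) = \lan (2,1),(0,2) \ran / \lan (2,1) \ran \iso C_2 = C_{2^\alpha} \ast C_2$. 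So the twist $\sigma(x_1) = -ix_1$ is the entire content of the lemma's first half, not a detail to be checked away. Your treatment of the field side ($H^{odd} = 0$ by Hilbert's theorem 90 and $H^{even}$ as the norm cokernel) is correct and matches the paper.
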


\begin{proof}
We know that $\tilde{F_1}^{W_0} = \lan x_1 \ran \x F_0^{W_0}$ with
$v(x_1) = \frac{1}{2}$. The action of $C_2$ on $\tilde{F_1}^{W_0} \iso
\lan x_1 \ran \x C_{2^\alpha} \x C_{2^{\frac{n_\alpha}{|W_0|}}-1}$ is
trivial on the last factor, acts on $C_{2^\alpha}$ by $\zeta_{2^\alpha}
\mto \zeta_{2^\alpha}^{-1}$ on the second, and sends $x_1$ to $-i x_1$.

Note that the last factor splits off and has trivial cohomology. Hence
for $t$ a generator of $C_2$, written additively, and $N = 1+t$, the
cohomology $H^\ast(C_2, \tilde{F_1}^{W_0})$ can be calculated from the
additive complex
\[
    \xymatrix{
        \Zbb \x \Zbb/4 \ar[r]^{1-t}
        & \Zbb \x \Zbb/4 \ar[r]^{N}
        & \Zbb \x \Zbb/4 \ar[r]^{\q 1-t}
        & \ldots\ ,
    }
\]
where
\[
    t(1,0) = (1, 1)
    \qq \text{and} \qq
    t(0,1) = (0,-1).
\]
Therefore
\begin{align*}
    (1-t)(1, 0) &= (0, -1), &
    (1-t)(0, 1) &= (0, 2),\\
    N(1, 0) &= (2, 1), &
    N(0, 1) &= (0, 0).
\end{align*}
Hence
\begin{align*}
    Ker(1-t) 
    &= \lan (2, 1), (0, 2) \ran,
    & Im(1-t) 
    &= \lan (0, 1) \ran, \\
    Ker(N) 
    &= \lan (0, 1) \ran,
    & Im(N) 
    &= \lan (2, 1) \ran,
\end{align*}
and the desired result for $H^\ast(C_2, \tilde{F_1}^{W_0})$ follows.

The case of $H^\ast(C_2, (\Qbb_p(F_0)^{W_0})^\x)$ for $0 < \ast$ odd
follows from Hilbert's theorem 90, and the rest is clear.
\end{proof}

We have seen in corollary \ref{238} that $i_G^\ast$ is not surjective
whenever $\alpha \geq 3$. Thus the case $\alpha = 2$ is all that we want
to consider in the following corollary.

\begin{corollary} \label{242}
Let $\alpha = 2$, $C_2 = Aut(C_{2^\alpha})$ and let $W_0$ be a subgroup
of odd order in $G$. Then $H^2(C_2, \tilde{F_1}^{W_0}) \ra H^2(C_2,
(\Qbb_2(F_0)^{W_0})^\x)$ is surjective if and only if $\alpha = k$. In
this case, its kernel is isomorphic to $C_2$ if $u \equiv \pm 3 \mod 8$
and it is an isomorphism if $u \equiv \pm 1 \mod 8$.
\end{corollary}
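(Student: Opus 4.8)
The plan is to follow the same strategy used in Corollary \ref{247} for the odd prime case: reduce the surjectivity question to an explicit computation of a norm residue via local class field theory, and then identify precisely when the relevant element $x_1$ maps to a generator. Setting $\alpha = 2$, $C_2 = Aut(C_{2^\alpha})$, and letting $W_0$ be an odd-order subgroup of $G$, the first step is to invoke Lemmas \ref{239} and \ref{241} to obtain the explicit forms of $H^2(C_2, \tilde{F_1}^{W_0})$ and $H^2(C_2, (\Qbb_2(F_0)^{W_0})^\x)$. When $u \equiv \pm 1 \mod 8$ the relevant group is built from $\lan x_1 \ran$ with $v(x_1) = \frac{1}{2}$; when $u \equiv \pm 3 \mod 8$ with $n_\alpha$ odd, instead $\tilde{F_1} = \tilde{F_0}$ and $x_1 = 2u$. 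In each case the target $H^2(C_2, (\Qbb_2(F_0)^{W_0})^\x)$ is cyclic of order $2$.

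Next I would set $L := \Qbb_2(F_0)^{W_0}$ and $K := L^{C_2}$, noting that since $\alpha = 2$ the extension $L/K$ is ramified with $Gal(L/K) \iso C_2$. The key is then to build the norm-residue diagram exactly as in the proof of Corollary \ref{238}, using proposition \ref{256} to identify $H^2(C_2, L^\x) \iso H^2(C_2, \Ocal_L^\x)$, the norm residue symbol of \cite{serre4} section 2.2 together with local class field theory (\cite{koch} chapter 2 \S 1.3) to make the middle square commute, and the five lemma to pin down $\Zbb_2^\x/N_G(\Ocal_L^\x)$. This yields a homomorphism $\tau$ to $U_2(\Zbb_2^\x)/U_\alpha(\Zbb_2^\x) = U_2(\Zbb_2^\x)/U_2(\Zbb_2^\x)$, which for $\alpha = 2$ is trivial unless $\alpha = k$; the condition $\alpha = k$ is equivalent by remark \ref{295} to $n_\alpha$ being odd, and this is precisely what forces $N_G(\Ocal_L^\x)$ to be a proper subgroup so that surjectivity of $i_{C_2}^\ast$ becomes possible. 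I would therefore first dispose of the case $\alpha < k$ (where $|G/W|$ is even and the norm kills everything, giving non-surjectivity) before treating $\alpha = k$.

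Assuming $\alpha = k$, the decisive computation is $\tau(x_1)$, evaluated via the norm relations $2, 1+i \in N_{L/K}(L^\x)$ (from example \ref{258} and remark \ref{271}), exactly as tabulated at the end of the proof of Corollary \ref{238} but now reading off the generator condition rather than the non-surjectivity. When $u \equiv \pm 1 \mod 8$ we have $x_1 = (1+i)t$ with $t^2 = \pm u$, and since $1+i$ is a norm, $\tau(x_1) = \tau(t) = t^{2n_\alpha}$ which maps nontrivially into $\Zbb_2^\x/N_G(\Ocal_L^\x) \iso C_2$ precisely when the relevant norm index is nontrivial, making $i_{C_2}^\ast$ surjective with trivial kernel, i.e. an isomorphism. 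When $u \equiv \pm 3 \mod 8$ (with $n_\alpha$ odd so that $\alpha = k$), one computes $\tau(2u) = \tau(u)$ and finds that surjectivity still holds but the source $H^2(C_2, \tilde{F_1}^{W_0})$ has an extra $C_2$-summand (the factor $C_{2^\alpha} \ast C_2 \iso C_2$ in Lemma \ref{239}) that dies in the target, giving kernel $\iso C_2$.

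The main obstacle I anticipate is the careful bookkeeping of the $2$-adic norm computations distinguishing the two residue classes of $u$ modulo $8$: the subtle point is that $1+i$ and $2$ are always norms from $L$, so $\tau(x_1)$ always reduces to a power of $t$ or of $u$, and one must check that $t^{2n_\alpha} \equiv 1 \bmod 8$ but $t^{n_\alpha} \not\equiv 1 \bmod 8$ (or the analogous statement) to correctly separate the surjective-with-trivial-kernel case $u \equiv \pm 1 \mod 8$ from the surjective-with-$C_2$-kernel case $u \equiv \pm 3 \mod 8$. The difference in the order of $\lan x_1 \ran / \lan x_1^{|W|} \ran$ versus the image in the target — caused entirely by whether $v(x_1) = 1$ or $\frac{1}{2}$ — is what produces the $C_2$ discrepancy in the kernel, so I would make sure to track valuations throughout rather than only the residue symbol.
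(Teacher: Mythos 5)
Your scaffolding matches the paper's proof of corollary \ref{242}: it does set $L=\Qbb_2(F_0)^{W_0}$, $K=L^{C_2}$, builds the same class-field-theoretic homomorphism $\tau$ as in corollary \ref{238} using proposition \ref{256}, the norm residue symbol and the five lemma, and feeds in the norms $2,\ 1+i \in N_{L/K}(L^\x)$ via example \ref{258} and remark \ref{271}. But your decisive computation is wrong, and following it through would prove the opposite of the statement. The class whose image under $\tau$ decides surjectivity is $-1$, not $x_1$. When $u \equiv \pm 1 \bmod 8$, the element $x_1$ is not even $C_2$-invariant (the action sends $x_1 \mto -ix_1$), so it represents no class in $H^2(C_2, \tilde{F_1}^{W_0})$; by lemma \ref{241} that group is $C_{2^\alpha} \ast C_2 \iso C_2$, generated by the class of $-1$ (the invariant $x_1^2\zeta_4 = N(x_1)$ is a norm and dies). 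Moreover your own formula $\tau(x_1)=\tau(t)=t^{2n_\alpha}=(\pm u)^{n_\alpha} \equiv 1 \bmod 8$ shows the element you track is \emph{always} trivial --- that was exactly the point of corollary \ref{238} --- so your criterion would yield non-surjectivity in every case. The paper instead computes $\tau(-1)=(-1)^{|H/C_2|}$ with $H = G/W_0$, which equals $-1$ precisely when $|C_{n_\alpha}/W_0|$ is odd, i.e.\ $n_\alpha$ odd, i.e.\ $\alpha = k$; in the case $u \equiv \pm 3 \bmod 8$, $n_\alpha$ odd, the second available generator gives $\tau(2u)=\tau(u)=u^{|H/C_2|}$, but surjectivity is already settled by $\tau(-1)$.

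You also misidentify the target of $\tau$ and the role of $\alpha = k$. Here $N_H(\Ocal_L^\x) = N_{C_2}(\Zbb_2(i)^\x) = U_2(\Zbb_2^\x)$ unconditionally: $N_{H/C_2}:\Ocal_L^\x \ra \Zbb_2(i)^\x$ is surjective by proposition \ref{257} since $L/\Qbb_2(i)$ is unramified, and $N_{C_2}(1+(a_1+a_2i)(1+i)) \equiv 1 \bmod 4$ as in (\ref{272}). So $\tau$ lands in $\Zbb_2^\x/U_2(\Zbb_2^\x) \iso \{\pm 1\}$, not in "$U_2/U_\alpha = U_2/U_2$", and the condition $\alpha = k$ does \emph{not} make the norm group proper --- the norm group never depends on it; only the parity of $|H/C_2|$ enters, through $\tau(-1)$. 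Finally, your account of the kernel is backwards: in the case $u \equiv \pm 3 \bmod 8$, $n_\alpha$ odd, the source $\lan 2u \ran/\lan (2u)^2 \ran \x (C_{2^\alpha} \ast C_2)$ has order $4$ and the kernel of order $2$ is generated by $2u$ (if $u \equiv -3$) or $-2u$ (if $u \equiv 3$); the torsion class $-1$ never dies, contrary to your claim that the $C_{2^\alpha}\ast C_2$ factor is what vanishes. The kernel statement itself is then just a cardinality count once surjectivity is established, but the mechanism you give for it would not survive scrutiny.
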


\begin{proof}
Let $L:=\Qbb_2(F_0)^{W_0}$, $K:=L^{C_2}$ and $H:= G/W_0 =
Gal(L/\Qbb_p)$. Note that $L/K$ is totally ramified. Similarly to
corollary \ref{238}, we may consider the homomorphism
\[
    \tau: H^2(C_2, L^\x) \raa \Zbb_2^\x/N_H(\Ocal_L^\x)
\]
given by the norm
\[
    N_{H/C_2}: H^2(C_2, L^\x) 
    \iso H^2(C_2, \Ocal_L^\x) 
    \iso (\Ocal_K^\x)/N_{C_2}(\Ocal_L^\x) 
    \raa \Zbb_2^\x/N_H(\Ocal_L^\x).
\]
Here again, as in corollary \ref{238}, we have short exact sequences
forming a commutative diagram
\[
    \xymatrix{
        1 \ar[r]
        & \Zbb_2^\x/N_{H}(\Ocal_L^\x) \ar[r] \ar[d]_{\iso}
        & \Qbb_2^\x/N_{H}(L^\x) 
        \ar[r]^{v} \ar[d]_{\iso}^{(\_, L/\Qbb_2)}
        & \Zbb/v(N_{H}(L^\x)) 
        \ar[r] \ar[d]_{\iso}^{\sigma^{(\_)}}
        & 1\\
        1 \ar[r]
        & Gal(L/K) 
        \ar[r]
        & Gal(L/\Qbb_2) \ar[r]^{pr}
        & Gal(l/\Fbb_2) \ar[r]
        & 1\\
    }
\]
where
\[
    Gal(L/K) \iso C_2
    \qq \text{and} \qq
    Gal(l/\Fbb_2) \iso C_{\frac{n_\alpha}{|W_0|}},
\]
for $l$ the residue field of $L$. Since $L^{C_{n_\alpha}/ W_0} =
\Qbb_2(F_0)^{C_{n_\alpha}} = \Qbb_2(i)$, and since $L/\Qbb_2(i)$ is
unramified, we know from proposition \ref{257} that 
\[
    N_{H/C_2} 
    : \Ocal_L^\x \raa \Zbb_2(i)^\x
\]
is surjective; consequently
\[
    N_H(\Ocal_L^\x) 
    = N_{C_2} \circ N_{H/C_2}(\Ocal_L^\x)
    = N_{C_2}(\Zbb_2(i)^\x).
\]
Furthermore, as in (\ref{272}), for any elements $a_1, a_2 \in \Zbb_2$
we have 
\[
    N_{C_2}(1+(a_1+a_2 i)(1+i))
    \equiv 1 \mod 4.
\]
Hence $N_H(\Ocal_L^\x) = U_2(\Zbb_2^\x)$ and the map
\[
    \tau: H^2(C_2, L^\x) \raa \Zbb_2^\x/U_2(\Zbb_2^\x) = \{\pm 1\}
\]
is surjective by proposition \ref{257}.

Using lemma \ref{239} and \ref{241}, the map $i^\ast: H^2(C_2,
\tilde{F_1}^{W_0}) \ra H^2(C_2, L^\x)$ is therefore surjective if and
only if
\[
    -1 =
    \begin{cases}
        \tau(2u) \text{ or } \tau(-1) 
        & \text{if } u \equiv \pm 3 \mod 8 
        \text{ and } n_\alpha \text{ odd},\\
        \tau(-1) 
        & \text{otherwise}.
    \end{cases}
\]
Since $N_{C_2}(1+i) = (1+i)(1-i) = 2$, remark \ref{271} implies
\[
    \tau(2u) = \tau(u) = u^{|H/C_2|}
    \qq \text{and} \qq
    \tau(-1) = (-1)^{|H/C_2|}.
\]
Hence $\tau(-1) = -1$ if and only if 
\[
    |H/C_2| = |C_{n_\alpha}/W_0|
    \text{ is odd}
    \qq \Lra \qq
    n_\alpha \text{ is odd}
    \qq \Lra \qq
    \alpha = k,
\]
and the result follows.
\end{proof}

\begin{theorem} \label{259}
Let $p=2$, $n = 2^{k-1}m$ with $m$ odd, $u \in \Zbb_2^\x$, $F_0 =
C_{2^\alpha} \x C_{2^{n_\alpha}-1}$ be a maximal abelian finite
subgroup of $\Sbb_n$, $G = Gal(\Qbb_2(F_0)/\Qbb_2)$, $G_{2'}$ be the odd
part of $G$, and let $\tilde{F_1} = \lan x_1 \ran \x F_0 \subset
\Qbb_2(F_0)^\x$ be maximal as a subgroup of $\Qbb_2(F_0)^\x$ having
$\tilde{F_0}$ as subgroup of finite index. 
\begin{enumerate}
    \item For any $1 \leq \alpha \leq k$, there is an extension of
    $\tilde{F_1}$ by $G_{2'}$; this extension is unique up to
    conjugation.
    
    \item If $\alpha = 1$, there is an extension of $\tilde{F_1}$ by
    $G$; the number of such extensions up to conjugation is
    \[
        \begin{cases}
            1 & \text{if } n \text{ is odd},\\
            2 & \text{if } n \text{ is even}.
        \end{cases}
    \]

    \item If $\alpha = 2$, there is an extension of $\tilde{F_1}$ by $G$
    if and only if $k=2$; the number of such extensions up to
    conjugation is
    \[
        \begin{cases}
            1 & \text{if } u \equiv \pm 1 \mod 8,\\
            2 & \text{if } u \not\equiv \pm 1 \mod 8.
        \end{cases}
    \]

    \item If $\alpha \geq 3$, there is no extension of $\tilde{F_1}$ by
    $G$.
\end{enumerate}
\end{theorem}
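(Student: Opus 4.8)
The plan is to reduce every assertion to the surjectivity and the kernel of the restriction map
\[
    i_W^\ast: H^2(W, \tilde{F_1}) \raa H^2(W, \Qbb_2(F_0)^\x)
\]
for $W = G_{2'}$ and $W = G$. Since $F_0$ is maximal abelian, $\Qbb_2(F_0)$ is a maximal subfield of $\Dbb_n$, so $\tilde{F_1} = \tilde{F_2}$, $[L:\Qbb_2] = n$ with $L = \Qbb_2(F_0)$, $C_{\Dbb_n^\x}(\tilde{F_2}) = L^\x$ and $\mu(\Qbb_2(F_0)) = F_0$. In particular the numerical condition that $|W|$ be prime to $n[L:\Qbb_2]^{-1} = 1$ in theorem \ref{190} is vacuous, so by theorem \ref{190} an extension of $\tilde{F_1}$ by $W$ exists if and only if $i_W^\ast$ is surjective; and whenever it is, corollary \ref{192} identifies the conjugacy classes of such extensions with the elements of $Ker(i_W^\ast)$. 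Thus each statement amounts to deciding surjectivity of $i_W^\ast$ and, when it holds, counting $|Ker(i_W^\ast)|$.

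Statements 1) and 2) then follow directly from the cohomological computations already in place. For 1), taking $W = G_{2'}$ which has odd order, proposition \ref{235} shows that $i_{G_{2'}}^\ast$ is an isomorphism; hence the extension exists and is unique up to conjugation. For 2), with $\alpha = 1$ and $W = G = C_n$, corollary \ref{232} shows $i_G^\ast$ is always surjective, being an isomorphism when $n$ is odd and having kernel $\{\pm 1\} \iso C_2$ when $n$ is even; the counts $1$ versus $2$ are then read off from corollary \ref{192}.

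For 3) and 4) the strategy is to propagate information between $G$ and its subgroups by means of proposition \ref{214} together with the Hochschild--Serre spectral sequences for $1 \ra G_{2'} \ra W \ra W/G_{2'} \ra 1$, where the odd part $G_{2'}$ contributes isomorphisms on all rows by proposition \ref{235}, reducing the comparison of spectral sequences to the bottom row $H^s(W/G_{2'}, \tilde{F_1}^{G_{2'}}) \ra H^s(W/G_{2'}, (\Qbb_2(F_0)^{G_{2'}})^\x)$. In case $\alpha = 2$ the relevant automorphism factor of $G$ is $C_2 = Aut(C_4)$, and this bottom-row map is exactly the one computed in corollary \ref{242}: it is surjective precisely when $\alpha = k$ (equivalently $n_\alpha$ is odd, equivalently $k = 2$ by remark \ref{295}), with trivial kernel if $u \equiv \pm 1 \mod 8$ and kernel $\iso C_2$ if $u \equiv \pm 3 \mod 8$. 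When $k = 2$ we have $G = G_{2'} \x C_2$, so this immediately yields surjectivity of $i_G^\ast$ and the counts $1$ and $2$ via corollary \ref{192}. When $k > 2$, were $i_G^\ast$ surjective, proposition \ref{214} would force $i_W^\ast$ surjective for $W = G_{2'} \x C_2$; the spectral-sequence reduction above would then force the corollary \ref{242} map to be surjective, contradicting $\alpha \neq k$, so no extension exists. For 4), with $\alpha \geq 3$, the subgroup $C_{2^{\alpha-2}} \subset Aut(C_{2^\alpha})$ generated by $\zeta \mto \zeta^5$ embeds in $G$ as part of the inertia, and corollary \ref{238} shows $i_{C_{2^{\alpha-2}}}^\ast$ is never surjective; by proposition \ref{214} this forbids $i_G^\ast$ from being surjective, so no extension of $\tilde{F_1}$ by $G$ exists.

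The main obstacle is the spectral-sequence bookkeeping in statement 3): one must verify that, after killing the odd part $G_{2'}$ via proposition \ref{235}, surjectivity of $i_G^\ast$ (resp. $i_{G_{2'} \x C_2}^\ast$) on $H^2$ is genuinely controlled by the single bottom-row map of corollary \ref{242}, so that the equivalence ``$i_G^\ast$ surjective $\Leftrightarrow \alpha = k$'' and the exact kernel cardinality ($1$ or $2$) are transferred correctly. One must also check that the distinguished automorphism subgroups of $Aut(C_{2^\alpha})$ embed in $G$ and are stable under the action, so that they lift to $Aut(L, \tilde{F_2}, F_0)$ and propositions \ref{214} and \ref{192} genuinely apply.
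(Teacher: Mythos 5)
Your proposal follows essentially the same route as the paper's proof: reduce everything to the surjectivity and kernel of $i_W^\ast$ via theorem \ref{190} and corollary \ref{192} (using that $\Qbb_2(F_0)$ is a maximal subfield so the index condition is vacuous), handle the odd part with proposition \ref{235}, settle $\alpha=2$ by the Hochschild--Serre degeneration argument feeding into corollary \ref{242} combined with proposition \ref{214}, and rule out $\alpha\geq 3$ with corollary \ref{238} plus proposition \ref{214} --- the paper does exactly this, except that for $k>2$ it short-circuits your spectral-sequence detour through $W=G_{2'}\times C_2$ by applying corollary \ref{242} with $W_0$ trivial, i.e.\ directly to $W=C_2$. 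The one slip is in part 1: proposition \ref{235} assumes $\alpha\geq 2$, so for $\alpha=1$ you must instead invoke corollary \ref{232} (with $W=G_{2'}$ of odd order, whose kernel $C_2\otimes C_{|G_{2'}|}$ vanishes), which is precisely the pair of citations the paper uses.
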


\begin{proof}
1) From corollary \ref{232} and proposition \ref{235} we know that
\[
    i_{G_{2'}}^\ast:
    H^2(G_{2'}, \tilde{F_1}) \raa H^2(G_{2'}, \Qbb_2(F_0)^\x)
\]
is an isomorphism. Existence and uniqueness up to conjugation then
follows from corollary \ref{192}.

2) This follows from corollary \ref{232} and \ref{192}.

3) Let $\alpha = 2$. Applying proposition \ref{214} and corollary
\ref{192} together with corollary \ref{242} in the case where $W_0$ is
trivial, we obtain that $\tilde{F_1}$ can never be extended by $G$ when
$n_\alpha$ is even. Assume then that $n_\alpha$ is odd. In this case $G$
decomposes canonically as
\[
    G = G_{2'} \x C_2
    \qq \text{with} \q
    G_{2'} = C_{n_\alpha}.
\]
In particular, there is a short exact sequence
\[
    1 \raa C_{n_\alpha} \raa G \raa C_2 \raa 1
\]
which gives rise to the Hochschild-Serre spectral sequences (see
\cite{brown} section VII.6)
\begin{align*}
    E_2^{s, t} 
    \iso H^s(C_2, H^t(C_{n_\alpha}, \tilde{F_1})) \q
    &\Raa \q H^{s+t}(G, \tilde{F_1}),\\[1ex]
    E_2^{s, t} 
    \iso H^s(C_2, H^t(C_{n_\alpha}, \Qbb_2(F_0)^\x)) \q
    &\Raa \q H^{s+t}(G, \Qbb_2(F_0)^\x).
\end{align*}
From lemma \ref{233}, \ref{234} and proposition \ref{235}, each map
$E_2^{s, t} \ra E_2^{s, t}$ is an isomorphism for $t>0$. We also have
\[
    H^0(C_{n_\alpha}, \tilde{F_1})
    = \tilde{F_1}^{C_{n_\alpha}}
    = \lan x_1 \ran \x C_{2^\alpha}
\]
and
\[
    H^0(C_{n_\alpha}, \Qbb_2(F_0)^\x)
    = (\Qbb_2(F_0)^{C_{n_\alpha}})^\x
    = \Qbb_2(i)^\x.
\]
Then corollary \ref{242} applied to the case $W_0 = C_{n_\alpha}$ shows
that the map
\[
    H^s(C_2, \tilde{F_1}^{C_{n_\alpha}})
    \raa H^s(C_2, (\Qbb_2(F_0)^{C_{n_\alpha}})^\x)
\]
is surjective when $t=0$ and $s>0$; its kernel is trivial if $u \equiv
\pm 1 \mod 8$, otherwise it is of cardinality $2$. In fact, since
$n_\alpha$ is odd, all the terms $E_2^{s, t}$ for which $s>0$ and $t>0$
are trivial. By the results of lemma \ref{233} and \ref{234}, the
non-trivial terms for which $s=0$ are of odd order, and the non-trivial
terms for which $t=0$ are powers of $2$. Hence all differentials of the
spectral sequences are trivial and $E_2^{\ast, \ast} = E_\infty^{\ast,
\ast}$. Consequently, $i_G^\ast$ is surjective if and only if $n_\alpha$
is odd, that is, if and only if $\alpha=k$. The result then follows from
corollary \ref{192}.

4) By corollary \ref{238} and proposition \ref{214} the map
\[
    i_{G}^\ast:
    H^2(G, \tilde{F_1}) \raa H^2(G, \Qbb_2(F_0)^\x)
\]
is never surjective if $\alpha \geq 3$. The result is then a consequence
of corollary \ref{192}.
\end{proof}

\section{Extensions of maximal finite subgroups of $\Sbb_n$ 
containing $Q_8$} 

In this section, we establish under what condition a maximal finite
subgroup $G$ of $\Sbb_n$ with a quaternionic $2$-Sylow subgroup extends
to a subgroup of order $n|G|$ in $\Gbb_n(u)$. Recall from theorem
\ref{032} that such a $G$ exists if and only if $p=2$ and $n=2m$ with
$m$ odd, in which case
\[
    G 
    \iso Q_8 \rtimes C_{3(2^m-1)}
    \iso T_{24} \x C_{2^m-1}.
\]

\begin{theorem} \label{260}
Let $p=2$, $n=2m$ with $m$ odd, and $u \in \Zbb_2^\x$. A subgroup $G$
isomorphic to $T_{24} \x C_{2^m-1}$ in $\Sbb_n$ extends to a maximal
finite subgroup $F$ of order $n|G|=48m(2^m-1)$ in $\Gbb_n(u)$ if and
only if $u \equiv \pm 1 \mod 8$; this extension is unique up to
conjugation. Moreover if $u \not\equiv \pm 1 \mod 8$ and $G'$ is a
subgroup isomorphic to $Q_8 \x C_{2^m-1}$ in $\Sbb_n$, there is no
extension of $G'$ of order $n|G'|$ in $\Gbb_n(u)$.
\end{theorem}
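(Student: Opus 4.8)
The plan is to study the extension problem via the cohomological machinery of chapter 2, specializing theorem \ref{190} to the case at hand. The group $G \iso T_{24} \x C_{2^m-1}$ has quaternionic $2$-Sylow subgroup $Q_8$, so its maximal abelian subgroups are cyclic and we are outside the direct scope of theorems \ref{250} and \ref{259}; instead we work with $\tilde{F_2} = \tilde{G} = G \x \lan 2u \ran$ directly. Since $n=2m$ with $m$ odd, proposition \ref{163} gives $v(N_{\Dbb_n^\x}(Q_8)) = \frac{1}{n}\Zbb = v(\Dbb_n^\x)$, so any extension $F$ of order $n|G|$ corresponds to $\tilde{F} \in \tilde{\Fcal}_u(N_{\Dbb_n^\x}(\tilde{F_2}), \tilde{F_2}, W)$ with $|W| = n$. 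By corollary \ref{174} the relevant central subfield is $K := \Qbb_2(Z(G)) = \Qbb_2(C_{2(2^m-1)})$, and $L := \Qbb_2(\tilde{F_2})$ should be a maximal subfield of $\Dbb_n$ of degree $n$ over $\Qbb_2$.

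First I would set up the correct field extension. Using the centralizer theorem together with remark \ref{028}, $\Dbb_n \iso \Qbb_2(Q_8) \ox_{\Qbb_2} C_{\Dbb_n}(Q_8)$, where $C_{\Dbb_n}(Q_8)$ is a division algebra of dimension $m^2$ with residue field $\Fbb_{2^m}$. The centralizer $C_{\Dbb_n^\x}(\tilde{F_2})$ is then $L^\x$ for $L = \Qbb_2(i, C_{2^m-1}, x)$, the maximal subfield obtained by adjoining to the unramified part an element of valuation $\frac{1}{2}$, giving $[L:\Qbb_2] = n$ and $L/K$ a degree-$n$ extension with $W = Aut(L,\tilde{F_2},F_0)$. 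The extension $F$ exists iff $i_W^\ast : H^2(W, \tilde{F_2}) \ra H^2(W, L^\x)$ is surjective, by theorem \ref{190} (the hypothesis $|W|$ prime to $n[L:\Qbb_2]^{-1} = 1$ being automatic).

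Next I would compute $i_W^\ast$ by reducing to the critical cyclic quotient, exactly as in corollaries \ref{238} and \ref{242}. The odd part of the problem is handled by proposition \ref{235}-type arguments (the $C_{2^m-1}$ and $C_3 \subset T_{24}$ factors contribute isomorphisms on cohomology since their orders are odd), leaving a $C_2$-worth of obstruction governed by a norm map $\tau: H^2(C_2, L^\x) \ra \Zbb_2^\x/N_H(\Ocal_L^\x)$. The key computation, entirely parallel to equation (\ref{272}) in corollary \ref{238} and the norm computation in corollary \ref{242}, shows $N_H(\Ocal_L^\x) = U_2(\Zbb_2^\x)$ via the identity $N_{C_2}(1+(a_1+a_2i)(1+i)) \equiv 1 \mod 4$, so that $\tau$ lands in $\{\pm 1\}$. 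Surjectivity of $i_W^\ast$ then amounts to whether $\tau$ of the chosen valuation-$\frac{1}{2}$ generator $x_1$ equals $-1$, which by the norm-residue computation of corollary \ref{242} holds precisely when $u \equiv \pm 1 \mod 8$. Uniqueness up to conjugation follows from theorem \ref{193} together with $H^1(W, L^\x/\tilde{F_2}) = 0$, via Hilbert 90 and the triviality of the kernel of $i_W^\ast$ in the $u \equiv \pm 1 \mod 8$ case as in corollary \ref{192}.

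Finally, for the group $G' \iso Q_8 \x C_{2^m-1}$ (omitting the $C_3$ factor), the same analysis applies with the identical critical $C_2$-obstruction, since dropping the order-$3$ automorphism does not change the $2$-primary part of $\tau$; thus no extension of order $n|G'|$ exists when $u \not\equiv \pm 1 \mod 8$. \textbf{The main obstacle} I anticipate is verifying that $L/K$ has the right ramification structure so that the tower $\Qbb_2 \subset \Qbb_2(i) \subset K \subset L$ supports the local class field theory computation of $N_H(\Ocal_L^\x)$; in particular one must check that the quaternionic (nonabelian) nature of the $2$-Sylow does not alter the group $W$ acting on $L$ or the norm calculation, reducing everything to the abelian norm-residue argument already established in corollaries \ref{238} and \ref{242}.
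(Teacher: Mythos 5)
Your plan founders at the very first step: you set $\tilde{F_2} = \tilde{G} = G \x \lan 2u \ran$ and then invoke the machinery of theorem \ref{190}, but that theorem (and theorem \ref{193}, corollary \ref{192}) requires $\tilde{F_2}$ to be \emph{abelian} with $L = \Qbb_p(\tilde{F_2})$ a commutative field, since the whole argument runs through $H^2(W, \tilde{F_2})$, Hilbert 90 for $L^\x$, and the crossed algebra $(L/L^W, e)$. Here $G \supset Q_8$ is nonabelian, and $\Qbb_2(\tilde{F_2}) \supset \Qbb_2(Q_8) = \Dbb_2$ is a noncommutative algebra, not a field; there is no maximal subfield $L$ of degree $n$ containing $G$ at all. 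Relatedly, your identification $C_{\Dbb_n^\x}(\tilde{F_2}) = L^\x$ with $[L:\Qbb_2]=n$ is false: since $\Qbb_2(Q_8) = \Dbb_2$, the centralizer theorem gives $C_{\Dbb_n^\x}(G) \iso \Qbb_2(\zeta_{2^m-1})^\x$, an \emph{unramified} field of degree $m$, not $n$. This is exactly the case excluded from the chain framework of chapter 2 (case b of lemma \ref{328}), which is why the paper treats it by a separate, direct argument. Your closing "main obstacle" paragraph correctly senses the danger, but the obstacle is not a verification to be checked; it is fatal to the route through corollaries \ref{238} and \ref{242}, whose norm-residue computations take place in an abelian Galois extension that does not exist in this setting.

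What the paper does instead: for existence when $u \equiv \pm 1 \mod 8$ it \emph{constructs} the extension explicitly, exhibiting the element $(1+i)tj\xi_u$ (with $t^2 = \pm u$ and $\xi_u \in C_{\Dbb_n^\x}(T_{24})$, $\xi_u^m = 2u$) of valuation $\frac{m+2}{n}$, prime to $n$, whose $n$-th power is computed by hand to lie in $\lan G, 2u \ran$. For non-existence it determines $N_{\Dbb_n^\x}(G) = \lan \Zbb_2[\zeta_{2^m-1}]^\x, T_{24}, (1+i), \xi_u \ran$ and shows any putative $x$ with $v(x) = \frac{1}{n}$ and $x^n \in \lan G, 2u \ran$ forces $z^2 = \pm u$ with $z \in \Zbb_2(\zeta_{2^m-1})^\x$, which corollary \ref{115} rules out when $u \not\equiv \pm 1 \mod 8$ (here $m$ odd is used, so $\zeta_3$ is absent); the statement for $G'$ is immediate because $\Qbb_2(Q_8) = \Qbb_2(T_{24})$ gives $N_{\Dbb_n^\x}(G') = N_{\Dbb_n^\x}(G)$ --- your remark that dropping the $C_3$ does not change the obstruction is in this respect the right instinct, though your justification via $\tau$ is not available. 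For uniqueness the paper does use the nonabelian-friendly part of the chapter 2 machinery, namely theorem \ref{206} applied to $\rho: N_{\Dbb_n^\x}(G) \raa Aut(T_{24}) \x Gal(\Qbb_2(\zeta_{2^m-1})/\Qbb_2)$ with abelian kernel $\Qbb_2(\zeta_{2^m-1})^\x$, $A = \lan 2u, -1, \zeta_{2^m-1} \ran$, and $H^1(B, \Qbb_2(\zeta_{2^m-1})^\x/A) \iso Hom(B, \Zbb_2^m) = 0$ --- not theorem \ref{193} with a nonexistent maximal field. You correctly isolated the arithmetic heart of the matter (whether $\pm u$ is a square, i.e.\ $u \equiv \pm 1 \mod 8$), but the cohomological scaffolding you propose to hang it on does not apply to this group.
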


\begin{proof}
Let $i, j, \zeta_3, \zeta_{2^m-1}$ be elements of respective order $4$,
$4$, $3$ and $2^m-1$ generating $G$, and let $T:=\lan i, j, \zeta_3 \ran
\iso T_{24}$. We first establish the structure of the centralizer of
$G$. By the centralizer theorem \ref{007}, there is a $\Qbb_2$-algebra
isomorphism
\[
    \Dbb_n 
    \iso \Qbb_2(T) \ox_{\Qbb_2} C_{\Dbb_n}(T),
\]
where $C_{\Dbb_n}(T)$ is a central division algebra of dimension $m^2$
over $\Qbb_2$. Note that the commutative extension
$\Qbb_2(\zeta_{2^m-1})/\Qbb_2$ is maximal unramified in $C_{\Dbb_n}(T)$.
Consequently
\[
    C_{\Dbb_n^\x}(G) \iso \Qbb_2(\zeta_{2^m-1})^\x, \qq
    C_{\Sbb_n}(G) \iso \Zbb_2(\zeta_{2^m-1})^\x,
\]
and as $\Qbb_2(\zeta_{2^m-1})/\Qbb_2$ is unramified we have
$C_{\Sbb_n}(G) \iso C_{\Gbb_n(u)}(G)$.

We now show the existence of the desired extension of order $n|G|$
assuming $u \equiv \pm 1 \mod 8$, that is $u \equiv \pm 1 \mod
(\Zbb_2^\x)^2$. Let $t \in \Zbb_2^\x$ be such that
\[
    t^2 =
    \begin{cases}
        u & \text{if } u \equiv 1 \mod 8,\\
        -u & \text{if } u \equiv -1 \mod 8.
    \end{cases}
\]
The valuation map gives rise to a short exact sequence
\[
    1 \raa N_{\Sbb_n}(G) \raa N_{\Gbb_n(u)}(G) \overset{v}{\raa} 
    \frac{1}{n}\Zbb/\Zbb \iso \Zbb/n \raa 1.
\]
Let $\xi_u \in C_{\Dbb_n^\x}(T)$ be an element satisfying $\xi_u^m=2u$
and acting on $\zeta_{2^m-1}$ by raising it to its square. Consider the
element $(1+i)tj\xi_u \in \Dbb_n^\x$. It becomes a generator in $\Zbb/n$
as
\[
    v((1+i)tj\xi_u)
    = v(1+i) + v(\xi_u)
    = \frac{1}{2} + \frac{1}{m}
    = \frac{m+2}{n},
\]
where $m+2$ is prime to $n$. Furthermore as $t, \xi_u$ commute with $i,
j$, we have
\begin{align*}
    [(1+i)tj\xi_u]^n\
    &=\ [(1+i)j(1+i)jt^2\xi_u^2]^m\\[1ex]
    &=\ [(1+i)(1-i)j^2t^2\xi_u^2]^m\\[1ex]
    &=\ [-2t^2\xi_u^2]^m \\[1ex]
    &=\
    \begin{cases}
        -(2u)^{m+2} & \text{if } u \equiv 1 \mod 8,\\
        (2u)^{m+2} & \text{if } u \equiv -1 \mod 8,
    \end{cases}
\end{align*}
and it is easy to check that $(1+i)tj\xi_u \in N_{\Dbb_n^\x}(G)$. This
shows the existence of $F$ in the case $u \equiv \pm 1 \mod 8$.

We proceed to the non-existence part of the result for $u \not\equiv 1
\mod 8$. First note that there is a short exact sequence
\[
    1 \raa C_{\Dbb_n^\x}(G) \raa N_{\Dbb_n^\x}(G) \overset{\rho}{\raa}
    Aut(T_{24}) \x Gal(\Qbb_2(\zeta_{2^m-1})/\Qbb_2) \raa 1,
\]
where $|Aut(T_{24})| = 24$ and $Gal(\Qbb_2(\zeta_{2^m-1})/\Qbb_2)$ is
cyclic of order $m$. Indeed, if $x \in N_{\Dbb_n^\x}(G)$, then the
conjugation action by $x$ preserves both $G$ and its $2$-Sylow subgroup
$Q$. Consequently $\Qbb_2(Q)^\x = \Qbb_2(T)^\x$ and $C_{\Dbb_n^\x}(G)$
are preserved as well. As for the surjectivity of $\rho$, we know from
the Skolem-Noether theorem that the restriction of $\rho$ to
$\Qbb_2(T)^\x \subset N_{\Dbb_n^\x}(G)$ is surjective on $Aut(T_{24})$,
while by definition the element $\xi_u \in N_{\Dbb_n^\x}(G)$ maps to a
generator of $Gal(\Qbb_2(\zeta_{2^m-1})/\Qbb_2)$. Now since
\[
    C_{\Dbb_n^\x}(G) = \Qbb_2(\zeta_{2^m-1})^\x
    \qq \text{and} \qq
    v(N_{\Dbb_n^\x}(G)) = \frac{1}{n}\Zbb
\]
as shown in proposition \ref{163}, we know that
\[
    N_{\Dbb_{n}^\x}(G) 
    = \lan C_{\Dbb_n^\x}(G), G, (1+i), \xi_u \ran
    = \lan \Zbb_2[\zeta_{2^m-1}]^\x, T, (1+i), \xi_u \ran.
\]
In the case $u \not\equiv \pm 1 \mod 8$, we claim that there is no $x
\in N_{\Dbb_n^\x}(G)$ such that
\[
    v(x) = \frac{1}{n}
    \qq \text{and} \qq
    x^n \in \lan G, 2u \ran.
\]
Indeed, if such an $x$ existed, there would be a $y \in T$ and a $z \in
\Zbb_2[\zeta_{2^m-1}]^\x$ such that $x^m=(1+i)yz$, in which case
\begin{align*}
    x^{2m}\
    &=\ (1+i)y(1+i)yz^2\\
    &=\ (1+i)^2 (1+i)^{-1} y (1+i) y z^2\\
    &=\ 2i \sigma(y)y z^2
\end{align*}
would belong to $2z^2T$, for $\sigma$ the automorphism of $T$ induced by
the conjugation by $(1+i)^{-1}$. In this case $2z^2 \in \lan G, 2u \ran
\cap \Qbb_2(\zeta_{2^m-1})^\x$, and there would be a $g \in G$ with
\[
    2z^2=g(2u)
    \qq \Lra \qq
    z^2=gu.
\]
Since both $z^2$ and $u$ are in $\Zbb_2(\zeta_{2^m-1})^\x$, so does $g$
and $z^2 = \pm u$. As shown in corollary \ref{115}, this is impossible
since $m$ is odd and $u \not\equiv \pm 1 \mod (\Zbb_2^\x)^2$. It follows
that $G$ cannot be extended as a subgroup of order $n|G|$ in $\Gbb_n(u)$
when $u \not\equiv \pm 1 \mod 8$. In fact, the argument also shows the
corresponding result for $G'$: since $\Qbb_2(Q_8) = \Qbb_2(T_{24})$, we
have 
\[
    \Qbb_2(G') 
    = \Qbb_2(G)
    \qq \text{and} \qq
    N_{\Dbb_n^\x}(G')
    = N_{\Dbb_n^\x}(G),
\]
and there is no $x$ of valuation $\frac{1}{n}$ in $N_{\Dbb_n^\x}(G')$
such that $x^n \in \lan G', 2u \ran \subset \lan G, 2u \ran$.

It remains to verify the statement on uniqueness when $u \equiv \pm 1
\mod 8$. For a finite group $F$ of order $n|G|$ extending $G$, we have
$F \in N_{\Dbb_n^\x}(G)$. Let
\[
    A := F \cap Ker(\rho)
    = \lan 2u, -1, \zeta_{2^m-1} \ran
    \qq \text{and} \qq
    B := F/A
\]
Applying theorem \ref{206} to the case $F \in
\Gcal_\rho(N_{\Dbb_n^\x}(G), A, B)$, it is enough to check that the
cohomology group $H^1(B, Ker(\rho)/A)$ is trivial. As 
\[
    |B| < \infty, \qq
    Ker(\rho)/A 
    = \Qbb_2(\zeta_{2^m-1})^\x/A 
    \iso \Zbb_2^m,
\]
and because the $B$-module structure is trivial, we obtain
\[
    H^1(B, Ker(\rho)/A)
    \iso Hom(B, \Zbb_2^m)
    = 0.
\]
\end{proof}

\section{Example of the case $n=2$} 

In this section, we illustrate the situation for $n=2$ and we find the
finite subgroups of $\Gbb_2(u)$ up to conjugation for $p \in \{2, 3\}$,
that is for those primes $p$ for which $p-1$ divides $n$. 

For a given $p$, we let $\omega \in \Sbb_2$ be a primitive
$(p^2\!-\!1)$-th root of unity and $\sigma$ be the Frobenius automorphism
of $\Qbb_p(\omega)/\Qbb_p$. For each $u \in \Zbb_p^\x$, we let $\xi_u
\in \Dbb_2^\x$ be an element associated to $\sigma$ such that $\xi_u^2 =
pu$. As in example \ref{251} and \ref{263} the multiplicative subgroups
in the division algebra
\[
    \Dbb_2
    \iso \Qbb_p(\omega)\lan \xi_u \ran /
    (\xi_u^2 = pu, \xi_u x = x^{\sigma} \xi_u),
    \qq x \in \Qbb_p(\omega),
\]
which correspond to finite subgroups of $\Gbb_2(u)$ are easily
expressible in terms of $\xi_u$ and $\omega$. This allows to determine
the conjugacy classes of those finite subgroups explicitly.

\subsection*{The case $p=3$}

Let $p=3$. Here $k=1$, $m=1$ and $\alpha \in \{0, 1\}$. \medskip

1) If $\alpha=0$, then $F_0 = \lan \omega \ran \iso C_8$ and
$\tilde{F_0} = F_0 \x \lan 3u \ran$. As shown in example \ref{251}
\[
    \tilde{F_0}=\tilde{F_1}=\tilde{F_2}, \qq 
    \tilde{F_3} = \lan \tilde{F_0}, \xi_u \ran    
    \q \text{with} \q \xi_u^2 = 3u,
\]
and for $\bar{\xi}_u$ the class of $\xi_u$ in $\Gbb_n(u)$ the group
\[
    F_3
    = \lan \omega, \bar{\xi}_u \ran 
    \iso\ SD_{16} \label{321}
\]
is a semidihedral group of order $16$.

\medskip

2) If $\alpha=1$, then $F_0 = \lan \zeta_3 \ran \x \lan \omega^4 \ran
\iso C_6$ where $\omega^4=-1$. The primitive third root of unity
$\zeta_3 \in \Sbb_2$ may be given by
\[
    \zeta_3 = -\frac{1}{2}(1+\omega S)
    \qq \text{for} \q S = \xi_1.
\]
In this case
\[
    \zeta_3^{-1} = \zeta_3^2 = -\frac{1}{2}(1-\omega S)
    \qq \text{and} \qq
    \zeta_3^2-\zeta_3 = \omega S.
\]
According to theorem \ref{250} there is no restriction on $u$, and $x_1$
can be chosen as $x_1 = (\zeta_3^2-\zeta_3)t$ with $t\in \Zbb_3^\x$ such
that
\[
    t^2 =
    \begin{cases}
        u & \text{if } u \equiv 1 \mod 3,\\
        -u & \text{if } u \equiv -1 \mod 3.
    \end{cases}
\]
Indeed,
\[
    x_1^2 
    = (\omega S)^2 t^2
    = \omega^4 S^2 t^2
    = -3 t^2,
\]
so that $v(x_1) = \frac{1}{2}$, and we have
\begin{align*}
    x_1 \zeta_3 x_1^{-1}
    &= - \frac{1}{2} (1+(\omega S)^2(\omega S)^{-1})
    = \zeta_3,\\[1ex]
    x_1 \omega^2 x_1^{-1}
    &= \omega S \omega^2 S^{-1} \omega^{-1}
    = (\omega^2)^3.
\end{align*}
Hence $\tilde{F_1} = \tilde{F_2} = \lan x_1 \ran \x \lan \zeta_3 \ran
\x \lan \omega^4 \ran$, where
\[
    x_1^2 =
    \begin{cases}
        -3u & \text{if } u \equiv 1 \mod 3,\\
        3u & \text{if } u \equiv -1 \mod 3,
    \end{cases}
    \qq \text{and} \qq
    \bar{x}_1^2 =
    \begin{cases}
        -1 & \text{if } u \equiv 1 \mod 3,\\
        1 & \text{if } u \equiv -1 \mod 3,
    \end{cases}
\]
for $\bar{x}_1$ the class of $x_1$ in $\Gbb_2(u)$.  Furthermore
$\omega^2 \in N_{\Dbb_2^\x}(\tilde{F_1})$ given that $\omega^2 \zeta_3
\omega^{-2} = \zeta_3^2$, and
\[
    \omega^2 x_1 \omega^{-2}
    = \omega^2 \zeta_3(1-\zeta_3) \omega^{-2}
    = \zeta_3^2(1-\zeta_3^2)
    = (\zeta_3 + \zeta_3^2)(\zeta_3 - \zeta_3^2)
    = -x_1.
\]
Thus $\tilde{F_3} = \lan \tilde{F_1}, \omega^2 \ran$ and $F_3 = \lan
\bar{x}_1, \zeta_3, \omega^2 \ran$ is a maximal finite subgroup of order
$24$ in $\Gbb_2(u)$.

\medskip

We let
\[
    D_8 
    \iso \lan a, b\ |\ a^4=b^2=1, bab^{-1} = a^{-1} \ran
    \label{320} 
\]
denote the dihedral group of order $8$.

\begin{theorem} \label{261}
Let $n=2$, $p=3$ and $u \in \Zbb_p^\x$. The conjugacy classes of maximal
finite subgroups $F$ of $\Gbb_2(u)$ are represented by
\[
    SD_{16}
    \qq \text{and} \qq
    \begin{cases}
        C_3 \rtimes Q_8 & \text{if } u \equiv 1 \mod 3,\\
        C_3 \rtimes D_8 & \text{if } u \equiv -1 \mod 3.
    \end{cases}
\]
\end{theorem}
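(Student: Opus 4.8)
The plan is to enumerate the maximal abelian finite subgroups $F_0$ of $\Sbb_2$ permitted in the case $p=3$, $n=2$, to apply the existence-and-uniqueness result of theorem \ref{250} to each one, and then to read off the isomorphism type of the resulting maximal extension from the explicit generators computed immediately above. First I would record the global structure: for $p=3$ we have $k=1$ and $m=1$, so $0 \leq \alpha \leq 1$, and since $p$ is odd, proposition \ref{019} forces every finite $p$-subgroup to be cyclic. Hence $F \cap S_n$ is cyclic for every finite $F \subset \Gbb_2(u)$ and the chain machinery of chapter \ref{091} applies with no quaternionic exception to treat. By corollary \ref{212} the only maximal abelian finite subgroups of $\Sbb_2$ are $F_0 = C_8 = \lan \omega \ran$ for $\alpha=0$ and $F_0 = C_6 = \lan \zeta_3, \omega^4 \ran$ for $\alpha=1$; in both cases $\Qbb_3(F_0)$ has degree $2=n$ over $\Qbb_3$, hence is a maximal subfield of $\Dbb_2$, so $\tilde{F_1}=\tilde{F_2}$ and a maximal $\tilde{F}$ over $F_0$ is exactly a maximal extension of $\tilde{F_1}$ by $G = Gal(\Qbb_3(F_0)/\Qbb_3) \iso C_2$. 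One checks directly that $F_0 = \mu(\Qbb_3(F_0))$ in each case, so $\tilde{F_1}$ is unique by corollary \ref{183}.

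Next I would invoke theorem \ref{250}: since $\alpha \leq 1$ throughout, its parts 1) and 2) guarantee that a maximal extension $\tilde{F_3}$ of $\tilde{F_1}$ by $G$ exists and is unique up to conjugation. Transporting through the bijection of proposition \ref{074}, this yields for each $\alpha$ exactly one conjugacy class of maximal finite subgroup $F=F_3$ of $\Gbb_2(u)$, of order $n|F_0| = 16$ and $24$ respectively. The two classes are genuinely distinct and neither sits inside the other, since $16 \nmid 24$ and $24 \nmid 16$.

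Finally I would identify the isomorphism types from the explicit generators already exhibited. For $\alpha=0$, $F_3 = \lan \omega, \bar\xi_u \ran$ with $\omega^8=1$, $\bar\xi_u^2 = \overline{3u} = 1$ and $\bar\xi_u \omega \bar\xi_u^{-1} = \omega^\sigma = \omega^3$, which is precisely the presentation of $SD_{16}$; this is independent of $u$, in agreement with corollary \ref{216}. For $\alpha=1$, $F_3 = \lan \bar x_1, \zeta_3, \omega^2 \ran$ with $\lan \zeta_3 \ran \iso C_3$ normal (it commutes with $\bar x_1$ and is inverted by $\omega^2$), so $F_3 = C_3 \rtimes \lan \bar x_1, \omega^2 \ran$ with a $2$-Sylow of order $8$. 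Using $(\omega^2)^2 = \omega^4 = -1$ and $\omega^2 \bar x_1 \omega^{-2} = -\bar x_1$ together with the value $\bar x_1^2$ established above ($\bar x_1^2 = -1$ if $u \equiv 1 \mod 3$, and $\bar x_1^2 = 1$ if $u \equiv -1 \mod 3$), the first case gives $\bar x_1^2 = (\omega^2)^2$ and $\omega^2 \bar x_1 \omega^{-2} = \bar x_1^{-1}$, i.e. $Q_8$, whereas the second gives $\bar x_1$ of order $2$ with $\bar x_1 \omega^2 \bar x_1^{-1} = (\omega^2)^{-1}$, i.e. $D_8$. This produces the stated $C_3 \rtimes Q_8$ and $C_3 \rtimes D_8$.

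The main obstacle is the completeness assertion — that these two classes exhaust the maximal finite subgroups of $\Gbb_2(u)$ — which rests on the reduction to maximal abelian $F_0$ and on the uniqueness half of theorem \ref{250}; once that is in hand, the identification of the isomorphism types reduces to the short presentation-matching computations above, and the only point requiring care is the faithful bookkeeping of $\bar x_1^2$ and the conjugation relations that distinguish $Q_8$ from $D_8$.
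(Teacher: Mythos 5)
Your identification of the two classes and the $Q_8$/$D_8$ bookkeeping is correct and matches the paper's own computations, and your use of theorem \ref{250} for existence and uniqueness over $F_0 = C_8$ and $F_0 = C_6$ is exactly what the paper does. The gap is in the completeness step, which you flag as ``the reduction to maximal abelian $F_0$'' but never actually justify --- and it cannot be justified in the form you state it. The reduction theorem \ref{332} does not reduce to $F_0$ maximal abelian in $\Sbb_2$; it only enlarges $F_0$ to $\mu(\Qbb_p(F_0))$, the roots of unity of the field that $F_0$ itself generates, and only under the hypothesis $\mu(\Qbb_p(H_0)) = \mu(\Qbb_p(\tilde{H_2}))$. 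Consequently your enumeration misses an entire case: a finite subgroup $F$ with $F \cap \Sbb_2 = \{\pm 1\}$, so that $\Qbb_3(F_0) = \Qbb_3$, whose lift $\tilde{F} = \tilde{F_2}$ sits inside the units of a \emph{ramified} quadratic extension of $\Qbb_3$. Since $\Qbb_3^\x/(\Qbb_3^\x)^2$ is represented by $\{\pm 1, \pm 3\}$, there are three quadratic extensions $L_v = \Qbb_3[X]/(X^2 - v)$, $v \in \{-1, \pm 3\}$; the cases $v = -1$ and $v = -3$ fold into your $\alpha = 0$ and $\alpha = 1$ analyses via theorem \ref{332}, but $L_3 = \Qbb_3(\sqrt{3})$ satisfies $\mu(L_3) = \{\pm 1\} = \mu(\Qbb_3)$, so theorem \ref{332} returns $F_0 = \{\pm 1\}$ unchanged and the case escapes your dichotomy entirely.

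The paper spends the last third of its proof on precisely this case: taking $x_2 = Xt$ with $t^2 = \pm u$, one gets groups $F_2 \iso C_2 \x C_2$ (if $u \equiv 1 \mod 3$) or $C_4$ (if $u \equiv -1 \mod 3$) in $\Gbb_2(u)$, with $F_3 = F_2$ because $Aut(\{\pm 1\})$ is trivial (proposition \ref{329}). A priori these could have been new maximal classes, which would falsify the theorem as stated; one must check by hand that they are not, which the paper does by exhibiting them inside the $SD_{16}$ class (take $x_2 = \xi_u$ or $\omega\xi_u$ in $\lan \omega, \xi_u \ran$). Without this verification your completeness assertion is unproven, so the proposal as written does not establish that $SD_{16}$ and $C_3 \rtimes Q_8$ (resp. $C_3 \rtimes D_8$) exhaust the maximal conjugacy classes.
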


\begin{proof}
We first consider the cases where $F_0$ is such that $[\Qbb_3(F_0) :
\Qbb_3]=2$; by theorem \ref{332} we may assume that $F_0$ is maximal.
The first class originates from the case $\alpha=0$; its existence and
uniqueness follow from example \ref{251} and theorem \ref{250}. 

Suppose then that $\alpha=1$. If $u \equiv 1 \mod 3$, the $2$-Sylow
subgroup $\lan \omega^2, \bar{x}_1 \ran$ of $F_3$ is isomorphic to
$Q_8$. As the latter does not contain a subgroup isomorphic to $C_2 \x
C_2$, the short exact sequence
\[
    1 \raa F_2 = \lan \zeta_3, \bar{x}_1 \ran
    \raa F_3 \raa C_2 \raa 1
\]
does not split. However, $\lan \zeta_3 \ran$ being normal in $F_3$, we
obtain $F_3 \iso C_3 \rtimes Q_8$. On the other hand if $u \equiv -1
\mod 3$, the group $F_3$ contains a subgroup isomorphic to $C_2 \x C_2$.
In this case we have a split extension
\[
    1 \raa F_2 = \lan \zeta_3, -1, \bar{x}_1 \ran
    \raa F_3 \raa C_2 \raa 1
\]
with a $2$-Sylow subgroup isomorphic to $D_8 \iso \lan \omega^2,
\bar{x}_1\ |\ (\omega^2)^4=\bar{x}_1^2=1, \bar{x}_1 \omega^2
\bar{x}_1^{-1} = \omega^{-2} \ran$, and $F_3 \iso C_3 \rtimes D_8$.
Uniqueness of the class of $F_3$ in $\Gbb_2(u)$ follows from theorem
\ref{250}.

It remains to consider the case where $F_0 = \{\pm 1\} \iso C_2$, that
is, $F_0$ is maximal such that $\Qbb_3(F_0) = \Qbb_3$. Then obviously
$\tilde{F_0} = \tilde{F_1}$. Because $\Qbb_3^\x/(\Qbb_3^\x)^2 \iso
\Zbb/2\Zbb \x \{\pm 1\}$ is represented by the elements of the set
$\{\pm 1, \pm 3\}$, we know that there are three possible quadratic
extensions of $\Qbb_3$ given by
\[
    L_v := \Qbb_3/(X^2-v)
    \qq \text{for } v \in \{-1, \pm 3\};
\]
each of them is unique up to conjugation. Among these $L_{-1} =
\Qbb_3(\zeta_8)$ and $L_{-3} = \Qbb_3(\zeta_3)$ have already been
considered. 

Hence suppose $v=3$ and let $x_2 := Xt$ with $t \in \Zbb_3^\x$ such that
\[
    t^2
    =
    \begin{cases}
        u & \text{if } u \equiv 1 \mod 3,\\
        -u & \text{if } u \equiv -1 \mod 3.
    \end{cases}
\]
Then
\[
    x_2^2
    = 3t^2
    =
    \begin{cases}
        3u \equiv 1 \mod \lan 3u \ran & \text{if } u \equiv 1 \mod 3,\\
        -3u \equiv -1 \mod \lan 3u \ran & \text{if } u \equiv -1 \mod 3,
    \end{cases}
\]
and we have an extension
\[
    1 \raa \tilde{F_1} = \lan 2u, \pm 1 \ran
    \raa \tilde{F_2} = \lan x_2, \pm 1 \ran
    \raa C_2 \raa 1,
\]
where
\[
    F_2
    \iso
    \begin{cases}
        C_2 \x C_2 & \text{if } u \equiv 1 \mod 3,\\
        C_4 & \text{if } u \equiv -1 \mod 3.
    \end{cases}
\]
By corollary \ref{188}, this group is unique up to conjugation. Because
the group $Aut(F_0)$ is trivial, proposition \ref{329} implies $F_3 =
F_2$.  This class however is neither new nor maximal. Indeed, for the
group $\lan \omega, \xi_u \ran \subset \Dbb_2^\x$ whose corresponding
group $\lan \omega, \bar{\xi}_u \ran$ in $\Gbb_n(u)$ represents the
class $SD_{16}$ found above, one can take
\[
    x_2 =
    \begin{cases}
        \xi_u & \text{if } u \equiv 1 \mod 3,\\
        \omega\xi_u & \text{if } u \equiv -1 \mod 3,
    \end{cases}
\]
in order to see that $F_2 \subset SD_{16}$.
\end{proof}

\subsection*{The case $p=2$}

Let $p=2$. Here $k=2$, $m=1$ and $\alpha \in \{1, 2\}$. \medskip

1) If $\alpha=1$, then $F_0 = \lan -\omega \ran \iso C_6$ and
$\tilde{F_0} = F_0 \x \lan 2u \ran$. As shown in example \ref{263}
\[
    \tilde{F_0}=\tilde{F_1}=\tilde{F_2}, \qq
    \tilde{F_3^{\pm}} 
    = \lan \tilde{F_0}, \xi_{\pm u} \ran
    \q \text{with } \xi_{\pm u}^2 = \pm 2u,
\]
and we have
\[
    F_3^+ 
    = \lan -\omega, \bar{\xi}_u\ \ran
    \iso C_6 \rtimes C_2,
    \qq
    F_3^- 
    = \lan -\omega, \bar{\xi}_{-u}\ \ran
    \iso C_3 \rtimes C_4,
\]
for $\bar{\xi}_{\pm u}$ the class of $\xi_{\pm u}$ in $\Gbb_n(u)$.

\medskip

2) If $\alpha=2$, then $F_0 = C_4 \subset T_{24}$ with $C_4 = \lan i
\ran$ and $T_{24} = \lan i, j \ran \rtimes \lan \zeta_3 \ran$.
According to theorem \ref{259} and \ref{260}, a finite maximal
extension of $F_0$ in $\Gbb_2(u)$ is an extension of $T_{24}$ if and
only if $u \equiv \pm 1 \mod 8$. Let
\[
    x_1 =
    \begin{cases}
        (1+i)t \text{ with } t^2=u & \text{if } u \equiv 1 \mod 8,\\
        (1+i)t \text{ with } t^2=-u & \text{if } u \equiv -1 \mod 8,\\
        2u & \text{if } u \equiv \pm 3 \mod 8.
    \end{cases}
\]
Then we know that $\tilde{F_1} = \tilde{F_2} = \lan x_1 \ran \x F_0$. In
case $u \equiv \pm 1 \mod 8$, we have $\tilde{F_3} = \tilde{F_2}$ and we
find $x_1^2 = 2it^2$, $x_1^4 = -4u^2$ and $x_1^8 = (2u)^4$, so that the
group $F_3$ is cyclic of order $8$; it is unique up to conjugation by
corollary \ref{183}.

\medskip

We let
\[
    O_{48} 
    \iso \lan a, b, c\ |\ a^2 = b^3 = c^4 = abc \ran
    \label{322}
\]
denote the binary octahedral group of order $48$.

\begin{theorem} \label{264}
Let $n=2$, $p=2$ and $u \in \Zbb_2^\x$. The conjugacy classes of maximal
finite subgroups $F$ of $\Gbb_2(u)$ are represented by
\[
    \begin{cases}
        C_{6} \rtimes C_2,\ O_{48} 
          & \text{if } u \equiv 1 \mod 8,\\
        C_{3} \rtimes C_4,\ T_{24} \rtimes C_2 
          & \text{if } u \equiv -1 \mod 8,\\
        C_{3} \rtimes C_4,\ C_{6} \rtimes C_2,\ 
        D_8  \text{ and } T_{24}
          & \text{if } u \equiv 3 \mod 8,\\
        C_{3} \rtimes C_4,\ C_{6} \rtimes C_2,\ 
        Q_8 \text{ and } T_{24}
          & \text{if } u \equiv -3 \mod 8.
    \end{cases}
\]
\end{theorem}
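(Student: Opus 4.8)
The plan is to classify the maximal finite subgroups of $\Gbb_2(u)$ by running through all possible $F_0$ according to the value of $\alpha \in \{1, 2\}$ (recall $k=2$, $m=1$ for $p=2$, $n=2$), together with the case of a quaternionic $2$-Sylow subgroup, and then discarding the non-maximal classes. Since $n=2$ is small, every relevant field and element can be written down explicitly in terms of $\omega$ (a primitive $3$rd root of unity, as $p^2-1=3$) and $\xi_u$, as set up in the running examples \ref{251}, \ref{263} and in the $\alpha=1,2$ discussions immediately preceding the statement. First I would organize the argument by the three sources of maximal classes: the case $\alpha=1$ (where $F_0 \iso C_6$), the case $\alpha=2$ (where $F_0 = C_4 \subset T_{24}$), and the quaternionic case governed by theorem \ref{260}, which applies precisely because $n = 2 = 2m$ with $m=1$ odd.

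For the $\alpha=1$ case I would invoke the $\alpha=1$ computation preceding the statement: $\tilde{F_0} = \tilde{F_1} = \tilde{F_2}$, and there are two extensions $F_3^\pm$ corresponding to $\xi_{\pm u}$ with $\xi_{\pm u}^2 = \pm 2u$, giving $C_6 \rtimes C_2$ and $C_3 \rtimes C_4$; their existence and count (two classes, as $n=2$ is even) follow from theorem \ref{259}.2. For the $\alpha=2$ case, theorem \ref{259}.3 and theorem \ref{260} together say that $F_0 = C_4$ extends to a group containing $T_{24}$ exactly when $u \equiv \pm 1 \mod 8$ (uniquely up to conjugation), yielding the octahedral-type groups. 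The main bookkeeping step is to identify the resulting extension explicitly: when $u \equiv 1 \mod 8$ one gets $O_{48}$ and when $u \equiv -1 \mod 8$ one gets $T_{24} \rtimes C_2$, which I would verify from the explicit generator $(1+i)tj\xi_u$ constructed in the proof of theorem \ref{260} by computing its square and order (the sign $-(2u)^{m+2}$ versus $(2u)^{m+2}$ distinguishes the two isomorphism types, exactly as in the binary-octahedral presentation $O_{48} = \lan a,b,c\mid a^2=b^3=c^4=abc\ran$). When $u \not\equiv \pm 1 \mod 8$, theorem \ref{260} denies any extension of order $n|G|$ for a group with quaternionic $2$-Sylow, so the only surviving maximal classes from the $2$-Sylow side are $T_{24}$ itself (maximal in $\Sbb_n$ by theorem \ref{032}) together with the purely abelian-based $D_8$ or $Q_8$, which arise as extensions of the $\alpha=2$ abelian $F_0=C_4$ through the $F_3$ with a $2$-Sylow of type $D_8$ (if $u\equiv 3$) or $Q_8$ (if $u\equiv -3$), mirroring the $p=3$ dichotomy of theorem \ref{261}.

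The hard part will be the maximality analysis: several of the groups produced are subgroups of larger ones, and I must show exactly which classes are maximal for each congruence class of $u \mod 8$. Concretely, I would argue as in the last paragraph of the proof of theorem \ref{261} that certain $F_2$'s (e.g. those coming from $F_0 = \{\pm 1\}$ via quadratic extensions $L_v$) embed into the already-listed larger groups and thus contribute no new maximal class; and I must check that $T_{24}$ is genuinely maximal when $u \not\equiv \pm 1 \mod 8$ (since it fails to extend), whereas it is absorbed into $O_{48}$ or $T_{24}\rtimes C_2$ when $u \equiv \pm 1 \mod 8$. I would also confirm that $D_8$ and $Q_8$ are maximal in the $u \equiv \pm 3 \mod 8$ cases by verifying, via $v(N_{\Dbb_2^\x}(\,\cdot\,))$ and the valuation/order obstruction from theorem \ref{260}, that they admit no further finite extension inside $\Gbb_2(u)$. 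Finally, uniqueness up to conjugation of each class follows from theorem \ref{168}, corollary \ref{183}/\ref{188}, and theorem \ref{259}, so the classification reduces to collecting the four cases into the displayed list, which I expect to match line by line once the isomorphism types $O_{48}$ and $T_{24}\rtimes C_2$ are pinned down by the explicit squares computed above.
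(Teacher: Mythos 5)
Your proposal is correct and follows essentially the same route as the paper's proof: the same case division (the two $\alpha=1$ classes $F_3^{\pm}$ via example \ref{263} and theorem \ref{259}, the $\alpha=2$ analysis split by $u \equiv \pm 1$ versus $u \equiv \pm 3 \bmod 8$ using theorems \ref{259}.3 and \ref{260}, and absorption of the residual $F_0 = \{\pm 1\}$ quadratic-extension cases into the already-listed classes, exactly as in the last paragraph of theorem \ref{261}). Your plan to distinguish $O_{48}$ from $T_{24} \rtimes C_2$ by squaring the generator $(1+i)tj\xi_u$ from the proof of theorem \ref{260} reduces, for $n=2$, $m=1$, to the paper's own computation $(jx_1)^2 \equiv \mp 1 \bmod \lan 2u \ran$, and your valuation-based maximality checks for $D_8$, $Q_8$ and $T_{24}$ match the paper's argument (including, implicitly, that $v(x_3)=\frac{1}{2}$ separates the maximal $Q_8$ class from $T_{24} \cap S_2$).
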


\begin{proof}
We first consider the cases where $F_0$ is such that $[\Qbb_2(F_0) :
\Qbb_2]=2$; by theorem \ref{332} we may assume that $F_0$ is maximal.
The classes $C_{6} \rtimes C_2$ and $C_{3} \rtimes C_4$ originate from
the case $\alpha = 1$. They are respectively represented by
\[
    F_3^+ 
    = \lan -\omega, \bar{\xi}_u\ \ran
    \qq \text{and} \qq
    F_3^- 
    = \lan -\omega, \bar{\xi}_{-u}\ \ran.
\]
Their existence and uniqueness follow from example \ref{263} and theorem
\ref{259}. We will now analyse the case where $F_0 = \lan i \ran \iso
C_4$.

Suppose that $u \equiv \pm 1 \mod 8$. Then
\[
    x_1^2 = (1+i)^2t^2 = 2it^2
    \equiv
    \begin{cases}
        i \mod \lan 2u \ran & \text{if } u \equiv 1 \mod 8,\\
        -i \mod \lan 2u \ran & \text{if } u \equiv -1 \mod 8,
    \end{cases}
    \qq
    x_1^4 \equiv -1 \mod \lan 2u \ran,
\]
and
\[
    x_1 i x_1^{-1} = i, \qq
    x_1 j x_1^{-1}
    = (1+i)j\frac{(1-i)}{2}
    = \frac{(1+i)^2}{2}j
    = ij
    = k.
\]
Therefore, we have a chain of subgroups
\[
    \tilde{F_0} = \lan i, 2u \ran
    \subsetneq \tilde{F_1} = \tilde{F_2} = \lan i, x_1 \ran
    \subsetneq \tilde{F_3} = \lan i, j, x_1 \ran,
\]
where $\tilde{F_i}$ is normal in $\tilde{F_{i+1}}$ for $1 \leq i \leq
3$, and where $|\tilde{F_1}/\tilde{F_0}|=|\tilde{F_3}/\tilde{F_2}|=2$.
Because $x_1^2 \equiv \pm i \mod \lan 2u \ran$ and $x_1^4 \equiv -1 \mod
\lan 2u \ran$, we know that for $\bar{x}_1$ the class of $x_1$ in
$\Gbb_n(u)$ we have $F_1 \iso C_8$ and there is an extension
\[
    1 \raa F_1 = \lan \bar{x}_1 \ran
    \raa F_3 = \lan \bar{x}_1, j \ran
    \raa C_2 \raa 1,
\]
where $j\bar{x}_1 \in F_3$ maps non-trivially to the quotient group. As
\begin{align*}
    (j x_1)^2
    &= j (x_1 j x_1^{-1}) x_1^2
    = j(ij)(2it^2)
    = -2t^2 \\[1ex]
    &\equiv
    \begin{cases}
        -1 \mod \lan 2u \ran & \text{if } u \equiv 1 \mod 8,\\
        1 \mod \lan 2u \ran & \text{if } u \equiv -1 \mod 8,
    \end{cases}
\end{align*}
and since
\begin{align*}
    (j x_1) x_1 (j x_1)^{-1}
    &= j x_1 j^{-1}
    = -(j x_1)^2 x_1^{-1}
    = 2t^2 x_1^{-1} \\[1ex]
    &\equiv
    \begin{cases}
        x_1^{-1} \mod \lan 2u \ran & \text{if } u \equiv 1 \mod 8,\\
        -x_1^{-1} \mod \lan 2u \ran & \text{if } u \equiv -1 \mod 8,
    \end{cases}
\end{align*}
we find
\[
    F_3 \iso
    \begin{cases}
        Q_{16} & \text{if } u \equiv 1 \mod 8,\\
        C_8 \rtimes C_2 = SD_{16} & \text{if } u \equiv -1 \mod 8.
    \end{cases}
\]
Clearly, $F_3$ is a $2$-Sylow subgroup of $F := \lan F_3, \omega \ran$
and $T_{24} = \lan i, j, \omega \ran \subset F$. As seen above, $x_1$
and $j x_1$ both belong to $N_{\Dbb_n^\x}(\lan i, j \ran) =
N_{\Dbb_n^\x}(\lan i, j, \omega \ran)$, and there is an extension
\[
    1 \raa T_{24} = \lan i, j, \omega \ran
    \raa F \raa C_2 \raa 1,
\]
where $x_1, j x_1 \in F$ are mapped non-trivially to the quotient group.

Assume for the moment that $u \equiv 1 \mod 8$. We let $a := \bar{x}_1$,
so that $a^2 = i$, and we consider the element of order $6$
\[
    b := \frac{1}{2}(1+i+j+k)\ \in T_{24} \subset F.
\]
Then we can take $\omega = -b$ and we easily check that
\[
    b^{-1}
    = -\omega^2
    = \frac{1}{2}(1-i-j-k), \qq
    b^{-1}a^2b
    = j.
\]
In particular $F = \lan a, b \ran$ is generated by the elements $a$ and
$b$ of respective order $8$ and $6$. These two elements interact via $ba
= -a^{-1}b^{-1}$ since
\[
    (bx_1)^2
    = \frac{1}{4}(2i+2j)^2 u
    = (i+j)^2 u
    = -2u
    \equiv -1 \q \mod \lan 2u \ran.
\]
Letting $c:=ba$, it follows that
\[
    F
    = \lan a, b\ |\ (ba)^2 = b^3 = a^4 = -1 \ran
    = \lan a, b, c\ |\ c^2 = b^3 = a^4 = cba \ran
\]
is isomorphic to the binary octahedral group $O_{48}$. Uniqueness of $F$
up to conjugation is given by theorem \ref{260}; its class is clearly
maximal. In fact since 
\begin{align*}
    (jx_1) \omega (jx_1)^{-1}\
    &=\ -\frac{1}{2} jx_1(1+i+j+k)x_1^{-1}j^{-1}\\
    &=\ -\frac{1}{2} j(1+i+k-j)j^{-1}\\
    &=\ -\frac{1}{2} (1-i-k-j)\\
    &=\ \omega^2,
\end{align*}
we may take $\xi_{-u} = jx_1$ in order to find that $F$ contains $F_3^-
= \lan b, j \bar{x}_1 \ran$. On the other hand, $F$ does not have a
subgroup isomorphic to $F_3^+$ since its $2$-Sylow subgroup $F_3 \iso
Q_{16}$ has no subgroup isomorphic to $C_2 \x C_2$. The class of $F_3^+$
is therefore maximal when $[\Qbb_2(F_0):\Qbb_2]=2$ and $u \equiv 1 \mod
8$.

Now assume $u \equiv -1 \mod 8$. Then $(j x_1)^2 \equiv 1 \mod \lan 2u
\ran$, in which case
\[
    F 
    = \lan \bar{x}_1, j, \omega \ran
    \iso T_{24} \rtimes C_2.
\]
The above calculations show that we may take $\xi_{u} = j x_1$ in order
to find that $F_3^+ = \lan b, j\bar{x}_1 \ran$ is a subgroup of $F$. On
the other hand one easily verifies that the group $\lan x_1, i, j \ran$
does not have an element of valuation $\frac{1}{2}$ which has order $4$
modulo $\lan 2u \ran$. This means that the class of $F$ does not contain
that of $F_3^-$. The latter is therefore maximal when
$[\Qbb_2(F_0):\Qbb_2]=2$ and $u \equiv -1 \mod 8$.

We now suppose $u \equiv \pm 3 \mod 8$. By theorem \ref{032}, a maximal
finite subgroup $F$ of $\Gbb_2(u)$ containing $F_0 = \lan i \ran \iso
C_4$ satisfies $C_4 \subset F \cap \Sbb_2 \subset T_{24}$. If $F \subset
\Sbb_2$, then $F \iso T_{24}$ contains the subgroup $F \cap S_n \iso
Q_8$ as in lemma \ref{328}.b. Otherwise if $F \not\subset \Sbb_2$, the
$2$-Sylow subgroup of $F \cap \Sbb_2$ must be $C_4$ by theorem
\ref{260}, and we have a chain of subgroups
\[
    \tilde{F_0}
    = \tilde{F_1}
    = \tilde{F_2}
    \subset \tilde{F_3}
    = \tilde{F},
\]
where $\tilde{F_3}/\tilde{F_0}$ is a cyclic group of order at most $2$.
We are thus looking for an element $x_3 \in \Dbb_2^\x$ such that $x_3^2
\in \tilde{F_0} = F_0 \x \lan 2u \ran$. By the Skolem-Noether theorem,
there is a short exact sequence
\[
        1 \raa
        C_{\Dbb_2^\x}(F_0) = \Qbb_2(i)^\x \raa
        N_{\Dbb_2^\x}(F_0) = \lan \Qbb_2(i)^\x, j \ran \raa
        C_2 \raa 1,
\]
where $j$ is mapped non-trivially to the quotient group. Hence $x_3$ is
of the form $x_3 = j^\epsilon z$ for $\epsilon \in \{ \pm 1 \}$ and $z
\in \Qbb_2(i)^\x$. We have
\[
    x_3^2
    = j^\epsilon z j^\epsilon z
    = -(j^\epsilon z j^{-\epsilon})z
    = - N(z)
\]
for $N: \Qbb_2(i)^\x \ra \Qbb_2^\x$ the norm of the extension
$\Qbb_2(i)/\Qbb_2$. In the proof of corollary \ref{242} we have shown
that $N(\Qbb_2(i)^\x) = \lan 2 \ran \x U_2(\Zbb_2(i)^\x)$. Since 
\[
    N(2+i)
    = (2+i)(2-i)
    = 5
    \equiv -3 \q \mod 8,
\]
we have $-6 \in N(\Qbb_2(i)^\x)$. We may therefore choose $z$ such that
\[
    x_3^2 =
    \begin{cases}
        2u & \text{if } u \equiv 3 \mod 8,\\
        -2u & \text{if } u \equiv -3 \mod 8.
    \end{cases}
\]
In this case $\tilde{F_3} = \lan i, x_3 \ran$, and for $\bar{x}_3$ the
class of $x_3$ in $\Gbb_2(u)$ we get
\[
    F_3 =
    \begin{cases}
        \lan i, \bar{x}_3\ |\ i^4=1,\ 
        \bar{x}_3 i \bar{x}_3^{-1} = i^{-1},\ 
        \bar{x}_3^2 = 1 \ran
        \iso D_8
        & \text{if } u \equiv 3 \mod 8,\\[1ex]
        \lan i, \bar{x}_3\ |\ i^4=1,\ 
        \bar{x}_3 i \bar{x}_3^{-1} = i^{-1},\ 
        \bar{x}_3^2 = -1 \ran
        \iso Q_8
        & \text{if } u \equiv -3 \mod 8,
    \end{cases}
\]
as a maximal finite subgroup of $\Gbb_2(u)$.  Since $v(x_3) =
\frac{1}{2}$, the conjugacy classes of $F_3$ and $T_{24} \cap S_2 \iso
Q_8$ must be distinct (although they are isomorphic if $u \equiv -3 \mod
8$). By theorem \ref{259}, $F_3$ and $T_{24}$ represent the only two
maximal classes containing $\lan i \ran$ when $u \equiv \pm 3 \mod 8$.
The maximality of $F_3^+$ and $F_3^-$ in this case is obvious.

It remains to consider the cases where $F_0 = \{\pm 1\} \iso C_2$, that
is, $F_0$ is maximal such that $\Qbb_2(F_0) = \Qbb_2$. Then obviously
$\tilde{F_0} = \tilde{F_1} = \lan 2u, \pm 1 \ran \iso \Zbb \x C_2$.
Because $\Qbb_2^\x/(\Qbb_2^\x)^2 \iso \Zbb/2\Zbb \x
\Zbb_2^\x/U_3(\Zbb_2^\x)$ is represented by the elements of the set
$\{\pm 1, \pm 2, \pm 3, \pm 6 \}$, we know that there are seven possible
quadratic extensions of $\Qbb_2$ given by
\[
    L_v := \Qbb_2/(X^2-v)
    \qq \text{for } v \in \{-1, \pm 2, \pm 3, \pm 6\};
\]
each of them is unique up to conjugation. Among these $L_{-1} =
\Qbb_2(\zeta_4)$ and $L_{-3} = \Qbb_2(\zeta_3)$ have already been
considered. Furthermore if $v=3$, and if $a, b \in \Qbb_2$, the element
\[
    (a+bX)^2 = a^2+3b^2+2abX
\]
cannot belong to $\tilde{F_1} = \lan 2u, \pm 1 \ran$ and the later can
never be extended non-trivially to some $\tilde{F_2}$. 

Let us then consider the cases where $v \in \{\pm 2, \pm 6\}$. If $u
\equiv \pm \frac{v}{2} \mod 8$, we let $x_2 := Xt$ with $t \in
\Zbb_2^\x$ such that
\[
    t^2
    =
    \begin{cases}
        \frac{2u}{v} & \text{if } u \equiv \frac{v}{2} \mod 8,\\
        -\frac{2u}{v} & \text{if } u \equiv -\frac{v}{2} \mod 8.
    \end{cases}
\]
Then
\[
    x_2^2
    = vt^2
    =
    \begin{cases}
        2u \equiv 1 \mod \lan 2u \ran 
        & \text{if } u \equiv \frac{v}{2} \mod 8,\\
        -2u \equiv -1 \mod \lan 2u \ran
        & \text{if } u \equiv -\frac{v}{2} \mod 8,
    \end{cases}
\]
and for $\bar{x}_2$ the class of $x_2$ in $\Gbb_2(u)$ we have
\[
    F_2
    = \lan \bar{x}_2, \pm 1 \ran
    \iso
    \begin{cases}
        C_2 \x C_2 
        & \text{if } u \equiv \frac{v}{2} \mod 8,\\
        C_4 
        & \text{if } u \equiv -\frac{v}{2} \mod 8.
    \end{cases}
\]
These classes however are not new: in the case $[\Qbb_2(F_0):\Qbb_2]=2$
and $\alpha = 1$ treated above, considering the situation where
\[
    x_2
    =
    \begin{cases}
        \xi_u 
        & \text{if } u \equiv 1 \text{ or } -3 \mod 8,\\
        \xi_{-u}
        & \text{if } u \equiv -1 \text{ or } 3 \mod 8,
    \end{cases}
\]
we see that $C_2 \x C_2 \subset F_3^+$ and $C_4 \subset F_3^-$. We also
know from corollary \ref{188} that the group $F_2$ is unique up to
conjugation. On the other hand if $u \not\equiv \pm \frac{v}{2} \mod 8$,
that is if $v \not\equiv \pm 2u \mod 8$, there is no $x \in L_v$ such
that $x^2 \in \lan 2u \ran \mod \{\pm 1\}$ and we have $\tilde{F_2} =
\tilde{F_1} = \tilde{F_0}$. 

Finally, because $Aut(F_0)$ is trivial independently of the value of
$u$, it follows from proposition \ref{329} that $F_3 = F_2$.  
\end{proof}

\begin{remark} \label{265}
For $\alpha=2$, we have shown
\[
    F_3 \iso
    \begin{cases}
        Q_{16} & \text{if } u \equiv 1 \mod 8,\\
        SD_{16} & \text{if } u \equiv -1 \mod 8,\\
        D_8 & \text{if } u \equiv 3 \mod 8,\\
        Q_8 & \text{if } u \equiv -3 \mod 8.
    \end{cases}
\]
When $u \equiv \pm 3$, the second conjugacy class obtained in theorem
\ref{259}.3 is not maximal as a finite subgroup of $\Gbb_2(u)$. It is
contained in $T_{24}$ and is represented by $T_{24} \cap S_2 = Q_8$. It
comes from the existence of an element $j$ of valuation zero in
$\Dbb_2^\x$ which induces the action of $Gal(\Qbb_2(i)/\Qbb_2)$ on $F_0
= \lan i \ran$ given by $i \mto -i$.
\end{remark}


\appendix\renewcommand{\chaptername}{Appendix}

\chapter{Simple algebras} 
\label{333}

We provide here the essential background and some classic results on
finite dimensional simple algebras. An overview of the subject can be
found in \cite{platonov}.

\vspace{2ex}

\begin{definition}
Let $A$ be an associative ring with unit.
\begin{itemize}
    \item $A$ is called \emph{simple} if the only two sided ideals of
    $A$ are $A$ itself and the zero ideal.

    \item $A$ is a \emph{skew field} if for every non-zero element $a$
    of $A$ there is an element $a^{-1} \in A$ satisfying
    \[
        aa^{-1} = 1 = a^{-1}a.
    \]
\end{itemize}
\end{definition}

Clearly, a commutative skew field is a field, and the set of non-zero
elements $A^\x$ of a skew field $A$ forms a group under multiplication.
On the other hand, the center $Z(A)$ of a simple ring $A$ is a field, as
for any non-zero element $a$ in $Z(A)$ the two sided ideal $aA$ is $A$
by simplicity, and its inverse $a^{-1}$ exists in $Z(A)$. In particular,
a simple ring $A$ is an algebra over any subfield $K$ of $Z(A)$.

\begin{definition}
A finite dimensional simple algebra $A$ over a field $K$ which is also a
skew field is a \emph{division algebra} over $K$. When $K = Z(A)$, the
division algebra $A$ is said to be \emph{central} and is also referred
to as an \emph{Azumaya algebra}.
\end{definition}

\begin{example} \label{001}
The algebra $M_n(K)$ of all $n \x n$ matrices over a field $K$ is a
simple algebra. To see this consider the canonical basis $\{e_{ij}\}$ of
$M_n(K)$, where $e_{ij}$ denotes the matrix having zero coefficients
everywhere except $1$ for the entry on the $i$-th row and $j$-th column.
We need to show that given a non-zero two-sided ideal $I$ of $M_n(K)$,
every $e_{ij}$ belongs to $I$.  Since
\[
    e_{ij}e_{kl} =
    \begin{cases}
    e_{il} & \text{if } j = k, \\
    0 & \text{if } j \neq k,
    \end{cases}
\]
we only have to show that $I$ contains at least on of the $e_{ij}$.  Let
\[
    a = \sum_{i,j=1}^n a_{ij}e_{ij}\ \in I
\]
be an element of $I$ with $a_{ij} \in K$ and $a_{kl} \neq 0$ for some $1
\leq k,l \leq n$. Then
\[
    a_{kl}e_{kl} = e_{kk}ae_{ll} \in I
\]
and $e_{kl} \in I$ as desired. It is clear however that when $n \geq 2$,
$M_n(K)$ is not a division algebra.
\end{example}

\begin{example} \label{002}
When $K$ is an algebraically closed field, there is no $K$-division
algebra other than $K$ itself, for if $A$ is such an algebra we must
have $K(a) = K$ for every element $a$ in $A$. 
\end{example}

\begin{proposition} \label{006}
If $A$ is a division algebra over a field $K$, then any $K$-subalgebra
$B$ of $A$ is itself a division algebra.
\end{proposition}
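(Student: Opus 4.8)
The plan is to show that $B$ is a division algebra by verifying that every non-zero element of $B$ has a multiplicative inverse lying in $B$. Since $B$ is given to be a finite dimensional $K$-subalgebra of $A$, it is automatically an associative algebra with unit; the only thing requiring proof is the existence of inverses within $B$. The key idea is that left multiplication by a fixed non-zero element is injective, and injective linear endomorphisms of a finite dimensional vector space are surjective.

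First I would fix a non-zero element $b \in B$ and consider the $K$-linear map $L_b : B \to B$ given by $L_b(x) = bx$. This is well defined because $B$ is closed under multiplication, and it is $K$-linear because multiplication in $A$ is $K$-bilinear. Next I would argue that $L_b$ is injective: if $bx = 0$ for some $x \in B \subset A$, then since $A$ is a division algebra and $b \neq 0$, the element $b$ is invertible in $A$, so $x = b^{-1}(bx) = 0$. Thus $\ker(L_b) = 0$. Since $B$ is finite dimensional over $K$, the rank-nullity theorem shows $L_b$ is surjective, so there exists $x \in B$ with $bx = 1$. By the same argument applied to right multiplication $R_b(x) = xb$, there exists $y \in B$ with $yb = 1$. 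A standard computation then gives $y = y(bx) = (yb)x = x$, so this common element is a genuine two-sided inverse of $b$ lying in $B$. Hence $B$ is a skew field, and being a finite dimensional simple algebra over $K$ (any subalgebra of a division algebra inherits the absence of zero divisors, which forces simplicity), it is a division algebra over $K$.

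The main subtlety to be careful about is the finite dimensionality hypothesis, which is essential for the injective-implies-surjective step; this is guaranteed since $A$ is finite dimensional over $K$ and $B$ is a $K$-subspace. A minor point is to confirm that the unit of $A$ lies in $B$, which holds by the convention that a $K$-subalgebra contains $1$. I do not anticipate a genuine obstacle here: the argument is entirely elementary linear algebra once the invertibility of $b$ in the ambient algebra $A$ is exploited to establish injectivity of $L_b$ and $R_b$.
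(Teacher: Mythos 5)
Your proof is correct, but it follows a genuinely different route from the paper's. The paper makes $x^{-1}$ explicit: since $B$ is finite dimensional over $K$, the powers $1, x, x^2, \ldots$ of a non-zero $x \in B$ satisfy a monic polynomial relation with non-zero constant term, and solving that relation exhibits $x^{-1}$ as a polynomial expression in $x$ lying in $B$. Your argument instead runs through the linear map $L_b(x) = bx$: injective because $b$ is invertible in the ambient division algebra $A$ (so $A$ has no zero divisors), hence surjective by rank--nullity, giving a right inverse in $B$, with the symmetric argument and the standard computation $y = y(bx) = (yb)x = x$ producing a two-sided inverse. Both arguments hinge on finite dimensionality, but they buy slightly different things: the paper's version shows the inverse lies even in the commutative subalgebra $K(x)$ generated by $x$, which is in the spirit of how the paper later works with subfields $\Qbb_p(F_0)$ of $\Dbb_n$; yours is slicker and proves the stronger general fact that any finite dimensional $K$-algebra without zero divisors is a skew field, using the ambient $A$ only to rule out zero divisors. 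One small slip: your parenthetical claim that ``absence of zero divisors forces simplicity'' is false in general (the domain $K[x]$ is not simple); the correct one-line justification is that a skew field is automatically simple, since any non-zero two-sided ideal contains a unit and hence equals the whole ring. Since you have already established that $B$ is a skew field at that point, the conclusion stands with this repair.
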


\begin{proof}
For any non-zero element $x \in B$, we must show that $x^{-1} \in A$ is
an element of $B$. Since $B$ is of finite dimension over $K$, the
elements of the sequence $1, x, x^2, \ldots$ are linearly dependent via
a polynomial in $B$ we can assume to be unitary and with a non-zero
constant term; in other words
\[
    x^m + b_{m-1}x^{m-1} + \ldots + b_1x + b_0 
    = 0 \q 
    \text{with } b_i \in B \text{ and } b_0 \neq 0.
\]
Hence
\[
    x(x^{m-1} + b_{n-1}x^{n-2} + \ldots + b_1) 
    = -b_0,
\]
and therefore
\[
    x^{-1} 
    = -b^{-1}_0 (x^{m-1} + b_{n-1}x^{n-2} + \ldots + b_1)\ \in B
\]
as desired.
\end{proof}

The following classic result reduces the study of finite dimensional
simple algebras to the particular case of division algebras. A proof can
be found in \cite{kersten} theorem 2.5 or \cite{reiner} section 7a.

\begin{theorem}[Wedderburn] \label{003}
A finite dimensional simple algebra $A$ over a field $K$ is isomorphic
as a $K$-algebra to $M_n(D)$ for $D$ a $K$-division algebra. The integer
$n$ is unique and $D$ is unique up to isomorphism.
\end{theorem}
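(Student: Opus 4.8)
The plan is to follow the classical minimal-left-ideal argument, essentially Rieffel's proof of the double centralizer property specialized to the finite-dimensional setting. Since $A$ is finite-dimensional over $K$ it is in particular left Artinian, so among the nonzero left ideals of $A$ there is one of least $K$-dimension; call it $L$. Minimality forces $L$ to be a simple left $A$-module, and Schur's lemma then shows that $D := End_A(L)$, the ring of left $A$-module endomorphisms of $L$ (written on the right of $L$, so that $L$ becomes a right $D$-module), is a skew field. As $D$ embeds into $End_K(L)$ it is finite-dimensional over $K$, and the scalar action of $K$ lands in its center, so $D$ is a $K$-division algebra.

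Left multiplication now gives a $K$-algebra homomorphism $\lambda: A \raa End_D(L)$, $a \mapsto (x \mapsto ax)$, which is well defined because the $A$-action and $D$-action on $L$ commute. The first main step is injectivity: $Ker(\lambda)$ is a two-sided ideal of $A$ not containing $1$ (since $\lambda(1) = id_L \neq 0$), so $Ker(\lambda)=0$ by simplicity. The second, and genuinely harder, step is surjectivity. Here I would use simplicity a second time through the identity $ALA = A$ (the two-sided ideal generated by $L$ being all of $A$), together with the observation that $\lambda(L)$ is a \emph{left} ideal of $End_D(L)$: for $x \in L$ the right multiplication $\rho_x \colon \ell \mapsto \ell x$ lies in $D$, and any $b \in End_D(L)$ commutes with $\rho_x$, whence $b \circ \lambda(\ell) = \lambda(b(\ell)) \in \lambda(L)$. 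Writing $1 \in A = ALA$ and expanding $\lambda(1)$ accordingly then shows that every $b \in End_D(L)$ lies in $\lambda(A)$, so $\lambda$ is an isomorphism $A \iso End_D(L)$.

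Since $L$ is a finite-dimensional right $D$-module, choosing a $D$-basis of dimension $n$ identifies $End_D(L)$ with a full matrix algebra $M_n(D_0)$ over a $K$-division algebra $D_0$ built from $D$ (equal to $D$ or $D^{op}$ according to the side conventions adopted). This proves existence. For uniqueness I would argue module-theoretically: over $M_n(D_0)$ every simple left module is isomorphic to the column space $D_0^{\,n}$, so any two presentations $M_n(D_0) \iso M_{n'}(D_0')$ have isomorphic simple modules; taking endomorphism rings of that essentially unique simple module recovers $D_0$ up to isomorphism, and comparing $K$-dimensions via $\dim_K A = n^2 \dim_K D_0 = (n')^2 \dim_K D_0'$ then forces $n = n'$.

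The main obstacle is the surjectivity of $\lambda$, that is, the double centralizer identity $End_D(L) = \lambda(A)$; this is precisely the point where simplicity of $A$ is indispensable, and the bookkeeping with the left-ideal property of $\lambda(L)$ is the delicate part. An alternative would be to invoke the Jacobson density theorem to get density of $\lambda(A)$ in $End_D(L)$ and then conclude by finite-dimensionality, but this merely repackages the same content.
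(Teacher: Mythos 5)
Your proof is correct: the minimal-left-ideal construction, the Rieffel double-centralizer argument for surjectivity of $\lambda$ (via the observation that $\lambda(L)$ is a left ideal of $End_D(L)$ and $A = ALA$), and the module-theoretic uniqueness argument are all sound, with the $D$ versus $D^{op}$ bookkeeping handled honestly. Note that the paper itself does not prove this theorem; it defers to \cite{kersten} theorem 2.5 and \cite{reiner} section 7a, and your argument is essentially the standard one given in those references, so there is nothing to reconcile.
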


\begin{corollary} \label{004}
The dimension of a central simple algebra is a square.
\end{corollary}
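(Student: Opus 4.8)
The plan is to reduce the computation of $\dim_K A$ to the case of an algebraically closed base field, where by Wedderburn's theorem \ref{003} together with example \ref{002} every (finite dimensional) central simple algebra is forced to be a full matrix algebra. First I would fix an algebraic closure $\bar{K}$ of $K$ and pass to the scalar extension $\bar{A} := A \ox_K \bar{K}$. Since tensoring with a field preserves dimension, $\dim_{\bar{K}} \bar{A} = \dim_K A$, so it suffices to prove that $\dim_{\bar{K}} \bar{A}$ is a perfect square.

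The next step is to verify that $\bar{A}$ is again a central simple algebra over $\bar{K}$. Centrality is immediate: using that $A$ is central over $K$, one has $Z(\bar{A}) = Z(A) \ox_K \bar{K} = K \ox_K \bar{K} = \bar{K}$. Granting simplicity of $\bar{A}$, Wedderburn's theorem \ref{003} then yields an isomorphism of $\bar{K}$-algebras $\bar{A} \iso M_m(D')$ for some $\bar{K}$-division algebra $D'$ and some integer $m \geq 1$. Because $\bar{K}$ is algebraically closed, example \ref{002} forces $D' = \bar{K}$, so that $\bar{A} \iso M_m(\bar{K})$ and hence $\dim_{\bar{K}} \bar{A} = m^2$. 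Combining this with the first step gives $\dim_K A = m^2$, which is the desired conclusion.

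The hard part will be justifying that $\bar{A} = A \ox_K \bar{K}$ remains \emph{simple}; centrality and the dimension bookkeeping are routine, but simplicity is not preserved under arbitrary scalar extension and genuinely uses that $A$ is central over $K$. I would not reprove this here but invoke the standard fact that, for a central simple $K$-algebra $A$, every two-sided ideal of $A \ox_K L$ arises as the $L$-span of a two-sided ideal of $A$ (so that $A \ox_K L$ is simple whenever $A$ is), citing it from \cite{kersten} or \cite{reiner} alongside the reference already used for theorem \ref{003}. With that input the argument closes immediately.

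As a sanity check on the logic, note that the reduction does not require first splitting $A$ as $M_n(D)$: applying the base-change argument directly to $A$ is cleanest. If one prefers, the same conclusion follows by first writing $A \iso M_n(D)$ with $D$ central over $K$ (since $Z(M_n(D)) = Z(D)$), reducing to the division-algebra case, and then extending scalars; but this detour is unnecessary.
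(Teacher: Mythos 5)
Your argument is correct and follows essentially the same route as the paper's own proof: extend scalars to $\bar{K}$, note the dimension is unchanged, and apply Wedderburn's theorem \ref{003} together with example \ref{002} to identify $A \ox_K \bar{K}$ with a matrix algebra $M_n(\bar{K})$. Your explicit flagging of the simplicity of $A \ox_K \bar{K}$ under base change is a fair point of care — the paper simply asserts this without comment — but invoking it from \cite{kersten} or \cite{reiner}, as you do, is exactly the intended level of detail.
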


\begin{proof}
If $A$ is a central simple algebra of dimension $[A:K]$ over a field
$K$ and if $\bar{K}$ denotes the algebraic closure of the latter, we
obtain a central simple algebra $A \ox_K \bar{K}$ of the same dimension
\[
    [A \ox_K \bar{K}: \bar{K}] = [A:K].
\]
By Wedderburn's theorem $A \ox_K \bar{K}$ is $\bar{K}$-isomorphic to
$M_n(D)$ for $D$ a central division algebra over $\bar{K}$. Because
$\bar{K}$ is algebraically closed, we have $D = \bar{K}$ by example
\ref{002}. This implies that $A \ox_K \bar{K}$ has dimension $n^2$ over
$\bar{K}$.
\end{proof}

From the Wedderburn theorem, we know that if $A$ is a central simple
algebra of dimension $n^2$ over $K$, then $A \iso M_r(D)$ for $D$ an
Azumaya algebra over $K$, and there is an integer $m$ with
\[
    n^2 = [A:K] = r^2[D:K] = r^2 m^2.
\]
The skewfield $D$ is called the \emph{skewfield part} of $A$, the
integer $\text{deg}(A) = n$ \label{335} is the \emph{degree} of $A$ and
$\text{ind}(A) = m$ \label{336} is its \emph{index}. 

\medskip

Another classic result we use in the text is the following. For an
algebra $A$ and a subalgebra $B$ of $A$, we denote by
\[
    C_A(B) = \{ a \in A\ |\ ab = ba \text{ for any } b \in B \}
\]
the centralizer of $B$ in $A$, and we denote by $B^{op}$ the opposite
ring of $B$. As shown in \cite{kersten} theorem 8.4, we have:

\begin{theorem}[Centralizer] \label{007}
Let $A$ be a central simple algebra of finite dimension over a field
$K$, and let $B$ be a simple subalgebra of $A$. Then
\begin{enumerate}
    \item there is a $K$-algebra homomorphism $C_A(B) \ox_K M_{[B:K]}(K)
    \iso A \ox_K B^{op}$; 

    \item $C_A(B)$ is a central simple algebra over $Z(B)$;

    \item $C_A(C_A(B)) = B$;

    \item $C_A(B) \ox_{Z(B)} B \iso C_A(Z(B))$ via the map
    \[
        C_A(B) \x B \ra C_A(Z(B))\ :\ (x,b) \mapsto xb.
    \]
\end{enumerate}
In particular if $B$ is central over $K$, then
\[
    Z(B) = K, \q C_A(Z(B)) = A
    \q \text{and} \q
    [A:K] = [B:K][C_A(B):K].
\]
\end{theorem}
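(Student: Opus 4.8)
The plan is to realize the centralizer as an endomorphism ring and then read off all four assertions from Wedderburn's theorem \ref{003}. First I would make $A$ into a left module over $S := A \otimes_K B^{op}$ by setting $(a \otimes b)\cdot x := axb$ for $a,x \in A$ and $b \in B$, which is well defined since left multiplication by $A$ commutes with right multiplication by $B$. The crucial identification is that $c \mapsto (x \mapsto xc)$ is an anti-isomorphism $C_A(B) \to \operatorname{End}_S(A)$: any $S$-endomorphism $\phi$ commutes with left multiplication by $A$, hence equals right multiplication by $c := \phi(1)$, and commuting with the right $B$-action forces $bc = cb$ for all $b \in B$, i.e. $c \in C_A(B)$. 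Thus $\operatorname{End}_S(A) \cong C_A(B)^{op}$.

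The key structural input is that $S = A \otimes_K B^{op}$ is central simple over $L := Z(B)$. For this I would prove the standard lemma that, because $A$ is central simple over $K$, every two-sided ideal of $A \otimes_K R$ has the form $A \otimes_K I$ for a two-sided ideal $I \subset R$; applied to $R = B^{op}$ this yields simplicity of $S$, while computing $Z(S) = Z(A) \otimes_K Z(B^{op}) = K \otimes_K L = L$ identifies its center. By Wedderburn's theorem \ref{003} write $S \cong M_r(\Delta)$ with $\Delta$ central division over $L$; since $A$ is an $S$-module it is a finite direct sum $A \cong V^{\oplus s}$ of copies of the unique simple $S$-module $V$, and $\operatorname{End}_S(V) \cong \Delta^{op}$. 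Hence $C_A(B)^{op} \cong \operatorname{End}_S(A) \cong M_s(\Delta^{op})$, so $C_A(B) \cong M_s(\Delta)$ is central simple over $L = Z(B)$, which is assertion (2).

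To pin down dimensions and deduce (1), (3), (4), I would use the fundamental isomorphism $A \otimes_K A^{op} \cong \operatorname{End}_K(A) = M_{[A:K]}(K)$ (the nonzero map $a \otimes a' \mapsto (x \mapsto axa')$ out of the simple algebra $A \otimes_K A^{op}$ is injective, and corollary \ref{004} gives matching dimensions, so it is an isomorphism). Regarding $S$ as a simple subalgebra of $\operatorname{End}_K(A)$ through its faithful action, the double-centralizer count inside a matrix algebra gives $[S:K]\,[C_A(B):K] = [A:K]^2$, whence $[C_A(B):K] = [A:K]/[B:K]$. Now (3) follows since $B \subseteq C_A(C_A(B))$ always, and applying the dimension formula twice forces equality; (4) follows because the multiplication map $C_A(B) \otimes_{Z(B)} B \to C_A(Z(B))$ is a homomorphism of $Z(B)$-algebras, injective as its source is simple (a tensor product of central simple $L$-algebras), between algebras of equal dimension; and (1) follows by comparing $A \otimes_K B^{op} \cong M_s(\Delta)$ with $C_A(B) \otimes_K M_{[B:K]}(K)$, both central simple over $L$ of Brauer class $[\Delta]$ and of the same $K$-dimension $[A:K][B:K]$. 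The final ``in particular'' is the special case $Z(B) = K$, where $C_A(Z(B)) = C_A(K) = A$ and (4) degenerates to $C_A(B) \otimes_K B \cong A$, yielding $[A:K] = [B:K][C_A(B):K]$. I expect the main obstacle to be the simplicity lemma for $A \otimes_K R$ together with the opposite-algebra and Brauer-class bookkeeping required to make the isomorphism in (1) precise rather than merely a dimension match.
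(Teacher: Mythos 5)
Your proposal is correct: the $S$-module structure on $A$ with $\operatorname{End}_S(A) \iso C_A(B)^{op}$, the simplicity and center computation for $A \ox_K B^{op}$, and the dimension count $[S:K][C_A(B):K]=[A:K]^2$ together yield all four assertions exactly as you outline. Note that the paper itself gives no proof of this theorem — it cites \cite{kersten}, theorem 8.4 — and your module-theoretic double-centralizer argument is precisely the standard proof found in that reference (and in \cite{reiner}), so there is no divergence to report.
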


\begin{corollary} \label{008}
The degree of a commutative extension $L$ of $K$ contained in a finite
dimensional central simple $K$-algebra $A$ divides $\text{deg}(A)$.
\end{corollary}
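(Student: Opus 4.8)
The plan is to apply the Centralizer Theorem \ref{007} to the subalgebra $B = L$ and to extract a divisibility relation from a single dimension count. Write $n = \text{deg}(A)$, so that $[A:K] = n^2$ by definition of the degree. Since $L$ is a field, it is in particular a simple $K$-subalgebra of $A$, and its center is $Z(L) = L$ itself, which is the observation that makes the centralizer machinery applicable.

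First I would record what the theorem gives in this situation. By part (2) of Theorem \ref{007}, the centralizer $C := C_A(L)$ is a central simple algebra over $Z(L) = L$; hence by Corollary \ref{004} its dimension over $L$ is a perfect square, say $[C:L] = d^2$. The crucial point to be careful about is that one cannot invoke the tidy formula $[A:K] = [B:K][C_A(B):K]$ from the ``in particular'' clause of Theorem \ref{007}, since that clause requires $B$ to be \emph{central} over $K$, whereas $L$ is central over $K$ only in the trivial case $L = K$. Instead I would extract the needed dimension relation from part (1) of the theorem, which carries no centrality hypothesis.

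Concretely, part (1) supplies a $K$-algebra isomorphism $C \ox_K M_{[L:K]}(K) \iso A \ox_K L^{op}$. Comparing $K$-dimensions on the two sides yields $[C:K]\cdot[L:K]^2 = [A:K]\cdot[L:K] = n^2[L:K]$, and cancelling one factor of $[L:K]$ gives $[C:K]\cdot[L:K] = n^2$. Substituting $[C:K] = [C:L][L:K] = d^2[L:K]$ then produces $d^2[L:K]^2 = n^2$, so that $d\,[L:K] = n$. In particular $[L:K]$ divides $n = \text{deg}(A)$, which is exactly the claim.

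I expect the only real subtlety to be the one flagged above: recognizing that the multiplicativity of dimensions in the convenient form is stated only for central subalgebras, and therefore routing the argument through the tensor-product isomorphism of part (1) rather than through that formula. Once that is done, the remainder is a routine bookkeeping computation with $K$-dimensions, combined with the single structural fact that a central simple algebra has square dimension.
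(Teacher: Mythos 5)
Your proof is correct and takes essentially the same route as the paper's: apply the centralizer theorem to the subfield $L$ and count dimensions to arrive at $[A:K] = [L:K]^2\,[C_A(L):L]$, whence $[L:K]$ divides $\text{deg}(A)$. You are in fact more scrupulous than the paper, which applies the formula $[A:K]=[L:K][C_A(L):K]$ directly to the non-central subalgebra $L$ even though the ``in particular'' clause of Theorem \ref{007} states it only for central $B$ (it does hold in general, by exactly the dimension count on part (1) that you perform), and which also leaves the final divisibility step implicit, whereas you complete it cleanly via the square-dimension fact of Corollary \ref{004}.
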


\begin{proof}
Because $L \subset C_A(L)$, we have
\[
    [C_A(L):K] = [C_A(L):L][L:K],
\]
and therefore
\[
    [A:K] = [L:K][C_A(L):K] = [L:K]^2 [C_A(L):L].
\]
\end{proof}

Thus the problem of describing subfields of finite dimensional central
simple algebras is reduced to the problem of describing their maximal
subfields, that is, those subfields of $A$ containing $K$ that are not
properly contained in a subfield of $A$. Because $A$ is assumed to be of
finite dimension, maximal subfields always exist in $A$.

\begin{proposition} \label{009}
If $L$ is a maximal subfield of a finite dimensional central simple
$K$-algebra $A$, then $C_A(L) \iso M_n(L)$. In particular, if $A$ is an
Azumaya algebra, then
\[
    C_A(L) = L
    \qq \text{and} \qq
    [L:K] = [A:K]^{\frac{1}{2}} = \text{ind}(A).
\]
\end{proposition}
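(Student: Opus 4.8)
The plan is to apply the centralizer theorem \ref{007} to the simple subalgebra $B = L$, then use Wedderburn's theorem \ref{003} together with the maximality of $L$ to identify the skewfield part of $C_A(L)$. Since $L$ is a field we have $Z(L) = L$, so by the centralizer theorem $C_A(L)$ is a central simple algebra over $L$, and by Wedderburn we may write $C_A(L) \iso M_n(D)$ for a unique integer $n$ and a central division algebra $D$ over $L$, unique up to isomorphism. The first assertion will follow once I show $D = L$.

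The key step is to deduce $D = L$ from the maximality of $L$. Viewing $D$ inside $M_n(D) \iso C_A(L) \subset A$ via scalar matrices, suppose toward a contradiction that $D \neq L$, so that $\dim_L D > 1$ and there exists $d \in D \setminus L$. Because $L = Z(D)$, every element of $D$ commutes with $L$, so the subring $L[d]$ is commutative; being a finite-dimensional commutative subring of the division algebra $D$, it is a commutative division ring, i.e.\ a field. This $L[d]$ then is a subfield of $A$ strictly containing $L$, contradicting the maximality of $L$. Hence $D = L$ and $C_A(L) \iso M_n(L)$, which proves the first statement.

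For the \emph{in particular} statement, I would assume $A$ is an Azumaya algebra, i.e.\ a central division algebra. By proposition \ref{006} the $K$-subalgebra $C_A(L)$ is itself a division algebra; but $M_n(L)$ is a division ring only when $n = 1$, since for $n \geq 2$ the matrix algebra has zero divisors (as in example \ref{001}). Therefore $n = 1$ and $C_A(L) = L$. The dimension formula of the centralizer theorem then gives $[A:K] = [L:K]\,[C_A(L):K] = [L:K]^2$, so $[L:K] = [A:K]^{\frac{1}{2}}$. Finally, as $A$ is a division algebra its skewfield part is $A$ itself (so $r = 1$), whence $\text{deg}(A) = \text{ind}(A) = [A:K]^{\frac{1}{2}}$, and consequently $[L:K] = \text{ind}(A)$.

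I expect essentially all of this to be bookkeeping with the centralizer theorem and the Wedderburn decomposition; the only genuinely non-formal point is the maximality argument in the second paragraph. There I must be careful that $L[d]$ is really a field — commutativity comes from $L$ being the center of $D$, and being a division ring comes from $D$ being one (plus finite dimensionality over $L$) — and that it sits inside $A$ as a subfield strictly larger than $L$. Everything else reduces to tracking the identifications $Z(L) = L$ and $C_A(L) \subset A$.
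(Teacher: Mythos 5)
Your proof is correct and follows essentially the same route as the paper's: the centralizer theorem makes $C_A(L)$ central simple over $L$, Wedderburn gives $C_A(L) \iso M_n(D)$, and maximality of $L$ forces $D = L$ (your explicit $L[d]$ argument is exactly the subfield the paper invokes when it says a noncommutative $D$ would contain a field properly containing $L$); the Azumaya case then uses proposition \ref{006} to force $n=1$ and the centralizer theorem's dimension formula. The only difference is that you spell out the $L[d]$ construction and the final identification $[L:K]=\text{ind}(A)$, which the paper leaves implicit.
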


\begin{proof}
According to the Wedderburn theorem, if the first assertion was not true
we would have $C_A(L) \iso M_n(D)$ for a noncommutative division algebra
$D$ over $L$.  This division algebra would then contain a subfield
properly containing $L$, and this would contradict the maximality of $L$
in $A$. Furthermore if $A$ is a skew field, we must have $n=1$, so that
$C_A(L) = L$. By the centralizer theorem,
\[
    [A:K] = [C_A(L):K][L:K] = [L:K]^2,
\]
as desired.
\end{proof}

We end the section by stating one of the most useful results in the
theory of simple algebras. See \cite{reiner} section 7d or
\cite{kersten} section 8 for proofs.

\begin{theorem}[Skolem-Noether] \label{010}
Let $A$ be a finite dimensional central simple algebra over a field $K$
and let $B$ be a simple $K$-subalgebra of $A$. If $\phi : B \ra A$ is a
$K$-algebra homomorphism, then there exists a unit $a \in A^\x$
satisfying
\[
    \phi(b) = aba^{-1}
    \qq \text{for all}\ b \in B.
\]
In particular, every $K$-isomorphism between subalgebras of $A$ can be
extended to an inner automorphism of $A$.
\end{theorem}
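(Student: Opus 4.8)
The final statement to prove is the Skolem--Noether theorem, Theorem \ref{010}. Since the excerpt presents this as a classic result with references to \cite{reiner} and \cite{kersten}, a full self-contained proof is not expected; nonetheless I can sketch the standard representation-theoretic argument. The plan is to reduce the statement to a uniqueness theorem for modules. First I would observe that it suffices to treat the case $A = M_n(K)$: indeed, for general central simple $A$ one tensors with $A^{op}$ and uses that $A \ox_K A^{op} \iso M_N(K)$ for $N = [A:K]^{1/2}$ (a consequence of the centralizer theorem \ref{007}, part 1, applied with $B = A$). The two $K$-algebra homomorphisms $B \ra A$ then induce two homomorphisms $B \ra M_N(K)$, and an inner automorphism of $M_N(K)$ implementing the passage between them can be descended back to an inner automorphism of $A$.

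So the core is the matrix case. Here I would exploit the fact that a $K$-algebra homomorphism $\phi\colon B \ra M_n(K)$ is precisely the data of a $B$-module structure on $V = K^n$, where $b \in B$ acts via the matrix $\phi(b)$. Given the inclusion $\iota\colon B \hookrightarrow M_n(K)$ and a second homomorphism $\phi\colon B \ra M_n(K)$, I obtain two $B$-module structures $V_\iota$ and $V_\phi$ on the same underlying space $V$. Since $B$ is a finite dimensional simple $K$-algebra, Wedderburn's theorem \ref{003} gives $B \iso M_r(D)$ for a division algebra $D$, and such a ring has, up to isomorphism, a unique simple module $U$; every finite dimensional $B$-module is a direct sum of copies of $U$. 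Comparing $K$-dimensions, $\dim_K V_\iota = \dim_K V_\phi = n$ forces $V_\iota$ and $V_\phi$ to be $B$-modules of the same length, hence isomorphic as $B$-modules.

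The key step is then to convert this module isomorphism into a conjugating unit. An isomorphism of $B$-modules $V_\iota \overset{\iso}{\ra} V_\phi$ is a $K$-linear map $a\colon V \ra V$, that is an element $a \in M_n(K)$, satisfying $a\,\iota(b) = \phi(b)\,a$ for all $b \in B$; because it is a module isomorphism it is invertible, so $a \in M_n(K)^\x$ and $\phi(b) = a\,\iota(b)\,a^{-1} = a\,b\,a^{-1}$, which is exactly the desired relation. The final sentence of the statement, that every $K$-isomorphism between subalgebras extends to an inner automorphism, follows by taking $B$ to be a subalgebra and $\phi$ the given isomorphism onto another subalgebra, viewed as a map into $A$.

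The main obstacle is ensuring that the simple-module comparison genuinely yields an \emph{isomorphism} rather than merely a nonzero intertwiner, which is where invertibility of $a$ must be argued carefully; this rests on semisimplicity of $B$ (so that every $B$-module is a direct sum of copies of the unique simple $U$) together with the dimension count, and on the reduction to $M_n(K)$ being compatible with descent of inner automorphisms. I would flag that the passage from $M_N(K)$ back to $A$ uses the identification $C_{M_N(K)}(A) \iso A^{op}$ from the centralizer theorem, so the conjugating element, a priori lying in $M_N(K)$, can be chosen inside $A$ itself; verifying this descent is the genuinely delicate point, and in a finished text I would either spell it out or simply cite \cite{kersten} section 8 as the excerpt already does.
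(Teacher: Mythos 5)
The paper does not prove this statement at all: it records Skolem--Noether as a classical theorem and points to \cite{reiner} section 7d and \cite{kersten} section 8 for proofs, so there is no internal argument to compare against. Your sketch is exactly the standard proof from those references — reduce to $A = M_n(K)$, identify $K$-algebra maps $B \ra M_n(K)$ with $B$-module structures on $K^n$, use Wedderburn's theorem \ref{003} together with the uniqueness of the simple module over $M_r(D)$ and a dimension count to get a $B$-module isomorphism, and read off the intertwiner as the conjugating unit — and that core is correct, including the point that invertibility of $a$ is automatic because $a$ is a module isomorphism, not merely a nonzero intertwiner.

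Two points in your reduction step need repair. First, $A \ox_K A^{op} \iso M_N(K)$ with $N = [A:K]$, not $[A:K]^{1/2}$: the centralizer theorem \ref{007}.1 with $B = A$ gives $C_A(A) \ox_K M_{[A:K]}(K) \iso A \ox_K A^{op}$, and $C_A(A) = Z(A) = K$. Second, and more substantively, inducing two maps $B \ra M_N(K)$ and conjugating them inside $M_N(K)$ is not by itself enough: a unit produced that way has no reason to lie in (the image of) $A$. The standard fix, which is also how your flagged ``delicate descent'' is discharged, is to apply the matrix case not to $B$ but to the two maps $\phi \ox \mathrm{id},\ \iota \ox \mathrm{id} : B \ox_K A^{op} \ra A \ox_K A^{op} \iso M_N(K)$; here $B \ox_K A^{op}$ is again simple because $A^{op}$ is central simple. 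Since both maps agree on $1 \ox A^{op}$, any intertwiner $u$ commutes with $1 \ox A^{op}$, hence lies in $C_{M_N(K)}(1 \ox A^{op}) = A \ox 1$, and so $u = a \ox 1$ with $a \in A^\x$ giving $\phi(b) = a b a^{-1}$. With these two corrections your outline is a complete and faithful rendering of the cited proof.
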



\chapter{Brauer groups of local fields} 
\label{060}

We collect here the needed results on Brauer groups, cyclic algebras and
local class field theory. More details can be found in \cite{reiner}
chapter 7.

\vspace{5ex}

\section{Brauer groups} 
\label{061}

Let $K$ be a field and let $A, B$ be a central simple $K$-algebras. We
say that $A$ and $B$ are equivalent, denoted $A \sim B$, if their
skewfield parts are $K$-isomorphic, in other words if there is an
isomorphism of $K$-algebras
\[
    A \ox_K M_r(K) \iso B \ox_K M_s(K)
\]
for some integers $r$ and $s$. Let $[A]$ and $[B]$ denote the respective
equivalence classes of $A$ and $B$. Under multiplication defined by
\[
    [A] \cdot [B] = [A \ox_K B],
\]
the set of classes of central simple $K$-algebras forms an abelian group
denoted $Br(K)$; \label{323} it is called the \emph{Brauer group} of
$K$. Clearly, its unit is $[K]$.

For an extension $L$ of $K$, there is a group homomorphism
\[
    Br(K) \raa Br(L)\ :\ [A] \mtoo [L \ox_K A],
\]
whose kernel $Br(L/K) = Br(L,K)$ is the \emph{relative Brauer group} of
$L$ over $K$. Thus $[A] \in Br(L/K)$ if and only if $L \ox_K A \iso
M_r(L)$ for some integer $r$, in which case we say that $L$
\emph{splits} $A$, or is a \emph{splitting field} of $A$. As shown in
\cite{reiner} theorem 28.5 and remark 28.9, we have the following:

\begin{proposition} \label{062}
For $D$ a central division algebra over $K$, a field $L$ splits $D$ if
and only if it embeds as a maximal subfield of $D$.
\end{proposition}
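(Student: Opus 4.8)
The plan is to reduce the whole statement to a single isomorphism coming from the centralizer theorem \ref{007}, read off in both directions. For a subfield $L$ with $K \subseteq L \subseteq D$ one takes $A = D$ and $B = L$ in part (1) of theorem \ref{007}; since $L$ is commutative we have $L^{op} = L$, and the theorem yields the $K$-algebra isomorphism
\[
    D \otimes_K L \;\cong\; C_D(L) \otimes_K M_{[L:K]}(K) \;\cong\; M_{[L:K]}\bigl(C_D(L)\bigr).
\]
Comparing $K$-dimensions in this isomorphism already gives $[D:K] = [L:K]\,[C_D(L):K]$, so I never need the centrality hypothesis in the dimension formula of \ref{007}. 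The idea is that this one identity transports the notion of splitting to a statement about the centralizer $C_D(L)$.

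For the direction "maximal subfield $\Rightarrow$ splits", suppose $L$ embeds as a maximal subfield of $D$. Then proposition \ref{009} gives $C_D(L) = L$, and the displayed isomorphism collapses to $D \otimes_K L \cong M_{[L:K]}(L)$; by the definition of a splitting field this says precisely that $L$ splits $D$. This direction is routine.

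For the converse restricted to subfields, suppose $L \subseteq D$ splits $D$, so $D \otimes_K L \cong M_r(L)$ for some $r$. The centralizer $C_D(L)$ is a $K$-subalgebra of the division algebra $D$, hence itself a division algebra by proposition \ref{006}, and it is central simple over $L$ by part (2) of theorem \ref{007}. Comparing $M_{[L:K]}(C_D(L)) \cong M_r(L)$ and invoking the uniqueness of the skewfield part in Wedderburn's theorem \ref{003} forces $C_D(L) \cong L$; since $L \subseteq C_D(L)$ this means $C_D(L) = L$. Consequently $L$ equals its own centralizer, so any subfield strictly containing $L$ would lie inside $C_D(L) = L$, a contradiction; thus $L$ is a maximal subfield (with $[L:K]^2 = [D:K]$ by the dimension count above).

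The main obstacle is the implicit first step of the converse when $L$ is presented as an abstract extension rather than a subfield: before the centralizer machinery applies one must know that a splitting field $L$ of degree $\deg(D)$ actually admits a $K$-embedding into $D$. I would obtain this by letting $D \otimes_K L \cong \operatorname{End}_L(W)$ act on its (up to isomorphism unique) simple module $W$, restricting the $D$-action to view $W$ as a $D$-vector space, and counting dimensions to produce a $K$-algebra map $L \to \operatorname{End}_D(W) \cong C_D(L) \subseteq D$, which is injective because $L$ is a field. This is the only place where genuine module theory, rather than the formal properties of centralizers and Wedderburn decompositions, is required, and it is where I expect the real effort to lie.
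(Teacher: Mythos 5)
The paper does not actually prove this proposition: it is quoted from \cite{reiner} (theorem 28.5 and remark 28.9), so there is no internal argument to compare against, and supplying one is genuinely useful. The two directions you carry out in full are correct and are the standard route: the isomorphism $D \otimes_K L \cong M_{[L:K]}(C_D(L))$ from theorem \ref{007} (with $L^{op}=L$), proposition \ref{009} giving $C_D(L)=L$ when $L$ is maximal, and for a splitting subfield the combination of proposition \ref{006} (so $C_D(L)$ is a skewfield) with Wedderburn uniqueness (theorem \ref{003}) forcing $C_D(L) \cong L$, hence $C_D(L)=L$ by the inclusion and equality of $K$-dimensions. As a bonus your argument shows that \emph{any} subfield of $D$ that splits $D$ is automatically maximal, which is slightly more than the statement asks.

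Two points in your final paragraph need repair, and one of them is a genuine (if standard) gap. First, the identification $\operatorname{End}_D(W) \cong C_D(L)$ is not meaningful where you invoke it: until an embedding $L \hookrightarrow D$ has been produced, $C_D(L)$ is undefined. The correct count is: $D \otimes_K L \cong M_r(L)$ forces $r = \deg(D) =: n$ by comparing $L$-dimensions, so the simple module $W$ satisfies $\dim_K W = n\,[L:K]$; as a left $D$-vector space $W \cong D^s$ with $s = [L:K]/n$, and $\operatorname{End}_D(W) \cong M_s(D^{op})$. Only when $[L:K]=n$ do you get $s=1$ and a $K$-algebra map $L \to \operatorname{End}_D(D) \cong D^{op}$, and you must then add the (easy, but necessary) remark that a homomorphism from a \emph{commutative} field into $D^{op}$ is literally the same map as a homomorphism into $D$; your sketch lands in $D$ without noticing the opposite-algebra detour. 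Second, the degree hypothesis you silently inserted (``a splitting field $L$ of degree $\deg(D)$'') is not cosmetic: without it the ``splits $\Rightarrow$ embeds'' direction is false, since any proper finite extension of a maximal subfield still splits $D$ but cannot embed, its degree exceeding $n$ by corollary \ref{008}. This normalization is exactly what Reiner's remark 28.9 carries (splitting fields of minimal degree), so your reading is the intended one — but it should be stated, not assumed.
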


For $[A] \in Br(K)$, we define the \emph{exponent} \label{302} $exp[A]$
of $[A]$ to be the order of $[A]$ in $Br(K)$, and we define the
\emph{index} $ind[A]$ of $[A]$ to be the index of the skewfield part of
$A$, that is
\[
    ind[A] = ind(D) = [D:K]^{\frac{1}{2}}
\]
for $D$ a division algebra equivalent to $A$ in $Br(K)$. As given in
\cite{reiner} theorem 29.22, we have:

\begin{proposition} \label{064}
For any $[A]$ in $Br(K)$, $ind[A]$ is a multiple of $exp[A]$.
\end{proposition}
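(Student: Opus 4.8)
The plan is to recast the statement as the divisibility $exp[A] \mid ind[A]$ and to produce a splitting field of $A$ whose degree over $K$ is exactly $ind[A]$. First I would let $D$ be the central division $K$-algebra with $[D] = [A]$ in $Br(K)$, so that $ind[A] = ind(D) = m$, where $[D:K] = m^2$. Since $exp[A]$ is the order of $[A] = [D]$ in $Br(K)$, it suffices to show $m \cdot [D] = 0$, i.e. that $[D^{\ox m}]$ is trivial. By proposition \ref{009}, $D$ has a maximal subfield $L$ with $[L:K] = m$, and by proposition \ref{062} this $L$ is a splitting field of $D$; as $[A] = [D]$, the field $L$ splits $A$ as well, so the image of $[A]$ under the restriction map $Br(K) \raa Br(L)$ vanishes.

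The crux is then the general lemma that a splitting field $L/K$ of degree $n$ annihilates $[A]$ in the sense that $n \cdot [A] = 0$ in $Br(K)$; applied to our $L$ with $n = m$ this gives $exp[A] \mid m = ind[A]$, which is the claim. To prove the lemma I would use the restriction–corestriction pair for the finite separable extension $L/K$: there is a corestriction $\mathrm{cor}\colon Br(L) \raa Br(K)$ whose composite with restriction $\mathrm{res}\colon Br(K) \raa Br(L)$ is multiplication by the degree, $\mathrm{cor} \o \mathrm{res} = [L:K]\cdot \mathrm{id}_{Br(K)}$. Because $L$ splits $A$ we have $\mathrm{res}([A]) = 0$, whence
\[
    [L:K]\cdot[A] = \mathrm{cor}(\mathrm{res}([A])) = \mathrm{cor}(0) = 0.
\]
In the setting of the paper $K$ has characteristic zero, so $L/K$ is automatically separable and the lemma applies to $L$ directly; in general one replaces $L$ by a separable splitting field of the same degree $m$.

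The main obstacle is the construction of $\mathrm{cor}$ together with the identity $\mathrm{cor} \o \mathrm{res} = [L:K]$. Concretely I would pass to the Galois closure $E$ of $L/K$, set $G = Gal(E/K)$ and $H = Gal(E/L)$, identify the relative Brauer groups $Br(E/K) \iso H^2(G, E^\x)$ and $Br(E/L) \iso H^2(H, E^\x)$ via crossed-product classes, and invoke the transfer of group cohomology: the composite $\mathrm{cor}^G_H \o \mathrm{res}^G_H$ on $H^2(G, E^\x)$ is multiplication by $[G:H] = [L:K]$. Here $[A] \in Br(E/K)$ since $E \supset L$ also splits $A$, and the splitting hypothesis for $L$ says exactly that $\mathrm{res}^G_H([A]) = 0$ in $H^2(H, E^\x)$, so applying $\mathrm{cor}^G_H$ yields $[L:K]\cdot[A] = 0$ as before. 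I would take care to check that the cohomological restriction corresponds to the base-change map $Br(K) \raa Br(L)$ under these isomorphisms; this is routine once the crossed-product description of $Br(E/K)$ is available.
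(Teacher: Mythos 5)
Your argument is correct, and the comparison here is slightly unusual: the paper does not prove proposition \ref{064} at all, but quotes it from \cite{reiner} (theorem 29.22). The proof in that reference is in essence the one you give: realize $[A]$ by its division algebra $D$, produce a splitting field $L$ with $[L:K] = ind[A]$, pass to a Galois closure $E$ with $G = Gal(E/K)$ and $H = Gal(E/L)$, and apply $\mathrm{cor}^G_H \o \mathrm{res}^G_H = [G:H]$ on $H^2(G, E^\x) \iso Br(E/K)$ to conclude $ind[A] \cdot [A] = 0$, i.e. $exp[A]$ divides $ind[A]$. Your use of propositions \ref{009} and \ref{062} to manufacture the degree-$m$ splitting field is exactly right, and note that $[A] \in Br(E/K)$ because a splitting field remains one after further extension. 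The compatibility you defer as routine --- that cohomological restriction matches the base-change map $Br(K) \ra Br(L)$ under the crossed-product isomorphisms --- is recorded in the paper as remark \ref{254}, so you can cite it rather than recheck it; only $E/K$ needs to be Galois for the transfer argument, not $L/K$.

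One point deserves more care if you want the proposition in the generality stated (arbitrary $K$): your fallback ``in general one replaces $L$ by a separable splitting field of the same degree $m$'' is a genuine theorem, not a formality. Over an imperfect field a maximal subfield of a division algebra can be inseparable, and the existence of a \emph{separable} maximal subfield --- equivalently, of a separable splitting field of degree exactly $ind[A]$ --- is a classical result of K\"othe/Noether that must be invoked explicitly; without some separable splitting field the Galois-closure step has no starting point. In the paper's actual setting of local fields of characteristic zero this issue is vacuous, as you observe, so for the purposes of this text your proof is complete.
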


\section{Crossed algebras} 
\label{338}

Let $L$ be a Galois extension of $K$ with Galois group $G = Gal(L/K)$.
We define an algebra
\[
    A = \sum_{\sigma \in G} Lu_\sigma
\]
having as $L$-basis a set of symbols $\{u_\sigma\ |\ \sigma \in G\}$
satisfying
\[
    \sigma(x) u_\sigma = u_\sigma x, \qq
    u_\sigma u_\tau = f_{\sigma,\tau} u_{\sigma\tau},
    \qq \text{and} \qq
    \rho(f_{\sigma,\tau}) f_{\rho,\sigma\tau} 
    = f_{\rho,\sigma} f_{\rho\sigma,\tau}
\]
for $x \in L$, $\rho, \sigma, \tau \in G$ and $f_{\sigma,\tau} \in
L^\x$. A map $f: G \x G \ra L^\x$ satisfying this third condition is a
\emph{factor set} from $G$ to $L^\x$.  Given such an $f$, the algebra
$A$ thus constructed is a \emph{crossed(-product) algebra} and is denoted
$(L/K, f)$. 

According to \cite{reiner} theorem 29.6, for each $f$, $(L/K, f)$ is a
finite dimensional central simple algebra over $K$ having $L$ as maximal
subfield. 

\begin{proposition} \label{331}
If $A = (L/K, f)$ and $exp[A]=[L:K]$, then $A$ is a division algebra.
\end{proposition}

\begin{proof}
Let $n = [L:K]$, so that $[A:K] = n^2$, and let $D$ be the skewfield
part of $A$ with $A \iso M_r(D)$ and $m = ind[D]$. Then $n=mr$, and
$exp[A]$ divides $m$ by proposition \ref{064}. Because $exp[A] = n$, we
have $m=n$ and $r=1$, in which case $A$ is a division algebra.
\end{proof}

We also know from \cite{reiner} theorem 29.6 that the set of factor sets
from $G$ to $L^\x$ can be partitioned under an equivalence relation to
form a multiplicative group of classes $[f]$, isomorphic to the second
cohomology group $H^2(G,L^\x)$, in such a way that two crossed algebras
$(L/K, e)$, $(L/K, f)$ are $K$-isomorphic if and only if $[e]=[f]$.
Then by \cite{reiner} theorem 29.12 we have the following:

\begin{theorem} \label{063}
Let $L$ be a finite Galois extension of a field $K$ with Galois group
$G$. Then
\[
    H^2(G,L^\x) \iso Br(L/K)
\]
given by mapping $[f] \in H^2(G,L^\x)$ onto the class $[(L/K, f)] \in
Br(L/K)$.
\end{theorem}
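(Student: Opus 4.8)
The statement is the classical identification of the relative Brauer group with second Galois cohomology, and the plan is to verify directly that the assignment $\Phi: [f] \mapsto [(L/K,f)]$ is a well-defined bijective group homomorphism onto $Br(L/K)$, following the line of \cite{reiner} theorem 29.12. Well-definedness is essentially already in hand from the discussion preceding the statement: two crossed algebras $(L/K,e)$ and $(L/K,f)$ are $K$-isomorphic exactly when $[e]=[f]$ in $H^2(G,L^\x)$, so $\Phi$ descends to cohomology classes; and since $L$ sits inside $(L/K,f)$ as a maximal subfield, proposition \ref{062} shows that $L$ splits $(L/K,f)$, so $[(L/K,f)]$ indeed lies in $Br(L/K)$. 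It remains to establish surjectivity, the homomorphism property, and injectivity.

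For surjectivity I would exploit the Skolem--Noether theorem \ref{010}, which is the natural tool here. Given $[A]\in Br(L/K)$, let $D$ be the skewfield part of $A$ and $n=[L:K]$. Because $L$ splits $D$, proposition \ref{062} embeds $L$ as a maximal subfield of $D$, whence $[L:K]=\mathrm{ind}(D)=\mathrm{deg}(D)$ and, by proposition \ref{009}, $C_D(L)=L$. For each $\sigma\in G$ the field automorphism $\sigma:L\to L\subset D$ is a $K$-algebra homomorphism, so theorem \ref{010} yields $u_\sigma\in D^\x$ with $u_\sigma x u_\sigma^{-1}=\sigma(x)$ for all $x\in L$. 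A Dedekind-independence argument together with the dimension count $[D:K]=n^2=n\,[L:K]$ shows that $\{u_\sigma\}_{\sigma\in G}$ is an $L$-basis of $D$; and since $u_\sigma u_\tau$ and $u_{\sigma\tau}$ induce the same automorphism $\sigma\tau$ of $L$, they differ by an element $f_{\sigma,\tau}\in C_D(L)^\x=L^\x$. Associativity of $D$ forces $f$ to be a factor set, so $D\cong(L/K,f)$ and $\Phi([f])=[A]$.

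The homomorphism property $\Phi([e][f])=\Phi([e])\Phi([f])$, that is $(L/K,e)\ox_K(L/K,f)\sim(L/K,ef)$, is what I expect to be the main obstacle, since it is the one genuinely computational step. The approach is to work inside $A=(L/K,e)\ox_K(L/K,f)$: one singles out the diagonally embedded copy of $L$ and the elements $u_\sigma\ox v_\sigma$, identifies the subalgebra they generate with $(L/K,ef)$ by checking that the induced factor set is the product $ef$, and then uses the centralizer theorem \ref{007} to recognize its centralizer in $A$ as a matrix algebra over $K$. The product $A$ thereby decomposes as $(L/K,ef)\ox_K M_r(K)$, giving the required Brauer equivalence; the bookkeeping in matching bases and verifying the centralizer is a split algebra is the delicate part.

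Injectivity then follows cleanly. If $\Phi([f])=[K]$, then $(L/K,f)$ is split; being central simple of degree $n$ with trivial skewfield part, Wedderburn \ref{003} and a dimension count give $(L/K,f)\cong M_n(K)\cong(L/K,1)$. By the iso-class correspondence already cited, $(L/K,f)\cong(L/K,1)$ forces $[f]=[1]$, so the kernel of $\Phi$ is trivial. Combining the three steps shows $\Phi$ is an isomorphism $H^2(G,L^\x)\iso Br(L/K)$, as claimed.
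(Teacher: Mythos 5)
Since the paper never proves theorem \ref{063} at all --- it is quoted directly from \cite{reiner}, theorem 29.12 --- there is no internal argument to compare yours against, and your reconstruction of the classical crossed-product proof must stand on its own. Your well-definedness step is fine, and injectivity is essentially fine, though note that arguing via ``the kernel of $\Phi$ is trivial'' presupposes the homomorphism property; it is cleaner to observe that two Brauer-equivalent crossed products over $L/K$ both have degree $n=[L:K]$, hence are $K$-isomorphic by theorem \ref{003}, hence have cohomologous factor sets by the correspondence quoted before the theorem.

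The genuine gap is in the multiplicativity step, and it is not mere bookkeeping: the construction you describe fails. In $A=(L/K,e)\ox_K(L/K,f)$ there is no diagonally embedded copy of $L$ to single out: the subalgebra $L\ox_K L$ is not a field but is isomorphic to a product of $n$ copies of $L$ indexed by $G$. Moreover $(u_\sigma\ox v_\sigma)(u_\tau\ox v_\tau)=(e_{\sigma,\tau}\ox f_{\sigma,\tau})(u_{\sigma\tau}\ox v_{\sigma\tau})$, and the factor $e_{\sigma,\tau}\ox f_{\sigma,\tau}$ lies in $L\ox_K L$, not in $L\ox 1$; since inverses of elements of a finite-dimensional subalgebra stay in that subalgebra, the algebra generated by $L\ox 1$ and the $u_\sigma\ox v_\sigma$ contains every $1\ox f_{\sigma,\tau}$, and is in general of dimension strictly larger than $n^2$, so it is not a copy of $(L/K,ef)$. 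The missing idea is the diagonal idempotent: writing $L\ox_K L\iso\prod_{\tau\in G}L$, let $\epsilon$ be the primitive idempotent characterized by $\epsilon(x\ox 1)=\epsilon(1\ox x)$ for all $x\in L$; it is fixed under conjugation by each $u_\sigma\ox v_\sigma$, the corner algebra $\epsilon A\epsilon$ has dimension exactly $n^2$, contains the genuine field $\epsilon(L\ox 1)\iso L$ and the elements $w_\sigma=\epsilon(u_\sigma\ox v_\sigma)$ whose factor set is $\epsilon(e_{\sigma,\tau}f_{\sigma,\tau}\ox 1)$, so that $\epsilon A\epsilon\iso(L/K,ef)$; and $A\sim\epsilon A\epsilon$ because a corner of a simple algebra by an idempotent is Brauer-equivalent to it (if $A\iso M_r(D)$ then $\epsilon A\epsilon\iso M_s(D)$). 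The centralizer theorem \ref{007} cannot substitute for this: the copy of $(L/K,ef)$ just constructed has unit $\epsilon\neq 1$, so it is not a unital subalgebra of $A$ and theorem \ref{007} does not apply to it; and even granted some unital embedding, the theorem only yields $A\iso(L/K,ef)\ox_K C$ with $[C]=[(L/K,e)][(L/K,f)][(L/K,ef)]^{-1}$ in $Br(K)$, so proving $C$ split is equivalent to the multiplicativity being proved --- that route is circular. A smaller slip of the same nature occurs in your surjectivity argument: proposition \ref{062}, taken literally, would embed $L$ into $D$ even when $\text{ind}(D)<n$ (for the trivial class it would embed $L$ into $K$); the correct procedure is to replace $D$ by $M_r(D)$ with $r=n/\text{ind}(D)$, embed $L$ there as a strictly maximal subfield with $C_{M_r(D)}(L)=L$, and run your Skolem--Noether and dimension-count argument in $M_r(D)$ rather than in $D$.
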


\begin{remark} \label{254}
As noted in remark (i) following theorem 29.13 of \cite{reiner}, if $K
\subset K' \subset L$ are finite Galois extensions with Galois groups $G
= Gal(K/L)$ and $G' = Gal(K'/L)$, then there is a commutative diagram
\[
    \xymatrix
    {
        H^2(G, L^\x) \ar[r]^{\iso} \ar[d]_{\text{res}} 
        & Br(L/K) \ar[d]^{\_ \ox_K K'} \\
        H^2(G', L^\x) \ar[r]^{\iso} & Br(L/K')
    }
\]
where the left hand vertical map is the restriction homomorphism
induced by the inclusion $G \subset G'$.
\end{remark}

\section{Cyclic algebras} 
\label{065}

Let $L$ be a finite Galois extension of a field $K$ with cyclic Galois
group $G = Gal(L/K)$ of order $n$ generated by $\sigma$; such an
extension is called \emph{cyclic}. Let $a$ be an element of $K^\x$ and
form the associative $K$-algebra
\[
    A = (L/K, \sigma, a)
    = \sum_{i=0}^{n-1} L u^i,
\]
for an element $u$ satisfying $u x = \sigma(x) u$ and $u^n=a$ for all $x
\in L$, where $u^0$ is identified with the unit of $A$.  Such a
$K$-algebra is called \emph{cyclic}. 

As explained in \cite{reiner} section 30, $A$ is isomorphic to the
crossed algebra $(L/K, f)$ where the factor set $f$ from $G$ to $L^\x$
is given by
\[
    f_{\sigma^i, \sigma^j} =
    \begin{cases}
        1 & \text{if } i+j < n,\\
        a & \text{if } i+j \geq n,
    \end{cases}
\]
for $0 \leq i,j \leq n-1$. In particular, $A$ is a central simple
$K$-algebra split by $L$.  Conversely, \cite{reiner} theorem 30.3
establishes that if $L/K$ is a cyclic extension with Galois group $G$ of
order $n$ generated by $\sigma$, and if $f$ is a factor set from $G$ to
$L^\x$, then the crossed algebra $(L/K, f)$ is isomorphic to the cyclic
algebra $(L/K, \sigma, a)$ for
\[
    a = \prod_{i=0}^{n-1} f_{\sigma^i, \sigma}\ \in K^\x.
\]

According to \cite{reiner} theorem 30.4, we have:

\begin{proposition} \label{066}
Let $L/K$ be a cyclic extension with Galois group of order $n$ generated
by $\sigma$, and let $a, b \in K^\x$. Then
\begin{enumerate}
    \item $(L/K, \sigma, a) \iso (L/K, \sigma^s, a^s)$ for any integer
    $s$ prime to $n$;

    \item $(L/K, \sigma, 1) \iso M_n(K)$;

    \item $(L/K, \sigma, a) \iso (L/K, \sigma, b)$ if and only if
    $\frac{a}{b}$ belongs to the norm $N_{L/K}(L^\x)$. In particular,
    $(L/K, \sigma, a) \iso K$ if and only if $a \in N_{L/K}(L^\x)$;

    \item $(L/K, \sigma, a) \ox_K (L/K, \sigma, b) \iso (L/K, \sigma,
    ab)$.
\end{enumerate}
\end{proposition}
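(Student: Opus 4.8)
The plan is to reduce all four statements to the factor-set description recalled just above the statement, combined with the isomorphism $H^2(G,L^\x)\iso Br(L/K)$ of theorem \ref{063}. Write $f^{(a)}$ for the factor set attached to $(L/K,\sigma,a)$, so that $f^{(a)}_{\sigma^i,\sigma^j}$ equals $1$ when $i+j<n$ and equals $a$ when $i+j\ge n$ (for $0\le i,j\le n-1$). The one elementary observation that drives everything is that these factor sets multiply pointwise, $f^{(a)}f^{(b)}=f^{(ab)}$, which is immediate from the two-case formula. Since theorem \ref{063} makes $[f]\mapsto[(L/K,f)]$ a group isomorphism onto $Br(L/K)$, and the law on $Br(K)$ is $\ox_K$ while the law on $H^2(G,L^\x)$ is pointwise multiplication of cocycles, this observation yields statement 4 directly: the Brauer classes satisfy $[(L/K,\sigma,a)]\cdot[(L/K,\sigma,b)]=[(L/K,\sigma,ab)]$. (Here I would note that the three algebras have $K$-dimensions $n^4,n^4,n^2$, so 4 is to be read as an equivalence $\sim$ in $Br(K)$, not a literal isomorphism.) In particular $\Phi\colon a\mapsto[(L/K,\sigma,a)]$ is a homomorphism $K^\x\to Br(L/K)$, which I will use for the rest.

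Statements 2 and 1 I would prove directly. For 2, either note that $f^{(1)}$ is the trivial cocycle, so $[(L/K,\sigma,1)]$ is the identity of $Br(L/K)$ and the algebra, being central simple of dimension $n^2$ and split, is $\iso M_n(K)$ by Wedderburn (theorem \ref{003}); or, more explicitly, let $(L/K,\sigma,1)$ act on $L$ by left multiplication of $L$ and by $\sigma$ for the generator, check that the relations $ux=\sigma(x)u$ and $u^n=1$ are respected, and observe that the resulting $K$-algebra map $(L/K,\sigma,1)\to End_K(L)\iso M_n(K)$ is injective by simplicity and bijective by a dimension count. For 1, I would exhibit the isomorphism via $u'=u^s$: from $ux=\sigma(x)u$ one gets $u'x=\sigma^s(x)u'$ and $(u')^n=(u^n)^s=a^s$, and since $\gcd(s,n)=1$ the elements $(u')^i=u^{si}=a^{\lfloor si/n\rfloor}u^{si\bmod n}$ for $0\le i\le n-1$ run, up to the unit $a\in L^\x$, through $u^0,\dots,u^{n-1}$, hence form an $L$-basis. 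This identifies the algebra with $(L/K,\sigma^s,a^s)$.

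Statement 3 I would first reduce to a norm criterion. Both $(L/K,\sigma,a)$ and $(L/K,\sigma,b)$ are central simple of dimension $n^2$, so by Wedderburn they are isomorphic iff their skewfield parts coincide, i.e. iff $\Phi(a)=\Phi(b)$; by the homomorphism property this holds iff $\Phi(a/b)=1$, that is, iff $(L/K,\sigma,a/b)$ splits. Thus 3 follows once I prove that $(L/K,\sigma,c)$ splits iff $c\in N_{L/K}(L^\x)$, the ``in particular'' being the case $b=1$ via statement 2. For the criterion I would argue that $\Phi(c)=[f^{(c)}]$ is trivial iff $f^{(c)}$ is a coboundary $(\partial g)_{\rho,\tau}=\rho(g_\tau)\,g_\rho\,g_{\rho\tau}^{-1}$. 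The direction $\Leftarrow$ I would settle by exhibiting, for $c=N_{L/K}(\lambda)$, the explicit cochain of partial norms $g_{\sigma^i}=\prod_{j=0}^{i-1}\sigma^j(\lambda)$, whose coboundary is exactly $f^{(c)}$, the single wrap-around contribution producing $N_{L/K}(\lambda)=c$. The direction $\Rightarrow$ is the cyclic-group computation that any presentation $f^{(c)}=\partial g$ forces $c$ to be the full norm of $g_\sigma$; this is precisely the periodicity isomorphism $H^2(G,L^\x)\iso K^\x/N_{L/K}(L^\x)$ sending $[f^{(c)}]$ to the class of $c$.

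The hard part will be exactly this norm criterion, and within it the implication splitting $\Rightarrow c\in N_{L/K}(L^\x)$: steps 4, 2, 1 and the Wedderburn reduction in 3 are all formal consequences of theorem \ref{063} and the structure theorem, but extracting the norm from a degree-two coboundary over a cyclic group is the genuine content, being the degree-two analogue of Hilbert 90 for $L/K$. I expect to spend the real work there, carefully tracking the cocycle relations around the full cycle $1,\sigma,\dots,\sigma^{n-1}$; alternatively, if I allow myself the periodicity of the Tate cohomology of $G$, this step becomes immediate, but otherwise I would carry out the coboundary computation by hand.
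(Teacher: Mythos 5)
Your proof is correct, but there is nothing in the paper to compare it against: the paper gives no argument at all for this proposition, citing \cite{reiner} theorem 30.4 instead. Your route is the cohomological one — pointwise multiplication of the explicit factor sets $f^{(a)}$ plus the isomorphism $H^2(G,L^\x) \iso Br(L/K)$ of theorem \ref{063} — and it hangs together: the identity $f^{(a)}f^{(b)} = f^{(ab)}$ does give 4 (as Brauer equivalence), the change of generator $u' = u^s$ with the observation $u^{si} = a^{\lfloor si/n \rfloor} u^{si \bmod n}$ gives 1, the representation of $(L/K,\sigma,1)$ on $L$ by left multiplication and $\sigma$ gives 2 by simplicity and a dimension count, and your partial-norm cochain $g_{\sigma^i} = \prod_{j=0}^{i-1}\sigma^j(\lambda)$ does have coboundary $f^{(N_{L/K}(\lambda))}$, the single wrap-around producing the full norm. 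You are also right to flag that 3 (the clause ``$\iso K$'') and 4 can only be read as equivalence in $Br(K)$, since the dimensions are $n^2$ versus $1$ and $n^4$ versus $n^2$; the sharp form of 4 is $(L/K,\sigma,a)\ox_K(L/K,\sigma,b) \iso M_n\bigl((L/K,\sigma,ab)\bigr)$. This sloppiness is in the paper's statement, not in your reading of it. For the one step you identify as the genuine content — splitting $\Rightarrow c \in N_{L/K}(L^\x)$ — your plan (by-hand coboundary chase around the cycle, or Tate periodicity for cyclic groups) works; but note that Reiner's own proof, which the paper implicitly endorses, avoids cohomology here entirely: given an isomorphism $(L/K,\sigma,a) \iso (L/K,\sigma,b)$, Skolem--Noether (theorem \ref{010}) lets one assume it fixes $L$ pointwise; the image $u'$ of $u$ then induces $\sigma$ on $L$, so $u'u^{-1}$ centralizes $L$, and since $L$ is a maximal subfield, $C_A(L) = L$ (proposition \ref{009}), whence $u' = \lambda u$ for some $\lambda \in L^\x$ and $b = (u')^n = N_{L/K}(\lambda)\,a$. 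That shortcut trades your cocycle computation for two structural theorems already in the paper's appendix; your version buys a uniform treatment in which 4 and the ``in particular'' of 3 (via $b=1$ and statement 2) are formal, and it sets up theorem \ref{068} directly. Either way the proposal is sound.
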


\begin{corollary} \label{067}
Let $A = (L/K, \sigma, a)$ be a cyclic algebra. Then $exp[A]$ is the
smallest positive integer $s$ such that $a^s \in N_{L/K}(L^\x)$.
\end{corollary}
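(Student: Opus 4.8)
The plan is to compute the powers $[A]^s$ of the Brauer class $[A]$ directly from the multiplicativity of cyclic algebras, and then to recognize when such a power is trivial via the norm criterion. Since $exp[A]$ is by definition the order of $[A]$ in $Br(K)$, the goal is to characterize the smallest integer $s \geq 1$ for which $[A]^s = [K]$.

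First I would establish, by induction on $s \geq 1$, that
\[
    (L/K, \sigma, a)^{\ox s} \iso (L/K, \sigma, a^s).
\]
The base case $s=1$ is trivial, and the inductive step is an immediate application of proposition \ref{066}.4, which gives $(L/K, \sigma, a) \ox_K (L/K, \sigma, a^{s-1}) \iso (L/K, \sigma, a\cdot a^{s-1})$. Passing to Brauer classes and using that the group law on $Br(K)$ is induced by $\ox_K$, this yields $[A]^s = [(L/K, \sigma, a^s)]$ for every $s \geq 1$.

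Next I would invoke the splitting criterion. The class $[A]^s$ equals the identity $[K]$ of $Br(K)$ precisely when $(L/K, \sigma, a^s)$ is split, i.e. equivalent to $M_n(K) \iso (L/K, \sigma, 1)$ by proposition \ref{066}.2. By proposition \ref{066}.3 this holds if and only if $a^s/1 = a^s$ lies in the norm subgroup $N_{L/K}(L^\x)$. Combining with the previous step, $[A]^s = [K]$ in $Br(K)$ if and only if $a^s \in N_{L/K}(L^\x)$. Consequently $exp[A]$, being the least positive $s$ with $[A]^s = [K]$, is exactly the least positive $s$ with $a^s \in N_{L/K}(L^\x)$.

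I do not anticipate a serious obstacle, as the argument is a formal consequence of proposition \ref{066}; the only point requiring a word of care is that $N_{L/K}(L^\x)$ is a subgroup of $K^\x$, so that the least such $s$ is well defined as the order of the image of $a$ in $K^\x/N_{L/K}(L^\x)$. This is immediate from the multiplicativity of the norm, and it also guarantees that such an $s$ is finite precisely because some power of $a$ must land in the norm group once $[A]$ has finite order in the Brauer group, which is assured by proposition \ref{064}.
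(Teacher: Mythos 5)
Your argument is correct and is essentially the paper's own proof: the paper likewise deduces $[A]^s = [(L/K, \sigma, a^s)]$ from proposition \ref{066}.4 and then applies the norm criterion of proposition \ref{066}.3 to conclude that $[A]^s = 1$ if and only if $a^s \in N_{L/K}(L^\x)$. Your additional remarks (the induction, the identification of the split class via \ref{066}.2, and the well-definedness of the least $s$ as the order of $a$ in $K^\x/N_{L/K}(L^\x)$) merely spell out details the paper leaves implicit.
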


\begin{proof}
Since $[A]^s = [(L/K, \sigma, a^s)]$, we have $[A]^s = 1$ if and only if
$a^s \in N_{L/K}(L^\x)$.
\end{proof}

We know from class field theory and theorem \ref{063} that the map
\[
    K^\x \raa Br(L/K)\ :\ a \mtoo [(L/K, \sigma, a)]
\]
is an epimorphism of group which induces an isomorphism:

\begin{theorem} \label{068}
If $L/K$ is a cyclic extension with Galois group $G$, then
\[
    H^2(G, L^\x) \iso Br(L/K) \iso K^\x/N_{L/K}(L^\x).
\]
\qed
\end{theorem}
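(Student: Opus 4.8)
The plan is to obtain the two isomorphisms separately and then splice them. The first isomorphism $H^2(G, L^\x) \iso Br(L/K)$ requires no new argument: it is exactly the content of Theorem \ref{063} applied to the cyclic (hence finite Galois) extension $L/K$, the isomorphism being $[f] \mto [(L/K, f)]$. So the whole task reduces to establishing the second isomorphism $Br(L/K) \iso K^\x/N_{L/K}(L^\x)$.

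For this, I would study the map
\[
    \Phi:\ K^\x \raa Br(L/K)\ :\ a \mtoo [(L/K, \sigma, a)],
\]
where $\sigma$ is a fixed generator of $G$. First I would check that $\Phi$ is a group homomorphism, which is immediate from Proposition \ref{066}.4: since $(L/K, \sigma, a) \ox_K (L/K, \sigma, b) \iso (L/K, \sigma, ab)$, we get $\Phi(a)\Phi(b) = [(L/K,\sigma,a) \ox_K (L/K,\sigma,b)] = \Phi(ab)$. Next I would verify that $\Phi$ is surjective. By Theorem \ref{063}, every class in $Br(L/K)$ has the form $[(L/K, f)]$ for some factor set $f$ from $G$ to $L^\x$; and by the cyclic-crossed correspondence recorded before Proposition \ref{066} (following \cite{reiner} theorem 30.3), each such crossed algebra is isomorphic to the cyclic algebra $(L/K, \sigma, a)$ with $a = \prod_{i=0}^{n-1} f_{\sigma^i, \sigma} \in K^\x$. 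Hence $[(L/K, f)] = \Phi(a)$ lies in the image of $\Phi$.

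Finally I would compute $Ker(\Phi)$. A class $\Phi(a)$ is trivial in $Br(L/K)$ precisely when $(L/K, \sigma, a)$ is split. Taking $b = 1$ in Proposition \ref{066}.3 gives $(L/K, \sigma, a) \iso (L/K, \sigma, 1)$ if and only if $a \in N_{L/K}(L^\x)$, while Proposition \ref{066}.2 gives $(L/K, \sigma, 1) \iso M_n(K)$; together these say $(L/K, \sigma, a) \iso M_n(K)$ (i.e. $[(L/K,\sigma,a)] = [K]$) iff $a \in N_{L/K}(L^\x)$. Thus $Ker(\Phi) = N_{L/K}(L^\x)$, and the first isomorphism theorem for the surjection $\Phi$ yields $K^\x/N_{L/K}(L^\x) \iso Br(L/K)$. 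Composing with Theorem \ref{063} produces the full chain.

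The conceptual weight of the argument sits entirely inside Proposition \ref{066}.3 — the norm criterion for when a cyclic algebra splits — which is the local-class-field-theoretic input; since we are free to invoke it, the remaining steps are routine bookkeeping, and the only point demanding care is to use ``split'' ($[A] = [K]$ in the Brauer group, realized by $(L/K,\sigma,a) \iso M_n(K)$) consistently rather than reading Proposition \ref{066}.3 as a literal isomorphism onto $K$.
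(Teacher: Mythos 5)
Your proof is correct and is essentially the paper's own argument made explicit: the paper simply asserts, citing class field theory and theorem \ref{063}, that $a \mto [(L/K, \sigma, a)]$ is an epimorphism inducing the stated isomorphism, and you verify exactly this map's homomorphism property, surjectivity, and kernel using proposition \ref{066} and the crossed-to-cyclic correspondence recorded before it. Your closing caveat --- reading proposition \ref{066}.3 as $(L/K,\sigma,a) \iso M_n(K)$, i.e.\ $[(L/K,\sigma,a)] = [K]$ in $Br(K)$, rather than as a literal algebra isomorphism onto $K$ --- is the correct reading and handles the one imprecision in the cited statement.
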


\section{The local case} 
\label{069}

Suppose that $K$ is a local field with residue field of cardinality $q$
and a uniformizing element $\pi_K$. Let $n$ be a positive integer, $K_n$
an unramified extension of degree $n$ over $K$, and let $\sigma \in
Gal(K_n/K) \iso \Zbb/n$ be the Frobenius of this extension. For a
positive integer $r$, we consider the cyclic algebra $A = (K_n/K,
\sigma, \pi_K^r)$ and we define the \emph{Hasse invariant} of $A$ to be
\[
    inv_K (K_n/K, \sigma, \pi_K^r) = \frac{r}{n}.
\]
By \cite{reiner} theorem 31.1 and 31.5, we know that the isomorphism
class of $A$ only depends on $r$ modulo $n$, and that the skewfield part
of $A$ has the same invariant as $A$. Consequently, the invariant of $A$
only depends on the class $[A]$ in $Br(K)$ and there is a well defined
map
\[
    inv_K:\ Br(K) \raa \Qbb/\Zbb;
\]
it is in fact an isomorphism by \cite{reiner} theorem 31.8:

\begin{theorem} \label{070}
If $K$ is a local field, then $Br(K) \iso \Qbb/\Zbb$ via $inv_K$.
\end{theorem}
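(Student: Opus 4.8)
The plan is to exploit the fact that, as recorded just before the statement, $inv_K$ is already a well-defined function $Br(K) \raa \Qbb/\Zbb$ on Brauer classes; the task is therefore to show that it is a bijective group homomorphism. The decisive reduction I would make first is that every central simple $K$-algebra is split by an unramified extension. Since by Wedderburn each class is represented by a division algebra $D$, this amounts to producing an unramified maximal subfield of $D$, and proposition \ref{062} then shows $[D] \in Br(K_n/K)$ for some unramified $K_n/K$. Consequently $Br(K) = \bigcup_n Br(K_n/K)$, the union running over the unramified extensions $K_n/K$ of degree $n$, each of which is cyclic with Galois group generated by the Frobenius $\sigma$.

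With this reduction in hand, I would fix $n$ and apply theorem \ref{068} to get $Br(K_n/K) \iso K^\x/N_{K_n/K}(K_n^\x)$. The next step is the local norm computation for an unramified extension: the norm on units $N_{K_n/K}: \Ocal_{K_n}^\x \raa \Ocal_K^\x$ is surjective, while on valuations $v_K \circ N_{K_n/K} = n \cdot v_{K_n}$. Writing $K^\x \iso \Ocal_K^\x \x \lan \pi_K \ran$, these two facts give $N_{K_n/K}(K_n^\x) = \Ocal_K^\x \x \lan \pi_K^n \ran$, so the quotient is cyclic of order $n$ generated by the class of $\pi_K$. Under the cyclic-algebra description preceding the definition of the Hasse invariant, the generator $[(K_n/K, \sigma, \pi_K)]$ has invariant $1/n$, so $inv_K$ restricts to an isomorphism $Br(K_n/K) \overset{\iso}{\raa} \frac{1}{n}\Zbb/\Zbb$. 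This already yields surjectivity of $inv_K$ onto each $\frac{1}{n}\Zbb/\Zbb$, hence onto $\bigcup_n \frac{1}{n}\Zbb/\Zbb = \Qbb/\Zbb$, and injectivity: a class with $inv_K = 0$ lies in some $Br(K_n/K)$, on which $inv_K$ is injective, so it is trivial.

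To finish I would verify that $inv_K$ is a homomorphism. Given two classes, I would refine to a common unramified splitting field $K_m$ with both degrees dividing $m$, represent each as a cyclic algebra $(K_m/K, \sigma, \pi_K^r)$, and invoke proposition \ref{066}.4, which gives $(K_m/K, \sigma, a) \ox_K (K_m/K, \sigma, b) \iso (K_m/K, \sigma, ab)$; thus the invariants add, as required. Combined with the bijectivity above, this establishes $Br(K) \iso \Qbb/\Zbb$.

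The hard part is the opening reduction: showing every central division algebra over a local field is split by an unramified extension, equivalently admits an unramified maximal subfield. The standard route is to extend $v_K$ to the canonical valuation on $D$, observe that the residue degree of $D$ over $K$ equals its index, and lift a generator of the corresponding residue field extension via Hensel's lemma to an unramified subfield of the correct degree, which is then maximal by proposition \ref{009}. The remaining steps — the norm computation and the homomorphism property — are routine given the cyclic-algebra class field theory of theorem \ref{068} and proposition \ref{066}, so the essential content of the theorem rests on this unramified-splitting statement.
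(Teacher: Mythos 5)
Your proof is correct, but note that the paper itself offers no argument for this theorem: it simply cites \cite{reiner} theorem 31.8, and your proposal is essentially the standard proof underlying that citation. All of your ingredients in fact already appear elsewhere in the text: the unramified maximal subfield (inertia field) you identify as the crux is constructed in appendix \ref{033} using proposition \ref{011} (which gives $e(D/K) = f(D/K) = n$, so the residue extension has the right degree to lift) together with proposition \ref{009} for maximality; the surjectivity of the norm on units for unramified extensions is proposition \ref{257} (see also proposition \ref{256}), which combined with $v_K \o N_{K_n/K} = n \cdot v_{K_n}$ yields your computation $N_{K_n/K}(K_n^\x) = \Ocal_K^\x \x \lan \pi_K^n \ran$; and the cyclic-algebra calculus is proposition \ref{066} and theorem \ref{068}. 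Thus your reduction $Br(K) = \bigcup_n Br(K_n/K)$, the identification of each $Br(K_n/K)$ with $\frac{1}{n}\Zbb/\Zbb$ generated by $[(K_n/K, \sigma, \pi_K)]$, and additivity via proposition \ref{066}.4 (after passing to a common unramified splitting field, which is licensed by the well-definedness of $inv_K$ recorded before the statement) give a sound and complete argument, more self-contained than the paper's treatment.
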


By \cite{reiner} theorem 31.9, we have:

\begin{theorem} \label{071}
Let $L$ be a finite extension of degree $m$ over a local field $K$.
There is a commutative diagram
\[
    \xymatrix
    {
        Br(K) \ar[r]^{inv_K}_{\iso} \ar[d]_{L \ox_K \_} 
        & \Qbb/\Zbb \ar[d]^{\cdot m} \\
        Br(L) \ar[r]^{inv_L}_{\iso} & \Qbb/\Zbb 
    }
\]
where the right hand vertical map is multiplication by $m$.
\end{theorem}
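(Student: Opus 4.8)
The plan is to verify the identity $inv_L([L \ox_K A]) = m \cdot inv_K([A])$ for every class $[A] \in Br(K)$; since $inv_K$ and $inv_L$ are isomorphisms by theorem \ref{070}, this is precisely the asserted commutativity. By theorem \ref{070} together with the construction of the Hasse invariant, every class in $Br(K)$ is represented by a cyclic algebra $(K_d/K, \sigma, \pi_K^r)$ for a suitable unramified extension $K_d/K$ of degree $d$ with Frobenius $\sigma$, and such an algebra has invariant $r/d$. It therefore suffices to compute the invariant of the base change of such algebras. I would first reduce to two elementary cases by factoring $L/K$ through its maximal unramified subextension $L_0$, so that $L_0/K$ is unramified of degree $f$ (the residue degree) and $L/L_0$ is totally ramified of degree $e$ (the ramification index), with $m = ef$. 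Since base change is transitive, $L \ox_K A \iso L \ox_{L_0}(L_0 \ox_K A)$, it is enough to show that restriction to an unramified extension multiplies the invariant by the degree and that restriction to a totally ramified extension does the same; composing then yields multiplication by $m = ef$.

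For the totally ramified case $L/L_0$ of degree $e$, take $B = (E/L_0, \psi, \pi_{L_0}^r)$ with $E/L_0$ unramified of degree $n$ and $\psi$ its Frobenius. The compositum $EL$ is unramified over $L$, since $E/L_0$ is unramified while $L/L_0$ is totally ramified, so $E \cap L = L_0$ and $[EL:L]=n$, and the Frobenius of $EL/L$ still restricts to $\psi$ on $E$. By the base-change formula for cyclic symbols (proposition \ref{066} and remark \ref{254}) one obtains $L \ox_{L_0} B \sim (EL/L, \psi, \pi_{L_0}^r)$. Now $v_L(\pi_{L_0}) = e$, and since the units of $\Ocal_L$ are norms from the unramified extension $EL/L$ (proposition \ref{256}), the invariant of a cyclic unramified algebra depends only on the valuation of its defining element; hence $inv_L(L \ox_{L_0} B) = er/n = e \cdot inv_{L_0}(B)$.

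For the unramified case $L_0/K$ of degree $f$, the delicate point is to keep the Frobenius bookkeeping transparent. Given $[A]$ with $inv_K[A] = r/n$, I would represent it over the unramified extension $K_N$ with $N = \mathrm{lcm}(n,f)$, writing $[A] = [(K_N/K, \sigma, \pi_K^{rN/n})]$; this is legitimate because the invariant of this symbol is $(rN/n)/N = r/n$. Since $L_0 = K_f \subset K_N$, the restriction of this class to $L_0$ is computed by remark \ref{254} and theorem \ref{068}: under the identifications $H^2(Gal(K_N/K), K_N^\x) \iso K^\x/N_{K_N/K}(K_N^\x)$ and $H^2(Gal(K_N/L_0), K_N^\x) \iso L_0^\x/N_{K_N/L_0}(K_N^\x)$, restriction is induced by the inclusion $K^\x \hookrightarrow L_0^\x$, so $L_0 \ox_K A \sim (K_N/L_0, \sigma^f, \pi_K^{rN/n})$, where $\sigma^f$ is exactly the Frobenius of $K_N/L_0$. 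As $L_0/K$ is unramified, $v_{L_0}(\pi_K) = 1$ and $[K_N:L_0] = N/f$, so $inv_{L_0}(L_0 \ox_K A) = (rN/n)/(N/f) = fr/n = f \cdot inv_K[A]$.

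The main obstacle is this last computation: one must be sure that after base change the cyclic algebra is described by the honest Frobenius $\sigma^f$ of $K_N/L_0$, with no spurious power, and that the valuation of the defining element is read off in $L_0$ rather than in $K$. Choosing the common unramified overfield $K_N$ (rather than $K_n$) is what makes $L_0 \subset K_N$, so that the restriction map of remark \ref{254} becomes literally the inclusion on the norm-residue groups of theorem \ref{068}; this avoids the twisting that would otherwise appear when $n \nmid f$. Once both cases are established, transitivity of base change and $m = ef$ give $inv_L([L \ox_K A]) = e(f \cdot inv_K[A]) = m \cdot inv_K[A]$, which is exactly the commutativity of the diagram.
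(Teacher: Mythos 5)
Your proof is correct, but note that the paper contains no argument for this statement at all: theorem \ref{071} is quoted verbatim from \cite{reiner}, theorem 31.9, so your proposal should be compared against that source rather than an in-text proof. Reiner handles a general finite $L/K$ in a single step: he base-changes $A \sim (W/K,\sigma,\pi_K^r)$ with $W/K$ unramified of degree $n$, identifies $Gal(WL/L)$ with the subgroup of $Gal(W/K)$ generated by $\sigma^{d}$ for $d = \gcd(n,f)$, and then reconciles the Frobenius of $WL/L$ (which restricts to $\sigma^f$) with that generator via proposition \ref{066}.1 — exactly the $\gcd$-bookkeeping you call the ``twisting.'' Your route through the maximal unramified subextension $L_0$, combined with the choice $N = \mathrm{lcm}(n,f)$ forcing $L_0 \subset K_N$, eliminates this bookkeeping entirely and splits the multiplier $m$ cleanly as $e\cdot f$; that is a genuine simplification, at the modest cost of two steps instead of one. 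Two points in your write-up deserve explicit verification. First, in the unramified step, the claim that restriction becomes the inclusion $K^\x \hookrightarrow L_0^\x$ on the norm-residue groups of theorem \ref{068} is generator-dependent: the identification $H^2(G,K_N^\x) \iso K^\x/N_{K_N/K}(K_N^\x)$ uses the chosen generator of the cyclic group, so you must check that restricting the standard factor set of $(K_N/K,\sigma,a)$ to $\lan \sigma^f \ran$ yields the standard factor set of $(K_N/L_0,\sigma^f,a)$; this is a one-line computation ($f_{\sigma^{fi},\sigma^{fj}} = a$ precisely when $i+j \geq N/f$) and works exactly because $\sigma^f$ is the Frobenius, i.e.\ the distinguished generator, of $Gal(K_N/L_0)$. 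Second, in the totally ramified step, remark \ref{254} does not literally apply, since the top field changes from $E$ to $EL$; but linear disjointness (here $E \cap L = L_0$ with $E/L_0$ Galois) gives $E \ox_{L_0} L \iso EL$, whence a direct algebra isomorphism $(E/L_0,\psi,a)\ox_{L_0} L \iso (EL/L,\tilde{\psi},a)$ with $\tilde{\psi}|_E = \psi$ by a dimension count — stronger than the similarity you invoke and independent of any cohomological citation. With these two checks made explicit, your argument closes correctly: $v_L(\pi_{L_0}) = e$ together with proposition \ref{256} (units are norms in unramified extensions, cf.\ proposition \ref{257}) gives the factor $e$, and $v_{L_0}(\pi_K) = 1$ gives the factor $f$.
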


\begin{corollary} \label{072}
Let $L$ be a finite Galois extension of degree $m$ over a local field
$K$ with Galois group $G$. Then
\[
    H^2(G, L^\x) \iso Br(L/K) \iso \Zbb/m.
\]
\end{corollary}

\begin{proof}
By theorem \ref{071} and the definition of $Br(L/K)$, there is a
commutative diagram
\[
    \xymatrix
    {
        1 \ar[r] & Br(L/K) \ar[r] 
        & Br(K) \ar[r] \ar[d]^{\iso} & Br(L) \ar[d]^{\iso} \\
        && \Qbb/\Zbb \ar[r]_{m} & \Qbb/\Zbb
    }
\]
where the top row is exact, the bottom map is multiplication by $m$, and
the vertical maps are isomorphisms. Hence $Br(L/K)$ is isomorphic to the
kernel of the bottom map.
\end{proof}

\begin{corollary} \label{255}
If $K \subset K' \subset L$ are finite Galois extensions of local fields
with Galois groups $G = Gal(L/K)$ and $G' = Gal(L/K')$, then the
restriction map 
\[
    H^2(G, L^\x) \raa H^2(G', L^\x)
\]
induced by the inclusion $G' \subset G$ is surjective.
\end{corollary}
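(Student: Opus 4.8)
The plan is to translate this statement about restriction in group cohomology into a statement about base change of Brauer groups, and then to make everything completely explicit via the local invariant. Set $m = [L:K]$, $m' = [L:K']$ and $d = [K':K]$, so that $m = dm'$. By remark \ref{254}, the restriction map $H^2(G, L^\x) \raa H^2(G', L^\x)$ fits into a commutative square whose horizontal arrows are the isomorphisms of theorem \ref{063} onto $Br(L/K)$ and $Br(L/K')$, and whose right vertical arrow is the base change homomorphism $\_ \ox_K K'$ (which is well defined on relative Brauer groups, since $L \supset K'$ splits any algebra that $L$ already splits over $K$). Thus it suffices to prove that $\_ \ox_K K': Br(L/K) \raa Br(L/K')$ is surjective.

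First I would recall from the proof of corollary \ref{072} that $Br(L/K)$ is exactly the kernel of the base change map $Br(K) \raa Br(L)$, and likewise that $Br(L/K')$ is the kernel of $Br(K') \raa Br(L)$. Under the invariant isomorphisms $inv_K: Br(K) \iso \Qbb/\Zbb$ and $inv_{K'}: Br(K') \iso \Qbb/\Zbb$ of theorem \ref{070}, together with theorem \ref{071} (which identifies $Br(L) \iso \Qbb/\Zbb$ and presents $Br(K) \raa Br(L)$ as multiplication by $m$, respectively $Br(K') \raa Br(L)$ as multiplication by $m'$), the subgroup $Br(L/K)$ becomes the $m$-torsion $\frac{1}{m}\Zbb/\Zbb$ and $Br(L/K')$ becomes the $m'$-torsion $\frac{1}{m'}\Zbb/\Zbb$.

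Next I would apply theorem \ref{071} to the extension $K \subset K'$ of degree $d$: it makes the base change $Br(K) \raa Br(K')$ correspond to multiplication by $d$ on $\Qbb/\Zbb$. Restricting this to the relative Brauer groups sends a class of invariant $\frac{a}{m}$ to the class of invariant $\frac{ad}{m} = \frac{a}{dm'} \cdot d = \frac{a}{m'}$. In particular the generator $\frac{1}{m}$ of $Br(L/K) \iso \frac{1}{m}\Zbb/\Zbb$ maps to $\frac{1}{m'}$, which generates $Br(L/K') \iso \frac{1}{m'}\Zbb/\Zbb$; hence $\_\ox_K K'$ is surjective, and by the square of remark \ref{254} so is the cohomological restriction map.

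The only delicate point is bookkeeping: one must confirm that the invariant isomorphisms for $K$ and $K'$ are compatibly normalized, so that theorem \ref{071} genuinely exhibits the two base change maps as multiplication by $m$, $m'$ and $d$ on a single copy of $\Qbb/\Zbb$, and that the relative Brauer subgroups then sit inside $\Qbb/\Zbb$ as the claimed torsion subgroups. Once these identifications are pinned down, the surjectivity reduces to the one-line divisibility check $\frac{ad}{m} = \frac{a}{m'}$ above, and no further obstacle arises.
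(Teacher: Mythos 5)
Your proposal is correct and follows essentially the same route as the paper's proof: both pass through remark \ref{254} to the base-change map on relative Brauer groups, identify $Br(L/K)$ and $Br(L/K')$ via the local invariant with the torsion subgroups $\frac{1}{m}\Zbb/\Zbb$ and $\frac{1}{m'}\Zbb/\Zbb$ of $\Qbb/\Zbb$, and observe via theorem \ref{071} that $\_\ox_K K'$ acts as multiplication by $d=[K':K]$, sending the generator $\frac{1}{m}$ to the generator $\frac{1}{m'}$. The bookkeeping on normalizations you flag is exactly what theorem \ref{071} supplies, so no gap remains.
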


\begin{proof}
The diagram
\[
    \xymatrix
    {
        H^2(G, L^\x) \ar[r]^{\iso} \ar[d]_{\text{res}}
        & Br(L/K) \ar[r]^{\text{inc}} \ar[d]
        & Br(K) \ar[r]^{\_ \ox_K L} \ar[d]_{\_ \ox_K K'}
        & Br(L) \ar[r]^{\iso} \ar@{=}[d]
        & \Qbb/\Zbb \ar@{=}[d] \\ 
        H^2(G', L^\x) \ar[r]^{\iso}
        & Br(L/K') \ar[r]^{\text{inc}}
        & Br(K') \ar[r]^{\_ \ox_K' L}
        & Br(L) \ar[r]^{\iso}
        & \Qbb/\Zbb
    }
\]
given by theorem \ref{063} and \ref{071} is commutative by remark
\ref{254}. By corollary \ref{072}, the relative Brauer groups $Br(L/K)$
and $Br(L/K')$ are cyclic of order $|G|$ and $|G'|$ respectively, and
the second square in the above diagram may be identified with the
commutative square
\[
    \xymatrix{
        \Zbb/|G| \ar[d] \ar[r]^{\text{inc}}
        & \Qbb/\Zbb \ar[d]^{\cdot \frac{|G|}{|G'|}} \\
        \Zbb/|G'| \ar[r]^{\text{inc}}
        & \Qbb/\Zbb,
    }
\]
where the right hand vertical map is multiplication by
$\frac{|G|}{|G'|}$ according to theorem \ref{071}. In particular this
latter map is surjective and sends $\frac{1}{|G|}$ to $\frac{1}{|G'|}$.
Hence the generator of $\Zbb/|G|$ associated to $\frac{1}{|G|}$ must be
sent to a generator of $\Zbb/|G'|$. The second vertical map in the first
diagram given above is therefore surjective and the result follows.
\end{proof}

\begin{proposition} \label{256}
Let $L/K$ be a finite Galois extension of local fields of characteristic
zero with cyclic Galois group $G$.
\begin{enumerate}
    \item If $L/K$ is unramified, the valuation map induces an
    isomorphism
    \[
        H^2(G, L^\x) 
        \iso H^2(G, \frac{1}{e(L)}\Zbb)
        \iso \lan \pi_K \ran / \lan \pi_K^{|G|} \ran,
    \]
    for $e(L)$ the ramification index of $L/\Qbb_p$ and $\pi_K$ a
    uniformizing element of $K$.

    \item If $L/K$ is totally ramified, the valuation map induces an
    isomorphism
    \[
        H^2(G, L^\x) \iso H^2(G, \Ocal_L^\x),
    \]
    for $\Ocal_L$ the ring of integers of $L$.
\end{enumerate}
\end{proposition}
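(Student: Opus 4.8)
The plan is to derive both statements from the short exact sequence of $G$-modules
\[
    1 \raa \Ocal_L^\x \raa L^\x \overset{v}{\raa} \tfrac{1}{e(L)}\Zbb \raa 1,
\]
where $v$ is the valuation (normalized by $v(p)=1$) and $G = Gal(L/K)$ acts trivially on the value group $v(L^\x) = \frac{1}{e(L)}\Zbb$, since the Galois automorphisms of a local field preserve its valuation. Because $G$ is cyclic, its cohomology is $2$-periodic and I may pass to Tate groups $\hat H^\ast$; the associated long exact sequence then relates $H^\ast(G, L^\x)$, $H^\ast(G, \Ocal_L^\x)$ and $H^\ast(G, \frac{1}{e(L)}\Zbb)$. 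Note that $\frac{1}{e(L)}\Zbb$ is isomorphic to $\Zbb$ as a trivial $G$-module, so $H^1(G, \frac{1}{e(L)}\Zbb) = Hom(G, \Zbb) = 0$ while $H^2(G, \frac{1}{e(L)}\Zbb) \iso \frac{1}{e(L)}\Zbb / |G| \cdot \frac{1}{e(L)}\Zbb \iso \Zbb/|G|$.

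For the unramified case (1), the key input is that $\Ocal_L^\x$ is a cohomologically trivial $G$-module: the norm $N_{L/K}\colon \Ocal_L^\x \ra \Ocal_K^\x$ is surjective by proposition \ref{257}, which kills $\hat H^0$, and the companion vanishing of $\hat H^{-1}$ together with $2$-periodicity propagate the vanishing to all degrees. The long exact sequence then collapses to isomorphisms $H^\ast(G, L^\x) \iso H^\ast(G, \frac{1}{e(L)}\Zbb)$ induced by $v$; in degree $2$ this is the first isomorphism. Since $L/K$ is unramified, $\pi_K$ is also a uniformizing element of $L$, so $v$ carries $\lan \pi_K \ran$ isomorphically onto the full value group $\frac{1}{e(L)}\Zbb$, whence $\lan \pi_K \ran / \lan \pi_K^{|G|} \ran \iso H^2(G, \frac{1}{e(L)}\Zbb)$, which supplies the second isomorphism.

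For the totally ramified case (2), the vanishing of $H^1(G, \frac{1}{e(L)}\Zbb)$ already shows that the inclusion $\Ocal_L^\x \hookrightarrow L^\x$ induces an injection $H^2(G, \Ocal_L^\x) \hookrightarrow H^2(G, L^\x)$. It remains to prove surjectivity, which I expect to be the main obstacle: one must show that the valuation map annihilates $H^2(G, L^\x)$, equivalently that the two groups have the same order. I would argue by cardinalities. By corollary \ref{072} one has $|H^2(G, L^\x)| = |G|$, while for cyclic $G$ one has $H^2(G, \Ocal_L^\x) \iso \hat H^0(G, \Ocal_L^\x) = \Ocal_K^\x / N_{L/K}(\Ocal_L^\x)$. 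Because $L/K$ is totally ramified, the residue degree is $1$, so the norm of a uniformizing element of $L$ is a uniformizing element of $K$; hence $K^\x = \Ocal_K^\x \cdot \lan N_{L/K}(\pi_L) \ran$, and since $N_{L/K}(x) \in \Ocal_K^\x$ forces $x \in \Ocal_L^\x$, one obtains $K^\x / N_{L/K}(L^\x) \iso \Ocal_K^\x / N_{L/K}(\Ocal_L^\x)$. Theorem \ref{068} identifies the left-hand side with $H^2(G, L^\x)$, of order $|G|$, so $|H^2(G, \Ocal_L^\x)| = |G|$ as well, and an injection of finite groups of equal order is an isomorphism. The genuinely delicate points throughout are the local class field theory inputs, namely the cohomological triviality of the units in the unramified case and the norm-index computation in the totally ramified case, and these carry the real content of the argument.
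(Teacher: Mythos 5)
Your overall strategy --- the short exact sequence $1 \ra \Ocal_L^\x \ra L^\x \overset{v}{\ra} \frac{1}{e(L)}\Zbb \ra 1$, its long exact sequence, and the $2$-periodicity coming from cyclicity of $G$ --- is exactly the paper's. But in the unramified case your argument has a gap: you obtain $\hat H^0(G,\Ocal_L^\x)=0$ from the norm surjectivity of proposition \ref{257}, and then invoke ``the companion vanishing of $\hat H^{-1}$'' with no justification. That vanishing is genuinely needed and does not follow from what you proved: periodicity makes norm surjectivity kill only the \emph{even}-degree Tate groups, while the surjectivity of $H^2(G,L^\x)\ra H^2(G,\frac{1}{e(L)}\Zbb)$ requires $H^3(G,\Ocal_L^\x)\iso H^1(G,\Ocal_L^\x)\iso\hat H^{-1}(G,\Ocal_L^\x)$ to vanish, an odd-degree statement. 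The fact is true and the fix is short: since $L/K$ is unramified, $e(L)=e(K)$, so $v\colon K^\x \ra \frac{1}{e(L)}\Zbb$ is already surjective, and the segment $H^0(G,L^\x) \ra H^0(G,\frac{1}{e(L)}\Zbb) \ra H^1(G,\Ocal_L^\x) \ra H^1(G,L^\x)=0$ of the long exact sequence (the last group vanishing by Hilbert's theorem 90) forces $H^1(G,\Ocal_L^\x)=0$. The paper sidesteps the whole issue by citing \cite{serre4} proposition 1, which gives $\hat H^i(G,\Ocal_L^\x)=0$ for all $i$ at once.

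In the totally ramified case your proof is complete and correct, but it takes a genuinely different route. You prove surjectivity of $H^2(G,\Ocal_L^\x)\ra H^2(G,L^\x)$ by counting: $|H^2(G,L^\x)|=|G|$ by corollary \ref{072}, and your reduction $K^\x/N_{L/K}(L^\x)\iso\Ocal_K^\x/N_{L/K}(\Ocal_L^\x)$ (valid because $f(L/K)=1$ makes $N_{L/K}(\pi_L)$ a uniformizer of $K$, and a unit norm forces a unit argument) identifies this order with $|\hat H^0(G,\Ocal_L^\x)|=|H^2(G,\Ocal_L^\x)|$, so your injection between finite groups of equal order is an isomorphism. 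The paper instead argues that the map $H^2(G,L^\x)\ra H^2(G,\frac{1}{e(L)}\Zbb)$ is itself zero: identifying $H^2$ with $\hat H^0$, it is induced by $v$ on $K^\x/N_{L/K}(L^\x)$, and $v(K^\x)=\frac{1}{e(K)}\Zbb=|G|\cdot\frac{1}{e(L)}\Zbb$ lands exactly in the subgroup being quotiented out; exactness then gives surjectivity with no appeal to Brauer groups. The paper's computation is more elementary; yours leans harder on the class field theory inputs (theorem \ref{068} and corollary \ref{072}) but is equally valid, and the norm-index reduction you carry out is a worthwhile fact in its own right.
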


\begin{proof}
The valuation map $v = v_{\Qbb_p}: L^\x \ra \frac{1}{e(L)}\Zbb$ is
surjective and induces a short exact sequence
\[
    1 \raa \Ocal_L^\x \raa L^\x \raa \frac{1}{e(L)}\Zbb \raa 1,
\]
which in turns induces a long exact sequence
\[
    H^1(G, \frac{1}{e(L)}\Zbb) \ra
    H^2(G, \Ocal_L^\x) \ra
    H^2(G, L^\x) \ra
    H^2(G, \frac{1}{e(L)}\Zbb) \ra
    H^3(G, \Ocal_L^\x).
\]

If $L/K$ is unramified, \cite{serre4} proposition 1 says that $H^i(G,
\Ocal_L^\x)$ is trivial for all $i \in \Zbb$ and hence yields the
result.

If $L/K$ is totally ramified, there are unifomizing elements $\pi_K$ of
$K$ and $\pi_L$ of $L$ such that
\[
    \pi_K = (\pi_L)^{|G|}.
\]
Therefore
\[
    v(\pi_K) = |G|v(\pi_L) = |G| \cdot \frac{1}{e(L)}\Zbb,
\]
and consequently the map $H^2(G, L^\x) \ra H^2(G, \frac{1}{e(L)}\Zbb)$
is trivial. Moreover, since $G$ is finite and $\frac{1}{e(L)}\Zbb$ is
infinite, we have $H^1(G, \frac{1}{e(L)}\Zbb) = 0$ and the result
follows.
\end{proof}

\begin{example} \label{258}
For any prime $p$ and $\alpha \geq 1$, we have
\[
    p 
    \in N_{\Qbb_p(\zeta_{p^\alpha})/\Qbb_p}(\Qbb_p(\zeta_{p^\alpha})^\x).
\]
Indeed, for $1 \leq r \leq \alpha-1$ let $\sigma$ be a generator of
$Gal(\Qbb_p(\zeta_{p^{r+1}}) / \Qbb_p(\zeta_{p^{r}}))$ satisfying 
\[
    \sigma(\zeta_{p^{r+1}}) = \zeta_{p^{r+1}}\zeta_p, 
\]
and define 
\[
    \Sigma_i(X_1, \ldots, X_p)
\]
to be the homogeneous symmetric polynomial of degree $i$ in $p$
variables $X_1, \ldots, X_p$, so that
\[
    \prod_{i=1}^p (X-X_i)\
    =\ \sum_{i=1}^p (-1)^i \Sigma_i(X_1, \ldots, X_n)X^{n-i}.
\]
Then for $1 \leq k \leq p-1$ we have
\begin{align*}
    N_{\Qbb_p(\zeta_{p^{r+1}}) 
    / \Qbb_p(\zeta_{p^{r}})}(1-\zeta_{p^{r+1}}^k)\
    &=\ \prod_{j=0}^{p-1} (1-\sigma^j(\zeta_{p^{r+1}}^k))\\
    &=\ \sum_{i=0}^p (-1)^i\ \Sigma_i(\zeta_{p^{r+1}}^k,\
    \sigma(\zeta_{p^{r+1}}^k),\ \ldots,\ \sigma^{p-1}(\zeta_{p^{r+1}}^k))\\
    &=\ \sum_{i=0}^p (-1)^i\ \Sigma_i(\zeta_{p^{r+1}}^k,\
    \sigma(\zeta_{p^{r+1}}\zeta_{p})^k,\ \ldots,\
    \sigma(\zeta_{p^{r+1}}\zeta_{p}^{p-1})^k)\\
    &=\ \sum_{i=0}^p (-1)^i\ \zeta_{p^{r+1}}^{ik}\
    \Sigma_i(1,\ \sigma(\zeta_{p}^k),\ 
    \ldots,\ \sigma(\zeta_{p}^{(p-1)k}))\\
    &=\ 1-\zeta_{p^r}^k,
\end{align*}
where the last equality is a consequence of the fact that
\[
    \Sigma_i(1,\ \sigma(\zeta_{p}^k),\ 
    \ldots,\ \sigma(\zeta_{p}^{(p-1)k}))\
    =\
    \begin{cases}
        1 & \text{if } i=0,p,\\
        0 & \text{if } i\neq 0,p.
    \end{cases}
\]
As shown in corollary \ref{093}
\[
    p 
    = \prod_{k=1}^{p-1} (\zeta_p^k-1) 
    = N_{\Qbb_p(\zeta_p)/\Qbb_p}(\zeta_p-1).
\]
Consequently 
\[
    p \in 
    N_{\Qbb_p(\zeta_{p^\alpha})/\Qbb_p}
    (\Qbb_p(\zeta_{p^\alpha})^\x)
    \qq \text{and} \qq
    p \in N_{\Qbb_p(\zeta_{p^\alpha})/\Qbb_p(\zeta_p)}
    (\Qbb_p(\zeta_{p^\alpha})^\x).
\]
Moreover if $p=2$, we have 
\[
    2, (1 \pm \zeta_4) 
    \in N_{\Qbb_2(\zeta_{2^\alpha}) / \Qbb_2(\zeta_4)} 
    (\Qbb_2(\zeta_{2^\alpha})^\x)
    \qq \text{for} \q \alpha \geq 2.
\]
\end{example}

For a local field $K$ of characteristic zero with uniformizing element
$\pi_K$ and ring of integers $\Ocal_K$, we let
\begin{align*} \label{340}
    U_i(\Ocal_K^\x) 
    &= \{ x \in \Ocal_K^\x\ |\ v_K(x-1) \geq i \}\\[1ex] 
    &= \{ x \in \Ocal_K^\x\ |\ x \equiv 1 \mod \pi_K^i\},
    \q i \geq 0,
\end{align*}
be the $i$-th group in the filtration
\[
    \Ocal_K^\x 
    = U_0(\Ocal_K^\x) 
    \supset U_1(\Ocal_K^\x) 
    \supset U_2(\Ocal_K^\x) 
    \supset \ldots
\]

\begin{proposition} \label{257}
Let $L/K$ be a finite Galois extension of local fields of characteristic
zero with Galois group $G$. If $L/K$ is unramified, then the trace
\[
    Tr_G = Tr_{L/K}: l \raa k
\]
is surjective on the residue fields, and the norm
\[
    N_G = N_{L/K}: \Ocal_L^\x \raa \Ocal_K^\x
\]
is surjective on the groups of units of the rings of integers.
\end{proposition}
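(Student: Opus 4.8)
The plan is to treat the two assertions separately, the trace statement being essentially formal while the norm statement requires a successive-approximation argument up the unit filtration. First I would dispose of the trace. Since $K$ is a local field, its residue field $k$ is finite, and because $L/K$ is unramified the residue extension $l/k$ is the Galois extension of finite fields corresponding to $G = Gal(L/K)$ under the canonical isomorphism $Gal(L/K) \iso Gal(l/k)$; moreover reduction modulo the maximal ideal carries $Tr_{L/K}$ on $\Ocal_L$ to $Tr_{l/k}$ on $l$. As $l/k$ is separable, its trace form is non-degenerate, so $Tr_{l/k}: l \ra k$ is a non-zero $k$-linear map and hence surjective, which gives the first assertion. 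The same isomorphism shows $N_{L/K}$ reduces to the finite-field norm $N_{l/k}: l^\x \ra k^\x$, which is surjective by the standard fact on norms of finite fields.

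For the norm, the key observation is that since $L/K$ is unramified, a uniformizer $\pi_K$ of $K$ is also a uniformizer of $L$, so that $U_i(\Ocal_L^\x) = 1 + \pi_K^i\Ocal_L$ and $U_i(\Ocal_K^\x) = 1 + \pi_K^i\Ocal_K$ for all $i \geq 1$. I would then record the norm-trace compatibility on the graded pieces: for $i \geq 1$ and $a \in \Ocal_L$,
\[
    N_{L/K}(1 + \pi_K^i a)
    = \prod_{\sigma \in G}(1 + \pi_K^i\sigma(a))
    \equiv 1 + \pi_K^i Tr_{L/K}(a) \q \mod (\pi_K^{i+1}),
\]
the cross terms having valuation at least $2i \geq i+1$. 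Thus the map induced by $N_{L/K}$ on the quotients $U_i/U_{i+1} \iso l \ra U_i/U_{i+1} \iso k$ is exactly $Tr_{l/k}$, which is surjective by the first part.

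Finally I would prove surjectivity of $N_{L/K}: \Ocal_L^\x \ra \Ocal_K^\x$ by successive approximation. Given $b \in \Ocal_K^\x$, surjectivity of $N_{l/k}$ produces $a_0 \in \Ocal_L^\x$ with $N_{L/K}(a_0) \equiv b \mod (\pi_K)$, that is $b/N_{L/K}(a_0) \in U_1(\Ocal_K^\x)$. Inductively, if $a \in \Ocal_L^\x$ satisfies $b/N_{L/K}(a) = 1 + \pi_K^i c \in U_i(\Ocal_K^\x)$ with $i \geq 1$, I would use surjectivity of $Tr_{l/k}$ to find $d \in \Ocal_L$ with $Tr_{L/K}(d) \equiv c \mod (\pi_K)$; the compatibility above then gives $N_{L/K}(1 + \pi_K^i d) \equiv 1 + \pi_K^i c \mod (\pi_K^{i+1})$, so replacing $a$ by $a(1 + \pi_K^i d) \in \Ocal_L^\x$ pushes $b/N_{L/K}(a)$ into $U_{i+1}(\Ocal_K^\x)$. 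The resulting sequence of approximants is Cauchy for the $\pi_K$-adic topology, and by completeness of $\Ocal_L$ it converges to some $a_\infty \in \Ocal_L^\x$ with $N_{L/K}(a_\infty) = b$, using continuity of the norm. I expect the main obstacle to lie precisely in this last step: one must verify that the iteration is well-defined at each stage and that the approximants converge, which is exactly where completeness of the local field and the graded norm-trace compatibility combine to close the argument.
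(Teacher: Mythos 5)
Your proof is correct, but it takes a different route from the paper's on both halves of the statement. For the surjectivity of the trace, the paper argues cohomologically: it invokes the additive Hilbert 90 to get $H^1(G, l) = 0$, then runs a counting argument on the periodic complex $l \overset{1-t}{\ra} l \overset{Tr}{\ra} l \ra \ldots$ to deduce $|Im(Tr)| = |Ker(1-t)| = |k|$ (and, in passing, $H^2(G,l)=0$), whereas you simply use separability of $l/k$ — the trace is a non-zero $k$-linear functional, hence surjective. Your argument is more elementary and self-contained; the paper's has the side benefit of producing the vanishing of $H^1$ and $H^2$ of the residue field, in keeping with its cohomological toolkit elsewhere (e.g.\ proposition \ref{256}). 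For the norm, both proofs pivot on the same key fact, namely that on the graded pieces $U_i(\Ocal_L^\x)/U_{i+1}(\Ocal_L^\x) \iso l \ra U_i(\Ocal_K^\x)/U_{i+1}(\Ocal_K^\x) \iso k$ the norm induces the residue trace — a fact the paper merely asserts and you actually verify via the valuation estimate on cross terms, which is a genuine improvement in completeness. From there the paper proceeds by a five-lemma induction on the finite quotients $U_1/U_{i+1}$ and passes (implicitly) to the limit, while you run the classical successive-approximation scheme and make the limit step explicit through completeness of $\Ocal_L$ and continuity of $N_{L/K}$ (each $\sigma \in G$ being an isometry); these are two presentations of the same inductive skeleton, with yours handling the convergence that the paper glosses over. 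You also explicitly dispose of the $U_0/U_1$ level by citing surjectivity of the finite-field norm $N_{l/k}: l^\x \ra k^\x$, a step the paper leaves implicit in its final clause ``and consequently on $\Ocal_K^\x$.'' In short: same structural core, but your treatment of the trace is by a different (and shorter) key lemma, and your lifting argument is more carefully completed.
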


\begin{proof}
Since $G = Gal(l/k)$ is cyclic, Hilbert's theorem 90 yields $H^1(G, l) =
0$. Let $t$ denote a generator of $G$, and let $Tr := Tr_G$. In the
periodic complex
\[
    \xymatrix{
        l \ar[r]^{1-t}
        & l \ar[r]^{Tr}
        & l \ar[r]^{1-t}
        & l \ar[r]^{Tr}
        & \ldots
    }
\]
we have $Ker(Tr) = Im(1-t)$. Hence
\[
    |Ker(1-t)|
    = \frac{|l|}{|Im(1-t)|}
    = \frac{|l|}{|Ker(Tr)|}
    = |Im(Tr)|,
\]
and $H^2(G, l) = 0$. Because $Ker(1-t)=k$, it follows that $Im(Tr_G) =
k$.

In order to show the second assertion, we first note that for any $i
\geq 1$ the norm $N_G$ becomes the trace
\[
    Tr: U_i(\Ocal_L^\x)/U_{i+1}(\Ocal_L^\x)
    \raa U_i(\Ocal_K^\x)/U_{i+1}(\Ocal_K^\x)
\]
on the successive quotients of the filtration of the units of the rings
of integers; these maps are surjective by the first assertion. For each
$i \geq 1$, consider the commutative diagram
\[
    \xymatrix{
        1 \ar[r]
        & U_i(\Ocal_L^\x)/U_{i+1}(\Ocal_L^\x) \ar[r] \ar@{->>}[d]^{Tr}
        & U_1(\Ocal_L^\x)/U_{i+1}(\Ocal_L^\x) \ar[r] \ar[d]
        & U_1(\Ocal_L^\x)/U_{i}(\Ocal_L^\x) \ar[r] \ar[d]
        & 1 \\
        1 \ar[r]
        & U_i(\Ocal_K^\x)/U_{i+1}(\Ocal_K^\x) \ar[r]
        & U_1(\Ocal_K^\x)/U_{i+1}(\Ocal_K^\x) \ar[r]
        & U_1(\Ocal_K^\x)/U_{i}(\Ocal_K^\x) \ar[r]
        & 1,
    }
\]
where the horizontal lines are exact and the vertical maps are induced
by the norm. If $i=1$ the vertical maps are obviously surjective.
Moreover if $i \geq 2$ and if the vertical map on the right hand side is
surjective, then the middle one is also surjective by the five lemma.
We conclude by induction on $i$ that $N_G$ is surjective on
$U_1(\Ocal_K^\x)$, and consequently on $\Ocal_K^\x$.
\end{proof}

\begin{remark} \label{271}
According to classical Galois theory (see for example \cite{lang}
chapter VI theorem 1.12), if $K_1$ and $K_2$ are extensions of $\Qbb_p$
such that $K_1K_2 = L$, $K_1 \cap K_2 = K$ and $L/K$ is Galois with
abelian Galois group, then any $x \in K$ such that $x \in
N_{K_1/K}(K_1^\x)$ satisfies $x \in N_{L/K_2}(K^\x)$.
\end{remark}


\chapter{Division algebras over local fields} 
\label{033}

We provide here a short account on division algebras over local fields.
The reader may refer to \cite{reiner} chapter 3 for more details.

Let $K$ be a local field with residue field of cardinality $q$, let
$\pi_K$ be a uniformizing element of $K$, and let $D$ be a central
division algebra of dimension $n^2$ over $K$. As shown in \cite{reiner}
theorem 12.10, the normalized valuation $v_K: \pi_K \mapsto 1$ on $K$
extends in a unique way to a valuation $v = v_D$ \label{325} on $D$. By
\cite{reiner} section 13, we know that the skew field $D$ is complete
with respect to $v$ and that the maximal order $\Ocal_D$ of $D$ is of
degree $n^2$ over the ring of integers $\Ocal_K$ of $K$. Let $d$ and $k$
denote the residue fields of $D$ and $K$ respectively. By \cite{reiner}
theorem 13.3 we have
\[
    n^2 = ef,
\]
where 
\begin{itemize}\label{301}
    \item $e = e(D/K) = |v(D^\x)/v(K^\x)|$ denotes the
    \emph{ramification index} of $D$ over $K$;

    \item $f = f(D/K) = [d:k]$ denotes the \emph{inertial degree} of $D$
    over $K$.
\end{itemize}

\begin{proposition} \label{011}
If $D$ is a central division algebra of dimension $n^2$ over a local
field $K$, then
\[
    e(D/K) = f(D/K) = n.
\]
\end{proposition}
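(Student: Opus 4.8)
The plan is to combine the dimension relation $n^2 = ef$ (from \cite{reiner} theorem 13.3, quoted just above the statement) with two separate bounds $f \leq n$ and $e \leq n$; once both bounds are in hand, the identity $ef = n^2$ forces $e = f = n$, since $e \leq n$ and $f \leq n$ give $ef \leq n^2$ with equality only when both factors equal $n$.

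First I would bound the inertial degree $f$. Since $\Ocal_D$ is the maximal order of $D$, its residue ring $d = \Ocal_D/\mathfrak{m}_D$ is a finite division ring (it is finite-dimensional over the finite field $k$, hence finite), and by Wedderburn's theorem on finite division rings it is in fact a field, so $d/k$ is a separable field extension of degree $f$. Lifting a primitive element of $d$ over $k$ through Hensel's lemma, using the completeness of $D$ with respect to $v = v_D$, produces an unramified commutative subfield $W$ with $K \subset W \subset D$ and $[W:K] = f$. By corollary \ref{008}, the degree $[W:K]$ divides $\deg(D) = n$, and therefore $f \leq n$.

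Next I would bound the ramification index $e$. Normalizing $v$ so that $v(\pi_K) = 1$, the value group is $v(D^\x) = \frac{1}{e}\Zbb$, so I may choose $\Pi \in D^\x$ with $v(\Pi) = \frac{1}{e}$. The commutative subfield $L := K(\Pi)$ then satisfies $\frac{1}{e} \in v(L^\x) = \frac{1}{e(L/K)}\Zbb$, whence $e$ divides the ramification index $e(L/K)$ of the extension $L/K$. Combining $e \leq e(L/K)$ with the elementary inequality $e(L/K) \leq [L:K]$ and with the divisibility $[L:K] \mid n$ furnished once more by corollary \ref{008}, I obtain $e \leq e(L/K) \leq [L:K] \leq n$.

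Finally, from $e \leq n$, $f \leq n$ and $ef = n^2$ I conclude $e = f = n$. The main obstacle is the construction of the unramified subfield $W$ realizing the full residue degree $f$: I must check that the separable residue extension $d/k$ lifts to a subfield of $D$ and that this lift is genuinely unramified of degree exactly $f$ over $K$. This is the one step that relies on the completeness of $D$ and the structure of $\Ocal_D$ rather than reducing directly to corollary \ref{008} and the dimension formula, so it deserves the most care.
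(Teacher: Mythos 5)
Your proof is correct and follows essentially the same route as the paper: both arguments bound $e(D/K)$ via a commutative subfield generated by an element of valuation $\frac{1}{e}$, bound $f(D/K)$ using that the residue ring $d$ is a finite field (Wedderburn) generated over $k$ by one element, and then force $e=f=n$ from $ef=n^2$. The only divergence is that your Hensel-lifted unramified subfield $W$ is more machinery than the inequality requires: the paper simply takes any lift $y \in \Ocal_D$ of a primitive element $\bar{y}$ of $d$, so that $f = [k(\bar{y}):k] \leq [K(y):K] \leq n$ without solving any equation.
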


\begin{proof}
Because there exists an element $x \in D$ such that $v(x) = e(D/K)^{-1}$
and as $x$ belongs to a commutative subfield of degree at most $n$ over
$K$, it follows that $e(D/K) \leq n$. On the other hand $k$ is a finite
field and $d = k(\bar{y})$ is a commutative field, for $\bar{y}$ the
image in $d$ of some suitable $y \in D$. Hence $f(D/K) \leq n$ and the
result follows.
\end{proof}

Since $[d:k] = n$, we can find an $x \in \Ocal_D$ such that $k(\bar{x})
= d$. Let $K_n = K(x)$. Because $K_n$ is commutative, $[K_n:K] \leq n$.
On the other hand, $\bar{x}$ is an element of the residue field $k_n$ of
$K_n$, while $k_n = d$, so that $[k_n:k] = n$. It follows that $K_n$ is
a maximal unramified extension of degree $n$ over $K$ in $D$. Such a
$K_n$ is referred to as an \emph{inertia field} of $D$. Of course the
above construction of $K_n$ is not unique, but the Skolem-Noether
theorem implies that all inertia fields are conjugate. 

Let $\omega \in D^\x$ be a root of unity satisfying
\[
    K(\omega) = K_n;
\]
in particular $\omega$ is of order $q^n-1$. According to \cite{reiner}
theorem 14.5, there exists a uniformizing element $\pi$ of $D$
satisfying
\[
    \pi^n = \pi_K
    \qq \text{and} \qq
    \pi \omega \pi^{-1} = \omega^{q^s},
\]
where $s < n$ is a positive integer prime to $n$, uniquely determined by
$D$, which does not depend upon the choice of $\omega$ or $\pi$. Let $r
\in \Zbb$ be such that $rs \equiv 1 \mod n$; in particular $r$ is prime
to $n$. Using \cite{reiner} theorem 31.1 and proposition \ref{066}, we
know that $D$ is isomorphic to the cyclic algebra
\[
    D
    \iso (K_n/K, \sigma^s, \pi_K)
    \iso (K_n/K, \sigma, \pi_K^r),
\]
and is classified up to isomorphism by its invariant 
\[
    inv_K(D) = \frac{r}{n}\ \in \Qbb/\Zbb.
\]
In other words we have:

\begin{theorem} \label{012}
All Azumaya algebras over a local field $K$ are classified up to
isomorphism, via $inv_K$, by the elements of the additive group
$\Qbb/\Zbb$.
\end{theorem}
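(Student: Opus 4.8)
The plan is to reduce the statement to the isomorphism $Br(K) \iso \Qbb/\Zbb$ of theorem \ref{070} by exhibiting a natural bijection between the isomorphism classes of Azumaya algebras over $K$ and the Brauer group $Br(K)$. The map I would use is $D \mapsto [D]$, sending an Azumaya algebra to its class in $Br(K)$, composed with $inv_K$. The whole argument is then formal once the surjectivity of $inv_K$ onto $\Qbb/\Zbb$ is granted.

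First I would check that $D \mapsto [D]$ is well defined and injective on isomorphism classes. An Azumaya algebra is by definition a central division $K$-algebra, so it is in particular a central simple $K$-algebra and determines a class in $Br(K)$; isomorphic algebras plainly yield the same class. For injectivity, suppose two Azumaya algebras $D_1, D_2$ satisfy $[D_1]=[D_2]$, that is $D_1 \sim D_2$. By the definition of equivalence their skewfield parts are $K$-isomorphic; but a division algebra is its own skewfield part (the integer $n$ in the Wedderburn decomposition $D_i \iso M_n(D_i')$ is $n=1$, $D_i'=D_i$), so $D_1 \iso D_2$.

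Next I would verify surjectivity. Given any class $[A] \in Br(K)$ represented by a central simple $K$-algebra $A$, Wedderburn's theorem \ref{003} gives $A \iso M_r(D)$ with $D$ a $K$-division algebra, unique up to isomorphism. Since $Z(M_r(D)) = Z(D)$ and $A$ is central over $K$, we get $Z(D)=K$, so $D$ is central, hence an Azumaya algebra with $[D]=[A]$. Thus every Brauer class is hit, and $D \mapsto [D]$ is a bijection from the isomorphism classes of Azumaya algebras onto $Br(K)$.

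Finally, composing with the isomorphism $inv_K: Br(K) \iso \Qbb/\Zbb$ of theorem \ref{070} gives the desired bijective correspondence, and by construction the invariant attached to $D$ agrees with $inv_K(D)=\frac{r}{n}$ computed from the cyclic presentation $D \iso (K_n/K, \sigma, \pi_K^r)$ established just above the statement. The only point requiring care — the main, though modest, obstacle — is the passage from ``the skewfield parts agree'' to ``the algebras are isomorphic,'' which rests on the uniqueness clause of Wedderburn's theorem together with the observation that a division algebra coincides with its own skewfield part; everything else is purely formal.
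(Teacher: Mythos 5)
Your proof is correct, but it takes a different route from the paper. The paper obtains theorem \ref{012} constructively, as the summary of the structure theory developed just before it: given an Azumaya algebra $D$ over $K$, one produces an inertia field $K_n = K(\omega)$ and a uniformizing element $\pi$ with $\pi^n = \pi_K$ and $\pi\omega\pi^{-1} = \omega^{q^s}$, exhibits $D$ as the cyclic algebra $(K_n/K, \sigma, \pi_K^r)$ with $rs \equiv 1 \bmod n$, and then cites \cite{reiner} (theorems 14.5, 31.1, 31.5) for the fact that $r$ modulo $n$ determines and is determined by the isomorphism class. You instead reduce the statement formally to theorem \ref{070} ($inv_K : Br(K) \iso \Qbb/\Zbb$): isomorphism classes of Azumaya algebras biject with Brauer classes, since the Wedderburn decomposition $A \iso M_r(D)$ together with $Z(M_r(D)) = Z(D)$ shows every class contains a central division algebra, and the uniqueness clause (a division algebra being its own skewfield part) gives injectivity. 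Both arguments ultimately rest on the same results of Reiner, and theorem \ref{070} is itself a citation independent of the statement at hand, so there is no circularity. What each buys: your reduction is shorter and purely formal, cleanly separating the group-theoretic bookkeeping from the analytic input; the paper's route yields the explicit presentation $D \iso K(\omega)\lan \xi \ran / (\xi^n = \pi_K^r,\ \xi x = x^\sigma \xi)$, which is not a by-product but the actual working tool used later (e.g.\ remark \ref{334} and the identification $\Gbb_n(u) \iso \Dbb_n^\x/\lan pu \ran$), so the paper has reason to prove the theorem the constructive way. You also correctly note the one compatibility point your argument needs, namely that the invariant of $D$ from its cyclic presentation agrees with $inv_K([D])$ as defined on $Br(K)$; since both are given by the same formula $inv_K(K_n/K, \sigma, \pi_K^r) = \frac{r}{n}$, this is immediate.
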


\begin{notation} \label{324}
For a class in $\Qbb/\Zbb$ represented by an element $r/n \in \Qbb$ with
$(r;n) = 1$ and $1 \leq r < n$, the corresponding Azumaya algebra is
denoted $D(K,r/n)$. When $K = \Qbb_p$, $r=1$ and $p$ is understood, we
write $\Dbb_n = D(\Qbb_p,1/n)$.
\end{notation}

\begin{corollary} \label{084}
If $D$ is a central division algebra over a local field, then $exp[D] =
ind[D]$.
\end{corollary}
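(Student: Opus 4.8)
The plan is to read off both invariants from the classification of central division algebras over a local field and to reduce the claim to an elementary order computation in $\Qbb/\Zbb$. Since $D$ is itself a division algebra, its skewfield part is $D$, so by the very definition of the index I have $ind[D] = [D:K]^{\frac{1}{2}}$; writing $n$ for this integer, so that $[D:K] = n^2$, I record $ind[D] = n$ at the outset. The whole point will then be to show that the exponent equals this same $n$.

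First I would invoke the structure result established in the discussion preceding Theorem \ref{012}: such a $D$ is isomorphic to the cyclic algebra $(K_n/K, \sigma, \pi_K^r)$ and is classified up to isomorphism by its Hasse invariant $inv_K(D) = \frac{r}{n} \in \Qbb/\Zbb$, where $r$ is prime to $n$. By definition $exp[D]$ is the order of the class $[D]$ in $Br(K)$, and under the isomorphism $inv_K: Br(K) \iso \Qbb/\Zbb$ of Theorem \ref{070} this order is exactly the order of the element $\frac{r}{n}$ in the additive group $\Qbb/\Zbb$. Thus the problem is transported entirely to $\Qbb/\Zbb$.

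The key step is the computation of that order. For a positive integer $k$, the relation $k \cdot \frac{r}{n} \equiv 0 \mod \Zbb$ means $n \mid kr$, and since $(r;n) = 1$ this forces $n \mid k$. Hence the least positive such $k$ is $n$, so the order of $\frac{r}{n}$ is $n$, giving $exp[D] = n = ind[D]$. I expect no genuine obstacle here: the only arithmetic content is this order calculation, which is routine, and Proposition \ref{064} serves as a built-in consistency check, since it already guarantees $exp[D] \mid ind[D]$ and the argument above merely upgrades this divisibility to an equality.
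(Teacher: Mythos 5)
Your proof is correct and follows essentially the same route as the paper: both identify $ind[D]=n$ from $[D:K]=n^2$, invoke the classification $inv_K(D)=\frac{r}{n}$ with $(r;n)=1$, and conclude that the class has order $n$ in $Br(K)\iso\Qbb/\Zbb$. The only cosmetic difference is that the paper leans on proposition \ref{064} to bound the exponent before using coprimality, whereas you compute the order of $\frac{r}{n}$ in $\Qbb/\Zbb$ directly and relegate proposition \ref{064} to a consistency check; the underlying argument is identical.
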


\begin{proof}
Suppose $inv_K(D) = \frac{r}{n}$, where $K$ denotes the center of $D$
and $[D:K] = n^2$. By definition $ind[D] = n$. We know from proposition
\ref{064} that $exp[A]$ must divide $n$. Because $r$ is prime to $n$, it
follows that $exp[A] = n$. 
\end{proof}

\begin{remark} \label{334}
Suppose $inv_K(D) = \frac{r}{n}$. By the Skolem-Noether theorem, the
Frobenius automorphism $\sigma$ of $K(\omega) = K_n$ is given by
\[
    \sigma(x) = \xi x \xi^{-1}
\]
for a suitable element $\xi \in D^\x$ determined up to multiplication by
an element of $K(\omega)^\x$. Then clearly the image of $v(\xi)$ in 
\[
    \frac{1}{n}\Zbb / \Zbb \subset \Qbb/\Zbb
\]
is none other than the invariant of $D$. Furthermore, as $\sigma^n$ is
the identity on the inertia field $K(\omega)$, we know that $\xi^n$
commutes with all elements of $K(\omega)$ and hence belongs to
$K(\omega)$. Because
\[
    v(\xi) = \frac{1}{n} v(\xi^n),
\]
we have $v(\xi) = r/n$. Hence $\xi^n = \pi_K^r u$ for a unit $u \in
K(\omega)^\x$. In this case,
\[
    D 
    \iso D(K,r/n) 
    \iso K(\omega)\lan \xi \ran 
    / (\xi^n = \pi_K^r, \xi x = x^\sigma \xi)
\]
as mentioned in the paragraph following the proof of \cite{reiner}
theorem 14.5. 
\end{remark}

So far, we have dealt with unramified extensions of the base field $K$,
but there are in $D$ many more commutative subfields. It can in fact be
shown that all extensions of $K$ of degree dividing $n$ exist; see
\cite{reiner} theorem 31.11, \cite{hazewinkel} 23.1.4 and 23.1.7, or
\cite{serre4} section 1 for proofs.

\begin{theorem}[Embedding] \label{013}
If $D$ is a central division algebra of dimension $n^2$ over a local
field $K$, then the degree of a commutative extension $L$ of $K$ in $D$
divides $n$, and any extension $L$ of $K$ whose degree divides $n$
embeds as a commutative subfield of $D$.
\end{theorem}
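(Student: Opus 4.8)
The statement to prove is the Embedding Theorem \ref{013}: for a central division algebra $D$ of dimension $n^2$ over a local field $K$, the degree of any commutative extension $L/K$ inside $D$ divides $n$, and conversely every extension $L/K$ of degree dividing $n$ embeds into $D$ as a commutative subfield.

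The divisibility half is essentially already available. By corollary \ref{008}, the degree of any commutative extension $L$ of $K$ contained in a finite dimensional central simple $K$-algebra divides $\text{deg}(A)$; applied to $A = D$ with $\text{deg}(D) = n$ (since $[D:K] = n^2$), this immediately gives $[L:K] \mid n$. So the plan is to dispose of the forward direction in one line by citing corollary \ref{008}.

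The substantive content is the converse: realizing an arbitrary $L/K$ of degree $d \mid n$ as a subfield. First I would reduce to the case that $L/K$ is separable — over a local field of characteristic zero (the setting of interest, where $K = \Qbb_p$ or a finite extension) all extensions are separable, so this is automatic, but I would note it. The natural approach is via splitting fields and the Brauer group. By proposition \ref{062}, $L$ splits $D$ if and only if $L$ embeds as a \emph{maximal} subfield of $D$, i.e. precisely when $[L:K] = n$. For the general case $d \mid n$, I would argue as follows: the class $[D] \in Br(K)$ has order $\exp[D] = \text{ind}[D] = n$ by corollary \ref{084} and $inv_K[D] = r/n$ with $(r,n)=1$ by theorem \ref{012}. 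By theorem \ref{071}, extending scalars to $L$ multiplies the invariant by $d = [L:K]$, giving $inv_L(L \ox_K D) = dr/n \in \Qbb/\Zbb$, which has order $n/\gcd(dr,n) = n/d$ since $(r,n)=1$. Thus $L \ox_K D$ has index $n/d$ over $L$, so its skewfield part $D'$ satisfies $[D':L] = (n/d)^2$. The aim is to find inside $D$ a copy of $L$ whose centralizer is this $D'$.

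The cleanest route, which I would carry out, is to build $L$ concretely rather than chase the abstract centralizer. Write $L$ as a compositum of its maximal unramified subextension and a totally ramified piece. More directly: using the structural description of remark \ref{334}, $D \iso K_n\lan \xi \ran/(\xi^n = \pi_K^r,\ \xi x = x^\sigma \xi)$ where $K_n/K$ is the maximal unramified extension of degree $n$. Any unramified $L/K$ of degree $d \mid n$ sits inside $K_n \subset D$ automatically. For a general (possibly ramified) $L$ of degree $d$, I would write $d = e_0 f_0$ with $f_0$ the residue degree and $e_0$ the ramification index, embed the unramified part of degree $f_0$ into $K_n$, and then produce a uniformizer-type element realizing the totally ramified part of degree $e_0$ using a suitable power $\xi^{n/e_0}$ composed with units, exploiting that $v(\xi) = r/n$ so that $v(\xi^{n/e_0}) = r/e_0$ generates the required value group $\tfrac{1}{e_0}\Zbb/\Zbb$ modulo $K$'s value group since $(r,e_0)=1$. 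The main obstacle is precisely this last step: verifying that one can solve for the needed element inside $D$ whose minimal polynomial over $K$ is the given (Eisenstein-type) polynomial defining the ramified part of $L$. This requires Hensel's lemma / completeness of $D$ with respect to $v$ (available by \cite{reiner} section 13) to lift an approximate root to an exact one, together with the Skolem-Noether theorem \ref{010} to adjust conjugates so the unramified and ramified generators commute and generate a field of the correct degree. Given the delicacy of tracking valuations and commutation, I would expect to lean on the cited references (\cite{reiner} theorem 31.11, \cite{hazewinkel} 23.1.4 and 23.1.7, \cite{serre4} section 1) for the detailed construction rather than reproduce it in full.
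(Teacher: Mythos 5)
Your proposal cannot be compared against a proof in the paper, because the paper gives none: theorem \ref{013} is stated as background in appendix \ref{033}, with the proof delegated entirely to \cite{reiner} theorem 31.11, \cite{hazewinkel} 23.1.4 and 23.1.7, and \cite{serre4} section 1 --- exactly the references you fall back on at the end. So there is no gap relative to the paper; what you add on top of a bare citation is correct and worthwhile. The forward direction is indeed a one-line application of corollary \ref{008}, and your Brauer-group computation for the converse is right: with $inv_K(D)=r/n$, $(r;n)=1$, theorem \ref{071} gives $inv_L(L \ox_K D)=dr/n$, whose order in $\Qbb/\Zbb$ is $n/d$ since $d \mid n$, so by corollary \ref{084} the skewfield part $D'$ of $L \ox_K D$ has degree $n/d$ over $L$. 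At that point you are one step away from a complete proof and do not need the explicit construction you then sketch: comparing dimensions gives $L \ox_K D \iso M_d(D')$, and the standard embedding criterion --- the generalization of proposition \ref{062} from maximal subfields to arbitrary ones, namely that a field $L$ of degree $d$ over $K$ embeds $K$-algebraically into $D$ if and only if the index of $L \ox_K D$ equals $n/d$ --- then yields the embedding; this Brauer-theoretic route is essentially how the cited sources argue. By contrast, the concrete construction via $\xi^{n/e_0}$ and Hensel lifting is where the genuine difficulty sits: a general totally ramified extension of local fields is cut out by an arbitrary Eisenstein polynomial, not one of the form $X^{e_0}-\pi_K^r u$, and forcing the unramified and ramified generators to commute inside $D$ is precisely the nontrivial point, so had you insisted on that route without the references, the proof would be incomplete. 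As written, your deferral matches the paper's own treatment, so the proposal stands.
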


In particular, a local field $L$ of characteristic zero embeds in some
$\Dbb_n$, in which case its group of units $L^\x$ is a subgroup of
$\Dbb_n^\x$. The structure of $L^\x$, both algebraically and
topologically, is well known and is recorded below; see for example
\cite{neukirch} chapter II proposition 5.3 and 5.7.

\begin{proposition} \label{341}
Let $L$ be a local field of characteristic zero with residue field $l
\iso \Fbb_{p^f}$, roots of unity $\mu(L)$ and uniformizing element
$\pi_L$. Then
\begin{align*}
    L^\x\
    &=\ \lan \pi_L \ran \x \Ocal_L^\x\\
    &=\ \lan \pi_L \ran \x l^\x \x U_1(\Ocal_L^\x)\\
    &\iso\ \Zbb \x \mu(L) \x \Zbb_p^{[L:\Qbb_p]}.
\end{align*}
\end{proposition}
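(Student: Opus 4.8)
The final statement is Proposition \ref{341}, the structure theorem for the multiplicative group $L^\x$ of a local field $L$ of characteristic zero, asserting the decomposition $L^\x \iso \Zbb \x \mu(L) \x \Zbb_p^{[L:\Qbb_p]}$. The plan is to build this decomposition in two stages: first the coarse splitting of $L^\x$ by valuation, then the finer analysis of the unit group $\Ocal_L^\x$ via the reduction map and the filtration by the higher unit groups $U_i(\Ocal_L^\x)$ introduced in appendix \ref{060}.

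First I would establish the top-level splitting $L^\x = \lan \pi_L \ran \x \Ocal_L^\x$. The normalized valuation $v_L \colon L^\x \ra \Zbb$ is a surjective homomorphism whose kernel is precisely $\Ocal_L^\x = \{x \mid v_L(x)=0\}$, and the choice of uniformizer $\pi_L$ (with $v_L(\pi_L)=1$) provides a canonical section $m \mto \pi_L^m$, so the short exact sequence $1 \ra \Ocal_L^\x \ra L^\x \ra \Zbb \ra 1$ splits, giving $L^\x = \lan \pi_L \ran \x \Ocal_L^\x$ with $\lan \pi_L \ran \iso \Zbb$. Next I would split off the residue field units: the reduction homomorphism $\Ocal_L^\x \ra l^\x$ is surjective with kernel $U_1(\Ocal_L^\x)$, and since $l^\x \iso \Fbb_{p^f}^\x$ is cyclic of order prime to $p$ (by proposition \ref{341}'s companion results on finite fields, or directly because $|l^\x|=p^f-1$), Teichm\"uller lifting gives a canonical splitting, yielding $\Ocal_L^\x = l^\x \x U_1(\Ocal_L^\x)$. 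Combining, $L^\x = \lan \pi_L \ran \x l^\x \x U_1(\Ocal_L^\x)$, which is the second displayed line.

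The third line requires identifying the torsion and the pro-$p$ parts. The roots of unity $\mu(L)$ decompose as $\mu_{p'}(L) \x \mu_{p^\infty}(L)$, where the prime-to-$p$ part maps isomorphically onto $l^\x$ under reduction (again Teichm\"uller), and the $p$-power part sits inside $U_1(\Ocal_L^\x)$. So I would show $U_1(\Ocal_L^\x) \iso \mu_{p^\infty}(L) \x \Zbb_p^{[L:\Qbb_p]}$. The key tool is that for $i$ large the logarithm (or the $p$-th power map analysis, as carried out in remark \ref{197} for the specific rings $\Zbb_p(F_0)$) shows $U_1(\Ocal_L^\x)$ is a finitely generated $\Zbb_p$-module: it is a pro-$p$ group, topologically the inverse limit of the finite quotients $U_1/U_i$, each successive quotient $U_i/U_{i+1}$ being isomorphic to the additive group of $l \iso \Fbb_{p^f}$. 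A rank count gives $\dim_{\Qbb_p}(L \ox \Qbb_p)$-many copies of $\Zbb_p$, namely $[L:\Qbb_p]$ copies, modulo the finite torsion subgroup $\mu_{p^\infty}(L)$. Since $\mu(L) = \mu_{p'}(L) \x \mu_{p^\infty}(L)$ and $\mu_{p'}(L) \iso l^\x$, regrouping the cyclic factor $l^\x$ together with $\mu_{p^\infty}(L)$ recovers $\mu(L)$, leaving the free $\Zbb_p$-module factor, which establishes the final isomorphism.

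The main obstacle will be the structure analysis of $U_1(\Ocal_L^\x)$ as a $\Zbb_p$-module, specifically pinning down the torsion as exactly $\mu_{p^\infty}(L)$ and computing the free rank as $[L:\Qbb_p]$. This is where the hypothesis of characteristic zero is essential (to invoke the $p$-adic logarithm, which is only well-behaved over fields of characteristic zero), and where one must be careful with the wildly ramified case $p \mid e(L/\Qbb_p)$. Since this is the standard structure theorem for local fields, I expect the cleanest route is simply to cite \cite{neukirch} chapter II proposition 5.3 and 5.7 as the excerpt itself does, rather than reproduce the filtration and logarithm arguments in full; the proof proper would then reduce to recording the valuation and Teichm\"uller splittings above and invoking the cited structure of the one-units.
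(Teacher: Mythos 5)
Your proposal is correct and ultimately coincides with the paper's approach: the paper gives no proof at all, simply recording the statement as well known and citing \cite{neukirch} chapter II propositions 5.3 and 5.7, which is exactly the route you recommend in your closing paragraph. Your preliminary sketch (splitting off $\lan \pi_L \ran$ via the valuation, splitting off $l^\x$ via the Teichm\"uller lift, and identifying $U_1(\Ocal_L^\x)$ as $\mu_{p^\infty}(L) \x \Zbb_p^{[L:\Qbb_p]}$ through the $p$-adic logarithm) is the standard argument behind the cited result and contains no gaps.
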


The most frequently encountered fields are the cyclotomic extensions of
$\Qbb_p$. Recall the following result from \cite{neukirch} chapter II
proposition 7.12 and 7.13.

\begin{proposition} \label{342}
Let $\zeta$ be a primitive $k$-th root of unity for $k=\beta p^\alpha
\geq 1$ with $(\beta;p)=1$, and let $f$ be the smallest positive integer
such that $p^f \equiv 1 \mod \beta$. Then $\Qbb_p(\zeta)/\Qbb_p$ is a
Galois extension with ramification index $\phi(p^\alpha)$ and residue
degree $f$, where
\begin{align*}
    \mu(\Qbb_p(\zeta)) 
    &\iso 
    \begin{cases}
        \Zbb/p^\alpha(p^f\!-\!1) 
        & \text{if } p>2 \text{ or } \alpha \geq 1,\\
        \Zbb/2(2^f\!-\!1) 
        & \text{if } p=2 \text{ and } \alpha = 0,
    \end{cases}\\[1ex]
    Gal(\Qbb_p(\zeta)/\Qbb_p) 
    &\iso 
    \begin{cases}
        (\Zbb/p^\alpha)^\x \x \Zbb/f
        & \text{if } \alpha \geq 1,\\
        \Zbb/f
        & \text{if } \alpha = 0.
    \end{cases}
\end{align*}
\end{proposition}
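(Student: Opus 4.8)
The final statement is Proposition \ref{342}, a standard result on cyclotomic extensions of $\Qbb_p$ that the excerpt attributes to \cite{neukirch}. The plan is to prove it by reducing a general cyclotomic extension to the two coprime building blocks: the unramified part coming from the prime-to-$p$ root of unity, and the totally ramified part coming from the $p$-power root of unity.

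First I would factor $\zeta = \zeta_\beta \zeta_{p^\alpha}$, where $\zeta_\beta$ is a primitive $\beta$-th root of unity and $\zeta_{p^\alpha}$ a primitive $p^\alpha$-th root, so that $\Qbb_p(\zeta) = \Qbb_p(\zeta_\beta)\Qbb_p(\zeta_{p^\alpha})$. The key is that these two subextensions are linearly disjoint over $\Qbb_p$ because their degrees $f$ and $\phi(p^\alpha)$ are coprime: indeed $\Qbb_p(\zeta_\beta)/\Qbb_p$ is unramified of residue degree $f$ (the smallest $f$ with $p^f \equiv 1 \bmod \beta$, which is exactly the order of Frobenius acting on $\mu_\beta$ in the residue field), while $\Qbb_p(\zeta_{p^\alpha})/\Qbb_p$ is totally ramified of degree $\phi(p^\alpha)$ by proposition \ref{092} and corollary \ref{093}, where we showed $(p) = (\zeta_{p^\alpha}-1)^{\phi(p^\alpha)}$ and that $\zeta_{p^\alpha}-1$ is a uniformizer. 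Linear disjointness then gives $[\Qbb_p(\zeta):\Qbb_p] = f\phi(p^\alpha)$ with ramification index $\phi(p^\alpha)$ and residue degree $f$, and the Galois group splits as the direct product $Gal(\Qbb_p(\zeta_{p^\alpha})/\Qbb_p) \x Gal(\Qbb_p(\zeta_\beta)/\Qbb_p)$.

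Next I would identify each factor. For the ramified part, $Gal(\Qbb_p(\zeta_{p^\alpha})/\Qbb_p) \iso (\Zbb/p^\alpha)^\x$ via the cyclotomic character $\sigma \mto a$ where $\sigma(\zeta_{p^\alpha}) = \zeta_{p^\alpha}^a$; this is the standard statement that $X^{\phi(p^\alpha)}$-type cyclotomic polynomials are irreducible over $\Qbb_p$, which follows from the uniformizer computation already in hand. For the unramified part, $Gal(\Qbb_p(\zeta_\beta)/\Qbb_p) \iso \Zbb/f$ generated by Frobenius. Assembling these yields the claimed $Gal(\Qbb_p(\zeta)/\Qbb_p)$, with the case distinction $\alpha \geq 1$ versus $\alpha = 0$ just recording whether the ramified factor is present. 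For the roots of unity $\mu(\Qbb_p(\zeta))$, I would use proposition \ref{341}: the residue field is $\Fbb_{p^f}$ contributing $p^f-1$, and the $p$-power torsion is exactly $p^\alpha$ when $\alpha \geq 1$, giving $\Zbb/p^\alpha(p^f-1)$; the exceptional line for $p=2$, $\alpha=0$ reflects that $\Qbb_2$ (and its unramified extensions) always contains $-1$ as an ``extra'' square root of unity beyond the residue field, so the $2$-power torsion has order $2$ rather than $1$.

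The main obstacle I anticipate is establishing linear disjointness cleanly, i.e.\ that the compositum has degree exactly $f\phi(p^\alpha)$ and that ramification and residue degrees multiply as claimed. This is where the coprimality of $f$ and $\phi(p^\alpha)$ does the real work: since one extension is totally ramified and the other unramified, their intersection is forced to be $\Qbb_p$, and the ramification index of the compositum is at least $\phi(p^\alpha)$ while its residue degree is at least $f$, so the product $ef \geq f\phi(p^\alpha) \geq [\Qbb_p(\zeta):\Qbb_p]$ pins everything down. Everything else is bookkeeping that either appears earlier in the excerpt or is immediate from proposition \ref{341}, so I would keep those steps brief and cite \cite{neukirch} chapter II propositions 7.12 and 7.13 for the detailed verification.
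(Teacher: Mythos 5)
Your proof is correct in substance, but there is nothing in the paper to compare it against: the paper states Proposition \ref{342} as a recalled fact and cites \cite{neukirch}, chapter II, propositions 7.12 and 7.13, giving no proof of its own. Your argument --- splitting $\Qbb_p(\zeta)$ into the unramified piece $\Qbb_p(\zeta_\beta)$ and the totally ramified piece $\Qbb_p(\zeta_{p^\alpha})$, using the uniformizer computation of proposition \ref{092} and corollary \ref{093} for the latter, and then gluing via the count $[L:\Qbb_p]=ef$ --- is exactly the standard argument found in the cited source, so you have in effect reconstructed the proof the paper chose to omit.

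One correction, though it does not break the proof: you twice attribute the linear disjointness to the ``coprimality of $f$ and $\phi(p^\alpha)$''. These degrees are not coprime in general --- take $p=5$, $\beta=3$, $\alpha=1$, where $f=2$ and $\phi(p^\alpha)=4$. What actually forces $\Qbb_p(\zeta_\beta) \cap \Qbb_p(\zeta_{p^\alpha}) = \Qbb_p$ is the dichotomy you state immediately afterwards: a common subextension is simultaneously unramified and totally ramified, hence trivial. Since your chain of inequalities $ef \geq f\phi(p^\alpha) \geq [\Qbb_p(\zeta):\Qbb_p]$ uses only that dichotomy, the proof stands; just delete the coprimality claims. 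You should also spell out why the $p$-power torsion of $\mu(\Qbb_p(\zeta))$ is exactly $p^\alpha$ when $p>2$ or $\alpha \geq 1$: a primitive $p^{\alpha+1}$-st root of unity in $\Qbb_p(\zeta)$ would force ramification index at least $\phi(p^{\alpha+1}) > \phi(p^\alpha)$, contradicting the ramification index you have just computed.
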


\begin{corollary} \label{343}
We have
\[
    \Qbb_p(\zeta)^\x
    \iso \Zbb \x \Zbb_p[\zeta]^\x
    \iso 
    \begin{cases}
        \Zbb \x \Zbb/p^\alpha(p^f\!-\!1) \x \Zbb_p^{\phi(p^\alpha)f}
        & \text{if } p>2 \text{ or } \alpha \geq 1,\\
        \Zbb \x \Zbb/2(2^f\!-\!1) \x \Zbb_2^{f}
        & \text{if } p=2 \text{ and } \alpha = 0.
    \end{cases}
\]
\end{corollary}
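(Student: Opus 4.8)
The plan is to read the result off directly from Proposition \ref{341} and Proposition \ref{342}, the only genuine input being the standard factorization of the degree of a local field extension as the product of its ramification index and residue degree.

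First I would note that $\Qbb_p(\zeta)$ is a finite extension of $\Qbb_p$, hence a local field of characteristic zero, whose ring of integers is $\Zbb_p[\zeta]$ and whose residue field is $\Fbb_{p^f}$ by Proposition \ref{342}. Thus Proposition \ref{341} applies verbatim with $L = \Qbb_p(\zeta)$, $\Ocal_L = \Zbb_p[\zeta]$, and $l \iso \Fbb_{p^f}$. The decomposition $L^\x = \lan \pi_L \ran \x \Ocal_L^\x$ of that proposition, together with $\lan \pi_L \ran \iso \Zbb$, yields at once the first displayed isomorphism $\Qbb_p(\zeta)^\x \iso \Zbb \x \Zbb_p[\zeta]^\x$.

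For the second isomorphism I would invoke the full statement of Proposition \ref{341}, namely $L^\x \iso \Zbb \x \mu(L) \x \Zbb_p^{[L:\Qbb_p]}$, and then substitute the two explicit quantities furnished by Proposition \ref{342}. The roots of unity $\mu(\Qbb_p(\zeta))$ are recorded there directly, producing the factor $\Zbb/p^\alpha(p^f-1)$ when $p>2$ or $\alpha \geq 1$, and the factor $\Zbb/2(2^f-1)$ when $p=2$ and $\alpha=0$. For the exponent of the free $\Zbb_p$-part I would use the fundamental identity $[\Qbb_p(\zeta):\Qbb_p] = e \cdot f$, valid since the residue extension is separable in characteristic zero, where $e = \phi(p^\alpha)$ is the ramification index and $f$ the residue degree given in Proposition \ref{342}; hence $[\Qbb_p(\zeta):\Qbb_p] = \phi(p^\alpha) f$. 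Substituting these into the isomorphism of Proposition \ref{341} and distinguishing the two cases of $\mu$ gives the claimed formula; in the exceptional case $p=2$, $\alpha=0$ one has $\phi(p^\alpha) = \phi(1) = 1$, so the free rank $\phi(p^\alpha) f$ collapses to $f$, exactly as stated.

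There is essentially no obstacle here: the corollary is a formal substitution into the two preceding propositions. The only point requiring a word of justification is the degree identity $[\Qbb_p(\zeta):\Qbb_p] = \phi(p^\alpha) f$, which is where separability of the residue extension enters to exclude any defect; everything else is immediate.
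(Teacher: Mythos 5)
Your proof is correct and is essentially the paper's own argument: the paper's proof of this corollary is precisely "this follows from propositions \ref{341} and \ref{342}," and your write-up just makes explicit the substitution of $\mu(\Qbb_p(\zeta))$ and of $[\Qbb_p(\zeta):\Qbb_p] = e\cdot f = \phi(p^\alpha)f$ into the decomposition of proposition \ref{341}. The extra word on separability of the residue extension (so that degree equals $ef$ with no defect) is a fine, if standard, clarification and does not change the route.
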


\begin{proof}
This follows from proposition \ref{341} and \ref{342}.
\end{proof}

We end the section by analysing the invariant of some embeddings that
are useful in the text.

\begin{proposition} \label{014}
Let $D$ be a central division algebra of invariant $\frac{r}{n}$ over a
local field $K$ for $r$ prime to $n$, let $L \subset D$ be a commutative
extension of $K$, and let $m$ be such that $n = m[L:K]$. Then $C_D(L)$
is a central division algebra of invariant $\frac{r}{m}$ over $L$. 
\end{proposition}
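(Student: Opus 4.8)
The plan is to split the argument into two parts: first identify $C_D(L)$ as a central division algebra over $L$ and fix its dimension, then pin down its Hasse invariant by comparing it with the scalar extension $L \otimes_K D$ inside the Brauer group $Br(L)$. First I would observe that $L$, being a field, is a simple $K$-subalgebra of the central simple algebra $D$ with $Z(L) = L$, so the centralizer theorem \ref{007}(2) makes $C_D(L)$ a central simple algebra over $L$. Since $C_D(L)$ is a $K$-subalgebra of the division algebra $D$, proposition \ref{006} shows that it is itself a division algebra; hence $C_D(L)$ is a central division algebra over $L$. For its dimension I would invoke the computation behind corollary \ref{008}, namely $[D:K] = [L:K]^2[C_D(L):L]$; as $[D:K] = n^2$ and $n = m[L:K]$, this gives $[C_D(L):L] = m^2$, so $C_D(L)$ has index $m$ over $L$ and its invariant is automatically of the form $r'/m$ with $(r';m)=1$.

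To identify $r'$, the key is to compare $C_D(L)$ with the scalar extension $L \otimes_K D$ inside $Br(L)$. Part (1) of the centralizer theorem \ref{007} provides, since $L^{op} = L$, an isomorphism $C_D(L) \otimes_K M_{[L:K]}(K) \cong D \otimes_K L$ of $K$-algebras, which I would read as an identity in $Br(L)$. On the one hand $L \otimes_K M_{[L:K]}(K) \cong M_{[L:K]}(L)$ is split, so the left-hand class equals $[C_D(L)]$; on the other hand $D \otimes_K L = L \otimes_K D$ is the image of $[D]$ under the scalar-extension map $Br(K) \to Br(L)$, whose invariant is computed by theorem \ref{071} as $inv_L(L \otimes_K D) = [L:K]\cdot inv_K(D) = [L:K]\cdot \tfrac{r}{n} = \tfrac{r}{m}$ in $\Qbb/\Zbb$. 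Combining the two readings yields $inv_L(C_D(L)) = \tfrac{r}{m}$; and since $r$ is prime to $n = m[L:K]$ it is in particular prime to $m$, so this is a genuine invariant of a central division algebra over $L$ by theorem \ref{070}.

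The hard part will be the bookkeeping of the $L$-structures: the centralizer isomorphism \ref{007}(1) is stated only as a $K$-algebra isomorphism, whereas reading it in $Br(L)$ requires that it respect the $L$-algebra structures on both sides. I would check that the copy of $L$ coming from the $B^{op} = L$ tensor factor on the right corresponds to the center $Z(C_D(L)) = L$ on the left, which is precisely how that isomorphism is constructed. Once this compatibility is secured, everything else reduces to the routine Brauer-group computation above, and no input beyond the centralizer theorem \ref{007} and theorem \ref{071} is needed.
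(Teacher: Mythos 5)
Your proposal is correct and takes essentially the same route as the paper: both identify $C_D(L)$ as a central division algebra of dimension $m^2$ over $L$ via the centralizer theorem \ref{007} and then read the class of $C_D(L)$ in $Br(L)$ off the isomorphism $D \otimes_K L \cong C_D(L) \otimes_L M_{[L:K]}(L)$, computing $inv_L(D \otimes_K L) = [L:K] \cdot \frac{r}{n} = \frac{r}{m}$ by theorem \ref{071}. The compatibility of $L$-structures you flag as the delicate point is exactly what the paper handles implicitly by inserting the step $C_D(L) \otimes_K M_{n/m}(K) \cong C_D(L) \otimes_L L \otimes_K M_{n/m}(K)$ in its chain of isomorphisms.
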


\begin{proof}
Using the centraliser theorem \ref{007}, we know that $C_D(L)$ is a
central division algebra of dimension $m^2$ over $L$, and we have
\begin{align*}
    D \ox_K L\
    &\iso\ C_D(L) \ox_K M_{n/m}(K)\\
    &\iso\ C_D(L) \ox_L L \ox_K M_{n/m}(K)\\
    &\iso\ C_D(L) \ox_L M_{n/m}(L).
\end{align*}
Hence the invariant of $C_D(L)$ is that of $D \ox_K L$, which is
$\frac{r}{n}[L:K]$ by theorem \ref{071}.
\end{proof}

\begin{proposition} \label{015}
For any prime $p$, $\Dbb_m$ embeds as a $\Qbb_p$-subalgebra of $\Dbb_n$
if and only if $n=km$ with $k \equiv 1 \mod m$.
\end{proposition}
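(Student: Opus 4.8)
The plan is to convert the embedding question into a numerical condition on Hasse invariants, using the centralizer theorem \ref{007} together with the identification $Br(\Qbb_p) \iso \Qbb/\Zbb$ of theorem \ref{070}. Throughout I write $K = \Qbb_p$ and use that $\Dbb_m$, $\Dbb_n$ are central division algebras over $K$ of invariants $1/m$, $1/n$ and dimensions $m^2$, $n^2$.

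For necessity, I would suppose $\phi \colon \Dbb_m \hookrightarrow \Dbb_n$ is a $K$-algebra embedding and set $B := \phi(\Dbb_m)$, a central simple $K$-subalgebra of $\Dbb_n$. The centralizer theorem \ref{007} gives that $C := C_{\Dbb_n}(B)$ is central simple over $Z(B) = K$ with $[\Dbb_n : K] = [B:K][C:K]$, so $n^2 = m^2[C:K]$; hence $m \mid n$, and writing $n = km$ yields $[C:K] = k^2$. Since $C$ is a subalgebra of the division algebra $\Dbb_n$, proposition \ref{006} shows $C$ is itself a division algebra, so $ind[C] = k$ and, by corollary \ref{084}, $exp[C] = k$. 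Part 1 of theorem \ref{007} gives $C \ox_K M_{[B:K]}(K) \iso \Dbb_n \ox_K B^{op}$, which in $Br(K)$ reads $[\Dbb_n] = [B][C] = [\Dbb_m][C]$. Applying the isomorphism $inv_K$ of theorem \ref{070} then yields $inv_K(C) = \tfrac{1}{n} - \tfrac{1}{m} = \tfrac{1-k}{n} \bmod \Zbb$. The order of this class in $\Qbb/\Zbb$ is $n/\gcd(k-1, n)$, and it must equal $exp[C] = k$; since $\gcd(k-1,k)=1$ gives $\gcd(k-1,n)=\gcd(k-1,km)=\gcd(k-1,m)$, the equality $n/\gcd(k-1,n)=k$ becomes $\gcd(k-1,m)=m$, i.e. $k \equiv 1 \bmod m$.

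For sufficiency, I would assume $n = km$ with $k \equiv 1 \bmod m$, say $k-1 = jm$, and build the would-be centralizer directly: set $C := D(K, -j/k)$ (notation \ref{324}), noting $\gcd(j,k)=1$ since any common divisor of $j$ and $k$ divides $k - jm = 1$. Then $C$ is a central division algebra over $K$ of dimension $k^2$ with $inv_K(C) = -j/k$. Computing in $\Qbb/\Zbb$, $inv_K(\Dbb_m) + inv_K(C) = \tfrac1m - \tfrac{j}{k} = \tfrac{k - jm}{km} = \tfrac1n = inv_K(\Dbb_n)$, so $[\Dbb_m \ox_K C] = [\Dbb_n]$ in $Br(K)$ by theorem \ref{070}. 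Since $\Dbb_m \ox_K C$ has dimension $m^2k^2 = n^2$ and is Brauer-equivalent to the division algebra $\Dbb_n$ of the same dimension, Wedderburn's theorem \ref{003} forces $\Dbb_m \ox_K C \iso \Dbb_n$; the $K$-algebra map $x \mapsto x \ox 1$ then embeds $\Dbb_m$ as a subalgebra of $\Dbb_n$.

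The main obstacle I expect is bookkeeping rather than conceptual: matching $exp[C]=k$ with the order of the invariant class $\tfrac{1-k}{n}$ and reducing the gcd condition to the clean form $k \equiv 1 \bmod m$. One must be careful in the necessity argument that $C$ is genuinely a division algebra (not a proper matrix algebra over a smaller skewfield), which is exactly what proposition \ref{006} guarantees, and in the sufficiency argument that $\Dbb_m \ox_K C$ is forced to be a division algebra purely on dimension grounds; both points rest on the equality $exp = ind$ for local division algebras (corollary \ref{084}).
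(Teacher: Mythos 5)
Your proof is correct and takes essentially the same route as the paper: the centralizer theorem \ref{007} reduces the embedding question to the Hasse-invariant identity $\frac{1}{n} \equiv \frac{1}{m} + inv_K(C) \mod \Zbb$ in $Br(\Qbb_p) \iso \Qbb/\Zbb$, from which $k \equiv 1 \mod m$ falls out by the same arithmetic. Your write-up is in fact slightly more careful at two points the paper leaves implicit, namely that the centralizer is genuinely a division algebra (proposition \ref{006} plus corollary \ref{084}) and that, in the converse, $\Dbb_m \ox_K C \iso \Dbb_n$ is forced by Wedderburn's theorem \ref{003} once Brauer classes and dimensions agree.
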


\begin{proof}
If $D(\Qbb_p, 1/m)$ embeds as a $\Qbb_p$-subalgebra of $D(\Qbb_p,1/n)$,
then the centralizer theorem provides an isomorphism
\[
    D(\Qbb_p,1/n) 
    \iso D(\Qbb_p, 1/m) \ox_{\Qbb_p} C_{D_n}(D(\Qbb_p, 1/m)),
\]
so that there is an integer $k$ satisfying $n = km$. Because
$C_{D_n}(D(\Qbb_p, 1/m))$ is a central division algebra over $\Qbb_p$,
we also know the existence of an integer $l$ such that
\[
    C_{D_n}(D(\Qbb_p, 1/m)) \iso D(\Qbb_p, l/k).
\]
The law on the Brauer group $\Qbb/\Zbb$ being defined as such a tensor
product over the $\Qbb_p$-Azumaya algebra classes (see appendix
\ref{060}), it follows that
\begin{align}
    \frac{1}{n}
    \equiv \frac{1}{m} + \frac{l}{k}\q \mod \Zbb. \tag{$\ast$}
\end{align}
Consequently $1 \equiv k + lm \mod n$, and $k \equiv 1$ mod $m$.

Conversely, if $n = km$ with $k \equiv 1 \mod m$, there is an integer
$l$ prime to $k$ such that $1 \equiv k+lm \mod n$. It follows that
($\ast$) is verified and $D(\Qbb_p, 1/m)$ embeds as a
$\Qbb_p$-subalgebra of $D(\Qbb, 1/n)$.
\end{proof}

\begin{corollary} \label{040}
When $p=2$, $\Dbb_2$ embeds in $\Dbb_n$ if and only if $n \equiv 2 \mod
4$.  \qed
\end{corollary}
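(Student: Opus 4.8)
The plan is to derive this as an immediate specialization of proposition \ref{015}, which already provides the general embedding criterion $\Dbb_m \hookrightarrow \Dbb_n$ for arbitrary prime $p$. All the substantive work—the Brauer-group computation showing $\frac{1}{n} \equiv \frac{1}{m} + \frac{l}{k} \mod \Zbb$ and the resulting congruence $k \equiv 1 \mod m$—is contained there, so the corollary reduces to a purely arithmetic translation.

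Concretely, I would set $p = 2$ and $m = 2$ in proposition \ref{015}. The proposition then asserts that $\Dbb_2$ embeds as a $\Qbb_2$-subalgebra of $\Dbb_n$ if and only if $n = 2k$ for some integer $k$ satisfying $k \equiv 1 \mod 2$. The next step is to observe that $k \equiv 1 \mod 2$ is simply the statement that $k$ is odd.

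Finally I would combine the two conditions: $n = 2k$ with $k$ odd means precisely that $n$ is twice an odd number, which is equivalent to $n \equiv 2 \mod 4$. Writing $k = 2j+1$ gives $n = 2(2j+1) = 4j + 2$, confirming the congruence in both directions. This establishes the stated equivalence.

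I do not expect any genuine obstacle here, since the embedding theorem content is already packaged in proposition \ref{015}; the only point requiring the slightest care is making the elementary translation between the divisibility-plus-congruence form ``$n = 2k$, $k$ odd'' and the single congruence ``$n \equiv 2 \mod 4$'' explicit and checking it holds as a biconditional, which the parametrization $k = 2j+1$ settles at once.
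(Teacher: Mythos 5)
Your proposal is correct and matches the paper's intended argument: the corollary is stated with an immediate \qed precisely because it is the specialization $m=2$ of Proposition \ref{015}, and your translation of ``$n=2k$ with $k \equiv 1 \bmod 2$'' into ``$n \equiv 2 \bmod 4$'' is the only (elementary) step needed. Nothing further is required.
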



\chapter{Endomorphisms of formal group laws} 
\label{273}

We give here a short account on endomorphisms of formal group laws of
finite height $n$ defined over a field of characteristic $p > 0$. We
summarize how these occurs as elements of the central division algebra
$\Dbb_n = D(\Qbb_p, 1/n)$ of invariant $\frac{1}{n}$ over $\Qbb_p$. The
reader may refer to \cite{hazewinkel} or \cite{frohlich} for more
details. 

\bigskip

\begin{definition}
Let $R$ be a commutative ring with unit. A \emph{formal group law} over
$R$ is a power series $F = F(X,Y) = X +_F Y \in R[[X, Y]]$ satisfying
\begin{itemize}
    \item $F(X, 0) = F(0, X) = X$,

    \item $F(X, Y) = F(Y, X)$, and

    \item $F(X, F(Y, Z)) = F(F(X, Y), Z)$ in $R[[X, Y, Z]]$.
\end{itemize}
We denote by $FGL(R)$ the set of formal group laws defined over $R$. For
$F, G \in FGL(R)$, a \emph{homomorphism} from $F$ to $G$ is a power
series $f=f(X) \in R[[X]]$ without constant term such that $f(F(X, Y)) =
G(f(X), f(Y))$. It is an \emph{isomorphism} if it is invertible, that
is, if the coefficient of $X$ is a unit in $R$. 
\end{definition}

The set $Hom_R(F,G)$ of homomorphisms from $F$ to $G$ forms an abelian
group under formal addition
\[
    G(f(X),g(X)) = f(X) +_G g(X).
\]
When $F=G$, the group $End_R(F) = Hom_R(F,F)$ becomes a ring via the
composition of series. Its group of units is written $End_R(F)^\x =
Aut_R(F)$. For an integer $n \in \Zbb$, we define the \emph{$n$-series}
$[n]_F$ to be the image of $n$ in $End_R(F)$ via the canonical ring
homomorphism $\Zbb \ra End_R(F)$, in other words
\[
    [n]_F(X) 
    = \underbrace{X +_F \ldots +_F X}_{\text{n times}}.
\]
As shown in \cite{frohlich} chapter I \S 3, when $R=k$ is a field
of characteristic $p > 0$, any homomorphism $f \in Hom_k(F,G)$ can be
written as a series
\[
    f(X) = \sum_{i \geq 1} a_i X^{ip^n}
\]
for some integer $n = ht(f) \in \Nbb^* \cup \{\infty\}$ defined as the
\emph{height} of $f$, where by convention $ht(f) = \infty$ if $f=0$. For
$F \in FGL(k)$ we then define $ht(F)$ to be the height of $[p]_F$. As
shown in \cite{frohlich} chapter III \S 2, this induces a valuation $ht$
on $End_k(F)$ which turns $End_k(F)$ into a complete local ring. In
particular, the definition of $[n]_F$ extends to the $p$-adic integers
$\Zbb_p$, and $ht(f)=0$ if and only if $f$ is invertible.

Let us fix a separably closed field $K$ of characteristic $p>0$. As
shown in \cite{frohlich} chapter III \S 2, we have the following three
results: the first two provide a classification of the $K$-isomorphism
classes of formal group laws defined over $K$ and the third one
describes the endomorphism ring as a subring of the central division
algebra of Hasse invariant $1/ht(F)$ over $\Qbb_p$.

\begin{theorem}[existence] \label{274}
For a positive integer $n$, there exists a formal group law $F_n \in
FGL(\Fbb_p)$ such that $[p]_{F_n}(X) = X^{p^n}$; it is the Honda formal
group law of height $n$.
\end{theorem}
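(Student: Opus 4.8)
The plan is to construct the Honda formal group law $F_n$ explicitly and verify the defining property $[p]_{F_n}(X) = X^{p^n}$ by working with the logarithm of the formal group law. First I would recall that over a $\Qbb$-algebra (here we pass to $\Qbb_p$ or simply work formally over $\Qbb$), a formal group law is determined by its logarithm $\log_{F}(X) = \sum_{i \geq 0} \ell_i X^{p^i}$ via the rule $F(X,Y) = \exp_F(\log_F(X) + \log_F(Y))$, where $\exp_F$ is the compositional inverse of $\log_F$. The natural candidate is the logarithm with coefficients $\ell_i = p^{-i}$ indexed over the powers $p^{ni}$; that is, I would set
\[
    \log_{F_n}(X) = \sum_{i \geq 0} \frac{1}{p^i} X^{p^{ni}}.
\]
The key computational observation is that this series satisfies the functional equation $\log_{F_n}(X) = X + \frac{1}{p}\log_{F_n}(X^{p^n})$, which is exactly the recursion one needs to pin down the $p$-series.

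Next I would invoke the integrality criterion of the functional equation lemma (as found in Hazewinkel's treatment cited in appendix \ref{273}): although the $\ell_i$ have denominators, the resulting power series $F_n(X,Y) = \exp_{F_n}(\log_{F_n} X + \log_{F_n} Y)$ has coefficients in $\Zbb_{(p)}$, and in fact reduces to a well-defined law over $\Fbb_p$. The heart of the verification is then the computation of $[p]_{F_n}$. Working over $\Qbb_p$ one has $\log_{F_n}([p]_{F_n}(X)) = p \log_{F_n}(X)$, and combining this with the functional equation $p\log_{F_n}(X) = pX + \log_{F_n}(X^{p^n})$ shows that $[p]_{F_n}(X) \equiv X^{p^n}$ modulo $p$ and higher-order corrections. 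After reducing coefficients to $\Fbb_p$, the terms divisible by $p$ vanish and one is left with precisely $[p]_{F_n}(X) = X^{p^n}$, which also confirms that $F_n$ has height $n$.

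The main obstacle I anticipate is the integrality step: verifying that $F_n \in FGL(\Fbb_p)$ (equivalently that the rationally-defined series has $p$-integral coefficients) is not a one-line matter and genuinely relies on the functional equation lemma rather than on direct inspection. The cleanest route is to quote that lemma in the form stated in \cite{hazewinkel}, whose hypotheses are met by our choice of $\ell_i$ with the single prime $p$ and the parameter $s = p^n$. Once integrality is granted, the reduction mod $p$ and the identification of $[p]_{F_n}$ are essentially formal manipulations with the logarithm, so I would present those briefly and let the citation carry the analytic weight. An alternative, if one prefers to avoid the logarithm entirely, is to define $F_n$ directly by the Honda construction over the ring $\Wbb_n$ using the element $S$ with $S^n = p$ introduced in section \ref{037}, but the logarithmic approach makes the property $[p]_{F_n}(X) = X^{p^n}$ most transparent.
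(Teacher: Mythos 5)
Your construction is correct, but note that the paper itself gives no proof of this statement: theorem \ref{274} appears in appendix \ref{273} as background, quoted from \cite{frohlich} chapter III \S 2 (with \cite{hazewinkel} as the parallel reference). Your route is precisely the standard argument from the cited literature, namely Hazewinkel's: take $\log_{F_n}(X) = \sum_{i \geq 0} p^{-i}X^{p^{ni}}$, observe the functional equation $\log_{F_n}(X) = X + \frac{1}{p}\log_{F_n}(X^{p^n})$, invoke the functional equation lemma for $p$-integrality of $F_n(X,Y) = \log_{F_n}^{-1}(\log_{F_n}X + \log_{F_n}Y)$, and reduce mod $p$. Two small precisions. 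First, in Hazewinkel's notation the relevant parameters are $q = p^n$ and $s_1 = 1/p$ (your phrase ``the parameter $s = p^n$'' conflates the two). Second, your final step — ``the terms divisible by $p$ vanish'' — is slightly glib as stated: from $\log_{F_n}([p]_{F_n}(X)) = pX + \log_{F_n}(X^{p^n}) \equiv \log_{F_n}(X^{p^n}) \bmod p\Zbb_{(p)}[[X]]$ one cannot conclude $[p]_{F_n}(X) \equiv X^{p^n} \bmod p$ by inspection, since $\log_{F_n}^{-1}$ has non-integral coefficients and could a priori destroy congruences; it is again the functional equation lemma (its congruence clause, asserting that $\alpha \equiv \beta \bmod p$ if and only if $\log_{F_n}(\alpha) \equiv \log_{F_n}(\beta) \bmod p$ for integral series $\alpha, \beta$) that licenses this. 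Since you cite that lemma wholesale, this is a matter of attribution rather than a gap, and your concluding observation that $[p]_{F_n}(X) = X^{p^n}$ forces $ht(F_n) = n$ matches the paper's definition of height via the $p$-series.
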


\begin{theorem}[Lazard] \label{275}
Two formal group laws $F,G \in FGL(K)$ are $K$-isomorphic if and only if
$ht(F)=ht(G)$.
\end{theorem}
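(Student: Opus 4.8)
The plan is to treat the two implications separately, the forward one being a short formal computation and the reverse one being the substantial content of Lazard's theorem.

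First I would dispose of the easy direction. If $f\colon F\to G$ is a $K$-isomorphism, then $f$ is a homomorphism of formal group laws and hence commutes with the formal multiplication by $p$, so that $f\circ [p]_F=[p]_G\circ f$. The height is additive under composition — a series of height $m$ composed with one of height $n$ has leading term $X^{p^{m+n}}$, so $ht$ behaves as a valuation on the endomorphism rings. Applying $ht$ to the relation above and using that $ht(f)=0$ (as $f$ is invertible, hence of finite height) gives $ht([p]_F)=ht([p]_G)$, that is $ht(F)=ht(G)$.

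For the converse, put $n:=ht(F)=ht(G)$. By the existence theorem \ref{274} the Honda law $F_n$ over $\Fbb_p\subset K$ has height $n$, and since $K$-isomorphism is an equivalence relation it suffices to show that an arbitrary $F\in FGL(K)$ of height $n$ is $K$-isomorphic to $F_n$. I would construct the isomorphism $f(X)=X+\sum_{i\geq 2}c_iX^i$ by successive approximation, after first normalizing the leading coefficient of $[p]_F$ to match that of $[p]_{F_n}$ (this requires a root of $u^{p^n-1}=v$ for some $v\in K^\x$, a separable equation solvable over $K$). Assuming $f$ chosen so that $f(F(X,Y))\equiv F_n(f(X),f(Y))$ modulo degree $d$, I would seek a correction $c_dX^d$ annihilating the error in degree $d$. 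That error is a homogeneous symmetric $2$-cocycle, and by Lazard's comparison lemma such a cocycle of degree $d$ is a scalar multiple of $\bigl((X+Y)^d-X^d-Y^d\bigr)/g_d$, where $g_d=p$ if $d$ is a power of $p$ and $g_d=1$ otherwise. When $d$ is not a power of $p$ the relevant coboundary operator is scaled by the unit $g_d$, and the correction is solved for directly over $K$.

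The hard part will be the exceptional degrees $d=p^j$, where $g_d$ vanishes in characteristic $p$ and the naive coboundary argument degenerates. Here the height hypothesis is essential: the leading behaviour of $[p]_F$ in degree $p^n$ forces the residual defect, after the preceding steps, into an equation of Artin-Schreier type $a^{p^n}-\lambda a=\mu$ with $\lambda\in K^\x$. Such a polynomial is separable, since its derivative is the unit $-\lambda$, so it has a root in $K$ precisely because $K$ is separably closed, and this root supplies the needed term $c_{p^j}$. Assembling the corrections degree by degree yields a power series $f$ with $f'(0)\neq 0$ and $f(F(X,Y))=F_n(f(X),f(Y))$, hence a $K$-isomorphism $F\iso F_n$. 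I would follow \cite{frohlich} chapter III \S 2 for the precise bookkeeping of the cocycle computations and of the reduction to the Artin-Schreier equation, which is where essentially all of the technical effort lies.
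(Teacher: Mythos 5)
The paper offers no proof of this theorem: it is quoted from \cite{frohlich} chapter III \S 2, alongside theorems \ref{274} and \ref{276}, so your proposal must be measured against the classical Lazard--Fr\"ohlich argument. Your forward direction is correct ($ht$ is a valuation, $f \circ [p]_F = [p]_G \circ f$, and $ht(f)=0$ for an isomorphism), and the broad strategy for the converse --- induction on degree, the symmetric $2$-cocycle comparison lemma, and the observation that separable closedness is needed only to solve $u^{p^n-1}=v$ and Artin--Schreier equations whose linear coefficient is nonzero because of the height hypothesis --- is indeed the skeleton of the classical proof.

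However, the mechanism you describe at the exceptional degrees is wrong, and that is the one non-formal point of the entire argument. In characteristic $p$ one has $B_{p^j}(X,Y) = (X+Y)^{p^j} - X^{p^j} - Y^{p^j} = 0$, and a direct computation shows that replacing $f$ by $f + cX^{p^j}$ changes the error $f(F(f^{-1}X, f^{-1}Y)) - F_n(X,Y)$ in degree $p^j$ by exactly $c\,B_{p^j} = 0$: all contributions nonlinear in $c$ (the $c^2$ and Frobenius-twisted $c^{p^i}$ terms) land in strictly higher degree. So no choice of $c_{p^j}$ can annihilate the degree-$p^j$ defect, and your Artin--Schreier equation cannot be an equation \emph{for $c_{p^j}$ killing the error in degree $p^j$}. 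The Artin--Schreier equations genuinely occur, but one level up, through the $p$-series: writing $[p]_F(X) = g(X^{p^n})$ with $g(T) = aT + \ldots$, $a \neq 0$, and using $([p]_F)' = 0$ in characteristic $p$, conjugation by $X + cX^d$ changes the coefficient of $X^{dp^n}$ in $[p]_F$ by $a^d c - a\,c^{p^n}$, a separable additive polynomial since $a^d \neq 0$; iterating brings $[p]_F$ to the normal form $X^{p^n}$. Even then one is not done: $[p]_G = X^{p^n}$ forces $G(X,Y)^{p^n} = G(X^{p^n}, Y^{p^n})$, so $G$ is defined over $\Fbb_{p^n}$, and comparing two such laws requires a further argument (this is the part tied to theorem \ref{277} and conjugacy of Frobenius elements, not to the cocycle calculus). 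In short, the induction must be organized around the relation $f \circ [p]_F = [p]_{F_n} \circ f$ rather than run degree by degree on $f(F(X,Y)) = F_n(f(X), f(Y))$; deferring this to ``bookkeeping'' in \cite{frohlich} conceals precisely the step where your scheme, as stated, breaks. A minor further omission: as stated the theorem allows $ht = \infty$, where there is no Honda law and one must instead show both laws are isomorphic to the additive one.
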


\begin{theorem}[Dieudonné - Lubin] \label{276}
For a formal group law $F \in FGL(K)$ of finite height $n$, the ring
$End_K(F)$ is isomorphic to the maximal order $\Ocal_n$ of the central
division algebra $\Dbb_n = D(\Qbb_p, 1/n)$ of invariant $\frac{1}{n}$
over $\Qbb_p$.
\end{theorem}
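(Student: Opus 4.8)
The plan is to reduce to the Honda law by Lazard's theorem and then compute the endomorphism ring of that one law against the Witt-vector presentation of $\Ocal_n$ recalled in section \ref{037}. First I would use theorem \ref{275}: since $K$ is separably closed and $ht(F) = n$, the law $F$ is $K$-isomorphic to the base change of the Honda law $F_n$ of theorem \ref{274}. A $K$-isomorphism $f\colon F \to F_n$ induces a ring isomorphism $End_K(F) \to End_K(F_n)$ by $g \mapsto f \circ g \circ f^{-1}$, so it suffices to prove $End_K(F_n) \iso \Ocal_n$.

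Next I would construct a ring homomorphism $\phi\colon \Wbb_n \lan S \ran/(S^n = p,\, Sw = w^\sigma S) \to End_K(F_n)$ realizing the presentation of $\Ocal_n$. Because $F_n$ has coefficients in $\Fbb_p$, the Frobenius $\varphi\colon X \mapsto X^p$ satisfies $\varphi(F_n(X,Y)) = F_n(\varphi X, \varphi Y)$ and so is an endomorphism; moreover $\varphi^n(X) = X^{p^n} = [p]_{F_n}(X)$, which gives the relation $S^n = p$ after sending $S \mapsto \varphi$ and identifying $\Zbb_p$ with its image under $[\cdot]_{F_n}$. To extend to all of $\Wbb_n = W(\Fbb_{p^n})$ I would lift each $(p^n-1)$-th root of unity to an endomorphism: using completeness of $End_K(F_n)$ in the height topology together with the filtration quotients $\iso \Fbb_{p^n}$, a successive-approximation argument produces for each $\zeta \in \Fbb_{p^n}^\x$ a unit $[\zeta]$ with leading coefficient $\zeta$, and multiplicativity of these Teichmüller-type lifts, with $p \mapsto S^n$, assembles the map $\Wbb_n \to End_K(F_n)$. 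The twist $Sw = w^\sigma S$ then follows from $\varphi \circ [\zeta] = [\zeta^p] \circ \varphi$ read on leading coefficients.

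The heart of the argument, and the step I expect to be the main obstacle, is to show $\phi$ is bijective, which reduces to identifying the residue field of $End_K(F_n)$ with exactly $\Fbb_{p^n}$. Here the height hypothesis enters: if $g(X) = aX + \cdots$ is a unit, commuting it with $[p]_{F_n}(X) = X^{p^n}$ gives $g(X^{p^n}) = g(X)^{p^n}$, and comparing the coefficients of $X^{p^n}$ forces $a = a^{p^n}$, whence $a \in \Fbb_{p^n}$; inducting on the remaining coefficients shows every endomorphism is already defined over $\Fbb_{p^n}$ (this also recovers $Aut_K(F_n) \iso Aut_{\Fbb_{p^n}}(F_n)$). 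Combined with the lifts above, the reduction $End_K(F_n) \to \Fbb_{p^n}$ is surjective with kernel $\mathfrak{m} = (\varphi)$, so $\phi$ is an isomorphism on residue fields and on each filtration quotient; completeness upgrades this to surjectivity, and injectivity is immediate since the only two-sided ideals of $\Ocal_n$ are the powers of $(S)$ while no power of $\phi(S) = \varphi$ vanishes.

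Finally I would identify the target as the maximal order of the right division algebra. The height is a valuation on $D := End_K(F_n) \ox_{\Zbb_p} \Qbb_p$ with $v(S) = \tfrac1n$ and $S^n = p$, so $D$ has ramification index $n$ and residue field $\Fbb_{p^n}$; by proposition \ref{011} these force $[D:\Qbb_p] = n^2$ with $D$ a central division algebra over $\Qbb_p$, and $End_K(F_n) = \{x \in D : v(x) \geq 0\}$ is its maximal order. Reading the presentation $D \iso \Qbb_p(\omega)\lan S \ran/(S^n = p,\, Sx = x^\sigma S)$ through remark \ref{334} with $\pi_K = p$ and $r = 1$ yields invariant $\tfrac1n$, so $D \iso \Dbb_n = D(\Qbb_p,1/n)$ and hence $End_K(F_n) \iso \Ocal_n$, as claimed.
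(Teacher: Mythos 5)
Your overall architecture (reduce to the Honda law by theorem \ref{275}, map the twisted power-series presentation of $\Ocal_n$ into $End_K(F_n)$ by sending $S$ to Frobenius, prove bijectivity through a residue-field-plus-completeness argument, then read off the invariant via remark \ref{334}) is reasonable, and several steps are correct and clean: the observation that any endomorphism $g$ commutes with $[p]_{F_n}(X)=X^{p^n}$, so that $g(X^{p^n})=g(X)^{p^n}$ forces all coefficients of $g$ into $\Fbb_{p^n}$; the injectivity of $\phi$ from the ideal structure of $\Ocal_n$; and the final invariant computation. Note for context that the paper itself offers no proof of this theorem: it is quoted from \cite{frohlich}, chapter III \S 2.

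The genuine gap is the construction of the lifts $[\zeta]$, i.e.\ of the embedding $\Wbb_n \raa End_K(F_n)$ --- and this is not a detail but the heart of the theorem. Your justification is circular: ``the filtration quotients $\iso \Fbb_{p^n}$'' is exactly the surjectivity you need to prove, since a priori your coefficient argument only shows those quotients \emph{embed} into $\Fbb_{p^n}$. Moreover, ``completeness plus successive approximation'' cannot by itself produce a unit with prescribed leading coefficient: if it could, it would do so for every $\zeta \in K^\x$, making the residue field of $End_K(F_n)$ all of $K$, contradicting your own step showing it lies in $\Fbb_{p^n}$. Concretely, building $[\zeta]=\zeta X + c_2X^2+\cdots$ degree by degree, the obstruction in degree $d$ is a symmetric $2$-cocycle, hence by Lazard's comparison lemma a multiple of the basic cocycle $C_d$; for $d$ not a power of $p$ it is killed by adjusting $c_d$, but for $d=p^j$ the adjustment changes the obstruction by $c\left((X+Y)^{p^j}-X^{p^j}-Y^{p^j}\right)=0$ in characteristic $p$, so the obstruction must be shown to vanish, and that is precisely where the hypothesis $\zeta^{p^n}=\zeta$ has to enter; completeness never supplies this. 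To fill the gap you need an actual construction, for instance: take the Lubin--Tate formal $\Wbb_n$-module $F_e$ attached to $e(X)=pX+X^{p^n}$. Since $e$ has coefficients in $\Zbb_p$, so does $F_e$ (by uniqueness, $F_e^\sigma = F_{e^\sigma} = F_e$), its reduction mod $p$ is a formal group law over $\Fbb_p$ with $[p](X)=X^{p^n}$, so it may serve as the Honda law of theorem \ref{274}, and the reductions of the series $[a]$, $a \in \Wbb_n$, are exactly the missing endomorphisms, with leading coefficient $\bar{a}$ and with $\varphi \o \bar{[a]} = \bar{[a^\sigma]} \o \varphi$ following from the Lubin--Tate uniqueness clause $[a]^\sigma=[a^\sigma]$. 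With that input, the rest of your argument goes through.
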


We now describe the image in $\Dbb_n$ of the ring of endomorphisms
defined over a finite subfield of $K$. For this we identify $\Ocal_n$
with $End_K(F_n)$ and fix two integers $n, r \geq 1$.  Let $v$ denote
the unique extension to $\Dbb_n^\x$ of the $p$-adic valuation $p \mto 1$
on $\Qbb_p^\x$. Let $\Ccal_r$ be the set of conjugacy classes of
elements of valuation $\frac{r}{n}$ in $\Ocal_n$, and let
$\Ical(\Fbb_{p^r}, n)$ denote the set of $\Fbb_{p^r}$-isomorphism
classes of formal group laws of height $n$. Define the map
\[
    \Phi: \Ical(\Fbb_{p^r}, n) \raa \Ccal_r
\]
by assigning to a formal group law $F \in FGL(\Fbb_{p^r})$ of height $n$
and a $K$-isomorphism $f: F_n \ra F$, the conjugacy class of $\xi_F^r
\in \Ocal_n^\x$ the element associated to the endomorphism $f^{-1}
X^{p^r} f$. Then $\Phi$ is a bijection (see \cite{hazewinkel} 24.4.2, or
\cite{frohlich} chapter III \S 3 theorem 2).

\begin{theorem} \label{277}
The map
\[
    End_{\Fbb_{p^r}}(F) 
    \raa C_{\Ocal_n}(\xi_F^r)\ 
    :\ x \mtoo f^{-1}xf
\]
is a ring isomorphism from $End_{\Fbb_{p^r}}(F)$ to the subring of all
elements of $\Ocal_n$ commuting with $\xi_F^r$.
\end{theorem}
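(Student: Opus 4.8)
The plan is to exhibit $End_{\Fbb_{p^r}}(F)$ as the centralizer of the relative Frobenius endomorphism inside $End_K(F)$, and then transport that description to $\Ocal_n$ by conjugating with $f$. Throughout, composition of power series is the ring multiplication, so $f^{-1}xf$ means $f^{-1} \circ x \circ f$, and the notation $\xi_F^r$ is consistent because $(f^{-1}X^pf)^{\circ r} = f^{-1}X^{p^r}f$.

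First I would check that conjugation by the $K$-isomorphism $f\colon F_n \ra F$ is a ring isomorphism $\gamma\colon End_K(F) \ra End_K(F_n) = \Ocal_n$, $x \mto f^{-1}xf$. It is a bijection with inverse $y \mto fyf^{-1}$; it is multiplicative because composition is associative; and it is additive because $f$, being a homomorphism of formal group laws, carries the formal addition $+_F$ to $+_{F_n}$, so that $f^{-1}(x +_F y)f = (f^{-1}xf) +_{F_n} (f^{-1}yf)$. Since both endomorphism rings are identified with maximal orders of $\Dbb_n$ via theorem \ref{276}, the image automatically lies in $\Ocal_n$. By the very definition of $\xi_F^r$ we have $\gamma(X^{p^r}) = f^{-1} X^{p^r} f = \xi_F^r$.

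The heart of the argument is the claim that, inside $End_K(F)$, an endomorphism lies in $End_{\Fbb_{p^r}}(F)$ precisely when it commutes with the endomorphism $X^{p^r}$; that is, $End_{\Fbb_{p^r}}(F) = C_{End_K(F)}(X^{p^r})$. I would first note that $X^{p^r}$ genuinely is an endomorphism of $F$: the coefficients of $F$ lie in $\Fbb_{p^r}$ and are therefore fixed by the $p^r$-power map, so $F(X,Y)^{p^r} = F(X^{p^r}, Y^{p^r})$, which is exactly the homomorphism condition for $X^{p^r}\colon F \ra F$. Then for $g(X) = \sum_{i} a_i X^i \in End_K(F)$, the characteristic-$p$ identity $(\sum a_i X^i)^{p^r} = \sum a_i^{p^r} X^{ip^r}$ gives $X^{p^r}\circ g = \sum a_i^{p^r} X^{ip^r}$, whereas $g \circ X^{p^r} = \sum a_i X^{ip^r}$. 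These two series coincide if and only if $a_i^{p^r}=a_i$ for every $i$, i.e. if and only if all coefficients of $g$ lie in $\Fbb_{p^r}$, i.e. if and only if $g$ is defined over $\Fbb_{p^r}$. This establishes the claimed equality of subrings.

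Combining the two steps, $\gamma$ carries $C_{End_K(F)}(X^{p^r})$ isomorphically onto $C_{\Ocal_n}(\gamma(X^{p^r})) = C_{\Ocal_n}(\xi_F^r)$, and the source is $End_{\Fbb_{p^r}}(F)$; this is exactly the asserted ring isomorphism $x \mto f^{-1}xf$. The step I expect to be the main obstacle is the coefficient computation identifying ``defined over $\Fbb_{p^r}$'' with the centralizer condition: one must keep the composition order straight and be sure that $X^{p^r}$ is a genuine endomorphism, so that the centralizer is taken inside the ring $End_K(F)$ rather than being a mere subset. Everything else---the isomorphism properties of $\gamma$ and the passage to centralizers---is formal once that identification is in hand.
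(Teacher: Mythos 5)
Your proof is correct and follows essentially the same route as the paper: identify $End_{\Fbb_{p^r}}(F)$ inside $End_K(F)\iso\Ocal_n$ as the centralizer of $\xi_F^r$ via the coefficient computation showing $g(X)^{p^r}=g(X^{p^r})$ if and only if the coefficients of $g$ lie in $\Fbb_{p^r}$. The paper states this in one line; you have merely made explicit the (correct) verification that conjugation by $f$ is a ring isomorphism and that $X^{p^r}$ is a genuine endomorphism of $F$.
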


\begin{proof}
In $End_K(F) \iso \Ocal_n$, the ring $End_{\Fbb_{p^r}}(F)$ is
characterized by $\xi_F^r x = x \xi_F^r$, as a series $g(X) \in K[[X]]$
satisfies $g(X)^{p^r} = g(X^{p^r})$ if and only if its coefficients are
in $\Fbb_{p^r}$.
\end{proof}

In other words if $m = [\Qbb_p(\xi_F^r):\Qbb_p]$, then $m$ divides $n$
and $End_{\Fbb_{p^r}}(F)$ is isomorphic to the maximal order of the
division algebra
\[
    D(\Qbb_p(\xi_F^r), m/n) 
    \iso C_{\Dbb_n}(\xi_F^r) 
    \subset D_n.
\]
In particular $End_{\Fbb_{p^r}}(F)$ is the ring of integers of the
$\Qbb_p$-algebra $End_{\Fbb_{p^r}}(F) \ox_{\Zbb_p} \Qbb_p$.

\begin{corollary} \label{278}
There exists a formal group law $F$ defined over $\Fbb_{p^r}$ and of
height $n$ such that
\[
    End_{\Fbb_{p^r}}(F) 
    \iso End_{K}(F) 
    \iso \Ocal_n
\]
if and only if $r$ is a multiple of $n$.
\end{corollary}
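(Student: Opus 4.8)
The plan is to reduce everything to Theorem~\ref{277} together with the bijection $\Phi\colon \Ical(\Fbb_{p^r}, n) \to \Ccal_r$ recalled just above the statement. By Theorem~\ref{277}, for any height-$n$ formal group law $F$ over $\Fbb_{p^r}$ we have $End_{\Fbb_{p^r}}(F) \iso C_{\Ocal_n}(\xi_F^r)$, and the Dieudonné-Lubin theorem~\ref{276} identifies $End_K(F)$ with $\Ocal_n$. So the condition $End_{\Fbb_{p^r}}(F) \iso End_K(F) \iso \Ocal_n$ is equivalent to $C_{\Ocal_n}(\xi_F^r) = \Ocal_n$, that is, to $\xi_F^r$ lying in the center of $\Ocal_n$. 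Since the center of $\Dbb_n$ is $\Qbb_p$, this center is $\Ocal_n \cap \Qbb_p = \Zbb_p$. Thus the whole statement becomes: there is a height-$n$ law $F$ over $\Fbb_{p^r}$ with $\xi_F^r \in \Zbb_p$ if and only if $n$ divides $r$.

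For the ``only if'' direction I would argue by valuation. By construction $\xi_F^r$ is an element of $\Ccal_r$, hence $v(\xi_F^r) = r/n$. If $\xi_F^r$ were central, i.e. $\xi_F^r \in \Zbb_p \subset \Qbb_p$, then its valuation would be the $p$-adic valuation of an element of $\Qbb_p$, hence an element of $\Zbb$; combined with $v(\xi_F^r) = r/n$ this forces $r/n \in \Zbb$, i.e. $n \mid r$. Equivalently, using the remark following Theorem~\ref{277}, $End_{\Fbb_{p^r}}(F)$ is the maximal order of $D(\Qbb_p(\xi_F^r), m/n)$ with $m = [\Qbb_p(\xi_F^r):\Qbb_p]$, which has $\Qbb_p$-dimension $n^2/m$; equality with $\Ocal_n$ of dimension $n^2$ forces $m=1$, so $\xi_F^r \in \Qbb_p \cap \Ocal_n = \Zbb_p$, and the same valuation argument applies.

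For the ``if'' direction, assume $n \mid r$ and set $s = r/n \in \Zbb$. The element $p^s \in \Zbb_p \subset \Ocal_n$ is central and has valuation $s = r/n$, so its (singleton) conjugacy class lies in $\Ccal_r$. Since $\Phi$ is a bijection, and in particular surjective, there is a height-$n$ formal group law $F$ over $\Fbb_{p^r}$ whose associated class $\Phi(F)$ is that of $p^s$; that is, $\xi_F^r$ is conjugate in $\Ocal_n^\x$ to $p^s$. As $p^s$ is central, conjugacy forces $\xi_F^r = p^s \in \Zbb_p$. Hence $C_{\Ocal_n}(\xi_F^r) = \Ocal_n$, and by Theorem~\ref{277} this $F$ satisfies $End_{\Fbb_{p^r}}(F) \iso \Ocal_n \iso End_K(F)$, as required.

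I expect the only delicate point to be the bookkeeping identifying ``$End_{\Fbb_{p^r}}(F) \iso \Ocal_n$'' with ``$\xi_F^r$ central'': one must invoke that the center of the maximal order $\Ocal_n$ is exactly $\Zbb_p$ and that the centralizer of a non-central element is a proper subring, which is already packaged in the remark via the strict inequality $n^2/m < n^2$ for $m>1$. Everything else is a direct application of the surjectivity of $\Phi$ and the valuation normalization $v(\xi_F^r)=r/n$.
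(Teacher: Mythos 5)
Your proposal is correct and follows essentially the same route as the paper: reduce via Theorem~\ref{277} to the condition that $\xi_F^r$ be central in $\Ocal_n$, then use that the valuation group of the center $\Qbb_p$ is $\Zbb$ while $v(\xi_F^r)=r/n$. The paper's proof is a one-liner that leaves the existence direction implicit; your explicit appeal to the surjectivity of $\Phi$ applied to the class of $p^{r/n}$ is exactly the missing bookkeeping and is a welcome elaboration, not a different method.
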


\begin{proof}
This follows from the fact that the valuation group of the center
$\Qbb_p$ of $\Dbb_n$ is $\Zbb$, and hence that $End_{\Fbb_{p^r}}(F) \iso
\Ocal_n$ if and only if $End_{\Fbb_{p^r}}(F) \subset \Zbb_p$.
\end{proof}

\begin{corollary} \label{279}
If $r=1$, then $End_{\Fbb_{p}}(F)$ is commutative and its field of
fractions is totally ramified of degree $n$ over $\Qbb_p$.
\end{corollary}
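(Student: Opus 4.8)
The plan is to combine the ring isomorphism of theorem \ref{277} with a valuation argument that forces the relevant commutative subfield to be maximal and totally ramified. Applying theorem \ref{277} with $r=1$, the ring $End_{\Fbb_p}(F)$ is isomorphic to $C_{\Ocal_n}(\xi_F)$, the subring of all elements of $\Ocal_n$ commuting with $\xi_F$, where $\xi_F$ is the element associated to the Frobenius endomorphism $X^p$ and has valuation $v(\xi_F) = \frac{1}{n}$ in $\Dbb_n$ (it lies in $\Ccal_1$). Setting $L := \Qbb_p(\xi_F)$ and $m := [L:\Qbb_p]$, I would first observe that $C_{\Ocal_n}(\xi_F) = \Ocal_n \cap C_{\Dbb_n}(\xi_F) = \Ocal_n \cap C_{\Dbb_n}(L)$, since an element of $\Dbb_n$ commutes with $\xi_F$ if and only if it commutes with the whole field $L$ that $\xi_F$ generates over $\Qbb_p$.

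Next I would pin down $L$. By the embedding theorem \ref{013} (equivalently corollary \ref{008}) the degree $m$ divides $n$, so the ramification index $e := e(L/\Qbb_p)$ satisfies $e \leq m \leq n$. On the other hand, the restriction of $v$ to $L^\x$ is the valuation of the local field $L$ and has value group $\frac{1}{e}\Zbb$ (normalized by $v(p)=1$); since $\xi_F \in L$ has $v(\xi_F) = \frac{1}{n}$, this forces $\frac{1}{n} \in \frac{1}{e}\Zbb$, that is $n \mid e$ and hence $e \geq n$. Combining the two estimates gives
\[
    n \leq e \leq m \leq n,
\]
so that $e = m = n$ and the residue degree is $1$; thus $L/\Qbb_p$ is totally ramified of degree $n$. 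This valuation argument is the crux of the proof and the step I expect to require the most care: the point is that an element of valuation $\frac{1}{n}$ sitting inside a commutative extension of degree at most $n$ can only exist when that extension is a maximal subfield and is fully ramified.

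Finally, since $L$ is a subfield of $\Dbb_n$ of degree $n = \mathrm{ind}(\Dbb_n)$, it is a maximal subfield, so proposition \ref{009} yields $C_{\Dbb_n}(L) = L$. Therefore
\[
    End_{\Fbb_p}(F) \iso C_{\Ocal_n}(\xi_F) = \Ocal_n \cap L = \Ocal_L,
\]
the ring of integers of $L$, where the last equality is immediate from $\Ocal_n = \{x \in \Dbb_n \mid v(x) \geq 0\}$. This is visibly commutative, and its field of fractions is $L$, which by the previous paragraph is totally ramified of degree $n$ over $\Qbb_p$, as claimed. The only routine checks left are the identification $\Ocal_n \cap L = \Ocal_L$ and the fact that $End_{\Fbb_p}(F)$ is already the full ring of integers of $L$ (both recorded in the discussion following theorem \ref{277}), so no further computation is needed.
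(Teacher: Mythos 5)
Your proposal is correct and follows essentially the same route as the paper: the valuation $v(\xi_F)=\frac{1}{n}$ forces the ramification index of $\Qbb_p(\xi_F)$ over $\Qbb_p$ to be at least $n$, the degree bound for commutative subfields of $\Dbb_n$ gives the reverse inequality, and commutativity of $End_{\Fbb_p}(F)$ follows because the centralizer of the maximal subfield $\Qbb_p(\xi_F)$ is itself. The extra identification $End_{\Fbb_p}(F)\iso\Ocal_n\cap L=\Ocal_L$ is a harmless refinement already recorded in the discussion following theorem \ref{277}.
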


\begin{proof}
In this case, the element $\xi_F \in \Dbb_n$ has valuation
$\frac{1}{n}$. Hence $\Qbb_p(\xi_F)$ has ramification index at least $n$
over $\Qbb_p$. Since $\Qbb_p(\xi_F)$ is a commutative subfield of
$\Dbb_n$, we have $[\Qbb_p(\xi_F):\Qbb_p] \leq n$ and
$\Qbb_p(\xi_F)/\Qbb_p$ is totally ramified of degree $n$.  The
commutativity of $End_{\Fbb_p}(F)$ follows from the fact that the
centralizer of $\Qbb_p(\xi_F)$ in $\Dbb_n$ is $\Qbb_p(\xi_F)$ itself.
\end{proof}

Generally $End_K(F) \iso \Ocal_n$ for $F$ a formal group law of height
$n$. If $F$ is already defined over $\Fbb_p$, the element $\xi_F \in
\Ocal_n$ corresponds to the Frobenius endomorphism $X^p \in End_{K}(F)$.

\begin{proposition} \label{280}
If $F$ is defined over $\Fbb_p$, then $End_K(F) = End_{\Fbb_{p^n}}(F)$
if and only if the minimal polynomial of $\xi_F \in \Ocal_n$ over
$\Zbb_p$ is $\xi_F^n-up$ with $u \in \Zbb_p^\x$.
\end{proposition}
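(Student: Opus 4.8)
The plan is to convert the equality of endomorphism rings into a centralizer condition by means of Theorem \ref{277}, and then to read off the minimal polynomial from the valuation of $\xi_F$. First I would apply Theorem \ref{277} with $r=n$: since $F$ is defined over $\Fbb_p$ it is in particular defined over $\Fbb_{p^n}$, and the theorem identifies $End_{\Fbb_{p^n}}(F)$ with the subring $C_{\Ocal_n}(\xi_F^n)$ of those elements of $\Ocal_n \iso End_K(F)$ commuting with $\xi_F^n$, the identifying map being the conjugation $x \mapsto f^{-1}xf$ by a fixed $K$-isomorphism $f: F_n \ra F$. The key bookkeeping observation is that this is the \emph{same} conjugation that realizes the Dieudonn\'e--Lubin isomorphism $End_K(F) \iso \Ocal_n$, so that under one common identification the inclusion $End_{\Fbb_{p^n}}(F) \subset End_K(F)$ becomes $C_{\Ocal_n}(\xi_F^n) \subset \Ocal_n$. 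Hence the desired equality $End_K(F) = End_{\Fbb_{p^n}}(F)$ holds if and only if $C_{\Ocal_n}(\xi_F^n) = \Ocal_n$, that is, if and only if $\xi_F^n$ is central in $\Ocal_n$.

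Next I would compute this center. Because $\Dbb_n$ is central over $\Qbb_p$ and $\Ocal_n = \{x \mid v(x) \geq 0\}$ spans $\Dbb_n$ over $\Qbb_p$, any element of $\Ocal_n$ commuting with all of $\Ocal_n$ must commute with all of $\Dbb_n$, hence lies in $Z(\Dbb_n) \cap \Ocal_n = \Qbb_p \cap \Ocal_n = \Zbb_p$. Thus the centrality of $\xi_F^n$ is equivalent to $\xi_F^n \in \Zbb_p$. I would then invoke Corollary \ref{279}, whose proof records that $v(\xi_F) = \frac{1}{n}$ and that $\Qbb_p(\xi_F)/\Qbb_p$ is totally ramified of degree $n$; in particular $v(\xi_F^n) = 1$. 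Therefore, whenever $\xi_F^n$ is central, it is an element of $\Zbb_p$ of valuation $1$, i.e. $\xi_F^n = up$ with $u \in \Zbb_p^\x$.

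Finally I would pin down the minimal polynomial in both directions. For the \emph{only if} direction, once $\xi_F^n = up$ with $u$ a unit, the monic polynomial $X^n - up$ annihilates $\xi_F$; since $\xi_F \in \Ocal_n$ is integral over $\Zbb_p$, its minimal polynomial over $\Zbb_p$ is monic with $\Zbb_p$-coefficients of degree $[\Qbb_p(\xi_F):\Qbb_p] = n$, and being a divisor of the degree-$n$ monic polynomial $X^n - up$ it must equal it (alternatively, $X^n - up$ is Eisenstein because $v(up) = 1$, hence irreducible). For the converse, if the minimal polynomial of $\xi_F$ over $\Zbb_p$ is exactly $X^n - up$ with $u \in \Zbb_p^\x$, then $\xi_F^n = up \in \Zbb_p$ is central, so $C_{\Ocal_n}(\xi_F^n) = \Ocal_n$ and the two endomorphism rings coincide. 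I expect the only genuinely delicate point to be the first step: verifying that the inclusion of endomorphism rings corresponds to the inclusion of $C_{\Ocal_n}(\xi_F^n)$ in $\Ocal_n$ under a single conjugation, so that no stray isomorphism is introduced; the center computation and the degree/Eisenstein argument are then routine.
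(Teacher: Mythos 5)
Your proposal is correct and follows essentially the same route as the paper's proof: both reduce the equality $End_K(F) = End_{\Fbb_{p^n}}(F)$ to the centrality of $\xi_F^n$ via the centralizer description $End_{\Fbb_{p^n}}(F) = C_{End_K(F)}(\xi_F^n)$ from theorem \ref{277}, then conclude using that the center of $End_K(F)$ is $\Zbb_p$ and that $v(\xi_F^n) = v(p)$. Your additional details --- the careful check that a single conjugation identifies both rings, and the degree/Eisenstein argument pinning down the minimal polynomial --- merely make explicit what the paper's terser proof leaves implicit.
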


\begin{proof}
One has $End_{\Fbb_{p^n}}(F) = C_{End_K(F)}(\xi_F^n)$, and therefore
$End_K(F) = End_{\Fbb_{p^n}}(F)$ if and only if $\xi_F^n$ is central.
The result then follows form the fact that the center of $End_K(F)$ is
$\Zbb_p$ and the valuation of $\xi_F^n$ is equal to the valuation of
$p$.
\end{proof}

From appendix \ref{033}, we know that
\[
    \Ocal_n
    \iso \Zbb_p(\omega)\lan \xi_F \ran /
    (\xi_F^n = pu, \xi_F x \xi_F^{-1} = \sigma(x)),
    \qq x \in \Zbb_p(\omega),
\]
for a primitive $(p^n\!-\!1)$-th root of unity $\omega$ and $\sigma \in
Gal(\Zbb_p(\omega)/\Zbb_p) \iso Gal(\Fbb_{p^n}/\Fbb_p)$ the Frobenius
automorphism. Here $\sigma$ lifts to an action on $\Ocal_n$ given by
\[
    \sigma\left( \sum_{i \geq 0} x_i \xi_F^i \right)
    = \sum_{i \geq 0} \sigma(x_i) \xi_F^i,
    \qq x_i \in \Zbb_p(\omega).
\]
Since $\xi_F^n = pu$, we know that $v(\xi_F) = \frac{1}{n}$. Thus the
valuation map and the canonical projection $\pi: \Dbb_n^\x \ra
\Dbb_n^\x/\lan pu \ran$ induce the exact commutative diagram
\[
    \xymatrix{
    && 1 \ar[d] & 1 \ar[d] \\
    && \lan pu \ran \ar[d] \ar[r] & \lan pu \ran \ar[d] \\
    1 \ar[r] & \Ocal_n^\x \ar[d] \ar[r] 
      & \Dbb_n^\x \ar[d]^{\pi} \ar[r]^{v} 
      & \lan \xi_F \ran \ar[d] \ar[r] & 1\\
    1 \ar[r] & \Ocal_n^\x \ar[r] 
      & \Dbb_n^\x/\lan pu \ran \ar[r]^{v} \ar[d] 
      & \lan \xi_F \ran/\lan pu \ran \ar[r] \ar[d] & 1 \\
    && 1 & 1 \\
    }
\]
in which the bottom horizontal sequence splits and the group $\lan \xi_F
\ran/\lan pu \ran \iso Gal(\Fbb_{p^n}/\Fbb_p)$ acts on $\Ocal_n^\x \iso
\Sbb_n$ by the above given action. It follows that
\[
    \Dbb_n^\x/\lan pu \ran
    \iso \Sbb_n \rtimes_F Gal(\Fbb_{p^n}/\Fbb_p)
    \iso \Gbb_n(u).
\]



\chapter*{Notations}
\addcontentsline{toc}{chapter}{Notations}

\subsection*{Integers}

\noindent
\begin{tabular*}{150mm}{rl@{\extracolsep{\fill}}l}
$n$
    & a positive integer
    & \rule{16mm}{0mm}\\
$p$
    & a prime
    & \\
$k$
    & the maximal integer such that $p^k$ divides $n$ 
    & p. \pageref{131}\\
$m$
    & the positive integer $\frac{n}{\phi(p^k)}$ when $p-1$ divides $n$ 
    & p. \pageref{131}\\
$n_\alpha$
    & the positive integer $\frac{n}{\phi(p^\alpha)}$ 
    for $0 \leq \alpha \leq k$ 
    & p. \pageref{131}\\
$(a;b)$
    & the greatest common divisor of $a$ and $b$
    & \\
$r_i$
    & the positive integer $|F_i/F_{i-1}|$ for $1 \leq i \leq 3$
    & \\
$[A:K]$
    & the dimension of $A$ over $K$
    & \\
$deg(A)$
    & the degree of $A$
    & p. \pageref{335}\\
$ind(A)$
    & the index of $A$
    & p. \pageref{336}\\
$exp(A)$
    & the exponent of $A$
    & p. \pageref{302}\\
$e(D/K)$ 
    & the ramification index of $D$ over $K$
    & p. \pageref{301}\\
$f(D/K)$ 
    & the inertial degree of $D$ over $K$
    & p. \pageref{301}\\
\end{tabular*}

\subsection*{Elements}

\noindent
\begin{tabular*}{150mm}{rl@{\extracolsep{\fill}}l}
$u$
    & a unit in $\Zbb_p^\x$
    & \rule{16mm}{0mm}\\
$S$
    & an element of $\Dbb_n^\x$ generating the Frobenius 
    such that $S^n=p$
    & p. \pageref{303}\\
$\zeta_i$
    & a $i$-th root of unity
    & \\
$x_i$
    & an element of $\Dbb_n^\x$ such that 
    $v(x_i) = (\prod_{k=1}^i r_i)^{-1}$ 
    and $x_i^{r_i} \in \tilde{F_{i-1}}$
    & p. \pageref{181}, \pageref{186}\\
$\epsilon_{p^\alpha}$
    & an element satisfying $(\zeta_{p^\alpha}-1)^{\phi(p^\alpha)} 
    = p \epsilon_{p^\alpha}$
    & p. \pageref{304}\\
$\pi_{\alpha}$
    & the element $\zeta_{p^\alpha}-1$
    & p. \pageref{305}\\
$\pi_K$
    & a uniformizing element of $K$
    & \\
\end{tabular*}

\subsection*{Sets}

\noindent
\begin{tabular*}{150mm}{rl@{\extracolsep{\fill}}l}
$C_G(H)$
    & the centralizer of $H$ in $G$
    & \rule{16mm}{0mm}\\
$N_G(H)$
    & the normalizer of $H$ in $G$
    & \\
$S/\sim_G$
    & the set of orbits with respect to the $G$-action on $S$
    & \\
$\Fcal(G)$
    & the set of all finite subgroups of $G$
    & p. \pageref{306}\\
$\tilde{\Fcal}_u(G)$
    & the set of all subgroups of $G$ containing $\lan pu \ran$\\
    & \q as a subgroup of finite index
    & p. \pageref{306}\\
$\tilde{\Fcal}_u(\Qbb_p(F_0), \tilde{F_0})$
    & as defined in
    & p. \pageref{307}\\
$\tilde{\Fcal}_u(\Qbb_p(F_0), \tilde{F_0}, r_1)$
    & as defined in
    & p. \pageref{308}\\
$\tilde{\Fcal}_u(C_{\Dbb_n^\x}(F_0), \tilde{F_1})$
    & as defined in
    & p. \pageref{309}\\
$\tilde{\Fcal}_u(C_{\Dbb_n^\x}(F_0), \tilde{F_1}, r_2)$
    & as defined in
    & p. \pageref{310}\\
$\tilde{\Fcal}_u(C_{\Dbb_n^\x}(F_0), \tilde{F_1}, L)$
    & as defined in
    & p. \pageref{311}\\
$\tilde{\Fcal}_u(N_{\Dbb_n^\x}(F_0), \tilde{F_2})$
    & as defined in
    & p. \pageref{312}\\
$\tilde{\Fcal}_u(N_{\Dbb_n^\x}(F_0), \tilde{F_2}, W)$
    & as defined in
    & p. \pageref{313}\\
\end{tabular*}

\pagebreak
\subsection*{Groups}

\noindent
\begin{tabular*}{150mm}{rl@{\extracolsep{\fill}}l}
$\Sbb_n$
    & the $n$-th (classical) Morava stabilizer group
    & p. \pageref{314}\\
$S_n$
    & the $p$-Sylow subgroup of $\Sbb_n$
    & p. \pageref{315}\\
$\Gbb_n(u)$
    & the $n$-th extended Morava stabilizer group associated to $u$
    & p. \pageref{316}\\
$\mu(R)$
    & the roots of unity in $R$
    & \\
$\mu_i(R)$
    & the $i$-th roots of unity in $R$
    & \\
$F_i$
    & the $i$-th subgroup of $\Gbb_n(u)$ 
    associated to a finite $F \subset \Gbb_n(u)$
    & p. \pageref{317}\\
$\tilde{F_i}$
    & the $i$-th subgroup of $\Dbb_n^\x$, 
    associated to a finite $F \subset \Gbb_n(u)$
    & p. \pageref{318}\\
$Z(G)$
    & the center of $G$
    & \\
$\Zbb\lan x \ran$
    & the infinite cyclic group generated by $x$
    & \\
$C_n$
    & the cyclic group of order $n$
    & \rule{10mm}{0mm}\\
$C_n \ast C_m$
    & the kernel of the $m$-th power map on $C_n$
    & \\
$Q_{2^n}$
    & the (generalized) quaternionic group order $2^n$
    & p. \pageref{019}\\
$T_{24}$
    & the binary tetrahedral group of order $24$
    & p. \pageref{319}\\
$D_{8}$
    & the dihedral group of order $8$
    & p. \pageref{320}\\
$SD_{16}$
    & the semidihedral group of order $16$
    & p. \pageref{321}\\
$O_{48}$
    & the binary octahedral group of order $48$
    & p. \pageref{322}\\
$Br(K)$
    & the Brauer group of $K$
    & p. \pageref{323}\\
$Br(L/K)$
    & the relative Brauer group of $L$ over $K$
    & p. \pageref{323}\\
\end{tabular*}

\subsection*{Rings, fields}

\noindent
\begin{tabular*}{150mm}{rl@{\extracolsep{\fill}}l}
$\Fbb_{p^n}$
    & the finite field with $p^n$ elements
    & \rule{10mm}{0mm}\\
$\Qbb_p$
    & the field of $p$-adic numbers
    & \\
$\Zbb_p$
    & the ring of $p$-adic integers
    & \\
$\Wbb(R)$
    & the ring of Witt vectors over $R$
    & \\
$D(K, r/n)$ 
    & the $K$-central division algebra of invariant $\frac{r}{n}$
    & p. \pageref{324}\\
$\Dbb_n$ 
    & the $\Qbb_p$-central division algebra of invariant $\frac{1}{n}$
    & p. \pageref{324}\\
$\Ocal_n$
    & the maximal order of $\Dbb_n$
    & \\
$\Ocal_K$
    & the ring of integers of the field $K$
    & \\
$U_i(\Ocal_K^\x)$
    & the $i$-th filtration group 
    $ \{ x \in \Ocal_K^\x\ |\ v_K(x-1) \geq i \}$ 
    & p. \pageref{340} \\
$R(G)$
    & the $R$-algebra generated by $G$
    & p. \pageref{170} \\
$R[G]$
    & the group ring generated by $G$
    & \\
$R_\alpha$
    & the ring $\Zbb_p[\zeta_{p^\alpha}]$
    & p. \pageref{305}\\
\end{tabular*}

\subsection*{Maps}

\noindent
\begin{tabular*}{150mm}{rl@{\extracolsep{\fill}}l}
$v$
    & the valuation $p \mto 1$ relative to $\Qbb_p$
    & \rule{10mm}{0mm}\\
$v_K$
    & the valuation $\pi_K \mto 1$ relative to the field $K$
    & \\
$v_D$
    & the valuation $\pi_{Z(D)} \mto 1$ relative to 
    the division algebra $D$
    & p. \pageref{325}\\
$\phi$
    & Euler's totient function
    & p. \pageref{337}\\
$N_{L/K}$
    & the norm of the extension $L/K$
    & \\
$Tr_{L/K}$
    & the trace of the extension $L/K$
    & \\
$N_{G}$
    & the norm relative to the Galois group $G$
    & \\
$Tr_{G}$
    & the trace relative to the Galois group $G$
    & \\
\end{tabular*}


\renewcommand{\bibname}{\LARGE\bf Bibliography \vspace{-4mm}}


\end{document}